\newcommand{\RR}{\mathbb R}
\newcommand{\ZZ}{\mathbb Z}
\newcommand{\FF}{\mathbb F}
\newcommand{\NN}{\mathbb N}
\newcommand{\conn}{\mathbin{\#}}
\newcommand{\co}{\colon}
\newcommand{\eps}{\epsilon}
\newcommand{\abs}[1]{{\lvert #1 \rvert}}
\newcommand{\bdy}{\partial}
\newcommand{\spinc}{\mathfrak s}
\DeclareMathOperator{\Sym}{Sym}
\DeclareMathOperator{\id}{Id}
\DeclareMathOperator{\spin}{Spin}
\newcommand{\SpinC}{\spin^c}
\DeclareMathOperator{\ind}{ind}
\DeclareMathOperator{\Int}{Int}
\DeclareMathOperator{\Diff}{Diff}
\theoremstyle{plain}
\numberwithin{equation}{section}
\newtheorem{theorem}[equation]{Theorem}
\newtheorem{citethm}[equation]{Theorem}
\newtheorem{proposition}[equation]{Proposition}
\newtheorem{lemma}[equation]{Lemma}
\newtheorem{corollary}[equation]{Corollary}
\newtheorem*{thm}{Theorem}
\theoremstyle{definition}
\newtheorem{convention}[equation]{Convention}
\newtheorem{definition}[equation]{Definition}
\theoremstyle{remark}
\newtheorem{example}[equation]{Example}
\newtheorem{remark}[equation]{Remark}
\newcommand{\ol}[1]{\overline{#1}{}}
\newcommand{\HF}{\mathit{HF}}
\newcommand{\HFa}{\widehat {\HF}}
\newcommand{\HFm}{{\HF}^-}
\newcommand{\HFp}{{\HF}^+}
\newcommand{\HFinf}{{\HF}^\infty}
\newcommand{\CF}{\mathit{CF}}
\newcommand{\CFa}{\widehat {\CF}}
\newcommand{\HFL}{\mathit{HFL}}
\newcommand{\HFLa}{\widehat{\HFL}}
\newcommand{\HFLm}{\HFL^-}
\newcommand{\HFK}{\mathit{HFK}}
\newcommand{\HFKm}{\HFK^-}
\newcommand{\SFH}{\mathit{SFH}}
\newcommand{\x}{\mathbf x}
\newcommand{\y}{\mathbf y}
\newcommand{\z}{\mathbf z}
\newcommand{\MCG}{\mathit{MCG}}
\newcommand{\alphas}{{\boldsymbol{\alpha}}}
\newcommand{\betas}{{\boldsymbol{\beta}}}
\newcommand{\gammas}{{\boldsymbol{\gamma}}}
\newcommand{\deltas}{\boldsymbol{\delta}}
\newcommand{\etas}{\boldsymbol{\eta}}
\newcommand{\Torus}{\mathbb{T}}
\newcommand{\HD}{\mathcal{H}}
\newcommand{\Field}{\FF_2}
\newcommand{\bal}{\mathrm{bal}}
\newcommand{\man}{\mathrm{man}}
\newcommand{\link}{\mathrm{link}}
\newcommand{\Bl}{\mathrm{Bl}} 
\newcommand{\bp}{\mathbf{p}}
\newcommand{\bq}{\mathbf{q}}
\newcommand{\FV}{\mathcal{FV}}
\newcommand{\cW}{\mathcal{W}} 
\newcommand{\R}{\mathbb{R}}
\renewcommand{\S}{\Sigma} 
\newcommand{\G}{\mathcal{G}}
\renewcommand{\a}{\alpha} 
\renewcommand{\b}{\beta} 
\newcommand{\g}{\gamma}
\newcommand{\C}{\mathcal{C}}
\newcommand{\cS}{\mathcal{S}}
\newcommand{\s}{\mathrm{stab}}
\renewcommand{\d}{\mathrm{diff}} 
\newcommand{\om}{\ol{\mu}}
\newcommand{\I}{\mathcal{I}}
\newcommand{\D}{\mathcal{D}}
\newcommand{\Man}{\mathsf{Man}} 
\newcommand{\Manp}{\Man_*} 
\newcommand{\Manpv}{\Man_{*,V}} 
\newcommand{\Sut}{\mathsf{Sut}} 
\newcommand{\Link}{\mathsf{Link}}
\newcommand{\Linkp}{\Link_*}
\newcommand{\Linkpp}{\Link_{**}}
\newcommand{\Ab}{\mathsf{Ab}} 
\newcommand{\Mod}{\mathsf{Mod}} 
\newcommand{\Vect}{\mathsf{Vect}} 
\newlist{sing}{enumerate}{1}
\setlist[sing]{label=(S-\arabic*),resume}
\newlist{singab}{enumerate}{2}
\setlist[singab]{label=(\alph*),ref=(S-\theenumi\alph*)}
\renewcommand{\S}{\Sigma} 
\renewcommand{\a}{\alpha} 
\renewcommand{\b}{\beta} 
\renewcommand{\d}{\text{diff}} 
\newcommand{\grad}{\text{grad}}
\DeclareMathOperator{\Diag}{{Diag}}
\DeclareMathOperator{\Iden}{{Id}}
\DeclareMathOperator{\Sing}{{sing}}
\DeclareMathOperator{\codim}{{codim}}
\newread\testin
\def\input@path{{}{draws/}}
\newcommand\mi@kern[1]{%
  \settowidth\@tempdima{$\mi@obj^{#1}$}
  \kern-\@tempdima
  #1
  \settowidth\@tempdima{$\mi@obj$}
  \kern\@tempdima
}
\newtoks\mi@toksp
\newtoks\mi@toksb
\DeclareRobustCommand{\manyindices}[5]{%
  \def\mi@obj{#5}%
  \mi@toksp\expandafter{\mi@kern{#2}}%
  \mi@toksb\expandafter{\mi@kern{#1}}%
  \@mathmeasure4\textstyle{#5_{#1}^{#2}}%
  \@mathmeasure6\textstyle{#5_{#3}^{#4}}%
  \dimen0-\wd6 \advance\dimen0\wd4%
  \@mathmeasure8\textstyle{\hphantom{{}_{#1}^{#2}}#5^{\the\mi@toksp#4}_{\the\mi@toksb#3}}%
  \hbox to \dimen0{}{\kern-\dimen0\box8}%
}
\newcommand{\labitem}[2]{%
  \item[#1]%
  \begingroup%
    \def\@currentlabel{#1}%
    \label{#2}%
  \endgroup%
}
\begin{document}
\title{Naturality and Mapping Class Groups in Heegaard Floer Homology}

\author[Juh\'asz]{Andr\'as Juh\'asz}
\thanks{AJ was supported by a Royal Society Research Fellowship and OTKA grant NK81203.
This project has received funding from the European Research Council (ERC)
under the European Union’s Horizon 2020 research and innovation programme
(grant agreement No 674978).}
\address{Mathematical Institute, University of Oxford,
  Woodstock Road, Oxford, OX2 6GG, UK}
\email{juhasza@maths.ox.ac.uk}

\author[Thurston]{Dylan~P.~Thurston}
\thanks {DPT was supported by NSF grants number DMS-1008049 and DMS-1507244.}
\address{Department of Mathematics\\
         Indiana University, Bloomington\\
         831 E. Third St.,
         Bloomington, Indiana 47405\\
         USA}
\email{dpthurst@indiana.edu}

\author[Zemke]{Ian Zemke}
\thanks{IZ was supported by NSF Postdoctoral Research Fellowship DMS-1703685.}
\address{Department of Mathematics\\Princeton University\\
Princeton, NJ 08544, USA}
\email{izemke@math.princeton.edu}

\begin{abstract}
    We show that all versions of Heegaard Floer homology,
    link Floer homology, and sutured Floer homology are natural.
    That is, they assign concrete groups to each based 3-manifold, based link, and balanced
    sutured manifold, respectively. Furthermore, we functorially assign isomorphisms to (based) diffeomorphisms,
    and show that this assignment is isotopy invariant. 

    The proof relies on finding a simple generating set for the fundamental group
    of the ``space of Heegaard diagrams,'' and then showing that Heegaard Floer homology
    has no monodromy around these generators.
    In fact, this allows us to give sufficient conditions for an arbitrary
    invariant of multi-pointed Heegaard diagrams to descend to
    a natural invariant of 3-manifolds, links, or sutured manifolds.
\end{abstract}

\subjclass[2010]{57M27; 57R25; 57R58; 57R65}
\keywords{Heegaard Floer homology, 3-manifold, Heegaard diagram}
\maketitle

\newcounter{bean}

\tableofcontents
\section{Introduction}
\label{sec:introduction}

The Heegaard Floer homology groups, introduced by Ozsváth and
Szabó~\cite{OS04:HolomorphicDisks, OS04:HolDiskProperties},
are powerful invariants. They come in four different versions
-- plus, minus, infinity, and hat -- each of which associates a
graded abelian group to a closed oriented 3-manifold.
These were later extended to knots by Ozsv\'ath and Szab\'o~\cite{OS04:Knots}
and independently by Rasmussen~\cite{Rasmussen03:Knots}.
Furthermore, the hat and minus versions were extended to links by Ozsv\'ath and Szab\'o~\cite{OS08:HFL},
and the hat version to sutured manifolds by Juh\'asz~\cite{Juhasz06:Sutured}.
These groups are initially well-defined only up to
isomorphism, but in order to get more powerful invariants that admit diffeomorphism
actions, cobordism maps, and to be able to consider specific elements such as
contact invariants, we need
a naturally associated group, not just a group up to isomorphism. We address
that issue in this paper.

\subsection{Motivation}
To better understand what it means to have a naturally associated group,
we explain some of the naturality issues that
arise in topology. Even though the examples considered here are classical,
they have strong analogies with the case of Heegaard Floer homology.
The reader familiar with naturality issues should
skip to Section~\ref{sec:statement} for the statement of our results.

When defining algebraic invariants in topology, it is essential to
place them in a functorial setting.  For example, suppose we construct
an algebraic invariant of topological spaces that depends on various
choices, and hence only assigns an isomorphism class of, say, groups to
a space. We cannot talk about maps between isomorphism classes of
groups, or consider specific elements of an isomorphism class.

An early example of this phenomenon is provided by the fundamental
group, which depends on the choice of basepoint in an essential
way. Indeed, given a space~$X$ and basepoints~$p$, $q \in X$, there is
no ``canonical'' isomorphism between~$\pi_1(X,p)$ and~$\pi_1(X,q)$;
one has to specify a homotopy class of paths from $p$ to $q$
first. (The word ``canonical'' is often used in an imprecise way in
the literature, we will specify its precise meaning later in this
section.) It follows that~$\pi_1$ can only be defined functorially on
the category of pointed topological spaces.

The naturality/functoriality issues that might arise are perfectly
illustrated by simplicial homology. First, one has to restrict to the
category of triangulable spaces. Even settling invariance up to
isomorphism took several decades. The main question was the following:
Given triangulations~$T$ and~$T'$ of the space~$X$, how do we compare
the groups~$H_*(T)$ and~$H_*(T')$? The first attempts tried to proceed via the
Hauptvermutung: Do $T$ and $T'$ have \emph{isomorphic} subdivisions?
We now know this is false, but even if it were true, it would not
provide naturality as the choice of isomorphism is not unique. The
issue of invariance and naturality was settled by Alexander's method
of simplicial approximation, which provides for any pair of
triangulations~$T$ and~$T'$ of~$X$ an isomorphism $\beta(T,T') \colon
H_*(T) \to H_*(T')$. So how do we obtain the group~$H_*(X)$ from this
data?  First, let us recall a definition due to Eilenberg and Steenrod
\cite[Definition~6.1]{ES}.

\begin{definition} \label{def:transitive-system} A \emph{transitive
    system of groups} consists of
  \begin{itemize}
  \item a set $M$, and for every $\a \in M$, a group $G_\a$,
  \item for every pair $(\a,\b) \in M \times M$, an isomorphism
    $\pi^\a_\b \colon G_\a \to G_\b$ such that
    \begin{enumerate}
    \item \label{item:id} $\pi^\a_\a = \text{Id}_{G_\a}$ for every $\a
      \in M$,
    \item \label{item:trans} $\pi^\b_\g \circ \pi^\a_{\b} = \pi^\a_\g$
      for every $\a$, $\b$, $\g \in M$.
    \end{enumerate}
  \end{itemize}
  A transitive system of groups gives rise to a single group $G$ as
  follows: Let $G$ be the set of elements $g \in \prod_{\a \in M}
  G_\a$ for which $\pi^\a_\b(g(\a)) = g(\b)$ for every $\a$, $\b \in
  M$.
\end{definition}

\begin{remark} \label{rem:colimit}
  For every $\a \in M$, let $p_\a \colon \prod_{\a \in M} G_\a \to
  G_\a$ be the projection. Then $p_\a|_G \colon G \to G_\a$ is an
  isomorphism. In fact, $G$ is a universal object, obtained as a
  limit along the directed graph on~$M$ where there is a unique edge
  from~$\a$ to~$\b$ for every~$(\a,\b) \in M \times M$. The assignment
  $\a \mapsto G_\a$ is a functor from~$M$ to the category of groups,
  which is the diagram along which we take the limit.

  In this paper, instead of the limit, we work with the colimit, which is
  $\coprod_{\a \in M} G_\a/\mathord{\sim}$,
  where $g_\a \sim g_\b$ for $g_\a \in G_\a$ and $g_\b \in G_\b$ if
  and only if $\pi^\a_\b(g_\a) = g_\b$. The group structure on
  $\coprod_{\a \in M} G_\a/\mathord{\sim}$ is given by pointwise
  multiplication of equivalence classes. Each embedding of~$G_\a$
  into $\coprod_{\a \in M} G_\a/\mathord{\sim}$ is an isomorphism.
  It is easy to check that this satisfies the universal property for a colimit.
  The map $g \mapsto [g(\a)]$ for $g \in G$ gives a natural
  isomorphism between the limit and the colimit, where $[g(\a)]$ is the equivalence class
  of $g(\a)$ for an arbitrary $\a \in M$.
\end{remark}

We call the $\pi^\a_\b$ \emph{canonical isomorphisms}. So, if we are
constructing some algebraic invariant, and have isomorphisms for any
pair of choices, we only call these isomorphisms ``canonical'' if they
satisfy properties~\eqref{item:id} and~\eqref{item:trans} above. An
instance of a transitive system of groups is given by taking $M$ to be
the set of all triangulations of a triangulable space $X$, and for any
pair $(T,T') \in M \times M$, setting $\pi^T_{T'} = \b(T,T')$. Another
example of a transitive system is given in the case of Morse homology
by Schwarz~\cite[Section 4.1.3]{Sch}, where one needs to compare
homology groups defined using different Morse functions.

Classical homology was put in a functorial framework by the
Eilenberg-Steenrod axioms, whereas the gauge theoretic invariants of
3- and 4-manifolds are expected to satisfy properties similar to the topological quantum
field theory (TQFT) axioms of Atiyah, called a ``secondary TQFT.''
Our motivating question is whether Heegaard Floer homology fits into such a functorial picture.
Heegaard Floer homology is a package of invariants of 3- and
4-manifolds defined by Ozsv\'ath and Szab\'o~\cite{OS04:HolomorphicDisks, OS06:HolDiskFour}.
It follows from our work that each version of Heegaard Floer homology
individually satisfies the classical TQFT axioms, but it is important to note that
the closed 4-manifold invariant is obtained by mixing the $+$, $-$, and $\infty$ versions,
hence deviating from Atiyah's original description. Building on our work,
the third author~\cite{Zemke} has shown that the 4-manifold invariants are also well-defined.

In its simplest form, Heegaard Floer homology assigns an Abelian group $\HFa(Y)$ to a closed
oriented 3-manifold $Y$, well-defined up to isomorphism. The
construction depends on a choice of Heegaard diagram for $Y$.
Given two Heegaard diagrams $\HD$ and $\HD'$ for $Y$, our goal is to
construct a canonical isomorphism $\HFa(\HD) \to \HFa(\HD')$ such that
the set of diagrams, together with these isomorphisms form a
transitive system of groups, yielding a single group $\HFa(Y)$.
We want to do this in a way that every diffeomorphism $d \colon Y_0 \to
Y_1$ induces an isomorphism
\[
d_* \colon \HFa(Y_0) \to \HFa(Y_1).
\]
For this (and also to get the canonical isomorphisms), one has to consider
diagrams embedded in~$Y$, not only ``abstract'' ones.
That is, we consider triples $(\S,\alphas,\betas)$ where~$\S$ is a subsurface of~$Y$
that splits~$Y$ into two handlebodies, and~$\alphas$, $\betas \subset \S$ are attaching sets
for the two handlebodies.
Then the main question is:
How do we compare~$\HFa$ for diagrams that are embedded
in~$Y$ differently? Note that our construction of canonical isomorphisms
differs from that of Ozsv\'ath and Szab\'o~\cite[Theorem~2.1]{OS06:HolDiskFour}
in a subtle but essential way; see Remark~\ref{rem:difference}.

The Reidemeister-Singer theorem provides an analogue of the
Hauptvermutung in the case of Heegaard splittings: Any two Heegaard
splittings of $Y$ become isotopic after stabilizations. However, this
isotopy is far from being unique. In fact, the fundamental group
$\pi_1(\mathcal{S}(Y,\S))$ of the space of Heegaard splittings equivalent to
$(Y,\S)$ is highly non-trivial. So we could have a loop of Heegaard
diagrams $\{\, \HD_t \colon t \in [0,1] \,\}$ of~$Y$ along which
$\HFa(\HD_t)$ has monodromy. Indeed, let us recall the following
definition.

\begin{definition}
  Let $\S \subset Y$ be a Heegaard surface. Then the \emph{Goeritz
    group} of the Heegaard splitting $(Y,\S)$ is defined as
  \[
  G(Y,\S) = \ker \left(\text{MCG}(Y,\S) \to \text{MCG}(Y)\right).
  \]
  In other words, $G(Y,\S)$ consists of automorphisms $d$ of $(Y,\S)$
  (considered up to isotopy preserving the splitting) such that $d$ is
  isotopic to $\text{Id}_Y$ if we are allowed to move $\S$.
\end{definition}

According to Johnson and McCullough~\cite{JMc13:HeegaardSpace}, there
is a short exact sequence
\[
1 \to \pi_1(\text{Diff}(Y)) \to \pi_1(\mathcal{S}(Y,\S)) \to G(Y,\S) \to 1.
\]
Let $\HD = (\S, \alphas,\betas,z)$ be a Heegaard diagram of $Y$.
Ignoring basepoint issues, an element of $\pi_1(\mathcal{S}(Y,\S))$ coming
from $\pi_1(\Diff(Y))$ acts
trivially on the Heegaard Floer homology $\HFa(\HD)$ (as this is the action of~$\id_\S$,
the endpoint of the loop),
so this descends to an action of $G(Y,\S)$ on $\HFa(\HD)$.  The 3-sphere has a unique
genus~$g$ Heegaard splitting for every $g \ge 0$.  At the time of
writing of this paper, it is unknown whether $G(S^3,\S)$ is finitely
generated when the genus of $\S$ is greater than~$2$.  Understanding
the group~$G(Y,\S)$ for a general 3-manifold~$Y$ and
splitting~$\S$ seems even more difficult.
Heegaard Floer homology is invariant under
stabilization, and the ``fundamental group'' of the space of Heegaard
diagrams modulo stabilizations is easier to understand, as we shall
see in this paper.

\subsection{Statement of results}
\label{sec:statement}

We prove that Heegaard Floer homology is an invariant of based
3-manifolds in the following strong sense.  (We ignore gradings and
$\SpinC$ structures for the moment.)

\begin{definition}
  Let $\Man$ be the category whose class of objects $\abs{\Man}$ consists of closed, connected,
  oriented 3-manifolds, and whose morphisms are diffeomorphisms.  Let
  $\Manp$ be the category whose objects are pairs $(Y, p)$, where
  $Y\in \abs{\Man}$ and $p \in Y$ is a choice of basepoint, and whose
  morphisms are basepoint-preserving diffeomorphisms.

  Also, let $R$-$\Mod$ be the category of $R$-modules for any
  ring~$R$, and let $k$-$\Vect$ be the category of vector spaces
  over~$k$ for any field~$k$.
\end{definition}

Recall that Ozsváth and Szabó defined four different versions of
Heegaard Floer homology, named $\HFa$, $\HFm$, $\HFp$, and
$\HFinf$. We will write $\HF$ without decoration to mean any of these
four versions. Note that these are all modules over the polynomial
ring $\Field[U]$, where the $U$-action is trivial on~$\HFa$.

\begin{theorem}\label{thm:invt-hf}
  There are functors
  \[
  \HFa \text{, } \HFm \text{, } \HFp \text{, } \HFinf \colon \Manp \to
  \Field[U]\text{-}\Mod,
  \]
  such that for a based 3-manifold $(Y,p)$, the groups $\HF(Y,p)$ are
  isomorphic to the various versions of Heegaard Floer homology
  defined by
  Ozsv\'ath and Szab\'o~\cite{OS04:HolomorphicDisks,OS04:HolDiskProperties}.
  Furthermore, isotopic diffeomorphisms induce identical maps
  on~$\HF$.
\end{theorem}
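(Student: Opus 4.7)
The plan is to realize $\HFa(Y,p)$ as the inverse limit of a transitive system of groups, in the sense of Definition~\ref{def:transitive-system}, indexed by embedded pointed Heegaard diagrams of $Y$. I would begin by defining a space (or 2-complex) $\HDspace(Y,p)$ whose points are embedded multi-pointed Heegaard diagrams $\HD=(\S,\alphas,\betas,p)$ for $Y$, modulo small deformations, and whose edges correspond to the standard Ozsv\'ath--Szab\'o Heegaard moves: isotopies of the attaching curves, handleslides, index $1/2$ (de)stabilizations, and changes of the path of almost complex structures on $\Sym^g(\S)$. For each elementary move $m\co\HD\to\HD'$, the original papers of Ozsv\'ath--Szab\'o supply an isomorphism $F_m\co\HFa(\HD)\to\HFa(\HD')$; for any pair $(\HD,\HD')$, a composite of such maps along any edge-path in $\HDspace(Y,p)$ gives a candidate $\pi^{\HD}_{\HD'}$, and the Reidemeister--Singer theorem ensures that at least one such path exists.

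The crux is well-definedness of $\pi^{\HD}_{\HD'}$: different edge-paths must yield the same composite, equivalently the monodromy representation $\pi_1(\HDspace(Y,p))\to\mathrm{Aut}(\HFa(\HD))$ is trivial. This is precisely where the philosophy advertised in the introduction enters: one must produce a tractable presentation of $\pi_1(\HDspace(Y,p))$. Because the Goeritz group of $(Y,\S)$ is not known to be finitely generated in high genus, I would not try to do this at fixed genus; instead, following the introduction's hint, I would work after quotienting by stabilization, where the relevant $\pi_1$ becomes accessible. The goal is a short explicit generating set of loops: commutations of moves supported in disjoint regions of $\S$, triangle relations expressing that a handleslide followed by its inverse is the identity loop, loops coming from the contractible parameter space of almost complex structures, and compatibility loops between stabilization and each other move.

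The hard part will be this step: proving that such a finite/manageable list actually generates $\pi_1$ of the space of diagrams modulo stabilization. This is a topological/combinatorial statement about configuration spaces of cut systems on a surface, and is the real technical content of the paper. I expect this to be proved by a Cerf-theoretic or generic-isotopy argument: deform an arbitrary loop of diagrams to one in general position, so that the codimension-one strata it crosses are the elementary moves and the codimension-two strata are the relations in the list.

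Given such a generating set, the monodromy check reduces to verifying that $\HFa$ has no monodromy around each generator. For commutations of disjointly supported moves this is immediate; for the others it follows from established properties of the holomorphic triangle and quadrilateral maps (associativity, invariance under one-parameter families of almost complex structures, and naturality of the stabilization isomorphism), recast as honest equalities on homology rather than mere homotopy-commutativities of chain maps. Once the transitive system is constructed, the functor on $\Manp$ is automatic: a based diffeomorphism $d\co(Y_0,p_0)\to(Y_1,p_1)$ pushes diagrams forward and intertwines all elementary moves, giving a well-defined $d_*$, and an isotopy between $d_0$ and $d_1$ produces a path in $\HDspace(Y_1,p_1)$ between $d_0(\HD)$ and $d_1(\HD)$, so the two induced maps coincide through $\pi^{d_0(\HD)}_{d_1(\HD)}$. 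The four flavors $\HFa,\HFm,\HFp,\HFinf$ are treated in parallel, since the Heegaard moves and their associated chain maps are defined uniformly.
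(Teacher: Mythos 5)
Your overall plan — embed the diagrams, form a space of Heegaard diagrams, use Cerf/generic-isotopy arguments to generate $\pi_1$ of the quotient by stabilization, then kill the monodromy around each generator — is the same strategy the paper follows. You also correctly identify the role of Reidemeister--Singer, the necessity of isotopy-dependence (continuity), and the way isotopies of diffeomorphisms translate into paths of diagrams. But there is one genuine gap that is, in fact, the crux of the whole paper, and one structural shortcut you've elided.

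\textbf{The gap.} You write that, beyond commutations of disjointly supported moves, the vanishing of monodromy around the remaining generating loops ``follows from established properties of the holomorphic triangle and quadrilateral maps (associativity, invariance under one-parameter families of almost complex structures, and naturality of the stabilization isomorphism).'' This is not correct. The Cerf analysis in Sections~\ref{sec:gradients}--\ref{sec:simplify} produces, in addition to the expected distinguished rectangles and slide polygons, a genuinely new codimension-2 loop: the \emph{simple handleswap} of Definition~\ref{def:simple-handleswap}, arising from a flow line between an index-2 and an index-1 critical point (the type~\ref{item:E1} bifurcation). That Heegaard Floer homology has no monodromy around this loop is not a formal consequence of associativity and naturality; it is a new holomorphic-curve computation, carried out in Proposition~\ref{prop:HFtrivialHandleswap} via a connected-sum degeneration and a careful index count (Lemmas~\ref{lem:ModelDiagram} and~\ref{lem:SmallTriangle}). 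The acknowledgements make explicit that this piece was supplied separately. Without identifying the handleswap loop as a generator and proving monodromy-freeness around it, your proof is incomplete; it is precisely the place where previous ``naturality'' sketches in the literature had a hole.

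\textbf{The structural shortcut.} The paper does not build the transitive system directly on pointed Heegaard diagrams of $Y$. It first proves everything for balanced sutured manifolds (so that all flavors and also link Floer homology are handled at once) and then converts $(Y,p)$ to the sutured manifold $Y(p,V) = (\Bl_p(Y),\gamma_V)$, which requires an auxiliary choice of oriented $2$-plane $V < T_pY$. This introduces an intermediate category $\Manpv$, and passing from $\Manpv$ to $\Manp$ requires one more naturality argument (Lemma~\ref{lem:2-planes}) using that the Grassmannian of oriented $2$-planes in $T_pY$ is $S^2$, hence simply connected. Your plan would need either to redo the Cerf analysis directly in the pointed (non-sutured) setting, or to go through the same blowup--and--$2$-plane detour; either way the passage from $\Manpv$ to $\Manp$ is a nontrivial step you haven't addressed.
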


As $\HF_{\text{red}}(Y,p)$ is defined
as $\HF^+(Y,p)/\text{Im}(U^k)$ for $k$ sufficiently large (see \cite[Definition~4.7]{OS04:HolomorphicDisks}),
it immediately follows from Theorem~\ref{thm:invt-hf} that $\HF_{\text{red}}$ is also functorial.

Ozsv\'ath and Szab\'o~\cite{OS04:HolomorphicDisks,OS04:HolDiskProperties}
showed that the \emph{isomorphism class} of $\HF(Y)$ is an invariant
of the 3-manifold $Y$.
The statement in Theorem~\ref{thm:invt-hf} is stronger, in that it
says that $\HF(Y,p)$ is actually a well-defined group, not just an
isomorphism class of groups.
The first step towards naturality was made by Ozsv\'ath and Szab\'o~\cite[Theorem~2.1]{OS06:HolDiskFour},
who constructed maps $\Psi$ between $\HF(\S,\alphas,\betas)$ and $\HF(\S,\alphas',\betas')$
for a fixed Heegaard surface $\S$ and equivalent ``abstract'' diagrams
$(\S,\alphas,\betas)$ and $(\S,\alphas',\betas')$.
The maps $\Psi$ are isomorphisms, and they satisfy conditions~\eqref{item:id}
and~\eqref{item:trans} of Definition~\ref{def:transitive-system}.
Furthermore, they also defined maps for stabilizations.

In the present paper, we explain how to canonically compare
invariants of diagrams with different embeddings in Y.
As it turns out, the additional checks are the following:
One has to prove that $\HF$ has no monodromy around the simple
handleswap loop of Figure~\ref{fig:handleswap}, and show that the map
on $\HF$ induced by a diffeomorphism $d \colon (\S,\alphas,\betas) \to
(\S,\alphas',\betas')$ isotopic to $\text{Id}_\S$ agrees with the
canonical isomorphism $\Psi$.

One surprise in Theorem~\ref{thm:invt-hf} is the appearance of the
basepoint. To make this precise, we look at the mapping class group.

\begin{definition}
  For a smooth manifold~$M$, its
  \emph{mapping class group} is
  \[
  \MCG(M) = \Diff(M) / \Diff_0(M) = \pi_0(\Diff(M)),
  \]
  where $\Diff(M)$ is the group of diffeomorphisms of~$X$, and
  $\Diff_0(M)$ is the subgroup of diffeomorphisms isotopic to the
  identity, which is also the connected component of the identity in
  $\Diff(M)$. Similarly, for a based smooth manifold $(M,p)$, its
  \emph{based mapping class group} is
  \[
  \MCG(M,p) = \Diff(M,p) / \Diff_0(M,p) = \pi_0(\Diff(M,p)),
  \]
  where we consider maps that preserve the basepoint.
\end{definition}

\begin{corollary}\label{cor:mcg-acts}
  For a based 3-manifold $(Y,p)$, the group
  $\MCG(Y,p)$ acts naturally on $\HF(Y,p)$ for any of the four
  versions of Heegaard Floer homology.
\end{corollary}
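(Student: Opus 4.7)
The corollary is an essentially formal consequence of Theorem~\ref{thm:invt-hf}, so the plan is simply to unpack what functoriality plus isotopy-invariance give us on the automorphism group of $\HF(Y,p)$.

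First, I would fix a based 3-manifold $(Y,p)$ and consider the functor $\HF \colon \Manp \to \Field[U]\text{-}\Mod$ from Theorem~\ref{thm:invt-hf}. Applied to the endomorphism monoid $\mathrm{End}_{\Manp}((Y,p)) = \Diff(Y,p)$, functoriality yields a monoid homomorphism
\[
\Phi \colon \Diff(Y,p) \longrightarrow \mathrm{End}_{\Field[U]\Hyph\Mod}(\HF(Y,p)), \quad d \mapsto d_*.
\]
Because each $d \in \Diff(Y,p)$ is an isomorphism in $\Manp$ (with inverse $d^{-1}$), functoriality forces $d_* \circ (d^{-1})_* = \mathrm{Id}$ and vice versa, so $\Phi$ lands in $\mathrm{Aut}(\HF(Y,p))$ and is thus a group homomorphism.

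Next, I would invoke the second clause of Theorem~\ref{thm:invt-hf}: isotopic based diffeomorphisms induce the same map on $\HF$. Thus $\Phi$ is trivial on the identity component $\Diff_0(Y,p)$, and hence descends to a group homomorphism
\[
\overline{\Phi} \colon \MCG(Y,p) = \Diff(Y,p)/\Diff_0(Y,p) \longrightarrow \mathrm{Aut}(\HF(Y,p)),
\]
which is by definition an action of $\MCG(Y,p)$ on $\HF(Y,p)$. The word \emph{naturally} in the statement should be read in the sense that the construction of $\overline{\Phi}$ depends only on the functor $\HF$ and uses no auxiliary choices, so it commutes with conjugation by any based diffeomorphism $(Y,p) \to (Y',p')$; this is again immediate from the functoriality of $\HF$.

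There is no real obstacle here — the content is entirely in Theorem~\ref{thm:invt-hf}. The only subtlety worth remarking on in the write-up is that isotopy-invariance is needed in the category $\Manp$ of \emph{based} manifolds, i.e.\ for isotopies that fix the basepoint $p$ throughout; this is exactly what Theorem~\ref{thm:invt-hf} provides, so the descent to $\MCG(Y,p)$ (rather than only to some quotient by a smaller subgroup) is legitimate.
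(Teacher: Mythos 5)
Your proof is correct and takes exactly the same route as the paper, which simply states that the corollary ``follows immediately from Theorem~\ref{thm:invt-hf} when restricted to automorphisms of $(Y,p)$''; you have just spelled out the formal unpacking (functoriality on $\Diff(Y,p)$ gives a homomorphism to $\mathrm{Aut}(\HF(Y,p))$, and isotopy invariance lets it descend to the quotient $\MCG(Y,p)$).
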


\begin{proof}
  This follows immediately from Theorem~\ref{thm:invt-hf} when
  restricted to automorphisms of $(Y,p)$.
\end{proof}

It is easy to construct examples where the action of the mapping class
group is non-trivial.  For instance, for a
$3$-manifold~$Y$, the evident diffeomorphism that exchanges the two factors of $Y
\conn Y$ (preserving a basepoint) will act via $x \otimes y \mapsto y \otimes x$ on
$\HFa(Y \conn Y) \cong \HFa(Y) \otimes \HFa(Y)$, which is non-trivial if~$Y$ is
sufficiently complicated.

Recall that, from the fibration $\Diff(Y,p) \to \Diff(Y) \to Y$, there
is a Birman exact sequence for based mapping class groups for any
connected manifold~$Y$:
\[
\pi_1(Y) \to \MCG(Y,p) \to \MCG(Y) \to 0.
\]
Thus, from an action of $\MCG(Y,p)$ on $\HFa$, we get an action of
$\pi_1(Y)$ on $\HFa$. This action of $\pi_1(Y)$ is trivial if the
action descends to an action of the unbased mapping class group
$\MCG(Y)$.  This action of $\pi_1(Y)$ is \emph{not} always
trivial. However, it factors through an action of $H_1(Y)$.
For the proof of a precise formula, originally conjectured by the first author,
see the work of the third author~\cite{Zemke1}.

For the other three versions, $\HFm(Y,p)$, $\HFp(Y,p)$, and
$\HFinf(Y,p)$, the action of $\pi_1(Y)$ on $\HF$
is always trivial, in analogy with the situation for monopole Floer
homology~\cite{KronheimerMrowka}. This has been shown by the third author~\cite{Zemke}.

There is also a version of Theorem~\ref{thm:invt-hf} for links.  Let
$\Link$ be the category of oriented links in $S^3$, whose morphisms
are orientation preserving diffeomorphisms $d \colon (S^3,L_1) \to
(S^3,L_2)$.  Let $\Link_*$ be the category whose objects
are \emph{based oriented links}: pairs $(L, \bp)$, where $L \subset
S^3$ is an oriented link and $\bp=\{\, p_1, \dots, p_n \,\} \subset L$
is a set of basepoints, exactly one on each component of~$L$.  The
morphisms are diffeomorphisms of $S^3$ preserving the based oriented
link.

\begin{theorem}\label{thm:invt-hfl}
  There are functors
  \begin{align*}
    \HFLa&\colon \Linkp \to \Field\text{-}\Vect,\\
    \HFL^-&\colon \Linkp \to \Field[U]\text{-}\Mod,
  \end{align*}
  agreeing up to isomorphism with the link invariants defined by
  Ozsv\'ath and Szab\'o~\cite{OS04:Knots, OS08:HFL} and
  Rasmussen~\cite{Rasmussen03:Knots}.  Isotopic
  diffeomorphisms induce identical maps on~$\HFLa$ and~$\HFL^-$.
\end{theorem}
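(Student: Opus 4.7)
The plan is to adapt the proof of Theorem~\ref{thm:invt-hf} to the based link setting, using multi-pointed Heegaard diagrams in place of singly-pointed ones. A based oriented link $(L, \bp) \subset S^3$ with $\bp = \{p_1, \dots, p_n\}$ determines a class of adapted diagrams $\HD = (\S, \alphas, \betas, \w, \z)$: here $\S \subset S^3$ is a Heegaard surface transverse to $L$, with $\w = \bp$ the positive intersections $L \cap \S$, and $\z = \{q_1, \dots, q_n\}$ a single negative intersection on each component, while $\alphas$ and $\betas$ are attaching sets in $\S \setminus (\w \cup \z)$ of the usual kind. On such diagrams, Ozsv\'ath-Szab\'o and Rasmussen already construct chain complexes whose homologies are $\HFLa(\HD)$ and $\HFLm(\HD)$, defined up to isomorphism. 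The goal is to promote these to canonical groups functorially associated to $(L,\bp)$.

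The first step is to set up the category of adapted diagrams for a fixed based link, and to construct, for every ordered pair of such diagrams, a candidate canonical isomorphism between their $\HFL$ groups. These arise from chain maps for the elementary moves that relate any two adapted diagrams: isotopies rel $\w \cup \z$, handleslides among the $\alphas$ or among the $\betas$ (supported away from $\w \cup \z$), and index~one/two (de)stabilizations whose effect is compatible with the multi-pointed structure. The main technical input is the general criterion promised in the introduction, which states sufficient conditions under which an invariant of multi-pointed Heegaard diagrams descends to a natural invariant of based links; the strategy is simply to feed $\HFLa$ and $\HFLm$ into this machine.

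The three conditions to be verified are parallel to the 3-manifold case: (i) the candidate isomorphisms satisfy the identity and transitivity axioms of a transitive system of groups, reducing to the claim that commutations and inverses of the elementary moves hold up to canonical chain homotopy; (ii) the monodromy around the handleswap loop vanishes in the link-adapted setting; and (iii) a diffeomorphism of $\HD$ isotopic to the identity rel $\w \cup \z$ acts on $\HFL(\HD)$ by the canonical isomorphism already assigned. I expect (ii) to be the main obstacle. The handleswap model used for Theorem~\ref{thm:invt-hf} can be localized in a genus-one region of $\S$ disjoint from $\w \cup \z$, so the computation is formally identical to the $\HFa$ one; what must be checked is that the extra basepoints $\z$, which impose additional constraints on the relevant holomorphic disks and triangles, do not permit any new classes that could contribute to monodromy.

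Granting (i)--(iii), functoriality under link diffeomorphisms is essentially automatic. A morphism $d \colon (L_1, \bp_1) \to (L_2, \bp_2)$ in $\Linkp$ sends any adapted diagram $\HD_1$ for the source to an adapted diagram $d(\HD_1)$ for the target, and induces a tautological chain isomorphism whose action on homology $\HFL(\HD_1) \to \HFL(d(\HD_1))$ may be composed with the canonical isomorphism to obtain a map $d_* \colon \HFL(L_1,\bp_1) \to \HFL(L_2,\bp_2)$. The transitive system property ensures this is independent of the choice of $\HD_1$, and compatibility of the canonical isomorphisms with stabilization and handleslides shows $(d \circ d')_* = d_* \circ d'_*$. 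Finally, isotopy invariance follows because an isotopy $\{d_t\}$ of based-link diffeomorphisms produces a continuous path of adapted diagrams $d_t(\HD_1)$, and the canonical isomorphism along this path interpolates $d_{0*}$ with $d_{1*}$, forcing them to coincide.
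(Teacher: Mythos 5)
Your proposal reconstructs the entire naturality machinery directly in the category of multi-pointed Heegaard diagrams, whereas the paper proves Theorem~\ref{thm:invt-hfl} as a two-line corollary of the sutured case: by Theorem~\ref{thm:invt-sfh} the functor is already defined on $\Sut_\link$, composing with the blowup $(S^3,L,\bp,\bq)\mapsto S^3(L,\bp,\bq)$ of Definition~\ref{def:Y-K} gives a functor on $\Linkpp$ (links with \emph{two} basepoints on each component), and then one observes that each fiber of the forgetful functor $\Linkpp\to\Linkp$ is homeomorphic to $\RR$, hence contractible, so the functor descends to $\Linkp$ exactly as in the proof of Theorem~\ref{thm:invt-hf}. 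This is the economical route: the 2-parameter bifurcation analysis, Theorem~\ref{thm:iso}, and the handleswap computation are each done once in the sutured setting and never revisited.

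Beyond the inefficiency, there is a real gap in your plan. You fix $\w=\bp$ but also choose auxiliary basepoints $\z=\{q_1,\dots,q_n\}$, one per component, and then assert that a morphism $d\co(L_1,\bp_1)\to(L_2,\bp_2)$ in $\Linkp$ sends an adapted diagram for the source to an adapted diagram for the target. But $d$ is only required to preserve $\bp$, not $\z$: the image $d(\z)$ is some auxiliary set of basepoints for $(L_2,\bp_2)$, and nothing forces it to agree with whatever $\z'$ you used to define $\HFL(L_2,\bp_2)$. Moreover the putative action of a mapping class $d\co(S^3,L,\bp)\to(S^3,L,\bp)$ with $d(\z)\neq\z$ on $\HFL(L,\bp)$ is precisely where basepoint-dependence issues (as in Sarkar's example, cited in the paper) can arise, so this cannot be waved away. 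Your transitive-system construction would give well-defined groups $\HFL(L,\bp,\z)$ depending a priori on $\z$, and you would still need a separate argument to erase the $\z$-dependence. The paper supplies that argument explicitly (the contractible fibers of $\Linkpp\to\Linkp$, using Lemma~\ref{lem:2-planes}-style isotopy arguments); your proposal omits it entirely.
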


As in Corollary~\ref{cor:mcg-acts}, Theorem~\ref{thm:invt-hfl} implies
that $\MCG(S^3,L,\bp)$ acts on $\HFLa(L,\bp)$ and $\HFL^-(L,\bp)$.  Again, one can ask
whether this action is non-trivial, and in particular, whether the
basepoint makes a difference.  For simplicity, consider
knots, in which case there is an exact sequence
\[
\pi_1(S^1) \to \MCG(S^3,K,p) \to \MCG(S^3,K) \to 0.
\]
In this context, Sarkar~\cite{Suc15:basepoints} has constructed
many examples where the action of $\pi_1(S^1)$ on $\HFLa(K,p)$ is
non-trivial. More concretely, let~$\sigma \in \MCG(S^3,K,p)$
be the positive finger move (or Dehn twist) along~$K$, defined
in~\cite[p.~4]{Suc15:basepoints}. Then it follows
from~\cite[Theorem~6.1]{Suc15:basepoints} that the action of~$\sigma$
on~$\HFLa(K,p)$ for prime knots up to~$9$ crossings is non-trivial
more often than not. He also proved a formula for the $\pi_1(S^1)$ action
for knots in $S^3$, which was then extended to links in arbitrary 3-manifolds
by the third author~\cite{Zemke2}.

There are several variants of Theorem~\ref{thm:invt-hfl}.  For
instance, Ozsv\'ath and Szab\'o~\cite{OS11:Rational} have
defined the group $\HFKm(Y,K,p)$ for a rationally
null-homologous knot~$K$ in an oriented 3-manifold~$Y$.
They also defined $\HFLm(Y,L,\bp)$ for a link~$L$ in an integer homology sphere~$Y$;
see~\cite[Theorem 4.7]{OS08:HFL}.
There is also more structure that can be put on the result.
In particular, there is a spectral sequence from $\HFKm(Y,K,p)$
converging to $\HFm(Y)$.  These invariants are again functorial.
In fact, Hendricks and Manolescu~\cite[Proposition~2.3]{HM17:Involutive} showed
that Theorem~\ref{thm:invt-hf} also holds on the chain level
in the homotopy category of chain complexes of $\FF_2[U]$-modules.
A knot induces a filtration on the Heegaard Floer chain complex, and
the canonical isomorphisms preserve the knot filtration; see
Juh\'asz and Marengon~\cite[Section~5.11]{JM16:Concordance}.
Finally, for the naturality of $\HFLa$ of links in arbitrary oriented 3-manifolds
and several basepoints on each link component,
see Juh\'asz~\cite[Proposition~4.10]{Juhasz16:Cobordism}.

We will unify the proofs of Theorems~\ref{thm:invt-hf}
and~\ref{thm:invt-hfl} in the more general setting of \emph{balanced sutured
manifolds}.  Let $\Sut$ be the category of sutured 3-manifolds and
diffeomorphisms, and let $\Sut_\bal$ be the full subcategory of
balanced sutured manifolds.  (For definitions and details, see
Definitions~\ref{def:sutured} and~\ref{def:balanced} below.)

\begin{theorem}\label{thm:invt-sfh}
  There is a functor
  \[
  \SFH\co \Sut_\bal \to \FF_2\text{-}\Vect,
  \]
  agreeing up to isomorphism with the sutured manifold invariant
  defined by the first author~\cite{Juhasz06:Sutured}.  Isotopic
  diffeomorphisms induce identical maps on~$\SFH$.
\end{theorem}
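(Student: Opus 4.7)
The plan is to follow the template established for Theorems~\ref{thm:invt-hf} and~\ref{thm:invt-hfl}, adapted to the balanced sutured setting. First, I would define a space $\HDspace(M,\gamma)$ of admissible balanced sutured Heegaard diagrams $(\S,\alphas,\betas)$ embedded in $(M,\gamma)$, with the natural topology coming from smooth variation of the surface and the attaching curves in $M$. To each diagram $\HD$ I assign the sutured Floer complex of Juh\'asz, and to each ordered pair $(\HD,\HD')$ a transition map $\Psi(\HD,\HD')\co\SFH(\HD)\to\SFH(\HD')$, built by composing the holomorphic polygon maps associated to a sequence of elementary Heegaard moves (isotopies, handleslides, index one and two stabilizations) that realize a path from $\HD$ to $\HD'$ in $\HDspace(M,\gamma)$.

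The main work is to show that the $\Psi(\HD,\HD')$ form a transitive system in the sense of Definition~\ref{def:transitive-system}. Since any two paths from $\HD$ to $\HD'$ differ by a loop in $\HDspace(M,\gamma)$ up to stabilization, this reduces to proving that $\SFH$ has trivial monodromy around a generating set of $\pi_1$ of the space of admissible diagrams modulo stabilization. The main technical result of the paper, specialized to the sutured case, furnishes a short list of such generators: distant commutations of disjoint moves, triangle relations describing a sequence of moves within a single chart, and the handleswap loop. Distant commutations and triangle relations are handled by associativity of the holomorphic triangle and quadrilateral maps, just as in the closed case treated by Ozsv\'ath and Szab\'o, and transfer directly to the sutured complex since the polygon counts are defined in the same way. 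Handleswap invariance is the key new ingredient, but the handleswap loop is supported in a small genus-two region disjoint from the sutures, so the relevant calculation is local and reduces to the same model computation needed for $\HFa$.

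For functoriality on $\Sut_\bal$, a diffeomorphism $d\co(M_0,\gamma_0)\to(M_1,\gamma_1)$ carries a diagram $\HD_0$ for $(M_0,\gamma_0)$ to a diagram $d(\HD_0)$ for $(M_1,\gamma_1)$ together with a tautological isomorphism $\SFH(\HD_0)\cong\SFH(d(\HD_0))$; composing with the canonical isomorphism $\Psi$ to any fixed reference diagram on the target yields the induced map $d_*\co\SFH(M_0,\gamma_0)\to\SFH(M_1,\gamma_1)$. Functoriality in $d$ follows because composition of diffeomorphisms corresponds to composition of tautological identifications, and the $\Psi$'s multiply correctly by the transitive-system property. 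Isotopy invariance is automatic: an isotopy from $d_0$ to $d_1$ produces a path in $\HDspace(M_1,\gamma_1)$ from $d_0(\HD_0)$ to $d_1(\HD_0)$, so the corresponding transition map is the identity. The hard part will be the Cerf-theoretic analysis that produces the promised finite generating set for $\pi_1$ of the space of sutured diagrams modulo stabilization and the explicit verification that the handleswap acts trivially on $\SFH$; once these are carried out uniformly in the main body of the paper for multi-pointed Heegaard diagrams, Theorem~\ref{thm:invt-sfh} follows by the same abstract argument as Theorems~\ref{thm:invt-hf} and~\ref{thm:invt-hfl}.
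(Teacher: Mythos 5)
Your proposal captures the paper's overall strategy accurately: build a transitive system of canonical isomorphisms between $\SFH$ of embedded sutured diagrams, prove path-independence by eliminating monodromy around a generating set of loops in a space (or graph) of diagrams modulo stabilization, verify handleswap invariance locally, and then define $F(\phi)$ by composing the tautological diffeomorphism map with canonical isomorphisms. This is in substance the same route as the paper, which assembles the result from Theorem~\ref{thm:strong}, Theorem~\ref{thm:iso}, Definition~\ref{def:strong-functor}, and Proposition~\ref{prop:F-phi-invt}.

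There is one place where your proposal is genuinely elliptical, and it hides a real verification. You write that isotopy invariance is ``automatic'' because an isotopy from $d_0$ to $d_1$ produces a path in $\HDspace(M_1,\gamma_1)$ whose transition map is the identity. But $d_0(\HD_0)$ and $d_1(\HD_0)$ are different embedded diagrams with different attaching curves, so the transition map $\Psi(d_0(\HD_0), d_1(\HD_0))$ is not literally the identity; what you need is that it coincides with the tautological isomorphism induced by the surface diffeomorphism $d_1 d_0^{-1}$, which is isotopic to the identity of the Heegaard surface. This is precisely the Continuity Axiom in Definition~\ref{def:strong-Heegaard}, verified for $\SFH$ in Proposition~\ref{prop:continuity}, and it is a nontrivial check: it hinges on identifying the triangle-map continuation with the nearest-point map for small Hamiltonian translates. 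The paper avoids stating this step as ``automatic'' by making the diffeomorphism itself an edge of the graph $\G_{(M,\g)}$, so that once Theorem~\ref{thm:iso} is in place, $F(d) = F_{H, \phi(H)}$ follows formally, with the continuity check absorbed into the strong-Heegaard-invariant axioms. A secondary, more cosmetic divergence: the paper works with \emph{isotopy diagrams} (attaching sets up to isotopy) rather than the admissible diagrams you describe, precisely to sidestep admissibility bookkeeping; the invariant of an isotopy diagram is built as a direct limit over admissible representatives.
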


All Heegaard Floer homology groups discussed above decompose along
$\SpinC$ structures, for example,
\[
\SFH(M,\g) = \bigoplus_{\spinc \in \SpinC(M,\g)} \SFH(M,\g,\spinc).
\]
In addition, each summand $\SFH(M,\g,\spinc)$ carries a relative
homological $\ZZ_{\mathfrak{d}(\spinc)}$-grading, where
$\mathfrak{d}(\spinc)$ is the divisibility of the Chern class
$c_1(\spinc) \in H^2(M)$. So, for any $x$, $y \in \SFH(M,\g,\spinc)$,
the grading difference $\text{gr}(x,y)$ is an element of
$\ZZ_{\mathfrak{d}(\spinc)}$.

These gradings are natural in the following sense. Suppose that $d
\colon (M,\g) \to (N,\nu)$ is a diffeomorphism. Then the induced map
\[
d_* \colon \SFH(M,\g) \to \SFH(N,\nu)
\]
restricts to an isomorphism
\[
d_*|_{\SFH(M,\g,\spinc)} \colon \SFH(M,\g,\spinc) \to \SFH(N,\nu,
d(\spinc))
\]
that preserves the relative homological grading. Completely analogous
results hold for the other versions of Heegaard Floer homology.

Now we outline the main technical tools behind the above results; for
further details we refer the reader to Section~\ref{sec:invariants}.
To be able to treat the various versions of Heegaard Floer homology simultaneously,
we consider an arbitrary algebraic invariant $F$
of abstract (i.e., not necessarily embedded) diagrams of sutured manifolds in a given class~$\mathcal{S}$ of
sutured manifolds (e.g., knot complements in case $F$ is knot Floer homology).
An \emph{isotopy diagram} is a sutured diagram with attaching sets taken up to isotopy. We work
with these to avoid admissibility issues.
Let $\G(\mathcal{S})$ be the directed graph whose vertices are isotopy diagrams
of sutured manifolds in~$\mathcal{S}$, and the
vertices $H$ and $H'$ are connected by an edge if either the $\a$-curves or the $\b$-curves differ by a sequence
of isotopies and handleslides (called an $\a$- or $\b$-equivalence),
or if $H'$ is obtained from $H$ by a stabilization
or a destabilization, and there is an edge for every diffeomorphism $d \colon H \to H'$.
We say that $F$ is a \emph{weak Heegaard invariant} if, for every edge $e$ from $H$ to $H'$ in $\G(\mathcal{S})$,
there is an induced isomorphism
\[
F(e) \colon F(H) \to F(H').
\]
A weak Heegaard invariant then gives rise to an invariant of sutured manifolds in the class~$\mathcal{S}$,
well-defined up to isomorphism.

To assign a concrete algebraic object to each sutured manifold in the class~$\mathcal{S}$, we then define the
notion of a \emph{strong Heegaard invariant}. Such an $F$ has to commute along certain
distinguished loops in $\G(\mathcal{S})$. These loops include rectangles where opposite edges are of the
same type (i.e., they are both $\a$-equivalences, $\b$-equivalences, stabilizations, or diffeomorphisms,
see Definition~\ref{def:distinguished-rect}),
and the aforementioned simple handleswap triangles of Figure~\ref{fig:handleswap} (involving an $\a$-handleslide,
a $\b$-handleslide, and a diffeomorphism). Furthermore, a strong Heegaard invariant has
to satisfy the property that if $e \co H \to H$ is a diffeomorphism isotopic to the identity
of the Heegaard surface, then $F(e) = \text{Id}_{F(H)}$.

Given a strong Heegaard invariant~$F$ and a sutured manifold $(M,\g)$ in the class~$\mathcal{S}$,
we obtain the invariant $F(M,\g)$ as follows. We take the subgraph $\G_{(M,\g)}$
of $\G(\mathcal{S})$ whose vertices are isotopy diagrams \emph{embedded} in $(M,\g)$,
and where we only consider diffeomorphisms that are isotopic to
the identity in $M$. Then our main result is Theorem~\ref{thm:iso}, which states that
given any two paths in $\G_{(M,\g)}$ from $H$ to $H'$,
the composition of $F$ along these paths coincide. The proof of this occupies most of the paper,
and relies on a careful analysis of the bifurcations occurring in generic 2-parameter
families of gradient vector fields on 3-manifolds.
It easily follows that these compositions give a canonical isomorphism $F(H) \to F(H')$, and
we obtain $F(M,\g)$ via Definition~\ref{def:transitive-system}.
We show that the different versions of Heegaard Floer homology are strong Heegaard invariants
in Sections~\ref{sec:HFstrong}, \ref{subsec:Handleswaps}, and~\ref{sec:strong}.

One consequence of naturality is that we can now talk about specific elements of~$\HF(Y)$,
which is necessary for the definition of the contact class. Another consequence is that we can define maps on
Heegaard Floer homology induced by diffeomorphisms and cobordisms.
The paper might also be of interest to 3-manifold topologists, as it sheds more light on
the space of Heegaard splittings and diagrams, potentially telling more about the structure of the
Goeritz group.

\subsection{Outline}
In Section~\ref{sec:invariants}, we study Heegaard invariants.
In Subsection~\ref{sec:sutured-manifolds}, we recall the definition of \emph{sutured manifolds},
and explain how to assign sutured manifolds to based 3-manifolds and links.
This allows us to unify our treatment of naturality by focusing on sutured manifolds.
In Subsection~\ref{sec:sutured-diagrams}, we discuss \emph{abstract} and \emph{embedded Heegaard
diagrams} of sutured manifolds. This distinction plays an important role in our construction
of natural 3-manifold invariants.

In Subsection~\ref{sec:moves-diagrams}, we define
\emph{moves} on abstract sutured diagrams, namely, $\a$-equivalence, $\b$-equivalence, stabilization,
destabilization, and diffeomorphism. We show that any two diagrams defining the
same sutured manifold can be connected by a sequence of such moves.
We define \emph{weak Heegaard invariants} (Definition~\ref{def:weak-Heegaard}), and state that the different versions of Heegaard Floer
homology are weak Heegaard invariants (Theorems~\ref{thm:HF-weak}, \ref{thm:HFL-weak}, and~\ref{thm:SFH-weak}).
They assign an object in a category to every sutured diagram in a given class (e.g., diagrams of based 3-manifolds or link
complements), and a morphism to every move.
A weak Heegaard invariant gives rise to an invariant of based 3-manifolds
(or based links, or sutured manifolds) that is well-defined up to isomorphism.

A key new notion is that of a \emph{strong Heegaard invariant} (Definition~\ref{def:strong-Heegaard}),
which we introduce in Subsection~\ref{sec:strong-invar}. This is a weak Heegaard invariant~$F$ that
has to satisfy four axioms: functoriality (under compositions of moves of the same type),
commutativity (which implies that~$F$ remains the same if we swap the order of two moves),
continuity (isotopic diffeomorphisms induce the same morphism), and \emph{simple handleswap invariance} (this
requires that~$F$ commutes along a certain triangle of moves).

In Subsection~\ref{sec:main-theorems}, we explain how a strong Heegaard invariant~$F$ gives
rise to a natural invariant of a given class of sutured manifolds.
In Theorem~\ref{thm:iso}, which is one of the deepest results of this work,
we show that if we connect two \emph{embedded} diagrams of a sutured manifold $(M,\g)$ by
a sequence of moves such that we only allow diffeomorphisms that are \emph{isotopic to the
identity in~$M$} (Definition~\ref{def:isotopic}), and we compose the morphisms induced by~$F$, then the result does not
depend on the choice of moves. This gives rise to a transitive system (Definition~\ref{def:transitive-system})
that allows us to assign an invariant to every
sutured manifold in the given class that is functorial under diffeomorphisms (Definition~\ref{def:strong-functor}).
We then prove Theorems~\ref{thm:invt-hf}, \ref{thm:invt-hfl}, and \ref{thm:invt-sfh} stated in the introduction.

In Section~\ref{sec:examples}, we present some examples that highlight some of the issues that
arise when trying to define functorial Heegaard Floer invariants.

Most of the remainder of this paper is aimed at proving Theorem~\ref{thm:iso}.
In Section~\ref{sec:smooth}, we give a brief overview of the types of \emph{singularities} that
appear in generic 2-parameter families of smooth functions. A generic smooth function is Morse.
In a generic 1-parameter family, one can have~$A_2$ (birth-death) singularities at isolated parameter values.
In a generic 2-parameter family, $A_3^\pm$ (birth-death-birth) points appear at isolated parameter values.

As a Heegaard diagram arises from a generic gradient vector field, not just a Morse function, we
review the bifurcations of generic 1-parameter and 2-parameter families of \emph{gradient vector fields}
in Section~\ref{sec:gradients}. In Subsection~\ref{sec:invariant-manifold}, we recall some
classical results from the theory of dynamical systems, such as the center manifold
theorem (Theorem~\ref{thm:center-manifold}) and the reduction principle (Theorem~\ref{thm:reduction-principle}).
In Subsection~\ref{sec:bifurcations}, we state that a generic gradient vector field is Morse-Smale.
In a generic 1-parameter family, a non-hyperbolic singular point might appear, or a stable and an
unstable manifold might intersect non-transversely. The classification of bifurcations of generic
2-parameter families of gradients is already very complicated in dimension~3,
and was carried out by Vegter~\cite{Vegter85}. In Subsection~\ref{sec:sutured-functions},
we define the space~$\FV(M,\g)$ of gradient-like vector fields
on the sutured manifold~$(M,\g)$ that we are going to work with. This is weakly contractible
by Corollary~\ref{cor:FV-contractible}.

In Section~\ref{sec:transl-hd}, we convert Morse-Smale vector fields to (overcomplete) Heegaard diagrams,
codimension-1 bifurcations of gradients to Heegaard moves, and codimension-2 bifurcations
to loops of diagrams.

We introduce the notion of \emph{separable gradients} in
Subsection~\ref{sec:separability}. These have at most codimension-2 bifurcations,
and the most important criterion is that there is no gradient flow-line from an index~2
to an index~1 critical point. A surface \emph{separates} such a gradient vector field
essentially if it is transverse to it, and contains all index~0 and~1 critical points on
one side, and all index~2 and~3 critical points on the other side. The space of such
surfaces is contractible, and they divide the sutured
manifold into two sutured compression bodies (Proposition~\ref{prop:grad-like}).
Furthermore, we can continuously deform the separating surface if we deform
the gradient vector field without introducing a birth-death bifurcation (Proposition~\ref{prop:fibration}).

In Subsection~\ref{sec:codimension-0}, we show how to obtain an \emph{overcomplete} diagram
(Definition~\ref{def:overcomplete}) from a codimension-0 gradient vector field
by intersecting the separating surface~$\S$ with the unstable manifolds of index-1 critical points
and the stable manifolds of index-2 critical points. The diagram is overcomplete because
the attaching curves might not be linearly independent in~$H_1(\S)$.
We assign a certain graph to a separable gradient, and we obtain a sutured diagram
once we choose a spanning forest for this graph by
deleting attaching curves corresponding to edges that lie outside the spanning forest (Definition~\ref{def:diagram}).
In the opposite direction, given a diagram of a sutured manifold such that the $\a$- and
$\b$-curves are transverse, the space of \emph{simple}
Morse-Smale gradients (Definition~\ref{def:simple}) that induce it is non-empty and connected
(Propositions~\ref{prop:existence} and~\ref{prop:connected-MS}).

In Subsection~\ref{sec:transl-codim-1}, we translate codimension-1 bifurcations of gradients
to moves on overcomplete diagrams. We show that a 1-parameter family of
Morse-Smale gradients induces a diffeomorphism of the diagram isotopic to the identity in~$M$ (Lemma~\ref{lem:isotopy}).
In Proposition~\ref{prop:1-param}, we show that birth-death bifurcations correspond to
\emph{generalized (de)stabilizations} (Definition~\ref{def:gen-stab})
or the appearance of a null-homotopic $\a$- or $\b$-curve,
and a non-transverse intersection of stable and unstable manifolds to a generalized
handleslide (Definition~\ref{def:gen-handleslide}) or a tangency between an $\a$- and a $\b$-curve.
In Proposition~\ref{prop:rectangle}, we show that, given a generic 2-parameter family of gradients
and a rectangle in the parameter space that intersects a codimension-1 stratum in two points on opposite sides,
the diagrams in the corners form a distinguished rectangle appearing in the Commutativity Axiom
of strong Heegaard invariants.

In Subsection~\ref{sec:codimension-1-actual}, we show how to choose spanning trees
appropriately in Propositions~\ref{prop:1-param}
and~\ref{prop:2-param-cod-1} to pass from overcomplete to actual
Heegaard diagrams, without altering the relationship of the diagrams
before and after the bifurcation in an essential way.
In Subsection~\ref{sec:codim-1:hd-to-func}, we show that, given a move on sutured diagrams,
it can be converted to a path of gradients (Proposition~\ref{prop:lift-moves}).

In Subsection~\ref{sec:transl-codim-2}, we translate codimension-2 bifurcations to loops of diagrams.
Given a codimension-2 bifurcation of gradients at a parameter value~$p$, the co\-di\-men\-sion-1 strata divide a neighborhood
of~$p$ in the parameter space into chambers. Taking a point in each chamber, we obtain a loop of diagrams
such that neighboring diagrams are related by generalized Heegaard moves. We describe
these loops in Theorem~\ref{thm:2-param}. There are many more loops than what appear
in the definition of strong Heegaard invariants.

In Section~\ref{sec:simplify}, we break down generalized stabilizations, generalized
handleslides, and the loops of diagrams
appearing in Theorem~\ref{thm:2-param} into
the simpler moves and loops that feature in the definition of strong
Heegaard invariants. During the
simplification procedure, we work with overcomplete diagrams, and will
only later choose spanning trees to pass to actual sutured diagrams.
We pass from 2-parameter families of gradients to certain combinatorial
structures, namely, \emph{polyhedral decompositions} of the parameter space~$D^2$
(Definition~\ref{def:polyhedral}). These are certain regular CW decompositions
of~$D^2$, and we label the vertices with (overcomplete) diagrams. Neighboring
diagrams are related by generalized moves. From now on, we only modify
the polyhedral decomposition, and forget about the family of gradients.

In Subsection~\ref{sec:simplify-codim-1}, we show how to replace
a generalized stabilization with a simple stabilization, a number of $\a$-handleslides,
and a number of $\b$-handleslides. There are several choices one can make during this
resolution process. If we use different choices at the two ends of a codimension-1 generalized
stabilization stratum, then we can interpolate between them (Lemma~\ref{lem:ori-change}).
We can also replace a generalized handleslide by a simple handleslide and an isotopy.
In Subsection~\ref{sec:simplify-codim-2}, we apply this resolution process
to the loops described in Theorem~\ref{sec:transl-codim-2}. We then break them
down into loops of diagrams appearing in the definition of strong Heegaard invariants
and $(k,l)$-handleswaps (Definition~\ref{def:handleswap}). We write $(k,l)$
handleswaps in terms of simple handleswaps in Subsection~\ref{sec:simpl-handleswap}.
The above procedure can be thought of as finding a simple generating set for
the space of Heegaard diagrams of a given sutured manifold.

We prove Theorem~\ref{thm:iso} in Section~\ref{sec:proof}.
The idea is the following: Given a loop of diagrams, we lift them to a loop of gradients
such that the bifurcations induce the given moves. We then extend this loop from~$S^1$
to a generic 2-parameter family of gradients over~$D^2$. We choose a polyhedral decomposition
of~$D^2$ compatible with the bifurcation stratification given by this family.
Each vertex is labeled by a diagram. We then apply the resolution procedure
to obtain the simple loops appearing in the definition of strong Heegaard invariants.
Finally, a simple combinatorial argument gives that, since a strong Heegaard invariant
commutes along the simple loops, it also commutes along~$S^1$.

In Section~\ref{sec:HeegaardFloer}, we prove Theorem~\ref{thm:strong},
which states that Heegaard Floer homology~$\HF$ is a strong Heegaard invariant.
We define the isomorphisms associated to Heegaard moves in Subsection~\ref{sec:HeegaarFloerInvariant},
where we reprove that $\HF$ is a weak Heegaard invariant. For this, we only use
triangle maps that are more computable than continuation maps.
We verify that $\HF$ is a strong Heegaard invariant in Subsection~\ref{sec:HFstrong}.
We prove the Continuity Axiom in Proposition~\ref{prop:continuity},
and simple handleswap invariance in Subsection~\ref{subsec:Handleswaps}.
We put the above pieces together in Subsection~\ref{sec:strong} to prove Theorem~\ref{thm:strong}.

Finally, in Appendix~\ref{app:handleslide}, we sketch a description of strong Heegaard invariants
for classical (i.e., not sutured) single pointed Heegaard diagrams
that is equivalent to Definition~\ref{def:strong-Heegaard}, and
instead of $\a$-equivalences and $\b$-equivalences, uses more
elementary moves: $\a$-isotopies, $\b$-isotopies, $\a$-handleslides,
and $\b$-handleslides.

\subsection{Acknowledgements}

We are extremely grateful to Peter Ozsv\'ath for numerous helpful conversations.
We would also like to thank Valentin
Afraimovich, 
Ryan Budney, 
Boris Hasselblatt, 
Matthew Hedden, Michael Hutchings, Martin Hyland, 
Jesse Johnson, Robert
Lipshitz, 
Saul Schleimer,  
Zolt\'an Szab\'o, and the referee for their guidance and suggestions.

This project would not have been possible without the
hospitality of the Tambara Institute of Mathematical Sciences,
the Mathematical Sciences Research Institute, and the Isaac Newton Institute.
Most of the work was carried out while the first author was at the University of Cambridge and
the second author was at Barnard College, Columbia University.
\section{Heegaard invariants}
\label{sec:invariants}

\subsection{Sutured manifolds}
\label{sec:sutured-manifolds}
Sutured manifolds were introduced by Gabai~\cite{Gabai}. The following definition
is slightly less general, in that it excludes toroidal sutures.

\begin{definition}\label{def:sutured}
A \emph{sutured manifold} $(M,\g)$ is a compact oriented
3-manifold $M$ with boundary, together with a set $\g \subset
\partial M$ of pairwise disjoint annuli. Furthermore, the interior of each component of
$\g$ contains a \emph{suture}; i.e., a homologically
nontrivial oriented simple closed curve. We denote the union of the
sutures by $s(\g)$.
In addition, every component of $R(\g) = \partial M \setminus
\Int(\g)$ is oriented. Define $R_+(\g)$ (respectively
$R_-(\g)$) to be those components of $\partial M \setminus
\Int(\g)$ whose orientations agree (respectively disagree) with the
orientation of $\bdy M$, or equivalently, whose
normal vectors point out of (respectively into) $M$.
The orientation on $R(\g)$ must be coherent with respect to~$s(\g)$;
i.e., if $\delta$ is a component of $\partial
R(\g)$ and is given the boundary orientation, then $\delta$ must
represent the same homology class in $H_1(\g)$ as some suture.%

A sutured manifold $(M,\g)$ is called \emph{proper} if
the map $\pi_0(\g) \to \pi_0(\partial M)$ is surjective and
$M$ has no closed components (i.e., the map $\pi_0(\partial M) \to
  \pi_0(M)$ is surjective).%
\end{definition}

Note that, for a proper sutured manifold $(M,\g)$, the orientations of~$s(\g)$ and~$M$
completely determine the orientation on $R(\gamma)$, and hence the
decomposition $R(\g) = R_+(\g) \cup R_-(\g)$.

\begin{convention}
  In this paper, we will assume that all sutured manifolds are proper,
  in addition to not having any toroidal sutures.
\end{convention}

To see the connection between sutured manifolds and closed
3-manifolds, observe that, if $(M,\g)$ is a sutured manifold such that
$\bdy M$ is a sphere with a single suture (dividing $\bdy M$ into two
disks), then the quotient of $M$ where $\bdy M$ is
identified with a point is a closed 3-manifold with a distinguished
basepoint given by the equivalence class of $\partial M$.  For the other direction,
we introduce the following definitions.

\begin{definition}\label{def:blowup}
  Suppose that $M$ is a smooth manifold, and let $L \subset M$ be a properly embedded
  submanifold. For each point $p \in L$, let
  $N_p L = T_p M/T_p L$ be the fiber of the
  normal bundle of~$L$ over $p$, and let $U N_p L =
  (N_p L \setminus\{0\})/\RR_+$ be the fiber of the unit normal bundle of $L$ over $p$.
  Then the \emph{(spherical) blowup} of~$M$ along~$L$, denoted by $\Bl_L(M)$, is
  a manifold with boundary obtained from~$M$ by replacing each
  point $p\in L$ by $U N_p(L)$.  There is a natural projection
  $\Bl_L(M) \to M$.  For further details, see Arone and Kankaanrinta~\cite{AK10:FunctorialityBlowUp}.%
\end{definition}

For instance, if $L \subset M$ is a submanifold of codimension~$1$,
then $\Bl_L(M)$ is the usual operation of cutting $M$ open along~$L$.

\begin{definition}\label{def:Y-p}
  Let $Y$ be a closed, connected, oriented 3-manifold, together with a
  basepoint~$p$ and an oriented tangent 2-plane~$V < T_pM$.
  Then $Y(p,V) = (M,\g)$ is the sutured manifold with $M = \Bl_p(Y)$ and
  suture $s(\g) = (V \setminus \{0\})/\RR_+$ in the resulting $S^2$ boundary
  component of $M$. See the left-hand side of Figure~\ref{fig:blowup}.
  We orient $s(\g)$ such that if we lift it to $V$, then the lift
  goes around the origin in the positive direction.
\end{definition}

\begin{figure}
\centering
\includegraphics{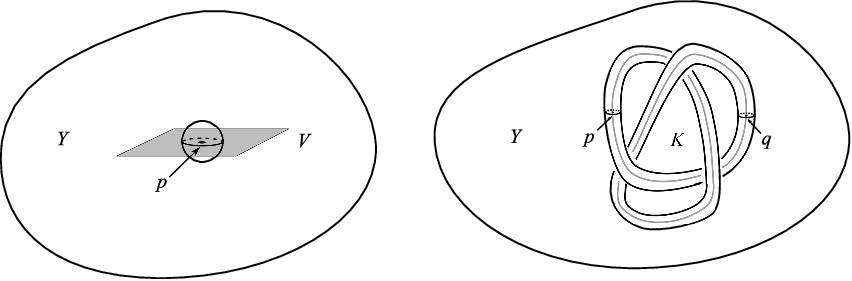}
\caption{The sutured manifolds $Y(p,V)$ and $Y(K,p,q)$.}
\label{fig:blowup}
\end{figure}

There is a similar construction for links, as well.

\begin{definition}\label{def:Y-K}
  Let $(Y,K,p,q)$ be an oriented knot with two basepoints.
  Then $Y(K,p,q) = (M,\g)$ is the sutured manifold
  with $M = \Bl_K(Y)$ and $s(\g) = UN_pK \cup UN_qK$, sitting inside the torus $\partial M$,
  as on the right-hand side of Figure~\ref{fig:blowup}.
  The orientation of $K$ induces an orientation of $NK$. We orient
  $UN_pK$ coherently with $N_pK$, while $UN_qK$ is oriented incoherently with $N_qK$.

  Similarly, if $(Y,L,\mathbf{p}, \mathbf{q})$ is a based oriented link with exactly one $p$ and one $q$
  basepoint on each component of $L$, then we define
  $Y(L,\mathbf{p}, \mathbf{q})$ to be the sutured manifold $(M,\g)$ with $M = \Bl_L(Y)$ and sutures
  obtained for each component of $L$ as above.
\end{definition}

\subsection{Sutured diagrams}
\label{sec:sutured-diagrams}
With these examples in mind, we turn to definitions for sutured
Heegaard diagrams. Since in this paper we need to be careful about
naturality of the constructions, we are careful in our definitions,
distinguishing, for instance, between attaching sets and isotopy
classes of attaching sets.

\begin{definition}
Let $\S$ be a compact oriented surface with boundary. An
\emph{attaching set in $\S$} is a one-dimensional smooth submanifold
$\deltas \subset \text{Int}(\S)$ such that each component of $\S \setminus \deltas$ contains at least one component
of $\partial \S$. We will denote the isotopy class of~$\deltas$ by~$[\deltas]$.
\end{definition}

\begin{definition} \label{def:scb}
The sutured manifold $(M,\g)$ is a \emph{sutured compression body} if either there is an attaching
set $\deltas \subset R_{+}(\g)$ such that if we compress
$R_{+}(\g)$ inside~$M$ along all the components of $\deltas$, we get
a surface that is isotopic to $R_{-}(\g)$ relative to $\g$,
or there is an attaching
set $\deltas \subset R_{-}(\g)$ such that if we compress
$R_{-}(\g)$ inside~$M$ along all the components of $\deltas$, we get
a surface that is isotopic to $R_{+}(\g)$ relative to $\g$. We call
$\deltas$ an \emph{attaching set} for $(M,\g)$.
\end{definition}

\begin{definition}
Given an attaching set $\deltas$ in $\S$, let $C(\deltas) = (M,\g)$ be the
sutured compression body obtained by taking $M$ to be $\S \times [0,1]$ and
attaching 3-dimensional 2-handles along $\deltas \times \{1\}$, while
$\g = \partial \S \times [0,1]$.
In addition, let $C_-(\deltas) = R_-(M,\g) = \S \times \{0\}$
and
\[
C_+(\deltas) = R_+(M,\g) = \partial C(\deltas) \setminus \text{Int} \left(C_-(\deltas) \cup  \g \right).
\]

If~$\deltas$ and~$\deltas'$ are both attaching sets in~$\S$, then we say they are
\emph{compression equivalent}, and we write
$\deltas \sim \deltas'$, if there is a diffeomorphism $d \colon C(\deltas) \to C(\deltas')$
such that $d|_{C_-(\deltas)}$ is the identity. This is an equivalence relation that
descends to isotopy classes of attaching sets.
So we will write $[\deltas] \sim [\deltas']$ if $\deltas \sim \deltas'$.
\end{definition}

Observe that $\chi(C_+(\deltas)) = \chi(C_-(\deltas)) + 2|\deltas|$. So $\deltas \sim \deltas'$ implies that
$|\deltas| = |\deltas'|$.

\begin{lemma} \label{lem:pi2}
Let $\deltas \subset \S$ be an attaching set in a compact oriented surface with boundary,
and let $C(\deltas) = (M,\g)$ be the corresponding sutured compression body. Then $\pi_2(M) = 0$.
\end{lemma}

\begin{proof}
Consider the Mayer-Vietoris sequence for the pair~$(\S \times I, H)$, where~$H$ is the union of the handles
attached to~$\S \times \{1\}$ along~$\deltas \times \{1\}$:
\[
0 = H_2(\S \times I) \oplus H_2(H) \to H_2(M) \to H_1((\S \times I) \cap H) \stackrel{i}{\to} H_1(\S \times I) \oplus H_1(H).
\]
Of course, $H_i(H) = 0$ for~$i \in \{1,2\}$, and $H_2(\S \times I) =
0$ as~$\S$ has no closed components.
Since $\deltas$ is an attaching set, the map $\pi_0(\partial \S) \to
\pi_0(\S \setminus \deltas)$ is surjective, so the components of~$\deltas$ are
linearly independent in~$H_1(\Sigma)$ and so the map~$i$ is injective.
It follows that~$H_2(M) = 0$. In particular, every smoothly embedded $2$-sphere~$S$ in~$M$ is null-homologous;
i.e., there is a compact submanifold-with-boundary~$N$ of~$M$ such that~$\partial N = S$.
If we attach $2$-handles to~$M$ along the components of~$\g$,
we obtain a compression body, which embeds into a handlebody, and hence also into~$\RR^3$.
In~$\RR^3$, the sphere~$S$ bounds a ball, hence~$N$ is diffeomorphic to~$D^3$, and~$S$ is
null-homotopic.
\end{proof}

\begin{definition}
  \label{def:Handleslide}
  Let $\delta_1$ and $\delta_2$ be two disjoint simple closed curves in $\S$, and fix an
  embedded arc~$a$ from $\delta_1$ to~$\delta_2$ whose interior is
  disjoint from $\delta_1 \cup \delta_2$ and from~$\partial \S$. Then
  a regular neighborhood of the graph $\delta_1\cup a \cup \delta_2$
  is a planar surface with three boundary components: one is isotopic
  to $\delta_1$, the other is isotopic to $\delta_2$, and the third is
  a new curve $\delta_1'$ that we call the curve obtained by
  \emph{handlesliding $\delta_1$ over $\delta_2$ along the arc $a$}, see Figure~\ref{fig:handleslide}.

  \begin{figure}
\centering
\includegraphics{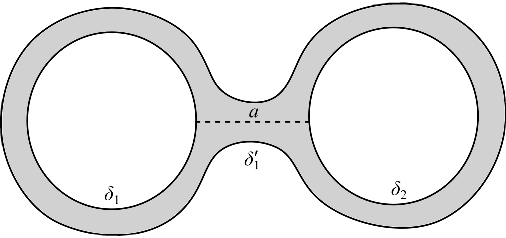}
\caption{Handlesliding the curve $\delta_1$ over $\delta_2$ along the arc $a$ gives $\delta_1'$.}
\label{fig:handleslide}
\end{figure}

  Suppose $\deltas$ and $\deltas'$ are two systems of attaching
  circles. We say that $\deltas$ and $\deltas'$ are \emph{related by a
  handleslide} if there are components $\delta_1$ and $\delta_2$ of $\deltas$ and
  a component $\delta_1'$ of $\deltas'$ such that $\delta_1'$ can be obtained by handle-sliding
  $\delta_1$ over $\delta_2$ along some arc whose interior is disjoint from~$\deltas$, and $\deltas' = (\deltas
  \setminus \delta_1) \cup \delta_1'$.
  If $D$ and $D'$ are isotopy classes of attaching sets, then they are related by a handleslide
  if they have representatives $\deltas$ and $\deltas'$, respectively, such that $\deltas$
  and $\deltas'$ are related by a handleslide.
\end{definition}

\begin{lemma} \label{lem:handleslide}
If $\deltas$ and $\deltas'$ are related by a handleslide, then
$\deltas \sim \deltas'$. Conversely, if $\deltas \sim \deltas'$, then
$[\deltas]$ and $[\deltas']$ are related by a sequence of handleslides.
\end{lemma}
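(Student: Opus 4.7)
The plan is to prove the two implications separately: the forward direction is a direct geometric construction, while the backward direction reduces to a classical uniqueness theorem for systems of compressing curves in a compression body.

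For the forward direction, I would realize the handleslide as a local modification of the handle decomposition of $C(\deltas)$. Let $P \subset \Sigma$ be a regular neighborhood of the graph $\delta_1 \cup a \cup \delta_2$; by construction $P$ is a pair of pants whose three boundary components are isotopic to $\delta_1$, $\delta_2$, and $\delta_1'$. Write $h_1$ and $h_2$ for the 2-handles of $C(\deltas)$ attached along $\delta_1$ and $\delta_2$, and consider the region
\[
U := (P \times [1-\eps, 1]) \cup h_1 \cup h_2 \subset C(\deltas).
\]
It deformation retracts onto the disk obtained by capping $P \times \{1\}$ along $\delta_1, \delta_2$ with the cores of $h_1$ and $h_2$; this disk has boundary $\delta_1' \times \{1-\eps\}$, so $U$ is a 3-ball. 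The same 3-ball admits a second handle decomposition $(P \times [1-\eps,1]) \cup h_1' \cup h_2$, where $h_1'$ is a 2-handle attached along $\delta_1'$ (with $h_2$ unchanged). Re-gluing $C(\deltas)$ using this alternative decomposition on $U$ produces a diffeomorphism onto $C(\deltas')$ supported in $U$, and hence restricting to the identity on $C_-(\deltas) = \Sigma \times \{0\}$. This yields $\deltas \sim \deltas'$.

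For the backward direction, let $d \co C(\deltas) \to C(\deltas')$ be a diffeomorphism with $d|_{C_-} = \mathrm{id}$. Each curve $\delta_i \in \deltas$, viewed on $C_-(\deltas) \cong \Sigma$, bounds a properly embedded disk $D_i \subset C(\deltas)$ obtained by extending the vertical cylinder $\delta_i \times [0,1]$ by the core of $h_i$; similarly $\delta_j'$ bounds $D_j' \subset C(\deltas')$. Since $d$ fixes $C_-$ pointwise, the pulled-back disks $d^{-1}(D_j')$ are properly embedded in $C(\deltas)$ with boundaries $\delta_j'$ on $C_-(\deltas) \cong \Sigma$. Thus $\deltas$ and $\deltas'$, viewed as curves on $\Sigma$, are two complete systems of compressing curves for the \emph{same} compression body $C(\deltas)$.

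The lemma now reduces to the classical uniqueness theorem: any two complete systems of compressing curves on $C_-$ of a compression body are related by a finite sequence of isotopies and handleslides. I would prove this via Cerf theory on the space of self-indexing Morse functions $f \co C(\deltas) \to [0,1]$ with $f^{-1}(0) = C_-$ and all critical points of index~$2$; a generic one-parameter family between two such functions encounters only handleslide bifurcations, since the index constraint rules out births and deaths of index-$2$ critical points. The main obstacle is executing this Cerf-theoretic analysis carefully in the sutured setting (so that the critical points and their descending manifolds remain disjoint from the vertical boundary $\g$), but this parallels the Reidemeister--Singer theorem for closed handlebodies and is by now standard.
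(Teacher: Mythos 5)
Your forward direction is a correct and explicit version of what the paper dismisses as ``immediate'': exhibiting the ball~$U$ containing both $2$-handles and a collar of the pair of pants $P$, and regluing the alternative handle decomposition of $U$.

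Your reduction in the backward direction is also correct: pulling back the compressing disks of $\deltas'$ along $d$ shows that $\deltas$ and $\deltas'$ are two complete systems of compressing curves on $C_-$ for the same compression body $C(\deltas)$. From there you take a genuinely different route from the paper. The paper cites Bonahon~\cite[Proposition~B.1]{Bonahon83:CobordAutSurface}, whose argument is a combinatorial innermost-disk one: isotope the two systems of compressing disks to intersect transversely, find an innermost intersection circle, perform a disk swap, and show that each swap realizes a sequence of handleslides while decreasing the total number of intersection circles. This avoids parametrized Morse theory entirely.

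Your Cerf-theoretic substitute has a genuine gap. You assert that ``a generic one-parameter family between two such functions encounters only handleslide bifurcations, since the index constraint rules out births and deaths of index-$2$ critical points.'' But the index constraint is a condition on the two \emph{endpoints} of the path, not on its interior; a generic path in the space of all sutured Morse functions will in general pass through functions with index-$1$ and index-$3$ critical points created by $(1,2)$ and $(2,3)$ birth-death bifurcations. To run your argument you would need the space of Morse functions on $C(\deltas)$ with only index-$2$ critical points (and the prescribed boundary behavior) to be path-connected, which is essentially the statement you are trying to prove, not a transversality fact. Filling the gap requires an explicit cancellation argument that such birth-death pairs can be pushed out of the family at the cost of introducing only handleslides. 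Compare Proposition~\ref{prop:1-param} in this paper, where index~$1$-$2$ birth-deaths in a generic family are shown to correspond to \emph{stabilizations} of the diagram, not handleslides; that is precisely the move you must show can be avoided in a compression body, and this is not automatic. The hard point in your approach is this cancellation, not the adaptation to the sutured setting, which (as you correctly note) is routine.
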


\begin{proof}
  The first part is immediate.  For the second part,
  the proof of Bonahon~\cite[Proposition~B.1]{Bonahon83:CobordAutSurface}
  for ordinary compression bodies can be adapted to this context.
\end{proof}

\begin{definition}
A \emph{sutured diagram} is a triple $(\S, \alphas, \betas)$, where $\S$ is a compact oriented surface with boundary,
and $\alphas$ and $\betas$ are two attaching sets in~$\S$.
An \emph{isotopy diagram} is a triple $(\S, [\alphas], [\betas])$, where $(\S,\alphas,\betas)$ is a sutured diagram.
\end{definition}

\begin{definition}
Let $(M,\g)$ be a sutured manifold. Then we say that $(\S, \alphas,
\betas)$ is an \emph{(embedded) diagram of $(M,\g)$}
if
\begin{enumerate}
\item $\S \subset M$ is an oriented surface with $\partial \S =
  s(\g)$ as oriented 1-manifolds,%
\item the components of $\alphas$ bound disjoint disks to the negative side of $\S$, and the components of $\betas$
bound disjoint disks to the positive side of $\S$,
\item if we compress $\S$ along $\alphas$, we get a surface isotopic to
  $R_-(\g)$ relative to $\g$,
\item if we compress $\S$ along $\betas$, we get a surface isotopic to $R_+(\g)$ relative to $\g$.
\end{enumerate}
In other words, $\S$ cuts $(M,\g)$ into two sutured compression bodies, with attaching sets
$\alphas$ and $\betas$, respectively (see Definition~\ref{def:scb}).

Note that if $[\alphas'] = [\alphas]$ and $[\betas'] = [\betas]$, then $(\S, \alphas',\betas')$
is also a sutured diagram of $(M,\g)$. So we say that $(\S,A,B)$ is an
\emph{isotopy diagram of $(M,\g)$} if there is a sutured diagram $(\S,\alphas,\betas)$
of $(M,\g)$ such that $A = [\alphas]$ and $B = [\betas]$.
\end{definition}

\begin{lemma} \label{lem:existence}
Let $(M,\g)$ be a sutured manifold. Then there is a diagram of $(M,\g)$.
\end{lemma}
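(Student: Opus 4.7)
The plan is to construct a diagram via Morse theory, using a Morse function on $M$ adapted to the sutured structure whose critical points of index $1$ and $2$ furnish the $\alphas$ and $\betas$ attaching sets, respectively. First I would fix a parametrization of the annuli $\g \cong s(\g) \times [0,3]$ with $s(\g) \times \{0\} \subset \partial R_-(\g)$ and $s(\g) \times \{3\} \subset \partial R_+(\g)$, and pick a collar neighborhood of $\partial M$ on which a preliminary function~$f_0$ equals $0$ on $R_-(\g)$, equals $3$ on $R_+(\g)$, and equals the $[0,3]$-coordinate on $\g$. Extending $f_0$ generically to a Morse function $f \co M \to [0,3]$ with gradient tangent to $\g$ gives a function whose level sets $f^{-1}(0) = R_-(\g)$ and $f^{-1}(3) = R_+(\g)$ are the correct horizontal boundaries, and whose critical points all lie in $\Int(M)$.

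Next, I would remove all interior critical points of index~$0$ and~$3$ using the standard cancellation lemma of Morse theory. This is the place where properness enters: because $M$ has no closed components, every connected component of $M$ contains a point of $\partial M$, and the coherence of orientations together with $\pi_0(\g)\to\pi_0(\partial M)$ being surjective ensures that every component meets either $R_-(\g)$ or~$R_+(\g)$ nontrivially (if it met only $\g$ one could argue directly, or insert a canceling pair on the collar). A downward gradient trajectory therefore connects each index~$0$ critical point through an index~$1$ critical point to~$R_-(\g)$, and by a standard handle slide / cancellation argument we can iteratively remove all index~$0$ critical points; dually for index~$3$. The hard part will be handling the degenerate cases where $R_\pm(\g)$ is empty on a component (e.g., when $\partial M$ of that component is entirely $\g$); here one first inserts a trivially canceling index~$0$/$1$ (or $2$/$3$) pair in the collar to guarantee a sink for the gradient flow, and then cancels as above.

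After rearranging so that all remaining index~$1$ critical points lie in $f^{-1}[0,\tfrac{3}{2})$ and all index~$2$ critical points lie in $f^{-1}(\tfrac{3}{2},3]$, I would set $\S = f^{-1}(\tfrac{3}{2})$, which by construction has $\partial \S = s(\g) \times \{\tfrac{3}{2}\}$, identified with $s(\g)$. Define $\alphas \subset \S$ to be the intersections of $\S$ with the ascending manifolds of the index~$1$ critical points, and $\betas \subset \S$ to be the intersections with the descending manifolds of the index~$2$ critical points; each component is a simple closed curve, and each bounds an embedded disk on the appropriate side (built from the half-ascending manifold joined to~$R_-(\g)$, respectively its dual). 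Finally I would verify conditions~(1)--(4) of the definition: (1) and (2) are immediate from the construction; (3) and (4) follow because compressing $\S$ along $\alphas$ produces the level set just above all the index~$1$ handles, which is isotopic rel~$\g$ via the gradient flow to~$R_-(\g) = f^{-1}(0)$, and symmetrically for $\betas$ and~$R_+(\g)$.
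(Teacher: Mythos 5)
Your proof is correct, and it essentially reconstitutes the standard Morse-theoretic argument that the paper delegates to Juh\'asz's Proposition~2.13 of~\cite{Juhasz06:Sutured}: fix a sutured function agreeing with the prescribed collar data near $\partial M$, cancel all interior index~$0$ and index~$3$ critical points, rearrange critical values so that index~$1$ points lie below index~$2$ points, and take the middle level set together with the traces of the relevant (un)stable manifolds. One small remark: under the conventions of this paper (proper sutured manifold, no toroidal sutures, and $\pi_0(\g)\to\pi_0(\partial M)$ surjective), every boundary component of~$M$ necessarily meets both $R_+(\g)$ and $R_-(\g)$, since each annular component of~$\g$ has one boundary circle in each. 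Consequently the ``degenerate case'' you flag — a component whose boundary is entirely~$\g$ — cannot occur here, and the contingency plan of inserting a canceling $0/1$ or $2/3$ pair in the collar is unnecessary (though it would be needed if toroidal sutures were permitted). There is also a minor wording slip at the end: compressing $\S=f^{-1}(\tfrac{3}{2})$ along~$\alphas$ yields a surface isotopic to a level set just \emph{below} all the index-$1$ critical values (hence isotopic rel~$\g$ to~$R_-(\g)$), not ``just above''; the conclusion you draw is nevertheless the right one.
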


\begin{proof}
The proof of Juh\'asz \cite[Proposition 2.13]{Juhasz06:Sutured}
provides a sutured Heegaard diagram $(\S,\alphas,\betas)$ such
that $\S \subset M$.
\end{proof}

\begin{definition}
Let $(M,\g)$ be a sutured manifold. We say that the oriented surface $\S \subset M$ is a \emph{Heegaard surface of $(M,\g)$}
if $\partial \S = s(\g)$ and $\S$ divides $(M,\g)$ into two sutured compression bodies.
\end{definition}

\begin{definition}
A sutured diagram $(\S,\alphas,\betas)$ \emph{defines} a sutured manifold $(M,\g)$ as follows. To obtain $M$, take
$\S \times [-1,1]$ and attach 3-dimensional 2-handles to $\S \times \{-1\}$ along $\alphas \times \{-1\}$ and to
$\S \times \{1\}$ along $\betas \times \{1\}$. The annuli are taken to
be $\g = \partial \S \times [-1,1]$, with the sutures $s(\g) = \S \times
\{0\}$.
Then $(M,\g)$ is well-defined up to diffeomorphism relative to~$\S$.
(Note that, if we think of
$\Sigma$ as the middle level $\S \times \{0\} \subset M$, then
$(\S,\alphas,\betas)$ is a sutured diagram of~$M$.)

If $\alphas'$ is isotopic to $\alphas$ and $\betas'$ is isotopic to $\betas$, then the sutured manifold $(M',\g')$
defined by $(\S,\alphas',\betas')$ is diffeomorphic (relative to $\S$)
to the sutured manifold $(M,\g)$ defined by $(\S,\alphas,\betas)$.
So an isotopy diagram $H$ defines a diffeomorphism type of
sutured manifolds that we will denote by $S(H)$.
\end{definition}

\subsection{Moves on diagrams and weak Heegaard invariants}
\label{sec:moves-diagrams}

\begin{definition}
We say that the isotopy diagrams $(\S_1, A_1, B_1)$ and $(\S_2, A_2, B_2)$ are \emph{$\alpha$-equivalent}
if $\S_1 = \S_2$ and $B_1 = B_2$, while $A_1 \sim A_2$. Similarly, they are \emph{$\beta$-equivalent}
if $\S_1 = \S_2$ and $A_1 = A_2$, while $B_1 \sim B_2$.
\end{definition}

\begin{definition} \label{def:stab}
The sutured diagram $(\S_2, \alphas_2, \betas_2)$ is obtained from $(\S_1, \alphas_1, \betas_1)$ by a \emph{stabilization}
if
\begin{itemize}
\item there is a disk $D \subset \S_1$ and a punctured torus $T \subset \S_2$ such that $\S_1 \setminus D = \S_2 \setminus T$,
\item $\alphas_1  = \alphas_2 \cap (\S_2 \setminus T)$,
\item $\betas_1  = \betas_2 \cap (\S_2 \setminus T)$,
\item $\alphas_2 \cap T$ and $\betas_2 \cap T$ are simple closed curves that intersect each other transversely in a single point.
\end{itemize}
In this case, we also say that $(\S_1, \alphas_1, \betas_1)$ is obtained from $(\S_2, \alphas_2, \betas_2)$ by a \emph{destabilization}.
For an illustration, see Figure~\ref{fig:stabilization}.%

\begin{figure}
\centering
\includegraphics{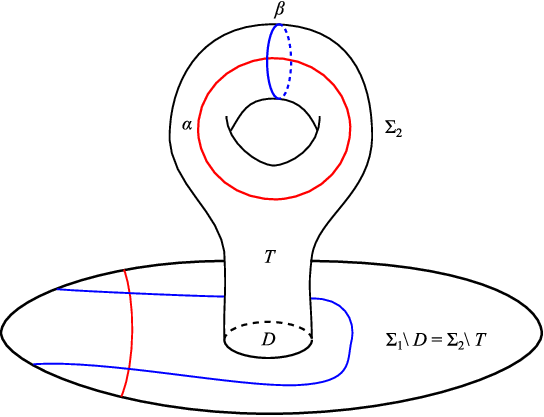}
\caption{The diagram $(\S_2,\alphas_2,\betas_2)$ is obtained from $(\S_1,\alphas_1,\betas_1)$ by a stabilization.}
\label{fig:stabilization}
\end{figure}

Let $H_1$ and $H_2$ be isotopy diagrams. Then $H_2$ is obtained from $H_1$ by a \emph{(de)sta\-bi\-li\-za\-tion} if
they have representatives $(\S_2, \alphas_2, \betas_2)$ and $(\S_1, \alphas_1, \betas_1)$, respectively, such that $(\S_2, \alphas_2, \betas_2)$ is obtained from $(\S_1, \alphas_1, \betas_1)$ by a (de)stabilization.
\end{definition}

If $d \colon \S \to \S'$ is a diffeomorphism of surfaces and $C$ is an isotopy class of attaching sets in $\S$,
then $d(C)$ is defined as $[d(\gammas)]$, where $\gammas$ is an arbitrary attaching set representing $C$.

\begin{definition}
Given isotopy diagrams $H_1 = (\S_1, A_1, B_1)$ and $H_2 = (\S_2, A_2, B_2)$, a diffeomorphism $d \colon H_1 \to H_2$
is an orientation preserving diffeomorphism $d \colon \S_1 \to \S_2$ such that $d(A_1) = A_2$ and $d(B_1) = B_2$.
\end{definition}

We now recall the notion of a graph, from a rather categorical viewpoint.

\begin{definition}
A \emph{graph} $G$ consists of
\begin{enumerate}
\item a class $|G|$ whose elements are called the objects (or
  vertices) of the graph,
\item for each pair $(A,B) \in |G| \times |G|$, a set $G(A,B)$ whose elements are called the morphisms (or arrows)
from $A$ to $B$.
\end{enumerate}
\end{definition}

\begin{definition}
A \emph{morphism of graphs} $F \colon G \to K$ between two graphs $G$ and $K$ consists of
\begin{enumerate}
\item a map $F \colon |G| \to |K|$,
\item for each pair $(A,B) \in |G| \times |G|$ of objects, a map
\[
F \colon G(A,B) \to K(F(A),F(B)).
\]
\end{enumerate}
\end{definition}

Notice that every category is a graph, and every functor between categories is a morphism of graphs.

\begin{definition} \label{def:big-graph}
Let $\G$ be the graph
whose class of vertices $|\G|$ consists of isotopy diagrams and, for
$H_1$,$H_2 \in |\G|$, the edges
$\G(H_1,H_2)$ can be written as a union of four sets
\[
\G(H_1,H_2) = \G_\a(H_1,H_2)
\cup \G_\b(H_1,H_2) \cup \G_\s(H_1, H_2) \cup \G_\d(H_1,H_2).
\]
The set $\G_\a(H_1,H_2)$
consists of a single arrow if $H_1$ and $H_2$ are $\a$-equivalent and
is empty otherwise. The set $\G_\b(H_1,H_2)$ is defined similarly using
$\b$-equivalence. The set $\G_\s(H_1,H_2)$ consists of a single arrow
if $H_2$ is obtained from $H_1$ by a stabilization or a
destabilization and is empty otherwise.
Finally, $\G_\d(H_1,H_2)$ consists of all diffeomorphisms from $H_1$ to $H_2$.
The graph $\G$ is thus the union of four subgraphs, namely $\G_\a$, $\G_\b$,
$\G_\s$, and $\G_\d$.%
\end{definition}

Note that the graphs $\G_\a$, $\G_\b$, and $\G_\d$ are in fact categories
when endowed with the obvious compositions. We have a version of the Reidemeister-Singer theorem.

\begin{proposition}\label{prop:diag-connected-weak}
The isotopy diagrams $H_1$, $H_2 \in |\G|$ can be connected by an oriented path if and only if they define diffeomorphic sutured
manifolds. Furthermore, the existence of an unoriented path from $H_1$ to $H_2$ implies the existence of an oriented one.
\end{proposition}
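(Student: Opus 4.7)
The plan has two halves: that oriented paths preserve the diffeomorphism class of $S(H)$, and the converse Reidemeister-Singer-type statement; the parenthetical unoriented-to-oriented claim will then follow from symmetry of the edge definitions.

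For the easy direction, I would check each of the four edge types in turn. For a $\G_\a$-edge we have $[\alphas]\sim[\alphas']$ by definition, which gives a diffeomorphism $C(\alphas)\to C(\alphas')$ fixing $\S$ (Definition of $\sim$); glued with the unchanged $\b$-compression body this produces a diffeomorphism $S(H_1)\to S(H_2)$ rel $\S$, and the $\G_\b$-case is identical. A $\G_\s$-edge adds or removes a canceling $1$-/$2$-handle pair (the torus summand together with the dual $\alphas\cap T$, $\betas\cap T$ pair), so $S(H)$ is unchanged. A $\G_\d$-edge $d\colon H_1\to H_2$ tautologically pushes forward to a diffeomorphism of the defining sutured manifolds.

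For the hard direction, suppose $f\colon S(H_1)\to S(H_2)$ is a diffeomorphism. Pulling $H_2$ back along $f$ yields an isotopy diagram $H_2'$ with $S(H_2')=S(H_1)$ and a $\G_\d$-edge $H_2'\to H_2$, so it suffices to connect $H_1$ and $H_2'$. By Lemma~\ref{lem:existence}, each of these can be represented by a sutured diagram embedded in the same sutured manifold $(M,\g)$. The crux is then a sutured Reidemeister-Singer theorem: any two sutured Heegaard splittings of $(M,\g)$ become isotopic after stabilization. I would prove this by a Cerf-theoretic argument: represent each splitting by a generic Morse function $f\colon M\to[-1,1]$ with $f^{-1}(\pm 1)=R_\pm(\g)$, with gradient tangent to $\g$, and no critical points near $\g$; connect two such functions by a generic one-parameter family. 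The codimension-one bifurcations produce exactly the $\G_\s$ moves (births and deaths of canceling critical pairs) and handleslides, which by Lemma~\ref{lem:handleslide} correspond to $\G_\a$- and $\G_\b$-edges once the Heegaard surfaces are aligned.

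The unoriented-to-oriented claim is automatic from the definitions: $\a$-equivalence and $\b$-equivalence are symmetric relations on isotopy diagrams, the condition ``stabilization or destabilization'' defining $\G_\s$ is symmetric by construction, and a diffeomorphism $d\colon H_1\to H_2$ has an inverse $d^{-1}\in\G_\d(H_2,H_1)$. Hence every edge of $\G$ can be reversed, so any unoriented path yields an oriented one. The main obstacle is the sutured Reidemeister-Singer step; while the closed case is classical, the bifurcation analysis must be carried out in the presence of the boundary conditions imposed by $\g$ and $R_\pm(\g)$, which requires a careful setup of the space of admissible Morse functions and a verification that the standard Cerf codimension-one singularities remain the only ones that appear generically.
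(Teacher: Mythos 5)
Your overall decomposition is the right one, and the easy direction and the unoriented-to-oriented remark match the paper exactly. The one genuine divergence is in the hard direction: you propose to \emph{prove} a sutured Reidemeister--Singer theorem from scratch by Cerf theory (generic one-parameter families of Morse functions respecting the suture structure, with an analysis of codimension-one bifurcations), whereas the paper's proof is a one-line citation to Juh\'asz~\cite[Proposition~2.15]{Juhasz06:Sutured}, which already establishes that any two diagrams defining diffeomorphic sutured manifolds become diffeomorphic after a finite sequence of isotopies, handleslides, stabilizations and destabilizations, plus the observation (Lemma~\ref{lem:handleslide}) that handleslides are $\a$- or $\b$-equivalences. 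Your Cerf-theoretic sketch is not wrong --- in fact it is precisely what the paper develops in earnest in Sections~\ref{sec:smooth}--\ref{sec:simplify} to prove the much stronger Proposition~\ref{prop:diag-connected-strong} and Theorem~\ref{thm:iso}, where one must carefully track the embedded Heegaard surfaces and codimension-two bifurcations as well --- but for the present weak statement it is heavy machinery whose details (the boundary conditions on sutured functions, the genericity of the bifurcation set, the translation of each codimension-one stratum into a diagram move) would need to be carried out with some care and are already available by citation. So the proposal is correct as a route, but it re-derives rather than invokes the key input.

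One small point worth tightening: your phrase ``pulling $H_2$ back along $f$ yields an isotopy diagram $H_2'$ with a $\G_\d$-edge $H_2'\to H_2$'' conflates the abstract isotopy diagrams that form the vertices of $\G$ with embedded diagrams in a fixed sutured manifold. Since the vertices of $\G$ are abstract triples $(\S,[\alphas],[\betas])$, pulling back along a diffeomorphism of the ambient sutured manifolds does not produce a genuinely new vertex; what you actually need is the statement that two abstract diagrams with $S(H_1)\cong S(H_2)$ can be connected, which is exactly what \cite[Proposition~2.15]{Juhasz06:Sutured} provides (after noting, as the paper does, that the cited result is stated for balanced diagrams but the same proof works in general). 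The $\G_\d$-edges then do no extra work in this direction; they are used in the paper's stronger embedded variant, $\G_{(M,\g)}$, not here.
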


\begin{proof}
By Juh\'asz~\cite[Proposition 2.15]{Juhasz06:Sutured}, if two diagrams define diffeomorphic sutured manifolds,
then they become diffeomorphic after
a sequence of isotopies, handleslides, stabilizations, and
destabilizations. (Actually, the above mentioned result is stated for balanced diagrams,
but the same proof works for arbitrary ones.)
Lemma~\ref{lem:handleslide}
implies that every handleslide is an $\a$- or $\b$-equivalence, which concludes the proof of the first claim.

For the second claim, observe that if $* \in \{\, \a, \b, \s, \d \,\}$,
then $\G_*(H_1,H_2) \neq \emptyset$ if and only if $\G_*(H_2,H_1) \neq \emptyset$.
\end{proof}

\begin{definition} \label{def:weak-Heegaard}
Let $\cS$ be a set of diffeomorphism types of sutured manifolds, and let $\C$ be any category.
We denote by $\G(\cS)$ the full subgraph of $\G$ spanned by those
isotopy diagrams $H$ for which $S(H) \in \cS$.
A \emph{weak Heegaard invariant of $\cS$}
is a morphism of graphs
$F \colon \G(\cS) \to \C$ such that for every arrow $e$ of $\G(\cS)$ the image $F(e)$ is an isomorphism.
\end{definition}

Observe that, if $F \colon \G(\cS) \to \C$ is a weak Heegaard invariant
and $H_1$ and $H_2$ lie in the same path-component of $\G(\cS)$ (i.e., if
$S(H_1) = S(H_2)$), then $F(H_1)$ and $F(H_2)$ are isomorphic objects
of the category~$\C$.
In this language, we can state the main invariance results previously
known.  We first introduce some important sets
of diffeomorphism types of sutured manifolds.
We denote by $[(M,\g)]$ the diffeomorphism type of $(M,\g)$.

\begin{definition}\label{def:balanced}
  A \emph{balanced} sutured manifold is a proper sutured manifold $(M,\g)$ such that
  $\chi(R_+(\g)) = \chi(R_-(\g))$.  Equivalently, by Juh\'asz~\cite[Proposition 2.9]{Juhasz06:Sutured},
  it is a proper sutured manifold that has a diagram
  $(\Sigma, \alphas, \betas)$ with $|\alphas| = |\betas|$.
  Then define the following types of sutured manifolds.
\begin{enumerate}
\item Let $\cS_\man$ be the set of all $[Y(p,V)]$, where $Y$ is a closed,
  oriented, based 3-manifold, $p\in Y$, and $V < T_pM$ is an oriented tangent
  2-plane.
\item Let $\cS_\link$ be the set of all $[S^3(L,\bp,\bq)]$, where
  $(L,\bp,\bq)$ is a based oriented link in $S^3$ with exactly one $\bp$
  and one $\bq$ marking on each component of $L$.
\item Let $\cS_\bal$ be the set of all $[(M,\g)]$, where $(M,\g)$ is a
  balanced sutured manifold.
\end{enumerate}
\end{definition}

\begin{theorem}[\cite{OS04:HolomorphicDisks}] \label{thm:HF-weak}
 The morphisms
 \[
 \HFa \text{, }\HFm \text{, } \HFp \text{, }\HFinf \colon \G(\cS_\man) \to \Field[U]\text{-}\Mod
 \]
 are weak Heegaard invariants of $\cS_\man$ (where the $U$-action is trivial on $\HFa$).
\end{theorem}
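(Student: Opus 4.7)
The plan is to assemble the theorem from Ozsv\'ath and Szab\'o's original construction of $\HF$, checking that it provides the data of a graph morphism that is an isomorphism on every edge. First, for each isotopy diagram $H=(\Sigma,A,B)$ with $S(H)\in\cS_\man$, the group $\HF(H)$ is defined by choosing an admissible representative $(\Sigma,\alphas,\betas)$ of the isotopy classes, an almost complex structure $J$ on $\Sym^g(\Sigma)$, and (from the single-suture sutured structure in Definition~\ref{def:Y-p}) a basepoint $z$ in the complement of $\alphas\cup\betas$. The continuation maps of \cite{OS04:HolomorphicDisks} show that the resulting homology is independent of these auxiliary choices up to canonical isomorphism, so $\HF(H)$ is a well-defined $\Field[U]$-module.

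Next, for each of the four kinds of edges in $\G(\cS_\man)$, I would quote the corresponding Ozsv\'ath--Szab\'o isomorphism. For an $\a$-equivalence, Lemma~\ref{lem:handleslide} reduces us to a sequence of handleslides and isotopies, each of which induces an isomorphism on $\HF$: isotopies via a Hamiltonian continuation map, and handleslides via a triangle-counting map into the Floer homology of a small symplectic pushoff. Composing these gives the edge map for $\G_\a$. A $\b$-equivalence is treated identically, with the roles of $\alphas$ and $\betas$ exchanged. For a stabilization edge, writing $(\Sigma_2,\alphas_2,\betas_2)$ as the connected sum of $(\Sigma_1,\alphas_1,\betas_1)$ with the genus-one piece $(T^2,\alpha_0,\beta_0)$ from Definition~\ref{def:stab} and stretching the almost complex structure along the connect-sum neck identifies the two chain complexes, giving the stabilization isomorphism. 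Finally, a diffeomorphism $d\colon H_1\to H_2$ pushes forward $\alphas_1,\betas_1,J_1$ to $\alphas_2,\betas_2,d_*J_1$, yielding a tautological chain isomorphism; composing with a continuation to a chosen almost complex structure on $\Sigma_2$ produces the required isomorphism on homology.

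The two points that need attention are admissibility and well-definedness on the level of isotopy classes. Working with isotopy diagrams from the outset lets us always perturb a representative within its class to achieve admissibility, and any two such perturbations are joined by an isotopy whose continuation map is an isomorphism; so each of the four constructions above descends to a well-defined isomorphism on $\HF$ of the ambient isotopy classes. Note that a weak Heegaard invariant is only asked to assign an isomorphism to each edge, with no compatibility between edges, no composition-along-paths statement, and no control over loops in $\G(\cS_\man)$; in this theorem we may therefore treat the four edge types entirely independently, which is what reduces the statement to repackaging the handleslide, isotopy, stabilization and diffeomorphism invariance proofs of \cite{OS04:HolomorphicDisks}. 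The main obstacle by far lies not in this theorem but in the stronger compatibility statements pursued in the rest of the paper, where one must verify transitivity of these isomorphisms and their behavior around the distinguished loops (rectangles and simple handleswaps) in $\G$.
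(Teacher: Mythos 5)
Your overall strategy is the right one and matches the construction carried out in Section~9 of the paper (which the paper explicitly describes as a reproof of this theorem), but it contains a genuine gap that you should not dismiss as merely ``repackaging.''

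The issue is with the $\a$- and $\b$-equivalence edges. Since $\G_\a(H_1,H_2)$ contains exactly one arrow whenever $H_1$ and $H_2$ are $\a$-equivalent, producing a morphism of graphs requires a single, unambiguously specified isomorphism for that arrow. Your proposal is to choose a decomposition of the equivalence into handleslides and isotopies (via Lemma~\ref{lem:handleslide}), send each step to the corresponding triangle or continuation map, and compose. The composite is certainly an isomorphism, since each factor is, but for this to define a morphism of graphs you must also show that the composite is independent of the chosen decomposition --- and this is precisely where the substantive work resides. Composing triangle maps requires admissibility of the ambient triples and quadruples, which generally fails without inserting auxiliary Hamiltonian pushoffs, and once those are inserted one needs to know that the top-dimensional generators are preserved (the content of Lemma~\ref{lem:triangles-compose}). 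The paper circumvents the decomposition entirely: it defines $\Phi^A_{B\to B'}$ as a \emph{single} triangle map through one Hamiltonian pushoff (Proposition~\ref{prop:isotopy-map} and Definition~\ref{def:phi-ab}), which is manifestly canonical, and then proves the resulting map is an isomorphism for \emph{arbitrary} $\betas \sim \betas'$ by induction on the number of handleslides (Proposition~\ref{prop:Compatibility1}(\ref{item:psi-iso})). This last step is not in Ozsv\'ath--Szab\'o, who only establish the isomorphism for a single model handleslide; the two approaches trade which half of the argument is easy.

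A second, smaller, overstatement: you write that perturbations to admissibility are ``joined by an isotopy whose continuation map is an isomorphism,'' as if this automatically gives a canonical identification. In general Floer theory the continuation map depends on the homotopy class of the isotopy, so this alone does not yield canonicity. The paper's Lemma~\ref{lem:cont-triangle} shows that in Heegaard Floer homology the continuation map coincides with a triangle map and hence depends only on the endpoints; this is a fact specific to the theory and is precisely what justifies the direct-limit definition of $\HF^\circ(H)$ over admissible representatives. Without appealing to this (or to an equivalent argument with monogons), the well-definedness of $\HF(H)$ as a group is not settled. You are right that a weak Heegaard invariant demands no compatibility between edges of different types and no control around loops, but it does demand a well-defined assignment on each single edge, and that is exactly what needs the care described above.
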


\begin{theorem}[\cite{OS04:Knots,Rasmussen03:Knots,OS08:HFL}] \label{thm:HFL-weak}
The morphisms
\[
\HFLa \text{, } \HFLm \colon \G(\cS_\link) \to \Field[U]\text{-}\Mod
\]
are weak Heegaard invariants of $\cS_\link$.
\end{theorem}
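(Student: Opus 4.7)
The plan is to reduce Theorem~\ref{thm:HFL-weak} to the invariance results already in the literature, by checking that each of the four types of edges in $\G(\cS_\link)$ is accounted for by a known theorem. For this, I first need to reconcile the language of sutured diagrams of $S^3(L,\bp,\bq)$ with the multi-pointed Heegaard diagrams used to define $\HFL$. Specifically, if $(\S,\alphas,\betas)$ is a sutured diagram of $S^3(L,\bp,\bq)$, then $\S$ intersects each component of $L$ in two points (corresponding to the two sutures on the torus boundary of $\Bl_L(S^3)$), and the induced partition of these intersection points into $\bp$-points and $\bq$-points recovers the $\w$- and $\z$-basepoints of a standard multi-pointed Heegaard diagram for $(S^3,L)$. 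This identification makes it clear that sutured data agree with the data used in \cite{OS08:HFL}.

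Having made this identification, I would treat the four subgraphs in turn. For $\G_\a$ and $\G_\b$: an $\a$-equivalence consists, by Lemma~\ref{lem:handleslide}, of a sequence of isotopies and handleslides of the $\alphas$-curves, and symmetrically for $\b$-equivalences. The isotopy invariance and handleslide invariance of $\HFLa$ and $\HFLm$ are proved in \cite{OS04:Knots,Rasmussen03:Knots,OS08:HFL} by exactly the same Lagrangian Floer-theoretic arguments (change of almost-complex structure, exact Hamiltonian isotopies, holomorphic triangle counts) that were used in the closed case, with the basepoints now treated as punctures so that disks crossing $\w$ or $\z$ are excluded. One chooses a compatible sequence of intermediate diagrams and composes the resulting triangle maps; these are shown in \cite{OS08:HFL} to be isomorphisms. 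For $\G_\s$: the stabilization (and destabilization) invariance is also established in the same references, via the standard ``stabilization lemma'' which identifies $\CFL(\S,\alphas,\betas,\w,\z)$ with $\CFL$ of a stabilized diagram up to a tensor factor corresponding to the new genus-one summand, and shows that the induced map is a quasi-isomorphism.

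For $\G_\d$: a diffeomorphism $d \colon (\S_1,\alphas_1,\betas_1) \to (\S_2,\alphas_2,\betas_2)$ respecting the basepoints transports a choice of almost-complex structure and associated holomorphic curve counts from one diagram to the other, so it induces a tautological chain isomorphism $d_* \colon \CFL(\S_1,\alphas_1,\betas_1,\w_1,\z_1) \to \CFL(\S_2,\alphas_2,\betas_2,\w_2,\z_2)$ on the nose, by permuting generators and identifying moduli spaces. This gives an isomorphism on $\HFL$.

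The only thing to be careful about, which will be the bookkeeping obstacle rather than a conceptual one, is verifying that the assignment $e \mapsto F(e)$ is well defined on each edge of $\G(\cS_\link)$ given that an $\a$-equivalence may be realized by many different sequences of elementary moves. This is not needed at the level of a weak Heegaard invariant: Definition~\ref{def:weak-Heegaard} only demands that every edge be sent to \emph{some} isomorphism, not that this be independent of choices. (The compatibility of these choices under loops is precisely what distinguishes a strong Heegaard invariant and is addressed elsewhere in the paper.) So at the level of Theorem~\ref{thm:HFL-weak} the proof is a short invocation of the invariance theorems of \cite{OS04:Knots,Rasmussen03:Knots,OS08:HFL} together with the tautological functoriality under diffeomorphisms.
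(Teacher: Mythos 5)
Your proposal correctly matches the high-level structure of the paper's argument: reduce to the four edge types, reconcile the sutured picture with the multi-pointed one (your description of $\S$ ``intersecting $L$ in two points'' is a slight abuse --- $\S \subset \Bl_L(S^3)$ and $\partial\S = s(\g)$ lives on the boundary tori, but capping those circles with disks and placing a basepoint in each recovers the usual $(\S,\alphas,\betas,\w,\z)$ data --- so this is fine in spirit), and cite the invariance theorems for each move. You also correctly observe that a weak Heegaard invariant only demands that each edge be sent to \emph{some} isomorphism, with no coherence required.

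However, there is a genuine gap in the object assignment that you do not address. The vertices of $\G(\cS_\link)$ are \emph{isotopy diagrams}, so a morphism of graphs must assign a single well-defined $\Field[U]$-module $F(H)$ to each isotopy diagram $H$. You cannot simply write $\HFL(\S,\alphas,\betas,\w,\z)$: that is only defined for \emph{admissible} representatives, and different admissible representatives of the same isotopy class give different (albeit isomorphic) groups. The paper handles this in Section~\ref{sec:HeegaarFloerInvariant} by winding the curves to achieve admissibility (Lemma~\ref{lem:adm-multi}) and then showing that the triangle maps $\Phi^{\alphas\to\alphas'}_{\betas\to\betas'}$ between admissible representatives form a transitive system (Proposition~\ref{prop:isotopy-map}, Lemma~\ref{lem:alpha-beta-commute}), which lets one take the limit $HF^\circ(H)$. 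Establishing that these $\Phi$-maps are isomorphisms in general, and not only for model handleslides, is precisely Proposition~\ref{prop:Compatibility1}, which the paper flags as a new contribution not in the cited references. For the \emph{weak} invariant alone you could sidestep the transitive-system machinery by invoking the axiom of choice to fix one admissible representative per isotopy class, and then realizing each $\a$-equivalence as a chain of model handleslides and isotopies so that the cited invariance theorems apply directly --- but your proposal should say so; as written it leaves the object assignment unspecified.
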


\begin{theorem}[\cite{Juhasz06:Sutured}] \label{thm:SFH-weak}
The morphism
\[
\SFH \colon \G(\cS_\bal) \to \Field \text{-} \Vect
\]
is a weak Heegaard invariant of $\cS_\bal$.
\end{theorem}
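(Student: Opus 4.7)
The plan is to build the functor by picking representative admissible diagrams at each vertex and producing an isomorphism for each of the four edge types in $\G(\cS_\bal)$, appealing essentially to the construction and invariance theorems of \cite{Juhasz06:Sutured} carried out in the isotopy-diagram formalism of Section~\ref{sec:invariants}.

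First I would define $\SFH(H)$ on vertices. Given an isotopy diagram $H = (\S,[\alphas],[\betas])$ with $S(H) \in \cS_\bal$, choose a transverse representative $(\S,\alphas,\betas)$ that is \emph{admissible} in the sense of Juh\'asz (every nontrivial periodic domain has both positive and negative multiplicities); this is possible by the isotopy-winding argument of \cite[Sec.~3]{Juhasz06:Sutured}. Form $\Sym^k(\S)$ with Lagrangian tori $\Ta,\Tb$ (where $k = |\alphas| = |\betas|$, balanced by hypothesis), let $\CF(\S,\alphas,\betas)$ be the $\Field$-vector space on $\Ta \cap \Tb$, and set the differential to count $J$-holomorphic Whitney disks of Maslov index~$1$ whose domains have zero multiplicity along each component of $\S \setminus (\alphas \cup \betas)$ meeting $\bdy \S$. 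Take homology to obtain $\SFH(H)$; by the standard continuation-map argument, the result is independent of the choice of admissible representative within the isotopy class and of the choice of almost complex structure, giving a well-defined $\Field$-vector space attached to the vertex.

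Next I would treat each edge type separately. For $e \in \G_\d(H_1,H_2)$ a diffeomorphism $d\colon \S_1 \to \S_2$ carrying $[\alphas_1] \to [\alphas_2]$ and $[\betas_1] \to [\betas_2]$: choose an admissible representative on one side, push it forward, and let $\SFH(e)$ be the induced identification of generators, which is tautologically a chain isomorphism. For $e \in \G_\a$ (and symmetrically $\G_\b$): by Lemma~\ref{lem:handleslide}, $\alpha$-equivalence is a composition of isotopies and handleslides; isotopies induce continuation isomorphisms, while for each handleslide the triangle-counting map through a third, small Hamiltonian translate of $\alphas$ gives a chain map, shown to be an isomorphism via the associativity/model-computation argument in \cite{Juhasz06:Sutured}, mirroring \cite{OS04:HolomorphicDisks}. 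For $e \in \G_\s$ (stabilization): the new punctured-torus region $T$ contains a single transverse intersection point of $\alphas_2 \cap T$ with $\betas_2 \cap T$, so generators of the stabilized complex are in bijection with those of the old, and an admissibility-preserving stretching of the stabilization neck shows the differential matches under this bijection, yielding the chain isomorphism; destabilization edges are handled by inverting.

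The main obstacle is admissibility: the diagram chosen to \emph{define} $\SFH$ at a vertex may not be compatible with the moves along a given edge (for instance, after a handleslide one needs a simultaneously admissible triple diagram), so at each edge one must show that the choice of representative inside the isotopy class can be refined to one that works for both the source and the target. This is resolved by the winding construction, which can be localized away from the support of any finite collection of curves, and by the fact that continuation maps between different admissible representatives of the same isotopy diagram are isomorphisms, so the particular choice does not affect the end result. Once this bookkeeping is in place, each edge of $\G(\cS_\bal)$ gets an isomorphism in $\Field\text{-}\Vect$, and the assignment is functorial on the trivial compositions (identities, which are the only compositions forced by the graph structure), establishing that $\SFH$ is a weak Heegaard invariant in the sense of Definition~\ref{def:weak-Heegaard}.
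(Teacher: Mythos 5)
Your proposal is correct and follows the same route the paper takes (recapitulated in Section~\ref{sec:HeegaarFloerInvariant}): define $\SFH$ of an isotopy diagram via admissible representatives, and assign isomorphisms to diffeomorphism, equivalence, and (de)stabilization edges by pushforward, triangle/continuation maps, and neck-stretching, respectively, all ultimately resting on the invariance results of Juh\'asz and Ozsv\'ath--Szab\'o. The one stylistic difference is that you factor an $\a$- or $\b$-equivalence through a chosen chain of isotopies and handleslides, whereas the paper builds a single map $\Phi^{\alphas}_{\betas\to\betas'}$ directly (via a pair of triangle maps through an exact Hamiltonian translate) so that the assignment is independent of the factorization --- this matters for the \emph{strong} invariant axioms, but for the weak Heegaard invariant statement either route suffices since only one arrow in $\G_\a(H_1,H_2)$ needs a well-defined image.
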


However, these theorems are not enough to give invariants in the
stronger sense of Theorems~\ref{thm:invt-hf}, \ref{thm:invt-hfl}, and~\ref{thm:invt-sfh},
which assign to a based 3-manifold, a based link, or a balanced sutured manifold an
object of~$\Field[U]\text{-}\Mod$, rather than an isomorphism class of objects of~$\Field[U]\text{-}\Mod$.
For that, we look for further structure in the graph~$\G$.

\subsection{Strong Heegaard invariants}
\label{sec:strong-invar}

\begin{definition}\label{def:distinguished-rect}
Let $H_i = (\S_i,[\alphas_i],[\betas_i])$ be isotopy diagrams for $1 \le i \le 4$.
A \emph{distinguished rectangle} in $\G$ is a subgraph
\[
\xymatrix{H_1 \ar[r]^e \ar[d]^f & H_2 \ar[d]^g \\ H_3 \ar[r]^h & H_4}
\]
of $\G$ that satisfies one of the following properties:
\begin{enumerate}
\item\label{item:rect-alpha-beta}  Both $e$ and $h$ are $\a$-equivalences,
while $f$ and $g$ are $\b$-equivalences.
\item\label{item:rect-alpha-stab} Either both $e$ and $h$ are $\a$-equivalences or both $e$ and $h$ are $\b$-equivalences,
while $f$ and $g$ are both stabilizations.
\item\label{item:rect-alpha-diff}  Either both $e$ and $h$ are $\a$-equivalences or both $e$ and $h$ are $\b$-equivalences,
while $f$ and $g$ are both diffeomorphisms. In this case, we necessarily have
$\S_1 = \S_2$ and $\S_3 = \S_4$. We require, in addition, that the diffeomorphisms
$f \colon \S_1 \to \S_3$ and $g \colon \S_2 \to \S_4$ are the same.
\item\label{item:rect-stab-stab} The maps $e$, $f$, $g$, and~$h$ are all stabilizations, such that there are disjoint discs
$D_1$, $D_2 \subset \S_1$ and disjoint punctured tori $T_1$, $T_2 \subset \S_4$ satisfying
$\S_1 \setminus (D_1 \cup D_2) = \S_4 \setminus (T_1 \cup T_2)$, and such that
$\S_2 = (\S_1 \setminus D_1) \cup T_1$ and $\S_3 = (\S_1 \setminus D_2) \cup T_2$.
\item\label{item:rect-stab-diff} The maps $e$ and $h$ are stabilizations, while $f$ and $g$ are diffeomorphisms. Furthermore,
there are disks $D \subset \S_1$ and $D' \subset \S_3$ and punctured tori $T \subset \S_2$ and
$T' \subset \S_4$ such that $\S_1 \setminus D = \S_2 \setminus T$ and $\S_3 \setminus D' = \S_4 \setminus T'$,
and the diffeomorphisms $f$, $g$ satisfy $f(D) = D'$,  $g(T) = T'$, and $f|_{\S_1 \setminus D} = g|_{\S_2 \setminus T}$.
\end{enumerate}
\end{definition}

\begin{remark}\label{rem:distinguished-rect}
In case~\eqref{item:rect-alpha-beta}, we have $\S_i = \S_j$ for $i$, $j \in \{\,1, \dots,4\,\}$,
so a distinguished rectangle in this case is of the form
\[
\xymatrix{(\S,A,B) \ar[r] \ar[d] & (\S,A',B) \ar[d] \\ (\S,A,B') \ar[r] & (\S,A',B').}
\]

In case~\eqref{item:rect-alpha-stab}, we necessarily have $\S_1 = \S_2$ and $\S_3 = \S_4$. Without loss of generality, consider the situation
when both $e$ and $h$ are $\a$-equivalences. Then we have a rectangle
\[
\xymatrix{(\S,[\alphas],[\betas]) \ar[r] \ar[d] & (\S,[\overline{\alphas}],[\betas]) \ar[d] \\
(\S',[\alphas'],[\betas']) \ar[r] & (\S',[\overline{\alphas}'],[\betas'])}
\]
such that there is a disk $D \subset \S$ and a punctured torus $T \subset \S'$ with $\S \setminus D = \S' \setminus T$.
Furthermore, we can assume that $\alphas = \alphas' \cap (\S' \setminus T)$ and $\betas = \betas' \cap (\S' \setminus T)$,
while $\overline{\alphas} = \overline{\alphas}' \cap (\S' \setminus T)$. Since $\alphas' \sim \overline{\alphas}'$, the curves
$\alphas' \cap T$ and $\overline{\alphas}' \cap T$ are isotopic.

In case~\eqref{item:rect-stab-stab}, the fact that all four diagrams
contain
\[
S = \Sigma_1 \setminus (D_1 \cup D_2)
\]
implies that
$\alphas_i \cap S = \alphas_j \cap S$ and $\betas_i \cap S = \betas_j \cap S$
for every $i$, $j \in \{\, 1, \dots, 4 \,\}$.
\end{remark}

\begin{figure}
\centering
\includegraphics{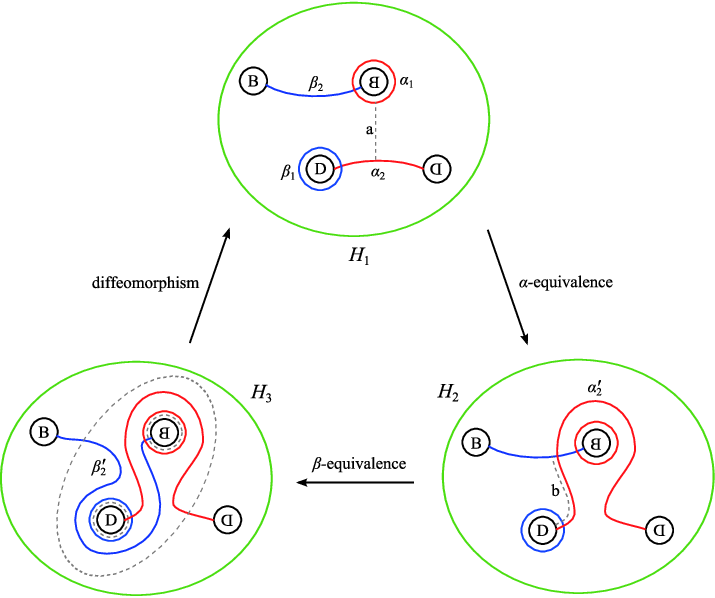}
\caption{A simple handleswap. The green curve is the boundary of the punctured genus two surface $P$ that is obtained
by identifying the circles marked with corresponding letters (namely, $B$ and $D$). We draw the $\a$ curves in red and the
$\b$ curves in blue.}
\label{fig:handleswap}
\end{figure}

\begin{definition} \label{def:simple-handleswap}
A \emph{simple handleswap} is a subgraph of $\G$ of the form
\[
\xymatrix{H_1 \ar[rd]^e & \\ H_3 \ar[u]^g & H_2 \ar[l]^f}
\]
such that
\begin{enumerate}
\item $H_i = (\S \# \S_0,[\alphas_i],[\betas_i])$ are isotopy diagrams for $i \in \{\,1,2,3\,\}$,
where $\S_0$ is a genus two surface,
\item $e$ is an $\a$-equivalence, $f$ is a $\b$-equivalence, and $g$
  is a diffeomorphism,
\item in the punctured genus two surface $P = (\S \# \S_0) \setminus \S$, the above triangle
is conjugate to the triangle in Figure~\ref{fig:handleswap}; i.e.,
there is a diffeomorphism that throws $P \cap H_i$ onto the pictures
in the green circles, sending the $\alpha$-circles in~$P$ to the two red circles,
and the $\beta$-circles in $P$ to the two blue circles,
\item \label{it:identical} in $\S$, the diagrams $H_1$, $H_2$, and $H_3$ are identical.
\end{enumerate}

So $P \cap \alphas_1$ consists of closed curves $\a_1$ and $\a_2$ and $P \cap \betas_1$ consists of closed curves
$\b_1$ and $\b_2$ such that $\a_i \cap \b_i$ consists of a single point for $i \in \{1,2\}$,
while both $\a_1 \cap \b_2$ and $\a_2 \cap \b_1$ are empty.
The arrow $e$ from $H_1$ to $H_2$ corresponds to handle-sliding $\a_1$ over $\a_2$ along the dashed arc $a$.
The arrow $f$ from $H_2$ to $H_3$ corresponds to handle-sliding $\b_2$ over $\b_1$ along the dashed arc $b$. Finally, the diffeomorphism $g$ maps
$H_3$ to $H_1$ by performing Dehn twists around the
dashed curves depicted in the lower left corner of
Figure~\ref{fig:handleswap}; namely a left-handed Dehn twist along
the large dashed curve and right-handed Dehn twists around the smaller ones.
\end{definition}

\begin{definition}\label{def:strong-Heegaard}
Let $\cS$ be a set of diffeomorphism types of sutured manifolds.
A \emph{strong Heegaard invariant of $\cS$} is a weak Heegaard
invariant $F \colon \G(\cS) \to \C$ that satisfies the following
axioms:
\begin{enumerate}
\item\label{item:strong-funct} \textbf{Functoriality}: The restriction
  of $F$ to $\G_\a(\cS)$,
  $\G_\b(\cS)$, and~$\G_\d(\cS)$ are functors to~$\C$. Furthermore, if
  $e \colon H_1 \to H_2$
  is a stabilization and $e' \colon H_2 \to H_1$ is the corresponding destabilization,
  then $F(e') = F(e)^{-1}$.
\item\label{item:strong-commute} \textbf{Commutativity}: For every
  distinguished rectangle
\[
\xymatrix{H_1 \ar[r]^e \ar[d]^f & H_2 \ar[d]^g \\ H_3 \ar[r]^h & H_4}
\]
in $\G(\cS)$, we have $F(g) \circ F(e) = F(h) \circ F(f)$.
\item\label{item:strong-cont} \textbf{Continuity}: If $H \in |\G(\cS)|$ and $e \in \G_\d(H,H)$ is a diffeomorphism isotopic to $\text{Id}_\S$, then $F(e) = \text{Id}_{F(H)}$.
\item\label{item:strong-handleswap} \textbf{Handleswap invariance}: For every simple handleswap
\[
\xymatrix{H_1 \ar[rd]^e & \\ H_3 \ar[u]^g & H_2 \ar[l]^f}
\]
in $\G(\cS)$, we have $F(g) \circ F(f) \circ F(e) = \text{Id}_{F(H_1)}$.
\end{enumerate}
\end{definition}

Note that in axiom~\eqref{item:strong-cont}, if $H = (\S,\alphas,\betas)$ and $e_t \colon \S \to \S$ for $t \in [0,1]$ is an
isotopy from $e$ to $\text{Id}_\S$, then $(\S,e_t(\alphas),e_t(\betas))$ represents the same isotopy diagram as $H$.
Hence $e_t \in \G_\d(H,H)$ for every $t \in [0,1]$.

Axiom~\eqref{item:strong-commute} of Definition~\ref{def:strong-Heegaard}
and the second part of axiom~\eqref{item:strong-funct}
imply commutativity for any distinguished rectangle involving destabilizations.
That is why we only considered stabilizations in Definition~\ref{def:distinguished-rect}.

\begin{theorem} \label{thm:strong}
The following are strong Heegaard invariants:
\begin{enumerate}
\item Sutured Floer homology, $\SFH$, is a strong Heegaard invariant
  of~$\cS_\bal$.
\item The Heegaard Floer homology invariants $\HFa$, $\HFp$, $\HFm$,
  and $\HFinf$ are strong
  Heegaard invariants of~$\cS_\man$.
\item The link Floer homology groups $\HFLa$ and $\HFLm$ are strong
  Heegaard invariants of~$\cS_\link$.
\end{enumerate}
\end{theorem}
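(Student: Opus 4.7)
The plan is to verify, flavor by flavor, the four axioms of Definition~\ref{def:strong-Heegaard}, leveraging the fact that all three families of invariants are built from the same kind of holomorphic-curve count on multi-pointed Heegaard diagrams. The edge maps of $\G(\cS)$ are already present implicitly in the weak-invariance theorems above: an $\alpha$- or $\beta$-equivalence is realized as a composition of triangle maps (for handleslides) and continuation maps (for isotopies), a stabilization is realized as the standard tensor-with-top-generator isomorphism with its destabilization defined as the inverse, and a diffeomorphism of isotopy diagrams is realized as the evident pushforward on chain complexes. Admissibility, which is awkward at the chain level, is handled cleanly by the isotopy-diagram formalism: one picks admissible representatives and invokes isotopy invariance to see that the map is independent of that choice.

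For functoriality, the restriction to $\G_\d$ is functorial by construction, and continuity is nearly tautological, since for an isotopy $e_t$ from $e$ to $\mathrm{Id}_\Sigma$ the triples $(\Sigma, e_t(\alphas), e_t(\betas))$ all represent the same isotopy diagram, which forces $F(e) = F(\mathrm{Id}) = \mathrm{Id}$. For $\G_\a$ and $\G_\b$, the content is that the triangle/continuation composition depends only on the endpoints of the $\alpha$- or $\beta$-equivalence; this is the classical associativity argument, where any two sequences of handleslides relating $\alphas$ to $\alphas'$ are connected by elementary moves, each contributing a chain homotopy coming from a holomorphic quadrilateral count.

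Commutativity of the five types of distinguished rectangles is handled case by case. Cases~\eqref{item:rect-alpha-beta}, \eqref{item:rect-alpha-stab}, and~\eqref{item:rect-stab-stab} are instances of the general principle that moves supported in disjoint regions of $\Sigma$ commute, established via holomorphic quadrilateral counts in the appropriate auxiliary diagram (respectively, the $\alpha$-side versus the $\beta$-side; the original surface versus a newly stabilized torus; or two disjoint stabilization tori). Cases~\eqref{item:rect-alpha-diff} and~\eqref{item:rect-stab-diff} reduce to the observation that the diffeomorphism pushes forward all of the holomorphic data defining the horizontal edges, so the pushforward of a triangle map equals the triangle map for the pushforward data.

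The main obstacle is axiom~\eqref{item:strong-handleswap}, handleswap invariance, which is the only property in the package that is not already part of the standard Heegaard Floer toolkit. This is precisely the content of Proposition~\ref{prop:HFtrivialHandleswap}, due to Ozsv\'ath. Granting that result, the reduction is essentially formal: the moves $e$, $f$, and the Dehn-twist diffeomorphism $g$ agree with the identity outside the punctured genus-two surface $P$ of Figure~\ref{fig:handleswap}, so a standard stabilization-type argument localizes the computation of $F(g)\circ F(f)\circ F(e)$ to $P$, where it equals the identity by the cited proposition. The same argument applies uniformly to $\SFH$, to the four flavors of $\HF$, and to the two flavors of $\HFL$, since the ring of coefficients and the $U$-action (when present) are insensitive to the localization on $P$.
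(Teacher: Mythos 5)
Your overall plan — verify the four axioms of Definition~\ref{def:strong-Heegaard} using triangle/continuation maps, stabilization maps, and pushforwards, citing Proposition~\ref{prop:HFtrivialHandleswap} for handleswap invariance — matches the paper's approach, and your treatment of Functoriality, Commutativity, and Handleswap Invariance is in line with what the paper actually does. But your argument for the Continuity axiom contains a genuine gap.

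You claim that continuity is "nearly tautological, since for an isotopy $e_t$ from $e$ to $\mathrm{Id}_\Sigma$ the triples $(\Sigma, e_t(\alphas), e_t(\betas))$ all represent the same isotopy diagram, which forces $F(e) = F(\mathrm{Id}) = \mathrm{Id}$." This does not follow. A strong Heegaard invariant is a morphism of \emph{graphs}, and $\G_\d(H,H)$ contains many edges; the observation that each $e_t$ lies in $\G_\d(H,H)$ (which the paper also makes, as a remark after Definition~\ref{def:strong-Heegaard}) does not by itself imply that $F(e_t)$ is independent of $t$, since $F$ comes with no a priori continuity in the parameter. Concretely, $F(e)$ is defined as the pushforward $e_*$ on Floer homology, while $F(\mathrm{Id})$ is the identity; these are defined by entirely different recipes, and the content of the axiom is exactly that the pushforward $e_*$ agrees with the \emph{canonical isomorphism} $\Phi^{\alphas\to e(\alphas)}_{\betas\to e(\betas)}$ built from triangle maps. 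That equality is not formal. The paper proves it in Proposition~\ref{prop:continuity} by decomposing $e$ into small diffeomorphisms $d_i$ each moving every curve by a small Hamiltonian isotopy, and then invoking the ``nearest point'' computation from~\cite[Proposition~9.8]{OS04:HolomorphicDisks} to see that, for such small $d_i$, the pushforward and the composed triangle maps $\Psi^{\alphas\to\alphas'}_{\betas'}\circ\Psi^\alphas_{\betas\to\betas'}$ both send a generator to the closest generator. You need an argument of this type; the tautology you appeal to is not available.

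A secondary, more cosmetic issue: you describe the maps associated to $\alpha$- and $\beta$-equivalences as handled "cleanly by the isotopy-diagram formalism" once admissibility is fixed. In practice, showing that the triangle maps $\Psi$ of Ozsv\'ath--Szab\'o are actually \emph{isomorphisms} for arbitrary pairs of equivalent attaching sets (not just exact Hamiltonian translates), and that they compose correctly and are independent of the choice of admissible representative, occupies most of the paper's Section~\ref{sec:HeegaarFloerInvariant} (Lemma~\ref{lem:cont-triangle}, Proposition~\ref{prop:Compatibility1}, Proposition~\ref{prop:isotopy-map}, Lemma~\ref{lem:alpha-beta-commute}); this is not just a routine invocation of associativity, and one has to be careful to wind in extra Hamiltonian translates to make all the relevant multi-diagrams admissible simultaneously (Lemma~\ref{lem:adm-multi}). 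This is not a wrong step in your outline, but it under-sells where the work actually is.
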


We will prove Theorem~\ref{thm:strong} in Section~\ref{sec:HeegaardFloer}.

\subsection{Construction of the Heegaard Floer functors}
\label{sec:main-theorems}
We next explain how Theorem~\ref{thm:strong} lets us associate, for
instance, a group
$\SFH(M,\g)$ to a balanced sutured manifold $(M,\g)$.

\begin{definition} \label{def:isotopic}
  Suppose that $H_1$ and~$H_2$ are two isotopy diagrams of
  $(M,\gamma)$ with $H_i = (\S_i,A_i,B_i)$, and let $\iota_i \co
  \Sigma_i \to M$ be the inclusion for $i \in \{1,2\}$.  Then a
  diffeomorphism $d \co H_1 \to H_2$ is \emph{isotopic to the identity
    in~$M$} if $\iota_2 \circ d \colon \S_1 \to M$ is isotopic to
  $\iota_1 \colon \S_1 \to M$ relative to $s(\gamma)$.
\end{definition}

\begin{definition} \label{def:subgraph} Let $(M,\g)$ be a sutured
  manifold. Then $\G_{(M,\g)}$ is the subgraph of~$\G$ whose vertex
  set%
  \footnote{Observe that $|\G_{(M,\gamma)}|$ is a set, not a proper
    class, as we defined a sutured diagram for $(M,\gamma)$ to be a
    submanifold of~$M$.}  $|\G_{(M,\g)}|$ consists of all isotopy
  diagrams of $(M,\g)$.  The set of edges between $H_1$, $H_2 \in
  |\G_{(M,\g)}|$ is defined to be
  \[
  \G_{(M,\g)}(H_1,H_2) = \G_\a(H_1,H_2) \cup \G_\b(H_1,H_2) \cup
  \G_\s(H_1,H_2) \cup \G^0_\d(H_1,H_2),
  \]
  where $\G_\a$, $\G_\b$, and $\G_\s$ are as before, and
  $\G^0_\d(H_1,H_2)$ is the set of diffeomorphisms from $H_1$ to $H_2$
  isotopic to the identity in~$M$.
\end{definition}

We will prove the following stronger version of
Proposition~\ref{prop:diag-connected-weak} in
Section~\ref{sec:simplify-codim-1}.   

\begin{proposition}\label{prop:diag-connected-strong}
  Let $(M,\g)$ be sutured manifold. In the graph
  $\G_{(M,\g)}$, any two vertices can be connected by an oriented
  path.
\end{proposition}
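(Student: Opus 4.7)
The plan is to reduce to the case where the two diagrams share a common Heegaard surface in~$M$, and then to finish using Lemma~\ref{lem:handleslide}. Let $H_i = (\S_i, A_i, B_i)$ for $i \in \{1,2\}$ be two isotopy diagrams of $(M,\g)$.

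I would first invoke a sutured Reidemeister--Singer statement: after performing finitely many stabilizations inside $M$, the underlying Heegaard surfaces can be arranged to be ambient isotopic in~$M$. A stabilization is realized by tubing a small standard handle into $M$ near the surface, giving an edge of $\G_\s$ in $\G_{(M,\g)}$. This would produce stabilized diagrams $H_1'$ and $H_2'$ with $\S_1', \S_2' \subset M$ ambient isotopic. I would then choose an ambient isotopy $\phi_t \colon M \to M$ fixing $s(\g)$, with $\phi_0 = \id_M$ and $\phi_1(\S_1') = \S_2'$, and such that $\phi_1$ sends the negative side of $\S_1'$ onto that of $\S_2'$. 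Then $d := \phi_1|_{\S_1'}$ is an orientation-preserving diffeomorphism isotopic to $\id_M$, providing an edge of $\G^0_\d$ from $H_1'$ to the push-forward diagram $H_1'' := (\S_2', d(A_1'), d(B_1'))$. Now $H_1''$ and $H_2'$ share the Heegaard surface $\S_2'$ and hence the two sutured compression bodies on either side; in particular $d(A_1')$ and $A_2'$ are both attaching sets for the negative compression body, so $d(A_1') \sim A_2'$ and Lemma~\ref{lem:handleslide} yields a chain of $\a$-equivalences between them, and similarly $d(B_1')$ and $B_2'$ are joined by a chain of $\b$-equivalences. Concatenating these edges gives an unoriented path from $H_1$ to $H_2$ in $\G_{(M,\g)}$, which is promoted to an oriented path exactly as in the proof of Proposition~\ref{prop:diag-connected-weak}, since each of $\G_\a$, $\G_\b$, $\G_\s$, and $\G^0_\d$ is closed under edge reversal.

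The main obstacle will be the sutured Reidemeister--Singer step, since Juh\'asz's invariance result only provides connectedness up to a diffeomorphism of the surface rather than up to ambient isotopy in~$M$. I would handle it via a Cerf-theoretic argument: an embedded Heegaard splitting of $(M,\g)$ arises from a self-indexing Morse function on $M$ whose gradient is tangent to $\g$ along the annuli and flows from $R_-(\g)$ to $R_+(\g)$. A generic $1$-parameter family of such data connecting any two would exhibit the interpolating Heegaard surfaces as undergoing ambient isotopy in~$M$ except at finitely many discrete bifurcations, which are precisely handleslides and births/deaths of cancelling critical pairs --- that is, $\a$- and $\b$-equivalences and (de)stabilizations in~$\G_{(M,\g)}$. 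This $1$-parameter analysis should be a streamlined precursor to the $2$-parameter bifurcation analysis carried out in Section~\ref{sec:smooth}.
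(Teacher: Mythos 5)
Your proposal is correct and rests on the same Cerf-theoretic engine that the paper builds, but the organization differs from the paper's in a way worth noting. The paper never writes out a standalone proof of Proposition~\ref{prop:diag-connected-strong}; what it intends is the direct argument assembled from its machinery: choose representatives $\HD_i=(\S_i,\alphas_i,\betas_i)$ with $\alphas_i\pitchfork\betas_i$, realize each by a simple Morse--Smale pair via Proposition~\ref{prop:existence}, connect the two pairs by a generic $1$-parameter path in $\FV(M,\g)$ using Corollary~\ref{cor:FV-contractible}, and then read edges of $\G_{(M,\g)}$ directly off that path --- diffeomorphisms isotopic to the identity from Lemma~\ref{lem:isotopy} on the Morse--Smale intervals, and $\a$/$\b$-equivalences and $(k,l)$-stabilizations at the codimension-$1$ bifurcations via Proposition~\ref{prop:1-param} (with the $(k,l)$-stabilizations decomposed into a simple stabilization followed by handleslides as in Section~\ref{sec:simplify}).

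You instead factor the argument into a ``sutured Reidemeister--Singer'' statement for the surfaces followed by Lemma~\ref{lem:handleslide} for the attaching curves. This is valid, and your diagnosis of the real gap --- that Juh\'asz's Proposition~2.15 yields only an abstract diffeomorphism of diagrams rather than an ambient isotopy in~$M$, and that the fix is Cerf theory with gradient-like fields adapted to $\g$ --- matches the paper exactly. But the two-step packaging is somewhat roundabout: as your own final paragraph observes, the generic $1$-parameter family already yields the full sequence of moves, so extracting an isotopy-of-surfaces lemma and then separately invoking Lemma~\ref{lem:handleslide} duplicates content. Moreover, the literal form of your step~(1) (``after stabilizations the surfaces are ambient isotopic'') is \emph{stronger} than what a generic path hands you --- the bifurcations produce an interleaved sequence of stabilizations and destabilizations, and converting that to ``stabilize both up to a common level'' requires an additional reordering/staircase argument which the direct route avoids entirely. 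So the proposal is correct, and the substantive idea is the paper's, but the detour through a Reidemeister--Singer lemma adds work without buying anything here. One small terminological point: since an $\a$-equivalence edge in $\G$ already records an arbitrary composition of handleslides, your final ``chain of $\a$-equivalences'' is in fact a single edge of $\G_\a$.
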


\begin{definition} \label{def:iso}
  Given a weak Heegaard invariant $F \colon \G(\cS) \to \C$ and an
  oriented path $\eta$ in $\G(\cS)$ of the form
  \[
  H_0 \stackrel{e_1}{\longrightarrow} H_1
  \stackrel{e_2}{\longrightarrow} \dots
  \stackrel{e_n}{\longrightarrow} H_n,
  \]
  define $F(\eta)$ to be the isomorphism
  \[
  F(e_n) \circ \dots \circ F(e_1) \,\colon\, F(H_0) \to F(H_n).
  \]
\end{definition}

For a weak Heegaard invariant, the isomorphism $F(\eta)$ from $F(H_0)$
to $F(H_n)$ might depend on the choice of the path $\eta$. However,
according to the following theorem, this ambiguity disappears if we
require $F$ to be a strong Heegaard invariant and we restrict our
attention to the subgraph $\G_{(M,\g)}$.

\begin{theorem} \label{thm:iso} Let $\cS$ be a set of diffeomorphism
  types of sutured manifolds containing $[(M,\g)]$. Furthermore, let
  $F \colon \G(\cS) \to \C$ be a strong Heegaard invariant. Given
  isotopy diagrams $H$, $H' \in |\G_{(M,\g)}|$ and any two oriented
  paths $\eta$ and $\nu$ in $\G_{(M,\g)}$ from $H$ to $H'$, we
  have
  \[
  F(\eta) = F(\nu).
  \]
\end{theorem}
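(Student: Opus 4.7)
The plan is to reduce the equality $F(\eta) = F(\nu)$ to the statement that $F$ sends every oriented loop in $\G_{(M,\gamma)}$ to an identity morphism. For any oriented edge $e \colon H \to H'$ in $\G_{(M,\gamma)}$, the weak-invariance assumption that $F(e)$ is an isomorphism, combined with functoriality on $\G_\alpha$, $\G_\beta$, $\G_\delta$ and the explicit relation $F(e') = F(e)^{-1}$ for a stabilization--destabilization pair, lets us reverse any edge after applying $F$. Concatenating $\eta$ with the reversal of $\nu$ produces an oriented loop $\lambda$ based at $H$, and the theorem reduces to showing $F(\lambda) = \mathrm{Id}_{F(H)}$ for every such loop.

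To handle loops, I would realize $\G_{(M,\gamma)}$ as (essentially) the $1$-skeleton of a cell structure on a space $\HDspace(M,\gamma)$ of ``Heegaard structures'' on $(M,\gamma)$, parametrized for instance by suitable pairs of generic gradient-like vector fields, or self-indexing Morse functions on $M$ adapted to $\gamma$, whose ascending and descending manifolds of the index-$1$ and index-$2$ critical points furnish the $\alpha$- and $\beta$-attaching sets. The four edge types of $\G_{(M,\gamma)}$ then match the four generic codimension-one bifurcations: a bifurcation between flow lines connecting critical points of one sweepout produces a handleslide (an $\alpha$- or $\beta$-equivalence), a birth/death of a canceling index-$1$/index-$2$ pair produces a (de)stabilization, and continuous motion inside a single stratum sweeps out a diffeomorphism edge, which is automatically isotopic to the identity in~$M$ because the path lies in $\HDspace(M,\gamma)$. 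Proposition~\ref{prop:diag-connected-strong} tells us that $\HDspace(M,\gamma)$ is path-connected, so it suffices to analyze relations at a single base vertex.

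The heart of the proof is to fill any loop in $\HDspace(M,\gamma)$ by a generic 2-parameter family and classify the codimension-two strata that it meets. Transverse crossings of two codimension-one walls give the distinguished rectangles of Definition~\ref{def:distinguished-rect}: two independent $\alpha$- and $\beta$-events commute as in case~(1); an $\alpha$- or $\beta$-event supported in one region commuting with a stabilization or diffeomorphism supported elsewhere gives cases (2), (3), and~(5); two disjoint stabilizations give case~(4); in every case the commutativity axiom forces $F$ to kill the resulting loop. Loops contained in a single diffeomorphism stratum correspond to loops in $\Diff_0$ acting on a fixed diagram, and the continuity axiom kills these. The only genuinely irreducible codimension-two phenomenon, where two cancellations or handleslides interact non-commutatively in a small local model, turns out after careful local analysis to be conjugate to the simple handleswap of Figure~\ref{fig:handleswap}; the handleswap axiom is designed precisely to make $F$ trivial on this triangle. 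Combining these four inputs gives $F(\lambda) = \mathrm{Id}_{F(H)}$ and hence the theorem.

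The main obstacle is the Cerf-theoretic bifurcation analysis: one must enumerate all generic codimension-two strata of the 2-parameter space of compatible gradient vector fields on a sutured manifold and verify that each is bounded by a loop controlled by one of the four strong-Heegaard axioms. The handleswap stratum is the most delicate, since one must show both that it actually arises as an unavoidable local model and that, up to conjugation by local diffeomorphisms, the specific triangle of Definition~\ref{def:simple-handleswap} is the universal such model. Handling the boundary behavior imposed by the sutures $\gamma$ throughout, and ensuring that all isotopies and local modifications respect the condition ``isotopic to the identity in $M$'' in Definition~\ref{def:subgraph}, adds further technical burden.
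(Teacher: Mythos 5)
Your outline matches the paper's proof strategy: reduce the statement to showing $F$ kills loops, realize a loop in $\G_{(M,\g)}$ as the boundary of a generic $2$-parameter family of sutured functions and gradient-like vector fields on $(M,\g)$ (using the contractibility of $\FV(M,\g)$ to fill the disk), and then invoke the Cerf-theoretic classification of codimension-one and codimension-two bifurcations to identify, via an adapted polyhedral decomposition of the disk, each codimension-two stratum with one of the elementary relations in Definition~\ref{def:strong-Heegaard}. Two substantive points are compressed into ``careful local analysis'' and ``technical burden'' in your sketch that in fact require real work: first, the codimension-two singularities do not produce the simple handleswap directly --- several distinct singularity types (flows between index-$1$ and index-$2$ critical points, index~$1$-$2$-$1$ birth-death-births, orbits of tangency involving saddle-nodes, double saddle-nodes) each produce $(k;l)$-handleswaps, which must then be reduced to the simple handleswap of Figure~\ref{fig:handleswap} by a separate combinatorial stabilization-and-handleslide procedure; and second, generic gradient-like vector fields produce \emph{overcomplete} diagrams with superfluous index-$0$ and index-$3$ critical points, and passing to genuine sutured diagrams requires a coherent choice of spanning trees for the associated graphs $\Gamma_\pm$, consistency of which across the polyhedral decomposition is another place where the commutativity, continuity, and functoriality axioms are invoked.
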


\begin{remark} \label{rem:iso} Another interpretation of
  Theorem~\ref{thm:iso} is that if we extend $\G_{(M,\gamma)}$ to a
  2-complex with 2-cells corresponding to the various polygons in
  Definition~\ref{def:strong-Heegaard}, the result is
  simply-connected.
\end{remark}

Theorem~\ref{thm:iso} is one of the most important and deepest results
of this paper.  We will prove it in Section~\ref{sec:proof}, and
develop the necessary technical tools in
Sections~\ref{sec:smooth}--\ref{sec:simplify}.

\begin{definition}
  Let $\cS$ be a set of diffeomorphism types of balanced sutured
  manifolds containing $[(M,\g)]$, and let $F \colon \G(\cS) \to \C$
  be a strong Heegaard invariant. If $H$ and $H'$ are isotopy diagrams
  of $(M,\g)$, then let
  \[
  F_{H,H'} = F(\eta),
  \]
  where $\eta$ is an arbitrary oriented path connecting $H$ to~$H'$.
  By Theorem~\ref{thm:iso}, the map $F_{H,H'}$ does not depend on the
  choice of~$\eta$.
\end{definition}

\begin{corollary} \label{cor:comp} Suppose that $H$, $H'$, $H'' \in
  |\G_{(M,\g)}|$. Then
  \[
  F_{H,H''} = F_{H',H''} \circ F_{H,H'}.
  \]
\end{corollary}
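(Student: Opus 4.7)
The plan is to deduce this immediately from Theorem~\ref{thm:iso} together with Proposition~\ref{prop:diag-connected-strong}, using the observation that concatenation of oriented paths is compatible with composition of the induced isomorphisms. The only nontrivial content is that oriented paths with the required endpoints exist and that the stated composition is path-independent.

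First I would invoke Proposition~\ref{prop:diag-connected-strong} to produce an oriented path $\eta_1$ in $\G_{(M,\g)}$ from $H$ to $H'$ and an oriented path $\eta_2$ from $H'$ to $H''$. Let $\eta = \eta_1 * \eta_2$ denote their concatenation, which is again an oriented path in $\G_{(M,\g)}$, now going from $H$ to $H''$. Writing $\eta_1$ as the sequence of edges $e_1, \dots, e_m$ and $\eta_2$ as $e_{m+1}, \dots, e_n$, the definition of $F(\eta)$ as the composite $F(e_n) \circ \cdots \circ F(e_1)$ immediately gives
\[
F(\eta) \;=\; \bigl(F(e_n) \circ \cdots \circ F(e_{m+1})\bigr) \circ \bigl(F(e_m) \circ \cdots \circ F(e_1)\bigr) \;=\; F(\eta_2) \circ F(\eta_1).
\]

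Now I would apply Theorem~\ref{thm:iso} three times. Since $F$ is a strong Heegaard invariant, $F_{H,H'}$ is defined as $F(\eta_1)$ independent of the choice of~$\eta_1$, and similarly $F_{H',H''} = F(\eta_2)$ and $F_{H,H''} = F(\eta)$. Substituting into the displayed equality yields
\[
F_{H,H''} \;=\; F(\eta) \;=\; F(\eta_2) \circ F(\eta_1) \;=\; F_{H',H''} \circ F_{H,H'},
\]
which is the desired conclusion.

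There is no real obstacle here beyond checking the two ingredients: the existence of oriented paths between any two vertices of $\G_{(M,\g)}$ (the content of Proposition~\ref{prop:diag-connected-strong}, which requires the balanced hypothesis and is where the bulk of work is hidden) and the path-independence of $F(\eta)$ on $\G_{(M,\g)}$ (the content of Theorem~\ref{thm:iso}). Granting those, the corollary is a one-line functoriality statement about concatenation of paths, exactly analogous to saying that a groupoid-valued functor on the fundamental groupoid of a simply-connected space is determined by its values at a single basepoint.
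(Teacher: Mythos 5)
Your proof is correct and is exactly the intended argument; the paper omits the proof because, as you observe, the corollary follows immediately from Theorem~\ref{thm:iso} (path-independence), Proposition~\ref{prop:diag-connected-strong} (existence of oriented paths), and the fact that $F$ applied to a concatenation of oriented paths is the composition of the maps on each piece.
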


Motivated by Definition~\ref{def:transitive-system} and Remark~\ref{rem:colimit},
we obtain a natural invariant of sutured manifolds from a strong Heegaard
invariant as follows.  As usual, we denote the category of abelian
groups by $\Ab$.

\begin{definition}\label{def:strong-functor}
  Let $\cS$ be a set of diffeomorphism types of balanced sutured
  manifolds, and let $F \colon \G(\cS) \to \Ab$ be a strong Heegaard
  invariant. Fix a balanced sutured manifold $(M,\g)$ with $[(M,\g)]
  \in \cS$, and suppose that $H$ and $H'$ are isotopy diagrams of
  $(M,\g)$. We say that the elements $x \in F(H)$ and $y \in F(H')$
  are \emph{equivalent}, in short $x \sim y$, if $y = F_{H,H'}(x)$.
  By Theorem~\ref{thm:iso}, 
  this is an equivalence relation on the disjoint union $\coprod_{H
    \in |\G_{(M,\g)}|} F(H)$. The equivalence class of an element $x
  \in F(H)$ is denoted by $[x]$. Under the natural addition operation,
  these equivalence classes form an abelian group that we call
  $F(M,\g)$. Furthermore, let $I_H \colon F(H) \to F(M,\g)$ be the
  isomorphism that maps $x$ to $[x]$.

  If $\phi \colon (M,\g) \to (M',\g')$ is a diffeomorphism, then we
  define
  \[
  F(\phi) \colon F(M,\g) \to F(M',\g')
  \]
  as follows. Pick an isotopy diagram $H = (\S,A,B)$ of $(M,\g)$, and
  let $d = \phi|_\S$. Then $H' = d(H)$ is an isotopy diagram of
  $(M',\g')$, and $d$ is a diffeomorphism from~$H$ to~$H'$, so it
  induces a map $F(d) \colon F(H) \to F(H')$. We define the
  isomorphism $F(\phi)$ as $I_{H'} \circ F(d) \circ (I_H)^{-1}$. So we
  have a commutative diagram
  \[
  \xymatrix{F(H) \ar[r]^{F(d)} \ar[d]^{I_H} & F(H') \ar[d]^{I_{H'}} \\
    F(M,\g) \ar[r]^{F(\phi)} & F(M',\g').}
  \]
\end{definition}

\begin{proposition}\label{prop:F-phi-invt}
  In the above definition, the isomorphism $F(\phi)$ does not depend
  on the choice of isotopy diagram $H$ of $(M,\g)$.
\end{proposition}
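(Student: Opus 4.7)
The plan is to reduce the statement to proving that for any two isotopy diagrams $H = (\S,A,B)$ and $H_1 = (\S_1,A_1,B_1)$ of $(M,\g)$, we have
\[
F_{H_1',H'} \circ F(d_1) = F(d) \circ F_{H_1,H},
\]
where $d = \phi|_{\S}$, $d_1 = \phi|_{\S_1}$, $H' = d(H)$, and $H_1' = d_1(H_1)$. Unwinding Definition~\ref{def:strong-functor}, one has $I_H^{-1} \circ I_{H_1} = F_{H_1,H}$ and $I_{H'}^{-1} \circ I_{H_1'} = F_{H_1',H'}$, so the displayed equation is equivalent to the equality of the two candidates $I_{H'} \circ F(d) \circ I_H^{-1}$ and $I_{H_1'} \circ F(d_1) \circ I_{H_1}^{-1}$ for $F(\phi)$.

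The next step is to invoke Proposition~\ref{prop:diag-connected-strong} to choose an oriented path $\eta$ in $\G_{(M,\g)}$ from $H_1$ to $H$. Pushing $\eta$ forward via $\phi$ vertex-by-vertex and edge-by-edge produces an oriented path $\phi(\eta)$ in $\G_{(M',\g')}$ from $H_1'$ to $H'$, since $\phi$ preserves every edge type: an $\a$- or $\b$-equivalence is sent to one of the same kind, a (de)stabilization to a (de)stabilization, and a diffeomorphism isotopic to the identity in~$M$ to one isotopic to the identity in~$M'$ (conjugate the isotopy by $\phi$). By Theorem~\ref{thm:iso}, $F_{H_1,H} = F(\eta)$ and $F_{H_1',H'} = F(\phi(\eta))$, so it suffices to show
\[
F(\phi(\eta)) \circ F(d_1) = F(d) \circ F(\eta).
\]

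By induction on the length of $\eta$, this reduces to checking that for every single edge $e \colon K \to K'$ in $\G_{(M,\g)}$, the square
\[
\xymatrix{F(K) \ar[r]^{F(e)} \ar[d]_{F(\phi|_{\S_K})} & F(K') \ar[d]^{F(\phi|_{\S_{K'}})} \\
F(\phi(K)) \ar[r]_{F(\phi(e))} & F(\phi(K'))}
\]
commutes. If $e$ is an $\a$- or $\b$-equivalence, then $\S_K = \S_{K'}$ and the two vertical arrows are the same diffeomorphism, so this is a distinguished rectangle of type~\eqref{item:rect-alpha-diff}. If $e$ is a stabilization or destabilization, the defining disk and punctured torus, together with their $\phi$-images, supply the data making the square a distinguished rectangle of type~\eqref{item:rect-stab-diff}. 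In both cases, commutativity of $F$ on the square is precisely axiom~\eqref{item:strong-commute} of Definition~\ref{def:strong-Heegaard}. When $e$ is itself a diffeomorphism isotopic to the identity in~$M$, the square already commutes at the level of diffeomorphisms by the definition $\phi(e) = \phi|_{\S_{K'}} \circ e \circ (\phi|_{\S_K})^{-1}$, and functoriality of $F$ on $\G_\d$ (axiom~\eqref{item:strong-funct}) transports this to the $F$-square.

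The main obstacle I foresee is the combinatorial bookkeeping: one must verify that each push-forward $\phi(e)$ is a legitimate edge of $\G_{(M',\g')}$ of the same type as $e$ (in particular, that diffeomorphisms isotopic to the identity in~$M$ conjugate to diffeomorphisms isotopic to the identity in~$M'$), and that each resulting square fits the precise combinatorial shape demanded by Definition~\ref{def:distinguished-rect}. These checks are essentially routine, however, once one observes that the ambient diffeomorphism $\phi$ transports all auxiliary data (attaching sets, stabilization disks and tori, isotopies) in the evident way.
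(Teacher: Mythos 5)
Your proof follows essentially the same route as the paper's: reduce to the commutation $F_{H_1',H'} \circ F(d_1) = F(d) \circ F_{H_1,H}$, choose a path in $\G_{(M,\g)}$ connecting the two diagrams, push it forward by $\phi$, and observe that each resulting small square is either a distinguished rectangle (of type~\eqref{item:rect-alpha-diff} or~\eqref{item:rect-stab-diff}) or a commuting square of diffeomorphisms, so $F$ commutes along each by axioms~\eqref{item:strong-commute} and~\eqref{item:strong-funct}. The only cosmetic difference is that you phrase the ladder argument as an induction on path length, whereas the paper simply concatenates the small squares into the large rectangle; the substance is identical.
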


\begin{proof}
  Suppose that $H_1 = (\S_1,A_1,B_1)$ and $H_2 = (\S_2,A_2,B_2)$ are
  isotopy diagrams of $(M,\g)$.  Let $d_1 = d|_{\S_1}$ and $d_2 =
  d|_{\S_2}$, and write $H_1' = d_1(H_1)$ and $H_2' = d_2(H_2)$.  Then
  we have to show that
  \[
  I_{H_1'} \circ F(d_1) \circ (I_{H_1})^{-1} = I_{H_2'} \circ F(d_2)
  \circ (I_{H_2})^{-1}.
  \]
  Since $(I_{H_2})^{-1} \circ I_{H_1} = F_{H_1,H_2}$ and
  $(I_{H_2'})^{-1} \circ I_{H_1'} = F_{H_1',H_2'}$, this amounts to
  proving that
  \begin{equation} \label{eqn:diff-canonical-commute} F_{H_1',H_2'}
    \circ F(d_1) = F(d_2) \circ F_{H_1,H_2}.
  \end{equation}
  Pick a path $\eta$ in $\G_{(M,\g)}$ of the form
  \[
  D_0 \stackrel{e_1}{\longrightarrow} D_1
  \stackrel{e_2}{\longrightarrow} \dots
  \stackrel{e_n}{\longrightarrow} D_n,
  \]
  such that $D_0 = H_1$ and $D_n = H_2$. There is a corresponding path
  $\eta'$ in $\G_{(M,\g)}$ from $H_1'$ to $H_2'$ of the form
  \[
  D_0' \stackrel{e_1'}{\longrightarrow} D_1'
  \stackrel{e_2'}{\longrightarrow} \dots
  \stackrel{e_n'}{\longrightarrow} D_n',
  \]
  obtained as follows. For every $i \in \{\, 1,\dots,n \,\}$, let
  $D_i' = \phi(D_i)$, and let $h_i \colon D_i \to D_i'$ be the
  restriction of $\phi$ to $D_i$.  If $e_i$ is an $\a$-equivalence,
  $\b$-equivalence, or stabilization, then we denote by $e_i'$ the
  corresponding $\a$-equivalence, $\b$-equivalence, or stabilization
  from $D_{i-1}'$ to $D_i'$. Furthermore, if $e_i$ is a diffeomorphism
  isotopic to the identity, then we take
  \[
  e_i' = h_i \circ e_i \circ h_{i-1}^{-1};
  \]
  this is also isotopic to the identity. Consider the following
  subgraph of $\G(\cS)$:
  \[
  \xymatrix{
    D_0 \ar[r]^{e_1} \ar[d]^{h_0 = d_1} & D_1 \ar[r]^{e_2} \ar[d]^{h_2} & \dots \ar[r]^{e_n} & D_n  \ar[d]^{h_n = d_2} \\
    D_0' \ar[r]^{e_1'} & D_1' \ar[r]^{e_2'} & \dots \ar[r]^{e_n'} &
    D_n'.  }
  \]
  By construction, each small square is either a distinguished
  rectangle, or a commuting rectangle of diffeomorphisms.  The functor
  $F$ commutes along the former by the Commutativity Axiom of strong
  Heegaard invariants, and along the latter by the Functoriality Axiom
  for $\G_\d(\cS)$. Hence $F$ also commutes along the large
  rectangle, giving equation~\eqref{eqn:diff-canonical-commute}.
\end{proof}

Let $\Sut_\bal$, $\Sut_\man$, and $\Sut_\link$ denote the full
subcategories of $\Sut$ whose objects have diffeomorphism types lying
in $\cS_\bal$, $\cS_\man$, and $\cS_\link$, respectively.

\begin{proof}[Proof of Theorem~\ref{thm:invt-sfh}]
  By Theorem~\ref{thm:strong}, the morphism $F = \SFH$ is a strong
  Heegaard invariant of $\cS_\bal$. Given isotopy diagrams $H$ and
  $H'$ of the balanced sutured manifold $(M,\g)$,
  Theorem~\ref{thm:iso} gives an isomorphism $F_{H,H'} \colon F(H) \to
  F(H')$.  These isomorphisms are canonical according to
  Corollary~\ref{cor:comp}.  Hence, the groups~$F(H)$ and the
  isomorphisms $F_{H,H'}$ form a transitive system, and we obtain the
  limit
  \[
  \SFH(M,\g) = F(M,\g)
  \]
  as in Definition~\ref{def:strong-functor}.  A diffeomorphism $\phi$
  between balanced sutured manifolds induces an isomorphism $F(\phi)$
  as in Definition~\ref{def:strong-functor}, and these are
  well-defined according to Proposition~\ref{prop:F-phi-invt}.  So we
  have all the ingredients for a functor $\SFH\co \Sut_\bal \to
  \Field\text{-}\Vect$.

  What remains to show is that isotopic diffeomorphisms induce
  identical maps on~$\SFH$, or equivalently, that for any
  diffeomorphism $\phi \colon (M,\g) \to (M,\g)$ isotopic
  to~$\text{Id}_{(M,\g)}$, we have $F(\phi) =
  \text{Id}_{\SFH(M,\g)}$. Let $H$ be an isotopy diagram of $(M,\g)$,
  and write $d = \phi|_H$ and $H' = \phi(H)$.  By definition,
  $F(\phi) = I_{H'} \circ F(d) \circ (I_H)^{-1}$.  So this is the
  identity if and only if
  \[
  F(d) = (I_{H'})^{-1} \circ I_H = F_{H,H'}.
  \]
  This is true since $d$ is isotopic to the identity, hence it
  corresponds to an edge of $\G_{(M,\g)}$ between $H$ and $H'$, and so
  if we take the path $\eta$ from $H$ to $H'$ to be the single
  edge~$d$, then $F(d) = F(\eta) = F_{H,H'}$.
\end{proof}

\begin{lemma} \label{lem:2-planes} Let $(Y,p)$ be a based 3-manifold,
  and let $V_0$ and $V_1$ be oriented $2$-planes in $T_pY$.  Suppose
  that $\phi$, $\psi \colon (Y,p) \to (Y,p)$ are diffeomorphisms such
  that $d\phi(V_0) = V_1$ and $d\psi(V_0) = V_1$ in an oriented sense.
  Suppose furthermore that both $\phi$ and $\psi$ are isotopic to $\text{Id}_Y$
  through diffeomorphisms fixing $p$.  Then~$\phi$ and~$\psi$ are
  isotopic to each other through diffeomorphisms fixing $p$ and mapping $V_0$ to
  $V_1$.
\end{lemma}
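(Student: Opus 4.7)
The plan is to reduce the claim to showing that the single diffeomorphism $\chi = \psi^{-1} \circ \phi$ is isotopic to $\text{Id}_Y$ through basepoint-preserving diffeomorphisms each preserving $V_0$ as an oriented 2-plane, and then to deduce this from the long exact sequence of a suitable evaluation fibration, using that $\pi_1(S^2)=0$. For the reduction, I observe that $\chi = \psi^{-1} \circ \phi$ fixes $p$, is isotopic to $\text{Id}_Y$ through basepoint-preserving diffeomorphisms (by concatenating the given isotopies for $\phi$ and $\psi^{-1}$), and satisfies $d\chi_p(V_0) = V_0$ as oriented 2-planes. Given any isotopy $\chi_t$ from $\text{Id}_Y$ to $\chi$ through basepoint-preserving diffeomorphisms each preserving $V_0$, the assignment $t \mapsto \psi \circ \chi_t$ furnishes the required isotopy from $\psi$ to $\phi$: each $\psi \circ \chi_t$ fixes $p$, and sends $V_0$ to $d\psi_p(V_0) = V_1$.

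To produce $\chi_t$, let $\mathrm{Gr}^+(T_pY)$ denote the space of oriented 2-planes in $T_pY$, naturally identified with $S^2$, and consider the evaluation map
\[
\mathrm{ev} \co \Diff(Y,p) \to \mathrm{Gr}^+(T_pY), \quad \chi \mapsto d\chi_p(V_0).
\]
Its fiber over $V_0$ is the subgroup $\Diff(Y,p,V_0)$ of basepoint-preserving diffeomorphisms additionally preserving $V_0$. I would argue that $\mathrm{ev}$ is a Serre fibration by constructing local sections: given $W \in \mathrm{Gr}^+(T_pY)$ close to $V_0$, choose a smooth family of elements of $GL^+(T_pY)$ carrying $V_0$ to $W$, and extend this family to an isotopy of $Y$ supported in a coordinate chart near $p$ via a bump function.

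The long exact sequence of this fibration, based at $\text{Id}_Y$ and $V_0$, contains the segment
\[
\pi_1(S^2) \to \pi_0(\Diff(Y,p,V_0)) \to \pi_0(\Diff(Y,p)).
\]
Since $\pi_1(S^2) = 0$, the right-hand map is injective. As $\chi$ lies in $\Diff(Y,p,V_0)$ and also in the identity component of $\Diff(Y,p)$, it must lie in the identity component of $\Diff(Y,p,V_0)$, yielding the required isotopy. The main technical obstacle is justifying that $\mathrm{ev}$ is a Serre fibration; although this is standard for jet-evaluation maps on smooth diffeomorphism groups, an elementary alternative is to first modify $\psi$ by an isotopy supported near $p$ so that $d\psi_p = d\phi_p$ (using that the stabilizer of an oriented 2-plane in $GL^+(T_pY) \cong GL^+(\mathbb{R}^3)$ is connected, being isomorphic to $GL^+(\mathbb{R}^2) \times \mathbb{R}^2 \times \mathbb{R}_{>0}$), and then appeal to $\pi_1(S^2)=0$ to straighten out the path of planes traced by any remaining isotopy from $\psi$ to $\phi$.
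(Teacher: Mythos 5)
Your proposal is correct, and it takes a genuinely different route from the paper. The paper's own proof is a hands-on $2$-parameter construction: it starts with the given isotopies $\phi_t, \psi_t$, uses $\pi_1(S^2)=0$ to fill in a $2$-parameter family of oriented $2$-planes $V_{t,u}$ interpolating between the loops $d\phi_t(V_0)$ and $d\psi_t(V_0)$, trivializes the resulting bundle to obtain a family of linear isomorphisms $j_{t,u}$ of $T_pY$, realizes these as derivatives of germs of diffeomorphisms near $p$, and finally globalizes with the relative isotopy extension theorem so that $u \mapsto g_{1,u}$ is the desired isotopy. You instead make the clean algebraic reduction to showing $\chi = \psi^{-1}\circ\phi$ lies in the identity component of $\Diff(Y,p,V_0)$, and then read this off from exactness of
\[
\pi_1\bigl(\mathrm{Gr}^+(T_pY)\bigr) \to \pi_0\bigl(\Diff(Y,p,V_0)\bigr) \to \pi_0\bigl(\Diff(Y,p)\bigr)
\]
together with $\pi_1(S^2)=0$. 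Both arguments hinge on the simple-connectivity of $S^2$ and on isotopy extension, but the packaging differs substantively: your approach is more conceptual and shorter, whereas the paper's is more explicit and self-contained. The cost of your route is that the real content is pushed into the claim that the $1$-jet evaluation map $\Diff(Y,p) \to \mathrm{Gr}^+(T_pY)$ is a Serre fibration; your local-section sketch (extend a germ of a linear family to a compactly supported isotopy near $p$) is the right idea and is a standard Cerf--Palais-type fact, but as written it would need to be carried out. Two small remarks: exactness of the sequence of pointed sets at $\pi_0$ of the fiber, combined with the fact that the map is a group homomorphism, is exactly what licenses the word ``injective,'' so that step is fine; and the ``elementary alternative'' you sketch at the end is in spirit closer to the paper's explicit argument than to your main fibration argument.
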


\begin{proof}
  This follows from the fact that the Grassmannian $M$ of oriented
  $2$-planes in~$T_pY$ is homeomorphic to $S^2$ and is hence
  simply-connected, together with an isotopy extension argument as
  follows.

  Let $\{\, \phi_t \colon t \in I \,\}$ and $\{\, \psi_t \colon t \in
  I \,\}$ be isotopies from $\text{Id}_Y$ to $\phi$ and $\psi$,
  respectively, through diffeomorphisms fixing $p$.  Since the
  Grassmannian $M$ is
  simply-connected, there is a $2$-parameter family of $2$-planes $V_{t,u}
  < T_pY$ for $(t,u) \in I \times I$ such that $V_{t,0} =
  d\phi_t(V_0)$ and $V_{t,1} = d\psi_t(V_0)$ for every $t \in I$,
  while $V_{0,u} = V_0$ and $V_{1,u} = V_1$ for every $u \in I$. The
  2-planes $V_{t,u}$ form a vector bundle $\nu$ over $I \times I$.
  Since $\nu$ is trivial, there is a family of isomorphisms $i_{t,u}
  \colon V_0 \to V_{t,u}$ for $(t,u) \in I \times I$ such that
  $i_{0,u} = \text{Id}_{V_0}$ for every $u \in I$, and $i_{t,0} =
  (d\phi_t)|_{V_0}$ and $i_{t,1} = (d\psi_t)|_{V_0}$ for every $t \in
  I$.  We can extend this to a $2$-parameter family of isomorphisms
  $j_{t,u} \colon T_pY \to T_pY$ such that $j_{t,u}|_{V_0} = i_{t,u}$
  for every $(t,u) \in I \times I$, while $j_{0,u} = \text{Id}_{T_pY}$
  for every $u \in I$, and $j_{t,0} = d\phi_t$ and $j_{t,1} = d\psi_t$
  for every~$t \in I$.  By Gromov's parametric $h$-principle for open manifolds~\cite[Theorem~7.2.3]{EM:h-principle},
  there is a neighborhood~$U$ of~$p$ and a family of diffeomorphisms $h_{t,u} \colon (U,p) \to
  (Y,p)$ such that $dh_{t,u} = j_{t,u}$ for every $(t,u) \in I \times
  I$, and $h_{t,0} = \phi_t|_U$ and $h_{t,1} = \psi_t|_U$ for every $t
  \in I$.  Using the relative isotopy extension theorem, we obtain a
  $2$-parameter family of diffeomorphism $g_{t,u} \colon (Y,p) \to
  (Y,p)$ such that $g_{0,u} = \text{Id}_Y$ for every $u \in I$, while
  $g_{t,0} = \phi_t$ and $g_{t,1} = \psi_t$ for every $t \in I$;
  furthermore, $g_{t,u}|_U = h_{t,u}$ for every $(t,u) \in I \times
  I$.  Then the family $\{\, g_{1,u} \colon u \in I \,\}$ provides the desired
  isotopy from $g_{1,0} = \phi$ to $g_{1,1} = \psi$.
\end{proof}

\begin{proof}[Proof of Theorem~\ref{thm:invt-hf}]
  Let $\HF$ be one of the four versions of Heegaard Floer homology,
  and let $\Manpv$ be the category of based 3-manifolds with a choice
  of oriented tangent 2-plane at the basepoint. A morphism from the
  object $(Y,p,V)$ to $(Y',p',V')$ is a diffeomorphism $\phi \colon
  (Y,p) \to (Y',p')$ such that $d\phi(V) = V'$ in an oriented sense.
  As in the proof of Theorem~\ref{thm:invt-sfh}, by Theorem~\ref{thm:strong},
  $\HF$ induces a functor $\HF_1 \co \Sut_\man \to \Ab$.  Composing with the
  functor $(Y,p,V) \mapsto Y(p,V)$ from Definition~\ref{def:Y-p} gives
  a functor $\HF_2\co\Manpv\to\Ab$. As in the proof of
  Theorem~\ref{thm:invt-sfh}, we obtain that isotopic morphisms induce
  identical maps, where we say that two morphisms from $(Y,p,V)$ to
  $(Y,p,V')$ are isotopic if they can be connected by a path of
  morphisms from $(Y,p,V)$ to $(Y,p,V')$.

  Each fiber of the forgetful functor $\Manpv \to \Manp$ is a sphere,
  which is simply-connected, so $\HF_2(Y,p,V)$ has no monodromy along
  any loop of oriented 2-planes in $T_pY$.  More precisely, fix a
  based manifold $(Y,p) \in \Manp$, and let $M$ be the Grassmannian of
  oriented 2-planes in $T_pY$.  Our goal is to construct a canonical
  isomorphism from $\HF_2(Y,p,V_0)$ to $\HF_2(Y,p,V_1)$ for any pair
  $(V_0, V_1) \in M \times M$.  Take an arbitrary morphism $\phi$ from
  $(Y,p,V_0)$ to $(Y,p,V_1)$, and such that $\phi$ is isotopic to
  $\text{Id}_Y$ through diffeomorphisms fixing~$p$.  Then we claim
  that the isomorphism
  \[
  \HF_2(\phi) \colon \HF_2(Y,p,V_0) \to \HF_2(Y,p,V_1)
  \]
  is independent of the choice of $\phi$. Indeed, by
  Lemma~\ref{lem:2-planes}, if $\psi$ is another choice, then $\phi$
  and $\psi$ are isotopic through diffeomorphisms fixing $p$ and
  mapping $V_0$ to $V_1$, and hence $\HF_2(\phi) = \HF_2(\psi)$. We
  denote this isomorphism by $i_{V_0,V_1}$.  So the groups
  $\HF_2(Y,p,V)$ for $V \in M$ and the isomorphisms $I_{V_0,V_1}$ for
  $(V_0,V_1) \in M \times M$ form a transitive system, and hence we
  can take the limit $\HF(Y,p)$. We have shown that~$\HF_2$ factors
  through a functor $\HF \co \Manp \to \Ab$.

  Since the forgetful morphism $\Field[U]\text{-}\Mod \to \Ab$ is faithful,
  for each of $\HFa$, $\HFm$, $\HFp$, and $\HFinf$,
  Theorem~\ref{thm:strong} gives a functor with target
  category $\Field[U]\text{-}\Mod$, as in the statement of the theorem.
\end{proof}

\begin{proof}[Proof of Theorem~\ref{thm:invt-hfl}]
  By Theorem~\ref{thm:strong}, the link Floer homology groups $\HFLa$ and $\HFLm$ are strong
  Heegaard invariants of~$\cS_\link$. Let $\HFL$ be one of $\HFLa$ and $\HFLm$.
  As in the proof of Theorem~\ref{thm:invt-sfh}, $\HFL$ induces a functor
  \[
  \HFL_1\co \Sut_\link \to \Ab
  \]
  for both versions of link Floer
  homology. Composing with the map
  \[
  (S^3,L,\bp,\bq) \mapsto S^3(L,\bp,\bq)
  \]
  introduced in Definition~\ref{def:Y-K} gives a
  functor $\HFL_2 \co\Linkpp \to \Ab$, where~$\Linkpp$ is the category
  of oriented links with two (distinguished) basepoints on each component. The fiber of
  the forgetful map $\Linkpp \to \Linkp$ over a based link $(L,\bp)$
  is homeomorphic to $\RR^{|L|}$ and hence contractible,
  so -- as in the proof of Theorem~\ref{thm:invt-hf} -- the morphism $\HFL_2$
  factors through a functor $\HFL \co \Linkp \to \Ab$.
  Again, the invariant takes values in a somewhat richer category than
  $\Ab$.
\end{proof}

Finally, we indicate how to obtain $\SpinC$-refined versions of the
above results. Let~$F$ be a strong Heegaard invariant defined
on a set~$\cS$ of diffeomorphism types of balanced sutured manifolds.
Fix a sutured manifold $(M,\g)$ such that $[(M,\g)] \in \cS$.
Suppose that, for every isotopy diagram $H$ of $(M,\g)$ and every
$\spinc \in \SpinC(M,\g)$, we are given
an abelian group $F(H,\spinc)$ such that
\[
F(H) = \bigoplus_{\spinc \in \SpinC(M,\g)} F(H,\spinc).
\]
In addition, we assume that if $e$ is an edge of $\G_{(M,\g)}$ from
$H$ to $H'$, then $F(e)|_{F(H,\spinc)}$ is an isomorphism between
$F(H,\spinc)$ and $F(H',\spinc)$. Then the limit $F(M,\g)$ will split
as a direct sum $\bigoplus_{\spinc \in \SpinC(M,\g)} F(M,\g,\spinc)$.
Relative homological gradings on the summands
$F(M,\g,\spinc)$ can be treated in a similar manner.

\begin{remark} \label{rem:difference}
Suppose that $\mathcal{H}$ and $\mathcal{H}'$ are admissible diagrams of the same $\SpinC$ 3-manifold~$(Y,\spinc)$.
Ozsv\'ath and Szab\'o~\cite[Theorem~2.1]{OS06:HolDiskFour} constructed an isomorphism $\Psi^\circ_\spinc \colon \HF^\circ(\mathcal{H}) \to \HF^\circ(\mathcal{H}')$ by composing maps associated to $\a$-equivalences, $\b$-equivalences, and stabilizations.
As the Reidemeister-Singer theorem only implies two diagrams become isotopic after a sequence of such moves, implicit in their construction is a non-unique diffeomorphism map (see~\cite[Lemma~2.10]{OS06:HolDiskFour}
and~\cite[Proposition~2.2]{OS04:HolomorphicDisks}), and hence $\Psi^\circ_\spinc$ is not well-defined.
The isomorphism we construct in Definition~\ref{def:iso} is different in that it involves a new move, namely
a diffeomorphism isotopic to the identity. As we shall see in Section~\ref{sec:HeegaarFloerInvariant}, the isomorphisms we associate
to $\a$-equivalences, $\b$-equivalences, and stabilizations agree with the isomorphisms defined by Ozsv\'ath and Szab\'o~\cite[Section~2.5]{OS06:HolDiskFour}, but they are defined in a more computable manner; see Remark~\ref{rem:equivalence}.
\end{remark}
\section{Examples} \label{sec:examples}

In this section, we give several examples that illustrate some of the
issues that arise when one tries to define Heegaard Floer homology in
a functorial manner.

\begin{example}
  \begin{figure}
    \centering
    \includegraphics{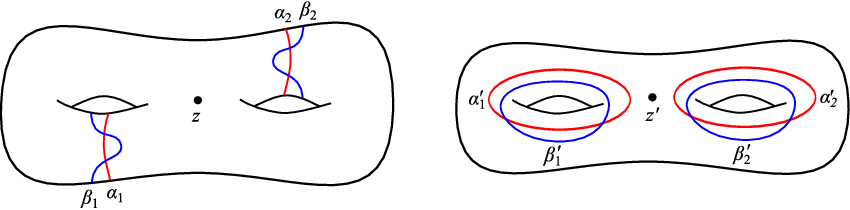}
    \caption{Two diffeomorphic diagrams, both defining manifolds
      diffeomorphic to $(S^1 \times S^2) \conn (S^1 \times S^2)$, for
      which different identifications induce different maps on
      $\HFa$.}
    \label{fig:example-abstract}
  \end{figure}
  This example shows why it does not suffice to work with abstract
  (i.e., non-embedded) Heegaard diagrams to obtain canonical
  isomorphisms, and hence a functorial invariant of 3-manifolds. See
  the diagrams
  \[
  \HD = \left(\S,\{\,\a_1,\a_2\,\},\{\,\b_1,\b_2\,\},z \right) \quad \text{and} \quad
  \HD' = \left( \S',\{\,\a_1',\a_2'\,\},\{\,\b_1',\b_2'\,\},z' \right)
  \]
  in Figure~\ref{fig:example-abstract}.  Both define sutured manifolds
  diffeomorphic to $(S^1 \times S^2) \conn (S^1 \times S^2)$ . The
  diagrams $\HD$ and $\HD'$ are clearly diffeomorphic. Choose a
  diffeomorphism $d \colon \HD \to \HD'$. Observe that there is an
  involution $f \colon \HD \to \HD$ such that $f(\a_1) = \a_2$, $f(\b_1) =
  \b_2$, and $f(z) = z$, obtained by $\pi$ rotation about the axis perpendicular
  to the surface and passing through $z$. Then $d \circ
  f$ is also an identification between~$\HD$ and~$\HD'$.  However,
  the diffeomorphisms~$d$ and~$d \circ f$
  induce different isomorphisms between~$\HFa(\HD)$ and~$\HFa(\HD')$.
  Indeed, $\HFa(\HD) \cong (\ZZ_2)^4$, and $f_*$ swaps the
  two $\ZZ_2$ terms lying in the ``middle'' homological grading.  This
  is why in the graph~$\G_{(M,\g)}$ we only consider diagrams embedded
  in~$(M,\g)$, and edges corresponding only to diffeomorphisms
  isotopic to the identity in~$M$. Otherwise, Theorem~\ref{thm:iso}
  would not hold.
\end{example}

\begin{example}
  Consider the diagram $\HD = (\S,\alphas,\betas,z)$ of $S^1 \times S^2$
  shown in Figure~\ref{fig:example-ori}.  Here, $S^1 \times S^2$ is
  represented by the region bounded by the two concentric spheres with
  common center~$O$, and we identify the points of the outer and inner
  spheres that lie on a ray through~$O$. The Heegaard surface $\S$ is
  represented by the horizontal annulus; after gluing the outer and
  inner boundary circles we get a torus. There is a single $\a$-circle
  and a single $\b$-circle; they intersect in two points~$a$ and~$b$.
  In the diagram, the dashed line represents an axis~$A$ passing
  through the basepoint $z$. If we rotate $\S$ about~$A$ by an angle~$\pi
  t$ for some $t \in [0,1]$, we get an automorphism~$d_t$ of~$S^1
  \times S^2$.  Notice that $d_1(\S,\alphas,\betas,z) =
  (\S,\alphas,\betas,z)$ and $d_1(a) = b$ and $d_1(b) = a$;
  furthermore, $d_t$ fixes the basepoint $z$ for every $t \in
  [0,1]$. Since $\HFa(\S,\alphas,\betas,z)$ is generated by~$a$ and~$b$,
  it appears that~$\HFa$ has non-trivial monodromy around the
  loop of diagrams~$d_t(\HD)$. However, $d_1|_\S$ is orientation
  reversing. This shows that we need to consider oriented Heegaard
  surfaces in~$\G_{(M,\g)}$ to obtain naturality.

\begin{figure}
  \centering
  \includegraphics{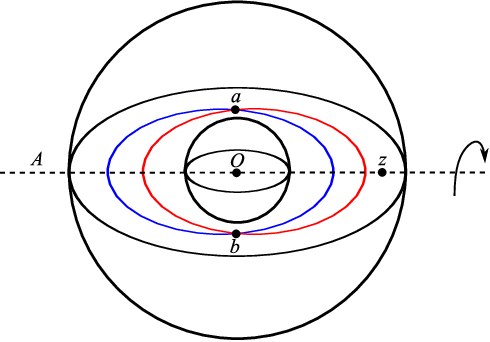}
  \caption{A diagram of $S^1 \times S^2$ for which an orientation
    reversing isotopy swaps the two generators of $\HFa$.}
  \label{fig:example-ori}
\end{figure}

\end{example}

\begin{example}
  Next, consider the diagram $\HD = (\S,\alphas,\betas)$ of the
  lens-space~$L(p,1)$ shown in
  Figure~\ref{fig:example-basepoint} for~$p =3$.  In particular, $\S$
  is the torus obtained by identifying the opposite edges of the
  rectangle $[0,1] \times [0,1]$, the curve~$\a$ is a line of slope~$0$
  and~$\b$ is a line of slope~$p$. Then~$\a \cap \b$ consists of~$p$ points
  $a_0, \dots, a_{p-1}$ that generate $\HFa(\HD)$. For $t \in
  [0,1]$, let $\HD_t$ be the diagram of $L(p,1)$ obtained by translating~$\b$
  horizontally by~$t/p$.  Then $\HD_0 = \HD_1$, so we obtain a loop
  of diagrams for~$L(p,1)$. Notice that $\HFa(\HD_t)$ has non-trivial
  monodromy, as it maps~$a_i$ to~$a_{i+1}$ for $0 \le i \le p-1$, where
  $a_p = a_0$. Non-trivial monodromy makes it impossible to assign a
  Heegaard Floer group to~$L(p,1)$ independent of the choice of diagram.
  This example is ruled out by requiring that there is at
  least one basepoint, and isotopies of the~$\alphas$ and~$\betas$
  curves cannot pass through the basepoints. Note that a choice of basepoint
  is necessary to assign a $\text{Spin}^c$ structure to each generator.

\begin{figure}
  \centering
  \includegraphics{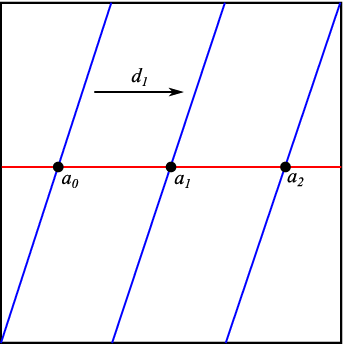}
  \caption{A diagram of $L(3,1)$ with no basepoint, together with an
    isotopy that permutes the 3 generators of $\HFa$.}
  \label{fig:example-basepoint}
\end{figure}

\end{example}

\begin{example}
  Here, we also consider a genus one Heegaard diagram $\HD =
  (\S,\alphas,\betas,\boldsymbol{z})$ of~$S^1 \times S^2$, but with
  two basepoints $\boldsymbol{z} = \{z_1,z_2\}$, see
  Figure~\ref{fig:example-bp-swap}. Heegaard Floer homology for
  multi-pointed Heegaard diagrams was introduced in~\cite[Section~4]{OS08:HFL}.
  Again, we draw the diagram on
  $[0,1] \times [0,1]$. We have two $\a$-curves: $\a_1 =
  \{1/4\} \times [0,1]$ and $\a_2 = \{3/4\} \times
  [0,1]$. The curve $\b_i$ is a small Hamiltonian translate of
  $\a_i$ such that $\a_1 \cap \b_1$ consists of two points that we
  label $a$, $b$, and $\a_2 \cap \b_2$ consists of two points $x$, $y$. We
  also arrange that $\b_2$ is a translate of $\b_1$ by the vector
  $(1/2,0)$. We choose two basepoints, namely $z_1 = (0,1/2)$ and $z_2
  = (1/2,1/2)$. For $t \in [0,1]$, let $d_t$ be the diffeomorphism of
  $\S$ given by $d_t(u,v) = (u+t/2,v)$. (This extends to~$S^1 \times
  S^2$ as rotation by $\pi t$ in the $S^1$-direction.)  Let $\HD_t =
  d_t(\HD)$ for $t \in [0,1]$, then $\HD_1$ = $\HD_0$, so we have a loop of
  doubly pointed diagrams of $S^1 \times S^2$.  Notice that
  $\HFa(\S,\alphas,\betas,z_1,z_2)$ is generated by the pairs
  $\{a,x\}$, $\{a,y\}$, $\{b,x\}$, and $\{b,y\}$.  The diffeomorphism
  $d_1$ swaps the generators $\{a,y\}$ and $\{b,x\}$, and swaps the
  basepoints $z_1$ and $z_2$.  Hence, to have naturality for $\HFa$,
  we need to work with based 3-manifolds and based diffeomorphisms.
  However, if $\spinc_0$ denotes the torsion $\SpinC$-structure
  on~$S^1 \times S^2$, a straightforward computation shows that
  \[
  \HFm(\HD,\spinc_0) \cong \ZZ[U_1, U_2]/(U_1 - U_2) \langle\, \{a,x\},
  \{a,y\} + \{b,x\} \,\rangle
  \]
  as a $\ZZ[U_1,U_2]$-module, and $d_1$ acts trivially on it.
  Compare this with our discussion in the introduction that
  the basepoint moving map can be non-trivial on~$\HFa$ but
  is trivial on~$\HFm$.

\begin{figure}
  \centering
  \includegraphics{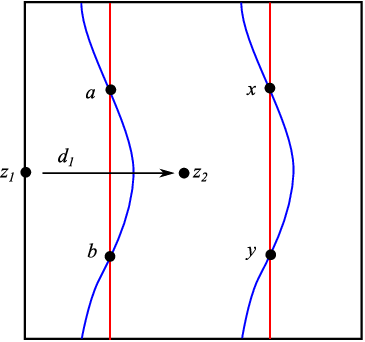}
  \caption{A doubly pointed diagram of $S^1\times S^2$. A
    $\pi$-rotation in the $S^1$-direction swaps the basepoints and
    induces a non-trivial automorphism of $\HFa$, while being trivial
    on $\HFp$ and $\HFm$ in the torsion $\SpinC$-structure.}
  \label{fig:example-bp-swap}
\end{figure}

\end{example}

\begin{example}
  Even if we isotope the $\a$- and $\b$-curves in the complement of
  the basepoint, one might obtain a loop of diagrams such that
  $\CFa$ has non-trivial holonomy around it. However, as we
  shall prove, there is no holonomy if we pass to homology.

  We describe a diagram $\HD$ of $S^1 \times S^2$ as follows;
  see Figure~\ref{fig:example-twist}.  Let $\S$ be the torus
  represented by $[0,1] \times [0,1]$, take $\a$ to be $\{1/2\} \times
  [0,1]$, and let $\b$ be a Hamiltonian translate of $\a$ such that
  $\a \cap \b$ consists of four points $a_0,\dots,a_3$, and $\b$ is
  invariant under translation by $(0,1/2)$. Let $d_t$ be translation
  by $(0, t/2)$ for $t \in [0,1]$, and let $\HD_t = (\S,\a,
  d_t(\b),z)$. By construction, $\HD_0 = \HD_1$. Since $d_1(a_i) =
  a_{i+2}$ (where $i+2$ is to be considered modulo 4), we see that
  $d_1$ acts non-trivially on $\CFa(\HD)$. However, as~$\HFa(\HD)$
  is generated by $a_0 + a_2$ and $a_1 + a_3$, the induced action on
  homology is trivial.

  More generally, suppose that $(\S,\alphas,\betas,z)$ is a Heegaard
  diagram, $\a \in \alphas$ and $\b \in \betas$.  Furthermore, suppose
  that there is a regular neighborhood $N \approx \a \times [-1,1]$ of
  $\a$ such that $\b \subset N$ and $\b$ is transverse to the fibers
  $\{p\} \times [-1,1]$ for every $p \in \a$. Then we can apply a
  ``finger move'' inside~$N$ that is the identity outside~$N$ and
  preserves~$\a \cup \b$
  setwise, and hence permutes the points of $\a \cap \b$.
  Even though this isotopy acts non-trivially on the chain level,
  it is trivial on the homology level.

\begin{figure}
  \centering
  \includegraphics{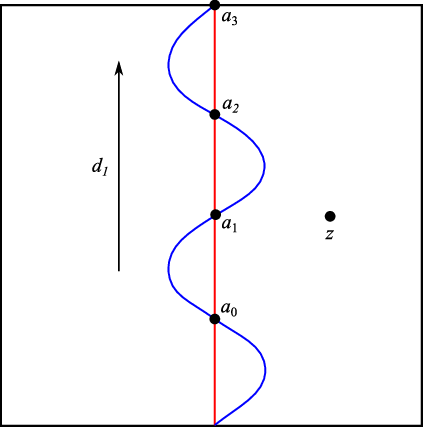}
  \caption{A Heegaard diagram of $S^1 \times S^2$. If we translate the
    $\b$-curve in the vertical direction by $1/2$ we get a non-trivial
    automorphism of the chain complex that is trivial on the
    homology.}
  \label{fig:example-twist}
\end{figure}

\end{example}
\section{Singularities of smooth functions}
\label{sec:smooth}

In this section, we recall some classical results about singularities
of smooth real valued functions following Arnold, Goryunov, Lyashko, and Vasil'ev~\cite{AGLV98:sing}.
The reader familiar with singularity theory can safely skip to Section~\ref{sec:gradients}.
This part is the beginning of the proof of Theorem~\ref{thm:iso} on strong
Heegaard invariants that culminates in Section~\ref{sec:proof}.
The reader interested in the proof of Theorem~\ref{thm:invt-hf},
the application of Theorem~\ref{thm:iso} to Heegaard Floer homology,
should skip to Section~\ref{sec:HeegaardFloer}.

\begin{definition}
  Let $f$ be a smooth function on the manifold $M$. A point $p \in M$
  is said to be a \emph{critical point} of $f$ if $df_p = 0$.
\end{definition}

\begin{definition}
  Let $\mathcal{E}_n$ be the set of germs at~$0$ of smooth functions $f
  \colon \R^n \to \R$.  Let
  $\mathcal{D}_n$ be the group of germs of diffeomorphisms $g \colon
  (\R^n,0) \to (\R^n,0)$.  The group $\mathcal{D}_n$ acts on
  $\mathcal{E}_n$ by the rule $g(f) = f \circ g^{-1}$.  Two
  function-germs $f$, $f' \in \mathcal{E}_n$ are said to be
  \emph{equivalent} if they lie in the same $\mathcal{D}_n$-orbit.

  The equivalence class of a function germ at a critical point is
  called a \emph{singularity}.  A \emph{class of singularities} is any
  subset of the space $\mathcal{E}_n$ that is invariant under the
  action of the group $\mathcal{D}_n$.
\end{definition}

\begin{definition}
  A critical point is said to be \emph{nondegenerate} (or a
  \emph{Morse critical point}) if the second differential (or Hessian)
  of the function at that point is a nondegenerate quadratic form. The
  class of non-degenerate critical points is called $A_1$.
\end{definition}

\begin{theorem}[Morse Lemma]
  In a neighborhood of a nondegenerate critical point $a \in M^n$ of
  the function $f \colon M^n \to \R$, there exists a coordinate system
  in which $f$ has the form
  \[
  f(x) = -x_1^2 - \dots - x_k^2 + x_{k+1}^2 + \dots + x_n^2 + f(a).
  \]
\end{theorem}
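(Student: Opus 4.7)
The plan is the classical inductive reduction. First I would translate so that $a = 0$ and $f(a) = 0$. Since $df_0 = 0$, Taylor's theorem with integral remainder gives
\[
f(x) = \sum_{i,j=1}^n h_{ij}(x)\, x_i x_j,
\]
with $h_{ij}$ smooth; after symmetrizing we may take $h_{ij} = h_{ji}$, and $(h_{ij}(0))$ is, up to a factor of $2$, the Hessian of $f$ at $0$, hence nondegenerate by hypothesis.

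The heart of the proof is an inductive diagonalization, splitting off one squared coordinate at a time. Suppose inductively that in some smooth local coordinate system $(y_1,\dots,y_{k-1},x_k,\dots,x_n)$ near $0$ we have
\[
f \;=\; \eps_1 y_1^2 + \dots + \eps_{k-1} y_{k-1}^2 + \sum_{i,j \ge k} H_{ij}(y,x)\, x_i x_j,
\]
with $\eps_i \in \{\pm 1\}$, the $H_{ij}$ smooth and symmetric, and the matrix $(H_{ij}(0))_{i,j \ge k}$ still nondegenerate. By a preliminary linear change of the variables $x_k,\dots,x_n$ I can arrange $H_{kk}(0) \ne 0$; then $H_{kk}$ is nonzero on a neighborhood of $0$, and completing the square via
\[
y_k \;=\; \sqrt{|H_{kk}|}\left(x_k + \sum_{j>k} \frac{H_{kj}}{H_{kk}}\, x_j\right),\qquad \eps_k = \sign H_{kk}(0),
\]
yields
\[
f \;=\; \eps_1 y_1^2 + \dots + \eps_k y_k^2 + \sum_{i,j>k}\!\left(H_{ij} - \frac{H_{ki}H_{kj}}{H_{kk}}\right)\! x_i x_j.
\]
The residual quadratic form is the Schur complement of $H_{kk}$, still nondegenerate, so the induction continues. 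The substitution $(y_1,\dots,y_{k-1},x_k,\dots,x_n)\mapsto (y_1,\dots,y_k,x_{k+1},\dots,x_n)$ is triangular with diagonal entry $\sqrt{|H_{kk}(0)|}\ne 0$ on the affected row, so is a local diffeomorphism by the inverse function theorem. Iterating $n$ times and then permuting the coordinates to place the negative signs first gives the desired normal form, with $k$ equal to the number of negative eigenvalues of the Hessian.

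The only delicate point, and the main obstacle, is ensuring at every stage that the square root $\sqrt{|H_{kk}|}$ is smooth and the substitution is a diffeomorphism; both rely on the ability to rearrange so that $H_{kk}(0)\ne 0$, which in turn is guaranteed by the preservation of nondegeneracy of the residual quadratic form under the Schur-complement step. Everything else is bookkeeping.
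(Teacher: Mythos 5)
The paper does not prove the Morse Lemma; it states it as a classical fact, citing Arnold et al.\ \cite{AGLV98:sing}, so there is no argument in the text to compare against. Your proof is the standard one — the inductive Lagrange reduction (Schur-complement completion of the square) that appears, for example, as Lemma~2.2 in Milnor \cite{Milnor} — and it is correct: the Taylor expansion with integral remainder gives the smooth symmetric $h_{ij}$, the congruence-invariance of nondegeneracy lets you arrange $H_{kk}(0)\neq 0$ at each stage, the substitution is a local diffeomorphism by the inverse function theorem, and the Schur complement preserves nondegeneracy of the residual form so the induction closes.
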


In the above theorem, $k$ is called the \emph{index} of the
nondegenerate critical point~$a$, and will be denoted by $\I(a)$. If
two elements of $\mathcal{E}_n$ have nondegenerate critical points at
zero, then they are equivalent if and only if they have the same
value and the same index. More generally, for an arbitrary critical point, $\I(a)$ is
the index of the second differential of the function at $a$.

The most important characteristic of a class of singularities is its
codimension $c$ in the space $\mathcal{E}_n$ of function-germs.
A generic function has only nondegenerate critical points.
So, the codimension $c$ of a nondegenerate critical point is zero,
while any class of degenerate critical points has positive codimension.
Degenerate critical points occur in an irremovable manner only in families of functions depending on
parameters. In a family of functions depending on $l$ parameters,
there may occur (in such a manner that it cannot be removed through a
small perturbation of the family) only a class of singularities for
which $c \le l$.

\begin{definition}
  A \emph{deformation} with base $\Lambda = \R^l$ of the germ $f \in
  \mathcal{E}_n$ is the germ at zero of a smooth map $F \colon \R^n
  \times \R^l \to \R$ such that $F(x,0) \equiv f(x)$.

  A deformation $F' \colon \R^n \times \R^l \to \R$ is \emph{equivalent} to $F$ if
  \[
  F'(x,\lambda) = F(g(x,\lambda),\lambda),
  \]
  where $g$ is the germ at zero of a smooth map $(\R^n \times \R^l,0) \to
  (\R^n,0)$ with $g(x,0) \equiv x$, representing a
  family of diffeomorphisms depending on $\lambda \in \R^l$.

  The deformation $F' \colon \R^n \times \R^{l'} \to \R$ is \emph{induced} from $F$ if
  \[
  F'(x,\lambda') = F(x, \theta(\lambda')),
  \]
  where $\theta \colon (\R^{l'},0) \to (\R^l,0)$ is a smooth germ of a
  mapping of the bases.
\end{definition}

\begin{definition} \label{def:versal}
  A deformation $F(x,\lambda)$ of the germ $f(x)$ is said to be
  \emph{versal} if every deformation $F' \colon \R^n \times \R^{l'} \to \R$ of $f(x)$ can be represented
  in the form
  \[
  F'(x,\lambda') = F(g(x,\lambda'),\theta(\lambda')),
  \]
  where $g$ is a germ at zero of an $l'$-parameter family of diffeomorphisms
  $\R^n \times \R^{l'} \to \R^n$ such that
  $g(x,0) \equiv x$, and $\theta \colon (\R^{l'},0) \to (\R^l,0)$
  is such that $\theta(0) = 0$; i.e., every deformation
  of $f(x)$ is equivalent to a deformation induced from $F$.

  A versal deformation for which the base $\Lambda$ has the smallest
  possible dimension is called \emph{miniversal}.
\end{definition}

\begin{proposition}{\cite[p.~18]{AGLV98:sing}} \label{prop:miniversal} Let $f(x) \in
  \mathcal{E}_n$ be a germ of a critical point. We denote by
  $I_{\nabla f}$ the ideal of $\mathcal{E}_n$ generated by all partial
  derivatives $f_i = \partial f / \partial x_i$ of $f$, and let $Q_f =
  \mathcal{E}_n / I_{\nabla f}$. If $\varphi_1, \dots, \varphi_l \in \mathcal{E}_n$
  project to a basis of $Q_f$, then
  \[
  F(x, \lambda) = f(x) + \sum_{j=1}^l \lambda_j \varphi_j
  \]
  is a miniversal deformation of the germ $f(x)$.
\end{proposition}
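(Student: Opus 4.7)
The plan is to reduce the proposition to an infinitesimal criterion for versality and then apply it to the explicit deformation in the statement. First, I would establish the following lemma: a deformation $F(x,\lambda)$ of $f$ with base $\mathbb{R}^l$ is versal if and only if the germs
\[
\dot F_j(x) := \frac{\partial F}{\partial \lambda_j}(x,0), \quad j = 1, \dots, l,
\]
project to a spanning set of the local algebra $Q_f = \mathcal{E}_n/I_{\nabla f}$. Granting this lemma, the proposition is immediate: for $F(x,\lambda) = f(x) + \sum_j \lambda_j \varphi_j(x)$ one computes $\dot F_j = \varphi_j$, and by hypothesis these form a basis (hence a spanning set) of $Q_f$. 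Miniversality follows from the necessity direction of the lemma: any versal deformation must have parameter derivatives spanning $Q_f$, so its base dimension is at least $\mu = \dim_{\mathbb{R}} Q_f$, which is achieved here.

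To prove the infinitesimal lemma, the necessity direction is a routine differentiation. If $F'(x,\lambda') = F(g(x,\lambda'),\theta(\lambda'))$ with $g(x,0) = x$ and $\theta(0) = 0$, then differentiating at $\lambda' = 0$ gives
\[
\dot F'_k(x) = \sum_{i=1}^{n} f_i(x)\,\frac{\partial g_i}{\partial \lambda'_k}(x,0) + \sum_{j=1}^{l} \dot F_j(x)\, \frac{\partial \theta_j}{\partial \lambda'_k}(0),
\]
so modulo $I_{\nabla f}$ every $\dot F'_k$ lies in the span of the $\dot F_j$. Applying this to the one-parameter deformation $F'(x,\epsilon) = f(x) + \epsilon\psi(x)$ for an arbitrary $\psi \in \mathcal{E}_n$, we conclude that the $\dot F_j$ must span $Q_f$.

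The sufficiency direction is the main obstacle and is the technical heart of the theory. It requires the Malgrange--Mather preparation theorem together with the finite determinacy of $f$ (which is guaranteed by $\dim_{\mathbb{R}} Q_f = \mu < \infty$, since $I_{\nabla f}$ then contains a power of the maximal ideal). Given an arbitrary deformation $F'$ of $f$ with base $\mathbb{R}^{l'}$, the construction of the required $g$ and $\theta$ proceeds by a homotopy argument: one connects $F'$ to an induced deformation through a smooth family $F_t$ and solves the resulting cohomological equation $\partial F_t/\partial t \equiv \sum_i f_i \cdot X_i^t + \sum_j \dot F_j \cdot \dot\theta^t_j \pmod{\text{higher order}}$ for vector fields $X_i^t$ on $\mathbb{R}^n$ and functions $\dot\theta^t_j$ on the base. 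Solvability of this equation at the formal level is guaranteed by the spanning hypothesis, and preparation upgrades the formal solution to a smooth one; integrating the resulting time-dependent family yields $g$ and $\theta$. The details are carried out in Arnold--Gusein-Zade--Varchenko~\cite{AGLV98:sing}, to which we refer for the complete argument.
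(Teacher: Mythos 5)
Your proof sketch is correct and follows the standard argument (infinitesimal criterion via the tangent space $\dot F_j + I_{\nabla f} \in Q_f$, necessity by differentiation, sufficiency via the Malgrange--Mather preparation theorem and a homotopy/Moser-type integration). Note, however, that the paper itself does not prove Proposition~\ref{prop:miniversal}: Section~\ref{sec:smooth} is explicitly a review, ``recall[ing] some classical results about singularities of smooth real valued functions following Arnold et al.~\cite{AGLV98:sing},'' and this statement is cited as a known theorem rather than established. So there is no in-paper argument to compare against; your sketch is exactly what the cited reference does, and the two minor things worth flagging are (i) your infinitesimal lemma is correct only under the finiteness hypothesis $\dim_{\RR} Q_f = \mu < \infty$, which is implicit in the statement since a finite basis is assumed, and (ii) your ``cohomological equation'' in the sufficiency direction is stated only modulo higher-order terms, whereas the actual argument needs the preparation theorem to upgrade a solution over $Q_f$ to a genuine solution over $\mathcal{E}_n$ before the time-dependent vector field can be integrated --- you acknowledge this but it is the one place where the sketch is materially incomplete rather than merely abbreviated.
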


A versal deformation is unique in the following sense.

\begin{theorem}{\cite[p.~18]{AGLV98:sing}}
  Every $\ell$-parameter versal deformation of a germ $f$ is equivalent
  to a deformation induced from any other $\ell$-parameter versal
  deformation by a suitable diffeomorphism of their bases.
\end{theorem}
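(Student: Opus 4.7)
The plan is to exploit the defining property of versality symmetrically, turning each versal deformation into a deformation induced from the other. Fix two versal deformations $F$ and $F'$ of $f$ with bases $\Lambda$ and $\Lambda'$. First I would view $F$ itself as a deformation of $f(x) = F(x,0)$ and apply the versality of $F'$: this produces a smooth family of diffeomorphisms $g(x,\lambda)$ with $g(x,0) \equiv x$ and a base map $\theta \co (\Lambda, 0) \to (\Lambda',0)$ with
\[
F(x,\lambda) = F'(g(x,\lambda), \theta(\lambda)).
\]
Applying versality of $F$ in the opposite direction produces $g'$ and $\theta' \co (\Lambda',0) \to (\Lambda,0)$ representing $F'$ as induced from $F$. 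The theorem will follow once we show that, when the bases have minimal dimension, $\theta$ (and $\theta'$) can be arranged to be a diffeomorphism, and in general that any versal deformation is obtained from a miniversal one by adding ``trivial'' parameters absorbed by a fiberwise diffeomorphism.

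Next I would extract the infinitesimal content of versality. Proposition~\ref{prop:miniversal} suggests the right invariant is the local algebra $Q_f = \mathcal{E}_n / I_{\nabla f}$, and the key claim is the \emph{infinitesimal versality criterion}: a deformation $F(x,\lambda)$ is versal if and only if the classes of $\partial F/\partial \lambda_j|_{\lambda = 0}$ span $Q_f$ as an $\R$-vector space. Granting this, differentiate the identity $F(x,\lambda) = F'(g(x,\lambda), \theta(\lambda))$ at $\lambda = 0$ in the $\lambda_j$-direction; modulo $I_{\nabla f}$, the left-hand side gives a generator of $Q_f$, and the right-hand side becomes a linear combination (with coefficients $\partial \theta_k/\partial \lambda_j|_0$) of the classes of $\partial F'/\partial \lambda'_k|_0$. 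Hence $d\theta_0$ maps $T_0 \Lambda$ onto a spanning set of $T_0 \Lambda'$ at the level of $Q_f$; when $\dim \Lambda = \dim \Lambda' = \mu = \dim_\R Q_f$, this forces $d\theta_0$ to be an isomorphism, and the inverse function theorem makes $\theta$ a local diffeomorphism of bases. The relation displayed above then exhibits $F$ as equivalent to the deformation induced from $F'$ by $\theta$.

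The main obstacle is justifying the infinitesimal criterion itself, since the forward direction requires converting infinitesimal spanning data into an actual versal family. This is exactly where Mather's preparation theorem (equivalently, Malgrange's division theorem for smooth germs) enters: it lets one extend the identity on $Q_f$ to a statement about $\mathcal{E}_n$-modules of germs in families, and then the homotopy (Moser) method integrates a time-dependent vector field of the form $X_t = (X_t^{\mathrm{fiber}}, X_t^{\mathrm{base}})$ on $\R^n \times \Lambda$ to produce the required family $g$ and the map $\theta$. For the general (non-miniversal) case one splits $\Lambda = \Lambda_0 \oplus \Lambda_1$, where $\Lambda_0 \cong \R^\mu$ is chosen so that $d\theta_0|_{\Lambda_0}$ is an isomorphism onto $T_0 \Lambda'$; the previous argument applied to the restriction shows $F|_{\Lambda_0}$ is equivalent to $F'$ via a base diffeomorphism, and a final application of the preparation-theorem homotopy absorbs the extra parameters $\Lambda_1$ into the fiberwise change of coordinates. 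This delivers the asserted equivalence.
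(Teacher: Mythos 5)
The paper states this theorem without proof; it is a classical result from singularity theory cited from Arnold, Gusein-Zade, and Varchenko~\cite{AGLV98:sing}, so there is no in-paper argument to compare against. Evaluating your outline on its own merits: the miniversal case $l = \mu$ is handled correctly, and this is exactly the standard proof. Versality of $F'$ gives $\theta$; differentiating the identity $F(x,\lambda) = F'(g(x,\lambda),\theta(\lambda))$ at $\lambda = 0$ and reducing modulo $I_{\nabla f}$ kills the $\partial g/\partial\lambda_j$ term and shows that the composite $T_0\Lambda \xrightarrow{d\theta_0} T_0\Lambda' \to Q_f$ is surjective; since both spaces have dimension $\mu$ and the second arrow is an isomorphism for a miniversal $F'$, the map $d\theta_0$ is an isomorphism and the inverse function theorem finishes. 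You are also right to flag the infinitesimal versality criterion, backed by the preparation theorem and the homotopy method, as the real technical load-bearer.

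The reduction of the general case $l > \mu$, however, does not work as written, for a dimension reason. You propose to pick $\Lambda_0 \cong \R^\mu$ with $d\theta_0|_{\Lambda_0}$ an isomorphism onto $T_0\Lambda'$, then apply the miniversal argument to $F|_{\Lambda_0}$ and $F'$. But $\Lambda'$ also has dimension $l > \mu$, so a $\mu$-dimensional subspace cannot map onto $T_0\Lambda'$, and there is no ``base diffeomorphism'' between an $\R^\mu$ and an $\R^l$ at all. Moreover, the infinitesimal criterion only controls the composition into $Q_f$; it says nothing about the component of $d\theta_0$ landing in the kernel of $T_0\Lambda' \to Q_f$, so the $\theta$ produced by raw versality of $F'$ need not be even infinitesimally invertible. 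The standard repair is a \emph{symmetric} normal-form step: show (using the same preparation-theorem homotopy) that every $l$-parameter versal deformation $G$ is equivalent, in the base-preserving sense, to the pullback of a $\mu$-parameter miniversal restriction $G|_{\Lambda_0}$ along a linear projection $\pi\colon \R^l \to \R^\mu$. Apply this to both $F$ and $F'$, invoke the $l = \mu$ case to get a germ of a diffeomorphism $\theta_0\colon\Lambda_0 \to \Lambda'_0$, and then extend $\theta_0$ to a block-diagonal diffeomorphism $\Theta\colon\R^l \to \R^l$ satisfying $\pi'\circ\Theta = \theta_0\circ\pi$. Your phrase about ``absorbing'' the extra parameters gestures at the normal-form lemma for $F$ alone, but omits the parallel step for $F'$ and the final extension of the base map, which is where the actual diffeomorphism of $l$-dimensional bases is produced.
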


Let~$K$ be a subset of~$\mathcal{E}_n$ invariant under the action of~$\mathcal{D}_n$; i.e., a
class of singularities. Furthermore, let~$P \subset \mathcal{E}_n$
be the set of germs at~$0$ of all elements of the polynomial ring $\RR[x_1,\dots,x_n]$.
A \emph{normal form} for the class~$K$ is a map $\Phi \colon B \to P$
from a finite dimensional linear ``parameter space'' $B$ to the space of polynomial germs
satisfying three conditions:
\begin{itemize}
\item $\Phi(B)$ intersects all $\mathcal{D}_n$-orbits in~$K$,
\item the preimage in~$B$ of every $\mathcal{D}_n$-orbit is finite, and
\item $\Phi^{-1}(\mathcal{E}_n \setminus K)$ lies in some proper algebraic hypersurface in~$B$.
\end{itemize}
For more detail, see \cite[p.~22]{AGLV98:sing}.

\begin{theorem}{(\cite[p.~33]{AGLV98:sing} and \cite[p.~78]{Cerf})}
  In a generic 1-parameter family of smooth functions, the only
  degenerate critical points that appear have normal form
  \[
  f(x) = -x_1^2 - \dots - x_k^2 + x_{k+1}^2 + \dots + x_{n-1}^2 +
  x_n^3 + f(a).
  \]
  The class of such singularities is called~$A_2$.

  In addition, in 2-parameter families, singularities of the form
  \[
  f(x) = -x_1^2 - \dots - x_k^2 + x_{k+1}^2 + \dots + x_{n-1}^2 \pm
  x_n^4 + f(a)
  \]
  might also appear. The class of such singularities is called
  $A_3^{\pm}$.
\end{theorem}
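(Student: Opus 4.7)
The plan is to combine Thom's transversality theorem with a classification of singularities by codimension up to $2$. By Proposition~\ref{prop:miniversal}, the codimension of the $\mathcal{D}_n$-orbit of a critical germ $f$ (modulo addition of constants) equals $\dim Q_f - 1$, since the miniversal deformation has $\dim Q_f$ parameters and one of them corresponds to a harmless shift in value. Thom's transversality theorem, applied in a jet bundle $J^r(M,\R)$ with $r$ large enough to separate the relevant orbits, then implies that a generic $l$-parameter family of functions has a jet extension transverse to each orbit stratum; after subtracting the $\dim M$ trivial directions corresponding to the location of the critical point in~$M$, only singularities whose intrinsic $\mathcal{D}_n$-orbit has codimension $\leq l$ occur. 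So the task reduces to enumerating $\mathcal{D}_n$-orbits of codimension $0$, $1$, and~$2$.

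To enumerate these, I would invoke the \emph{Splitting Lemma}: any critical germ $f \in \mathcal{E}_n$ is right-equivalent to $Q(x_1,\dots,x_{n-c}) + g(x_{n-c+1},\dots,x_n)$, where $Q$ is a nondegenerate quadratic form of index $k$ and $g$ has vanishing Hessian at the origin (corank equal to~$c$). Since $Q_f \cong Q_g$, the codimension is governed entirely by the corank-$c$ piece. When $c = 0$, $f$ is Morse, giving $A_1$ of codimension~$0$. When $c = 1$, a one-variable normalization shows $g \sim \pm x_n^{k+1}$ for some $k \geq 2$; this is $A_k$ with $\dim Q_g = k$. Codimension~$1$ forces $k = 2$ and yields the $A_2$ normal form (the sign absorbed by $x_n \mapsto -x_n$), while codimension~$2$ forces $k = 3$ and yields $A_3^{\pm}$, where over $\R$ the two signs are genuinely inequivalent. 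When $c = 2$, every degenerate two-variable germ has Milnor number at least~$4$ (the minimum realized by $D_4$, $g = x^3 \pm xy^2$), so its codimension is at least~$3$ and none appear. Letting the index of the quadratic form~$Q$ range freely then reassembles the normal forms listed in the statement.

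The principal obstacle is the corank-$2$ check: verifying that every two-variable critical germ with vanishing Hessian has $\dim Q_g \geq 4$ requires finite-determinacy arguments (via Mather's preparation theorem or Arnold's determinacy estimates) to reduce an \emph{a priori} infinite-dimensional classification problem to a finite jet-level computation. Since this is a classical result, I would cite~\cite{AGLV98:sing} for the complete argument rather than reproduce it, and similarly for the stratified version of Thom transversality that underlies the generic-avoidance step.
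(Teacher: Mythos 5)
The paper does not supply a proof of this theorem: Section~\ref{sec:smooth} opens by declaring that these are classical results ``recalled'' from Arnold et al.~\cite{AGLV98:sing}, and the statement is presented as background without argument. Your sketch---splitting lemma to reduce to the corank piece, Thom jet transversality to bound the orbit codimension by the number of parameters, one-variable normalization to identify $A_k$ with codimension $k-1$, and the corank-$2$ check that $\mu \geq 4$ rules those germs out in $\leq 2$-parameter families---is precisely the standard argument in the reference the paper cites, and you correctly defer the determinacy estimates there as well, so the two ``approaches'' coincide.
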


As a corollary of Proposition~\ref{prop:miniversal}, a miniversal
deformation of a singularity of type~$A_1$ is given by
\[
F(x,\lambda) = -x_1^2 - \dots -x_k^2 + x_{k+1}^2 + \dots x_n^2 +
\lambda,
\]
where $\lambda \in \R$.

Miniversal deformations of type~$A_2$ singularities are given by the
formula
\[
F(x,\lambda) = -x_1^2 - \dots - x_k^2 + x_{k+1}^2 + \dots + x_{n-1}^2
+ x_n^3 + \lambda_1 x_n + \lambda_2,
\]
where the parameter $\lambda = (\lambda_1,\lambda_2) \in \R^2$. By Definition~\ref{def:versal},
every generic 1-parameter deformation of a type~$A_2$ singularity (with parameter~$t$) is
equivalent to one induced from this via a suitable map $t \mapsto (\lambda_1(t),\lambda_2(t))$,
and so has normal form
\[
-x_1^2 - \dots - x_k^2 + x_{k+1}^2 + \dots + x_{n-1}^2 + x_n^3 +
\lambda_1(t) x_n + \lambda_2(t).
\]
The concrete value of the constant term $\lambda_2(t)$ does not affect
the types of singularities appearing in the family, so from a
qualitative point of view, we can assume that $\lambda_2(t) \equiv
0$. Then, in this family, for $\lambda_1 < 0$ we have two
nondegenerate critical points of indices $k$ and $k+1$ that cancel
each other at $\lambda_1 = 0$, and the germs have no critical points
for $\lambda_1 > 0$. Hence, we will sometimes refer to a type~$A_2$
singularity of index~$k$ as an index $k$--$(k+1)$ birth-death
singularity (death if $\lambda_1(t)$ is decreasing, and birth if
$\lambda_1(t)$ is increasing).  In 2-parameter families, type~$A_2$
singularities appear along curves in the parameter space $\Lambda$ (in
the above normal form given by the formula $\lambda_1 = 0$).

Finally, type $A_3^{\pm}$ singularities have miniversal deformation
\[
F(x,\lambda) = -x_1^2 - \dots - x_k^2 + x_{k+1}^2 + \dots + x_{n-1}^2
\pm x_n^4 + \lambda_1 x_n^2 + \lambda_2 x_n + \lambda_3
\]
with parameter $\lambda \in \R^3$. These first appear in a
non-removable manner in 2-parameter families.
By versality, every generic 2-parameter deformation can be induced from this
by a generic map $\lambda \colon \RR^2 \to \RR^3$ of their bases.
To study the bifurcation set in $\Lambda = \R^2$ for a generic 2-parameter deformation $\Lambda
\to \mathcal{E}_n$, we again disregard the constant term, and consider
\[
-x_1^2 - \dots - x_k^2 + x_{k+1}^2 + \dots + x_{n-1}^2 \pm x_n^4 +
\lambda_1(t_1,t_2) x_n^2 + \lambda_2(t_1,t_2) x_n,
\]
where $(t_1,t_2) \mapsto (\lambda_1(t_1,t_2),\lambda_2(t_1,t_2))$ has non-vanishing
differential at the origin.
For generic values of~$(\lambda_1,\lambda_2)$, this may have
\begin{itemize}
\item three nondegenerate critical points, of indices $k$, $k+1$, and~$k$
  for~$A_3^+$ and $k+1$, $k$, and~$k+1$ for~$A_3^-$; or
\item one nondegenerate critical point, of index~$k$ for~$A_3^+$
  and~$k+1$ for~$A_3^-$.
\end{itemize}
When the polynomial $\pm 4x_n^3 + 2\lambda_1 x_n + \lambda_2$ has
multiple roots, the behaviour is different. The discriminant
is the cuspidal curve
\[
\Delta = \{\, \lambda \in \Lambda \,\colon\, 8\lambda_1^3 \pm
27\lambda_2^2 = 0\,\}.
\]
For $\lambda \in \Delta \setminus \{0\}$, the germ $F(x,\lambda)$ has
an~$A_1$ and an~$A_2$ singularity, while for $\lambda = 0$, it has
an~$A_3^{\pm}$ singularity. Sometimes, we will refer to an~$A_3^+$
singularity of index~$k$ as an index $k$--$(k+1)$--$k$\/
birth-death-birth, while an $A_3^-$ singularity of index~$k$ as an
index $(k+1)$--$k$--$(k+1)$ birth-death-birth. For the bifurcation
diagrams of the singularities~$A_3^+$ and~$A_3^-$, see Figure~\ref{fig:A3}.
Note that, in case of~$A_3^+$, for~$\lambda_1 < 0$ and~$\lambda_2 = 0$,
the values of two critical points coincide, which is a type of bifurcation
that we disregard in this paper as we will only work with the
gradient flows of functions and not with their values.
There is an analogous bifurcation for~$A_3^-$ singularities
in case~$\lambda_1 > 0$ and~$\lambda_2 = 0$.

\begin{figure}
  \centering
  \includegraphics{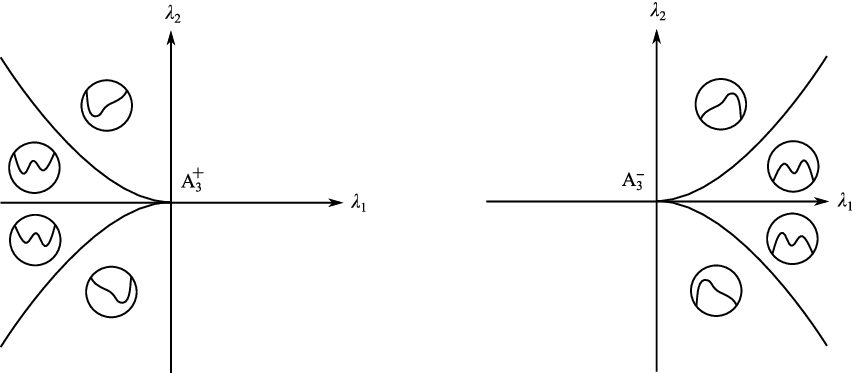}
  \caption{Bifurcation diagrams of the singularities~$A_3^+$ and~$A_3^-$ for~$n=1$.}
  \label{fig:A3}
\end{figure}

We now apply the above discussion to global 1- and 2-parameter
families of smooth real valued functions on manifolds; see Cerf~\cite[p.~78]{Cerf}.
For a generic 1-parameter family of smooth functions $\{\, f_\lambda \co \lambda \in
\Lambda \,\}$, there is a discrete subset $D \subset \Lambda$ such that
for every $\lambda \in \Lambda \setminus D$ the function $f_{\lambda}$
has only nondegenerate critical points, while for $\lambda \in D$ it
has a single degenerate critical point of type $A_2$, where two
nondegenerate critical points of neighboring indices collide.

For a generic 2-parameter family $\{\, f_\lambda \co \lambda \in \Lambda \,\}$,
there is a subset $D \subset \Lambda$ such that $f_{\lambda}$ has only
nondegenerate critical points for $\lambda \in \Lambda \setminus
D$. In addition, $D$ is a union of embedded curves that have only cusp
singularities and intersect each other in transverse double points. At
a regular point $\lambda \in D$, the function $f_t$ has a single
degenerate critical point of type~$A_2$. If $\lambda$ is a double
point of $D$, then $f_t$ has two degenerate critical points of type~$A_2$.
Finally, at each cusp of $D$, the function $f_t$ has a single
degenerate critical point of type~$A_3^{\pm}$.
\section{Generic 1- and 2-parameter families of gradients}
\label{sec:gradients}

Next, we summarize the results of Palis and Takens~\cite{PT83} and
Vegter~\cite{Vegter85} on the classification of global bifurcations
that appear in generic 1- and 2-parameter families of gradient vector
fields on 3-manifolds. In Section~\ref{sec:transl-hd}, we will
translate the codimension-1 bifurcations to moves on Heegaard
diagrams, while codimension-2 bifurcations translate to loops of
Heegaard diagrams. Note that the bifurcation theory of gradients is
much richer than the corresponding theory for smooth functions, due to
the tangencies that can appear between invariant manifolds of singular
points.

\subsection{Invariant manifolds} \label{sec:invariant-manifold}

First, we review some classical definitions and results
from Anosov, Aranson, Arnold, Bronshtein, Grines, and Il’yashenko~\cite[Section~4]{dyn97}.

\begin{definition}
  An \emph{invariant manifold} of a vector field is a submanifold that
  is tangent to the vector field at each of its points.
\end{definition}

If $v$ is a smooth vector field on a manifold $M$ with a singularity
at $p$ (i.e., $v$ is zero at~$p$), then the linear part $L_pv$ of $v$
at $p$ is an endomorphism
of $T_pM$. In local coordinates $x = (x_1,\dots,x_n)$ around $p$,
the linear part of $v$ is $Ax$, where $A = \left.\frac{\partial
    v}{\partial x}\right|_{x = 0}$ and $\frac{\partial v}{\partial x}$
is the Jacobian matrix whose $(i,k)$ entry is $\frac{\partial
  v_i}{\partial x_k}$.

The space $T_pM$ can be written as a direct sum of three
$L_pv$-invariant subspaces, namely $T^s$, $T^u$, and $T^c$, such that
every eigenvalue of $L_pv|_{T^s}$ has negative real part, every
eigenvalue of $L_pv|_{T^u}$ has positive real part, and every
eigenvalue of $L_pv|_{T^c}$ has real part zero.  Indeed, $T^s$, $T^u$,
and $T^c$ are spanned by the generalized eigenvectors of $L_pv$
corresponding to eigenvalues with negative, positive, and zero real
parts, respectively.  Here, the superscripts $s$, $u$, and~$c$ correspond
to ``stable,'' ``unstable,'' and ``center.''

\begin{definition}
We say that $v$ has a
\emph{hyperbolic singularity} at $p$ if none of the eigenvalues of
$L_p v$ are purely imaginary; i.e., if $T^c = 0$.
\end{definition}

\begin{theorem}[Center manifold theorem {\cite[Subsection~4.2]{dyn97}}] \label{thm:center-manifold}
  Let $v$ be a $C^{r+1}$ vector field on $M$ with a singular point at
  $p$. Let $T^s$, $T^u$, and $T^c$ be the $L_pv$-invariant subspaces of $T_pM$
  defined above.

  Then the vector field~$v$ has invariant
  manifolds $\cW^s$, $\cW^u$, and $\cW^c$ of class $C^{r+1}$,
  $C^{r+1}$, and $C^r$, respectively, that go through $p$ and are
  tangent to $T^s$, $T^u$, and $T^c$, respectively, at $p$.  Integral curves of~$v$
  with initial point on $\cW^s$ (respectively, $\cW^u$) tend
  exponentially to~$p$ as $t \to + \infty$ (respectively, $t \to
  -\infty$).
\end{theorem}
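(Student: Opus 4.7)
The plan is to work in local coordinates near $p$ and build each of $\cW^s$, $\cW^u$, and $\cW^c$ as the graph of a smooth map from the corresponding subspace of $T_pM$ into its complement, obtained as the fixed point of a contracting integral operator. I would choose coordinates on $M$ centered at $p$ adapted to the splitting $T_pM = T^s \oplus T^u \oplus T^c$, and write $v(x) = Ax + f(x)$, where $A$ is block-diagonal with blocks $A_s$, $A_u$, $A_c$ and $f$ satisfies $f(0) = 0$, $Df(0) = 0$. The spectral hypothesis yields constants $C, \alpha > 0$ with $\|e^{tA_s}\| \le Ce^{-\alpha t}$ and $\|e^{-tA_u}\| \le Ce^{-\alpha t}$ for $t \ge 0$, while $\|e^{tA_c}\| \le C_\eps e^{\eps|t|}$ for every $\eps > 0$.

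For the stable manifold, I would characterize $\cW^s$ as the set of initial conditions whose forward trajectories decay exponentially. Variation of parameters shows such a trajectory must satisfy an integral equation of the form
\[
x(t) = e^{tA_s}\xi + \int_0^t e^{(t-\tau)A_s} f_s(x(\tau))\,d\tau - \int_t^\infty e^{(t-\tau)(A_u \oplus A_c)} f_{uc}(x(\tau))\,d\tau,
\]
parametrized by $\xi \in T^s$. I would view the right-hand side as an operator on the Banach space of continuous maps $[0,\infty) \to \R^n$ with norm $\sup_{t \ge 0} e^{\beta t}\|x(t)\|$ for some fixed $\beta \in (0,\alpha)$, and use $Df(0) = 0$ to make the Lipschitz constant of $f$ on a small ball as small as desired, so that the operator is a contraction. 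The map $\xi \mapsto x(0)$ then parametrizes $\cW^s$ as a $C^{r+1}$ graph over $T^s$, tangent to $T^s$ at $0$; the $C^{r+1}$ regularity follows from standard smooth dependence of fixed points on parameters, and exponential decay is built into the choice of norm. The unstable manifold $\cW^u$ is obtained symmetrically by reversing time.

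For the center manifold the argument is similar in spirit but more delicate, because $e^{tA_c}$ does not decay in either direction and because $\cW^c$ is only locally invariant and not unique in general. I would first cut off the nonlinearity: choose a smooth bump $\chi$ with $\chi \equiv 1$ near $0$ and $\chi \equiv 0$ outside a small ball, and replace $f$ by $\tilde f(x) = \chi(x/\delta) f(x)$, so that for $\delta$ small enough $\tilde f$ has arbitrarily small global Lipschitz constant. I would then seek $\cW^c$ as the graph of a bounded map $h \colon T^c \to T^s \oplus T^u$ with $h(0) = 0$ and $Dh(0) = 0$, obtained as the fixed point of the corresponding integral operator on the Banach space of bounded Lipschitz maps; the smallness of $\tilde f$ furnishes the contraction. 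Higher regularity of $h$ is achieved by iterating the fixed-point construction on scales of Banach spaces of $C^k$ maps, and one loses one derivative relative to the hyperbolic case because of the absence of a spectral gap between $A_c$ and the imaginary axis.

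The principal obstacle is precisely this last step: showing that $\cW^c$ attains full $C^r$ regularity rather than merely being Lipschitz. The bootstrap in $k$ is subtle because each step requires a slightly weaker exponential weight in the norm used for the fixed-point argument, and the estimates break down when one attempts to push the regularity to $C^{r+1}$ on the center manifold, which is exactly the regularity gap recorded in the statement between $\cW^c$ and $\cW^{s,u}$. Tangency of each manifold to its corresponding subspace at $p$ follows from $h(0) = 0$ and $Dh(0) = 0$, which are forced by $Df(0) = 0$ via the structure of the integral equation, and the global invariant manifolds in $M$ are then obtained by saturating these local graphs under the flow.
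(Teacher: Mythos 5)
The paper does not prove this theorem: it is recalled as a classical result, with the preceding sentence attributing it (together with the Reduction Principle that follows) to Anosov et al.~\cite[Section~4]{dyn97}. There is therefore no ``paper's proof'' to compare against. Your outline is the standard Lyapunov--Perron approach and is essentially correct as a sketch: variation of parameters converts the stable/unstable characterization into an integral fixed-point equation on an exponentially weighted Banach space, $Df(0)=0$ (or a cutoff) makes the operator a contraction, and smooth dependence of fixed points on parameters yields the $C^{r+1}$ graph. The center manifold treatment --- cutoff to get a globally small Lipschitz nonlinearity, graph transform for existence, and acknowledgement of non-uniqueness and the locality caveat --- is also the standard route.

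Two points worth sharpening. First, for the stable and unstable manifolds you should also apply a cutoff (or work entirely on a fixed small ball and verify that the contraction maps that ball into itself); asserting that $Df(0)=0$ ``makes the Lipschitz constant small'' needs the localization made explicit, since the integral runs to $+\infty$ and you need the trajectory to stay in the region where the Lipschitz estimate holds. Second, your diagnosis of the one-derivative loss for $\cW^c$ is heuristically right but could be stated more precisely: the obstruction is that differentiating the fixed-point equation in the base variable introduces an extra factor that forces one to pass to a strictly larger exponential weight at each step of the bootstrap, and after $r$ steps the available spectral gap $\alpha$ is exhausted; this is exactly why $\cW^c$ is $C^r$ rather than $C^{r+1}$ and why the $C^\infty$ center manifold theorem fails in general. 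With those two caveats filled in, this is a legitimate proof of the classical statement, rather than a paraphrase of the cited source.
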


Here, $\cW^s$ is called the (strong) stable manifold and $\cW^u$ the
(strong) unstable manifold of the singular point $p$. The behavior of
the integral curves on the center manifold $\cW^c$ is determined by the
nonlinear terms. If $v$ is $C^{\infty}$, then $\cW^s$ and $\cW^u$ can
be chosen to be $C^{\infty}$, whereas the center manifold is only
finitely smooth. In addition, while $\cW^s$ and $\cW^u$ are well-defined,
the choice of $\cW^c$ might not be unique.

We say that the flows~$\phi$ on~$M$ and $\psi$ on~$N$
are \emph{topologically equivalent} if there is a homeomorphism
$h \colon M \to N$ mapping orbits of $\psi$ to orbits of $\phi$
homeomorphically, and preserving the orientation of the orbits.

\begin{theorem}[Reduction Principle {\cite[Subsection~4.3]{dyn97}}] \label{thm:reduction-principle}
  Suppose that a $C^2$ vector field~$v$ has a
  singular point at~$p$. Let $T^s$, $T^u$, and $T^c$ be the invariant
  subspaces corresponding to the map~$L_pv$. Then, in a neighborhood
  of the singular point~$p$, the flow of the vector field~$v$ is
  topologically equivalent to the flow of the direct product of two vector fields: the
  restriction of~$v$ to the center manifold, and the
  ``standard saddle'' $(-a, b)$ for $a \in T^s$ and $b \in T^u$.
\end{theorem}

This theorem can be used to study both individual vector fields and
families of vector fields; by replacing a family $v(x,\varepsilon)$ on~$M$
with $(v(x,\varepsilon),0)$ on $M \times \RR^l$, where $x \in M$ and $\varepsilon \in \RR^l$.

The following discussion has been taken from Palis and Takens~\cite{PT83}.

\begin{definition}
  We say that a smooth vector field $v$ on $M$ has a
  \emph{saddle-node} at~$p$ (or a \emph{quasi-hyperbolic singularity of
  type~1}) if $\dim T^c = 1$ and $v|_{\cW^c}$ has the form $v = ax^2
  \frac{\partial}{\partial x} + O(|x|^3)$ with $a \neq 0$ for some
  center manifold $\cW^c$ through $p$, where $x$ is a local coordinate on $\cW^c$
  around~$p$.

  If $v^{\ol{\mu}}$, belonging to a one-parameter family $\{v^\mu\}$
  of vector fields, has a saddle-node at $p$, we say that it
  \emph{unfolds generically} if there is a center
  manifold for the family $\{v^\mu\}$ passing through $p$ (at $\mu
  =\ol{\mu}$) such that
  $v_\mu$, restricted to this center manifold, has the form
  \[
  v_\mu = \left(ax^2 + b(\mu - \ol{\mu})\right)
  \frac{\partial}{\partial x} + O\left(|x^3| + |x \cdot (\mu
    -\ol{\mu})| + |\mu - \ol{\mu})|^2\right),
  \]
  with $a$, $b \neq 0$.
\end{definition}

For example, if $f: M \to \RR$ has an $A_2$ singularity at~$p$, then
$\nabla f$ has a saddle-node at~$p$.

\begin{definition}
  A point $p \in M$ is called a \emph{quasi-hyperbolic singularity of
    type~2} of a vector field $v$ if $\dim T^c = 1$ and there is a
  center manifold $\cW^c$ of class $C^m$ such that on $\cW^c$, there is
  a local $C^m$-coordinate $x$ with $v|_{\cW^c} = x^3 \cdot v_1(x)
  \frac{\partial}{\partial x}$ with $v_1(0) \neq 0$.
\end{definition}

For example, if $f: M \to \RR$ has an $A_3^\pm$ singularity at~$p$,
then $\nabla f$ has a quasi-hyperbolic singularity of type~2 at~$p$.

\begin{definition} \label{def:stable-set} Let $p$ be a singular point
  of the vector field $v$ on $M$. Furthermore, let the maximal flow of
  $v$ be $\varphi \colon D \to M$, where $D \subset M \times \R$ is
  the flow domain.  Then the \emph{stable set} of $p$ is
  \[
  W^s(p) = \{\, x \in M \,\colon\, \lim_{t \to \infty} \varphi(x,t) =
  p \,\},
  \]
  and the \emph{unstable set} of $p$ is
  \[
  W^u(p) = \{\, x \in M \,\colon\, \lim_{t \to -\infty} \varphi(x,t) =
  p \,\}.
  \]
\end{definition}

If $p$ is a hyperbolic singular point of $v$, then both $W^s(p)$ and
$W^u(p)$ are injectively immersed submanifolds of $M$ with tangent
spaces $T_p W^s(p) = T^s$ and $T_p W^u(p) = T^u$, respectively.  So, in
this case, the stable and unstable sets $W^s(p)$ and $W^u(p)$ coincide
with the stable and unstable
manifolds $\cW^s$ and $\cW^u$ of the singular point $p$, respectively.

If $p$ is a saddle-node, $W^s(p)$ is an injectively immersed
submanifold with boundary. This boundary is the \emph{strong stable
  manifold} of $p$, and is denoted by $W^{ss}(p)$. Note that
$T_pW^{ss}(p) = T^s$. Similarly, $W^u(p)$ is an injectively immersed
submanifold with boundary the \emph{strong unstable manifold}
$W^{uu}(p)$, see Figure~\ref{fig:saddle-node}.  In the terminology of
Theorem~\ref{thm:center-manifold}, we have $W^{ss}(p) = \cW^s$ and
$W^{uu}(p) = \cW^u$.

\begin{figure}
  \centering
  \includegraphics{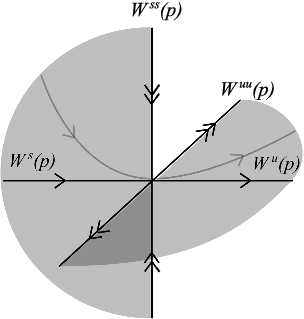}
  \caption{The stable and unstable sets of a saddle-node singularity
    are manifolds with boundary. This figure also illustrates the
    non-uniqueness of the center manifold; the black and the light grey curves
    with single arrows show two possible center manifolds.}
  \label{fig:saddle-node}
\end{figure}

\subsection{Bifurcations of gradient vector fields on 3-manifolds}
\label{sec:bifurcations}

Let~$M$ be compact 3-manifold. A gradient vector field $X =
\grad_g(f)$ on~$M$ is associated with a Riemannian metric~$g$ and a
smooth function $f \colon M \to \R$ by the relation $g(X,Y) = df(Y)$
for all smooth vector fields~$Y$ on~$M$.  Since~$f$ is strictly
increasing along regular orbits of~$X$, the vector field~$\grad_g(f)$
has no periodic orbits or other kinds of recurrence. The singular
points of~$X$ coincide with the critical points of~$f$. Since the
linear part~$L_pX$ of~$X$ at a singularity~$p$ is symmetric with
respect to~$g$, all eigenvalues of~$L_pX$ are real. (In suitable
coordinates~$L_pX$ is the Hessian matrix of~$f$ at the singular
point.)

\begin{definition} \label{defn:Morse-Smale} A gradient vector field
  $X$ on $M$ is \emph{Morse-Smale} if
  \begin{enumerate}
  \labitem{(H)}{cond:hyp}  all singular points of $X$ are
    hyperbolic, and
  \labitem{(T)}{cond:transv} all stable and unstable manifolds are
    transversal.
  \end{enumerate}
\end{definition}

The Morse-Smale vector fields constitute an open and dense subset
of the set~$X^g(M)$ of all gradient vector fields on a closed manifold~$M$.
In addition, a gradient vector field is structurally stable if and only
if it is Morse-Smale. Recall that a $C^1$ vector field~$v$ on~$M$ is \emph{structurally stable}
if the following holds: For any $C^0$ neighborhood~$U$ of $\id_M$ in $\text{Homeo}(M)$,
there is a $C^1$ neighborhood~$V$ of $v$
such that for every $v' \in V$ there is a homeomorphism $h \in U$
that maps the oriented trajectories of $v$ to the oriented trajectories of $v'$
(in particular, the flows of $v$ and $v'$ are topologically conjugate).

\begin{definition} \label{def:family-of-gradients}
  A \emph{$k$-parameter family of gradients} on a compact
  manifold $M$ is a family of pairs $(g^\mu,f^\mu)$ for $\mu \in
  \R^k$, where $\{g^\mu\}$ is a $k$-parameter family of Riemannian
  metrics and $\{f^\mu\}$ is a $k$-parameter family of smooth real-valued
  functions on $M$. Let $X^\mu = \grad_{g^\mu}(f^\mu)$ for $\mu \in \R^k$.
  By a slight abuse of notation, we will only write $\{X^\mu\}$
  to denote a $k$-parameter family of gradients.

  We assume that both $g^\mu$ and $f^\mu$, and hence $X^\mu$, depend
  smoothly on $(x,\mu) \in M \times \R^k$.  The set of such pairs is
  endowed with the strong Whitney topology; i.e., the topology of
  uniform convergence of $g^\mu$, $f^\mu$ and all their derivatives on
  compact sets. The resulting topological space of $k$-parameter
  families is denoted by $X^g_k(M)$.
\end{definition}

\begin{definition} \label{def:bifurcation} A parameter value $\ol{\mu}
  \in \R^k$ is called a \emph{bifurcation value} for the family~$\{X^\mu\}$
  in~$X^g_k(M)$ if~$X^{\ol{\mu}}$ fails to be a Morse-Smale system.
  Hence $X^{\ol{\mu}}$ has at least one \emph{orbit of
  tangency} between stable and unstable manifolds (i.e., an integral curve along which
  a stable and an unstable manifold are tangent),
  or at least one non-hyperbolic singular point.

  The \emph{bifurcation set} (or \emph{bifurcation diagram})
  of a family $\{X^\mu\}$ in $X^g_k(M)$ is
  the subset of $\R^k$ consisting of all bifurcation values of
  $\{X^\mu\}$.
\end{definition}

Note that, in dimension three, if a stable manifold and an unstable
manifold do not intersect transversely at a point~$x$, then they are
actually tangent at~$x$. Indeed, stable and unstable manifolds always
contain the flow direction, and any two linear subspaces of~$\RR^3$
containing a fixed vector are either transverse or tangent (in the
sense that one is contained inside the other).

\subsubsection{Generic 1-parameter families of gradients in dimension
  3} \label{sec:1-param}
We now turn to the results of Palis and
Takens~\cite{PT83}, following the notation of Vegter~\cite{Vegter85}.
We will translate the bifurcations occurring in generic 1-parameter families of
gradients to moves on Heegaard diagrams in Proposition~\ref{prop:1-param}.
For the following definition, see Vegter~\cite[p.~122]{Vegter85} and Palis--Takens~\cite[Section~2.a]{PT83}.

\begin{definition} \label{def:quasi-transversal} Let $v$ be a vector
  field on a 3-manifold~$M$. Two invariant submanifolds~$A$ and~$B$ of~$v$
  have a \emph{quasi-transversal tangency} if their intersection has a
  connected integral curve~$\g$ that is not a single point, and at some (and hence
  every) point $r \in \g$, the following two conditions hold:
  \begin{enumerate}[label=(QT-\arabic*)]
  \item \label{item:QT1} $\dim(T_r A + T_r B) = 2$; so we have three
    cases:
    \begin{enumerate}[ref=(QT-\arabic{enumi}\alph*)]
    \item \label{item:dim-2-2} $\dim A = 2$ and $\dim B = 2$,
    \item \label{item:dim-1-2} $\dim A = 1$ and $\dim B = 2$,
    \item \label{item:dim-2-1} $\dim A = 2$ and $\dim B = 1$.
    \end{enumerate}
  \item \label{item:QT2} In case~\ref{item:dim-2-2}, we impose the
    condition that the tangency between~$A$ and~$B$ is as generic as
    possible in the following sense. Let~$S$ be a smooth 2-dimensional
    cross-section for $v$, containing~$r$. Take coordinates $(x_1, x_2)$
    on a neighborhood of~$r$ in~$S$, in which $A \cap S = \{x_2 =
    0\}$, while $B \cap S$ is of the form $\{x_2 = F(x_1)\}$ for some
    smooth function $F$. Condition~\ref{item:QT1} amounts to $F(0) =
    0$ and $\frac{dF}{dx_1}(0) = 0$. In addition, we require that
    \[
    \frac{d^2F}{dx_1^2}(0) \neq 0.
    \]
  \end{enumerate}
\end{definition}

By Palis and Takens~\cite[Main Theorem]{PT83} and Vegter~\cite[p.~124]{Vegter85},
for an open and dense set of 1-parameter families of gradients, it is
easy to describe the bifurcation diagram: It consists of isolated
points in the parameter space~$\R$ at which one of the conditions
appearing in the characterization of Morse-Smale gradients is violated
``in the mildest possible manner.''  For a generic family $\{X^\mu\}
\in X_1^g(M)$, at each bifurcation value $\ol{\mu} \in \R$ exactly one
of the following two possibilities holds:

\begin{enumerate}
\labitem{(NH)}{item:failure-H} Failure of condition~\ref{cond:hyp}. The vector
  field $X^{\ol{\mu}}$ has exactly one generically unfolding
  saddle-node~$p$, while all other singular points are hyperbolic, and all
  stable and unstable manifolds are transversal. By convention (here
  and later), at saddle-nodes, the set of stable and unstable
  manifolds required to be transverse includes
  $W^{ss}(p)$ and $W^{uu}(p)$ as well as $W^s(p)$ and $W^u(p)$.
\labitem{(NT)}{item:failure-T} Failure of condition~\ref{cond:transv}. All singular
  points of $X^{\ol{\mu}}$ are hyperbolic, and there is a single
  non-transversal orbit of intersection~$\g$ of the unstable manifold
  $W^u(p_1^{\ol{\mu}})$ of $p_1^{\ol{\mu}}$ and the stable manifold
  $W^s(p_2^{\ol{\mu}})$ of $p_2^{\ol{\mu}}$ that is quasi-transversal
  and satisfies the additional non-degeneracy conditions below.

\begin{enumerate}[label=(ND-\arabic*)]
\item \label{item:unfolding} This condition expresses the ``crossing
  at non-zero speed'' of $W^u(p_1^{\mu})$ and $W^s(p_2^{\mu})$ as the
  parameter passes the value $\om$, where $p_1^{\mu}$ and $p_2^{\mu}$
  are the saddle points of $X^\mu$ near $p_1^{\om}$ and $p_2^{\om}$,
  respectively. For this, we choose paths $\sigma^u$, $\sigma^s \colon \R \to
  M$ with $\sigma^u(\mu) \in W^u(p_1^{\mu})$, $\sigma^s(\mu) \in
  W^s(p_2^{\mu})$, and $\sigma^u(\om) = \sigma^s(\om) = r \in \g$. We
  require that
  \[
  \dot{\sigma}^u(\om) - \dot{\sigma}^s(\om) \not\in
    T_rW^u(p_1^{\om}) + T_rW^s(p_2^{\om}).
  \]
\end{enumerate}

When $\dim W^u(p_1^{\om}) = 1$ and $\dim W^s(p_2^{\om}) = 2$, we
impose the following additional conditions.  The case when $\dim
W^u(p_1^{\om}) = 2$ and $\dim W^s(p_2^{\om}) = 1$ have the same
conditions, but with the sign of $X^\mu$ reversed.
\begin{enumerate}[resume,label=(ND-\arabic*)]
\item \label{item:eigenvalues} The contracting (i.e., negative) eigenvalues of the
  linear part of $X^{\om}$ at $p_1^{\om}$ are distinct
  (for gradients only real eigenvalues occur).  This
  implies that there is a unique 1-dimensional invariant manifold
  $W^{ss}(p_1^{\om}) \subset W^s(p_1^{\om})$ such that
  $T_{p_1^{\om}}W^{ss}(p_1^{\om})$ is the eigenspace of
  $L_{p_1^{\om}}X^{\om}$ corresponding to the strongest contracting (i.e., smallest negative)
  eigenvalue.  We call $W^{ss}(p_1^{\om})$ the strong stable manifold
  of $p_1^{\om}$.
\item \label{item:limit} For some $r \in \g$, let $E_r \subset
  T_rW^s(p_2^{\om})$ be a 1-dimensional subspace complementary to
  $X^{\om}(r)$. Let $\phi_t^{\om}$ for $t \in \R$ be the flow of
  $X^{\om}$. Then we require that
  \[
  \lim_{t \to -\infty} (d\phi_t^{\om})_r(E_r) = T_{p_1^{\om}}
  W^{ss}(p_1^{\om}).
  \]
\item \label{item:transversality} The stable and unstable manifolds of
  any singularity $p^* \not\in \left\{\, p_1^{\om}, p_2^{\om} \,\right\}$ are transversal to
  $W^{ss}(p_1^{\om})$.
\end{enumerate}
\end{enumerate}

The possibilities occurring in cases~\ref{item:failure-H}
and~\ref{item:failure-T} are shown schematically in
Figure~\ref{fig:codim1}. Note that, in case~\ref{item:failure-T}, we
have $\I(p_1^{\om})$, $\I(p_2^{\om}) \in \{1,2\}$ and $\I(p_1^{\om})
\le \I(p_2^{\om})$ by condition~\ref{item:QT1} of
quasi-transversality.

The above conditions are labeled (i)--(vii) in Vegter~\cite[pp.~121--123]{Vegter85};
see also conditions (i)--(v) in Palis and Takens~\cite[pp.~384--385]{PT83}.

\begin{figure}
  \centering
  \includegraphics{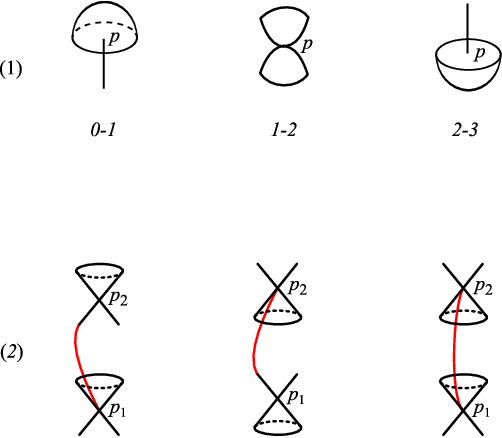}
  \caption{Generic codimension-1 singularities of gradient vector
    fields.  The vertical direction in these pictures is the direction
    of the Morse function, so the gradient flow always goes upwards.
    The critical points are shown schematically, indicating only the
    stable and unstable manifolds.  The top row shows the
    possibilities in case~\ref{item:failure-H}: an index~0-1, an
    index~1-2, and an index~2-3 saddle-node. Here the stable and
    unstable manifolds are manifolds with boundary. In the bottom
    row, we see a quasi-transversal orbit of tangency, shown in red,
    between $W^u(p_1)$ and $W^s(p_2)$. There are three cases, from left to
    right: $\I(p_1) = \I(p_2) = 1$, or $\I(p_1) = \I(p_2) = 2$, or
    $\I(p_1) = 1$ and $\I(p_2) = 2$.}
  \label{fig:codim1}
\end{figure}

\subsubsection{Generic 2-parameter families of gradients in dimension
  3} \label{sec:2-param}
This section summarizes results of Vegter~\cite{Vegter85}; also see
Carneiro and Palis~\cite{CP89} for the classification of generic
2-parameter families of gradients in arbitrary dimensions.

The instabilities~\ref{item:failure-H} and~\ref{item:failure-T} of
Section~\ref{sec:1-param} may also occur in an open and dense class
of 2-parameter gradient families on~$M$.  The corresponding parameter
values form smooth curves in the parameter space~$\R^2$. Moreover, by the work of
Vegter~\cite[p.~108]{Vegter85}, for
a generic family $\{X^\mu\} \in X^g_2(M)$, at isolated values~$\om$
of the parameter, exactly one of the following situations may
occur. (These cases are described in more detail shortly.)
\begin{enumerate}[label=(\Alph*)]
\item \label{item:A} The vector field $X^{\om}$ has exactly two quasi-transversal
  orbits of tangency between stable and unstable manifolds,
  satisfying analogues of
  conditions~\ref{item:unfolding}--\ref{item:transversality}, while
  all singularities are hyperbolic.
\item \label{item:B} The vector field $X^{\om}$ has exactly one non-hyperbolic
  singularity, which is a saddle-node, and exactly one
  quasi-transversal orbit of tangency between stable and unstable
  manifolds that satisfies analogues of
  conditions~\ref{item:unfolding}--\ref{item:transversality}.
\item \label{item:C} The vector field $X^{\om}$ has exactly two non-hyperbolic
  singularities, which are saddle-nodes, while all stable and unstable
  manifolds intersect transversely.
\item \label{item:D} The vector field $X^{\om}$ has exactly one non-hyperbolic
  singularity, which is quasi-hyperbolic of type~2, while all stable
  and unstable manifolds are transversal.
\item \label{item:E} All singular points of $X^{\om}$ are hyperbolic, and a single
  degenerate orbit of tangency occurs between $W^u(p_1^{\om})$ and
  $W^s(p_2^{\om})$ that violates exactly one of the conditions
  \ref{item:QT1}, \ref{item:QT2},
  \ref{item:limit}, or \ref{item:transversality} in the
  ``mildest possible manner'' (for more detail, see \ref{item:E1}--\ref{item:E4}).
  Observe that it does not make sense to consider
  violation of condition~\ref{item:unfolding}; it can be replaced by a similar
  condition for 2-parameter families. Condition~\ref{item:eigenvalues} also holds for
  generic 2-parameter families, since the set of linear
  \emph{gradients} on $\RR^2$ having two equal eigenvalues has
  codimension~2. Hence, generically, a pair of equal contracting
  eigenvalues at $p_1^{\om}$ does not occur together with an orbit of
  tangency.
\end{enumerate}
If $X^{\om}$ has a non-hyperbolic singular point $p \in M$, as in
cases~\ref{item:B}--\ref{item:D}, the set of stable and unstable manifolds also includes
$W^{ss}(p)$ and $W^{uu}(p)$, respectively.
Conditions~\ref{item:A}--\ref{item:D} correspond to conditions I--IV of Vegter \cite[pp.~111--112]{Vegter85}.
For condition~\ref{item:E}, see \cite[p.~125]{Vegter85}.

Next, we consider the bifurcation sets in $\R^2$ near parameter values
$\om$ for which we have one of the situations described above. Such a
parameter value is in the closure of smooth curves in $\R^2$ that
correspond to the occurrence of bifurcations that may also occur in
1-parameter families.  The curves limiting on $\om$
corresponding to tangencies with invariant manifolds of far-away
singularities (i.e., singular points of~$X^{\om}$ not directly contributing
to the failure of conditions \ref{item:failure-H} or \ref{item:failure-T}
in cases \ref{item:A}--\ref{item:E})
are called \emph{secondary bifurcations}.
We do not list the cases that arise from the ones below by reversing
the sign of $X^\mu$, which simply amounts
to swapping superscripts~``$u$'' and~``$s$.''

\begin{enumerate}[label=(A\arabic*)]
\item \label{item:A1} There are four hyperbolic singular points $p_1^{\om},
  \dots, p_4^{\om}$ of $X^{\om}$ such that the orbits of tangency are
  contained in $W^u(p_1^{\om}) \cap W^s(p_2^{\om})$ and
  $W^u(p_3^{\om}) \cap W^s(p_4^{\om})$, respectively. We allow $p_1^{\om} = p_3^{\om}$ or $p_2^{\om} = p_4^{\om}$, or both.
  Note that $\I(p_i^{\om}) \in \{1,2\}$ for every $i \in \{\,
  1,\dots,4 \,\}$, and necessarily $\I(p_1^{\om}) \le \I(p_2^{\om})$
  and $\I(p_3^{\om}) \le \I(p_4^{\om})$. See the top row of
  Figure~\ref{fig:flow-pairs} for schematic drawings of the
  possibilities when each~$p_i^{\om}$ has index~1.
  Generically, the bifurcation set consists of two smooth curves that intersect
  transversely at $\om$; see Figure~\ref{fig:link-A1}.
  \begin{figure}
    \centering
    \includegraphics{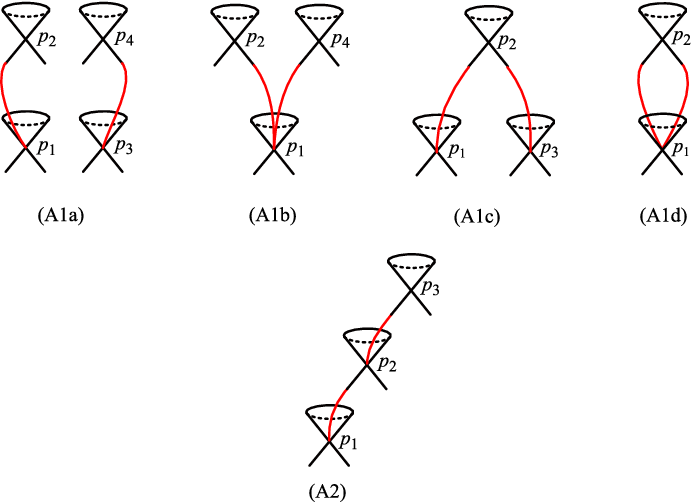}
    \caption{\textbf{The possibilities for two nondegenerate orbits of
        tangency between hyperbolic singular points from case~\ref{item:A}.}
      The top row shows the possibilities in case~\ref{item:A1}, while the
      bottom row illustrates case~\ref{item:A2}.  In this figure, each critical
      points is index~1, with stable manifold consisting of a curve
      and unstable manifold consisting of a disk.  The orbits of
      tangencies are shown in red.}
    \label{fig:flow-pairs}
  \end{figure}

\item \label{item:A2} There are three hyperbolic singular points $p_1^{\om}$,
  $p_2^{\om}$, and $p_3^{\om}$ of $X^{\om}$ such that the orbits of
  tangency are contained in $W^u(p_1^{\om}) \cap W^s(p_2^{\om})$ and
  $W^u(p_2^{\om}) \cap W^s(p_3^{\om})$, respectively. Again, each
  $p_i^{\om}$ has index 1 or~2, and
  \[
  \I(p_1^{\om}) \le \I(p_2^{\om}) \le \I(p_3^{\om}).
  \]
  See the bottom row of Figure~\ref{fig:flow-pairs} for an
  illustration.
  The bifurcation set consists of five codimension-1 strata meeting at~$\om$;
  see Figure~\ref{fig:link-chain-flows}.
\end{enumerate}

\begin{enumerate}[label = (B\arabic*)]
\item \label{item:B1} The vector field $X^{\om}$ has one saddle-node $p^{\om}$
  and one quasi-transverse orbit of tangency between $W^u(p_1^{\om})$
  and $W^s(p_2^{\om})$, where $p_1^{\om}$ and $p_2^{\om}$ are
  hyperbolic saddle-points of $X^{\om}$; see
  Figure~\ref{fig:codim-two-cases} for one case. The bifurcation set
  consists of two curves that intersect transversely at~$\om$;
  see Figure~\ref{fig:link-6a}.

\item \label{item:B2} The vector field $X^{\om}$ has a saddle-node~$p^{\om}$ and
  a hyperbolic saddle-point~$\ol{p}^{\om}$ whose stable manifold has
  one quasi-transverse orbit of tangency with the unstable manifold
  of~$p^{\om}$.
  Secondary bifurcations are due to the occurrence of
  tangencies between $W^u(p_*^{\mu})$ and $W^s(\ol{p}^{\mu})$ for each
  saddle point $p_*^{\mu} \neq \ol{p}^{\mu}$ such that $W^u(p_*^{\om})
  \cap W^s(p^{\om}) \neq \emptyset$; suppose there are $s$ of these.
  For an illustration of one case, see Figure~\ref{fig:codim-two-cases}.
  Then the bifurcation diagram consists of $s+3$ codimension-1 strata
  meeting at $\om$; see Figure~\ref{fig:link-7a}.

\item \label{item:B3} The vector field $X^{\om}$ has a saddle-node~$p^{\om}$ and
  a quasi-transverse orbit of tangency between $W^{ss}(p^{\om})$ and
  $W^u(\ol{p}^{\om})$, where $\ol{p}^{\om}$ is a saddle-point of
  $X^{\om}$; see Figure~\ref{fig:codim-two-cases}. The bifurcation set
  consists of 3 codimension-1 strata meeting at~$\om$; see Figure~\ref{fig:link-9a}.
\end{enumerate}

\begin{enumerate}[label = (\Alph*)]
\addtocounter{enumi}{2}
\item For an open and dense class of 2-parameter families
  $\{X^\mu\}$ of $X_2^g(M)$, we have a pair $p_1$, $p_2$ of saddle-nodes
  occurring at isolated values $\om$ of the parameter.
  For an illustration, see Figure~\ref{fig:codim-two-cases}. There are two
  curves $\Gamma_1$ and $\Gamma_2$ in the parameter plane
  corresponding to the occurrence of exactly one saddle-node of
  $X^\mu$ near $p_1$ and $p_2$, respectively. Generically, these
  curves are transversal; see Figure~\ref{fig:link-8}.

\begin{figure}
  \centering
  \includegraphics{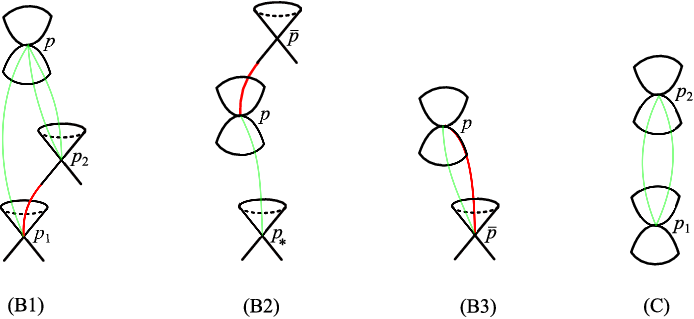}
  \caption{Codimension-2 bifurcations that involve a saddle-node.  As
    in Figure~\ref{fig:flow-pairs}, we have drawn schematic
    possibilities showing only the stable and unstable manifolds.  In
    the case of an index~1-2 saddle-node, the stable and unstable
    manifolds are each half-planes. The red lines show
    quasi-transverse orbits of tangency between different
    singularities.  The green lines are flows that are transverse
    intersections between stable and unstable manifolds.}
  \label{fig:codim-two-cases}
\end{figure}
\end{enumerate}

\begin{enumerate}[label = (\Alph*)]
\addtocounter{enumi}{3}
\item In a neighborhood of~$\om$,
  the bifurcation diagram consists of parameter values~$\mu$ for which~$X^\mu$,
  and hence~$f^\mu$, has a degenerate singular point near~$p$.
  For an open and dense class of 2-parameter families~$\{f^\mu\}$
  for which~$\grad(f^\mu)$ has a quasi-hyperbolic singularity of type~2,
  there are $\mu$-dependent local coordinates $(x,y,z)$ in which~$f^\mu$
  can be written as
  \[
  \pm x^4 + \mu_1 x^2 + \mu_2 x \pm y^2 \pm z^2,
  \]
  having a singularity of type~$A_3^\pm$.  So the bifurcation diagram
  near~$\om$ is the well-known cusp; see Figure~\ref{fig:link-10a}.
  The pair of curves having~$\om$
  in their closure corresponds to the occurrence of exactly one
  saddle-node near~$p$. For an illustration, see
  Figure~\ref{fig:codim-two-local}.

\begin{figure}
  \centering
  \includegraphics{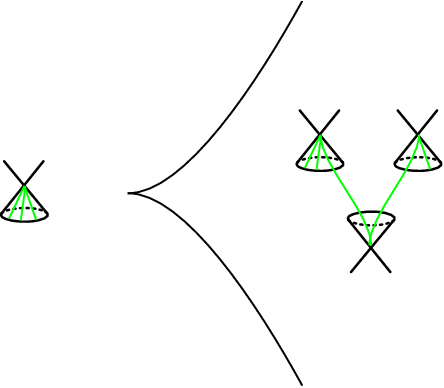}
  \caption{
    Local codimension-2 bifurcation of type~\ref{item:D}. We have drawn the bifurcation diagram for an
    $A_3^-$ singularity, and indicated the dynamics on the two sides of the bifurcation set.
  }
  \label{fig:codim-two-local}
\end{figure}
\end{enumerate}

\begin{enumerate}[label = (E\arabic*)]
\begin{figure}
  \centering
  \includegraphics{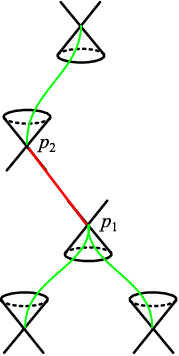}
  \caption{The dynamics at a bifurcation of type~\ref{item:E1}.}
  \label{fig:codim-two-E1}
\end{figure}

\item \label{item:E1} We have $\dim W^u(p_1^{\om}) = \dim W^s(p_2^{\om}) = 1$,
  violation of~\ref{item:QT1}.  Secondary bifurcations may be present
  due to occurrence of an orbit of tangency between $W^s(p_2^\mu)$ and
  an unstable manifold (of dimension~2) intersecting $W^s(p_1^\mu)$,
  or between~$W^u(p_1^\mu)$ and a stable manifold (of dimension~2)
  intersecting~$W^u(p_2^\mu)$; see Figure~\ref{fig:codim-two-E1}.  For~$\mu$
  close to~$\om$, let~$D_r^\mu$ be a continuous family of smooth
  discs contained in a level set of~$f^\mu$ such that $W^u(p_1^\mu)
  \cap D_r^\mu = \{r\}$.  Let $U_1^\mu, \dots, U_n^\mu$ be the
  intersections of~$D_r^\mu$ with unstable manifolds having non-empty
  intersections with~$W^s(p_1^\mu)$. Similarly, $S_1^\mu, \dots,
  S_m^\mu$ denote intersections of~$D_r^\mu$ and stable manifolds
  meeting~$W^u(p_2^\mu)$. The corresponding bifurcation diagram
  consists of~$n+m$ curves in the parameter plane, having $\om$ in
  their closure; see Figure~\ref{fig:E1-surface}.  For parameter
  values~$\mu$ on these curves, we have
  either $W^s(p_2^\mu) \cap D_r^\mu \in U_i^\mu$ for some $i \in \{\,1,\dots,
  n\,\}$, or $W^u(p_1^\mu) \cap D_r^\mu \in S_j^\mu$ for some $j \in \{\,1, \dots,
  m\,\}$.

\item \label{item:E2} We have $d^2 F^{\om}/dx^2 = 0$, violation
  of~\ref{item:QT2}, where~$F^{\om}$ is the function~$F$ defined in~\ref{item:QT2}
  for the vector field~$v^{\om}$. For an open and dense class of 2-parameter
  families, we have $(d^3F^{\om}/dx^3)(0) \neq 0$, while the family
  $\{F^\mu\}$ is a versal unfolding of $F^{\om}$. The latter condition
  implies the existence of local coordinates $(\mu, x)$ near $(\om,r)$
  in which $(\om, r)$ corresponds to $(0,0) \in \R^2 \times \R$, such
  that $F^\mu(x) = x^3 + \mu_1 x + \mu_2$. The bifurcation values form
  a cusp in the parameter space $\R^2$.
\item \label{item:E3} Situation of case~\ref{item:dim-1-2}, where $\lim_{t \to
    -\infty} d\phi_t^{\om}(E_r)$ is the eigenspace of the linear part at
  $p_1^{\om}$ corresponding to the \emph{weakest} contracting
  eigenvalues, which is violation of~\ref{item:limit}. Let~$D^\mu_r$ be
  as in case~\ref{item:E1}. Then~$D^\mu_r$ contains $U_1^\mu, \dots, U_n^\mu$
  that are intersections of~$D_r^\mu$ and unstable manifolds meeting
  $W^s(p_1^\mu)$. Secondary bifurcations occur for parameter values~$\mu$
  for which $W^s(p_2^\mu) \cap D_2^\mu$ is tangent to~$U_i^\mu$
  for some $i \in \{\, 1, \dots, n \,\}$.

\item \label{item:E4} The vector field $X^\mu$ has an orbit of tangency as in
  case~\ref{item:dim-1-2}, and exactly one hyperbolic saddle
  $p^{\om}_*$ different from $p_1^{\om}$ and $p_2^{\om}$ such that
  $W^u(p_*^{\om})$ and $W^{ss}(p_1^{\om})$ are not transversal, which
  is violation of~\ref{item:transversality}.  In this case, we have
  secondary bifurcations for parameter values~$\mu$ for which one of
  the following occur:
  \begin{enumerate}[label = (\alph*)]
  \item $W^u(p_1^\mu) \cap D_r^\mu \in W^s(p_2^\mu) \cap
    D_r^\mu$,
  \item $W^u(p_*^\mu) \cap D_r^\mu$ is tangent to $W^s(p_2^\mu)
    \cap D_r^\mu$.
  \end{enumerate}
\end{enumerate}

We remark that conditions \ref{item:A1} and \ref{item:A2} correspond to
conditions I.a and I.b of Vegter~\cite[p.~113]{Vegter85}.
Conditions \ref{item:B1}--\ref{item:B3} correspond to conditions II.a--II.c of Vegter~\cite[p.~114]{Vegter85},
while conditions \ref{item:E1}--\ref{item:E4} appear as conditions 1--4 in Vegter~\cite[pp.~125--128]{Vegter85}.

\subsection{Sutured functions and gradient-like vector fields}
\label{sec:sutured-functions}

In this section, we introduce sutured functions, which are smooth
functions on a sutured manifold with prescribed boundary
behavior. Then we define and study gradient-like vector fields for
sutured functions.

\begin{definition}
  Let $(M,\g)$ be a sutured manifold. A \emph{sutured function} on
  $(M,\g)$ is a smooth function $f \colon M \to [-1,1]$ such that
  \begin{enumerate}
  \item $f^{-1}(\pm 1) = R_\pm(\g)$ and $f^{-1}(0) \supset s(\g)$,
  \item \label{item:suturedfn-on-R}
  $f$ has no critical points along $R(\g)$,
  \item \label{item:suturedfn-on-gamma} $f|_\g$ has no critical points.
  \end{enumerate}
\end{definition}

The space of sutured functions on $(M,\g)$ is contractible. Indeed, the set of sutured functions
for a given $f|_\g$ is convex, while the space of possible $f|_\g$ is
contractible.
For a sutured function~$f$, we denote by~$C(f)$ the set of critical
points of~$f$; i.e.,
\[
C(f) = \{\, p \in M  \colon  df_p = 0\,\}.
\]
By conditions~\eqref{item:suturedfn-on-R}
and~\eqref{item:suturedfn-on-gamma}, the set~$C(f)$ lies in the
interior of~$M$.  The following definition was motivated by Milnor
\cite[Definition 3.1]{Milnor}.

\begin{definition} \label{def:grad-like} Let $f$ be a sutured function
  on $(M,\g)$. A vector field~$v$ on $M$ is a \emph{gradient-like
    vector field for~$f$} if
  \begin{enumerate}
  \item $v(f) > 0$ on $M \setminus C(f)$,
  \item \label{item:grad} $C(f)$ has a neighborhood $U$ such that
    $v|_U = \grad_g(f|_U)$ for some Riemannian metric $g$ on $U$,
  \item $v_p \in T_p\g$ for every $p \in \g$.
  \end{enumerate}
\end{definition}

\begin{remark}
  Note that Milnor \cite[Definition 3.1]{Milnor} defined
  gradient-like vector fields for a Morse function~$f$ on an
  $n$-manifold~$M$.  Instead of condition~\eqref{item:grad}, he
  required that, for any critical point~$p$ of~$f$, there are
  coordinates $(x_1,\dots,x_n)$ in a neighborhood $U$ of $p$ such
  that
  \[
  f = f(p) - x_1^2 -\dots-x_\lambda^2 + x_{\lambda+1}^2 + \dots
  + x_n^2
  \]
  and $v$ has coordinates
  $(-x_1,\dots,-x_\lambda,x_{\lambda+1},\dots,x_n)$ throughout $U$.
  When studying families of smooth functions, as we have seen, more
  complicated singularities can arise. We could require that around
  such a singularity, $v$ is the Euclidean gradient in a local
  coordinate system in which the singularity is in normal form.  But
  then it is unclear whether the space of gradient-like vector fields
  is contractible, as the space of such local coordinate systems is
  rather complicated.  Hence, we have chosen to work with
  condition~\eqref{item:grad}, as the space of metrics is clearly
  contractible. As a tradeoff, one has to resort to such results as
  Theorems~\ref{thm:center-manifold} and~\ref{thm:reduction-principle}
  to understand the invariant manifolds of~$v$ near a singular point.
\end{remark}

Let $\FV(M,\g)$ be the space of pairs $(f,v)$, where $f$ is a sutured
function on $(M,\g)$ and $v$ is a gradient-like vector field for
$f$. We endow $\FV(M,\g)$ with the $C^{\infty}$-topology.

\begin{definition}
  A \emph{Morse function} on $(M,\g)$ is a sutured function $f \colon
  M \to [-1,1]$ such that all critical points of~$f$ are
  non-degenerate.  For a Morse function~$f$ and $i \in
  \{\,0,1,2,3\,\}$, let $C_i(f)$ be the set of critical points of~$f$
  of index~$i$.
\end{definition}

By condition~\eqref{item:grad} of Definition~\ref{def:grad-like},
every gradient-like vector field~$v$ of a Morse
function has only hyperbolic singular points. In particular, we can
talk about the stable and unstable manifolds $W^s(p)$ and $W^u(p)$ of
a singular point $p$ of $v$. If we also want to refer to the vector
field~$v$, then we write $W^u(p,v)$ and $W^s(p,v)$. Note that the
Morse index~$\I(p)$ of the critical point $p \in C(f)$ agrees with
$\dim W^s(p)$. Indeed, in a suitable coordinate system around~$p$, the
linearization $L_pv$ coincides with the Hessian of~$f$ at~$p$.
Furthermore, notice that every point
\[
x \in M \setminus \bigcup_{p \in C(f)} (W^u(p) \cup W^s(p))
\]
lies on a compact flow-line connecting $R_-(\g)$ and $R_+(\gamma)$.

\begin{definition}
  We say that $(f,v) \in \FV(M,\g)$ satisfies the \emph{Morse-Smale
    condition} if $v$ is Morse-Smale in the sense of
  Definition~\ref{defn:Morse-Smale}. We denote the subspace of
  Morse-Smale pairs in $\FV(M,\g)$ by $\FV_0(M,\g)$.
\end{definition}

If $(f,v) \in \FV_0(M,\g)$, then $f$ is a Morse function on $(M,\g)$.
Furthermore, for every $p$, $q \in C(f)$, the intersection $W^u(p) \cap
W^s(q)$ is a manifold of dimension $\I(q) - \I(p)$ that we denote by
$W(p,q)$.  In particular, $W(p,q) = \emptyset$ if $\I(q) - \I(p) <
0$.

\begin{remark}
  Notice that, for $(f,v) \in \mathcal{FV}(M,\g)$, the condition that
  $W^u(p)$ and $W^s(q)$ intersect transversely is automatically
  satisfied if at least one of $p$ or~$q$ has index~0 or~3.  For a
  pair $(f,v) \in \FV(M,\g)$ where $f$ is a Morse function, the
  Morse-Smale condition can be violated by having flows between
  critical points of index~1, flows between critical points of
  index~2, flows from index~2 to index~1 critical points,
  or an orbit of tangency (see Definition~\ref{def:bifurcation})
  in~$W^u(p) \cap W^s(q)$ for $p \in C_1(f)$ and $q \in C_2(f)$.
  Flows from index~2 to index~1 critical points
  only appear generically in 2-parameter families, while the other possibilities
  already occur in generic 1-parameter families.
\end{remark}

\begin{definition}
  We say the pair $(f,v) \in \FV(M,\g)$ is \emph{codimension-1} if
  $(f,v) \not\in \FV_0(M,\g)$, but $v$ appears as $X^{\om}$ for some
  1-parameter family $\{ X^\mu \} \in X_1^g(M)$ that is generic in the
  sense of Section~\ref{sec:1-param}. We denote the space of
  codimension-1 pairs by $\FV_1(M,\g)$, and the union $\FV_0(M,\g)
  \cup \FV_1(M,\g)$ by $\mathcal{FV}_{\le 1}(M,\g)$.

  In an analogous manner, we say that $(f,v) \in \FV(M,\g)$ is
  \emph{codimension-2} if $(f,v) \not\in \mathcal{FV}_{\le 1}(M,\g)$,
  but $v$ appears as $X^{\om}$ for some 2-parameter family $\{ X^\mu
  \} \in X_2^g(M)$ that is generic in the sense of
  Section~\ref{sec:2-param}. We denote the space of codimension-2
  pairs by $\FV_2(M,\g)$. Finally, we set
  \[
  \FV_{\le 2}(M,\g) = \bigcup_{i \in \{\, 0,1,2 \,\}}
  \mathcal{FV}_i(M,\g).
  \]
\end{definition}

The following proposition implies that every gradient-like vector
field is actually a gradient for some Riemannian metric. The advantage
of gradient-like vector fields is that they are easier to manipulate
than metrics, which is useful in actual constructions.

\begin{proposition} \label{prop:grad-like-metric} Let $(f,v) \in
  \FV(M,\g)$.  Then the space $G(f,v)$ of Riemannian metrics~$g$ on
  $M$ for which $v = \grad_g(f)$ is non-empty and contractible.
\end{proposition}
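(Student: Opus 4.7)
The starting point is to translate the condition $v = \grad_g f$ into the pointwise linear equation
\[
g_p(v(p), w) = df_p(w) \quad \text{for all } w \in T_p M.
\]
Since this equation is affine linear in $g_p$ and positive-definiteness defines an open convex cone in the space of symmetric bilinear forms on $T_pM$, the set of admissible Riemannian metrics is carved out by a pointwise linear constraint cut out from a convex set.

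\emph{Non-emptiness.} I would cover $M$ by open sets $\{U_\alpha\}$ on each of which a local metric $g_\alpha$ with $v|_{U_\alpha} = \grad_{g_\alpha}(f|_{U_\alpha})$ exists, and then patch via a partition of unity. Three kinds of charts suffice: (i) the neighborhood $U$ of $C(f)$ given by Definition~\ref{def:grad-like}\eqref{item:grad}; (ii) a collar of $\gamma$ with the product coordinates from the fixed diffeomorphism $\gamma \cong s(\gamma) \times [-1,1]$, where $v = \partial/\partial z$ and $f$ restricts to the $z$-coordinate, so any product metric $g_{s(\gamma)} + dz^2$ (extended slightly into $M$) works; (iii) for any $p \in M \setminus C(f)$, a local metric constructed by declaring $v$ to be orthogonal to $\ker(df)$ with squared length $v(f) > 0$ and placing any positive-definite inner product on the codimension-one subbundle $\ker(df)$. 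Choosing a smooth partition of unity $\{\rho_\alpha\}$ subordinate to $\{U_\alpha\}$ and setting $g = \sum_\alpha \rho_\alpha g_\alpha$, the pointwise linearity of the defining equation gives
\[
g(v, \cdot) = \sum_\alpha \rho_\alpha\, g_\alpha(v, \cdot) = \sum_\alpha \rho_\alpha\, df = df,
\]
while positive-definiteness is preserved by convex combinations. Thus $g \in G(f,v)$.

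\emph{Contractibility.} The space $G(f,v)$ is actually convex: given $g_0, g_1 \in G(f,v)$ and $t \in [0,1]$, the convex combination $g_t = (1-t) g_0 + t g_1$ is positive-definite, and
\[
g_t(v, \cdot) = (1-t) df + t\, df = df,
\]
so $g_t \in G(f,v)$. Since convex subsets of the Fr\'echet space of smooth symmetric $(0,2)$-tensors on $M$ are contractible (via the straight-line homotopy to any chosen basepoint $g_0 \in G(f,v)$), the proposition follows.

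\emph{Main obstacle.} The only genuinely delicate point is the local construction near a critical point~$p$ where $v$ vanishes and the pointwise equation degenerates. Condition~\eqref{item:grad} of Definition~\ref{def:grad-like} is exactly what is needed to bypass this: it supplies a ready-made metric on a whole neighborhood $U$ of $C(f)$ with the correct behavior, so no direct local analysis at critical points is required. The rest is a standard partition-of-unity argument combined with the convex-combination trick, made possible by the linearity of the constraint in $g$.
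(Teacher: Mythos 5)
Your proof is correct and follows essentially the same strategy as the paper: a partition-of-unity patching argument for non-emptiness, exploiting that the constraint $g(v,\cdot)=df$ is affine in $g$ and positive-definiteness is an open convex condition, and then convexity of $G(f,v)$ for contractibility. The paper phrases the patching slightly differently, applying the partition of unity only to the induced metric on the $2$-plane bundle $\ker(df)$ over $M\setminus C(f)$ and recovering the full metric from the constraint, but this is equivalent to your direct convex combination of metrics.
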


\begin{proof}
  By definition, there is a metric~$g$ on a neighborhood~$U$ of $C(f)$
  such that $v|_U = \grad_g(f|_U)$.  Pick a smaller neighborhood~$V$
  of $C(f)$ such that $\ol{V} \subset U$.  We are going to extend
  $g|_V$ to the whole manifold~$M$ such that $v = \grad_g(f)$
  everywhere. Such a metric~$g$ on~$M$ satisfies $g(v_x,v_x) = v_x(f)
  > 0$ and $g(v_x,w_x) = 0$ for every $x \not \in C(f)$ and $w_x \in
  \ker(df_x)$. So the extension~$g$ on~$M$ is uniquely determined by a
  choice of metric on the 2-plane bundle $\ker(df)|_{M \setminus V}$ that
  smoothly extends the metric given on $\ker(df)|_{V \setminus
    C(f)}$. For this, pick an arbitrary metric on $\ker(df)|_{M
    \setminus V}$ and piece it together with $g|_{U \setminus C(f)}$
  using a partition of unity subordinate to the covering $\{U, M
  \setminus V\}$ of $M$.  Hence $G(f,v) \neq \emptyset$.

  The space $G(f,v)$ is contractible because it is convex. Indeed, if
  $g_0$, $g_1 \in G(f,v)$, then $g_i(v,w) = w(f)$ for every vector field~$w$
  on~$M$ and $i \in \{0,1\}$. Let $g_t = (1-t)g_0 + tg_1$ for $t \in I$
  be an arbitrary convex combination of $g_0$ and $g_1$.  Then
  $g_t(v,w) = w(f)$ for every~$w$ on~$M$; i.e., $v = \grad_{g_t}(f)$.
\end{proof}

\begin{corollary} \label{cor:FV-contractible} The space $\FV(M,\g)$ is
  weakly contractible.
\end{corollary}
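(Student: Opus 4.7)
The plan is to prove contractibility by exhibiting $\FV(M,\g)$ as the image of a manifestly convex auxiliary space under a continuous surjection with convex (hence contractible) fibers, leveraging Proposition~\ref{prop:grad-like-metric} to reparametrize in terms of metrics.

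First I would introduce the auxiliary space
\[
\mathcal{Q}(M,\g) = \{\,(f, g) : f \text{ a sutured function on } (M,\g),\ g \text{ a metric on } M,\ \grad_g(f)|_\g = \partial/\partial z\,\}.
\]
The boundary condition $\grad_g(f)|_\g = \partial/\partial z$ is equivalent to the pointwise linear condition $g_p(\partial/\partial z, w) = df_p(w)$ for every $p\in\g$ and $w \in T_pM$. Since the spaces of sutured functions (observed after Definition 5.41) and of Riemannian metrics are both convex, and since the boundary condition is jointly linear in $(f,g)$, it is preserved under convex combinations:
\[
((1-t) g_0 + t g_1)(\partial/\partial z, w) = (1-t) w(f_0) + t w(f_1) = w((1-t)f_0 + t f_1).
\]
Proposition~\ref{prop:grad-like-metric}, applied to any pair in $\FV(M,\g)$, exhibits a point of $\mathcal{Q}(M,\g)$, so $\mathcal{Q}(M,\g)$ is a non-empty convex set and hence contractible.

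Next I would study the continuous map $\Phi\co \mathcal{Q}(M,\g) \to \FV(M,\g)$ defined by $\Phi(f,g) = (f, \grad_g(f))$. It lands in $\FV(M,\g)$ because $v := \grad_g(f)$ satisfies $v(f) = |df|_g^2 > 0$ on $M \setminus C(f)$, it is a gradient of $f$ near $C(f)$ (for the metric $g|_U$), and $v|_\g = \partial/\partial z$ by construction. Surjectivity is exactly the content of Proposition~\ref{prop:grad-like-metric}, and the fiber $\Phi^{-1}(f,v)$ is precisely the space $G(f,v)$, shown to be non-empty and convex (and hence contractible) in the proof of that proposition. Moreover, $\Phi$ has an affine-bundle flavor: if $g_0, g_1 \in G(f,v)$, then $h = g_1 - g_0$ is a symmetric bilinear form on $TM$ satisfying $h(v,\cdot) = 0$, and the space of such $h$ varies continuously with $(f,v)$.

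The final step is to deduce contractibility of $\FV(M,\g)$ from the contractibility of $\mathcal{Q}(M,\g)$ and the convex fibers of $\Phi$. Given a map $\varphi\co S^n \to \FV(M,\g)$, a partition-of-unity argument on the paracompact sphere (exploiting the convexity of $\Phi$-fibers in the ambient Fr\'echet space of symmetric bilinear forms) produces a continuous lift $\tilde\varphi \co S^n \to \mathcal{Q}(M,\g)$; convexity of $\mathcal{Q}(M,\g)$ then nullhomotopes $\tilde\varphi$, and composing with $\Phi$ nullhomotopes $\varphi$. Hence $\pi_n(\FV(M,\g)) = 0$ for every~$n$, and since $\FV(M,\g)$ is an open subset of a Fr\'echet manifold of smooth sections, it has the homotopy type of a CW complex and is therefore contractible. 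The main obstacle is making the partition-of-unity lifting rigorous in the infinite-dimensional function-space setting; this is a standard but slightly technical invocation of the principle that a surjection with contractible convex fibers over a paracompact base is a weak equivalence.
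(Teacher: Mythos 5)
Your proof follows essentially the same strategy as the paper: exhibit $\FV(M,\g)$ as the image of a contractible function/metric space under the map $(f,g)\mapsto(f,\grad_g f)$, invoke Proposition~\ref{prop:grad-like-metric} for surjectivity and contractibility of the fibers $G(f,v)$, and conclude via a fibration-type argument. The one substantive refinement is your auxiliary space $\mathcal{Q}(M,\g)$: the paper works with the full product $\mathcal{F}(M,\g)\times\mathcal{G}(M,\g)$ of sutured functions and Riemannian metrics and calls $\pi(f,g)=(f,\grad_g f)$ a Serre fibration onto $\FV(M,\g)$, but for an arbitrary metric $g$ the condition $\grad_g(f)|_\gamma=\partial/\partial z$ from Definition~\ref{def:grad-like} need not hold, so strictly speaking $\pi$ does not land in $\FV(M,\g)$. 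Your linear constraint $g_p(\partial/\partial z,\cdot)=df_p$ on $\gamma$ fixes this cleanly while preserving convexity, so $\mathcal{Q}(M,\g)$ is still contractible; this is a genuine improvement over the paper's phrasing. Your partition-of-unity lifting in the final step is one way to make the ``surjection with contractible convex fibers'' heuristic precise, and is in the same spirit as the paper's unproved assertion that $\pi$ is a Serre fibration; both treatments leave that topological step somewhat informal, so you are not losing rigor relative to the original.
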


\begin{proof}
  Let $\mathcal{F}(M,\g)$ be the space of sutured functions and
  $\mathcal{G}(M,\g)$ the space of Riemannian metrics on $(M,\g)$,
  respectively. Both $\mathcal{F}(M,\g)$ and $\mathcal{G}(M,\g)$ are
  contractible. Consider the projection
  \[
  \pi \colon \mathcal{F}(M,\g) \times \mathcal{G}(M,\g) \to \FV(M,\g)
  \]
  given by $\pi(f,g) = (f,\grad_g(f))$; this is a Serre fibration. For
  $(f,v) \in \FV(M,\g)$, the fiber is $\pi^{-1}(f,v) = G(f,v)$, which is
  contractible by Proposition~\ref{prop:grad-like-metric}. Hence the
  base space $\FV(M,\g)$ is weakly contractible.
\end{proof}
\section{Translating bifurcations of gradients to Heegaard diagrams}
\label{sec:transl-hd}

We will now translate the singularities of
Sections~\ref{sec:1-param} and~\ref{sec:2-param} in terms of
Heegaard diagrams.  Loosely speaking, each generic gradient gives a
Heegaard diagram, each codimension\hyp 1 singularity gives a move
between Heegaard diagrams, and each codimension\hyp 2 singularity
gives a contractible loop of Heegaard diagrams.  The codimension\hyp 1
and codimension\hyp 2 singularities give moves and loops of moves,
respectively, that are more complicated than the ones appearing in the
definition of weak and strong Heegaard invariants.
In Section~\ref{sec:simplify}, we will see how to simplify these families.

The overall idea is that to construct a Heegaard splitting from the
gradient of a generic Morse function (not necessarily self-indexing),
take one compression body to be a small neighborhood of the union of
all flows starting at~$R_-(\g)$ or index~0 critical points and ending at index~1
critical points.
The other compression body is then isotopic to a small neighborhood of
the flows starting at index~2 critical points and ending at~$R_+(\g)$
or at index~3
critical points.
To further construct the $\alpha$- and $\beta$-curves of a Heegaard diagram, we take
the intersection of the Heegaard surface with the unstable
manifolds of some of the index~1 critical points and with the
stable manifolds of some of the index~2 critical points.

This Heegaard splitting extends naturally across codimension\hyp 1 and
codimension\hyp 2 singularities, as long as there is no flow from
an index~2 to an index~1 critical point.  In each case, we will
analyze how the corresponding Heegaard diagrams change.

\subsection{Separability of gradients}
\label{sec:separability}

We now introduce \emph{separability}, our main technical tool for
obtaining Heegaard splittings compatible with gradient-like vector
fields that have at most codimension-2 degeneracies. In the sections
that follow, we explain how to enhance these Heegaard splittings to
Heegaard diagrams for generic gradients, to moves between diagrams for
codimension-1 gradients, and to loops of diagrams for codimension-2
gradients.

\begin{definition} \label{def:separable} We say that the pair $(f,v) \in
  \FV_{\le 2}(M,\g)$ is \emph{separable} if
  \begin{itemize}
  \item it is not codimension-2 of type \ref{item:E1}; i.e., if for
    every pair of non-degenerate critical points $p \in C_2(f)$ and $q
    \in C_1(f)$, we have $W^u(p) \cap W^s(q) = \emptyset$; and
  \item if it is codimension-2 of type~\ref{item:C} (i.e., it has two
    birth-death singularities at~$p$ and at~$q$), then $f(p) \neq f(q)$.
  \end{itemize}
  (This second condition is codimension-3, and hence generic for
  2-parameter families.)
\end{definition}

\begin{definition} \label{def:partition} Suppose that $(f,v) \in
  \FV_{\le 2}(M,\g)$. Then we partition~$C(f)$ into two subsets,
  namely $C_{01}(f,v)$ and~$C_{23}(f,v)$, as follows.
  We define $C_{01}(f,v)$ to be the set of those critical points $p \in C(f)$
  for which one of the following holds:
  \begin{enumerate}
  \item $p \in C_0(f) \cup C_1(f)$, or
  \item $p$ is an index~0-1 birth-death, or
  \item $p$ is an index~1-2 birth-death, $(f,v)$ is codimension-2 of
  type~\ref{item:B1}, and $\I(p_1) = \I(p_2) = 1$, or
  \item $p$ is an index~1-2 birth-death,
  $(f,v)$ is codimension-2 of type~\ref{item:B2} or~\ref{item:B3}, and $\I(\ol{p}) = 1$,
  \item \label{item:onlyone} $(f,v) \in \FV_2(M,\g)$ is type~\ref{item:C},
  and if~$q$ is the other birth-death critical point, then $f(p) < f(q)$, or
  \item $(f,v)$ is codimension-2 of type~\ref{item:D}, and $p$
  is an index 1-0-1, 0-1-0, or~1-2-1 birth-death-birth.
  \end{enumerate}
  Finally, if $p$ is an index~1-2 birth-death critical
  point and $(f,v)$ is codimension-1 of
  type~\ref{item:failure-H}, then we can put~$p$ in either
  $C_{01}(f,v)$ or $C_{23}(f,v)$. Having defined $C_{01}(f,v)$,
  we let $C_{23}(f,v) = C(f) \setminus C_{01}(f,v)$.
\end{definition}

It follows from condition~\eqref{item:onlyone} of Definition~\ref{def:partition} that
$C_{01}(f,v)$ contains at most one index 1-2 birth-death critical point.

\begin{definition} \label{defn:separates} 
  Suppose that $(f,v) \in \FV_{\le 2}(M,\g)$. 
  We say that a properly embedded surface $\S
  \subset M$ \emph{separates} $(f,v)$ if
  \begin{enumerate}
  \item $\S \pitchfork v$,
  \item $M = M_- \cup M_+$, such that $M_- \cap M_+ = \S$ and
    $R_\pm(\g) \subset M_\pm$,
  \item $C_{01}(f,v) \subset M_-$ and $C_{23}(f,v) \subset M_+$, and
  \item $\partial \S = s(\g)$.
  \end{enumerate}
  We denote the set of surfaces that separate $(f,v)$ by
  $\S(f,v)$. When $(f,v) \in \FV_1(M,\g)$ is
  type~\ref{item:failure-H} with an index~1-2 birth-death
  singularity~$p$, then there are two different choices for the
  partition $(C_{01}(f,v), C_{23}(f,v))$ of $C(f)$, depending on where
  we put~$p$, hence $\S(f,v)$ is
  not completely unique. If we put~$p$ into $C_{01}(f,v)$, then we
  denote the resulting set $\S_-(f,v)$, and we write $\S_+(f,v)$ when
  $p \in C_{23}(f,v)$.  Often, we suppress this choice in our
  notation, and simply write $\S(f,v)$ (which is then either
  $\S_-(f,v)$ or $\S_+(f,v)$).
\end{definition}

Notice that, if~$\S$ separates $(f,v)$, then~$\S$ is necessarily
orientable as it is transverse to~$v$ and~$M$ is orientable.  We
orient~$\S$ such that the normal orientation given by~$v$, followed by
the orientation of~$\S$, agrees with the orientation on~$M$; i.e.,
such that~$\Sigma$ is oriented as the boundary of~$M_-$.

If $(f,v) \in \mathcal{FV}_{\le 2}(M,\g)$, then, for every $p \in
C_{01}(f,v)$, the manifold $W^s(p) \setminus R_-(\g)$ is diffeomorphic to
\begin{itemize}
\item a single point if $p \in C_0(f)$, or $p$ is a birth-death-birth
  of index~0-1-0,
\item $\R$ if $p \in C_1(f)$, or $p$ is an index~1-0-1 or~1-2-1
  birth-death-birth,
\item $[0,\infty)$ if $p$ is an index 0-1 birth-death, or
\item $\R \times [0,\infty)$ if $p$ is an index 1-2 birth-death.
\end{itemize}
In addition, if $(f,v)$ is separable and $p \in C_{01}(f,v)$ is not an
index~1-2 birth-death, then $\partial W^s(p)
\subset C_{01}(f,v) \cup R_-(\g)$ (where $\partial W^s(p)$ is the
topological boundary).  If $p \in C_{01}(f,v)$ is an index~1-2
birth-death, then $\partial W^{ss}(p) \subset C_{01}(f,v) \cup
R_-(\g)$, while
\[
\partial W^s(p) \subset C_{01}(f,v) \cup R_-(\g) \cup W^{ss}(p) \cup
\bigcup \bigl\{\, W^s(p') \,\colon\, p'\in C_{01}(f,v) \setminus \{p\}
  \,\bigr\}.
\]
Analogous statements hold for $C_{23}(f,v)$.  The above discussion
justifies the following definition.

\begin{definition} \label{def:gamma} Suppose that $(f,v) \in \FV_{\le 2}(M,\g)$
  is separable. Then we define the relative CW complexes $(R_-(\g)
  \cup \Gamma_{01}(f,v), R_-(\g))$ and $(R_+(\g) \cup
  \Gamma_{23}(f,v), R_+(\g))$ by taking
  \begin{align*}
    \Gamma_{01}(f,v) &= \bigcup_{p \in C_{01}(f,v)} W^s(p),\\
    \Gamma_{23}(f,v) &= \bigcup_{p \in C_{23}(f,v)} W^u(p).
  \end{align*}
  The set of vertices of $\Gamma_{01}(f,v)$ is $C_{01}(f,v)$.
  The closed 1-cells of $\Gamma_{01}(f,v)$ are the closures of the
  components of the $W^{ss}(p) \setminus \{p\}$ for $p \in
  C_{01}(f,v)$ an index~1-2 birth-death, and the closures of the
  components of $W^s(p) \setminus \{p\}$ for every other $p \in
  C_{01}(f,v)$. Finally, $\Gamma_{01}(f,v)$ has at most one (closed)
  2-cell, namely $\ol{W^s(p)}$ if $p \in C_{01}(f,v)$ is an index~1-2
  birth-death.  We define the cell decomposition of $\Gamma_{23}(f,v)$
  in an analogous manner.  Finally, we set $\Gamma(f,v) =
  \Gamma_{01}(f,v) \cup \Gamma_{23}(f,v)$.
\end{definition}

Note that $\Gamma_{01}$ is either a graph (i.e., a 1-complex), or
obtained from a graph by an elementary expansion
involving the 1-cell $W^{ss}(p)$ and the 2-cell
$W^s(p)$  if~$C_{01}(f,v)$ contains an index~1-2 birth-death~$p$.

\begin{remark}
  In light of Definition~\ref{def:gamma}, we now motivate
  Definition~\ref{def:partition}.  We partitioned the set of critical
  points $C(f)$ into $C_{01}(f,v)$ and $C_{23}(f,v)$ precisely so that
  we can form the relative CW complexes $(R_-(\g) \cup
  \Gamma_{01}(f,v), R_-(\g))$ and $(R_+(\g) \cup
  \Gamma_{23}(f,v), R_+(\g))$.  We would like to have $C_1(f)
  \subset C_{01}(f,v)$ and $C_2(f) \subset C_{23}(f,v)$ because if
  $\S$ is a separating surface, then -- as we shall see in
  Section~\ref{sec:codimension-0} -- we can obtain a
  Heegaard diagram from it by taking $\a$-curves to be $W^u(p) \cap \S$ for
  some $p \in C_1(f)$ and $\b$-curves to be $W^s(p) \cap \S$ for some
  $p \in C_2(f)$.  This also explains our rule in case~\ref{item:B2}. For
  example, suppose that $(f,v)$ has an index~1-2 birth-death critical
  point at~$p$, and a non-degenerate critical point at $\ol{p}$ of index~1,
  such that there is a flow~$\varphi$ from~$p$ to~$\ol{p}$. Since we
  have to place~$\ol{p}$ in $C_{01}(f,v)$, we must also put~$p$ into
  $C_{01}(f,v)$, otherwise the 1-cell $\ol{\varphi} \subset
  W^s(\ol{p})$ would have one endpoint in $\Gamma_{23}(f,v)$.

  In case~\ref{item:E1}, we do not obtain a CW complex (whichever side
  we assign $p$ to) for a similar reason,
  explaining why those gradients are not separable.  Our choices for
  placing the index~1-2 birth-death critical points of a pair $(f,v)
  \in \FV_2(M,\g)$ in every case other than~\ref{item:B2} are purely
  conventional to make the construction more canonical, and most
  proofs would also work for the other choices. However, we do adhere
  to these conventions in Theorem~\ref{thm:2-param}.

  When $(f,v) \in \FV_1(M,\g)$ has an index~1-2 birth-death critical
  point~$p$, there is no canonical way to decide where to put~$p$, and,
  in fact, the rule in case~\ref{item:B2} forces us to allow both
  possibilities: If $\{\,(f_\lambda,v_\lambda) \,\colon\, \lambda \in
  \RR^2\,\}$ is a generic 2-parameter family such that $(f_0,v_0)$ has
  a type~\ref{item:B2} bifurcation, where we have to put the~$A_2$ point in
  $C_{01}(f,v)$, then we have to do the same for
  $(f_\lambda,v_\lambda)$ when $\lambda$ lies in the stratum of the
  bifurcation set corresponding to the~$A_2$ singularity.
\end{remark}

\begin{lemma} \label{lem:flow} Suppose that $(f,v) \in \FV_{\le
    2}(M,\g)$ is separable and $\S \in \S(f,v)$. Then the surface~$\S$
  intersects every
  flow line of~$v$ in~$M \setminus \Gamma(f,v)$ in exactly one point.
\end{lemma}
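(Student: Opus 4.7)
My plan is to analyze the asymptotic behavior of a flow line in $M \setminus \Gamma(f,v)$ and then to exploit the direction of~$v$ along~$\S$. Let $\varphi \co I \to M$ be a maximal integral curve of~$v$ with $\varphi(I) \subset M \setminus \Gamma(f,v)$. Since $C(f) \subset \Gamma(f,v)$, the flow line avoids all critical points, so $v(f) > 0$ everywhere on $\varphi$ and $f \circ \varphi$ is strictly monotone. Using that $v$ points strictly inward on $R_-(\g)$, is tangent to~$\g$, and points strictly outward on $R_+(\g)$, I deduce that the backward endpoint of~$\varphi$ either lies on $R_-(\g)$ or is a limit at a critical point~$q$, in which case $\varphi \subset W^u(q)$. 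The hypothesis $\varphi \cap \Gamma(f,v) = \emptyset$ forces $q \notin C_{23}(f,v)$ (otherwise $\varphi \subset \Gamma_{23}$), so $q \in C_{01}(f,v)$. Symmetrically, the forward limit lies in $R_+(\g) \cup C_{23}(f,v)$. Since $R_-(\g) \cup C_{01}(f,v) \subset M_-$ and $R_+(\g) \cup C_{23}(f,v) \subset M_+$, the curve~$\varphi$ starts in~$M_-$, ends in~$M_+$, and therefore crosses~$\S$ at least once.

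The heart of the proof is the claim that $v$ points from~$M_-$ to~$M_+$ at every point of~$\S$. Because $\S \pitchfork v$, the two subsets of~$\S$ on which~$v$ crosses in the respective directions are both open and partition~$\S$, so each connected component of~$\S$ lies entirely in one of them. On a component $\S_0$ with $\partial \S_0 \ne \emptyset$, the identification $\g \cong s(\g) \times [-1,1]$ and the formula $v|_\g = \partial/\partial z$ show that the half $s(\g) \times [-1,0]$ of~$\g$ borders~$M_-$ while $s(\g) \times [0,1]$ borders~$M_+$; a local computation at a point of $\partial \S_0$, using that $\S$ is transverse to~$\g$, then shows that $v$ crosses $\S_0$ from~$M_-$ to~$M_+$ there, and hence everywhere on~$\S_0$ by connectedness. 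In the setting relevant to Heegaard splittings of sutured manifolds every component of~$\S$ has nonempty boundary in~$s(\g)$, so this completes the proof of the claim.

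Finally, uniqueness of the crossing is automatic from the claim: if $\varphi$ meets~$\S$ at some time~$t_0$, then $\varphi(t) \in M_+$ for all $t > t_0$, since any later crossing $\varphi(t_1) \in \S$ would force $v(\varphi(t_1))$ to point from~$M_+$ to~$M_-$, contradicting the claim. Symmetrically $\varphi(t) \in M_-$ for $t < t_0$, so the crossing is unique and $|\varphi \cap \S| = 1$. The main technical point I anticipate needing care is the local computation across $\partial \S_0$, where one must unpack the standard orientation conventions to verify that the direction of~$v$ really agrees with the boundary-of-$M_-$ orientation of~$\S$.
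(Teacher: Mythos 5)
Your proposal is correct and follows essentially the same route as the paper. The paper's proof is terser: it observes that $M \setminus \Gamma(f,v)$ is flow-saturated, that the endpoints of any flow-line there lie in $R_-(\g) \cup C_{01}(f,v) \subset M_-$ and $R_+(\g) \cup C_{23}(f,v) \subset M_+$ respectively, and that ``since $\S$ is positively transverse to $v$, once an integral curve enters $M_+$ it can never leave,'' giving both existence and uniqueness of the crossing. Your contribution is to unpack the phrase ``positively transverse'' --- spelling out, via the open/closed argument on each component of~$\S$ and a local computation at $\partial\S \subset s(\g)$ using $v|_\g = \partial/\partial z$, why $v$ crosses~$\S$ from $M_-$ to $M_+$ everywhere. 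That is indeed the content the paper leaves implicit (also visible in the orientation remark following Definition~\ref{defn:separates}, which silently presumes a coherent crossing direction), so your version is a mild but genuine clarification of the same argument rather than a different one.
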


\begin{proof}
  Note that $M \setminus \Gamma(f,v)$ is a saturated subset of~$M$
  (i.e., it is a union of complete flow lines).
  The closure of a non-constant flow line~$\tau$ of~$v$ is
  diffeomorphic to~$I$, and has both endpoints in $R(\g) \cup
  C(f)$.
  If the maximal open interval on which~$\tau$ is defined is~$(a,b)$
  (where~$a$ might be~$-\infty$ and~$b$ might be~$+\infty$), then let
  these endpoints be $\tau(a) = \lim_{t \to a+} \tau(t)$ and $\tau(b)
  = \lim_{t \to b-} \tau(t)$.  If $\tau(a) \in C_{23}(f,v)$, then
  $\tau \subset \Gamma_{23}(f,v)$. Similarly, if $\tau(b) \in
  C_{01}(f,v)$, then $\tau \subset \Gamma_{01}(f,v)$.  Consequently,
  every flow line~$\tau$ of~$v|_{M \setminus \Gamma(f,v)}$ has
  \begin{align*}
    \tau(a) &\in R_-(\g) \cup C_{01}(f,v) \subset M_-, \text{ and}\\
    \tau(b) &\in R_+(\g) \cup C_{23}(f,v) \subset M_+,
  \end{align*}
  so $\tau \cap \S \neq \emptyset$.
  Since~$\S$ is positively transverse to~$v$, once an integral
  curve of~$v$ enters~$M_+$ it can never leave it, so $|\tau \cap \S|
  = 1$.
\end{proof}

Using Lemma~\ref{lem:flow}, we can endow $\S(f,v)$ with a topology as
follows.  Choose a smooth function $h \colon M \to I$ such that
$h^{-1}(0) = R(\g)$, and let $w = hv$.  Unlike~$v$, the vector field~$w$
is complete, and $v$ and~$w$ have the same phase portrait inside $M
\setminus R(\g)$.  Let $\varphi \colon \RR \times M \to M$ be the flow
of~$w$.  For surfaces~$\S$ and~$\S'$ in $\S(f,v)$, we define the
function $d_{\S',\S} \in C^{\infty}(\S)$ by requiring that
$\varphi(x,d_{\S',\S}(x)) \in \S'$ for every $x \in \S$.  This
uniquely determines $d_{\S',\S}(x)$ by Lemma~\ref{lem:flow}.  If we
fix $\S_0 \in \S(f,v)$, then the map $b_{\S_0} \colon \S(f,v) \to
C^{\infty}(\S_0)$ given by $b_{\S_0}(\S) = d_{\S,\S_0}$ is
bijective. The topology on $\S(f,v)$ is the pullback of the Whitney
$C^{\infty}$-topology on $C^{\infty}(\S_0)$ along $b_{\S_0}$. This is
independent of the choice of $\S_0$, since $d_{\S,\S_1} = d_{\S,\S_0}
\circ i_{\S_0,\S_1} + d_{\S_0,\S_1}$, where $i_{\S_0,\S_1} \colon \S_1
\to \S_0$ is the diffeomorphism given by $i_{\S_0,\S_1}(x) =
\varphi(x,d_{\S_0,\S_1}(x))$. In addition, the map $f \mapsto f \circ
i_{\S_0,\S_1}$ from $C^{\infty}(\S_0)$ to $C^{\infty}(\S_1)$ and the
map $g \mapsto g + d_{\S_0,\S_1}$ from $C^{\infty}(\S_1)$ to
$C^{\infty}(\S_1)$ are both homeomorphims.  The function $d_{\S,\S'}$
depends continuously on~$h$, hence the topology that we defined is
independent of the choice of~$h$.

\begin{proposition} \label{prop:grad-like} Suppose that $(f,v) \in
  \FV_{\le 2}(M,\g)$ is separable. Then the space $\S(f,v)$ is
  non-empty and contractible. Furthermore, every $\S \in \S(f,v)$
  divides $(M,\g)$ into two sutured compression bodies; i.e., it is a
  Heegaard surface of $(M,\g)$.
\end{proposition}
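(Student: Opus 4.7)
The plan is to establish the three assertions in turn. For \textbf{non-emptiness}, I would construct a separating surface as a level set of a Lyapunov-type function. By separability, the spines $R_-(\g) \cup \Gamma_{01}(f,v)$ and $R_+(\g) \cup \Gamma_{23}(f,v)$ are disjoint closed subsets of $M$. Using Lemma~\ref{lem:flow}, which guarantees that every integral curve of the bounded flow $w = hv$ in $M \setminus \Gamma(f,v)$ passes monotonically from the first spine to the second, one can prescribe a smooth function $g \colon M \to [-1,1]$ equal to $-1$ on a neighborhood of the first spine, equal to $+1$ on a neighborhood of the second, with $g|_\g = \pi_2 \circ d$, and with $v(g) > 0$ at every zero of $g$. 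Then $\S := g^{-1}(0)$ lies in $\S(f,v)$.

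For \textbf{contractibility}, I would fix a reference surface $\S_0 \in \S(f,v)$ and use the parameterization $b_{\S_0} \colon \S(f,v) \to C^\infty(\S_0)$ defined just before the proposition. I claim its image is convex. For each $x \in \S_0$, the flow line through $x$ lies in $M \setminus \Gamma(f,v)$ and is parameterized by times $t$ in an open interval $(-T_-(x), T_+(x))$ with $T_\pm(x) \in (0,+\infty]$; the condition that $\iota(x) := \varphi(x, \phi(x))$ traces out a separating surface reduces to the pointwise open constraint $-T_-(x) < \phi(x) < T_+(x)$, together with the linear boundary condition $\phi|_{\partial \S_0} = 0$. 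A convex combination of two such smooth functions $\phi_1$, $\phi_2$ remains smooth and satisfies both conditions, and the resulting $\iota$ is an embedding transverse to $v$ (injectivity follows from Lemma~\ref{lem:flow}, and the immersion and transversality properties from the fact that $\partial_t \psi = w$ in the flow parameterization $\psi(x,t) = \varphi(x,t)$ is transverse to $\S_0$). Hence the image of $b_{\S_0}$ is convex, so $\S(f,v)$ is contractible in the Whitney $C^\infty$-topology.

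For the \textbf{compression body property}, I would show $M_-$ is a sutured compression body; the case of $M_+$ is symmetric. The backward flow of $w$ deformation retracts $M_-$ onto the 2-complex $R_-(\g) \cup \Gamma_{01}(f,v)$: every $x \in M_- \setminus \Gamma_{01}(f,v)$ has a unique backward limit in $R_-(\g) \cup C_{01}(f,v)$ by Lemma~\ref{lem:flow}, and the center/stable manifold theorems (Theorems~\ref{thm:center-manifold} and~\ref{thm:reduction-principle}) control the limiting behavior near the critical set. In the Morse-Smale case, $R_-(\g) \cup \Gamma_{01}(f,v)$ is $R_-(\g)$ with isolated $0$-cells from $C_0(f)$ and arcs from $C_1(f)$ attached, so $M_-$ is $R_-(\g) \times I$ with $3$-dimensional $0$- and $1$-handles attached, a sutured compression body by Definition~\ref{def:scb}. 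I expect the \textbf{main obstacle} to be the degenerate cases, where $C_{01}(f,v)$ may contain $A_2$ birth-deaths of index $0$-$1$ or $1$-$2$, or $A_3^\pm$ birth-death-births; I would handle these by a case-by-case local analysis using the normal forms of Section~\ref{sec:smooth} and the sign conventions of Definition~\ref{def:partition} (which were chosen precisely so that each degenerate critical point in $C_{01}(f,v)$ contributes a spine cell thickening to a $0$- or $1$-handle, possibly accompanied by a cancelling handle pair), or alternatively by perturbing $(f,v)$ to a nearby Morse-Smale pair $(f',v') \in \FV_0(M,\g)$ and transporting $\S$ to some $\S' \in \S(f',v')$ via an ambient isotopy supplied by the contractibility step.
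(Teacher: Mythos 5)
Your proposal is correct in outline but takes a genuinely different route in two of the three parts, and has one confusion worth flagging.

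For \textbf{non-emptiness}, the paper constructs $\S$ directly: it takes a thin regular neighborhood $N_{01}$ of $R_-(\g) \cup \Gamma_{01}(f,v)$, a thin regular neighborhood $N_{23}$ of $R_+(\g) \cup \Gamma_{23}(f,v)$, sets $\S_{01}=\ol{\partial N_{01}\setminus\partial M}$, and shows that the complementary region $P$ between $\S_{01}$ and $\S_{23}$ is a product sutured manifold by flowing along $v/v(f)$. Your Lyapunov-function construction $\S = g^{-1}(0)$ is a plausible alternative, but the existence of a function $g$ with all the prescribed properties (constant near both spines, agreeing with $\pi_2\circ d$ on $\g$, and with $v(g)>0$ along its zero set) is asserted rather than established, and it is roughly equivalent in difficulty to producing $\S$ in the first place. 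Moreover, the regular-neighborhood construction yields the product region $P$ for free, which the paper then reuses.

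For \textbf{contractibility}, your claim that the image of $b_{\S_0}$ is a convex (in fact linear) subspace cut out by the boundary condition $\phi|_{\partial\S_0}=0$ is correct and arguably more careful than the paper's one-line appeal to the preceding discussion. However, your invocation of open intervals $(-T_-(x),T_+(x))$ with finite endpoints confuses the flow of $v$ (which need not be complete) with the flow $\varphi$ of the rescaled field $w=hv$ actually used to define $b_{\S_0}$; since $w$ is complete, $T_\pm(x)=+\infty$ for every $x$, and there is no such pointwise constraint. The real constraint is only the boundary condition.

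For the \textbf{compression-body property}, the paper's argument is direct: $N_{01}$ is a sutured compression body because $\Gamma_{01}$ is a graph, or a graph plus a collapsible $2$-cell when $C_{01}(f,v)$ contains an index $1$-$2$ birth-death; $N_{23}$ likewise; $(P,\g\cap P)$ is a product, so $N_{23}\cup P$ is a compression body; and every element of $\S(f,v)$ is isotopic to $\S_{01}$ rel $\g$. Your deformation-retraction argument is in the same spirit, but a deformation retraction onto the spine is not by itself a compression-body structure — you need the explicit cell structure of Definition~\ref{def:gamma} realized by a regular neighborhood, which is what the paper uses. Your alternative suggestion, perturbing to a nearby Morse-Smale pair and transporting $\S$, is sound (transversality is open and critical points stay on the correct sides), but it still requires the Morse-Smale case to be proved first and is unnecessary once one notices that $\Gamma_{01}$ is always at most a $2$-complex that simplicially collapses to a $1$-complex.
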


More precisely, in the indeterminate case that $(f,v) \in \FV_1(M,\g)$
and $f$ has an index~1-2 birth-death critical point, we mean that both
$\S_-(f,v)$ and $\S_+(f,v)$ are non-empty and contractible.

\begin{proof}
  By the above discussion, it is clear that if $\S(f,v) \neq
  \emptyset$, then it is homeomorphic to $C^{\infty}(\S)$, hence it is
  contractible.

  Next, we show that $\S(f,v) \neq \emptyset$. Let~$N_{01}$ be a thin
  regular neighborhood of $\Gamma_{01}(f,v) \cup R_-(\g)$, and
  consider the surface $\S_{01} = \ol{\partial N_{01}
    \setminus \partial M}$. Similarly, pick a regular neighborhood~$N_{23}$
  of $\Gamma_{23}(f,v) \cup R_+(\g)$, and define $\S_{23} =
  \ol{\partial N_{23} \setminus \partial M}$.  Choosing sufficiently
  small and nice regular neighborhoods, we can suppose that $\S_{01}
  \cap \S_{23} = \emptyset$ and that $\S_{01}$ and $\S_{23}$ are
  transverse to~$v$.
  Their union $\S_{01} \cup \S_{23}$ separates~$M$ into three
  pieces. Two of them are~$N_{01}$ and~$N_{23}$, and we call the third
  piece~$P$. Now~$v|_P$ is a nowhere vanishing vector field that
  points into~$P$ along $\S_{01}$, points out of~$P$ along~$\S_{23}$,
  and is tangent to $\g \cap P$.  In addition, $v(f) > 0$ on~$P$, so
  an isotopy from $\S_{01}$ to $\S_{23}$ relative to~$\gamma$ is given by
  flowing along $v/v(f)$.  In particular, $(P, \g \cap P)$ is a
  product sutured manifold, and the flow-lines of $v|_P$ give an
  $I$-fibration.  By isotoping $\S_{01}$ near $\g$ flowing along~$v$,
  we can obtain a surface $\S_{01}'$ such that $\partial \S_{01}' =
  s(\g)$. Hence $\S_{01}' \in \S(f,v)$ (with $M_-$ isotopic to
  $N_{01}$ and $M_+$ isotopic to $N_{23} \cup P$).

  Observe that $\S_{01}$ divides $(M,\g)$ into the sutured manifolds
  $(N_{01},\g \cap N_{01})$ and $(N_{23} \cup P,\g \cap (N_{23} \cup
  P))$. Since $\Gamma_{01}$ is either a graph (i.e., a 1-complex), or
  obtained from a graph by an elementary expansion if~$C_{01}(f,v)$
  contains an index~1-2 birth-death, $(N_{01}, \g \cap N_{01})$ is a
  sutured compression body (where $R_+(\g \cap N_{01}) = \S_{01}$ can be compressed
  to be isotopic to $R_-(\g \cap N_{01})$).
  Similarly, $(N_{23},\g \cap N_{23})$ is also a sutured compression body
  (where $R_-(\g \cap N_{23}) = \S_{23}$ can be compressed
  to be isotopic to $R_+(\g \cap N_{23})$). As $(P, \g \cap P)$ is a product,
  $(N_{23} \cup P, \g \cap (N_{23} \cup P))$ is a sutured compression
  body.  Every element of $\S(f,v)$ is isotopic to $\S_{01}$ relative to~$\g$,
  hence also divides $(M,\g)$ into two sutured compression bodies.
\end{proof}

\begin{definition}
  Let $B(M,\g)$ be the space of pairs $(f,v) \in \FV_{\le 2}(M,\g)$
  that are separable, and let $E(M,\g)$ be the space of triples
  $(f,v,\S)$, where $(f,v) \in B(M,\g)$ and $\S \in \S(f,v)$.  There
  is a
  projection $\pi \colon E(M,\g) \to B(M,\g)$ defined by
  forgetting~$\Sigma$.  For $(f,v) \in B(M,\g)$, let~$\chi(f,v)$ be
  the Euler characteristic of~$\S$ for any $\S \in \S(f,v)$ (which is
  independent of the choice of $\S$ by
  Proposition~\ref{prop:grad-like}). For $k \in \ZZ$, we define
  \[
  B_k(M,\g) = \{\,(f,v) \in B(M,\g) \colon \chi(f,v) = k\,\}.
  \]
  Finally, we set $E_k(M,\g) = \pi^{-1}(B_k(M,\g))$ and $\pi_k =
  \pi|_{E_k(M,\g)}$.
\end{definition}

Note that the total space~$E(M,\g)$ depends on whether~$\S(M,\g)$
stands for~$\S_+(M,\g)$ or~$\S_-(M,\g)$, but the base~$B(M,\g)$
is independent of this choice according to the following result.

\begin{lemma} \label{lem:euler-char} For $(f,v) \in B(M,\g)$, we have
  \[
  \chi(f,v) = \chi(R_-(\g)) + \sum_{p \in C_{01}(f,v)} i(p),
  \]
  where
  \begin{itemize}
  \item $i(p) = 2$ for $p \in C_0(f)$ or~$p$ an index~0-1-0
    birth-death-birth,
  \item $i(p) = 0$ for $p$ a birth-death, and
  \item $i(p) = -2$ for $p \in C_1(f)$, or $p$ an index 1-0-1 or 1-2-1
    birth-death-birth.
  \end{itemize}
\end{lemma}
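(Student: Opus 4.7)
The plan is to compute $\chi(\S) = \chi(f,v)$ for any $\S \in \S(f,v)$ by first reducing to $\chi(M_-)$ and then to the CW complex $X := R_-(\g) \cup \Gamma_{01}(f,v)$. For the first reduction, $\partial M_-$ is a closed surface so $\chi(\partial M_-) = 2\chi(M_-)$; decomposing $\partial M_- = R_-(\g) \cup (\g \cap M_-) \cup \S$ along the circles $\partial R_-(\g)$ and $\partial \S$, and using that $\g \cap M_- \cong s(\g) \times [-1, 0]$ is a disjoint union of annuli while $R_-(\g) \cap \S = \emptyset$, inclusion--exclusion yields
\[
\chi(\S) = 2\chi(M_-) - \chi(R_-(\g)).
\]
From the proof of Proposition~\ref{prop:grad-like}, $M_-$ is isotopic to a regular neighborhood of $X$, so $\chi(M_-) = \chi(X)$, and it suffices to establish
\[
\chi(X) = \chi(R_-(\g)) + \tfrac{1}{2} \sum_{p \in C_{01}(f,v)} i(p).
\]

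The idea is to build $X$ from $R_-(\g)$ by adjoining, for each $p \in C_{01}(f,v)$, the cells of $\overline{W^s(p)}$ (together with those of $\overline{W^{ss}(p)}$ when $p$ is an index $1$--$2$ birth-death). Separability of $(f,v)$, together with the discussion preceding Definition~\ref{def:gamma}, ensures that the topological boundaries of these cells lie in the already-constructed part of $X$, so one need only count the alternating sum of open cells contributed by each $p$. Using the normal forms of Section~\ref{sec:smooth} and the descriptions in Section~\ref{sec:gradients} to read off $W^s(p)$ in each case, the contribution $j(p)$ is: $+1$ when $p \in C_0(f)$ or $p$ is an index $0$--$1$--$0$ birth-death-birth, since $W^s(p) = \{p\}$ is a single vertex; $-1$ when $p \in C_1(f)$, or $p$ is an index $1$--$2$--$1$ or $1$--$0$--$1$ birth-death-birth, since $W^s(p)$ is an arc with $W^s(p) \setminus \{p\}$ having two components; $0$ at an index $0$--$1$ birth-death, where $W^s(p) \cong \RR_+$ supplies one vertex and one open $1$-cell; and $0$ at an index $1$--$2$ birth-death, where $W^{ss}(p) \setminus \{p\}$ supplies two open $1$-cells and $W^s(p) \setminus \overline{W^{ss}(p)}$ a single open $2$-cell. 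In every case $2 j(p) = i(p)$, and combining with the first displayed equation gives the claim.

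The main obstacle will be justifying that at an index $1$--$2$ birth-death the half-plane $W^s(p)$ genuinely contributes a single open $2$-cell of $X$; equivalently, that $\overline{W^s(p)} \setminus (\overline{W^{ss}(p)} \cup \{p\})$ is an embedded open disk whose closure has topological boundary in $R_-(\g) \cup \Gamma_{01}(f,v)$ away from its interior. This relies on separability of $(f,v)$, which forbids flow lines from $C_2(f)$ to $C_1(f)$ and hence prevents $\overline{W^s(p)}$ from picking up extra structure from index-$2$ critical points. The $A_3^\pm$ cases are then handled in the same way, with the dimension of $W^s(p)$ read off the sign of the $\pm x_n^4$ term in the normal form (the center direction is stable for $A_3^-$ and unstable for $A_3^+$), and the same cell count carried out.
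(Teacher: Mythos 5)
Your proof is correct and takes essentially the same approach as the paper: both reduce $\chi(\Sigma)$ to $2\chi(M_-) - \chi(R_-(\gamma))$ (the paper works with the regular neighborhood $N_{01}$ directly rather than with $M_-$, but these are diffeomorphic by Proposition~\ref{prop:grad-like}), and then count cells of $\Gamma_{01}(f,v)$ relative to $R_-(\gamma)$ using the same case analysis of stable manifolds. One small expository slip: in the index~1-2 birth-death case you should also list the $0$-cell $\{p\}$ (giving $1-2+1=0$), since as written you only name the two $1$-cells and the $2$-cell.
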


\begin{proof}
  Recall that $(R_-(\g) \cup \Gamma_{01}(f,v),R_-(\g))$ is a relative
  CW complex of dimension at most two.  We saw in the proof of
  Proposition~\ref{prop:grad-like} that every $\S \in \S(f,v)$ is
  isotopic to the surface $\S_{01} = \ol{\partial N_{01}
    \setminus \partial M}$ relative to $\g$, where~$N_{01}$ is a
  regular neighborhood of $R_-(\g) \cup \Gamma_{01}(f,v)$. Since
  $\partial N_{01} = R_-(\g) \cup \S_{01} \cup (\g \cap N_{01})$,
  where $\g \cap N_{01}$ is a disjoint union of annuli,
  \[
  \chi(R_-(\g)) + \chi(\S_{01}) = \chi(\partial N_{01}) =
  2\chi(N_{01}).
  \]
  As $N_{01}$ deformation retracts onto $R_-(\g) \cup
  \Gamma_{01}(f,v)$, we have
  \[
  \chi(N_{01}) = \chi(R_-(\g)) + c_0 - c_1 + c_2,
  \]
  where~$c_i$ is the number of $i$-cells in $\Gamma_{01}(f,v)$.  By
  construction, each point $p \in C_{01}(f,v)$ contributes $i(p)/2$ to
  $c_0 - c_1 + c_2$.  Indeed, for $p \in C_0(f)$ or $p$ an index~0-1-0
  birth-death-birth, $W^s(p) = \{p\}$, so $p$ contributes a single
  0-cell. If $p$ is an index~0-1 birth-death, then it contributes a
  0-cell and a 1-cell, while an index~1-2 birth-death contributes a
  0-cell, two 1-cells, and a 2-cell. For $p \in C_1(f)$ or $p$ an
  index~1-0-1 or~1-2-1 birth-death-birth, $W^s(p)$
  is an arc, and $p$ contributes a 0-cell and two 1-cells.
\end{proof}

\begin{corollary} \label{cor:chi} If $(f_0,v_0)$ and $(f_1,v_1)$ lie
  in the same path-component of $\FV_0(M,\g)$ or $\FV_1(M,\g)$, then
  $\chi(f_0,v_0) = \chi(f_1,v_1)$.
\end{corollary}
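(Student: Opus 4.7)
The plan is to invoke Lemma~\ref{lem:euler-char} and argue that its right-hand side is locally constant on each of $\FV_0(M,\g)$ and $\FV_1(M,\g)$, since any integer-valued locally constant function is automatically constant on every path-component. In $\FV_{\le 1}(M,\g)$ no $A_3^{\pm}$ singularities appear, so every critical point is either Morse or an $A_2$ birth-death. Applying Lemma~\ref{lem:euler-char} together with Definition~\ref{def:partition}, every $A_2$ birth-death contributes $i(p)=0$, index~0 Morse points contribute $+2$, and index~1 Morse points contribute $-2$, while all Morse points of index~$\ge 2$ lie in $C_{23}(f,v)$. Thus the formula reduces to
\[
\chi(f,v) = \chi(R_-(\g)) + 2|C_0(f)| - 2|C_1(f)|,
\]
and it suffices to show that $|C_0(f)|$ and $|C_1(f)|$ are locally constant functions of $(f,v)$ on $\FV_0(M,\g)$ and on $\FV_1(M,\g)$.

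On $\FV_0(M,\g)$ this is standard: non-degenerate critical points are stable under small $C^{\infty}$-perturbations. Viewing $df$ as a section of $T^*M$ transverse to the zero section at each critical point, the implicit function theorem provides, for every Morse pair $(f',v')$ sufficiently close to $(f,v)$, exactly one critical point near each critical point of~$f$, of the same Morse index, and no others. Hence the counts $|C_0(f)|$ and $|C_1(f)|$ are locally constant on $\FV_0(M,\g)$.

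On $\FV_1(M,\g)$ I would distinguish the two codimension-1 behaviours. For a pair of type~\eqref{item:failure-T}, all critical points are Morse, so the above argument still applies and the tangency between invariant manifolds is irrelevant to the critical-point count. For a pair of type~\eqref{item:failure-H}, the versal deformation theory of Proposition~\ref{prop:miniversal} gives local coordinates on $\FV(M,\g)$ near $(f,v)$ in which the $A_2$ stratum is cut out by a single transverse equation; moving within this codimension-1 stratum preserves the $A_2$ singularity and its index, while the remaining Morse critical points persist by the implicit function theorem. In either case the counts $|C_0(f)|$ and $|C_1(f)|$ are locally constant on $\FV_1(M,\g)$. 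The main subtlety is to confirm that a path in $\FV_1(M,\g)$ cannot jump between the type~\eqref{item:failure-H} and type~\eqref{item:failure-T} strata, or between $A_2$ birth-deaths of different indices; any such jump would force the path to cross a locus of codimension~$\ge 2$ in $\FV(M,\g)$, contradicting the definition of $\FV_1(M,\g)$. This is a dimension count in the stratification of $\FV(M,\g)$ by bifurcation type developed in Sections~\ref{sec:smooth} and~\ref{sec:gradients}.
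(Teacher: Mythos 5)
Your proof is correct and follows the same basic strategy as the paper: invoke Lemma~\ref{lem:euler-char} and observe that the right-hand side is constant along any path in $\FV_0(M,\g)$ or $\FV_1(M,\g)$. The paper's own proof is much terser — it simply asserts that along a path $\{(f_t,v_t)\}$ in $\FV_i(M,\g)$ the ``types of critical points in $C_{01}(f_t,v_t)$ remain unchanged,'' and hence the local contributions $i(p_t)$ are constant, without stopping to justify this. You supply the justification: the reduction of the formula to $\chi(R_-(\g)) + 2|C_0(f)| - 2|C_1(f)|$ (using that codimension-$\le 1$ pairs have no $A_3^\pm$ points and that birth-deaths contribute $i(p)=0$), the stability of Morse points under perturbation via the implicit function theorem, the persistence of the single $A_2$ point within the codimension-1 stratum via Proposition~\ref{prop:miniversal}, and the observation that switching between the \eqref{item:failure-H} and \eqref{item:failure-T} substrata, or between $A_2$ points of different indices, would require crossing a codimension-$\ge 2$ locus. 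This extra detail is exactly the content the paper leaves implicit, so the two arguments are genuinely the same in spirit; yours is more careful, at the price of invoking versal deformation theory and the stratification explicitly.
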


Note that this corollary is false for $\FV_{\le 1}(M,\g)$.

\begin{proof}
  Take a path $\{\,(f_t,v_t) \in \FV_i(M,\g) \,\colon\, t \in I\,\}$
  connecting $(f_0,v_0)$ and $(f_1,v_1)$.  In this family, the types
  of critical points in $C_{01}(f_t,v_t)$ remain unchanged; in
  particular, the local contributions $i(p_t)$ for $p_t \in
  C_{01}(f_t,v_t)$ are also constant. By Lemma~\ref{lem:euler-char},
  we obtain that $\chi(f_0,v_0) = \chi(f_1,v_1)$.
\end{proof}

We denote by $B_k^m(M,\g)$ and $E_k^m(M,\g)$ the space of those $(f,v)
\in B_k(M,\g)$ and $(f,v,\S) \in E_k(M,\g)$ for which~$f$ is Morse.
By slight abuse of notation, we also denote the projection $(f,v,\S)
\mapsto (f,v)$ by $\pi_k$.

\begin{proposition} \label{prop:fibration} Let $(M,\g)$ be a connected
  sutured manifold and $k \in \ZZ$. Then the map $\pi_k \colon
  E_k^m(M,\g) \to B_k^m(M,\g)$ is a fiber bundle with fibre
  $C^{\infty}(\S,\R)$ for a compact, connected, orientable surface~$\S$
  with $|\partial \S| = |s(\g)|$ and $\chi(\S) = k$.
\end{proposition}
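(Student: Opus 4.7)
The plan is to establish local triviality while identifying each fibre with $C^\infty(\S, \R)$. By Proposition~\ref{prop:grad-like}, for each $(f,v) \in B_k^m(M,\g)$ the fibre $\pi_k^{-1}(f,v) = \S(f,v)$ is non-empty, and the discussion preceding the proposition shows that for any $\S_0 \in \S(f,v)$ the map $b_{\S_0}\colon \S \mapsto d_{\S,\S_0}$ is a homeomorphism from $\S(f,v)$ onto the subspace of $C^\infty(\S_0,\R)$ consisting of functions vanishing on $\partial \S_0 = s(\g)$; in particular, each fibre is homeomorphic to $C^\infty(\S,\R)$ for a surface $\S$ of the type asserted (with $\chi(\S)=k$ by Lemma~\ref{lem:euler-char}). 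Only local triviality remains.

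The main step, and the chief obstacle, is the construction of local sections. I claim that for any $(f_0, v_0) \in B_k^m(M,\g)$ and any $\S_0 \in \S(f_0, v_0)$, there is a $C^\infty$-open neighbourhood $U$ of $(f_0, v_0)$ in $B_k^m(M,\g)$ such that $\S_0 \in \S(f,v)$ for every $(f,v) \in U$. The argument rests on three stability facts. First, transversality is an open condition, so $\S_0 \pitchfork v$ persists for $v$ near $v_0$. Second, because $f_0$ is Morse, the implicit function theorem produces a continuous deformation of $C(f_0)$ to $C(f)$ preserving Morse indices. Third, since every $f$ in $B_k^m$ is Morse, Definition~\ref{def:partition} reduces to $C_{01}(f,v) = C_0(f) \cup C_1(f)$ and $C_{23}(f,v) = C_2(f) \cup C_3(f)$, a partition determined entirely by the indices; hence the deformed critical points stay on the correct side of $\S_0$ by continuity, and all four conditions of Definition~\ref{defn:separates} persist. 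Without the Morse hypothesis on $f$, this argument would fail, since the partition in $B_k(M,\g)$ depends on degenerate data that need not vary continuously.

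With such a local section in hand, fix a smooth $h\colon M \to [0,1]$ with $h^{-1}(0) = R(\g)$, and let $\varphi_{f,v}$ denote the flow of $w_{f,v} = hv$. I then define
\[
\Phi\colon U \times C^\infty(\S_0, \R) \longrightarrow \pi_k^{-1}(U),\qquad
\Phi(f,v,g) = \bigl(f,\,v,\,\{\,\varphi_{f,v}(x,g(x)) \,\colon\, x \in \S_0\,\}\bigr).
\]
Flowing $\S_0$ along $v$ preserves transversality and keeps all critical points on their original sides, so the image lies in $\pi_k^{-1}(U)$. Lemma~\ref{lem:flow} provides the inverse $(f,v,\S) \mapsto (f,v, d_{\S,\S_0})$, and continuity of $\Phi$ and its inverse in the Whitney $C^\infty$-topology follows from the smooth dependence of flows on parameters and data.

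Finally, two such local trivializations built from $\S_0$ and $\S_0'$ are related, by the identity $d_{\S,\S_0'} = d_{\S,\S_0}\circ i_{\S_0,\S_0'} + d_{\S_0,\S_0'}$ recorded before the proposition, by the affine map $g \mapsto g \circ i_{\S_0,\S_0'} + d_{\S_0,\S_0'}$. Under the canonical identification of $C^\infty(\S_0,\R)$ with $C^\infty(\S_0',\R)$ induced by precomposition with the flow diffeomorphism $i_{\S_0,\S_0'}$, this transition becomes translation by $d_{\S_0,\S_0'}$. Thus the transitions act by translation within the additive topological group $C^\infty(\S,\R)$, giving $\pi_k$ the structure of a principal $C^\infty(\S,\R)$-bundle.
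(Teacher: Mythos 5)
Your argument follows the same route as the paper's: show that $\S_0$ remains a separating surface for every pair in a $C^\infty$-neighbourhood of $(f_0,v_0)$ (using openness of transversality together with the stability of nondegenerate critical points, which forces the index partition $C_{01}/C_{23}$ to vary continuously), then trivialize by flowing along $hv$ and observe that the transition maps are affine translations. Your one refinement is the explicit observation that $b_{\S_0}$ identifies the fibre with the closed subspace of $C^\infty(\S_0,\R)$ vanishing along $\partial\S_0 = s(\g)$ rather than all of $C^\infty(\S_0,\R)$ (a point at which the paper is slightly loose, since $h$ is nonzero on $s(\g)\subset\g$, so the flow moves boundary points unless the reparametrizing function vanishes there); this does not change the structure of the argument.
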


\begin{proof}
  Given $(f,v,\S) \in E_k^m(M,\g)$, there is a neighborhood~$U$ of
  $\pi_k(f,v,\S) = (f,v)$ in $B_k^m(M,\g)$ such that for every
  $(f',v') \in U$, we have $\S \in \S(f',v')$. To see this, note that
  the surface~$\S$ separates $(M,\g)$ into two sutured compression bodies
  $(M_+,\g_+)$ and $(M_-,\g_-)$, one of which contains
  $\Gamma_{01}(f,v) \cup R_-(\g)$, and the other one contains
  $\Gamma_{23}(f,v) \cup R_+(\g)$.  If $(f',v')$ is sufficiently close
  to $(f,v)$, then~$\S$ is transverse to~$v'$. Furthermore,
  $\Gamma_{01}(f',v') \cup R_-(\g) \subset M_-$ and
  $\Gamma_{23}(f',v') \cup R_+(\g) \subset M_+$.  Indeed, no critical
  point can pass through $\S$ as long as $\S$ is transverse to the
  vector field, and every critical point of~$f$ is stable.

  We now construct a local trivialization $\phi \colon U \times
  C^\infty(\S,\R) \to \pi^{-1}_k(U)$.  Choose a smooth function $h
  \colon M \to I$ such that $h^{-1}(0) = R(\g)$.  Then~$\phi$ is
  defined by the formula $\phi((f',v'),s) = \S + s$, where $(f',v')
  \in U$ and $s \in C^\infty(\S,\R)$, and we view $\S(f',v')$ as an
  affine space over $C^\infty(\S,\R)$ via flowing along~$hv'$. The
  trivializations~$\phi$ define the topology on $E(M,\g)$ that makes
  $\pi_k \colon E_k^m(M,\g) \to B_k^m(M,\g)$ into a fiber bundle,
  and is compatible with the topology on each fiber $\S(f,v)$.
\end{proof}

As a corollary, $\pi_k \colon E_k^m(M,\g) \to B_k^m(M,\g)$ is a Serre
fibration. In particular, it satisfies the path-lifting property. For
example, together with Corollary~\ref{cor:chi}, for any family of
Morse-Smale vector fields $\{\,(f_\lambda,v_\lambda) \,\colon\, \lambda
\in \Lambda\,\}$, there is a corresponding family of surfaces
$\{\,\S_\lambda \,\colon\, \lambda \in \Lambda\,\}$ such that $\S_\lambda
\in \S(f_\lambda,v_\lambda)$ for every $\lambda \in \Lambda$. As the
fiber $C^{\infty}(\S,\R)$ is contractible, we can even extend a family
of splitting surfaces defined over a closed subset of $\Lambda$.

\subsection{Codimension-0} \label{sec:codimension-0}

In the previous section, we described how to obtain a contractible
space of Heegaard splittings of the sutured manifold $(M,\g)$ from a
separable pair $(f,v) \in \mathcal{FV}_{\le 2}(M,\g)$. If we also
assume that $(f,v)$ is Morse-Smale and we make an additional discrete
choice, then we can enhance these splittings to sutured diagrams. In
the opposite direction, we also show that every diagram of $(M,\g)$
with $\alphas \pitchfork \betas$ arises from a particularly simple
Morse-Smale pair $(f,v)$, and the space of such pairs is connected.

By Proposition~\ref{prop:grad-like}, every $\S \in \S(f,v)$ is a
Heegaard surface of $(M,\g)$. If $(f,v)$ is Morse-Smale, then, for
every $p \in C_1(f)$ and $q \in C_2(f)$, the intersections $W^u(p)
\cap \S$ and $W^s(q) \cap \S$ are embedded circles, and $W^u(p) \cap
\S$ is transverse to $W^s(q) \cap \S$.

\begin{definition}
  Suppose that $(f,v) \in \FV_0(M,\g)$, and let $\S \in \S(f,v)$. Then
  the triple $H(f,v,\S) = (\S,\alphas,\betas)$ is defined by taking
  the $\a$-curves to be $W^u(p) \cap \S$ for $p \in C_1(f)$ and the
  $\b$-curves to be $W^s(q) \cap \S$ for $q \in C_2(f)$.
\end{definition}

In general, $H(f,v,\S)$ is not a diagram of $(M,\g)$ as $\alphas$ and
$\betas$ might have too many components. We will refer to
such diagrams as \emph{overcomplete}, as we can remove some components
of $\alphas$ and $\betas$ to get a sutured diagram of $(M,\g)$.

\begin{definition} \label{def:overcomplete}
  Let $(M,\g)$ be a sutured manifold. We say that
  $(\S,\alphas,\betas)$ is an \emph{overcomplete diagram} of $(M,\g)$
  if
  \begin{enumerate}
  \item $\S \subset M$ is an oriented surface with $\partial \S =
    s(\g)$ as oriented 1-manifolds,
  \item the components of the 1-manifold $\alphas \subset \S$ bound
    disjoint disks to the negative side of~$\S$, and the components of
    the 1-manifold $\betas \subset \S$ bound disjoint disks to the
    positive side of~$\S$,
  \item if we compress $\S$ along $\alphas$, we get a surface isotopic
    to $R_-(\g)$ relative to $\g$, plus some 2-spheres that bound
    disjoint balls in~$M$, and
  \item if we compress $\S$ along $\betas$, we get a surface isotopic
    to $R_+(\g)$ relative to~$\g$, plus some 2-spheres that bound
    disjoint balls in~$M$.
  \end{enumerate}
\end{definition}

Overcomplete diagrams specify handle decompositions of $(M,\g)$ that
also include 0- and 3-handles.  Note that $\alphas$ and $\betas$ might
fail to be attaching sets because $\S \setminus \alphas$ and $\S
\setminus \betas$ can have some components disjoint from $\partial
\S$; however, all such components are planar.

To actually make the overcomplete diagram $H(f,v,\S)$ into a usual
Heegaard diagram
of $(M,\g)$, in addition to assuming that $(f,v)$ is Morse-Smale, we
also need to make a discrete choice. The Morse-Smale condition rules
out flows between two index~$i$ critical points for $i \in \{1,2\}$,
hence every point of $C_1(f) \cup C_2(f)$ has valence~2 in the graph
$\Gamma(f,v)$.

\begin{definition}\label{def:Gamma-pm}
Let $\Gamma_-(f,v)$ be the graph obtained from
$\Gamma_{01}(f,v)$ by identifying all vertices lying in $R_-(\g)$ and
deleting the vertices at $C_1(f)$ (and merging the two adjacent
edges into one edge).
So the vertices of $\Gamma_-(f,v)$
are the points of $C_0(f)$, plus at most one vertex for $R_-(\g)$, and
its edges correspond to $W^s(p)$ for $p \in C_1(f)$.  In other words,
$\Gamma_-(f,v)$ is obtained from the relative CW complex
$(\Gamma_{01}(f,v) \cup R_-(\g), R_-(\g))$ by taking the factor CW
complex $(\Gamma_{01}(f,v) \cup R_-(\g))/R_-(\g)$ and removing the
vertices at $C_1(f)$.  Similarly, the graph $\Gamma_+(f,v)$ is
obtained by collapsing $\Gamma_{23}(f,v) \cap R_+(\g)$ to a single
point, and deleting the vertices at $C_2(f)$.  The edges of
$\Gamma_+(f,v)$ correspond to $W^u(q)$ for $q \in C_2(f)$.
\end{definition}

\begin{definition} \label{def:diagram} Suppose that $(f,v) \in
  \FV_0(M,\g)$.  Let $T_{\pm}$ be a spanning tree of
  $\Gamma_{\pm}(f,v)$, and choose a splitting surface $\S \in
  \S(f,v)$. Then the sutured diagram $H(f,v,\S,T_-,T_+)$ is
  defined by taking the $\alpha$-curves to be $W^u(p) \cap \S$, where
  $p \in C_1(f)$ and $W^s(p)$ is not an edge of $T_-$. Similarly, the
  $\beta$-curves are the intersections $W^s(q) \cap \S$, where $q \in
  C_2(f)$ and $W^u(q)$ is not an edge of $T_+$.
\end{definition}

For brevity, we will often write $H(f,v,\S,T_\pm)$ for
$H(f,v,\S,T_+,T_-)$.  Of course, a different choice of $T_-$ gives a
diagram that is $\a$-equivalent to the original, while changing $T_+$
gives a diagram that is $\b$-equivalent.

In the opposite direction, given a Heegaard surface~$\S$ for $(M,\g)$,
we will show that one can find a particularly nice pair $(f,v) \in
\FV_0(M,\g)$ such that $\S \in \S(f,v)$.

\begin{definition} \label{def:simple} We say that a Morse function~$f$
  on $(M,\g)$ is \emph{simple} if
  \begin{enumerate}
  \item $C_i(f) = \emptyset$ for $i \in \{ 0,3 \}$,
  \item $f(p) < 0$ for every $p \in C_1(f)$,
  \item $f(q) > 0$ for every $q \in C_2(f)$.
  \end{enumerate}
  We call a pair $(f,v) \in V(M,\g)$ \emph{simple} if $f$ is simple
  and, in addition,
  \begin{enumerate}[resume]
  \item for every $p \in C_1(f)$, there is a local coordinate system
    $(x_1,x_2,x_3)$ around $p$ in which $f = -x_1^2 + x_2^2 + x_3^2 +
    f(p)$ and $v$ has coordinates $(-2x_1,2x_2,2x_3)$,
  \item for every $q \in C_2(f)$, there is a local coordinate system
    $(x_1,x_2,x_3)$ around $q$ in which $f = -x_1^2 - x_2^2 + x_3^2 +
    f(q)$ and $v$ has coordinates $(-2x_1,-2x_2,2x_3)$.
  \end{enumerate}
\end{definition}

Let $f$ be a simple Morse function. Then observe that for any
gradient-like vector field~$v$ for~$f$, the pair $(f,v)$ is
separable. Indeed, $f(p) < f(q)$ for every $p \in C_1(f)$ and $q \in
C_2(f)$, so there is no flow-line of~$v$ from~$q$ to~$p$. Furthermore,
the surface $\S = f^{-1}(0)$ is a Heegaard surface that separates
$(f,v)$. If, in addition, $(f,v)$ is Morse-Smale, then we can uniquely
complete this to a diagram $H(f,v) = (\S,\alphas,\betas)$ of $(M,\g)$
using Definition~\ref{def:diagram}. Indeed, since $\Gamma_\pm(f,v)$ is
a wedge of circles, it has a unique spanning tree~$T_\pm$ consisting
of a single vertex. The diagram $H(f,v)$ is then
$H(f,v,\S,T_-,T_+)$. More explicitly, the $\a$-curves are $W^u(p) \cap
\S$ for $p \in C_1(f)$, and the $\b$-curves are $W^s(q) \cap \S$ for
$q \in C_2(f)$.

The following result is basically standard Morse theory, but since it
provides the crucial link between sutured Heegaard diagrams and Morse
functions, we give a complete proof.

\begin{proposition} \label{prop:existence} Let $(\S,\alphas,\betas)$
  be a diagram of the sutured manifold $(M,\g)$, and suppose that
  $\alphas \pitchfork \betas$. Then there exists a simple pair $(f,v)
  \in \FV_0(M,\g)$ such that $H(f,v) = (\S,\alphas,\betas)$.
\end{proposition}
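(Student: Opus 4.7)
The plan is to build $(f,v)$ separately on the two sutured compression bodies $(M_-,\g_-)$ and $(M_+,\g_+)$ into which $\S$ divides $(M,\g)$, where $R_+(\g_-)=R_-(\g_+)=\S$ and $\alphas$, $\betas$ are attaching sets for $M_-$, $M_+$ respectively, and then glue along~$\S$. Because compressing $\S$ along $\alphas$ yields a surface isotopic rel~$\g$ to $R_-(\g)$, the model compression body $\S\times[-\tfrac23,0]$ with 3-dimensional 2-handles attached along $\alphas\times\{-\tfrac23\}$ and capped off by $R_-(\g)\times[-1,-\tfrac23]$ is diffeomorphic to $M_-$ rel $\S\cup(\g\cap M_-)$. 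Pulling back through this identification, we may therefore assume that $M_-$ is built from $R_-(\g)\times[-1,-\tfrac23]$ by attaching one 3-dimensional 1-handle $h_i$ for each component $\a_i$ of $\alphas$, whose belt circle in $\S$ is precisely $\a_i$. The symmetric decomposition gives $M_+$ as $R_+(\g)\times[\tfrac23,1]$ with a 1-handle (attached from above) per $\b_j$, whose belt circle on $\S$ is $\b_j$.

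On $M_-$, define a smooth $f_-\co M_-\to[-1,0]$ with $f_-^{-1}(-1)=R_-(\g)$, $f_-^{-1}(0)=\S$, $f_-=\pi_2\circ d$ on $\g\cap M_-$, and with exactly one index-$1$ critical point $p_i$ in the interior of each $h_i$, at a distinct value in $(-\tfrac23,-\tfrac13)$, given the standard quadratic form $-x_1^2+x_2^2+x_3^2+f_-(p_i)$ in a chart adapted to $h_i$ whose $x_1$-axis runs along the core and whose $x_2$-$x_3$-plane spans the co-core. Outside neighborhoods of the $p_i$, interpolate $f_-$ smoothly between $R_-(\g)$ and $\S$ while respecting the product form on $\g$. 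Choose a Riemannian metric $g_-$ that is Euclidean in the Morse charts, a product along $\g$, and arranged inside each $h_i$ so that the gradient $v_-:=\nabla_{g_-}f_-$ preserves the product structure of the handle. Then $v_-$ has the standard linear form near each $p_i$, equals $\partial/\partial z$ along $\g$, and the unstable manifold $W^u(p_i,v_-)$ is the upper half of the co-core of $h_i$, so $W^u(p_i,v_-)\cap\S=\a_i$. Perform the mirror construction on $M_+$ to produce index-$2$ critical points $q_j$ with $W^s(q_j,v_+)\cap\S=\b_j$.

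Gluing along $\S$ yields $(f,v)\in\FV(M,\g)$, simple in the sense of Definition~\ref{def:simple} by construction. It remains to check the Morse--Smale condition and that $H(f,v)=(\S,\alphas,\betas)$. Any flow from $p_i$ immediately crosses $\S$ and thereafter strictly increases $f$ in $M_+$, ruling out any $p_i\to p_{i'}$ flow; the dual argument rules out $q_j\to q_{j'}$ flows. The intersection $W^u(p_i)\cap W^s(q_j)$ consists of flow-lines through $\a_i\cap\b_j\subset\S$; since $\alphas\pitchfork\betas$ in $\S$ and $\S$ is a global transverse section for $v$, this upgrades to $W^u(p_i)\pitchfork W^s(q_j)$ in $M$. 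Hence $(f,v)\in\FV_0(M,\g)$, and Definition~\ref{def:diagram} gives $H(f,v)=(\S,\alphas,\betas)$.

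The main obstacle, concentrated in the second paragraph, is producing a metric whose gradient flow carries each co-core up to exactly the prescribed embedded curve $\a_i$, rather than merely an isotopic one; this is why one must choose the handle decomposition of $M_-$ at the outset to realize the belt circles as the given embedded $\a_i$, and then make the metric a straight product inside each handle. The analogous point for $M_+$, together with the $\alphas\pitchfork\betas$ hypothesis, is what upgrades the construction from yielding a mere weakly equivalent diagram to yielding $(\S,\alphas,\betas)$ on the nose.
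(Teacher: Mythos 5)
Your proof takes essentially the same approach as the paper's: build a model Morse function and gradient on each sutured compression body bounded by $\S$ using an explicit handle decomposition, arrange for the identification with $M_\mp$ to be the identity on $\S$ so that the (co-)cores hit $\S$ in exactly the prescribed curves $\a_i$, $\b_j$ (not merely isotopic ones), glue along $\S$, and deduce the Morse--Smale condition from $\alphas\pitchfork\betas$ via $\S$ being a transverse section. The only difference is cosmetic: the paper builds $C(\alphas)$ as $\S\times I$ with 2-handles attached along $\alphas\times\{1\}$ and then flips the sign of the Morse function, whereas you describe the dual decomposition by 1-handles attached to $R_-(\g)\times[-1,-\tfrac{2}{3}]$ — these are the same handle structure viewed from opposite ends.
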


\begin{proof}
  Given an arbitrary attaching set $\deltas \subset \S$, recall that
  $C(\deltas)$ is the sutured compression body obtained by attaching
  2-handles to $\S \times I$ along $\deltas \times \{1\}$.  We are
  going to construct a Morse function $f_{\delta} \colon C(\deltas)
  \to I$ with only index~2 critical points, and a gradient-like
  vector field $v_{\delta}$ for $f_{\delta}$.

  Consider the Morse function $h(x) = -x_1^2 - x_2^2 + x_3^2 + 1/2$
  and its gradient $v(x) = (-2x_1, -2x_2, 2x_3)$ on the unit disk
  $D^3$.  Our model 2-handle will be $Z = h^{-1}(I)$; this is a
  3-manifold with boundary and corners. Let $Z^- = h^{-1}(0)$ and $Z^+
  = h^{-1}(1)$.  The boundary of~$Z$ is $Z^- \cup Z^+ \cup A$, where
  $Z^-$ is a connected hyperboloid, hence topologically an
  annulus. The surface~$Z^+$ is the disjoint union of two disks, while
  $A = Z \cap S^2$ is the disjoint union of two annuli.  The attaching
  circle of~$Z$ is the curve $a = Z^- \cap \{x_3 = 0\}$. We isotope
  $v$ near $A$ such that it stays gradient-like for $h$, and becomes
  tangent to $A$.  The function~$h$ has a single non-degenerate
  critical point of index~2 at the origin, with stable manifold
  $W^s(0) = Z \cap \{x_3 = 0\}$.

  Pick an open regular neighborhood $N$ of $\deltas$, and let $R = (\S
  \setminus N) \times I$.  We define $f_{\delta}$ on $R$ to be the
  projection~$t$ onto the $I$-factor, and $v_{\delta}$ on $R$ is
  simply $\partial/\partial t$.  If the components of $\deltas$ are
  $\delta_1, \dots, \delta_d$, then we denote by~$N_i$ the component
  of~$N$ containing~$\delta_i$.  For each $i \in \{\, 1, \dots,
  d\,\}$, take a copy~$Z_i$ of~$Z$, together with the function $h_i$
  and the vector field~$v_i$ constructed above. We glue~$Z_i$ to~$R$
  using a diffeomorphism $A_i \to \partial N_i \times I$ that maps the
  circles $h^{-1}(t) \cap A$ to $\partial N_i \times \{t\}$ for every
  $t \in I$.  So we can extend~$f_{\delta}$ to~$Z_i$ with~$h_i$ and
  $v_{\delta}$ with~$v_i$.  After gluing $Z_1, \dots, Z_d$ to~$R$, we
  get a compression body diffeomorphic to $C(\deltas)$, together with
  the required pair $(f_{\delta}, v_{\delta})$. Note that here we
  identify $(\S \setminus N) \times \{0\}$ with $\S \setminus N$ and
  $Z_i^-$ with $\overline{N}_i$, so that $C_-(\deltas) = \S$. In
  addition, the attaching circle~$a_i$ of~$Z_i$ is identified with~$\a_i$.
  Let~$p_i$ be the center of the 2-handle~$Z_i$.  By
  construction, the stable manifold $W^s(p_i) \cap \S = \a_i$.

  The surface $\S$ cuts $(M,\g)$ into two sutured compression
  bodies. Call these~$C_-$ and~$C_+$, such that $R_{\pm}(\g) \subset
  C_{\pm}$. There are diffeomorphisms $d_- \colon C_- \to C(\alphas)$
  and $d_+ \colon C_+ \to C(\betas)$ that are the identity on~$\S$.
  Then we define the Morse function~$f$ on $(M,\g)$ by taking
  $-f_{\a} \circ d_-$ on $C_-$ and $f_{\b} \circ d_+$ on $C_+$, and
  smoothing along~$\S$. Then the vector field $v$ that agrees with
  $-(d_-)_*^{-1} \circ v_{\a} \circ d_-$ on $C_-$ and with
  $(d_+)_*^{-1} \circ v_{\b} \circ d_+$ on $C_+$ is gradient-like for
  $f$, and is Morse-Smale since $\alphas \pitchfork \betas$.  It
  follows from the construction that $H(f,v) = (\S,\alphas,\betas)$.
\end{proof}

Notice that the above construction is almost completely canonical, in
the sense that the various choices can be easily deformed into each
other. In fact, we have the following stronger statement.

\begin{proposition} \label{prop:connected-MS} Let
  $(\S,\alphas,\betas)$ be a diagram of the sutured manifold $(M,\g)$
  such that $\alphas \pitchfork \betas$, and recall that $\FV_0(M,\g)$
  is endowed with the $C^{\infty}$-topology.  Then the subspace of
  simple pairs $(f,v) \in \FV_0(M,\g)$ for which $H(f,v) =
  (\S,\alphas,\betas)$ is connected.
\end{proposition}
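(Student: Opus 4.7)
The plan is to show that any simple pair $(f_1,v_1)\in\FV_0(M,\g)$ with $H(f_1,v_1)=(\S,\alphas,\betas)$ can be connected, through simple pairs with the same diagram, to a fixed ``model'' pair $(f_0,v_0)$ constructed as in the proof of Proposition~\ref{prop:existence}. That construction depends only on continuous auxiliary data (a tubular neighbourhood of $\alphas\cup\betas$ in $\S$, identifications with the standard 2-handle, Morse charts at each critical point), each varying in a contractible space, so the set of pairs it produces is itself path-connected; thus it suffices to reach one such pair from an arbitrary $(f_1,v_1)$. Since $\S=f^{-1}(0)$ is common to all simple pairs under consideration, $M$ splits as $M_-\cup_{\S}M_+$, every simple pair respects this splitting, and I will work separately on each compression body; the argument on $M_+$ is symmetric to that on $M_-$ via $f\mapsto -f$, so I focus on $M_-$.

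On $M_-$, the index-one critical points of $f_1$ are in canonical bijection with $\alphas$ via $p\mapsto W^u(p,v_1)\cap\S$, and likewise for $(f_0,v_0)$. My first step will be to use the connectivity of the configuration space of labelled distinct interior points in the connected 3-manifold $M_-$ to build an ambient isotopy $\{\phi_t\}$ of $M_-$ which is the identity on $\partial M_-$ (in particular on $\S$) and moves the critical points of $f_1$ onto those of $f_0$. Transferring $(f_1,v_1)$ by this isotopy yields a continuous path of simple pairs in our subspace, and the diagram is unchanged throughout because $\phi_t|_\S=\mathrm{id}$ forces $\phi_t(W^u(p,v_1))\cap\S=\phi_t(W^u(p,v_1)\cap\S)=\alpha$ for every $\alpha\in\alphas$. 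So I may assume $C(f_0)=C(f_1)$ as labelled sets. Next, at each critical point~$p$ the germs of $(f_0,v_0)$ and $(f_1,v_1)$ are both simple, and any two simple germs of the same index are conjugate by an orientation-preserving germ of diffeomorphism; since the space of such germs (homotopy equivalent to the identity component of $\GL(3,\RR)$) is connected, a further path in our subspace arranges that the two pairs agree on a prescribed neighbourhood of each critical point.

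The third step is to make the two pairs agree on a neighbourhood of $\S$. Once the local model at each $p\in C_1(f)$ is fixed, the remaining data for the germ of $W^u(p)$ near $\S$ is captured by an embedded compressing disk in $M_-$ bounded by the corresponding $\alpha\in\alphas$; by Lemma~\ref{lem:pi2}, $\pi_2(M_-)=0$, so the space of such disks is connected, and I can isotope $v_1$ accordingly to make $W^u(p,v_1)$ coincide with $W^u(p,v_0)$ on a collar of $\S$. After these normalisations, $(f_0,v_0)$ and $(f_1,v_1)$ coincide on a neighbourhood $U\cup V$ of $C(f)\cup\S$. On the complementary region $P=M_-\setminus(U\cup V)$, which contains no critical points, both pairs exhibit $P$ as a product sutured manifold via flow along $v_i/v_i(f_i)$, with identical boundary behaviour. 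The affine space of simple sutured functions on $P$ with these prescribed boundary values is convex, and Proposition~\ref{prop:grad-like-metric} shows that the space of compatible gradient-like vector fields over any such $f$ is convex as well, so a straight-line homotopy connects $(f_0,v_0)|_P$ to $(f_1,v_1)|_P$ through admissible pairs, finishing the argument.

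The main obstacle will be ensuring that throughout the critical-point isotopy in the second step the diagram condition $H(f_t,v_t)=(\S,\alphas,\betas)$ is maintained; in particular, the evolving unstable manifolds $W^u(p_t,v_t)$ must continue to meet~$\S$ in the correct $\alpha$-circles, and stable and unstable manifolds of distinct critical points must not collide, lest the Morse--Smale condition be violated. I plan to handle this by performing the isotopy one critical point at a time and invoking the parametrised isotopy extension theorem in $M_-$, using Lemma~\ref{lem:pi2} to deform each compressing disk with fixed boundary $\alpha$ without any $\pi_2$-obstruction, and keeping the support of the isotopy disjoint from the already-fixed unstable manifolds of the other critical points.
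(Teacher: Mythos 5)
Your overall strategy—splitting along $\Sigma$, relocating critical points, matching local models, and using $\pi_2=0$ (Lemma~\ref{lem:pi2}) to align compressing disks—parallels the paper's proof and is sound up to a point. However, the final step of your argument contains two genuine gaps.

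\textbf{The region $P$ is not a product.} You take $P=M_-\setminus(U\cup V)$ with $U$ a neighbourhood of $C(f)$ and $V$ a collar of $\Sigma$, and assert that the flow of $v_i$ exhibits $P$ as a product sutured manifold. This is false: the unstable disks $W^u(p)$ thread across all of $P$, and on the sphere $\partial U$ around each critical point $p$ the vector field points both into $U$ (near $W^s(p)$) and out of $U$ (near $W^u(p)$). So flow-lines in $P$ travel from $R_-(\g)\cup\partial U$ to $\partial U\cup\partial V$ in a mixed fashion and $P$ is not an $I$-bundle. The paper's Steps~3--5 solve this by matching the pairs on \emph{product neighbourhoods of the entire stable/unstable disks} (not merely a collar of $\Sigma$ and balls around the critical points); only after excising those neighbourhoods does the complement become a genuine product sutured manifold.

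\textbf{The straight-line homotopy does not stay in the admissible space.} Even if $P$ were a product, the homotopy $(f_t,v_t)=\bigl((1-t)f_0+tf_1,\,(1-t)v_0+tv_1\bigr)$ need not remain gradient-like. On a critical-point-free region one needs $v_t(f_t)>0$, and
\[
v_t(f_t)=(1-t)^2\,v_0(f_0)+t(1-t)\bigl(v_0(f_1)+v_1(f_0)\bigr)+t^2\,v_1(f_1),
\]
whose cross-terms $v_0(f_1)$ and $v_1(f_0)$ can be negative; the space of gradient-like pairs is convex in $f$ for fixed $v$, and in $v$ for fixed $f$, but not jointly. (Also, Proposition~\ref{prop:grad-like-metric} is about the convexity of the space of \emph{metrics} for a fixed pair $(f,v)$, not about the space of $v$'s over a fixed $f$, so it does not say what you invoke it for.) The paper avoids this precisely by splitting the job in two: it first matches $v$ to $w$ everywhere by conjugating the pair $(f,v)$ along a diffeotopy of the product region (Step~5), which is harmless because pushing a pair forward by a diffeomorphism preserves admissibility; only then does it run a linear interpolation of $f$ alone (Step~6), which works since once $v=w$ one has $v(f_t)=(1-t)v(f)+t\,w(g)>0$. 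Your plan collapses these two stages and the convexity it needs simply fails.

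A smaller imprecision: in your third step you only claim to make $W^u(p,v_1)$ coincide with $W^u(p,v_0)$ ``on a collar of $\S$''; you need the disks to agree entirely (this is what the paper's Step~1 achieves via the cut-and-paste argument using $\pi_2=0$), since a mismatch in the interior still has to be removed before the product argument can even begin.
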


\begin{proof}
  Suppose that the simple pairs $(f,v)$ and $(g,w)$ in $\FV_0(M,\g)$
  satisfy $H(f,v) = (\S,\alphas,\betas)$ and $H(g,w) =
  (\S,\alphas,\betas)$.

  As in Proposition~\ref{prop:existence}, the surface~$\S$ cuts
  $(M,\g)$ into two compression bodies $C_-$ and $C_+$, such that
  $R_{\pm}(\g) \subset C_{\pm}$.  We will describe how to connect
  $(f,v)|_{C_+}$ and $(g,w)|_{C_+}$; the deformation on $C_-$ is
  analogous. Since $\S$ remains the zero level set and $v$, $w$ stay
  transverse to $\S$ throughout, it is easy to glue the deformations
  on $C_+$ and~$C_-$ together.  We construct this deformation in
  several steps.

  First, some terminology. Let $\{\,d_t \,\colon\, t \in I\,\}$ be an
  isotopy of $C_+$ such that $d_0 = \text{Id}_{C_+}$.  Then we call
  the family $(f_t,v_t) = (f \circ d_t^{-1}, (d_t)_* \circ v \circ
  d_t^{-1})$ the isotopy of $(f,v)$ along~$d_t$. Note that $v_t$ is a
  gradient-like vector field for $f_t$.

  \textbf{Step 1.} In this step, we move the stable manifolds of
  $(f,v)$ until they coincide with the stable manifolds of $(g,w)$.
  We denote the components of $\betas$ by $\b_1, \dots, \b_k$. In
  addition, let $C_2(f) = \{\,q_1,\dots,q_k\,\}$ and $C_2(g) =
  \{\,q_1',\dots,q_k'\,\}$, enumerated such that
  \[
  W^s(q_i,v) \cap \S = W^s(q_i',w) \cap \S = \b_i.
  \]
  By Lemma~\ref{lem:pi2}, we have $\pi_2(C_+) = 0$. Hence, using
  cut-and-paste techniques, the disks $W^s(q_i,v)$ and $W^s(q_i',w)$
  are isotopic relative to their boundary.  So there is an isotopy
  $\{\,d_t \,\colon t\, \in I\,\}$ of $C_+$ fixing $\partial C_+$ such
  that $d_0 = \text{Id}_{C_+}$ and $d_1(W^s(q_i,v)) = W^s(q_i',w)$;
  furthermore, $d_1(q_i) = q_i'$. Isotoping $(f,v)$ along $d_t$, we
  get a path of simple pairs $(f_t,v_t)$ in $\FV_0(M,\g)$, all
  compatible with the diagram $(\S, \alphas, \betas)$. Replacing
  $(f,v)$ with $(f_1,v_1)$, we can assume that $W^s(q_i,v) =
  W^s(q_i',w)$ and $q_i = q_i'$ for every $i \in \{\,1 \dots, k \,\}$.
  The further deformation of $(f,v)$ will preserve these properties,
  so from now on we will write $W^s(q_i)$ for every $q_i \in C_2(f) =
  C_2(g)$.

  \textbf{Step 2.} We now isotope $(f,v)$ until it coincides with
  $(g,w)$ in a neighborhood of the critical points, without ruining
  what we have already achieved in Step~1.  Let $i \in \{\,1 \dots,
  k\,\}$. Since both $(f,v)$ and $(g,w)$ are simple, there are balls
  $N_1$ and $N_2$ centered at~$q_i$ and coordinate systems $x \colon
  N_1 \to \R^3$ and $y \colon N_2 \to \R^3$ such that $f = -x_1^2 -
  x_2^2 + x_3^2 + f(q_i)$ and~$v$ has coordinates $(-2x_1,-2x_2,2x_3)$
  in $N_1$, while $g = -y_1^2 - y_2^2 + y_3^2 + g(q_i)$ and $w$ has
  coordinates $(-2y_1,-2y_2,2y_3)$ in $N_2$.  Choose an $\varepsilon >
  0$ so small that the disks $D_1 = \{\,|x| \le \varepsilon\,\}$ and
  $D_2 = \{\,|y| \le \varepsilon\,\}$ both lie in $N_1 \cap
  N_2$. Consider the diffeomorphism $d \colon D_1 \to D_2$ given by
  the formula $y^{-1} \circ x$. Then $d(D_1 \cap W^s(q_i)) = D_2 \cap
  W^s(q_i)$, as $D_1 \cap W^s(q_i)$ is given by the equation $x_3 =
  0$, while $D_2 \cap W^s(q_i)$ by $y_3 = 0$. We can choose an isotopy
  $e_t \colon D_1 \to N_1 \cap N_2$ such that $e_0 = \text{Id}_{D_1}$
  and $e_1 = d$; furthermore,
  \[
  e_t(D_1 \cap W^s(q_i)) \subset W^s(q_i)
  \]
  and $e_t(q_i) = q_i$ for every $t \in I$. This can be extended to an
  isotopy $d_t \colon C_+ \to C_+$ such that $d_t|_{D_1} = e_t$, the
  diffeomorphism~$d_t$ is the identity outside $N_1 \cap N_2$, and
  $d_t(W^s(q_i)) = W^s(q_i)$. If we isotope $(f,v)$ along~$d_t$, we
  get a pair $(f_1,v_1)$ that agrees with $(g,w)$ in $D_2$. Repeating
  this process for every~$q_i$, we can assume that $(f,v)$ and $(g,w)$
  agree in a neighborhood~$N$ of all the critical points $q_1, \dots,
  q_k$ (where $N$ is the union of the disks $D_2$ for each~$q_i$).

  \textbf{Step 3.} In this step, we arrange that $v|_{W^s(q_i)} =
  w|_{W^s(q_i)}$ for every $i \in \{\,1,\dots,k\,\}$.  By Step~2, we
  know that $v$ and $w$ coincide on the disk $B = N \cap W^s(q_i)$,
  and that they are transverse to $\partial B$. Let $A$ be the annulus
  $W^s(q_i) \setminus B$.  Take a regular neighborhood of $W^s(q_i)$
  of the form $W^s(q_i) \times [-1,1]$, where $W^s(q_i)$ is identified
  with $W^s(q_i) \times \{0\}$. We are going to construct an isotopy
  of $C_+$ that is supported in $A \times [-1,1]$ that takes $v$ to
  $w$.  For every $p \in \partial B$, we denote by $\nu_v(p,t)$ the
  flow-line of $-v$ starting at $p$ and ending at $\partial
  W^s(q_i)$. Here $t$ lies in some interval $[0,T(v,p)]$.  Similarly,
  $\nu_w(p,t)$ denotes the flow-line of $-w$ starting at $p$ and
  defined for $t \in [0,T(w,p)]$. After smoothly rescaling $v$ inside
  $A \times [-1,1]$ such that it is unchanged in a neighborhood of
  $\partial (A \times [-1,1])$, we can assume that $T(v,p) = T(w,p)$
  for every $p \in \partial B$.  Let $a \colon A \to A$ be the
  diffeomorphism defined by the formula
  \[
  a(\nu_v(p,t)) = \nu_w \left(p, t \right)
  \]
  for $t \in [0,T(v,p)]$.  This has the property that $a_* \circ v
  \circ a^{-1} = w$. There is an isotopy $\{\,a_t \,\colon\, t \in I\,\}$
  of $A$ that is fixed on $\partial B$, and such that $a_0 =
  \text{Id}_A$ and $a_1 = a$. We extend this to $A \times [-1,1]$ by
  the formula $a_t(x,s) = (a_{r(s)t}(x),s)$, where $r \colon \R \to I$
  is a bump function that is zero outside $[-1,1]$ and such that $r(0)
  = 1$. Finally, we extend $a_t$ to the whole of $C_+$ as the
  identity. Then, isotoping $(f,v)$ along $a_t$, we get a family
  $\{\,(f_t,v_t) \,\co\, t \in I \,\}$ such that $v_1|_{W^s(q_i)} =
  w|_{W^s(q_i)}$. Note that $W^s(q_i)$ is invariant
  under~$a_t$. Furthermore, even though $a_t$ is not the identity on
  $\S$, the field~$v_t$ stays transverse to~$\S$ throughout. In fact,
  $v_t$ can be made invariant under $a_t$ if we first make $v$ and $w$
  agree in a neighborhood of $\partial A$; so we can glue the
  deformation with the one on $C_-$.

  Note that we do not claim that $f = g$ anywhere outside a
  neighborhood of the critical points.  We will return to this in the
  last step.

  \textbf{Step 4.} We now make $v$ and $w$ agree on a product
  neighborhood $W^s(q_i) \times [-1,1]$ of every stable manifold
  $W^s(q_i)$. Fix $i \in \{\,1,\dots,k\,\}$, and let the ball $B$ and
  the annulus $A$ be as in Step~3. We already know that $(f,v)$ and
  $(g,w)$ agree on a neighborhood~$N$ of~$B$.  Since $v$ and $w$ agree
  on $A$ and have no zeroes there, there is a thin product
  neighborhood of $W^s(q_i)$ diffeomorphic to $W^s(q_i) \times [-2,2]$
  such that in this neighborhood the linear homotopy $(1-t)v + tw$
  from $v$ to $w$ stays gradient-like for~$f$ throughout. In addition,
  we choose this neighborhood so thin that $B \times [-2,2] \subset
  N$.  Let $\vartheta \colon \R \to I$ be a smooth function that is
  zero outside $[-2,2]$ and is identically one in $[-1,1]$. Then we
  define the isotopy $v_t$ of $v$ to be the identity outside $W^s(q_i)
  \times [-2,2]$, and
  \[
  v_t(x,s) = \left(1-\vartheta \left(s \right)t \right)v + \left(\vartheta \left(s \right)t \right)w
  \]
  for every $(x,s) \in W^s(q_i) \times [-2,2]$.  Then $v_t$ is
  gradient-like for $f$ for every $t \in I$, and~$v_1$ agrees with $w$
  on $W^s(q_i) \times [-1,1]$.

  \textbf{Step 5.} In this step, we homotope $v$ to $w$ on the rest of
  $C_+$.  Let $P$ be the manifold obtained from
  \[
  C_+ \setminus \bigcup_{i=1}^k (W^s(q_i) \times
  [-\varepsilon,\varepsilon])
  \]
  by rounding the corners. Here, we choose $\varepsilon$ so small that
  (after possibly a small perturbation) $v$ and $w$ point into $P$
  along $P_- = \partial P \setminus \text{Int}(\g \cup
  R_+(\g))$. Notice that $P_-$ consists of $\S \setminus \bigcup_{i=1}^k (\a_i \times
  [-\varepsilon,\varepsilon])$ and $W^s(q_i) \times
  \{-\varepsilon,\varepsilon\}$ for $i \in \{\,1, \dots, k\,\}$. By
  construction, the vector fields $v$ and $w$ point into $P$ along
  $\S$ and $B \times \{-\varepsilon,\varepsilon\}$ for every disk $B$
  of the form $N \cap W^s(q_i)$.  As $v$ and $w$ are tangent to the
  annuli $A$, such an $\varepsilon$ and perturbation clearly exist.
  Note that $v$ and $w$ coincide along $P_-$.

  The sutured manifold $(P, \g \cap P)$ is diffeomorphic to the
  product sutured manifold $(R_+(\g) \times I, \partial R_+(\g) \times
  I)$. For every $x \in P_-$, let $\phi_v(x,t)$ be the flow-line of
  $v$ starting at $x$ and defined for $t \in [0,T(v,x)]$. Similarly,
  let $\phi_w(x,t)$ be the flow-line of $w$ starting at $x$ and
  defined for $t \in [0,T(w,x)]$. After smoothly rescaling $v$ inside
  $P$ such that it is unchanged in a neighborhood of $\partial P$, we
  can assume that $T(v,x) = T(w,x)$ for every $x \in P_-$. As in
  Step~3, we define a diffeomorphism $d \colon P \to P$ by the formula
  \[
  d(\phi_v(x,t)) = \phi_w \left(x,t \right)
  \]
  for $t \in [0, T(v,x)]$.  This satisfies $d_* \circ v \circ d^{-1} =
  w$, and $d$ is the identity on $P_-$ and~$\g \cap P$.  Since any
  diffeomorphism of a product sutured manifold $(S \times I, \partial
  S \times I)$ that fixes $S \times \{0\}$ and $\partial S \times I$
  is isotopic to the identity through such diffeomorphisms, there is
  an isotopy $d_t$ of $P$ that fixes $P_-$ and $\g \cap P$, and $d_0 =
  \text{Id}_P$. Since $v$ and $w$ agree on each $W^s(q_i) \times
  [-1,1]$, we can extend $d_t$ to every $W^s(q_i) \times
  [-\varepsilon, \varepsilon]$ as the identity.  If we isotope $(f,v)$
  along $d_t$, we get a path of pairs $(f_t,v_t)$ such that $v_1 = w$.

  \textbf{Step 6.} In this final step, we achieve $f = g$. For this,
  the linear homotopy $\{\, f_t = (1-t)f + tg \,\co\, t \in I \,\}$
  works. Indeed, as $f$ and $g$ coincide in a neighborhood~$N$ of the
  critical points, $f_t|_N = f|_N$ for every $t \in I$.  In addition,
  \[
  v(f_t) = (1-t)v(f) + t v(g) = (1-t) v(f) + t w(g) > 0
  \]
  away from $\{\,q_1,\dots,q_k\,\}$, the common critical set of $f$
  and $g$. So $f_t$ has the same index~2 non-degenerate critical
  points as $f$ and is hence Morse, $v$ is a gradient-like vector
  field for $f_t$, and the pair $(f_t,v)$ is simple for every $t \in
  I$.
\end{proof}

\subsection{Codimension-1: Overcomplete diagrams}
\label{sec:transl-codim-1}

We start this section by proving a type of isotopy extension lemma for
families of Heegaard diagrams.

\begin{lemma} \label{lem:isot-ext} Suppose that
  $\{\,(\S_t,\alphas_t,\betas_t) \,\colon\, t \in I\,\}$ is a smooth
  1-parameter family of possibly overcomplete Heegaard diagrams in
  $(M,\g)$ such that $\alphas_t \pitchfork \betas_t$ for every $t \in
  I$.  Then there is an isotopy $D \colon M \times I \to M$ such that
  \[
  d_t(\S_0,\alphas_0,\betas_0) = (\S_t, \alphas_t,\betas_t)
  \]
  for every $t \in I$, and~$d_t$ fixes $\partial M$ pointwise (where
  $d_t = D(\cdot,t)$).  In particular, $d_1|_{\S_0} \colon \S_0 \to
  \S_1$ is isotopic to the identity in~$M$.  The space of such
  isotopies is contractible, so the space of diffeomorphisms that
  arise as $d_1$ for such an isotopy~$D$ is path-connected.

  An analogous statement holds if $\{\,\S_t \colon t \in I\,\}$ is a
  1-parameter family of Heegaard surfaces of $(M,\g)$. In particular,
  there is an induced diffeomorphism $d_1 \colon \S_0 \to \S_1$,
  well-defined up to isotopy.
\end{lemma}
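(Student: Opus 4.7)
The plan is to apply the ambient isotopy extension theorem in two stages: first to move the surfaces $\Sigma_t$ into place, then to move the curves $\alphas_t \cup \betas_t$ within the already-positioned surfaces. Throughout, I will treat the diagrams as stratified subsets of~$M$ whose deepest stratum is the constant 1-manifold $s(\gamma) \subset \partial M$, so that the extensions can be arranged to fix $\partial M$ pointwise.

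First I would invoke the standard isotopy extension theorem (as in Hirsch or Palais) for the smooth family $\{\Sigma_t\}$ of embedded surfaces-with-boundary in $M$. Viewing this family as a smooth path in the space of embeddings $\mathrm{Emb}(\Sigma_0, M)$ that fixes $\partial \Sigma_0 = s(\gamma)$ pointwise, isotopy extension produces an ambient isotopy $\{\phi_t \co t \in I\}$ of $M$ with $\phi_0 = \mathrm{Id}_M$ and $\phi_t(\Sigma_0) = \Sigma_t$. Because the motion of $\Sigma_t$ is trivial on $\partial \Sigma_t = s(\gamma)$, the standard construction (integrating a time-dependent vector field obtained by extending the infinitesimal generator from $\Sigma_t$) can be arranged to vanish outside a compact neighborhood of $\bigcup_t \Sigma_t$ in the interior of $M$, so $\phi_t$ fixes $\partial M$ pointwise.

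Next I would reduce to a family living inside a single fixed surface. Pulling back by $\phi_t$, the 1-manifolds $\phi_t^{-1}(\alphas_t \cup \betas_t) \subset \Sigma_0$ form a smooth isotopy of embedded 1-submanifolds of $\Sigma_0$ (the transversality $\alphas_t \pitchfork \betas_t$ is preserved, but is not actually needed here). Applying isotopy extension again, now inside the compact surface-with-boundary $\Sigma_0$ relative to $\partial \Sigma_0$, produces an ambient isotopy $\{\psi_t\}$ of $\Sigma_0$ such that $\psi_t(\alphas_0 \cup \betas_0) = \phi_t^{-1}(\alphas_t \cup \betas_t)$. Extending $\psi_t$ to an isotopy $\tilde{\psi}_t$ of $M$ supported in a small product neighborhood of $\Sigma_0$ (and in particular disjoint from $\partial M$), the composition $d_t = \phi_t \circ \tilde{\psi}_t$ is the desired ambient isotopy, and the restriction $d_1|_{\Sigma_0} \colon \Sigma_0 \to \Sigma_1$ is isotopic to the identity in~$M$ via the obvious track of $d_t|_{\Sigma_0}$.

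Finally, for contractibility, the space of isotopies $D$ satisfying the conclusions sits as the total space of a tower of fibrations whose fibers are the spaces of extensions at each stage. At each stage, the fiber is the space of time-dependent vector fields on~$M$ (or on~$\Sigma_0$) vanishing on a prescribed closed subset and inducing a prescribed normal component along the moving submanifold; this is an affine space over the space of tangential time-dependent vector fields with appropriate vanishing conditions, and is therefore contractible. The path-connectedness of the resulting set of terminal diffeomorphisms $d_1$ is immediate. The analogous statement for families of Heegaard surfaces alone is just the first stage of this argument. The main technical point to be careful about is the compatibility of boundary conditions across the two stages: one must choose the support of $\tilde{\psi}_t$ inside the region where $\phi_t$ has already set up a trivialization of the normal bundle to $\Sigma_t$, but this is easily arranged by shrinking the tubular neighborhood, and is the only place the argument requires more than a verbatim citation of standard isotopy extension.
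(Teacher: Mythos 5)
Your two-stage strategy (first move the surfaces into place, then move the curves within a fixed surface) is a reasonable alternative to the paper's one-shot construction of a vector field on $M \times I$, but as written there is a genuine error in the second stage. You assert that ``the 1-manifolds $\phi_t^{-1}(\alphas_t \cup \betas_t) \subset \Sigma_0$ form a smooth isotopy of embedded 1-submanifolds of $\Sigma_0$'' and add parenthetically that the transversality $\alphas_t \pitchfork \betas_t$ ``is not actually needed here.'' This is false: for any nontrivial Heegaard diagram, $\alphas_t$ and $\betas_t$ intersect, so $\alphas_t \cup \betas_t$ is not an embedded 1-submanifold of~$\S_t$ --- it has crossings. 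The ordinary isotopy extension theorem for embedded submanifolds does not apply to it, and you cannot simply apply isotopy extension to $\alphas_t$ first and then to $\betas_t$, since an ambient isotopy of $\S_0$ produced to move the $\betas$-curves will generically destroy the positioning of the $\alphas$-curves already achieved.

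The transversality hypothesis is precisely what makes a correct version of the argument go through, via a \emph{stratified} isotopy extension: the locus $\alphas_t \cap \betas_t$ is a discrete set moving smoothly in $t$, so one first produces a time-dependent vector field on the crossing points, then extends it to a field along $\alphas_t$ and along $\betas_t$ (this is possible because the crossings are transverse, so the two extensions are compatible), then to a field along $\S_t$, then to all of $M$. This is exactly what the paper does, working in $M \times I$ with the horizontal foliation and building the vector field $\nu$ up along the strata $\alphas_* \cap \betas_* \subset \alphas_* \cup \betas_* \subset \S_* \subset M \times I$. If you want to keep your two-stage structure, the second stage must itself be replaced by this kind of stratified extension inside $\S_0 \times I$ (crossings, then curves, then the full surface); simply invoking isotopy extension for a single submanifold is not enough. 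Your first stage and your contractibility sketch are essentially fine, though the paper's contractibility argument --- every such isotopy arises from a choice of $\nu$, and the space of admissible $\nu$ is convex --- is cleaner than the fibration tower you gesture at.
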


\begin{proof}
  In $M \times I$, consider the submanifold
  \[
  \S_* = \bigcup_{t \in I}
  \S_t \times \{t\},
  \]
  which, in turn, contains the submanifolds $\alphas_* = \bigcup_{t
    \in I} \alphas_t \times \{t\}$ and $\betas_* = \bigcup_{t \in I}
  \betas_t \times \{t\}$.  (The fact that these are smooth
  submanifolds is in fact our definition that the family of Heegaard
  diagrams is smooth.)  Let $\mathcal{F}$ be the horizontal foliation
  of $M \times I$ by leaves $M \times \{t\}$, and we coorient
  $\mathcal{F}$ by $\partial/\partial t$.  The condition $\alphas_t
  \pitchfork \betas_t$ implies that $\alphas_* \cap \betas_*$ is a
  collection of arcs transverse to $\mathcal{F}$.  Furthermore,
  $\alphas_*$, $\betas_*$, and $\S_*$ are also transverse to
  $\mathcal{F}$.

  Pick a smooth vector field $\nu$ in the tangent bundle $T(\alphas_*
  \cap \betas_*)$ positively transverse to $\mathcal{F}$.  This can be
  extended to first a vector field in $T\alphas_*$ and $T\betas_*$
  positively transverse to $\mathcal{F}$, then to a field in $T\S_*$
  positively transverse to $\mathcal{F}$ such that $\nu_{s(\gamma)
    \times I} = \partial/\partial t$. Finally, extend the vector field
  to $M \times I$ positively transverse to $\mathcal{F}$ such that
  $\nu|_{\partial M} = \partial/\partial t$.  We also denote this
  extension by~$\nu$.  After normalizing~$\nu$ such that $\nu(t) = 1$,
  we can assume that its flow preserves the foliation~$\mathcal{F}$.
  Then the diffeomorphism $d_t \colon M \to M$ is defined by flowing
  along~$\nu$ from $M \times \{0\}$ to $M \times \{t\}$.  By
  construction, $d_t(\S_0,\alphas_0,\betas_0) =
  (\S_t,\alphas_t,\betas_t)$.  Note that the embedding $\iota_0 \colon
  \S_0 \hookrightarrow M$ is ambient isotopic to $\iota_1 \circ d_1
  \colon \S_0 \hookrightarrow M$ relative to $s(\g)$, where $\iota_1$
  is the embedding of~$\S_1$ in~$M$. So~$d_1$ is indeed isotopic to
  the identity in~$M$.

  On the other hand, every isotopy~$D$ arises from the above
  construction. Indeed, given~$D$, take~$\nu$ to be the velocity
  vector fields of the curves $t \mapsto (d_t(x),t)$ for $x \in M$.
  The space of such~$\nu$ is convex, hence contractible, so the space
  of such isotopies~$D$ is also contractible.

  The proof of the last statement about families of Heegaard surfaces
  is completely analogous, but simpler as our isotopies now do not
  have to preserve sets of attaching curves $\alphas_t$ and
  $\betas_t$.
\end{proof}

\begin{lemma} \label{lem:isot-homot} Let $\{\,\HD_t \,\colon\, t \in
  I\,\}$ and $\{\,\HD_t' \,\colon\, t \in I\,\}$ be 1-parameter families
  of possibly overcomplete Heegaard diagrams of $(M,\g)$, both
  connecting $\HD_0$ and $\HD_1$. If the two families are homotopic
  relative to their endpoints, then the induced diffeomorphisms $d_1$,
  $d_1' \colon M \to M$ (in the sense of Lemma~\ref{lem:isot-ext}) are
  isotopic through diffeomorphisms mapping $\HD_0$ to $\HD_1$.  An
  analogous statement holds for homotopic families of Heegaard
  surfaces $\{\,\S_t \,\colon\, t \in I\,\}$ and $\{\,\S_t' \,\colon\, t \in
  I\,\}$; i.e., the induced diffeomorphisms $d_1$, $d_1' \colon \S_0 \to
  \S_1$ are isotopic.
\end{lemma}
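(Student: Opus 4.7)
The plan is to extend the construction in the proof of Lemma~\ref{lem:isot-ext} from a single-parameter family to a two-parameter family witnessing the homotopy rel endpoints. The homotopy between $\{\HD_t\}$ and $\{\HD_t'\}$ gives a smooth two-parameter family $\{\HD_{s,t}\co (s,t)\in I\times I\}$ of possibly overcomplete Heegaard diagrams with $\alphas_{s,t}\pitchfork\betas_{s,t}$, satisfying $\HD_{0,t}=\HD_t$, $\HD_{1,t}=\HD_t'$, $\HD_{s,0}=\HD_0$, and $\HD_{s,1}=\HD_1$. The strategy is to run the vector-field construction of Lemma~\ref{lem:isot-ext} parametrically in $s$ to obtain a two-parameter family of diffeomorphisms $d_{s,t}\co M\to M$, and then read off the desired isotopy as $\{d_{s,1}\co s\in I\}$.

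In more detail, I would work in $M\times I_s\times I_t$ and form the submanifolds
\[
\S_{**}=\bigcup_{(s,t)}\S_{s,t}\times\{(s,t)\},\quad \alphas_{**}=\bigcup_{(s,t)}\alphas_{s,t}\times\{(s,t)\},\quad \betas_{**}=\bigcup_{(s,t)}\betas_{s,t}\times\{(s,t)\},
\]
together with the 2-dimensional intersection $\alphas_{**}\cap\betas_{**}$. Each of these is transverse to the horizontal foliation $\F$ of $M\times I_s\times I_t$ by $M$-slices, in the sense that its projection to $I_s\times I_t$ is a submersion. I would then construct, step by step (first on $\alphas_{**}\cap\betas_{**}$, then extending over $\alphas_{**}$ and $\betas_{**}$, then over $\S_{**}$, and finally over all of $M\times I_s\times I_t$), a smooth vector field $\nu$ whose $(s,t)$-component is everywhere equal to $\partial/\partial t$, that is tangent to each of the submanifolds above, and that equals $\partial/\partial t$ on $\bdy M\times I_s\times I_t$. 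Each extension exists for the same reasons as in Lemma~\ref{lem:isot-ext}: the relevant space of extensions is convex and nonempty once the transversality conditions are met, and demanding that the $\partial/\partial s$-component vanishes costs no generality.

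Flowing along $\nu$ for time $t$ inside the slice $M\times\{s\}\times I_t$ produces a smooth two-parameter family $d_{s,t}\co M\to M$ with $d_{s,0}=\Id_M$, $d_{s,t}$ fixing $\bdy M$ pointwise, and $d_{s,t}(\HD_0)=\HD_{s,t}$. Specializing to $t=1$, the family $\{d_{s,1}\co s\in I\}$ is an isotopy of diffeomorphisms of $M$ each of which satisfies $d_{s,1}(\HD_0)=\HD_{s,1}=\HD_1$, so it stays within the subspace of diffeomorphisms mapping $\HD_0$ to $\HD_1$. Finally, the restrictions $\{d_{0,t}\}$ and $\{d_{1,t}\}$ are themselves isotopies of the type considered in Lemma~\ref{lem:isot-ext} for the families $\{\HD_t\}$ and $\{\HD_t'\}$ respectively, so by the contractibility/path-connectedness statement at the end of that lemma, $d_{0,1}$ is isotopic to $d_1$ and $d_{1,1}$ is isotopic to $d_1'$ through diffeomorphisms mapping $\HD_0$ to $\HD_1$; concatenating these three isotopies gives the result.

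The main obstacle is the bookkeeping in Step 2: one must check that the four boundary constraints $\HD_{s,0}=\HD_0$, $\HD_{s,1}=\HD_1$, $\HD_{0,t}=\HD_t$, $\HD_{1,t}=\HD_t'$ are compatible with choosing a vector field $\nu$ whose $\partial/\partial s$-component vanishes, because along $\{t=0,1\}$ the submanifolds $\S_{**}$, $\alphas_{**}$, $\betas_{**}$ are constant in $s$ and so already contain $\partial/\partial s$ directions. The resolution is that the condition we need is tangency, not positive transversality in the $s$-direction, and tangency to a product submanifold $X\times I_s\subset M\times I_s\times I_t$ of a vector field of the form $(v,0,1)$ is automatic. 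The proof for homotopic families of Heegaard surfaces $\{\S_t\},\{\S_t'\}$ is identical, with only the surface $\S_{**}$ present.
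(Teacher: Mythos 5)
Your argument is essentially the same as the paper's: both construct a vector field $\nu$ on $M\times I\times I$ whose component in the homotopy-parameter direction vanishes, whose $t$-component is $1$, and which is tangent to the $3$- and $4$-dimensional submanifolds swept out by the two-parameter family of diagrams; flowing along $\nu$ then yields the desired isotopy $\{d_{s,1}\}$ between the induced diffeomorphisms. Your extra observation about tangency along the $\{t=0,1\}$ boundary faces, and your explicit appeal to the contractibility clause of Lemma~\ref{lem:isot-ext} to identify $d_{0,1}$ with $d_1$ and $d_{1,1}$ with $d_1'$ up to isotopy, are correct small elaborations of the paper's terser remark that ``$g_0=d_1$ and $g_1=d_1'$ (up to isotopy).''
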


\begin{proof}
  Let $\HD_{t,u} = (\S_{t,u},\alphas_{t,u},\betas_{t,u})$ for $(t,u)
  \in I \times I$ be the homotopy between $\{\HD_t\}$ and
  $\{\HD_t'\}$; i.e., $\HD_{t,0} = \HD_t$ and $\HD_{t,1} = \HD_t'$ for
  $t \in I$, while $\HD_{i,u} = \HD_i$ for $i \in \{0,1\}$ and $u \in
  I$.  As in the proof of Lemma~\ref{lem:isot-ext}, we can construct a
  vector field~$\nu$ on $M \times I \times I$ such that $\nu(u) = 0$,
  $\nu(t) = 1$ (in particular, it is transverse to the foliation of $M
  \times I \times I$ with leaves $M \times \{t\} \times I$), and which
  is tangent to the submanifolds $\bigcup_{t,u \in I} \S_{t,u}$,
  $\bigcup_{t,u \in I} \alphas_{t,u}$, and $\bigcup_{t,u \in I}
  \betas_{t,u}$.  Then the flow of~$\nu$ defines a diffeomorphism
  \[
  g_u \colon M \times \{0\}\times \{u\} \to M \times \{1\} \times
  \{u\}
  \]
  that maps $\HD_{0,u} = \HD_0$ to $\HD_{1,u} = \HD_1$ for every $u
  \in I$.  Notice that $g_0 = d_1$ and $g_1 = d_1'$ (up to isotopy).
  Hence $\{\, g_u \colon u \in I \,\}$ provides the required isotopy
  between the diffeomorphisms $d_1$ and $d_1'$.
\end{proof}

\begin{lemma} \label{lem:isotopy} Suppose that $\{\,(f_t,v_t) \in \FV_0(M,\g)
  \,\colon\, t \in I\,\}$ is a 1-parameter family of gradient-like vector
  fields, and let $\S_i \in \S(f_i,v_i)$ be a Heegaard surface of
  $(M,\g)$ for $i \in \{0,1\}$. Choose a spanning tree $T^0_\pm$ of
  $\Gamma_\pm(f_0,v_0)$. The isotopy $\Gamma(f_t,v_t)$ takes $T^0_\pm$
  to a spanning tree $T^1_\pm$ of $\Gamma_\pm(f_1,v_1)$, and consider
  the diagrams $(\S_0,\alphas_0,\betas_0) = H(f_0,v_0,\S_0,T^0_\pm)$
  and $(\S_1,\alphas_1,\betas_1) = H(f_1, v_1, \S_1, T^1_\pm)$. Then
  there is a (non-unique) induced diffeomorphism
  \[
  d \colon (\S_0,\alphas_0,\betas_0) \to (\S_1,\alphas_1,\betas_1)
  \]
  isotopic to the identity in $M$, and the space of such
  diffeomorphisms is path-connected. If we do not pick spanning trees,
  we obtain a similar statement for overcomplete diagrams.
\end{lemma}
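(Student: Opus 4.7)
The plan is to lift the given path of gradients to a path of splitting surfaces, extract from it a smooth 1-parameter family of Heegaard diagrams, and then apply Lemma~\ref{lem:isot-ext} directly.

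First I would invoke Proposition~\ref{prop:fibration}: since $\pi_k \colon E_k^m(M,\g) \to B_k^m(M,\g)$ is a principal bundle with contractible fibre $C^\infty(\S,\R)$, path-lifting is available with any prescribed endpoints in the relevant fibres. Concretely, the path $\{(f_t,v_t)\}$ lifts to some path ending in the fibre over $(f_1,v_1)$, and since that fibre is path-connected one can then extend within it to terminate at the prescribed $\S_1$. This yields a smooth family $\{(f_t,v_t,\S_t) : t\in I\}$ with $\S_0$ and $\S_1$ the given Heegaard surfaces.

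Second, the Morse--Smale hypothesis $(f_t,v_t)\in \FV_0(M,\g)$ guarantees that no critical points are born or die and no tangencies between invariant manifolds occur along the path. Hence the index-1 and index-2 critical points of $f_t$ trace smooth arcs in $M$, their stable and unstable manifolds vary smoothly in the $C^\infty$-topology, and the chosen spanning tree $T^0_\pm$ isotopes smoothly along the path to a spanning tree $T^t_\pm$ of $\Gamma_\pm(f_t,v_t)$. Setting $\HD_t = H(f_t,v_t,\S_t,T^t_\pm)$ gives a smooth 1-parameter family of Heegaard diagrams with $\alphas_t \pitchfork \betas_t$ throughout, connecting $\HD_0$ to $\HD_1$. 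Lemma~\ref{lem:isot-ext} then produces an ambient isotopy $\{d_t\}$ of $M$ fixing $\partial M$ with $d_t(\HD_0)=\HD_t$; the diffeomorphism $d = d_1 \colon (\S_0,\alphas_0,\betas_0) \to (\S_1,\alphas_1,\betas_1)$ is isotopic to the identity in $M$, as required.

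For the path-connectedness of the space of such diffeomorphisms, suppose $d$ and $d'$ arise from possibly different choices of lift $\{\S_t\}$ and different isotopy extensions. Any two lifts of the path $\{(f_t,v_t)\}$ with the prescribed endpoints are homotopic rel endpoints because the fibre $C^\infty(\S,\R)$ is contractible (indeed convex), and this induces a homotopy rel endpoints between the associated 1-parameter families of Heegaard diagrams. Lemma~\ref{lem:isot-homot} then yields an isotopy between the corresponding induced diffeomorphisms, and combined with the contractibility of the space of isotopy extensions for a fixed family (the last part of Lemma~\ref{lem:isot-ext}) we conclude that all diffeomorphisms arising from the construction lie in a single path component. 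The overcomplete-diagram version is identical, simply omitting the choice of spanning trees. The main point to verify is that the Morse-Smale hypothesis really does give a \emph{smooth} family of diagrams; once that is granted, everything follows from the previously established lemmas.
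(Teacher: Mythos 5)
Your proposal is correct and takes essentially the same approach as the paper: lift the path of gradients to a path of separating surfaces, form the resulting smooth family of Heegaard diagrams, and conclude via Lemma~\ref{lem:isot-ext} and Lemma~\ref{lem:isot-homot}. The only stylistic difference is that you invoke Proposition~\ref{prop:fibration} directly to obtain the lift, whereas the paper first gives an explicit construction (interpolating the surfaces via bump functions and the affine structure on $\S(f_t,v_t)$) and then remarks that the same result follows from the fibration; the paper itself acknowledges both routes, so this is not a substantive departure.
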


\begin{remark}
  If $T^0_\pm$ and $T^1_\pm$ are not related as above, then one can
  get from $(\S_0,\alphas_0,\betas_0)$ to $(\S_1,\alphas_1,\betas_1)$
  via a diffeomorphism isotopic to the identity in $M$, an
  $\a$-equivalence, and a $\b$-equivalence.
\end{remark}

\begin{proof}
  Note that if $\S \in \S(f_t,v_t)$ for some $t \in I$, then $\S$ also
  separates $(f_s,v_s)$ for every~$s$ sufficiently close to~$t$.
  Indeed, each $(f_s,v_s)$ is Morse-Smale, hence separable, so $\S$
  stays separating as long as $v_s$ is transverse to~$\S$, which is an
  open condition. By the compactness of $I$, there is a sequence $0 =
  t_0 < t_1 < \dots < t_n = 1$ and surfaces $\S_{t_i} \in
  \S(f_{t_i},v_{t_i})$ for every $i \in \{\, 0, \dots, n-1 \,\}$ such
  that $\S_{t_i} \in \S(f_s,v_s)$ for every $s \in [t_i,t_{i+1}]$.

  By the discussion preceding Proposition~\ref{prop:grad-like}, we can
  view $\S(f_t,v_t)$ as an affine space over~$C^\infty(\S)$ for any
  $\S \in \S(f_t,v_t)$. For this, choose a smooth function $h \colon M
  \to I$ such that $h^{-1}(0) = R(\g)$, and let $i \in \{\, 2, \dots,
  n \,\}$.  As both $\S_{t_{i-1}}$, $\S_{t_i} \in \S(f_{t_i},v_{t_i})$,
  we can talk about the difference $d_{\S_{t_i},\S_{t_{i-1}}} \in
  C^{\infty}(\S_{t_{i-1}})$, obtained by flowing along~$hv_{t_i}$.
  Let $\varphi_i \colon \R \to I$ be a smooth function such that
  $\varphi_i(t) = 0$ for $t \le t_{i-1}$ and $\varphi_i(t) = 1$ for $t
  \ge t_i$.  For $t \in [t_{i-1},t_i]$, let
  \[
  \S_t = \S_{t_{i-1}} + \varphi_i \left( t \right) d_{\S_{t_i},\S_{t_{i-1}}},
  \]
  where the sum is taken using the flow of~$hv_t$.  Then $\S_t$ is a
  smooth 1-parameter family of surfaces connecting $\S_0$ to~$\S_1$
  such that $\S_t \in \S(f_t,v_t)$ for every $t \in I$. (Note that
  this path-lifting also follows from
  Proposition~\ref{prop:fibration}, which claims that $E_k^m(M,\g) \to
  B_k^m(M,\g)$ is a fibre bundle with connected fiber
  $C^\infty(\S,\RR)$.)

  The isotopy $\{\,\Gamma(f_s,v_s) \,\colon\, 0 \le s \le t \,\}$ takes
  $T^0_\pm$ to a spanning tree $T^t_\pm$ of $\Gamma_\pm(f_t,v_t)$.
  Then $(\S_t,\alphas_t,\betas_t) = H(f_t,v_t,\S_t,T^t_\pm)$ provides
  a smooth 1-parameter family of diagrams connecting
  $(\S_0,\alphas_0,\betas_0)$ and $(\S_1,\alphas_1,\betas_1)$.  Since
  $(f_t,v_t)$ is Morse-Smale for every $t \in I$, we have $\alphas_t
  \pitchfork \betas_t$, so we can apply Lemma~\ref{lem:isot-ext} to
  obtain an isotopy $D \colon M \times I \to M$ such that $d_t(\S_0,
  \alphas_0,\betas_0) = (\S_t,\alphas_t,\betas_t)$ for every $t \in
  I$.  If we take $d$ to be $d_1$, then $d$ is isotopic to the
  identity in $M$.

  Also by Lemma~\ref{lem:isot-ext}, the diffeomorphism $d_1$ is unique
  up to isotopy in the space of diffeomorphisms mapping
  $(\S_0,\alphas_0,\betas_0)$ to $(\S_1,\alphas_1,\betas_1)$ once we
  fix the family of surfaces $\S_t$. For a different family of
  surfaces $\S_t' \in \S(f_t,v_t)$ connecting $\S_0$ and $\S_1$,
  consider the homotopy $\S_{t,u} = \S_t + u d_{\S_t',\S_t}$ for $t,u
  \in I$ (where the sum means flowing along $hv_t$). Then $\S_{t,0} =
  \S_t$ and $\S_{t,1} = \S_t'$ for every $t \in I$. Applying
  Lemma~\ref{lem:isot-homot} to the homotopy $\HD_{t,u} =
  H(f_t,v_t,\S_{t,u},T^t_\pm)$, we obtain that $d_1$ is also unique up
  to isotopy if we are allowed to vary the path $t \mapsto \S_t$.
\end{proof}

Summarizing the proof of Lemma~\ref{lem:isotopy},
the diffeomorphism induced by the family $\{\, (f_t,v_t) \,\colon\, t
\in I \,\}$ is obtained by first picking an arbitrary family of
surfaces $\S_t \in \S(f_t,v_t)$, and then applying
Lemma~\ref{lem:isot-ext} to the diagrams $H(f_t,v_t,\S_t,T_\pm^t)$. We
have the following analogue of Lemma~\ref{lem:isotopy} for 1-parameter
families in $\FV_1(M,\g)$, which is somewhat weaker as an element of
$\FV_1(M,\g)$ does not induce a Heegaard diagram.

\begin{lemma} \label{lem:isotopy-cod-1} Let $\{\, (f_t,v_t) \in
  \FV_1(M,\g) \,\colon\, t \in I \,\}$ be a 1-parameter family, and let
  $\S_i \in \S(f_i,v_i)$ be a Heegaard surface of $(M,\g)$ for $i \in
  \{0,1\}$. This family induces a diffeomorphism $d \colon \S_0 \to
  \S_1$ that is well-defined up to isotopy. Furthermore, there is an
  isotopy $d_t \colon \S_0 \to \S_t$ for $t \in I$ connecting
  $\text{Id}_{\S_0}$ and $d$ such that $\S_t \in \S(f_t,v_t)$ for
  every $t \in I$.
\end{lemma}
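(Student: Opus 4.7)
The plan is to reduce the statement to Lemma~\ref{lem:isotopy} by constructing a continuous family of separating surfaces $\{\S_t \in \S(f_t,v_t) : t \in I\}$ joining $\S_0$ to $\S_1$, and then feeding it into Lemma~\ref{lem:isot-ext} (in its form for families of Heegaard surfaces) to extract the ambient isotopy whose restriction yields $d_t$.

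First I would establish the existence of such a path $\{\S_t\}$. By Definition~\ref{def:separable}, every pair in $\FV_0(M,\g) \cup \FV_1(M,\g)$ is separable (the only excluded case of codimension at most~$2$ is type~\ref{item:E1}, which is codimension~$2$), so Proposition~\ref{prop:grad-like} guarantees that $\S(f_t,v_t)$ is non-empty and contractible for every $t \in I$. On the open dense subset where $(f_t,v_t) \in \FV_0(M,\g)$, local path-lifting is furnished by Proposition~\ref{prop:fibration}. At the finitely many bifurcation times $t_i$ where $(f_{t_i},v_{t_i}) \in \FV_1(M,\g)$, I need to check that a fixed separating surface persists in a neighborhood of $t_i$. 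In case~\eqref{item:failure-T} (a quasi-transversal orbit of tangency) the critical set does not change, so any surface transverse to $v_{t_i}$ that separates $(f_{t_i},v_{t_i})$ continues to separate $(f_s,v_s)$ for $s$ near $t_i$. In case~\eqref{item:failure-H} (a birth-death) the created or destroyed pair of critical points emerges in a small ball around the birth-death point $p \in M$, and provided $\S$ is chosen disjoint from a neighborhood of $p$, the conventions of Definition~\ref{def:partition} ensure that the new partition $(C_{01},C_{23})$ is still separated by $\S$. Hence each $t \in I$ has an open neighborhood admitting a smooth local section of $\S(f_s,v_s)$; by compactness of $I$ and the affine structure on $\S(f_t,v_t)$ given by flowing along $hv_t$ for a suitable $h \co M \to I$, a partition-of-unity interpolation exactly as in the proof of Lemma~\ref{lem:isotopy} produces the required smooth family with prescribed endpoints $\S_0, \S_1$.

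Applying the Heegaard-surface version of Lemma~\ref{lem:isot-ext} to $\{\S_t\}$ then yields an ambient isotopy $D \co M \times I \to M$ with $d_t(\S_0) = \S_t$ and $d_0 = \mathrm{Id}_M$, whose restriction produces the desired isotopy $d_t \co \S_0 \to \S_t$ and the diffeomorphism $d := d_1|_{\S_0}$. For well-definedness of $d$ up to isotopy, if $\{\S_t'\}$ is a second such family, the linear interpolation $\S_{t,u} := \S_t + u \cdot d_{\S_t',\S_t}$ (using the affine structure on $\S(f_t,v_t)$) is a homotopy of paths relative to endpoints, and Lemma~\ref{lem:isot-homot} applied to this homotopy of Heegaard surfaces produces an isotopy between the two resulting diffeomorphisms of $\S_0$ with $\S_1$.

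The main obstacle is the path-lifting across the codimension-$1$ strata, since Proposition~\ref{prop:fibration} is stated only for Morse functions. Once one verifies the local persistence of separating surfaces across a single $A_2$ degeneracy or single quasi-transversal tangency---a direct consequence of the local normal forms in Sections~\ref{sec:smooth} and~\ref{sec:gradients}, together with the fact that the change happens in a fixed compact subset of $M \setminus \S$---the remainder of the argument is a straightforward adaptation of Lemma~\ref{lem:isotopy}. The key weakening from that lemma is that, without the Morse-Smale condition at the bifurcation times, one cannot upgrade the family of surfaces to a family of sutured diagrams, which is why the present statement yields only a diffeomorphism $\S_0 \to \S_1$ rather than one of diagrams.
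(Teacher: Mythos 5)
Your overall strategy — lift a path of separating surfaces using contractibility and the affine structure from $h v_t$, feed it into the Heegaard-surface version of Lemma~\ref{lem:isot-ext}, and get well-definedness from Lemma~\ref{lem:isot-homot} applied to a linear interpolation — is exactly the paper's strategy, and the paper's proof is just as terse ("just like in the proof of Lemma~\ref{lem:isotopy}"). However, the detailed persistence analysis you supply in the middle paragraph reveals a misreading of the hypothesis. You write of "the open dense subset where $(f_t,v_t) \in \FV_0(M,\g)$" and "the finitely many bifurcation times $t_i$ where $(f_{t_i},v_{t_i}) \in \FV_1(M,\g)$," but the lemma assumes the \emph{entire} family lies in $\FV_1(M,\g)$. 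This is not a family with isolated degeneracies; it is a path along a codimension-$1$ stratum (in its intended use, Proposition~\ref{prop:rectangle}, it parametrizes a curve $a \subset V_1$). There are no $\FV_0$-times and no birth or death events along the way; the degeneracy type is constant in $t$. So the question "does a separating surface persist across a birth-death?" does not arise, and Proposition~\ref{prop:fibration}, which is stated for $B_k^m(M,\g)$ (Morse $f$), is not directly available.

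The correct local persistence statement is instead the simpler one used implicitly in Lemma~\ref{lem:isotopy}: since $(f_t,v_t) \in \FV_1(M,\g)$ is separable (it is not of type~\ref{item:E1}), Proposition~\ref{prop:grad-like} gives $\S(f_t,v_t) \neq \emptyset$ and contractible; and if $\S \in \S(f_{t_0},v_{t_0})$, then $\S \pitchfork v_t$ for $t$ near $t_0$, while the partition $(C_{01},C_{23})$ is determined by the (unchanging) degeneracy type, so $\S \in \S(f_t,v_t)$ for $t$ near $t_0$. Compactness of $I$ plus the affine structure then produce the global path. You also omit the subtlety flagged right after the lemma statement: when the $(f_t,v_t)$ have an index~$1$-$2$ birth-death, $\S(f_t,v_t)$ splits into $\S_-$ and $\S_+$ and one must lift within a single sign, so the hypothesis implicitly requires $\S_0,\S_1$ to be of the same type; your interpolation must respect this. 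With those two repairs the argument closes as you describe.
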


More precisely, if the $(f_t,v_t)$ have an index~1-2 birth-death
critical point, then we can choose either $\S_0 \in \S_-(f_0,v_0)$ and $\S_1 \in
\S_-(f_1,v_1)$, or $\S_0 \in \S_+(f_0, v_0)$ and $\S_1 \in
\S_+(f_1,v_1)$.

\begin{proof}
  Just like in the proof of Lemma~\ref{lem:isotopy}, there exists a
  smooth family of Heegaard surfaces $\{\, \S_t \,\colon\, t \in I \,\}$
  such that $\S_t \in \S(f_t,v_t)$ for every $t \in I$. If we apply
  the second part of Lemma~\ref{lem:isot-ext} to $\{\, \S_t \,\colon\, t
  \in I \,\}$, we obtain a family of diffeomorphisms $d_t \colon \S_0
  \to \S_t$ for $t \in I$, and $d_1$ is unique up to isotopy.
  Independence of $d_1$ from the choice of family $\{\, \S_t \,\colon\, t
  \in I \,\}$ (up to isotopy) is obtained just like in the proof of
  Lemma~\ref{lem:isotopy}, except now we apply the second part of
  Lemma~\ref{lem:isot-homot}.
\end{proof}

\begin{lemma} \label{lem:loop} Let $\Lambda \colon I \to \FV_0(M,\g)$
  be a loop of gradient-like vector fields.  Furthermore, let $\S \in
  \S(f_0,v_0)$ be a Heegaard surface, pick a spanning tree $T_\pm$ of
  $\Gamma_\pm(f_0,v_0)$, and set $\HD = H(f_0,v_0,\S,T_\pm)$.  By
  Lemma~\ref{lem:isotopy}, the loop $\Lambda$ induces a diffeomorphism
  $d \colon \HD \to \HD$.  If $\Lambda$ is null-homotopic in
  $\FV_0(M,\g)$, then $d$ is isotopic to $\text{Id}_{\HD}$ in the
  space of diffeomorphisms from $\HD$ to itself. If we do not pick a
  tree $T_\pm$, we obtain an analogous statement for overcomplete
  diagrams.
\end{lemma}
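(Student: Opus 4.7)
The plan is to lift the null-homotopy of $\Lambda$ in $\FV_0(M,\g)$ to a null-homotopy of the corresponding loop of Heegaard diagrams, and then apply Lemma~\ref{lem:isot-homot}. Write the null-homotopy as a continuous map $\Lambda \colon I \times I \to \FV_0(M,\g)$, $(t,u) \mapsto (f_{t,u},v_{t,u})$, with $\Lambda(t,0) = (f_t,v_t)$, $\Lambda(t,1) \equiv (f_0,v_0)$, and $\Lambda(0,u) = \Lambda(1,u) \equiv (f_0,v_0)$ for every $u \in I$.

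The first step is to produce a continuous two-parameter family of separating surfaces $\S_{t,u} \in \S(f_{t,u},v_{t,u})$ matching the boundary data $\S_{t,1} = \S_{0,u} = \S_{1,u} = \S$. By Proposition~\ref{prop:fibration}, the projection $\pi_k \colon E_k^m(M,\g) \to B_k^m(M,\g)$ is a principal $C^\infty(\S,\R)$-bundle, hence a Serre fibration with contractible fiber, so any lift defined on a closed subset of $I \times I$ extends over the whole square (the affine-combination construction used in the proof of Lemma~\ref{lem:isotopy} works verbatim here). Next, transport the given spanning tree $T_\pm$ of $\Gamma_\pm(f_0,v_0)$ along each path $t \mapsto \Lambda(t,u)$ to obtain a spanning tree $T^{t,u}_\pm$ of $\Gamma_\pm(f_{t,u},v_{t,u})$. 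Since the set of spanning trees of the fixed graph $\Gamma_\pm(f_0,v_0)$ is discrete and $T^{1,u}_\pm$ depends continuously on $u$ with $T^{1,1}_\pm = T_\pm$ (the loop at $u=1$ being constant), we have $T^{1,u}_\pm = T_\pm$ for every $u$; in particular $T^{1,0}_\pm = T_\pm$, which confirms that the diffeomorphism $d$ from Lemma~\ref{lem:isotopy} genuinely maps $\HD$ to $\HD$ rather than to a different completion.

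Setting $\HD_{t,u} = H(f_{t,u},v_{t,u},\S_{t,u},T^{t,u}_\pm)$ then produces a smooth two-parameter family of Heegaard diagrams of $(M,\g)$, with $\alphas_{t,u} \pitchfork \betas_{t,u}$ throughout since each $(f_{t,u},v_{t,u})$ is Morse-Smale, and satisfying $\HD_{t,0} = \HD_t$, $\HD_{t,1} = \HD$, and $\HD_{0,u} = \HD_{1,u} = \HD$. This is precisely a homotopy, relative to the common endpoints, between the loop $\{\HD_t\}$ induced by $\Lambda$ and the constant loop at $\HD$. Lemma~\ref{lem:isot-homot} then produces a continuous family of self-diffeomorphisms $g_u \colon \HD \to \HD$ with $g_0$ isotopic (through self-diffeomorphisms of $\HD$) to $d$ and $g_1$ isotopic to the diffeomorphism induced by the constant loop, which can be taken to be $\text{Id}_{\HD}$; concatenating these isotopies gives $d \simeq \text{Id}_{\HD}$.

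The main obstacle is the simultaneous two-parameter lifting of surfaces \emph{and} spanning trees with the correct closing-up at $t \in \{0,1\}$: the surface lift is supplied by the fibration property in Proposition~\ref{prop:fibration}, while the spanning-tree issue is dealt with by the discreteness argument above. Once both lifts are in place, Lemma~\ref{lem:isot-homot} applies directly. For the overcomplete version of the statement one simply omits the tree step and otherwise repeats the same argument.
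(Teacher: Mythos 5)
Your proposal follows the same route as the paper: lift the null-homotopy to a two-parameter family of separating surfaces via the Serre fibration of Proposition~\ref{prop:fibration}, transport the spanning tree along the family, form the induced two-parameter family of Heegaard diagrams, and apply Lemma~\ref{lem:isot-homot}. The only addition is that you supply an explicit discreteness-and-continuity argument for why the transported spanning tree closes up to $T_\pm$ at $t=1$ for every $u$, a point the paper asserts without elaboration.
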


\begin{proof}
  Let $L \colon I \times I \to \FV_0(M,\g)$ be the null-homotopy;
  i.e., $L(t,0) = (f_t,v_t)$ for every $t \in I$, and $L(t,u) =
  (f_0,v_0)$ for $t \in \{0,1\}$ or $u = 1$. By
  Proposition~\ref{prop:fibration}, there is a smooth 2-parameter
  family of Heegaard surfaces $\{\, \S_{t,u} \,\colon\, t \text{, } u \in I \,\}$
  such that $\S_{t,u} \in \S(L(t,u))$ for every $t$, $u \in I$, and
  $\S_{t,u} = \S$ whenever $t \in \{0,1\}$ or $u=1$.  Furthermore,
  $T_\pm$ naturally induces a spanning tree $T_\pm^{t,u}$ of
  $\Gamma_\pm(L(t,u))$ such that $T_\pm^{t,u} = T_\pm$ for $t \in
  \{0,1\}$ or $u = 1$.  So we have a smooth 2-parameter family of
  diagrams
  \[
  \HD_{t,u} = H \left(L(t,u),\S_{t,u},T_\pm^{t,u} \right)
  \]
  such that $\HD_{t,u} = \HD$ for $t \in \{0,1\}$ or $u =1$.  Now
  Lemma~\ref{lem:isot-homot} provides the required isotopy between $d$
  and $\text{Id}_{\HD}$.
\end{proof}

\begin{corollary} \label{cor:homotopy} Let $(f_i,v_i) \in \FV_0(M,\g)$
  for $i \in \{0,1\}$, and let
  \[
  \Gamma_0 \text{, } \Gamma_1 \colon I \to \FV_0(M,\g)
  \]
  be paths such that $\Gamma_j(i) = (f_i,v_i)$ for $i$, $j \in \{0,1\}$.
  Given surfaces $\S_i \in \S(f_i,v_i)$ for $i \in \{0,1\}$, consider
  the (overcomplete) diagrams $\HD_i = H(f_i,v_i,\S_i)$. By
  Lemma~\ref{lem:isotopy}, the path $\Gamma_i$ induces a
  diffeomorphism $d_i \colon \HD_0 \to \HD_1$.  Suppose the paths
  $\Gamma_0$ and $\Gamma_1$ are homotopic in $\FV_0(M,\g)$ fixing
  their endpoints.  Then $d_0$ and $d_1$ are isotopic through
  diffeomorphisms from $\HD_0$ to $\HD_1$.
\end{corollary}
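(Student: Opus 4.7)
The approach will mirror the proof of Lemma~\ref{lem:loop}, replacing the null-homotopy of a loop with a homotopy between two paths sharing endpoints. Let $L \colon I \times I \to \FV_0(M,\g)$ denote the given homotopy, so $L(t,0) = \Gamma_0(t)$, $L(t,1) = \Gamma_1(t)$, and $L(i,u) = (f_i,v_i)$ for $i \in \{0,1\}$ and every $u \in I$.

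First, I would construct a smooth 2-parameter family of Heegaard surfaces $\{\,\S_{t,u} \colon (t,u) \in I \times I\,\}$ with $\S_{t,u} \in \S(L(t,u))$, subject to the boundary conditions $\S_{0,u} = \S_0$ and $\S_{1,u} = \S_1$ for every $u \in I$, while $\{\,\S_{t,0} \colon t \in I\,\}$ and $\{\,\S_{t,1} \colon t \in I\,\}$ are the 1-parameter families used to produce $d_0$ and $d_1$ via Lemma~\ref{lem:isotopy}. These four prescriptions are compatible at the corners, so this amounts to extending a section of $\pi_k \circ L$ from $\partial(I \times I)$ to all of $I \times I$; the extension exists because $\pi_k \colon E_k^m(M,\g) \to B_k^m(M,\g)$ is a principal bundle (Proposition~\ref{prop:fibration}) with contractible fibre $C^\infty(\S,\R)$, so the obstructions to extending a map from a disk boundary into a contractible fiber all vanish.

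Next, I would set $\HD_{t,u} = H(L(t,u),\S_{t,u})$, which provides a smooth 2-parameter family of overcomplete diagrams with $\HD_{0,u} = \HD_0$ and $\HD_{1,u} = \HD_1$ for all $u$. Since each $L(t,u)$ is Morse-Smale, the $\a$- and $\b$-curves are transverse throughout. Consequently, the 1-parameter families $\{\HD_{t,0}\}$ and $\{\HD_{t,1}\}$ satisfy the hypotheses of Lemma~\ref{lem:isot-homot}: they share endpoints $\HD_0$ and $\HD_1$, and they are homotopic relative to those endpoints via $\HD_{t,u}$. That lemma then yields an isotopy between $d_0$ and $d_1$ through diffeomorphisms $\HD_0 \to \HD_1$, completing the argument.

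The main technical point is the construction of the 2-parameter family $\S_{t,u}$ with the prescribed boundary behavior, but I expect this to be routine: because we are working with overcomplete diagrams, no spanning tree needs to be transported, and the fibre of $\pi_k$ can be identified affinely with $C^\infty(\S,\R)$ by flowing along $hv$ for an auxiliary function $h \colon M \to I$, so the desired extension can in fact be written down explicitly as a smoothly interpolated affine combination of the boundary data. Beyond this, the proof uses only the ingredients already assembled in Lemmas~\ref{lem:isot-ext}, \ref{lem:isot-homot}, \ref{lem:isotopy}, and \ref{lem:loop}.
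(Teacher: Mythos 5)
Your proposal is correct, and it is technically close to what underlies the paper's Lemma~\ref{lem:loop}, but the route is genuinely different. The paper labels this a ``Corollary'' and gives no proof text; the intended derivation is evidently to form the loop $\Lambda = \Gamma_0 * \overline{\Gamma_1}$ (where $\overline{\Gamma_1}$ denotes the reversed path), observe that it is null-homotopic, apply Lemma~\ref{lem:loop} to conclude that the induced diffeomorphism $\HD_0 \to \HD_0$ is isotopic to the identity, and then unwind this to $d_0 \simeq d_1$. That formal deduction silently requires two small auxiliary facts: that the diffeomorphism induced by a concatenation of paths is, up to isotopy, the composition of the induced diffeomorphisms, and that reversing a path induces the inverse diffeomorphism up to isotopy. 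You instead bypass Lemma~\ref{lem:loop} entirely, rerunning the same underlying argument --- lift the given homotopy $L \colon I \times I \to \FV_0(M,\g)$ to a 2-parameter family of separating surfaces via Proposition~\ref{prop:fibration} (using contractibility of the fibre to extend the prescribed section from $\partial(I\times I)$ to $I \times I$), form the corresponding 2-parameter family of overcomplete diagrams, and feed the resulting homotopy of diagram families into Lemma~\ref{lem:isot-homot}. This buys a more self-contained argument that avoids the unstated composition and reversal lemmas; the cost is that it duplicates some of the work already packaged in Lemma~\ref{lem:loop}. The only technical point worth spelling out a little more is that the boundary data on $\partial(I\times I)$ for the section of $\pi_k \circ L$ must itself be a smooth family, which it is because the restriction to each side is one of the smooth families produced in the proof of Lemma~\ref{lem:isotopy} or a constant family; with that in place your extension-by-contractible-fibre and affine-interpolation argument is exactly the one the paper uses inside the proof of Lemma~\ref{lem:loop}.
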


\begin{definition} \label{def:gen-stab} The sutured diagram $(\S',
  \alphas',\betas')$ is obtained from $(\S,\alphas,\betas)$ by a
  \emph{$(k,l)$-stabilization} if there is a disk $D \subset \S$ and a
  punctured torus $T \subset \S'$, and there are curves $\a \in
  \alphas'$ and $\b \in \betas'$ such that
  \begin{itemize}
  \item $\S \setminus D = \S' \setminus T$,
  \item $\alphas \setminus D = \alphas' \setminus T$ and $\betas
    \setminus D = \betas' \setminus T$,
  \item $\alphas \cap D$ and $\betas \cap D$ consist of~$l$ and~$k$
    arcs, respectively, and each component of $\alphas \cap D$
    intersects each component of $\betas \cap D$ transversely in a
    single point,
  \item $\a$, $\b \subset T$, and they intersect each other transversely
    in a single point,
  \item $(\alphas' \setminus \a) \cap T$ consists of~$l$ parallel
    arcs, each of which intersects~$\b$ transversely in a single
    point,
  \item $(\betas' \setminus \b) \cap T$ consists of~$k$ parallel arcs,
    each of which intersects $\a$ transversely in a single point,
  \item for each component of $\alphas \cap D$, there is a
    corresponding component of $\alphas' \cap T$ with the same
    endpoints, and similarly for the $\b$-curves.
  \end{itemize}
  In the above case, we also say that $(\S, \alphas, \betas)$ is
  obtained from $(\S', \betas', \alphas')$ by a
  $(k,l)$-destabilization.  Notice that a $(0,0)$-(de)stabilization
  agrees with the ``simple'' (de)stabilization of
  Definition~\ref{def:stab}.  The two diagrams in the bottom of
  Figure~\ref{fig:1-2-birth-death} are related by a
  $(3,3)$-stabilization.
\end{definition}

\begin{definition} \label{def:gen-handleslide} The sutured diagram
  $(\S,\alphas',\betas)$ is obtained from $(\S,\alphas,\betas)$ by a
  \emph{generalized $\a$-handleslide of type~$(m,n)$} if there are
  curves $\a_1, \a_2 \in \alphas$, a curve $\a_1' \in \alphas'$, and an
  embedded arc $a \subset \S$ such that
  \begin{itemize}
  \item $\alphas' \setminus \a_1' = \alphas \setminus \a_1$,
  \item $\partial a \subset \a_2$ and the interior of $a$ is disjoint
    from $\alphas$,
  \item there is a regular neighborhood $N$ of $\a_2 \cup a$ such
    that $\partial N = \a_1 \cup \a_1' \cup c$, where $c$ is a curve
    parallel to $\a_2$, and the interior of $N$ is disjoint from
    $\alphas \cup \alphas' \setminus \{\a_2\}$, and
  \item if $\a_2 \setminus \partial a = \a_2^0 \cup \a_2^1$, where
    $\a_2^0 \cup a$ is parallel to $\a_1$ and $\a_2^1 \cup a$ is
    parallel to $\a_1'$, then $|\a_2^0 \cap \betas| = m$ and $|\a_2^1
    \cap \betas| = n$.
  \end{itemize}
See Figure~\ref{fig:simplify-gen-slide}. Generalized $\b$-handleslides are defined similarly.
\end{definition}

In particular, an ``ordinary'' handleslide is a generalized
handleslide of type~$(0,n)$, where the endpoints of the arc~$a$ lie
very close to each other.

The bifurcations that appear in generic 1-parameter families of
gradient vector fields were given in Section~\ref{sec:1-param}. We now
translate these to moves on sutured diagrams.  For clarity, we state
what happens to overcomplete diagrams.

\begin{proposition} \label{prop:1-param} Suppose that
  \[
  \{\, (f_t,v_t) \,\colon\, t \in [-1,1] \,\}
  \]
  is a generic 1-parameter family of sutured functions and gradient-like
  vector fields on~$(M,\g)$ that has a bifurcation at~$t = 0$. Since
  $(f_0,v_0) \in \FV_1(M,\g)$, it is separable; pick a separating
  surface $\S \in \S(f_0,v_0)$. Then there exists an $\eps = \eps(\S) >
  0$ such that $\S \pitchfork v_t$ for every $t \in (-\eps,\eps)$.
  Furthermore, for every $x \in (-\eps,0)$ and $y \in (0,\eps)$, the
  following hold.

  If the bifurcation is not an index~1-2 birth-death, then $\S \in
  \S(f_x,v_x) \cap \S(f_y,v_y)$.
  Furthermore, the (overcomplete) diagrams
  \[
  (\S,\alphas,\betas) =
  H(f_x,v_x,\S) \quad \text{and} \quad (\S,\alphas',\betas') = H(f_y,v_y,\S),
  \]
  possibly after a small isotopy of the immersed submanifold $\alphas
  \cup \betas$, are related in one of the following ways.
  \begin{enumerate}
  \item \label{item:birthdeath} If the bifurcation is an index~0-1
    or~2-3 birth-death, adding or removing a redundant
    $\a$- or $\b$-curve, not necessarily disjoint from curves of the
    opposite type. ``Redundant'' means this $\alpha$- or $\beta$-curve is
    null-homotopic in $\Sigma$ compressed along the remaining
    $\alpha$- or $\beta$-curves, or equivalently that it bounds a
    planar region together with the other $\alpha$- or $\beta$-curves.
  \item \label{item:isot} If the
    bifurcation is a tangency of $W^u(p)$ and $W^s(q)$ for $p \in
    C_1(f_0)$ and $q \in C_2(f_0)$, an isotopy of the $\a$- and $\b$-curves
    cancelling or creating a pair of intersection points.
  \item \label{item:tangency} If the bifurcation is a tangency between
    $W^u(p)$ and $W^s(q)$ for $p$, $q \in C_1(f_0)$ or $p$, $q \in
    C_2(f_0)$, a generalized $\a$- or $\b$-handleslide. Specifically,
    the $\a$-curve corresponding to $p$ slides over the $\a$-curve
    corresponding to $q$ if $p$, $q \in C_1(f_0)$, while the $\b$-curve
    corresponding to $q$ slides over the $\b$-curve corresponding to
    $p$ if $p$, $q \in C_2(f_0)$.
  \end{enumerate}

  If the bifurcation is an index~1-2 birth, then $\S \in \S(f_x,v_x)$.
  Furthermore, there exists a surface $\S' \in \S(f_y,v_y)$ such that
  the (overcomplete) diagrams $(\S,\alphas,\betas) = H(f_x,v_x,\S)$
  and $(\S',\alphas',\betas') = H(f_y,v_y,\S')$, possibly after a
  small isotopy of the immersed submanifold $\alphas \cup \betas$, are
  related by a $(k,l)$-stabilization if there are $l$ flows from
  index~$1$ critical points into the degenerate singularity and $k$
  flows from the degenerate singularity to index~$2$ critical points.
  For an index~1-2 death, the same statements hold, but with $x$ and
  $y$ reversed.
\end{proposition}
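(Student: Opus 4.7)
The plan is to establish both the persistence of $\S$ as a separating surface near the bifurcation value and the geometric change in the attaching curves, by combining the transversality arguments from Proposition~\ref{prop:fibration} with a local normal-form analysis around the degenerate singularity.

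First I would argue persistence of $\S$: since $\S \pitchfork v_0$ and transversality is an open condition in the $C^\infty$-topology, $\S \pitchfork v_t$ for $t$ in a small interval $(-\eps,\eps)$. For the cases where the bifurcation is not an index~1-2 birth-death, the partition $C(f_t) = C_{01}(f_t,v_t) \cup C_{23}(f_t,v_t)$ deforms continuously with~$t$: by Definition~\ref{def:partition}, an index 0-1 or 2-3 birth produces a pair that is placed entirely in $C_{01}$ or entirely in $C_{23}$, so the topological type of the separating partition is preserved, and $\S$ remains in $\S(f_t,v_t)$ for small $t \ne 0$. In the index 1-2 birth-death case the newly born pair straddles the partition, which is precisely why $\S$ persists only on the side where no new critical points exist.

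Next, for each bifurcation type I would carry out a local analysis using the normal forms from Section~\ref{sec:smooth} together with the dynamical refinements from Section~\ref{sec:gradients}. For an index 0-1 birth, the new index-1 critical point $p$ has a small 2-dimensional unstable disk $W^u(p)$ born near the degenerate singularity; its intersection with $\S$ is a small null-homotopic circle that becomes a new $\a$-curve (possibly after an isotopy pushing it off existing $\b$-curves to which it may be tangent at birth). The 2-3 case is dual. For an orbit of tangency between $W^u(p)$ with $p \in C_1$ and $W^s(q)$ with $q \in C_2$, both intersections are 1-dimensional circles on $\S$, and the quasi-transversality condition~\ref{item:QT2} translates directly into a parabolic tangency between the $\a$- and $\b$-circles, which unfolds into either zero or two transverse intersection points. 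For a tangency between same-index critical points, say $p,q \in C_1(f_0)$, the 2-disk $W^u(p)$ meets the 1-arc $W^s(q)$ in a tangential flow lying entirely on the $M_-$ side of $\S$; as $t$ crosses $0$, the piece of $W^u(p)$ just past the tangency is dragged to the opposite side of $W^u(q)$, which on $\S$ precisely realizes the handleslide of $\a_p$ over $\a_q$ along an arc obtained by projecting the bifurcating flow to $\S$. The parameters $(m,n)$ are read off by counting intersections of the two subarcs of the sliding arc with the $\b$-curves.

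Finally, for an index 1-2 birth, a pair $(p,q)$ of an index-1 and an index-2 critical point is born with a single connecting trajectory from $p$ to $q$. The old surface $\S$ cannot separate the new pair, but a new separating surface $\S'$ is obtained from $\S$ by removing a small disk in the region swept out by the birth and attaching a punctured torus $T$ in a regular neighborhood of the birth trajectory; this follows by rerunning the proof of Proposition~\ref{prop:grad-like} near the local model. Inside $T$, the new $\a$-curve $W^u(p) \cap \S'$ and $\b$-curve $W^s(q) \cap \S'$ intersect transversely in exactly one point, coming from the unique $p$-to-$q$ flow. Pre-existing flow lines entering the degenerate singularity from index-1 critical points correspond to $\a$-curves of $H(f_x,v_x,\S)$ that passed through the disk removed from $\S$; each such curve is completed through $T$ by an arc intersecting the new $\b$-curve once, giving $l$ additional $\a$-arcs in $T$. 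Dually, flows from the degenerate singularity to index-2 critical points contribute $k$ additional $\b$-arcs in $T$. This is exactly the combinatorial structure of a $(k,l)$-stabilization from Definition~\ref{def:gen-stab}.

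The main obstacle will be this last case: one must construct $\S'$ intrinsically from the center manifold of the quasi-hyperbolic singularity and verify via an isotopy-extension argument that it genuinely separates $(f_y,v_y)$ with the modified partition, and one must carefully track how pre-existing unstable and stable manifolds that enter the local model of the birth behave near $T$, to show that they interpolate cleanly through the punctured torus and produce precisely the arc combinatorics of a $(k,l)$-stabilization rather than a more complicated pattern. The remaining cases amount to standard gradient-theoretic bookkeeping once the local normal forms from Sections~\ref{sec:smooth} and~\ref{sec:gradients} are in hand.
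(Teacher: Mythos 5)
Your overall strategy matches the paper's: establish persistence of $\S$ (or construct $\S'$ in the 1-2 case), then analyze each bifurcation type by a local normal-form argument. However, the handleslide case (item~\ref{item:tangency}) contains a genuine gap, and it is precisely the case where the real content of the proposition lies.

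You describe the handleslide as realized ``along an arc obtained by projecting the bifurcating flow to $\S$.'' But the tangential flow $\tau = W^u(p) \cap W^s(q)$ lies entirely inside $M_-$ (both $p$ and $q$ belong to $C_{01}(f_0,v_0)$), so $\tau$ never meets $\S$ and there is nothing to project. The arc along which the handleslide is performed is actually $\mathrm{cl}(W^u(p) \cap \S)$ at the critical parameter $t=0$: the decisive structural fact --- which your sketch omits entirely --- is that at $t=0$ the circle $W^u(p)\cap\S$ degenerates into an \emph{open arc} $\a_p^0$ whose two ends limit to the two points of $W^{uu}(q)\cap\S$, where $W^{uu}(q)$ is the strong unstable manifold of $q$. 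This is not automatic; it requires the non-degeneracy conditions~\ref{item:eigenvalues}--\ref{item:limit} from Section~\ref{sec:1-param} (applied with signs reversed, since $\dim W^u(p)=2$ and $\dim W^s(q)=1$): the contracting eigenvalues at $q$ being distinct gives the invariant sub-line $W^{uu}(q)\subset W^u(q)$, and the limit condition forces the transverse directions of $W^u(p)$ along $\tau$ to converge to $T_qW^{uu}(q)$ under forward flow. Without invoking this, you cannot conclude that $\a_p^0$ is an arc with well-defined endpoints on $\a_q^0$, and the pair-of-pants neighborhood $P$ of $\a_p^0 \cup \a_q^0$ --- which is what certifies that $\a_p^y$ is a handleslide of $\a_p^x$ over $\a_q^x$ --- does not come into view. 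The claim that the type $(m,n)$ is read off from ``intersections of the two subarcs of the sliding arc with the $\b$-curves'' is then the right conclusion but needs this arc structure to even be stated.

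Your remaining cases are broadly correct and track the paper's treatment. For the index 0-1 and 2-3 birth-deaths, the new $\a$- or $\b$-circle is indeed null-homotopic (as $W^u(p_y^1)\cap\S$ for the index-1 point born, a small translate of $W^{uu}(p_0)\cap\S$), and you correctly note that it need not be disjoint from the opposite-color curves. For the 1-2 birth, the construction of $\S'$ by tubing $M_-$ along a regular neighborhood of $W^s(p^1_y)$, the identification of $A = \ol{\S'\setminus\S}$, the torus $T$, and the count of $\a$-arcs ($l = |W^u(q_0)\cap W^s(p_0)|$ over index-1 critical points $q_0$) and $\b$-arcs ($k$) is essentially the paper's argument. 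You are right to flag the persistence of the $(k,l)$-stabilization structure as the delicate point there; the verification that $\S'$ is transverse to $v_y$ and separates $C_{01}(f_y,v_y)$ from $C_{23}(f_y,v_y)$ does need the remark that $p^1_y$ and $p^2_y$ cannot cross $\S$ as $v_t$ remains transverse, which your opening paragraph supplies.
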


\begin{proof}
  Since the family is generic, $(f_t,v_t) \in \FV_0(M,\g)$ for every
  $t \in [-1,1] \setminus \{0\}$, and $(f_0,v_0) \in \FV_1(M,\g)$. By
  Proposition~\ref{prop:grad-like}, the surface~$\S$ divides $(M,\g)$
  into two sutured compression bodies $(M_-,\g_-)$ and $(M_+,\g_+)$.
  Let $\eps > 0$ be so small that for every $ t \in (-\eps,\eps)$, the
  surface~$\S$ is transverse to~$v_t$.

  First, suppose we are in case~\eqref{item:birthdeath}. Without loss
  of generality, we can assume that the bifurcation is an index~0-1
  birth. The function~$f_0$ has a degenerate critical point at~$p_0
  \in M$, which splits into an index~0 critical point $p_t^0 \in
  C_0(f_t)$ and an index~1 critical point $p_t^1 \in C_1(f_t)$ for $t
  > 0$. Recall that the stable manifold $W^s(p_0)$ is a 1-manifold
  with boundary~at~$p_0$, while the unstable manifold $W^u(p_0)$ is
  locally diffeomorphic to~$\R^3_+$, with boundary the strong unstable
  manifold $W^{uu}(p_0)$; see Figure~\ref{fig:codim1}.
  The critical points $p_0$ at $t=0$ and
  $p^0_t$ for $t > 0$ both have valence $k+1$ in $\Gamma(f_t, v_t)$,
  where $k$ is the number of flow-lines from~$p_0$ to index~$1$
  critical points within $W^u(p_0)$.

  Recall that $p_0 \in C_{01}(f_0,v_0) \subset M_-$. Since $v_t$ is
  transverse to $\S$ for every $t \in (-\eps,\eps)$, both $p_t^0$ and
  $p_t^1$ lie in $M_-$, hence $C_{01}(f_t,v_t) \subset M_-$ for every
  $t \in (-\eps,\eps)$.  This implies that $\S \in \S(f_t,v_t)$ for
  every $t \in (-\eps,\eps)$.

  The attaching sets $\betas$ and $\betas'$ are just small isotopic
  translates of each other. The isotopy is provided by
  \[
  \bigcup_{q_t \in C_2(f_t)} W^s(q_t) \cap \S
  \]
  for $t \in [x,y]$.  The same holds for $\alphas$ and $\alphas'$,
  except that $\alphas'$ has one new component due to the appearance
  of the new index~1 critical point~$p_y^1$. The new $\a$-circle
  $W^u(p_y^1) \cap \S$ is a small translate of $W^{uu}(p_0) \cap
  \S$. For every index~2 critical point $q \in C_2(f_0)$ for which
  $W^s(q)$ intersects $W^{uu}(p_0)$, the corresponding $\b$-circle
  $W^s(q) \cap \S$ intersects the new, redundant, $\a$-circle. (This
  does happen generically in 1-parameter families.)

  Now we look at case~\eqref{item:isot}.  Consider the family of
  diagrams $(\S,\alphas_t,\betas_t) = H(f_t,v_t,\S)$ for $t \in
  [x,y]$. Then $(\S,\alphas,\betas) = (\S,\alphas_x,\betas_x)$ and
  $(\S,\alphas',\betas') = (\S,\alphas_y,\betas_y)$.  The 1-manifolds
  $\alphas_t$ and $\betas_t$ remain transverse, except for $t = 0$,
  when there is a generic tangency between $W^u(p) \cap \S \in
  \alphas_0$ and $W^s(q) \cap \S \in \betas_0$.

  Next, assume we are in case~\eqref{item:tangency}. Without loss of
  generality, we can suppose that $p$, $q \in C_1(f_0)$.  Then we show
  that the $\a$-curve corresponding to $p$ slides over the $\a$-curve
  corresponding to $q$.  Since $\S$ is transverse to~$v_t$ for every
  $t \in (-\eps,\eps)$, we have $C_0(f_t) \cup C_1(f_t) \cup R_-(\g)
  \subset M_-$ and $C_2(f_t) \cup C_3(f_t) \cup R_+(\g) \subset M_+$
  for every $t \in (-\eps,\eps)$.  Hence $\S \in \S(f_t,v_t)$ for
  every $t \in (-\eps,\eps)$.

  \begin{figure}
    \centering
    \includegraphics[width=5in]{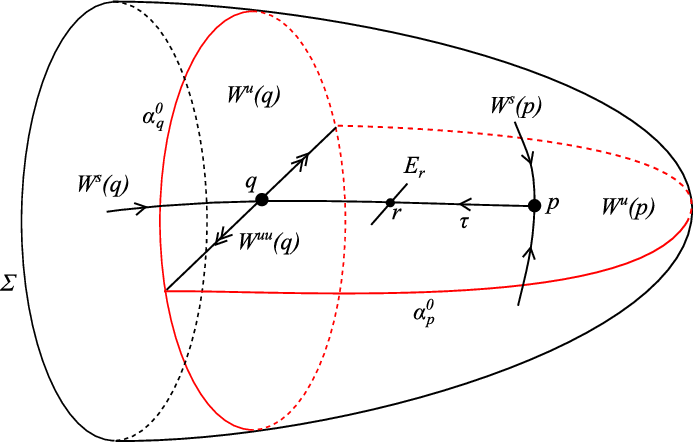}
    \caption{The situation in Case~\ref{item:tangency}, a tangency
      between $W^u(p)$ and $W^s(q)$, leading to a generalized
      handleslide.}
    \label{fig:generalized-handleslide}
  \end{figure}
  Let $\tau = W^u(p) \cap W^s(q)$ be the flow-line of $v_0$ from $p$
  to $q$.  Recall from Section~\ref{sec:1-param} that, generically,
  inside the 2-dimensional unstable manifold $W^u(q)$, there is a
  1-dimensional strong unstable manifold $W^{uu}(q)$. Furthermore, for
  every $r \in \tau$, there is a 1-dimensional subspace $E_r <
  T_rW^u(p)$ complementary to $\langle v_0(r) \rangle = T_r\tau$ that
  limits to $T_qW^{uu}(q)$ under the flow of $v_0$; see
  Figure~\ref{fig:generalized-handleslide}.  It follows that
  the curve $\a_p^0 = W^u(p) \cap \S$ is diffeomorphic to $\R$, with
  ends limiting to the two points of $W^{uu}(q) \cap \S$. Consider the
  circle $\a_q^0 = W^u(q) \cap \S$, and take a thin regular
  neighborhood $P$ of $\a_p^0 \cup \a_q^0$.  Notice that $P$ is a
  pair-of-pants, and one component $\a_q'$ of $\partial P$ is a small
  isotopic translate of $\a_q^0$.  For $t \in (-\eps,\eps)$, let $p_t$
  and $q_t$ be the points of $C_1(f_t)$ corresponding to $p = p_0$ and
  $q = q_0$, and let $\a_p^t = W^u(p_t) \cap \S$ and $\a_q^t = W^u(q_t)
  \cap \S$. Then $\a_q^x$ and $\a_q^y$ are small isotopic translates
  of $\a_q^0$, while $\a_p^x$ and $\a_p^y$ are small isotopic
  translates of the other two components of $\partial P$. Hence
  $\a_p^y \in \alphas'$ is obtained (up to a small
  isotopy) by a generalized handleslide of $\a_p^x \in \alphas$ over
  $\a_q^x \in \alphas$ using the arc $a = \text{cl}\left( \a_p^0
  \right)$, and every other component of $\alphas'$ is a small
  translate of a component of $\alphas$. The type~$(m,n)$ of the
  generalized handleslide is given by the number of flow lines from
  $q$ to index~2 critical points that intersect the two components of
  $W^u(q) \setminus W^{uu}(q)$.

\begin{figure}
  \centering
  \includegraphics[width = 5in]{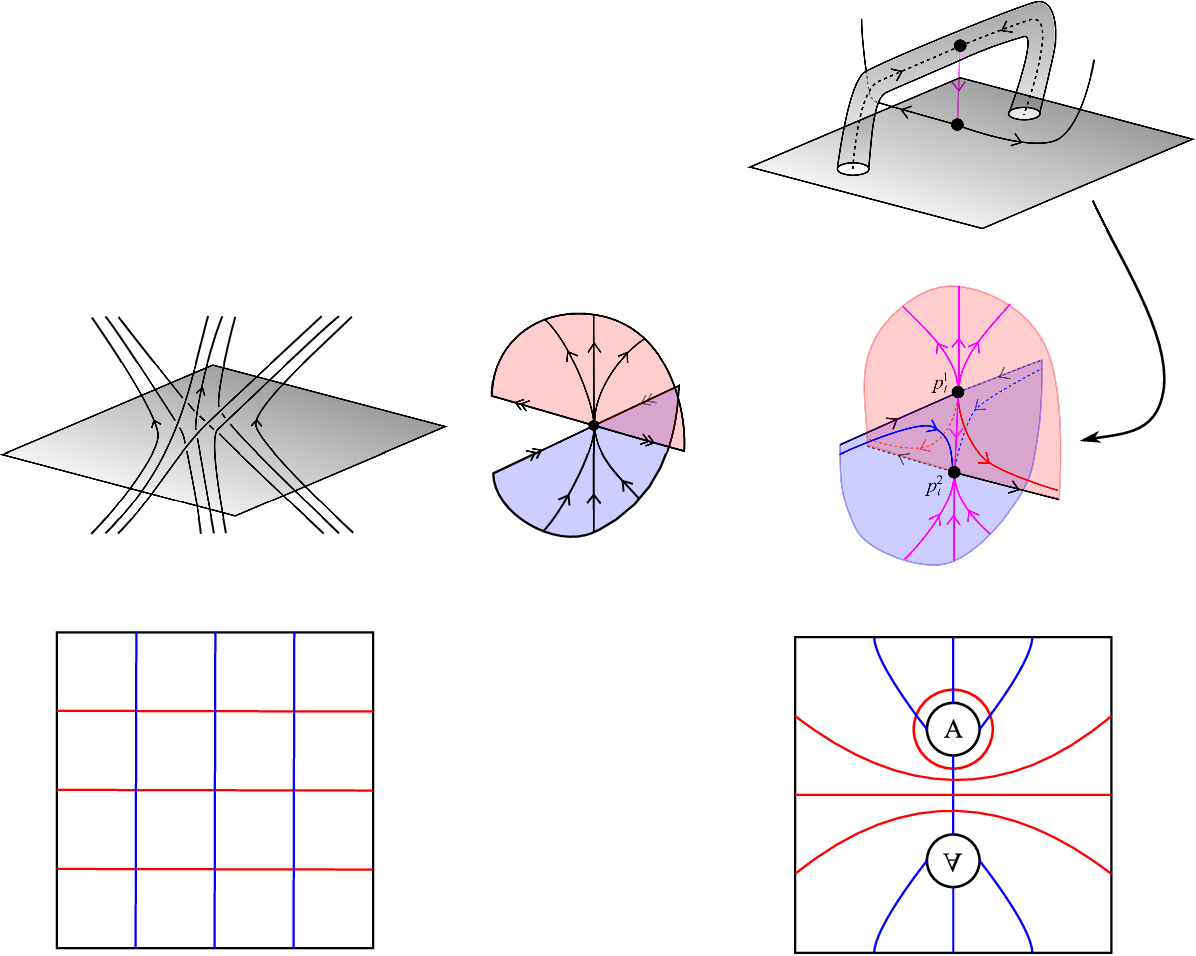}
  \caption{\textbf{An index 1-2 birth death.}  Here, we see the two
    sides of a codimension-1 index~1-2 birth-death singularity, which is included in $C_{23}(f_0,v_0)$.
    There may be flows from index~1 critical points to this
    singularity, or from this singularity to index~2 critical points;
    in the example, there are three of each.
    The top row shows locally the gradient flow, together with the Heegaard surface (drawn in grey).
    In the bottom row, we illustrate the corresponding Heegaard diagrams.
    As usual, we identify the two circles labeled ``$A$''.
    On the side of the singularity where the two critical points die
    (on the left), we see a grid of flows between these critical
    points.}
  \label{fig:1-2-birth-death}
\end{figure}

Finally, consider the case of an index~1-2 birth-death, as
illustrated in Figure~\ref{fig:1-2-birth-death}.  Without loss of
generality, we can assume that a pair of index~1 and~2 critical points
are born.  So $f_0$ has a degenerate singularity at $p_0$ that splits
into $p_t^1 \in C_1(f_t)$ and $p_t^2 \in C_2(f_t)$ for $t > 0$. Recall
that we can either include $p_0$ in $C_{01}(f_0,v_0)$ or
in $C_{23}(f_0,v_0)$. For now, we assume that $p_0 \in C_{23}(f_0,v_0)$,
but the other choice works as well.

Observe that $\S \in \S(f_t,v_t)$ for every $t \in (-\eps,0)$, as $\S
\pitchfork v_t$, $C_{01}(f_t,v_t) \subset M_-$, and $C_{23}(f_t,v_t)
\subset M_+$.  We have $p_0 \in C_{23}(f_0,v_0) \subset M_+$, thus
$p_t^1$, $p_t^2 \in M_+$ for every $t \in (0,\eps)$.  Indeed, neither of
the points~$p_t^1$ and~$p_t^2$ can pass through $\S$ as $v_t$ remains
transverse to $\S$ throughout.

Since $v_y$ is generic, $W^s(p^1_y)$ has both ends in $C_0(f_y) \cup
R_-(\g) \subset M_-$. This implies that $W^s(p^1_y)$ intersects $\S$
transversely in two points. On the other hand, as $p^2_y \in M_+$ and
both ends of $W^u(p^2_y)$ lie in $C_3(f_y) \cup R_+(\g) \subset M_+$,
we have $W^u(p^2_y) \cap \S = \emptyset$.  We obtain $\S'$ by
smoothing the corners of
\[
\partial\left( M_- \cup N \left(W^s (p^1_y) \right) \right)
\setminus \partial M,
\]
where $N \left(W^s(p^1_y) \right)$ is a thin tubular neighborhood of
$W^s(p^1_y)$ whose boundary in $M_+$ is transverse to $v_y$. It is
apparent that $\S'$ is transverse to $v_y$, and $\S'$ cuts $(M,\g)$
into two sutured compression bodies, one of which contains
$C_{01}(f_y,v_y) \cup R_-(\g)$, while the other one contains
$C_{23}(f_y,v_y) \cup R_+(\g)$.  Hence $\S' \in \S(f_y,v_y)$.  Notice
that $\S' \setminus \S$ is an annulus $A$, and $\S \setminus \S'$ is a
disjoint union of two disks~$D_1$ and~$D_2$.

We now describe the attaching sets $\alphas'$ and $\betas'$. Observe
that $\a = W^u(p^1_y) \cap \S' \in \alphas'$ is a homologically
non-trivial curve in $A$.  The curve $\b = W^s(p^2_y) \cap \S' \in
\betas'$ intersects $\a$ transversely in a single point, and $\b \cap
A$ is an arc connecting $\partial D_1$ and $\partial D_2$. Let $T$ be
a thin regular neighborhood of $A \cup \b$. Then $T$ is homeomorphic
to a punctured torus. In addition, let
\[
D = (T \setminus A) \cup D_1
\cup D_2;
\]
this is diffeomorphic to a disk.  Observe that $\alphas'
\cap (\S \setminus T)$ is a small isotopic translate of $\alphas \cap
(\S \setminus D)$, and similarly, $\betas' \cap (\S \setminus T)$ is a
small isotopic translate of $\betas \cap (\S \setminus D)$.

If $q_0 \in C_1(f_0)$ is a non-degenerate critical point, and $q_y \in
C_1(f_y)$ is the corresponding critical point, then $W^u(q_y) \cap \S'
\in \alphas'$ intersects~$\b$ in precisely $|W^u(q_0) \cap W^s(p_0)|$
points. In addition, $W^u(q_y) \cap T$ consists of parallel arcs that
do not enter~$A$. Similarly, for every $r_0 \in C_2(f_0)$ with
corresponding $r_y \in C_2(f_y)$, the $\b$-curve $W^s(r_y) \cap \S'
\in \betas'$ intersects $\a$ in $|W^s(r_0) \cap W^u(p_0)|$ points, and
$A$ in the same number of parallel arcs. Hence
$(\S',\alphas',\betas')$ is indeed obtained from $(\S,\alphas,\betas)$
by a $(k,l)$-stabilization, as stated.

Note that if we include $p_0$ in $C_{01}(f_0,v_0)$, then an analogous
argument applies, with the difference that inside the stabilization
tube $A$ we have a $\b$-curve and an $\a$-arc.
\end{proof}

\begin{remark}
  In case of an index 1-2 stabilization,
  the stabilized surface $\S' \in \S(f_y,v_y)$ does not separate $(f_t,v_t)$
  if $t \in (0,y)$ is sufficiently small. Indeed, generically, the
  saddle-node $p_0 \not\in \S'$, and consequently the index~1 and~2
  critical points~$p_1^t$ and~$p_2^t$ will both lie on the same side
  of $\S'$ for $t > 0$ sufficiently small.
\end{remark}

Essentially the same argument gives the following analogue of
Proposition~\ref{prop:1-param} for 2-parameter families.

\begin{proposition} \label{prop:2-param-cod-1} Suppose that $\{\,
  (f_\mu,v_\mu) \in \FV(M,\g) \,\colon\, \mu \in \R^2 \,\}$ is a generic
  2-parameter family that has a codimension-1 bifurcation at $\mu =
  0$. Let~$S$ be the stratum of the bifurcation set passing through
  the origin ($S$ is a non-singular curve near $0$). Since $(f_0,v_0)
  \in \FV_1(M,\g)$, it is separable; pick a separating surface $\S \in
  \S(f_0,v_0)$. Then there exists an $\eps = \eps(\S) > 0$ such that
  $D^2_\eps \setminus S$ consists of two components~$C_1$ and~$C_2$,
  and for every $x \in C_1$ and $y \in C_2$ the same conclusion holds
  as in Proposition~\ref{prop:1-param}.
\end{proposition}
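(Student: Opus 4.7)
My plan is to reduce the statement to Proposition~\ref{prop:1-param} by restricting the 2\Hyph parameter family to a well\Hyph chosen 1\Hyph parameter path. The key observation is that by the classification of bifurcations in generic 2\Hyph parameter families from Section~\ref{sec:2-param}, the set of codimension\Hyph 1 bifurcations forms a smooth 1\Hyph manifold near~$0$, of which $S$ is the component through~$0$, and the \emph{type} of bifurcation is constant along~$S$.

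First, I would choose $\eps > 0$ small enough that three things hold on $D^2_\eps$: (i) $\S \pitchfork v_\mu$ for every $\mu \in D^2_\eps$, which is possible since transversality is an open condition, $\S \pitchfork v_0$, and the disk is compact; (ii) no stratum of the bifurcation set other than $S$ meets $D^2_\eps$, which is possible because $0$ lies on the single codimension\Hyph 1 stratum $S$ and not at any codimension\Hyph 2 point; and (iii) the critical points of $f_\mu$ depend continuously on $\mu$, so that the partition of $C(f_\mu)$ into $C_{01}(f_\mu,v_\mu)$ and $C_{23}(f_\mu,v_\mu)$ from Definition~\ref{def:partition} extends the one at $\mu = 0$ coherently. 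Combined with (i), this last item forces $\S \in \S(f_\mu,v_\mu)$ for every $\mu \in D^2_\eps$ at which $(f_\mu,v_\mu)$ is not an index $1$\Hyph $2$ birth\Hyph death (and in the birth\Hyph death case, the appropriate one\Hyph sided membership as in the statement of Proposition~\ref{prop:1-param}). By (ii), the complement $D^2_\eps \setminus S$ has exactly two components $C_1$ and $C_2$.

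Next, given $x \in C_1$ and $y \in C_2$, I would connect them by a smooth embedded path $\g\co I \to D^2_\eps$ that crosses $S$ transversely at a single interior point $t_0$, and such that the restricted family $\{(f_{\g(t)}, v_{\g(t)}) \co t \in I\}$ is a generic 1\Hyph parameter family in the sense of Section~\ref{sec:1-param}. Such a $\g$ exists because the set of generic 1\Hyph parameter restrictions is dense (the genericity conditions are transversality conditions on the jets of $(f_\mu,v_\mu)$ at the critical points, and can be arranged by a small perturbation of the path). Applying Proposition~\ref{prop:1-param} to this restricted family at its unique bifurcation value $t_0$ yields precisely the conclusion claimed: after a small isotopy, $H(f_x,v_x,\S)$ and $H(f_y,v_y,\S)$ are related by the move dictated by the type of bifurcation at $\g(t_0)$, and analogously for the index $1$\Hyph $2$ birth\Hyph death case with a suitable $\S' \in \S(f_y,v_y)$.

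The one point requiring genuine verification is that the bifurcation type encountered at $\g(t_0) \in S$ agrees with the type at~$\mu = 0$. This I would extract directly from Section~\ref{sec:2-param}: in each of the codimension\Hyph 2 cases \ref{item:A}--\ref{item:E} that $0$ might belong to, the local bifurcation diagram is described as a collection of smooth codimension\Hyph 1 strata, each of which corresponds to a single persistent type of codimension\Hyph 1 degeneracy (e.g.\ a particular saddle\Hyph node, or a particular quasi\Hyph transversal tangency between specified invariant manifolds). Hence the move on Heegaard diagrams produced by Proposition~\ref{prop:1-param} at $\g(t_0)$ is the same move it would produce at any other parameter on $S$ near~$0$, and the conclusion is independent of the choice of crossing path.
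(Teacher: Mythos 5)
Your proof is correct in its essentials, and since the paper offers no separate proof --- it merely remarks that ``essentially the same argument'' as Proposition~\ref{prop:1-param} gives the result --- your path-restriction reduction is a reasonable formalization of what the authors intend. The spirit is the same: choose $\eps$ small enough that $\S$ stays transverse to $v_\mu$ throughout, observe that the critical point partition cannot jump across $\S$, and then run the analysis of the single bifurcation crossing.

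Two small points of care. First, your step (iii) is phrased slightly imprecisely: when the bifurcation at $0$ is an index $1$-$2$ birth-death, $\S$ fails to separate $(f_\mu,v_\mu)$ on the entire birth side of $S$, not merely ``at'' parameter values where $(f_\mu,v_\mu)$ is itself a birth-death; the vector fields on the birth side are Morse-Smale but have their newly born pair of critical points on a single side of $\S$. Your parenthetical seems to acknowledge this, but the claim as literally written would be false. Second, the assertion that a path $\gamma$ from $x$ to $y$ transverse to $S$ can be taken so that the restricted family is generic in the sense of Section~\ref{sec:1-param} deserves one more word: the genericity conditions for 1-parameter families include both local jet conditions and the global non-degeneracy conditions~\ref{item:unfolding}--\ref{item:transversality}, and the fact that these are satisfied by a transverse restriction of a generic 2-parameter family is not just Thom transversality but also relies on the way the 2-parameter conditions in Section~\ref{sec:2-param} are formulated (which is why they include ``analogues of conditions~\ref{item:unfolding}--\ref{item:transversality}''). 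Once this is granted, the rest goes through as you say.
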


Recall that, in Definition~\ref{def:distinguished-rect}, we introduced
the notion of distinguished rectangles of Heegaard moves.

\begin{definition}
  A \emph{generalized distinguished rectangle} is defined just like in
  Definition~\ref{def:distinguished-rect}, except we replace the word
  ``stabilization'' with ``$(k,l)$-stabilization,'' and allow
  overcomplete diagrams.
\end{definition}

The following result relates isotopies with codimension-1 moves.

\begin{proposition} \label{prop:rectangle} Suppose that $\{\,
  (f_\mu,v_\mu) \in \FV_{\le 1}(M,\g) \,\colon\, \mu \in \R^2 \,\}$ is a generic
  2-parameter family, and let
  \[
  V_1 = \{\, \mu \in \R^2 \,\colon\, (f_\mu, v_\mu) \in \FV_1(M,\g) \,\}
  \]
  be the codimension-1 bifurcation set.  Let $a \subset V_1$ be an arc
  with endpoints~$\mu_0$ and~$\mu_1$, and suppose we are given
  surfaces $\S_i \in \S(f_{\mu_i},v_{\mu_i})$ for $i \in \{0,1\}$.
  Let~$b_0$ and~$b_1$ be arcs transverse to $V_1$ such that the only
  bifurcation value in~$b_i$ is~$\mu_i$ and $\S_i \pitchfork v_\mu$
  for every~$\mu \in b_i$. Orient~$b_0$ and~$b_1$ such that they have
  the same intersection sign with~$V_1$, and let $\partial b_0 = y_0 -
  x_0$ and $\partial b_1 = y_1-x_1$; see Figure~\ref{fig:rectangle}.

  After possibly flipping the orientation of $b_0$ and $b_1$, we can
  assume that $\S_i \in \S(f_{x_i},v_{x_i})$ for $i \in \{0,1\}$.
  Furthermore, suppose we are given surfaces $\S_i' \in
  \S(f_{y_i},v_{y_i})$ for $i \in \{0,1\}$ such that $\S_i'$ is
  obtained from $\S_i$ by a stabilization if $(f_{\mu_i},v_{\mu_i})$
  is an index~1-2 birth, and $\S_i' = \S_i$ otherwise. (Such surfaces
  always exist by applying Proposition~\ref{prop:1-param} to the
  1-parameter family parametrized by $b_i$.)  Then the isotopy
  diagrams $H_1 = [H(f_{x_0},v_{x_0},\S_0)]$, $H_2 =
  [H(f_{y_0},v_{y_0},\S_0')]$, $H_3 = [H(f_{x_1},v_{x_1},\S_1)]$, and
  $H_4 = [H(f_{y_1},v_{y_1},\S_1')]$ fit into a generalized
  distinguished rectangle
  \[
  \xymatrix{H_1 \ar[r]^{e} \ar[d]^{f} & H_2 \ar[d]^{g} \\ H_3
    \ar[r]^{h} & H_4}
  \]
  of type~\eqref{item:rect-alpha-diff} if $(f_{\mu_i},v_{\mu_i})$ is a
  handleslide, or type~\eqref{item:rect-stab-diff} if
  $(f_{\mu_i},v_{\mu_i})$ is an index~1-2 birth-death;
  cf.~Definition~\ref{def:distinguished-rect}. For other types of
  bifurcations, we have a rectangle with $e$ and $h$ the identity or
  adding/removing a redundant $\a$- or $\b$-curve, and $f = g$ a
  diffeomorphism.  If we pick any curves~$a_1$ and~$a_2$ outside the
  bifurcation set parallel to $a$ with $\partial a_1 = x_1 - x_0$ and
  $\partial a_2 = y_1 - y_0$ and apply Lemma~\ref{lem:isotopy}, then
  $a_1$ will induce a diffeomorphism isotopic to $f$, and $a_2$ will
  induce a diffeomorphism isotopic to $g$.  In particular, $f \in
  \G_\d^0(H_1,H_3)$ and $g \in \G_\d^0(H_2,H_4)$.  The arrows~$e$ and~$h$
  are given by Proposition~\ref{prop:1-param}.
\end{proposition}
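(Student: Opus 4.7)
The plan is to build the four edges of the rectangle separately and then verify that the resulting square matches one of the two cases of Definition~\ref{def:distinguished-rect} allowed by the hypotheses.

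Since the arc $a$ lies in a single connected stratum of the codimension-1 bifurcation set $V_1$, the bifurcations at $\mu_0$ and $\mu_1$ are of the same type. Apply Proposition~\ref{prop:1-param} (equivalently, its 2-parameter refinement Proposition~\ref{prop:2-param-cod-1}) to the 1-parameter slices parametrized by $b_0$ and $b_1$ to obtain horizontal edges $e \colon H_1 \to H_2$ and $h \colon H_3 \to H_4$ of matching type: both identity or birth/death of a null-homotopic attaching curve in case~\eqref{item:birthdeath}; both $\a$- or $\b$-equivalences in cases~\eqref{item:isot} and~\eqref{item:tangency}; both $(k,l)$-stabilizations in the index~1-2 birth-death case. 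The hypothesis $\S_i \pitchfork v_\mu$ for every $\mu \in b_i$ is precisely what lets us use the given $\S_i$ and $\S_i'$ as the separating surfaces on the two sides of each bifurcation.

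Next, choose the parallel arcs $a_1$ and $a_2$ close enough to $a$ that $(f_\mu, v_\mu)$ is Morse-Smale along both, and that $\S_i$ (respectively $\S_i'$) remains separating in a neighborhood of the endpoints of $a_1$ (respectively $a_2$); openness of the separation condition ensures this is possible. Applying Lemma~\ref{lem:isotopy} to the families $\{(f_\mu, v_\mu) \colon \mu \in a_1\}$ and $\{(f_\mu, v_\mu) \colon \mu \in a_2\}$ then produces diffeomorphisms $f \colon H_1 \to H_3$ and $g \colon H_2 \to H_4$, each isotopic to the identity in $M$, hence belonging to $\G_\d^0$. Lemma~\ref{lem:isot-homot} shows that these maps depend only (up to isotopy preserving diagrams) on the homotopy class of the arc in the Morse-Smale region on its side of $V_1$, yielding the final statement of the proposition.

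The main obstacle is verifying that the square formed by $e$, $f$, $g$, $h$ has the precise structure of a generalized distinguished rectangle. In the non-stabilization cases, the Heegaard surface does not change across $a$, so one can construct a 2-parameter family of separating surfaces $\{\S_\mu\}$ defined on a neighborhood of $a_1 \cup a_2$ that extends continuously across $a$ (using Proposition~\ref{prop:fibration} to lift a 2-disk in parameter space to the total space). Both $f$ and $g$ are then obtained by isotopy extension of the same underlying family of surfaces via Lemma~\ref{lem:isot-ext}, so they coincide as maps $\S_0 \to \S_1$, giving case~\eqref{item:rect-alpha-diff}. In the stabilization case, the arc of saddle-nodes lying over $a$ permits a smooth choice of the disk $D_\mu \subset \S_\mu$ and the punctured torus $T_\mu \subset \S_\mu'$ realizing the stabilization at each $\mu$; outside a tubular neighborhood of this arc of saddle-nodes, $\S_\mu$ and $\S_\mu'$ can be arranged to agree. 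Isotopy extension then produces $f(D) = D'$, $g(T) = T'$, and $f|_{\S_0 \setminus D} = g|_{\S_0' \setminus T}$, establishing case~\eqref{item:rect-stab-diff}. The technical heart of the argument is this smooth-parameter construction of the stabilization data, extending the local construction used in the proof of Proposition~\ref{prop:1-param} over the full arc $a$ so as to remain compatible with the pre-specified surfaces at the four corners of the square.
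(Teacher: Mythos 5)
Your overall strategy---build each edge separately, then verify the rectangle structure by constructing a compatible two-parameter family of separating surfaces over a neighborhood of $a$---is the same as the paper's, and for most bifurcation types your argument closes. In particular your key observation that in the non-stabilization cases one can take $f=g$ because both are induced by the same arc of surfaces matches the paper exactly (the paper produces the surfaces via Lemma~\ref{lem:isotopy-cod-1} rather than your lift over a 2-disk via Proposition~\ref{prop:fibration}, but these are interchangeable here).

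There is, however, one genuine gap. When the bifurcation along $a$ is an index~1-2 birth-death, the set $\S(f_{a(t)}, v_{a(t)})$ splits into two disjoint contractible pieces, $\S_-$ and $\S_+$, depending on which side of the saddle-node the separating surface is placed (cf.\ Definition~\ref{defn:separates}); and the note following the proposition explicitly allows $\S_0\in\S_+(f_{\mu_0},v_{\mu_0})$ and $\S_1\in\S_-(f_{\mu_1},v_{\mu_1})$ (or vice versa). In that mixed case there is \emph{no} continuous family $t\mapsto\S_t$ with $\S_t\in\S(f_{a(t)},v_{a(t)})$ interpolating between $\S_0$ and $\S_1$, so your claim that ``the arc of saddle-nodes $\ldots$ permits a smooth choice of the disk $D_\mu\subset\S_\mu$ and the punctured torus $T_\mu\subset\S_\mu'$'' simply fails: the underlying surfaces $\S_\mu$ do not vary smoothly across the switch of sides. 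Your proof therefore only covers the same-sign case.

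The paper's proof handles this in two stages. It first establishes the rectangle for all same-sign choices, and separately for the degenerate case where $a$ is a constant path at a single saddle-node value $\om$ with $b_0=b_1$ a single transverse arc; the latter is the place where the side-switch is actually carried out, via a direct comparison of the two flow-induced identifications. It then reduces the general mixed-sign case to a vertical composite of a same-sign rectangle with one of these constant-path rectangles, choosing an auxiliary surface $\S\in\S_+(f_{\mu_1},v_{\mu_1})$ at the far endpoint. To complete your argument you would need some analogue of this composition step, or another mechanism to compare $\S_-$- and $\S_+$-separating surfaces over the same bifurcation point.
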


\begin{figure}
  \centering
  \includegraphics{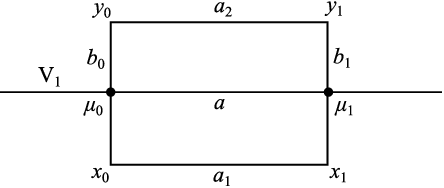}
  \caption{}
  \label{fig:rectangle}
\end{figure}

Note that, in case of an index~1-2 birth-death singularity, we mean
$\S_0 \in \S_\pm(f_{\mu_0},v_{\mu_0})$ and $\S_1 \in
\S_\pm(f_{\mu_1},v_{\mu_1})$, and we allow all four combinations of
signs.

\begin{proof}
  For now, assume that in case of an index~1-2 birth death, we have
  either $\S_0 \in \S_+(f_{\mu_0},v_{\mu_0})$ and $\S_1 \in
  \S_+(f_{\mu_1},v_{\mu_1})$, or $\S_0 \in \S_-(f_{\mu_0},v_{\mu_0})$
  and $\S_1 \in \S_-(f_{\mu_1},v_{\mu_1})$.

  Choose an arbitrary parametrization $a(t)$ of the arc $a$, then
  apply Lemma~\ref{lem:isotopy-cod-1} to the 1-parameter family $\{\,
  (f_{a(t)},v_{a(t)}) \,\colon\, t \in I \,\}$ inside $\FV_1(M,\g)$. We
  obtain a family of diffeomorphisms $d_t \colon \S_0 \to \S_t$ such
  that $\S_t = d_t(\S_0) \in \S(f_{a(t)},v_{a(t)})$ for every $t \in
  I$. There exists an $\eps > 0$ such that for every $t \in I$ and
  $\mu \in \R^2$ with $|a(t) - \mu| < \eps$, we have $\S_t \pitchfork
  v_\mu$.  Indeed, as transversality is an open relation, the set
  \[
  U = \{\, (t,\mu) \in I \times \R^2 \,\colon\, \S_t \pitchfork v_\mu \,\}
  \]
  is an open neighborhood of the graph $\ol{a} = \{\, (t,a(t)) \,\colon\,
  t \in I \,\}$ in $I \times \R^2$.  In particular, we can take $\eps$
  to be the distance of $\ol{a}$ and $(I \times R^2) \setminus U$.
  Furthermore, we take~$\eps$ so small that for every $\mu \in
  N_\eps(a)$, the pair $(f_\mu,v_\mu)$ is Morse-Smale unless $\mu$
  lies in the component of $V_1$ containing~$a$.
  We denote by $C_1$ and $C_2$ the components of $N_\eps(a) \setminus
  V_1$, labeled such that $b_0$ and $b_1$ are both oriented from $C_1$
  to $C_2$.

  First, suppose that $(f_{\mu_i},v_{\mu_i})$ is not an index~1-2
  birth-death. Then, as explained in the proof of
  Proposition~\ref{prop:1-param}, for every $t \in I$ and $\mu \in
  \R^2$ with $|a(t) -\mu| < \eps$, we even have $\S_t \in
  \S(f_\mu,v_\mu)$.
  Furthermore, $\S_0 = \S_0'$ and $\S_1 = \S_1'$, and the
  corresponding isotopy diagrams~$H_1$ and~$H_2$, and similarly, $H_3$
  and $H_4$, are related by a handleslide, adding or removing a
  redundant $\a$- or $\b$-curve, or they are the same (for an
  orbit of tangency between an index~1 and an index~2 critical
  point). In this case, we take $f = g = d_1 \colon \S_0 \to
  \S_1$. What we need to show is that $d_1(H_1) = H_3$ and $d_1(H_2) =
  H_4$. Pick points $x_0' \in C_1 \cap b_0$, $x_1' \in C_1 \cap b_1$,
  $y_0' \in C_2 \cap b_0$, and $y_1' \in C_2 \cap b_1$, then choose
  arcs $c_1 \subset C_1$ and $c_2 \subset C_2$ with $\partial c_1 =
  x_1' - x_0'$ and $\partial c_2 = y_1' - y_0'$.  These can be
  parametrized such that $|a(t) - c_j(t)| < \eps$ for every $t \in I$
  and $j \in \{1,2\}$.  Since $\S_t \in \S(f_{c_j(t)},v_{c_j(t)})$ and
  $(f_{c_j(t)},v_{c_j(t)})$ is Morse-Smale, there is an induced
  overcomplete diagram
  \[
  \HD^j_t = H(f_{c_j(t)},v_{c_j(t)},\S_t).
  \]
  If we apply the first part of Lemma~\ref{lem:isot-ext} to the family
  of diagrams $\{\, \HD^j_t \,\colon\, t \in I \,\}$, we obtain an induced
  diffeomorphism $d^j_1 \colon \S_0 \to \S_1$ such that
  $d^1_1(\HD^1_0) = \HD^1_1$ and $d_1^2(\HD^2_0) = \HD^2_1$.  Since
  $\S_i \in \S(f_\mu,v_\mu)$ for every $\mu \in b_i$ and $i \in
  \{0,1\}$, the isotopy diagrams $[\HD^1_0] = H_1$, $[\HD^1_1] = H_3$,
  $[\HD^2_0] = H_2$, and $[\HD^2_1] = H_4$.  Hence $d_1^1(H_1) = H_3$
  and $d_1^2(H_2) = H_4$. The second part of Lemma~\ref{lem:isot-ext}
  implies that $d_1^j$ is isotopic to $d_1$ for $j \in \{1,2\}$, so
  indeed $d_1(H_1) = H_3$ and $d_1(H_2) = H_4$.

  Let $a_1' \subset \R^2$ be the path obtained by going from~$x_0$ to~$x_0'$
  along $b_0$, then from~$x_0'$ to~$x_1'$ along~$c_1$, finally,
  from~$x_1'$ to~$x_1$ along $b_1$.  We define the path $a_2' \subset
  \R^2$ from~$y_0$ to~$y_1$ in an analogous manner.  Since $\S_i \in
  \S(f_\mu,v_\mu)$ for every $\mu \in b_i$ and $i \in \{0,1\}$, the
  path $a_j'$ induces a diffeomorphism $\delta^j_1 \colon H_j \to
  H_{j+2}$ isotopic to $d_1^j$ for $j \in \{1,2\}$. If $a_1$ is an
  arbitrary path from $x_0$ to $x_1$ in the complement of the
  bifurcation set and parallel to $a$, then $a_1$ is homotopic to
  $a_1'$ relative to their boundary. So, by
  Corollary~\ref{cor:homotopy}, the path $a_1$ induces a
  diffeomorphism $f \colon H_1 \to H_3$ isotopic to $\delta^1_1$,
  hence also to $d_1$. Similarly, a path $a_2$ from $y_0$ to $y_1$
  avoiding the bifurcation set and parallel to $a$ induces a
  diffeomorphism $g \colon H_2 \to H_4$ isotopic to $\delta^2_1$,
  hence also to $d_1$.

  Suppose that $(f_{\mu_i},v_{\mu_i})$ is an index~1-2 birth;
  furthermore, $\S_0 \in \S_+(f_{\mu_0},v_{\mu_0})$ and $\S_1 \in
  \S_+(f_{\mu_1},v_{\mu_1})$.  The case when $\S_0 \in
  \S_-(f_{\mu_0},v_{\mu_0})$ and $\S_1 \in \S_-(f_{\mu_1},v_{\mu_1})$
  is completely analogous.  Then, by
  Proposition~\ref{prop:2-param-cod-1}, the diagram~$H_2$ is obtained
  from $H_1$ by a stabilization, and similarly, $H_4$ is obtained from
  $H_3$ by a stabilization. Pick arcs $c_1 \colon I \to C_1$ and $c_2
  \colon I \to C_2$ as above, and extend them to arcs $a_1' \colon
  [-1,2] \to \R^2$ connecting $x_0$ and $x_1$ and $a_2' \colon [-1,2]
  \to \R^2$ connecting $y_0$ and $y_1$ in a similar manner.  I.e.,
  $a_j'([-1,0]) \subset b_0$, $\left. a_j' \right|_I = c_j$, and
  $a_j'([1,2]) \subset b_1$ for $j \in \{1,2\}$.
  For every $\mu$ on the ``birth'' side~$C_2$ of~$V_1$, let the index~1
  and~2 critical points born be $p^1(\mu)$ and $p^2(\mu)$,
  respectively.  The surface~$\S_t$ divides $M$ into two sutured
  compression bodies $M_-(t)$ and $M_+(t)$.  Let $\S_t = \S_0$ for $t
  \in [-1,0]$ and $\S_t = \S_1$ for $t \in [1,2]$.  Furthermore, we
  write $p^j(t)$ for~$p^j(a_2'(t))$, where $j \in \{1,2\}$ and $t \in
  [-1,2]$.  For each $t \in [-1,2]$, we construct a surface
  \[
  \S_t^* \in \S(f_{a_2'(t)},v_{a_2'(t)})
  \]
  from $\S_t$ by adding a tube around $W^s(p^1(t))$ as in the proof of
  Proposition~\ref{prop:1-param}, but now in a way that the
  construction depends smoothly on~$t$. For this, simply pick a thin
  regular neighborhood $N$ of
  \[
  \bigcup_{t \in [-1,2]} W^s(p^1(t)) \times \{t\} \subset M \times
  [-1,2],
  \]
  let $N_t = N \cap (M \times \{t\})$, then take
  \[
  \S_t^* = \partial (M_-(t) \cup N_t) \setminus \partial M.
  \]
  Finally, let $A_t = \ol{\S_t^* \setminus \S_t}$ be the added tube
  (i.e., annulus). We can do this in such a way that $\S_{-1}^* =
  \S_0'$ and $\S_2^* = \S_1'$.

  We take $f$ to be $d_1$, defined at the beginning of this proof.
  Furthermore, we define~$g$ to be $d_1$
  outside the extra tube $A_{-1}$, and extend it to $A_{-1}$ using the
  family $\S_t^*$ (this follows from a straightforward relative
  version of Lemma~\ref{lem:isot-ext}).  Similarly to the other cases,
  take $\HD^1_t = H(f_{a_1'(t)},v_{a_1'(t)},\S_t)$ and $\HD^2_t =
  H(f_{a_2'(t)},v_{a_2'(t)},\S_t^*) =
  (\S_t^*,\alphas^2_t,\betas^2_t)$, then apply
  Lemma~\ref{lem:isot-ext} to these families of diagrams to obtain
  diffeomorphisms $d_1^1$ and $d_1^2$, respectively, such that
  $d_1^1(H_1) = H_3$ and $d_1^2(H_2) = H_4$. As above, $d_1^1$ is
  isotopic to $d_1$, hence $d_1(H_1) = H_3$. Similarly, $d_1^2$ agrees
  with $d_1$ up to isotopy outside $A_{-1}$, and inside $A_{-1}$ it
  has to map the curve $\alphas^2_{-1} \cap A_{-1}$ to $\alphas^2_2
  \cap A_2$ up to isotopy, hence $d_1^2(H_2) = H_4$.  So we indeed
  have a generalized distinguished rectangle of
  type~\eqref{item:rect-stab-diff}.  The fact that any curve $a_i$
  homotopic to $a_i'$ relative to their boundary induces an isotopic
  diffeomorphism follows from Corollary~\ref{cor:homotopy}.

  Finally, we consider the case of an index~1-2 birth-death
  singularity with $\S_0 \in \S_+(f_{\mu_0},v_{\mu_0})$ and $\S_1 \in
  \S_-(f_{\mu_1},v_{\mu_1})$, or $\S_0 \in \S_-(f_{\mu_0},v_{\mu_0})$
  and $\S_1 \in \S_+(f_{\mu_1},v_{\mu_1})$. We will only discuss the
  former possibility, as the latter is completely analogous. We first
  assume that $a$ is a constant path mapping to the point $\om \in
  V_1$, and $b_0 = b_1$.  We denote the arc $b_0 = b_1$ by $b$, and
  write $\partial b = y - x$. Let $b_x$ and $b_y$ be the components of
  $b \setminus \{\om\}$ containing $x$ and $y$, respectively.  Then
  $\S_0$, $\S_1 \in \S(f_x,v_x)$, while $\S_0'$, $\S_1' \in \S(f_y,v_y)$.
  If $a_1$ is the constant path at $x$, then it induces the
  diffeomorphism $f \colon \S_0 \to \S_1$ obtained by flowing along
  $v_x$. Similarly, if $a_2$ is the constant path at $y$, then it
  induces the diffeomorphism $g \colon \S_0' \to \S_1'$ obtained by
  flowing along $v_y$.  Then $f(H_1) = H_3$ and $g(H_2) = H_4$.

  All we need to show is that $g$ is isotopic to the stabilization of
  $f$.  Let $p \in M$ be the degenerate critical point of $f_{\om}$,
  and let $p^1(\mu) \in C_1(f_\mu)$ and $p^2(\mu) \in C_2(f_\mu)$ be
  the corresponding critical points of $f_\mu$ for $\mu \in b_y$.  Let
  $N \subset M$ be a $(v_{\om})$-saturated regular neighborhood of
  $W^s(p) \cup W^u(p)$.  Then the disk $D_0 = \S_0 \cap N$ is a
  regular neighborhood of the arc $W^s(p) \cap \S_0$, and the disk
  $D_1 = \S_1 \cap N$ is a regular neighborhood of the arc $W^u(p)
  \cap \S_1$.  Furthermore, for $i \in \{0,1\}$, let
  \[
  A_i = \ol{\S_i' \setminus \S_i}
  \]
  be the stabilization tubes, and $\a_i = \S_i' \cap W^u(p^1(y))$ and
  $\b_i = \S_i' \cap W^s(p^2(y))$ the new $\a$- and $\b$-curves. Note
  that $\a_0 \subset A_0$ and $\b_0 \cap A_0$ is an arc, whereas $\b_1
  \subset A_1$ and $\a_1 \cap A_1$ is an arc.  Recall that $B_i = \S_i
  \setminus \S_i'$ is a pair of open disks; we choose $D_i$ such that
  $B_i \subset D_i$. Then
  \[
  T_i = (D_i \setminus B_i) \cup A_i
  \]
  is a punctured torus that is a regular neighborhood of $\a_i \cup
  \b_i$ for $i \in \{0,1\}$.  By construction, $\S_i' = (\S_i
  \setminus D_i) \cup T_i$.  The flow of $v_{\om}$ induces a
  diffeomorphism
  \[
  d \colon \S_0 \setminus D_0 \to \S_1 \setminus D_1.
  \]
  If $i \colon \S_1 \setminus D_1 \hookrightarrow \S_1$ is the
  embedding, then both
  \[
  f|_{\S_0 \setminus D_0} \colon \S_0 \setminus D_0 \to \S_1 \text{ and}
  \]
  \[
  g|_{\S_0' \setminus T_0} \colon \S_0' \setminus T_0 = \S_0 \setminus
  D_0 \to \S_1'
  \]
  are isotopic to $i \circ d$. Hence, $f|_{\S_0 \setminus D_0}$ is
  isotopic to $g|_{\S_0' \setminus T_0}$.  Since $g(\a_0) = \a_1$ and
  $g(\b_0) = \b_1$, we can isotope $g$ such that it maps the regular
  neighborhood $T_0$ of $\a_0 \cup \b_0$ to the regular neighborhood
  $T_1$ of $\a_1 \cup \b_1$.  So, up to isotopy of $f$ and $g$, the
  diagram
  \[
  \xymatrix{[H(f_x,v_x,\S_0)] \ar[r]^{e} \ar[d]^{f} & [H(f_y,v_y,\S_0')] \ar[d]^{g} \\
    [H(f_x,v_x,\S_1)] \ar[r]^{h} & [H(f_y,v_y,\S_1')]}
  \]
  is a distinguished rectangle of type~\eqref{item:rect-stab-diff}.

  We are now ready to prove the general case, when $a \subset V_1$ is
  an arbitrary arc, and we have an index 1-2 birth-death singularity
  with $\S_0 \in \S_+(f_{\mu_0},v_{\mu_0})$ and $\S_1 \in
  \S_-(f_{\mu_1},v_{\mu_1})$. Choose a surface $\S \in
  \S_+(f_{\mu_1},v_{\mu_1})$.  There exists an $\eps = \eps(\S) > 0$
  such that $\S \pitchfork v_\mu$ for every $\mu \in D^2_\eps(\mu_1)$,
  and let $b \subset D^2_\eps(\mu_1)$ be a sub-arc of $b_1$ such that
  $\mu_1 \in \text{Int}(b)$.  Suppose that $\partial b = y - x$, and
  denote by $b_{x,x_1}$ the sub-arc of $b_1$ between $x$ and $x_1$,
  and by $b_{y,y_1}$ the sub-arc of $b_1$ between $y$ and $y_1$.  If
  we apply Proposition~\ref{prop:1-param} to the 1-parameter family
  $b$, then we see that $\S \in \S(f_x,v_x)$, and we obtain a surface
  $\S' \in \S(f_y,v_y)$ stabilizing $\S \in \S(f_x,v_x)$.  We write $H
  = [H(f_x,v_x,\S)]$ and $H' = [H(f_y,v_y,\S')]$.

  Let $a_1 \subset \R^2$ be the path obtained by going from $x_0$ to
  $x$ along an arc~$a_1'$ parallel to $a$, then from $x$ to $x_1$
  along $b_{x,x_1}$.  The path $a_2 \subset \R^2$ is obtained by going
  from $y_0$ to $y$ along an arc $a_2'$ parallel to $a$, then from $y$
  to $y_1$ along $b_{y,y_1}$.  We also assume that $a_1'$ and $a_2'$
  are disjoint from the bifurcation set.  Then $a_1'$ induces a
  diffeomorphism
  \[
  f' \colon H_1 = [H(f_{x_0},v_{x_0},\S_0)] \to H = [H(f_x,v_x,\S)],
  \] and the arc $a_2'$ induces a diffeomorphism
  \[
  g' \colon H_2 = [H(f_{y_0},v_{y_0},\S_0')] \to H' =
  [H(f_y,v_y,\S')].
  \]
  Furthermore, the constant $x$ path induces a diffeomorphism
  \[
  f'' \colon H = [H(f_x,v_x,\S)] \to [H(f_x,v_x,\S_1)],
  \]
  and the constant $y$ path induces a diffeomorphism
  \[
  g'' \colon H' = [H(f_y,v_y,\S')] \to [H(f_y,v_y,\S_1')].
  \]

  Since $\S \in \S(f_\mu, v_\mu)$ for every $\mu \in b_{x,x_1}$, both
  $H(f_x,v_x,\S_1)$ and $H(f_{x_1},v_{x_1},\S_1)$ define the same
  isotopy diagram~$H_3$.  Similarly, as $\S' \in \S(f_\mu, v_\mu)$ for
  every $\mu \in b_{y,y_1}$, both $H(f_y,v_y,\S_1')$ and
  $H(f_{y_1},v_{y_1},\S_1')$ define the same isotopy diagram~$H_4$.
  Furthermore, the arc $b_{x,x_1}$ induces a diffeomorphism isotopic
  to~$f''$, and the path $b_{y,y_1}$ induces a diffeomorphism isotopic
  to~$g''$.  Hence, the path $a_1$ induces a diffeomorphism $f \colon
  H_1 \to H_3$ isotopic to $f'' \circ f'$, and the path $a_2$ induces
  a diffeomorphism $g \colon H_2 \to H_4$ isotopic to $g'' \circ g'$.
  Let $s$ denote the stabilization from $H$ to $H'$, and consider the
  following subgraph of $\G$:
  \[
  \xymatrix{H_1 \ar[r]^{e} \ar[d]^{f'} & H_2 \ar[d]^{g'} \\
    H \ar[r]^{s} \ar[d]^{f''} & H' \ar[d]^{g''} \\
    H_3 \ar[r]^{h} & H_4.}
  \]
  The top rectangle is distinguished of
  type~\eqref{item:rect-stab-diff}, as we already know the result for
  $\S_0 \in \S_+(f_{\mu_0},v_{\mu_0})$ and $\S \in
  \S_+(f_{\mu_1},v_{\mu_1})$, together with the arc $a \subset V_1$
  and transverse arcs~$b_0$ and~$b$.  The bottom rectangle is also
  distinguished of type~\eqref{item:rect-stab-diff}, since we have
  proved the proposition for the special case $\S \in
  \S_+(f_{\mu_1},v_{\mu_1})$ and $\S_1 \in \S_-(f_{\mu_1},v_{\mu_1})$,
  the constant $\mu_1$ path, and the transverse arc~$b$.  It follows
  that the large rectangle is also distinguished of
  type~\eqref{item:rect-stab-diff}, and by the above discussion, it
  agrees with the rectangle in the statement.
\end{proof}

\subsection{Codimension-1: Ordinary diagrams}
\label{sec:codimension-1-actual}

In this section, we will show how to choose spanning trees
appropriately in Propositions~\ref{prop:1-param}
and~\ref{prop:2-param-cod-1} to pass from overcomplete to actual
Heegaard diagrams, without altering the relationship of the diagrams
before and after the bifurcation in an essential way. We are going to
write $\Gamma$ for $\Gamma(f_x,v_x)$ and $\Gamma'$ for
$\Gamma(f_y,v_y)$. Similarly, we use the shorthand $\Gamma_\pm$ for
$\Gamma_\pm(f_x,v_x)$ and $\Gamma_\pm'$ for $\Gamma_\pm(f_y,v_y)$,
where $\Gamma_\pm(f,v)$ is defined in Definition~\ref{def:Gamma-pm}.
By abuse of notation, if $p$ is a non-degenerate critical point of $f_0$,
then we also write $p$ for the corresponding critical points of $f_x$
and $f_y$.  Furthermore, if $p$ is index~1 or~2, we also view $p$ as
the midpoint of the
appropriate edge of $\Gamma_+ \cup \Gamma_-$.
Even though this graph is not strictly speaking a subset of $M$, it is obtained from
$\Gamma$ -- which does contain $p$ -- by identifying its vertices lying in $R_+(\g)$
and its vertices lying in $R_-(\g)$. Similarly, we can view $p$ as the midpoint of the appropriate
edge of $\Gamma_+' \cup \Gamma_-'$.

Suppose we are in case~\eqref{item:birthdeath} of
Proposition~\ref{prop:1-param} (0-1 or 2-3 birth-death), and without
loss of generality,
consider the case of the birth of the critical points $p \in C_0(f_y)$
and $q \in C_1(f_y)$. Then $\Gamma$ is obtained from~$\Gamma'$ by a
small isotopy, deleting the vertex~$q$ of valence two along with
its two adjacent edges, and merging the two vertices in~$\Gamma'$ it
was connected to (one of which is~$p$). There is a map $b$ from
spanning trees of $\Gamma_\pm$ to spanning trees of $\Gamma'_\pm$,
given by small isotopy and adding the edge whose midpoint is~$q$;
then $H(f_x,v_x,\Sigma,T_\pm)$ and $H(f_y,v_y,\Sigma,b(T_\pm)$
are the same isotopy diagram.

Similarly, in case~\eqref{item:isot} (1-2 tangency), the graphs $\Gamma$ and
$\Gamma'$ are the same, except for a small isotopy. This
induces a bijection $b$ of spanning trees of $\Gamma_\pm$ and
$\Gamma_\pm'$ such that $H(f_x,v_x,\S,T_\pm)$ and
$H(f_y,v_y,\S,b(T_\pm))$ represent the same isotopy diagram.  So
bifurcations~\eqref{item:birthdeath}
and~\eqref{item:isot} have no effect on isotopy diagrams if we choose
the spanning trees consistently.

Now consider the case of an index 1-2 birth. Then $\Gamma_-'$ is
obtained from
$\Gamma_-$ by adding an edge corresponding to
the new index~1 critical point, and similarly, $\Gamma_+'$ is
obtained from $\Gamma_+$ by adding an edge corresponding to the new
index 2 critical point. Furthermore, $\Gamma_-$ and $\Gamma_+$ are
both connected. So spanning trees $T_\pm$ of $\Gamma_\pm$ remain
spanning trees $T_\pm'$ of $\Gamma_\pm'$.  The diagram
$H(f_y,v_y,\S',T_\pm')$ is obtained from $H(f_x,v_x,\S,T_\pm)$ by a
$(k',l')$-stabilization, where $l'$ is the number of flows from
index~1 critical points of $f_0$ not in $T_-$ to the saddle-node
singular point, and $k'$ is the number of flows from the saddle-node
to index~2 critical points not in $T_+$.  Note that, in this case,
not all spanning trees of $\Gamma_\pm'$ come from
spanning trees of $\Gamma_\pm$.

Finally, consider case~\eqref{item:tangency} (same-index
tangency). Without loss of
generality, assume that the curve $\a_p$ slides over $\a_q$, yielding
$\a_p'$, where $p$, $q \in C_1(f_x)$, the curve $\a_p = W^u(p) \cap \S$,
and $\a_q = W^u(q) \cap \S$.  Then the graph $\Gamma_-'$ is obtained
from $\Gamma_-$ by sliding the edge $e_q \in E(\Gamma_-)$ containing
$q$ over the edge $e_p \in E(\Gamma_-)$ containing $p$, yielding the
edge $e_q'$ (note the change of roles as we pass to the spanning
trees).  Issues arise when $e_q \in T_-$ and $e_p \not \in T_-$,
since then the curve $\a_p$ is sliding over the ``invisible'' curve
$\a_q$.  In fact, there are situations where, for any spanning tree
$T_-$ of $\Gamma_-$ and any spanning tree $T_-'$ of
$\Gamma_-'$, the corresponding Heegaard diagrams do not differ by a
single handleslide.  For such a situation, see
Figure~\ref{fig:graph-slide}. This motivates the following definition.

\begin{figure}
  \centering
  \includegraphics{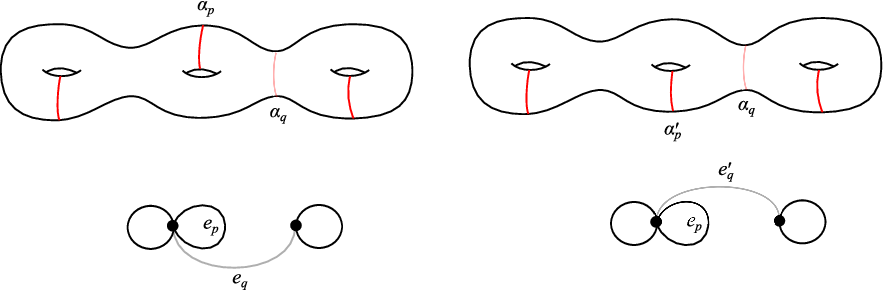}
  \caption{A handleslide in an overcomplete diagram. In this example,
    $T_- = \{e_q\}$ is the only spanning tree of $\Gamma_-$ and $T_-'
    = \{e_q'\}$ is the unique spanning tree of $\Gamma_-'$.  However,
    the diagram $\HD'$ cannot be obtained from $\HD$ by a single
    handleslide.  The $\a$-curves and graph edges corresponding to
    $T_-$ and $T_-'$ are drawn in a lighter color.  The tree $T_-$ is
    not adapted to the handleslide.}
  \label{fig:graph-slide}
\end{figure}

\begin{definition}
  Suppose we have a handleslide of $\a_p$ over $\a_q$ as above. Then
  we say that the spanning tree $T_\pm$ of $\Gamma_\pm$ is
  \emph{adapted to the handleslide} if either
  \begin{itemize}
  \item $e_q \not \in T_\pm$, or
  \item both $e_p, e_q \in T_\pm$.
  \end{itemize}
  We denote by $A_{\a_p/\a_q}(\Gamma_\pm)$ the set of spanning trees
  of $\Gamma_\pm$ adapted to sliding the curve $\a_p$ over $\a_q$.
\end{definition}

\begin{lemma} \label{lem:graph-slide} Given a handleslide as above,
  $A_{\a_p/\a_q}(\Gamma_\pm) \neq \emptyset$ if either $e_p$ is not a
  loop or $e_q$ is not a cut-edge.  Furthermore, there is a bijection
  \[
  b \colon A_{\a_p/\a_q}(\Gamma_\pm) \to A_{\a_p'/\a_q}(\Gamma_\pm')
  \]
  such that, for every spanning tree $T_\pm \in
  A_{\a_p/\a_q}(\Gamma_\pm)$, the sutured diagrams $\HD =
  H(f_{-\eps},v_{-\eps},\S,T_\pm)$ and $\HD' =
  H(f_\eps,v_\eps,\S,b(T_\pm))$ are related by sliding $\a_p$ over
  $\a_q$ if~$e_p, e_q \not\in T_\pm$, and represent the same isotopy
  diagram otherwise.
\end{lemma}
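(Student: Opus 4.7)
The proof is essentially graph-theoretic once one understands the dynamical content of the edge slide. My first step will be to extract the local picture from the quasi-transversal tangency between $W^u(p)$ and $W^s(q)$: this modifies $W^s(q)$ (changing $e_q$ to $e_q'$) and $W^u(p)$ (changing $\alpha_p$ to $\alpha_p'$), but leaves $W^s(p)$ and $W^u(q)$ unchanged up to isotopy. Only one of the two ends of $W^s(q)$ is affected, and its limit vertex moves from one endpoint of $e_p$ to the other. Setting $e_p = \{x,y\}$ and $e_q = \{x,s\}$, this yields $e_q' = \{y,s\}$, which is the standard graph-theoretic edge slide of $e_q$ over $e_p$ sharing vertex~$x$.

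For the existence statement I argue the contrapositive. If $e_p$ is a loop, it lies in no spanning tree, so the case ``both $e_p,e_q \in T_\pm$'' is vacuous and adaptedness forces $e_q \notin T_\pm$, which requires $e_q$ to not be a cut-edge. Conversely, if $e_q$ is not a cut-edge then any spanning tree avoiding $e_q$ is adapted, while if $e_p$ is not a loop then matroid exchange produces a spanning tree containing $e_p$, which automatically also contains $e_q$ in the case when $e_q$ is a cut-edge.

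For the bijection I would set
\[
b(T_\pm) = \begin{cases} T_\pm & \text{if } e_q \notin T_\pm,\\ (T_\pm \setminus \{e_q\}) \cup \{e_q'\} & \text{if } e_p, e_q \in T_\pm.\end{cases}
\]
In the first case $b(T_\pm)$ uses only edges common to $\Gamma_\pm$ and $\Gamma_\pm'$, so it is automatically a spanning tree of the latter. In the second case, removing $e_q$ from $T_\pm$ splits it into two components, with $y$ in the same component as $x$ (through $e_p$) and $s$ in the other, so $e_q' = \{y,s\}$ reconnects them. Adaptedness in $\Gamma_\pm'$ is immediate: in the first sub-case $e_q' \notin b(T_\pm)$, and in the second both $e_p$ and $e_q'$ lie in $b(T_\pm)$. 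An inverse $b^{-1}$ is defined by the symmetric rule $e_q' \mapsto e_q$, and the two compositions are visibly the identity.

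The final step is to compare the sutured diagrams, using the rule that an $\alpha$-curve appears in $H(f,v,\S,T_-,T_+)$ exactly when its associated edge is \emph{not} in $T_-$. When $e_p, e_q \notin T_\pm$, both $\alpha_p$ and $\alpha_q$ are visible in $\HD$, while $\alpha_p'$ and $\alpha_q$ are visible in $\HD' = H(f_\eps,v_\eps,\S,b(T_\pm))$, so the two diagrams differ by exactly the handleslide produced by Proposition~\ref{prop:1-param}\eqref{item:tangency}. In the two remaining adapted sub-cases at least one of $\alpha_p,\alpha_q$ is suppressed on both sides, and the remaining $\alpha$-curves---which are unaffected by the tangency up to isotopy coming from Lemma~\ref{lem:isotopy}---produce identical isotopy diagrams, so $\HD$ and $\HD'$ represent the same isotopy diagram. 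The main obstacle in the whole argument is the very first step: making the geometric claim that exactly one endpoint of $e_q$ moves, and that it moves to the other endpoint of $e_p$, precise enough to justify the graph-theoretic edge-slide formula. This should follow from a direct inspection of the local dynamics near the tangency, using the description of $W^{ss}(q)$ and the limiting behavior from Section~\ref{sec:1-param}.
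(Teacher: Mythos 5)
Your proof matches the paper's: the bijection $b$, the existence argument (matroid exchange to find a tree containing $e_p$ when it is not a loop; avoiding $e_q$ when it is not a cut-edge), and the three-way case split on which of $\a_p, \a_q$ are suppressed are all exactly what the paper does. The opening geometric claim that $\Gamma_-'$ is $\Gamma_-$ with $e_q$ slid over $e_p$---which you flag at the end as your main worry---is established by the paper in the paragraph immediately \emph{preceding} Lemma~\ref{lem:graph-slide} and is taken as given inside its proof, so this is setup you are re-deriving rather than a gap in the lemma itself.
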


\begin{proof}
  If $e_p$ is not a loop, then there is a spanning tree $T_\pm$ of
  $\Gamma_\pm$ that contains~$e_p$.  Alternatively, if $e_q$ is not a
  cut-edge, then there is a spanning tree
  $T_\pm$ of $\Gamma_\pm$ such that $e_q \not \in T_\pm$. In either case
  $T_\pm \in A_{\a_p/\a_q}(\Gamma_\pm)$ and
  $A_{\a_p/\a_q}(\Gamma_\pm) \neq \emptyset$.

  We now define the map~$b$. If $e_q \not \in T_\pm$, then
  $b(T_\pm)=T_\pm$.  In this case, the diagram~$\HD'$ is obtained
  from~$\HD$ by sliding~$\a_p$ over~$\a_q$ if $e_p \not \in T_\pm$,
  and~$\HD'$ represents the same isotopy diagram as~$\HD$ otherwise.
  If $e_p$, $e_q \in T_\pm$, then $b(T_\pm)$ is
  $T_\pm \setminus \{e_q\} \cup \{e_q'\}$, where~$e_q'$ is obtained by
  sliding~$e_q$ across~$e_p$.
  Now~$\HD$ and~$\HD'$ represent the same isotopy diagram.
\end{proof}

Even if $A_{\a_p/\a_q}(\Gamma_\pm) = \emptyset$ (with $p$, $q$ of
index~$1$), since
$\Gamma_{23}(f_x,v_x)$ and $\Gamma_{23}(f_y,v_y)$ are small isotopic
translates of each other, there is a natural bijection $b$ between
spanning trees of $\Gamma_+$ and $\Gamma_+'$.  If $T_\pm$ and $T_\pm'$
are spanning trees of $\Gamma_\pm$ and $\Gamma_\pm'$, respectively,
such that $T_+' = b(T_+)$, then the corresponding diagrams are
$\a$-equivalent. As the example in Figure~\ref{fig:graph-slide} shows,
this is the best we can hope for, unless we are in one of the lucky
situations of Lemma~\ref{lem:graph-slide}.

\subsection{Codimension-1: Converting Heegaard moves to function
  moves}
\label{sec:codim-1:hd-to-func}

We now turn to the other direction: Given a move on Heegaard diagrams,
can it be converted to a path of functions?

\begin{proposition} \label{prop:lift-moves} Suppose that $\HD_i =
  (\S_i,\alphas_i,\betas_i)$ for $i \in \{0,1\}$ are diagrams of the
  sutured manifold $(M,\g)$ such that $\alphas_i \pitchfork
  \betas_i$. In addition, let $(f_i,v_i) \in \FV_0(M,\g)$ for $i \in
  \{0,1\}$ be simple Morse-Smale pairs with $H(f_i,v_i) = \HD_i$.
  \begin{enumerate}
  \item \label{item:lift-diffeo} Given a diffeomorphism $d \colon
    \HD_0 \to \HD_1$ isotopic to the identity in $M$, there is a
    family $\{\, (f_t,v_t) \,\colon\, t \in [0,1] \,\}$ of simple
    Morse-Smale pairs connecting $(f_0,v_0)$ and $(f_1,v_1)$ that
    induces $d$ in the sense of Lemma~\ref{lem:isotopy}.
  \item \label{item:lift-equiv} If\/ $\HD_0$ and $\HD_1$ are $\a$- or
    $\b$-equivalent, then $(f_0,v_0)$ and $(f_1,v_1)$ can be connected
    by a family of simple (but not necessarily Morse-Smale) pairs
    $(f_t,v_t)$ such that $\S_0 = \S_1 \in \S(f_t,v_t)$ for every $t
    \in [0,1]$.  In particular, every isotopy and handleslide can be
    realized by such a family.
  \item \label{item:lift-stab} If\/ $\HD_1$ is obtained from $\HD_0$
    by a (de)stabilization, then there is a generic family $(f_t,v_t)$
    of sutured functions connecting $(f_0,v_0)$ and $(f_1,v_1)$ such
    that for every $t \neq 1/2$, the pair $(f_t,v_t)$ is simple and
    Morse-Smale, and at $t = 1/2$, there is an index 1-2 birth-death
    bifurcation of $(f_t,v_t)$ realizing the stabilization.
  \end{enumerate}
\end{proposition}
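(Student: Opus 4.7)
The plan is to treat the three parts in order, leveraging Proposition~\ref{prop:connected-MS} as a ``rigidification'' tool that lets us move freely between simple Morse-Smale pairs inducing a fixed Heegaard diagram.

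For~\eqref{item:lift-diffeo}, I would start with an ambient isotopy $\{D_t\}_{t \in I}$ of $M$ with $D_0 = \mathrm{Id}_M$ and $D_1|_{\S_0} = d$, which exists because $d$ is isotopic to $\mathrm{Id}_M$ inside $M$. Define the transported family
\[
(f_t', v_t') = (f_0 \circ D_t^{-1},\ (D_t)_* v_0)
\]
for $t \in [0,\tfrac{1}{2}]$ (after reparametrizing). Each $(f_t',v_t')$ is a simple Morse-Smale pair, and by construction the endpoint pair $(f_{1/2}',v_{1/2}')$ has associated diagram $d(\HD_0) = \HD_1$. The induced diffeomorphism of this half of the path, in the sense of Lemma~\ref{lem:isotopy}, is precisely $d$ up to isotopy in the space of diffeomorphisms $\HD_0 \to \HD_1$ (this follows from Lemma~\ref{lem:isot-ext} applied to the surfaces $D_t(\S_0)$). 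Now Proposition~\ref{prop:connected-MS} provides a path through simple Morse-Smale pairs, all inducing the same diagram $\HD_1$, from $(f_{1/2}',v_{1/2}')$ to $(f_1,v_1)$. Along this second half, the Heegaard surface is $f_t^{-1}(0)$ with $\alphas_t$ and $\betas_t$ fixed throughout, so the induced diffeomorphism is isotopic to the identity. Concatenation and smoothing yields the required family inducing $d$.

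For~\eqref{item:lift-equiv}, assume without loss of generality that $\HD_0$ and $\HD_1$ are $\a$-equivalent. By Lemma~\ref{lem:handleslide} we may decompose the $\a$-equivalence into a finite sequence of isotopies of individual $\a$-curves and single handleslides, and by part~\eqref{item:lift-diffeo} it suffices to construct the family for each elementary move. The Heegaard surface $\S_0$ will remain the zero level set throughout; I keep $(f_t,v_t)|_{C_+}$ equal to the fixed pair dictated by $\betas_0 = \betas_1$, and only vary the construction inside $C_-$. For an isotopy of a single $\a$-curve, this is a small deformation of the flow of $v_t$ in the neighborhood of a single 2-handle, in the model of Proposition~\ref{prop:existence}. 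For a handleslide of $\a_1$ over $\a_2$, I reproduce the geometric picture underlying case~\eqref{item:tangency} of Proposition~\ref{prop:1-param}: slide the attaching data of one 2-handle through a codimension-one tangency between $W^u(p_1)$ and $W^s(p_2)$ for the corresponding index-1 critical points. Symmetrically to the analysis there, this produces a generic family with a single codimension-1 bifurcation at some interior time that realizes the prescribed handleslide, and is simple throughout (no birth-death occurs). To patch everything into a single continuous family, I would use part~\eqref{item:lift-diffeo} and Proposition~\ref{prop:connected-MS} at each junction.

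For~\eqref{item:lift-stab}, the task is essentially to run the index~1-2 birth analysis of Proposition~\ref{prop:1-param} backwards. Using the local model $h(x) = -x_1^2 - x_2^2 + x_3^2 + \tfrac{1}{2}$ introduced in the proof of Proposition~\ref{prop:existence}, together with the versal unfolding
\[
f^\mu(x) = -x_1^2 - x_2^2 + x_3^3 + \mu x_3 + c,
\]
I construct, in a ball about a point of $M \setminus C(f_0)$ lying on the prescribed stabilization disk, a 1-parameter family $(f_t,v_t)$ exhibiting an index-1-2 birth at $t = 1/2$ that produces exactly the new $\a$-circle and $\b$-circle required by the stabilization. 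Outside this ball the family is constant. The resulting pair at $t$ slightly greater than $1/2$ is a simple Morse-Smale pair inducing the stabilized diagram (after a trivial isotopy of the Heegaard surface, as described in the proof of Proposition~\ref{prop:1-param}). Using part~\eqref{item:lift-diffeo} and Proposition~\ref{prop:connected-MS}, I connect this to $(f_1,v_1)$ through simple Morse-Smale pairs inducing $\HD_1$, and connect the initial segment of the local family to $(f_0,v_0)$ similarly. The destabilization case is obtained by reversing the parameter.

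The main technical obstacle is in part~\eqref{item:lift-equiv}: ensuring that the family realizing a single handleslide can be chosen \emph{simple} throughout and compatible with the fixed splitting surface $\S_0$. In Proposition~\ref{prop:1-param} the relevant bifurcation was produced as a byproduct of a generic family, but here we must produce it on demand with prescribed effect on the diagram. This requires a local construction in a thin tubular neighborhood of the arc $a$ along which the handleslide takes place, coupled with a careful deformation of the gradient-like vector field so that $\S_0$ remains separating and $f_t^{-1}(0) = \S_0$; the contractibility statements of Proposition~\ref{prop:grad-like-metric} and Corollary~\ref{cor:FV-contractible} provide the flexibility needed to achieve this.
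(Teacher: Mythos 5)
Your overall plan matches the paper's in all three parts: transport by an ambient diffeotopy plus Proposition~\ref{prop:connected-MS} for~\eqref{item:lift-diffeo}, reduction to single isotopies/handleslides for~\eqref{item:lift-equiv}, and a local birth-death model for~\eqref{item:lift-stab}. Part~\eqref{item:lift-diffeo} is essentially identical to the paper's argument.

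For~\eqref{item:lift-equiv}, however, you have named but not filled the gap. You acknowledge that the handleslide must be produced ``on demand'' and gesture at the contractibility of $\FV(M,\g)$, but contractibility alone does not give you a path along which the Heegaard surface $\S_0$ remains the zero level set while the gradient's stable/unstable manifolds pass through a single quasi-transversal tangency realizing the prescribed slide of $\a_1$ over $\a_2$ along $a$. The paper supplies this via Milnor's classical construction \cite[Section~4]{Milnor}: first rearrange the critical values so that $p$ and $q$ are adjacent index~1 critical points, then perform an explicit finger move of $W^u(p)\cap M_c$ across one point of $W^s(q)\cap M_c$ in an intermediate level set $M_c$, which stays gradient-like for the (deformed) Morse function throughout and remains simple. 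Without invoking this (or an equivalent explicit construction), your argument has a genuine hole precisely where you flag it.

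For~\eqref{item:lift-stab} there are two issues. First, your local model $-x_1^2-x_2^2+x_3^3+\mu x_3+c$ has two negative squares, so the unfolding creates an index~$2$-$3$ pair, not an index~$1$-$2$ pair; the correct local model must have one negative and one positive square in the transverse directions, as in the paper's $G_t(x,y,z)=x^3-y^2+z^2+(1/2-t)x$. Second, and more substantively, a birth supported in a small ball creates a tiny null-homotopic pair of circles near the birth point, not the prescribed $\a,\b\subset T$, and the statement requires the family to be Morse-Smale for all $t\neq 1/2$ and to end at $(f_1,v_1)$; you therefore cannot afford to fix up the diagram with further handleslides or tangencies after $t=1/2$. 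The paper avoids this by first isotoping so that the saturated neighborhood $Z_0$ of $D$ under $v_0$ coincides with the saturated neighborhood $Z_1$ of $T\cup W^s(p)\cup W^u(p)\cup W^s(q)\cup W^u(q)$ under $v_1$ (both diffeomorphic to $D\times I$), decomposes $(M,\g)$ into $(Z,A)$ and its complement $(N,\nu)=(\ol{M\setminus Z},\g\cup A)$, applies Proposition~\ref{prop:connected-MS} on $(N,\nu)$ where $\HD_0'=\HD_1'$, and performs the model index~$1$-$2$ birth on the product sutured manifold $(Z,A)$ where $(D,\emptyset,\emptyset)$ stabilizes to $(T,\a,\b)$. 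This $Z$/$N$ splitting is the mechanism guaranteeing that the unique bifurcation at $t=1/2$ produces exactly the prescribed stabilization; your proposal is missing it.
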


\begin{proof}
  We first prove claim~\eqref{item:lift-diffeo}. Let $\iota_i \colon
  \S_i \hookrightarrow M$ be the embedding.  The statement that $d$ is isotopic to
  the identity in $M$ means that there exists an isotopy $e_t \colon
  \S_0 \to M$ such that $e_0 = \iota_0$ and $e_1 = \iota_1 \circ d$,
  while $e_t(\partial \S_0) = s(\g)$ for every $t \in [0,1]$. This can
  be extended to a diffeotopy $E_t \colon M \to M$ such that
  $E_t|_{\S_0} = e_t$ and $E_0 = \text{Id}_M$. Consider the function
  $g_t = f_0 \circ E_t^{-1}$ and the vector field $w_t = dE_t \circ
  v_0 \circ E_t^{-1}$. Then $(g_t,w_t)$ is a simple Morse-Smale
  pair. If $\S_t = e_t(\S_0)$, $\alphas_t = e_t(\alphas_0)$, and
  $\betas_t = e_t(\betas_0)$, then we have $(\S_t,\alphas_t,\betas_t) \in
  \S(g_t,w_t)$.  Clearly, $(g_0,w_0) = (f_0,v_0)$, but $(g_1,w_1)$ and
  $(f_1,v_1)$ might differ. We define $(f_t,v_t)$ to be
  $(g_{2t},w_{2t})$ for $t \in [0,1/2]$.  By
  Proposition~\ref{prop:connected-MS}, the pairs $(g_1,w_1)$ and
  $(f_1,v_1)$ can be connected by a family $\{\, (f_t,v_t) \,\colon\, t
  \in [1/2,1] \,\}$ of simple Morse-Smale pairs, all adapted to
  $\HD_1$.  In the proof of Lemma~\ref{lem:isotopy}, if we take $d_t$
  to be $e_{2t}$ for $t \in [0,1/2]$ and to be $e_1$ for $t \in [1/2,1]$,
  then $d_t$ satisfies $d_t(\HD_0) = \HD_t \in \S(f_t,v_t)$
  for every $t \in [0,1]$.  Hence the family $\{\, (f_t,v_t) \,\colon\, t
  \in [0,1] \,\}$ indeed induces the diffeomorphism $d_1 = d$, which
  concludes the proof of~\eqref{item:lift-diffeo}.

  Now consider claim~\eqref{item:lift-equiv}, and suppose that $\HD_0
  = (\S,\alphas_0,\betas)$ and $\HD_1 = (\S,\alphas_1,\betas)$ are
  $\a$-equivalent. Then Lemma~\ref{lem:handleslide} implies that,
  after applying a sequence of handleslides to $\alphas_0$, we get an
  attaching set $\alphas_1'$ that is isotopic to $\alphas_1$. Hence,
  it suffices to prove the claim when $\HD_2$ can be obtained from
  $\HD_1$ by an isotopy of the $\a$-curves, or by an $\a$-handleslide.

  First, assume that~$\alphas_0$ and~$\alphas_1$ are related by an
  isotopy. As described by Milnor~\cite[Section~4]{Milnor}, there is
  an isotopy $\{\, w_t \,\colon\, t \in [0,1] \,\}$ of $v_0$, supported in
  a collar neighborhood of $\S$ in the $\a$-handlebody, such that $w_0
  = v_0$, every $w_t$ is gradient-like for $f_0$, and $H(f_0,w_1) =
  \HD_1$.  Once we have arranged that the Heegaard diagrams are equal,
  by Proposition~\ref{prop:connected-MS}, we can
  connect~$(f_0,w_1)$ and~$(f_1,v_1)$ through a family of simple
  Morse-Smale pairs, all adapted to~$\HD_1$.

  Now suppose that $\alphas_0$ and $\alphas_1$ are related by a
  handleslide. In particular, the circle $\a_p = W^u(p) \cap \S$
  corresponding to $p \in C_1(f_0)$ slides over the curve $\a_q =
  W^u(q) \cap \S$ corresponding to $q \in C_1(f_0)$ along some arc $a
  \subset \S$ connecting $\a_p$ and $\a_q$. Again, by
  Milnor~\cite[Section~4]{Milnor}, there is a deformation
  $\{\,(g_t,w_t) \,\colon\, t \in [0,1] \,\}$ of $(f_0,v_0)$ such that
  $(g_0,w_0) = (f_0,v_0)$, every $(g_t,w_t)$ is a simple Morse-Smale
  pair, and~$p$, $q$ are neighboring index~1 critical points of~$g_1$.
  \emph{Neighboring} means that, if~$\xi = g_1(p)$ and~$\eta = g_1(q)$,
  then~$\xi < \eta$, and the only critical points of~$g_1$ in
  $M_{[\xi,\eta]} = g_1^{-1}([\xi,\eta])$ are~$p$ and~$q$.
  We can also assume that $H(g_t,w_t) = \HD_0$ for
  every $t \in [0,1]$, and that $(g_t,w_t)$ coincides with $(f_0,v_0)$
  outside a small regular neighborhood of
  \[
  W^s(p) \cup W^u(p) \cup W^s(q) \cup W^u(q).
  \]
  Let $c = (\xi+\eta)/2$ and $M_c = g_1^{-1}(c)$.  By flowing backwards
  along $w_1$, the arc $a$ gives rise to an arc $a' \subset M_c$.
  Then there is an isotopy $\{\, w_t \,\colon\, t \in [1,2] \,\}$ of $w_1$
  such that
  \begin{itemize}
  \item the isotopy is supported in $M_{[\xi,\eta]}$,
  \item $w_t$ is a gradient-like vector field for $g_1$ for every $t \in [1,2]$,
  \item it isotopes the circle $W^u(p) \cap M_c$ by a finger move
    along $a'$ across one of the points of the 0-sphere $W^s(q) \cap
    M_c$,
  \item $W^u(r) \cap M_c$ is fixed for every $r \in C_1(g_1) \setminus
    \{p\}$.
  \end{itemize}
  The last condition can be satisfied because $a'$ is disjoint from
  the circles $W^u(r) \cap M_c$.  This realizes the handleslide of
  $\a_p$ over $\a_q$; i.e., $H(g_2,w_2) = \HD_1$.  Again, using
  Proposition~\ref{prop:connected-MS}, the pairs~$(g_2,w_2)$
  and~$(f_1,v_1)$ can be connected by a family of simple Morse-Smale
  pairs, all adapted to~$\HD_1$, concluding the proof of
  claim~\eqref{item:lift-equiv}.  Notice that~$(f_t,v_t)$ ceases to be
  Morse-Smale at values of $t$ for which there is a tangency between
  an $\a$- and a $\b$-curve, or when there is an $\a$-handleslide.

  Finally, consider statement~\eqref{item:lift-stab}. Without loss of
  generality, we can suppose that~$\HD_1$ can be obtained from~$\HD_0$
  by a stabilization.  The case of a destabilization follows by
  time-reversal.

  By definition, there is a disk~$D \subset \S_0$ and a punctured
  torus~$T \subset \S_1$ such that $\S_0 \setminus D = \S_1 \setminus T$.
  Furthermore, $\alphas_0 = \alphas_1 \cap (\S_1 \setminus T)$,
  $\betas_0 = \betas_1 \cap (\S_1 \setminus T)$, and there are circles
  $\a = \alphas_1 \cap T$ and $\b = \betas_1 \cap T$ that intersect each other
  transversely in a single point. Let $p \in C_1(f_1)$ and
  $q \in C_2(f_1)$ be the critical points of~$f_1$ for which $W^u(p) \cap
  \S_2 = \a$ and $W^s(q) \cap \S_2 = \b$.  Let~$Z_0$ be the union of
  the flow-lines of~$v_0$ passing through~$D$.
  As $D \cap (\alphas_0 \cup \betas_0) = \emptyset$, the
  manifold $Z_0$ is diffeomorphic to $D \times I$.  Define $Z_1$ to be
  the union of the flow-lines of $v_1$ passing through $T$, together
  with
  \[
  W^s(p) \cup W^u(p) \cup W^s(q) \cup W^u(q).
  \]
  Then $Z_1$ is also diffeomorphic to $D \times I$, since it can be
  obtained from $T \times I$ by attaching 3-dimensional 2-handles
  along $\alpha \times \{0\}$ and $\beta \times \{1\}$.

  The vertical boundary of $Z_i$ is the annulus $A_i$ obtained by
  taking the union of the flow-lines of $v_i$ passing through
  $\partial D = \partial T$. There is an isotopy $\{\, d_t \,\colon\, t
  \in [0,1] \,\}$ of~$M$ such that $d_0 = \text{Id}_M$, $d_1(A_0) =
  A_1$, and $d_t$ fixes $\S_0$ pointwise. Then consider the
  1-parameter family $(f_0 \circ d_t^{-1}, (d_t)_* \circ v_0 \circ
  d_t^{-1})$ of simple Morse-Smale pairs. This isotopes $A_0$ to
  $A_1$. Hence, we can assume that $A_0 = A_1$, which implies that
  $Z_0 = Z_1$. So we will write $A$ for $A_i$ and $Z$ for $Z_i$.  The
  attaching sets $\alphas_0$ and $\betas_0$ might move during this
  process via an isotopy avoiding $D$, but we can undo this using
  claim~\eqref{item:lift-equiv} without changing $A$ anymore. So we
  still have $\alphas_0 = \alphas_1 \cap (\S_1 \setminus T)$ and
  $\betas_0 = \betas_1 \cap (\S_1 \setminus T)$.  It is
  straightforward to arrange that $(f_0,v_0)$ and $(f_1,v_1)$ agree on
  a regular neighborhood of $A$.

  Take the sutured manifold
  \[
  (N,\nu) = \left(\overline{M \setminus Z}, \g \cup A \right).
  \]
  Then $\HD_0' = (\S_0 \setminus D, \alphas_0,\betas_0)$ and
  \[
  \HD_1' = (\S_1 \setminus T, \alphas_1 \setminus \{\a\} ,\betas_1
  \setminus \{\b\})
  \]
  are both diagrams of $(N,\nu)$. If we write $(f_i',v_i') =
  (f_i,v_i)|_N$ for $i \in \{0,1\}$, then $\HD_i' = H(f_i',v_i')$.
  However, as $\HD_0' = \HD_1'$, we can apply
  Proposition~\ref{prop:connected-MS} to get a family $(f_t',v_t')$ of
  simple Morse-Smale pairs on $(N,\nu)$ connecting $(f_0',v_0')$ and
  $(f_1',v_1')$.  On the other hand, observe that
  $(D,\emptyset,\emptyset)$ and $(T,\a,\b)$ are both diagrams of the
  product sutured manifold $(Z,A)$ that are related by a
  stabilization, hence it now suffices to prove
  claim~\eqref{item:lift-stab} for this special case. Indeed, we can
  simply glue the family connecting $(f_0,v_0)|_Z$ and $(f_1,v_1)|_Z$
  to the family $(f_t',v_t')$.

  Consider $\R^3$ with the standard coordinates $(x,y,z)$. Let
  \[
  G_t(x,y,z) = x^3 - y^2 + z^2 + (1/2 - t)x,
  \]
  with gradient vector field
  \[
  W_t(x,y,z) = (3x^2 + 1/2 - t, -2y, 2z).
  \]
  Then $G_t$ has a bifurcation at $t = 1/2$, where a pair of index~1
  and~2 critical points are born. Let
  \[
  B_t = G_t^{-1}([-1,1]) \cap D^3_2 \quad \text{and} \quad \eta_t =
  B_t \cap \partial D^3_2,
  \]
  where $D^3_2$ is the unit disk in $\R^3$ of radius~2.  Furthermore,
  let $g_t = G_t|_{B_t}$ and $w_t = W_t|_{B_t}$. It is straightforward
  to check that $(B_t,\eta_t)$ is diffeomorphic to the product sutured
  manifold $(D^2,\partial D^2 \times I)$ for every $t \in [0,1]$.  In
  addition, $H(g_0,w_0) = (D',\emptyset,\emptyset)$, where $D' =
  g_0^{-1}(0)$ is a disk, while $H(g_1,w_1) = (T',\a',\b')$, where $T'
  = g_1^{-1}(0)$ is a punctured torus, and $\a'$ and $\b'$ are simple
  closed curves that intersect each other in a single point. There
  exists a smooth family of diffeomorphisms $h_t \colon (B_t,\eta_t)
  \to (Z,A)$ such that $h_0(D') = D$, $h_1(T') = T$, $h_1(\a') = \a$,
  and $h_1(\b') = \b$. Pushing $(g_t,w_t)$ forward along $h_t$, we get
  a family on $(Z,A)$ that we also denote by $(g_t,w_t)$.  According
  to Proposition~\ref{prop:connected-MS}, for $i \in \{0,1\}$, the
  pair $(f_i,v_i)|_Z$ can be connected with $(g_i,w_i)$ via a family
  of simple Morse-Smale pairs.  This concludes the proof of
  claim~\eqref{item:lift-stab}.
\end{proof}

\subsection{Codimension-2}
\label{sec:transl-codim-2}
The singularities of gradient vector fields that appear in ge\-ner\-ic
2-parameter families were given in Section~\ref{sec:2-param}. This
also applies to gradient-like vector fields on sutured manifolds by
Proposition~\ref{prop:grad-like-metric}. In this section, we associate a loop
of sutured Heegaard diagrams and Heegaard moves to each codimension-2 singularity.

Let $(f_\mu,v_\mu)$ for $\mu \in \R^2$ be a generic 2-parameter family
of sutured functions and gradient-like vector fields on the sutured
manifold $(M,\g)$ that has a codimension-2 singularity for $\mu = 0$;
i.e., $(f_0,v_0) \in \FV_2(M,\g)$.  Recall the notion of the
bifurcation set in the parameter space from
Definition~\ref{def:bifurcation}; this is the set $S$ of parameter
values $\mu \in \R^2$ for which $v_\mu$ fails to be Morse-Smale.
Then, for $\eps > 0$ sufficiently small, the set $(S \cap D^2_\eps)
\setminus \{0\}$ is the disjoint union of smooth arcs (strata)
$S_1,\dots,S_r$ with $0 \in \partial S_i$ and $\partial S_i \setminus
\{0\} \in S^1_\eps$. We label the arcs $S_i$ in a clockwise
manner. The components of $D^2_\eps \setminus S$ are chambers
$C_1,\dots, C_r$, labeled such that $C_i$ lies between $S_{i-1}$ and
$S_i$ for $i \in \{\,1, \dots, r \,\}$ (where $S_0 = S_r$ by
definition).

In this section, the bifurcation diagrams that we draw illustrate the
bifurcation set $S \subset \R^2$ in a neighborhood $D^2_\eps$ of $0$,
and for each chamber $C_i$, we indicate the relevant part of the
corresponding (overcomplete) Heegaard diagram $H(f_\mu,v_\mu,\S)$ for
$\mu \in C_i$ near $0$ and some Heegaard surface $\S \in
\S(f_\mu,v_\mu)$.  (Note that if $\mu$, $\mu' \in C_i$, then the
vector fields $v_\mu$ and $v_{\mu'}$ are topologically equivalent,
hence the corresponding diagrams are homeomorphic and close to each
other.)  We only show certain subsurfaces of~$\S$ in our illustrations
and draw the boundary of these in green.  Outside these subsurfaces,
the diagrams are related by a small isotopy of $\alphas \cup \betas$.
Following our previous conventions, $\a$-circles are drawn in red,
while $\b$-circles are drawn in blue.

Consider an arc $S_i$, and pick a short curve $c \colon [-\nu,\nu] \to
\R^2$ transverse to $S_i$ at~$c(0)$. This gives rise to a 1-parameter
family
\[
\{\, (f_{c(t)},v_{c(t)}) \,\co\, t \in [-\nu,\nu] \,\}
\]
to which
we can apply Proposition~\ref{prop:1-param}.  If the diagrams for
$(f_{c(-\nu)},v_{c(-\nu)})$ and $(f_{c(\nu)},v_{c(\nu)})$ are related
by an $\a$-equivalence, then we draw $S_i$ in red; if they are related
by a $\b$-equivalence, then we draw $S_i$ in blue; and $S_i$ is black
if they are related by a (de)stabilization.

\begin{definition}
  Suppose that $\{\, (f_\mu,v_\mu) \,\colon\, \mu \in \R^2 \,\}$ is a
  generic 2-parameter family such that $(f_0,v_0) \in \FV_2(M,\g)$.
  For $\eps > 0$ as above, a \emph{link of the bifurcation} at $0$ is
  an embedded polygonal curve $P \subset D^2_\eps$ such that
  \begin{itemize}
  \item the bifurcation value $0$ lies in the interior of $P$,
  \item $P \pitchfork S$ and $|S_i \cap P| = 1$ for every $i \in \{\,
    1,\dots,r \,\}$,
  \item each chamber $C_i$ contains exactly one or two vertices of
    $P$.
  \end{itemize}
  We say that $P$ is \emph{minimal} if each $C_i$ contains precisely
  one vertex of $P$.  We orient the curve~$P$ in a clockwise manner.

  A \emph{surface enhanced link} of the bifurcation at $0$ is a link
  $P$, together with a choice of Heegaard surface $\S_\mu \in
  \S(f_\mu,v_\mu)$ for each vertex $\mu$ of $P$.
\end{definition}

We will use the following notational convention. If $C_i$ contains one
vertex of $P$, then we denote that by $\mu_i$. The edge of $P$ that
intersects $S_i$ is called $a_i$.  If $C_i$ contains two vertices,
then they are denoted by $\mu_i$ and $\mu_i'$, ordered coherently with
the orientation of the edge $a_i'$ of $P$ between them. So $\partial
a_i$ is either $\mu_{i+1} - \mu_i$ or $\mu_{i+1} - \mu_i'$, and
$\partial a_i' = \mu_i' - \mu_i$.
In particular, if $P$ is minimal, then the vertices of $P$ are
$\mu_1,\dots,\mu_r$ and its edges are $a_1,\dots,a_r$.  For
simplicity, we write $(f_i,v_i)$ for $(f_{\mu_i},v_{\mu_i})$,
$(f_i',v_i')$ for $(f_{\mu_i'},v_{\mu_i'})$, $\S_i$ for $\S_{\mu_i}$,
and $\S_i'$ for $\S_{\mu_i'}$. Furthermore, we write $H_i =
(\S_i,[\alphas_i],[\betas_i])$ for the (overcomplete) isotopy diagram
$[H(f_i,v_i,\S_i)]$ and $H_i' = (\S_i',[\alphas_i'],[\betas_i'])$ for
$[H(f_i',v_i',\S_i')]$.

The cases distinguished in the following result are labeled
consistently with the ones appearing in the bifurcation analysis of
Section~\ref{sec:2-param}.

\begin{theorem} \label{thm:2-param} Suppose that
  \[
   \mathcal{F} = \{\,(f_\mu,v_\mu) \,\colon\, \mu \in \R^2 \,\}
  \]
  is a generic 2-parameter
  family such that $(f_0,v_0) \in \FV_2(M,\g)$.  Using the above notation,
  for every $\eps > 0$ sufficiently small, there exists a
  surface enhanced link $P \subset D^2_\eps$ of the bifurcation at $0$
  such that the following hold.  The polygon $P$ is minimal unless the
  bifurcation at $0$ is of type~\ref{item:C} or~\ref{item:E1}.  For $i
  \in \{\, 1, \dots, r \,\}$, there is a point $x_i \in \partial a_i$
  such that $\S_{x_i} \pitchfork v_\mu$ for every $\mu \in
  a_i$. Consecutive isotopy diagrams $H_i$ and $H_{i+1}$, or $H_i'$
  and $H_{i+1}$, are related by a move corresponding to the type of
  the stratum $S_i$.  As in Lemma~\ref{lem:isotopy}, each edge $a_i'$
  induces a diffeomorphism $d_i \colon H_i \to H_i'$ isotopic to the
  identity in $M$.
  We explicitly describe the surface enhanced link~$P$ for each
  bifurcation of type~\ref{item:A}--\ref{item:E} in the proof below.
\end{theorem}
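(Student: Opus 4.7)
The plan is to proceed by a case analysis on the codimension-2 bifurcation type at~$0$, using the classification from Subsection~\ref{sec:2-param} (cases~\ref{item:A1}, \ref{item:A2}, \ref{item:B1}--\ref{item:B3}, \ref{item:C}, \ref{item:D}, \ref{item:E1}--\ref{item:E4}). For each case, the local normal form determines the bifurcation set $S \cap D^2_\eps$, and hence the number of strata~$r$ and the move-type assigned to each~$S_i$ by Proposition~\ref{prop:1-param}. The claim about consecutive diagrams then becomes a direct consequence of Proposition~\ref{prop:2-param-cod-1} applied to a short transverse curve through each $S_i$.

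The unifying mechanism is this: whenever $(f_0,v_0)$ is separable, Proposition~\ref{prop:grad-like} gives a separating surface $\S_* \in \S(f_0,v_0)$, and by openness of transversality there is an $\eps > 0$ such that $\S_* \pitchfork v_\mu$ for every $\mu \in D^2_\eps$. In each chamber $C_i$, I will pick a single vertex $\mu_i$ close to~$0$ and take $\S_i = \S_*$. Then for every edge $a_i$ of $P$, I can take $x_i \in \partial a_i$ such that $\S_{x_i} = \S_*$ remains transverse to $v_\mu$ on $a_i$, so the diagram transition across $S_i$ is governed by Proposition~\ref{prop:2-param-cod-1}, producing the move corresponding to the type of~$S_i$. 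For an interior edge $a_i'$ one simply parametrizes it and applies Lemma~\ref{lem:isotopy} to extract the diffeomorphism $d_i \co H_i \to H_i'$ isotopic to $\mathrm{Id}_M$. This uniform recipe handles cases~\ref{item:A1}, \ref{item:A2}, \ref{item:B1}, \ref{item:B2}, \ref{item:B3}, \ref{item:D}, and~\ref{item:E2}--\ref{item:E4}, where the minimal polygon suffices.

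The non-minimal cases~\ref{item:C} and~\ref{item:E1} are the main obstacle and require an adapted surface in each chamber rather than a single global one. In case~\ref{item:C}, the pair $(f_0,v_0)$ has two saddle-nodes~$p$ and~$q$ with $f(p) \neq f(q)$ (separability); one of these is attached to $C_{01}$ and the other to $C_{23}$ by Definition~\ref{def:partition}. Crossing a saddle-node stratum generically triggers a stabilization on one side of the polygon and a destabilization on the other, so inside a single chamber I need two vertices $\mu_i$, $\mu_i'$ with surfaces $\S_i \neq \S_i'$ related by a stabilization (producing the required edge $a_i'$), and the interior edge yields a rectangle of type~\eqref{item:rect-stab-diff} via the second half of Proposition~\ref{prop:rectangle}. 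In case~\ref{item:E1}, $(f_0,v_0)$ is not separable, so no single $\S_*$ exists; instead, for each chamber I choose a surface locally adapted to the $\mu$-dependent flow using path-lifting in the Serre fibration $E_k^m(M,\g)\to B_k^m(M,\g)$ of Proposition~\ref{prop:fibration}, then split chambers containing a secondary-bifurcation transition between two incompatible surface choices.

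The hardest part will be verifying, for each case, that the induced moves compose correctly around~$P$: one must check that the surface choices at the vertices are mutually compatible (so that the transitions predicted by Proposition~\ref{prop:2-param-cod-1} genuinely assemble into a closed decorated loop) and that the secondary bifurcations listed in Subsection~\ref{sec:2-param} have been properly absorbed into the polygon. Once the correct $P$ is constructed in each case, the remaining claims --- transversality along $a_i$, the move-type across $S_i$, and the isotopy class of $d_i$ along $a_i'$ --- are immediate from the propositions already established in Sections~\ref{sec:gradients}--\ref{sec:transl-hd}, so the theorem reduces to bookkeeping guided by the explicit bifurcation pictures of Figures~\ref{fig:flow-pairs}--\ref{fig:codim-two-E1}.
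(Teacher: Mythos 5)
Your overall strategy (case analysis on the codimension-2 type, using Propositions~\ref{prop:1-param}, \ref{prop:2-param-cod-1}, and Lemma~\ref{lem:isotopy} at each stratum and edge) matches the paper. However, there is a genuine gap in the ``uniform recipe'' you propose for the separable cases.

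You claim that for cases~\ref{item:B1}--\ref{item:B3} and~\ref{item:D} you can pick one surface $\S_* \in \S(f_0,v_0)$ and set $\S_i = \S_*$ at every vertex $\mu_i$ of the minimal polygon. This fails whenever the bifurcation involves an index~$1$-$2$ birth-death stratum, which is precisely what happens in all of B1, B2, B3, and D. Across such a stratum the Heegaard surface must change by a stabilization, so its genus jumps; a single surface cannot be separating on both sides. Concretely, if $p^0$ is the $A_2$ point and it is assigned to $C_{23}(f_0,v_0)$, then on the ``birth'' side the newly-born critical points $p^1_\mu \in C_1(f_\mu)$ and $p^2_\mu \in C_2(f_\mu)$ both lie on the $M_+$ side of $\S_*$, so $\S_*$ does not separate $C_{01}(f_\mu,v_\mu)$ from $C_{23}(f_\mu,v_\mu)$ and hence $\S_* \notin \S(f_\mu,v_\mu)$. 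This is exactly the content of the remark after Proposition~\ref{prop:1-param}. The paper therefore uses a \emph{different} surface in the stabilized chambers: for type~\ref{item:B} it sets $\S_i = \S$ only for $i \notin \{2,3\}$, and constructs $\S_2 = \S_3$ by attaching a tube around $W^u(p^2_\nu)$ or $W^s(p^1_\nu)$; similarly for~\ref{item:D}. ``Minimal polygon'' means one vertex per chamber, not one surface for all vertices; you have conflated the two.

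Aside from this, you correctly identify the non-separable obstruction in case~\ref{item:E1} and the need for two vertices in the chamber for case~\ref{item:C}, though the paper's constructions in those cases are considerably more explicit than the fibration/path-lifting sketch you give (e.g., for~\ref{item:E1} the paper builds $\S$ from a regular neighborhood of $\bigcup\{W^s(p) : p \in C_0(f_0) \cup C_1(f_0) \setminus \{p_2^0\}\} \cup R_-(\g)$ and attaches specific tubes $A_i$). To repair your argument you should apply your ``adapted surface in each chamber'' remedy not just to~\ref{item:C} and~\ref{item:E1} but also to~\ref{item:B1}--\ref{item:B3} and~\ref{item:D}: wherever a stratum is an index~1-2 birth-death, the surfaces on its two sides must differ by exactly the stabilization tube prescribed in Proposition~\ref{prop:1-param}, and the transversality condition $\S_{x_i} \pitchfork v_\mu$ along $a_i$ is then arranged by shrinking the transverse edge, as in Proposition~\ref{prop:rectangle}.
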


\begin{proof}
We may ignore the strata $S_i$ that correspond to an index~0-1 or an
index 2-3 birth-death, or a tangency between the unstable manifold of
an index~1 critical point and the stable manifold of an index~2
critical point, as the isotopy diagrams defined by
$H(f_i,v_i,\S_i,T_\pm^i)$ and
$H(f_{i+1},v_{i+1},\S_{i+1},T_\pm^{i+1})$ coincide if we take $\S_i =
\S_{i+1}$ and choose $T_\pm^i$ and $T_\pm^{i+1}$ consistently (see the
discussion of trees following Proposition~\ref{prop:1-param}). If this
reduces the bifurcation set to a single curve of codimension-1
bifurcations or eliminates it completely, then we do not list the
bifurcation below. This simplification reduces the number of cases
considerably, though no extra technical difficulty arises in
the proof of Theorem~\ref{thm:2-param} in the
omitted cases.  We use the notation of Section~\ref{sec:2-param}, with
the codimension-2 bifurcation appearing at the parameter value $\om =
0$. Whenever we talk about handleslides, we mean generalized
handleslides, as in Definition~\ref{def:gen-handleslide}.

In all the cases where $(f_0,v_0)$ is separable (i.e., everywhere
except in case~\ref{item:E1}) we construct the surfaces
$\S_1,\dots,\S_r$ (and $\S_3'$ in case~\ref{item:C}) from a common
surface $\S \in \S(f_0,v_0)$ with the aid of
Proposition~\ref{prop:grad-like}.  In these cases, we take $\eps$ so
small that $\S \pitchfork v_\mu$ for every $|\mu| < \eps$.  Often, $\S
\in \S(f_i,v_i)$ for every $i \in \{\, 1,\dots,r \,\}$; for example,
when $f_0$ is Morse (this includes all bifurcations of
type~\ref{item:A}), or has an index~0-1 or~2-3 birth-death
singularity, or an index 0-1-0, 1-0-1, 2-3-2, or 3-2-3
birth-death-birth. When $f_0$ has an index 1-2 birth-death, then we
can construct surfaces on the two sides of the corresponding stratum
as in the proof of Proposition~\ref{prop:rectangle}.  We only explain
how to construct the surfaces $\S_1,\dots,\S_r$ whenever a new idea is
needed.

As stated above, in cases~\ref{item:C} and~\ref{item:E1}, the link $P$
is not minimal. In the corresponding figures, if $C_i$ contains two
vertices of $P$, we will draw a yellow ray in $C_i$ emanating from $0$
that separates $\mu_i$ and $\mu_i'$. The reader should think of this
ray as a ``diffeomorphism stratum'' of the bifurcation set. The
purpose of this will be explained in the following section.

We start by looking at bifurcations of type~\ref{item:A}, which were
illustrated schematically in Figure~\ref{fig:flow-pairs}.  For
cases~\ref{item:B} and~\ref{item:C}, the reader should consult
Figure~\ref{fig:codim-two-cases}, while for cases~\ref{item:D}
and~\ref{item:E1}, see Figure~\ref{fig:codim-two-local}.

In all subcases of case~\ref{item:A}, we can take an arbitrary minimal
link $P \subset D^2_\eps$ and $\S_i = \S$ for $i \in \{\, 1, \dots, r
\,\}$. First, we describe the possibilities in case~\ref{item:A1};
see Figure~\ref{fig:link-A1}. In each case, $r = 4$ and the
bifurcation set $S$ is the union of two smooth curves that intersect
transversely at~$0$.

\begin{figure}
\centering
\includegraphics{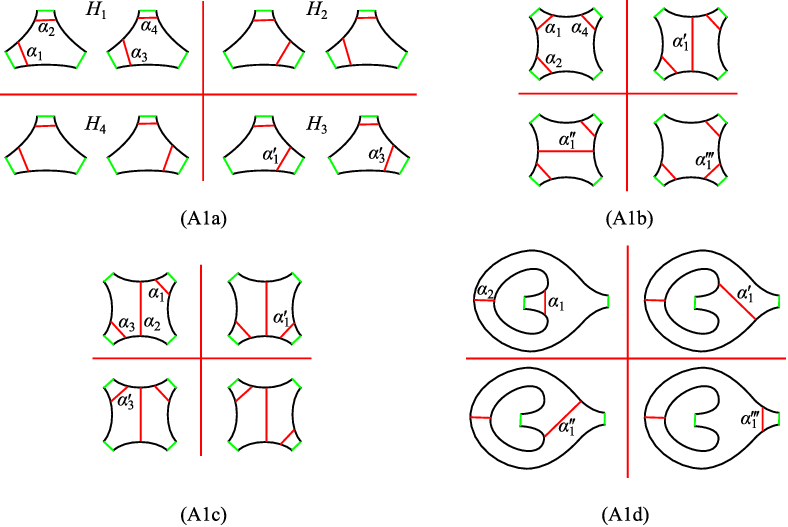}
\caption{The links of bifurcations of
  type~\ref{item:A1a}--\ref{item:A1d}.  The surfaces shown
  should be doubled along their black boundary arcs to obtain the
  relevant subsurface of the Heegaard diagram. This way the red arcs
  become the $\a$-circles and the green arcs become the boundary
  components of the subsurface. We do not draw the $\b$-circles as
  they remain unchanged.}
\label{fig:link-A1}
\end{figure}

\begin{enumerate}[label = (A1\alph*)]
\item \label{item:A1a} Suppose $\mathcal{F}$ is of type~\ref{item:A1}, and using
  the notation of Section~\ref{sec:2-param}, all $p_i^0$ are
  distinct, $\I(p_1^0) = \I(p_2^0) \in \{1,2\}$ and $\I(p_3^0) =
  \I(p_4^0) \in \{1,2\}$. We describe what happens to the diagrams
  $H_i$ when all the $p_i^0$ have index~1; the other cases are
  analogous. Then $\betas_i = \betas_1$ for $i \in
  \{\,2,3,4\,\}$. Furthermore, the attaching set $\alphas_1$ contains
  four distinct curves $\a_1,\dots,\a_4$ (corresponding to $p_1^0,
  \dots, p_4^0$, respectively), and $\alphas_3$ contains two distinct
  curves $\a_1'$ and $\a_3'$ such that $\a_1'$ is obtained by sliding
  $\a_1$ over $\a_2$ and $\a_3'$ is obtained by sliding $\a_3$ over
  $\a_4$. In addition, $\alphas_2 = (\alphas_1 \setminus \a_1) \cup
  \a_1'$, $\alphas_4 = (\alphas_1 \setminus \a_3) \cup \a_3' $, and
  $
  \alphas_3  =\left( \alphas_1 \setminus (\a_1 \cup \a_3) \right) \cup (\a_1' \cup \a_3').
  $
\item \label{item:A1b} Suppose $\mathcal{F}$ is of type~\ref{item:A1}
  and $p_1^0 = p_3^0$. The points $p_1^0$, $p_2^0$, and $p_4^0$ all have index~1,
  or they all have index~2. We discuss the case when they are all
  index~1.  Then there are curves $\a_1$, $\a_2$, $\a_4 \in
  \alphas_1$, and curves $\a_1' \in H_2$, $\a_1'' \in H_4$, and
  $\a_1''' \in H_3$ such that $\a_1'$ is obtained by sliding $\a_1$
  over $\a_2$, the curve $\a_1''$ is obtained by sliding $\a_1$ over
  $\a_4$, while $\a_1'''$ can be obtained by either sliding $\a_1'$
  over $\a_4$, or $\a_1''$ over $\a_2$. Furthermore, $\alphas_2 =
  (\alphas_1 \setminus \a_1) \cup \a_1'$, $\alphas_3 = (\alphas_1
  \setminus \a_1) \cup \a_1'''$, and $\alphas_4 = (\alphas_1 \setminus
  \a_1) \cup \a_1''$.  In other words, $H_2$ is obtained from $H_1$ by
  sliding $\a_1$ over $\a_2$, the diagram $H_4$ is obtained from $H_1$
  by sliding $\a_1$ over $\a_4$, and $H_3$ is obtained from $H_1$ by
  sliding $\a_1$ over $\a_2$, and then sliding the resulting curve
  over $\a_4$.
\item \label{item:A1c} Suppose $\mathcal{F}$ is of type~\ref{item:A1}
  and $p_2^0 = p_4^0$. The set $\alphas_1$ contains three distinct curves $\a_1$,
  $\a_2$, and $\a_3$; furthermore, there is an arc $a_1$ with
  $\partial a_1 \subset \a_2$ and an arc $a_3$ with $\partial a_3
  \subset \a_2$ such that $a_1$ and $a_3$ reach $\a_2$ from opposite
  sides, and $H_2$ is obtained from $H_1$ by sliding $\a_1$ over
  $\a_2$ using $a_1$ (resulting in a curve $\a_1'$), $H_4$ is obtained
  from $H_1$ by sliding $\a_3$ over $\a_2$ using $a_3$ (resulting in
  the curve $\a_3'$), while $H_3$ differs from $H_1$ by removing
  $\a_1$ and $\a_3$ and adding $\a_1'$ and $\a_3'$.
\item \label{item:A1d} Suppose $\mathcal{F}$ is of type~\ref{item:A1},
  $p_1^0 = p_3^0$, and $p_2^0 = p_4^0$. Both $p_1^0$ and $p_2^0$ have index~1, or they
  both have index~2.  This is similar to case~\ref{item:A1b}, except
  that $\alpha_1$ is sliding over the same curve $\alpha_2$
  in two different ways from opposite sides.
\end{enumerate}

\begin{enumerate}[label = \ref{item:A2}]
\item \label{item:link-A2} In this case, we have $\I(p_1^0) =
  \I(p_2^0) = \I(p_3^0) \in \{1,2\}$ and $r = 5$.  We consider the
  case when all the $p_i^0$ are index~1. The $\betas_i$ coincide up to
  a small isotopy.  The attaching set $\alphas_1$ contains three
  distinct curves $\a_1$, $\a_2$, and $\a_3$ corresponding to
  $p_1^0$, $p_2^0$, and $p_3^0$, respectively.  Then the pentagon is
  formed by $\a_1$ sliding over $\a_2$, which is itself sliding over
  $\a_3$.  More precisely, let $\a_1'$ be the curve obtained from
  $\a_1$ by sliding it over $\a_2$, let $\a_2'$ be the curve obtained
  from $\a_2$ by sliding it over $\a_3$, and finally let $\a_1''$ be
  the curve
  obtained from $\a_1$ by sliding it over $\a_2'$. Then $\alphas_2 =
  (\alphas_1 \setminus \a_2) \cup \a_2'$, $\alphas_3 = (\alphas_2
  \setminus \a_1) \cup \a_1''$, $\alphas_4 = (\alphas_3 \setminus
  \a_2') \cup \a_2$, and $\alphas_5 = (\alphas_4 \setminus \a_1'') \cup
  \a_1'$.  In particular, this implies that $\alphas_1 = (\alphas_5
  \setminus \a_1') \cup \a_1$. For a schematic illustration, see
  Figure~\ref{fig:link-chain-flows}.
  \begin{figure}
    \centering
    \includegraphics{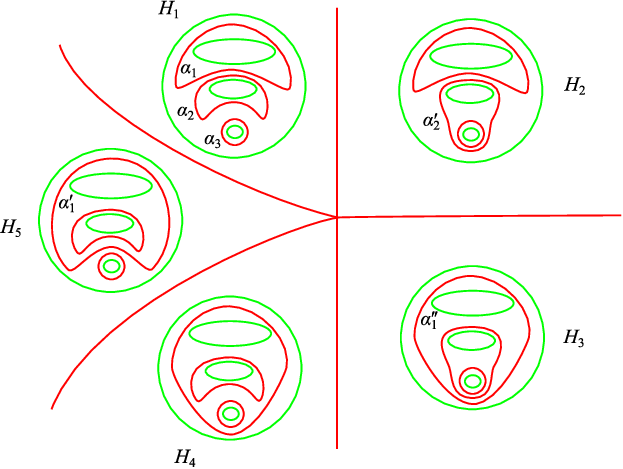}
    \caption{The link of a bifurcation of type~\ref{item:A2}.}
    \label{fig:link-chain-flows}
  \end{figure}
\end{enumerate}

We now look at bifurcations of type~\ref{item:B}; i.e., codimension-2
singularities that include a single stabilization.  See
Figure~\ref{fig:codim-two-cases} for schematic drawings.  The link $P$
and the surfaces $\S_i$ are obtained as follows. We label the strata
such that $S_1$ and $S_3$ are the stabilizations, and there is a single
stratum~$S_2$ on the stabilized side.
We choose $\S \in \S(f_0,v_0)$ and $\eps$ as above.
For $i \not\in \{2,3\}$, the vertex $\mu_i$ of $P$ is an arbitrary point of
$C_i$ and $\S_i = \S$.  Pick a parameter value $\nu \in S_2$ with
$|\nu| < \eps$. Let $p^0 \in C(f_0)$ be the index 1-2 birth-death
singularity; it breaks into the critical points $p^1_\nu \in
C_1(f_\nu)$ and $p^2_\nu \in C_2(f_\nu)$. The surface
\[
\S_\nu \in \S(f_\nu,v_\nu)
\]
is obtained from $\S$ by attaching a tube around
$W^u(p^2_\nu)$ if $p^0 \in C_{01}(f_0,v_0)$, or a tube around
$W^s(p^1_\nu)$ if $p^0 \in C_{23}(f_0,v_0)$.
The side $a_2$ of $P$ is chosen short enough so that $\S_\nu \pitchfork
v_\mu$ for every $\mu \in a_2$. The endpoints of $a_2$ are $\mu_2$ and
$\mu_3$. Both $\S_2$ and $\S_3$ are defined to be $\S_\nu$.  Every
side $a_i$ of $P$ for $i \neq 2$ can be an arbitrary arc connecting
$\mu_i$ and $\mu_{i+1}$ that intersects $S_i$ in a single point.

\begin{enumerate}[label = \ref{item:B\arabic*}]
\item \label{item:link-B1}
  For definiteness, suppose that $p^0$ is an index 1-2 birth, while
  $p_1^0$ and $p_2^0$ are index~1 critical points. Then $r=4$, and the
  strata $S_1$ and $S_3$ are stabilizations, while $S_2$ and $S_4$ are
  handleslides. Recall from Definition~\ref{def:partition} that, in
  this case, $p^0 \in C_{01}(f_0,v_0)$.
  The type of the stabilizations $H_1 \to H_2$ and $H_4 \to H_3$
  depend on the number of flows from $p_1^\mu$ to $p^\mu$ for $\mu \in
  S_1$ and $\mu \in S_3$, respectively.  Recall from \ref{item:eigenvalues}
  that $W^{uu}(p_2^0)$ is 1-dimensional, and it is transverse to~$W^s(p^0)$
  by \ref{item:failure-H};
  i.e., disjoint from it. Hence, the flows from $p_2^0$ to $p^0$
  are split into two parts by $W^{uu}(p_2^0)$.
  For~$\mu \in S_1$, flows in one part will be glued to the orbit of
  tangency from~$p_1^0$ to~$p_2^0$, while for $\mu \in S_3$ flows in
  the other part will be glued to the orbit of tangency.
  If there are~$k_1$ and~$k_2$ flows from~$p_2^0$ to~$p^0$ in the two
  parts, and $l$ flows from~$p^0$ to index~2 critical points and $m$
  flows to~$p^0$ from index 1 critical points, then the two
  stabilizations $H_1 \to H_2$ and $H_4 \to H_3$ are of types $(l,
  m+k_1)$ and $(l, m+k_2)$, respectively.

  Figure~\ref{fig:link-6a}
  shows an example with $k_1 = k_2 = 1$. In general, the $\a$-curve
  corresponding to~$p_2^\lambda$ intersects the green disk in~$k_1 + k_2$
  horizontal segments, $k_1$ of which lie on one side of~$W^{uu}(p_2^\lambda)$
  and~$k_2$ on the other side. So the $\a$-curve corresponding to~$p_1^\lambda$
  intersects the green disk in~$k_1$ arcs on one side of the handleslide stratum,
  and in~$k_2$ arcs on the other side.
  When $p_1^0$ and $p_2^0$ are index~2, then we
  obtain a similar picture, but with red and blue reversed. (This
  is ensured by the convention of Definition~\ref{def:separable}
  that now $p^0 \in C_{23}(f_0,v_0)$.)%
  \begin{figure}
    \centering
    \includegraphics[width=4.5in]{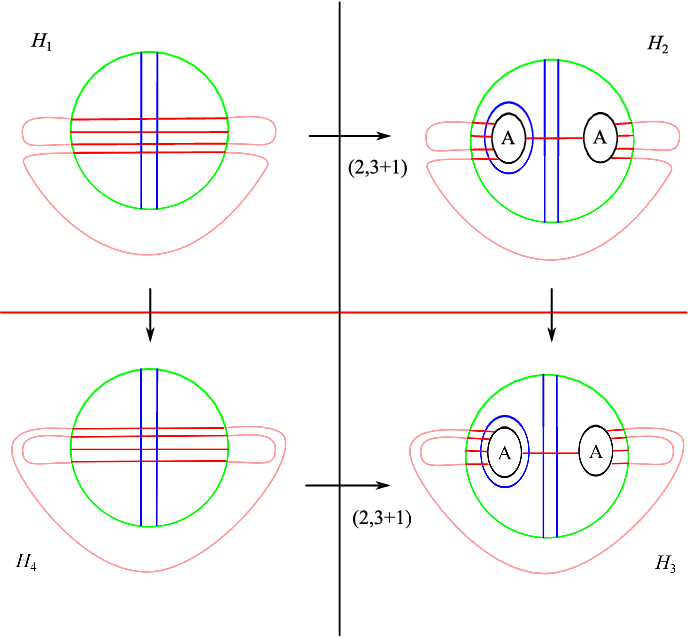}
    \caption{The link of a singularity of type~\ref{item:B1}.  This
      example has $l=2$, $m=3$, and $k_1 = k_2 = 1$.}
    \label{fig:link-6a}
  \end{figure}
\item \label{item:link-B2} An orbit of tangency from an index 1-2
  birth-death point~$p^0$ to an index 1 critical point $\ol{p}^0$ (in
  which case $p^0 \in C_{01}(f_0,v_0)$), or an orbit of tangency from
  an index~$2$ critical point to an index 1-2 birth-death point (in
  which case $p^0 \in C_{23}(f_0,v_0)$). For definiteness, we consider
  the first case.  The bifurcation diagram has at least $r \ge 3$
  strata, where $S_1$ and $S_3$ are stabilizations and the other $S_i$
  for $i \not\in \{1,3\}$ are $\a$-handleslides.  Indeed, for any flow
  from an index~1 critical point $p^0_*$ to $p^0$, we can perturb the
  neighborhood of $p^0$ on the ``death'' side of $S_1 \cup S_3$ so
  that there is a flow from $p^\mu_*$ to $\ol{p}^\mu$. The number of
  flows from index 1 critical points to $p^0$ is equal to $r-3$.

  For the types of the stabilizations, suppose that there are $k$
  flows from index 1 critical points to~$p^0$ and $l$ flows from $p^0$
  to index 2 critical points. Furthermore, the flows from $\ol{p}^0$
  to index 2 critical points are divided into two parts by
  $W^{uu}(\ol{p}^0)$; let these two parts have $m_1$ and $m_2$ flows,
  respectively. Then the two stabilizations $H_1 \to H_2$ and $H_4 \to
  H_3$ have types $(l+m_1,k)$ and $(l+m_2,k)$, respectively (where
  $H_4 = H_1$ if $r=3$).  The pair $(m_1,m_2)$ can be seen as the type
  of the generalized handleslide $H_2 \to H_3$.
  Figure~\ref{fig:link-7a} shows an example. When $\ol{p}$ is index~2,
  we obtain a similar picture, but with red and blue reversed.
  \begin{figure}
    \centering
    \includegraphics{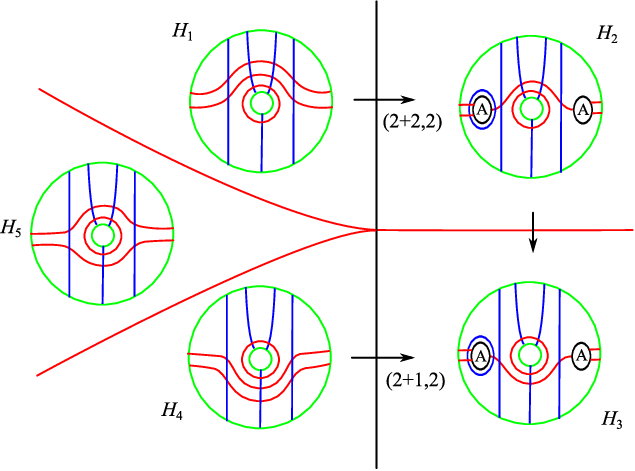}
    \caption{The link of a singularity of type~\ref{item:B2}.  This example has
      $k = 2$, $l = 2$, $m_1 = 2$, and $m_2 = 1$.}
    \label{fig:link-7a}
  \end{figure}

\item \label{item:link-B3} An orbit of tangency between the strong
  stable manifold of an index 1-2 birth-death point~$p$ and the
  unstable manifold of an index 1 critical point~$\ol{p}$, or between
  the strong unstable manifold of an index 1-2 birth-death point and
  the stable manifold of an index 2 critical point. Without loss of
  generality, suppose we are in the former case. Then $r = 3$, the
  strata $S_1$ and $S_3$ are stabilizations, while the stratum $S_2$
  is a handleslide. Recall that we chose $p \in C_{01}(f_0,v_0)$.  If
  there are $k$ flow lines from index~1 critical point to $p$ (not
  counting the flow from $\ol{p}$ in $W^{ss}(p)$) and $l$ flows from
  $p$ to index 2 critical points, then the stabilizations $H_1 \to
  H_2$ and $H_1 \to H_3$ are of types $(k,l+1)$ and $(k,l)$,
  respectively. For $\mu \in S_2$, the 2-dimensional unstable manifold
  $W^u(\ol{p})$ has a tangency with the 1-dimensional stable manifold
  of the index~1 critical point born from $p$; see
  Figure~\ref{fig:1-2-birth-death}.  Hence, the $\a$-curve
  $W^u(\ol{p}) \cap \S$ slides over the $\a$-curve appearing in the
  stabilization as we move from $H_3$ to $H_2$.
  Figure~\ref{fig:link-9a} shows an example. As before,
  Definition~\ref{def:separable} ensures that we obtain a picture with
  colors reversed when $\ol{p}$ is index~2.
  \begin{figure}
    \centering
    \includegraphics{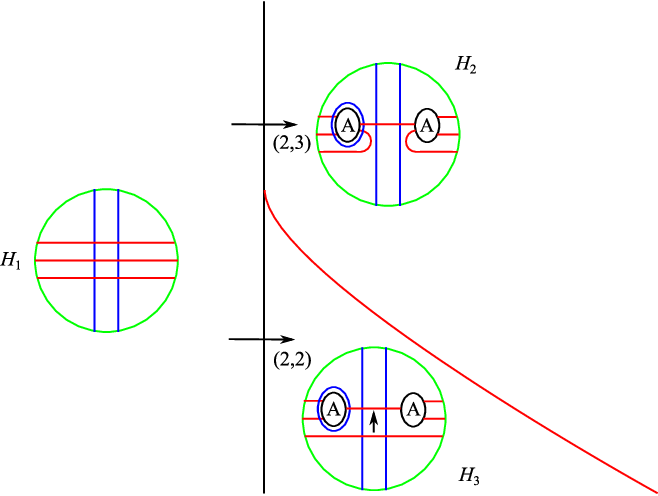}
    \caption{The link of a singularity of type~\ref{item:B3}.  This example has
      $k=2$ and $l=2$.}
    \label{fig:link-9a}
  \end{figure}
\end{enumerate}

\begin{enumerate}[label=\ref{item:C}]
\item \label{item:link-C} Two simultaneous index 1-2 birth-death
  critical points at $p_1$ and~$p_2$, labeled such that $f(p_1) <
  f(p_2)$. Recall that, in this case, $(f_0,v_0)$ is separable with
  $p_1 \in C_{01}(f_0,v_0)$ and $p_2 \in C_{23}(f_0,v_0)$. Let the
  number of flows from $p_1$ to $p_2$ be $t$, and let the number of flows from
  $p_1$ to index~2 critical points and from index~1 critical points to
  $p_1$ be $m$ and $n$, respectively. Similarly, let the number of flows
  to index~2 critical points from $p_2$ and from index~1 critical
  points to $p_2$ be $k$ and $l$, respectively. Then $r = 4$, and
  each stratum $S_i$ is a (de)stabilization.  The strata $S_1$ and
  $S_3$ correspond to the birth-death at $p_1$, while $S_2$ and $S_4$
  are the birth-death strata for $p_2$.  The type of the stabilization
  $H_1 \to H_2$ is $(kt + m, n)$, for $H_2 \to H_3$ it is $(k, l+t)$,
  for $H_1 \to H_4$ it is $(k, nt + l)$, and finally, for $H_3 \to
  H_4$ it is $(m+t, n)$.  Figure~\ref{fig:link-8} shows an example
  with $t=1$, and Figure~\ref{fig:link-8-alt} shows an example with
  $t=2$.
  \begin{figure}
    \centering
    \includegraphics{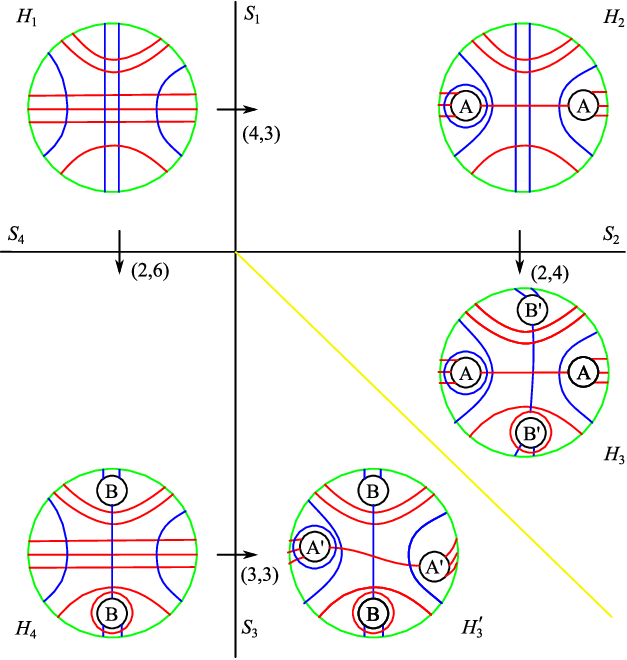}
    \caption{The link of a singularity of type~\ref{item:C}. This
      example has $t=1$, $(k,l)=(2,3)$, and $(m,n)=(2,3)$.}
    \label{fig:link-8}
  \end{figure}
  \begin{figure}
    \centering
    \includegraphics[width=4in]{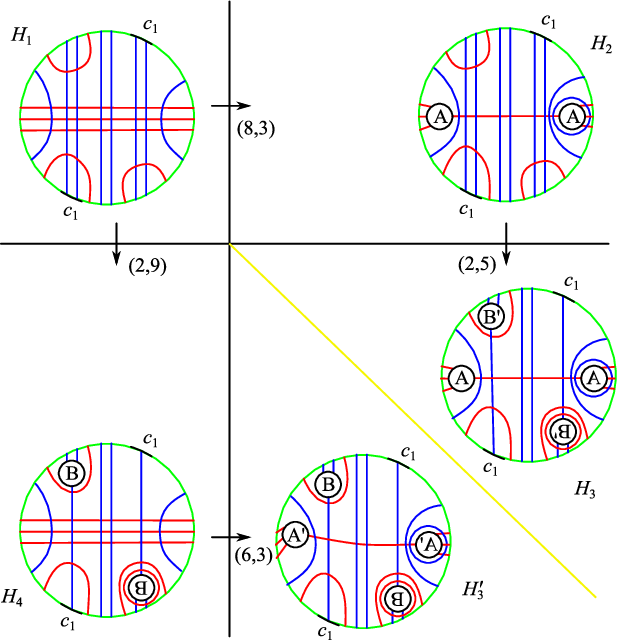}
    \caption{The link of a more complicated singularity of
      type~\ref{item:C}.  This example has $t=2$, $(k,l) = (2,3)$, and
      $(m,n) = (4,3)$. The black arcs labeled with $c_1$ in the
      boundary of the green circle are identified.}
    \label{fig:link-8-alt}
  \end{figure}

  The vertices $\mu_1$, $\mu_2$, $\mu_3$, $\mu_3'$, and $\mu_4$ of $P$
  and the Heegaard surfaces $\S_i \in \S(f_i,v_i)$ for $i \in
  \{\,1,\dots,4\,\}$ and $\S_3' \in \S(f_3',v_3')$ are obtained as
  follows. As before, pick a surface $\S \in \S(f_0,v_0)$, and let
  $\eps > 0$ be so small that $\S \pitchfork v_\mu$ for every $|\mu| <
  \eps$. Then $\mu_1 \in C_1$ is arbitrary and $\S_1 = \S$.  For $\nu
  \in C_2 \cup S_2 \cup C_3$, let $q_1(\nu) \in C_2(f_\nu)$ be the
  index~2 critical point born from $p_1$. Similarly, for $\eta \in C_3
  \cup S_3 \cup C_4$, let $q_2(\eta) \in C_1(f_\eta)$ be the index~1
  critical point born from $p_2$.  By taking $\eps$ to be sufficiently
  small, we can assume that $W^u(q_1(\nu)) \cap W^s(q_2(\eta)) =
  \emptyset$ for every $\nu$, $\eta \in S_2 \cup C_3 \cup S_3$.
  Choose points $\nu \in S_2$ and $\eta \in S_3$. The surface $\S_\nu$
  is obtained from $\S$ by attaching a tube around $W^u(q_1(\nu))$
  such that $\S_\nu \in \S(f_\nu,v_\nu)$. Similarly, $\S_\eta$ is
  obtained from $\S$ by attaching a tube around $W^s(q_2(\eta))$ such
  that $\S_\eta \in \S(f_\eta,v_\eta)$.  Pick short arcs $a_2$ and
  $a_3$ transverse to $S_2$ and $S_3$ at $\nu$ and $\eta$,
  respectively, such that $\S_\nu \pitchfork v_\mu$ for every $\mu \in
  a_2$ and $\S_\eta \pitchfork v_\mu$ for every $\mu \in a_3$.  We
  take $\mu_2 = \partial a_2 \cap C_2$, $\mu_3 = \partial a_2 \cap
  C_3$, $\mu_3' = \partial a_3 \cap C_3$, and $\mu_4 = \partial a_3
  \cap C_4$. Furthermore, $\S_2 = \S_\nu$ and $\S_4 = \S_\eta$. To
  obtain $\S_3 \in \S(f_3,v_3)$, add a tube to $\S_2$ around
  $W^s(q_2(\mu_3))$. Similarly, $\S_3' \in \S(f_3',v_3')$ is obtained
  from $\S_4$ by adding a tube around $W^u(q_1(\mu_3'))$. The edges
  $a_1$, $a_3'$, and $a_4$ of $P$ are chosen arbitrarily (subject
  to $|a_i \cap S_i| = 1$ for $i \in \{1,4\}$ and $a_3' \subset C_3$).

  The regions of $\S$ shown in Figures~\ref{fig:link-8}
  and~\ref{fig:link-8-alt} are obtained by taking a regular
  neighborhood $N$ of $(W^u(p_1) \cup W^s(p_2)) \cap \S$; so the green
  curves are the components of $\partial N$.  Recall that both
  $W^u(p_1) \cap \S$ and $W^s(p_2) \cap \S$ are arcs, which intersect
  each other in $t$ points $x_1, \dots, x_t$. For $i \in \{\,1, \dots,
  t-1\,\}$, let $c_i$ be a properly embedded arc in $N$ that
  intersects $W^s(p_2) \cap \S$ transversely in a single point between
  $x_i$ and $x_{i+1}$.  Cutting $N$ along $c_1,\dots, c_{t-1}$, we
  obtain a disk with distinguished pairs of arcs in its boundary. This
  is the disk that we draw in our figures, with the $c_i$ shown in
  black.
\end{enumerate}

\begin{remark} \label{rem:diffeo-C}
  To avoid the ``diffeomorphism stratum'' in case (C), one would need
  a quantitative result that the arcs $a_2$ and $a_3$ can be chosen to
  be so long that they intersect, in which case we could take
  $\mu_3 = a_2 \cap a_3$ and drop $\mu_3'$. This does not seem
  possible for an arbitrary 2-parameter family.

  Note that the diffeomorphism $d_3 \colon H_3 \to H_3'$ induced by
  $a_3'$ can be destabilized to a diffeomorphism $d_3' \colon \S \to
  \S$. This follows from the fact that $\S \pitchfork v_\mu$ for every
  $\mu \in a_3'$ and both $\S_3$ and $\S_3'$ are obtained by attaching
  tubes to $\S$. Indeed, consider the family of surfaces $\S_\mu \in
  \S(f_\mu,v_\mu)$ for $\mu \in a_3'$ obtained by adding tubes around
  $W^u(q_1(\mu))$ and $W^s(q_2(\mu))$.  Then one can apply
  Lemma~\ref{lem:isot-ext} to lift this family of surfaces to an
  isotopy that preserves the ``tubes.''
\end{remark}

\begin{enumerate}[label = \ref{item:D}]
\item \label{item:link-D} An index 1-2-1 ($A_3^+$) or 2-1-2 ($A_3^-$)
  degenerate critical point, a birth-death-birth singularity.  See
  Figure~\ref{fig:link-10a} for an example in the index 2-1-2 case,
  which we will discuss. In this case, $r = 2$, and on the stabilized
  side $C_2$, we have three critical points, $p_1$, $p_2$, and $p_3$,
  with $p_1$ and $p_3$ of index~2 and $p_2$ of index~1. In the
  birth-death strata $S_1$ and $S_2$, the critical points cancel each
  other in two different ways: $p_2$ cancels against either $p_1$ or
  $p_3$.  For both cancellations to be possible, there is necessarily
  a unique flow from $p_2$ to both $p_1$ and $p_3$, and no other flows
  from~$p_2$ to index~2 critical points.
  To see there are no other flows from~$p_2$, recall that
  the local form of an~$A_3^-$ singularity~$p$ is $-x_1^2 + x_2^2 - x_3^4$,
  hence it has a 1-dimensional unstable manifold, which is generically
  disjoint from the stable manifolds of all index~2 critical points.
  So, after a sufficiently small deformation of~$f_0$, these stable manifolds
  will still avoid a neighborhood of~$p$.
  The parameters are the numbers $k$ and $l$ of flows
  from index~1 critical points to $p_1$ and $p_3$, respectively, not
  counting the flows from~$p_2$. The two stabilizations corresponding
  to passing $S_1$ and $S_2$ are of types $(1,k)$ and $(1,l)$,
  respectively.  Note that, on the common destabilized diagram $H_1$,
  there is a single $\beta$-circle meeting $k+l$ of the $\alpha$-strands.

  The link $P$ is an arbitrary bigon around 0 inside $D^2_\eps$.  The
  Heegaard surface $\S_1 = \S$. The part shown in
  Figure~\ref{fig:link-10a} is a neighborhood of $W^s(p) \cap \S$,
  where $p$ is the degenerate critical point of $f_0$.  The surface
  $\S$ divides $M$ into two pieces $M_-$ and $M_+$ such that $p \in
  M_+$.  To obtain $\S_2$, we add a tube around $W^s(p_2)$ to $\S$ so
  thin that it separates $p_2$ from $p_1$ and $p_3$.  Recall that
  $W^s(p)$ is a 2-disk, while $W^{ss}(p)$ is a curve inside it.  The
  numbers $k$ and $l$ are in fact the number of flow-lines from
  index~1 critical points to $p$ on the two sides of $W^{ss}(p)$
  in~$W^s(p)$.
  \begin{figure}
    \centering
    \includegraphics{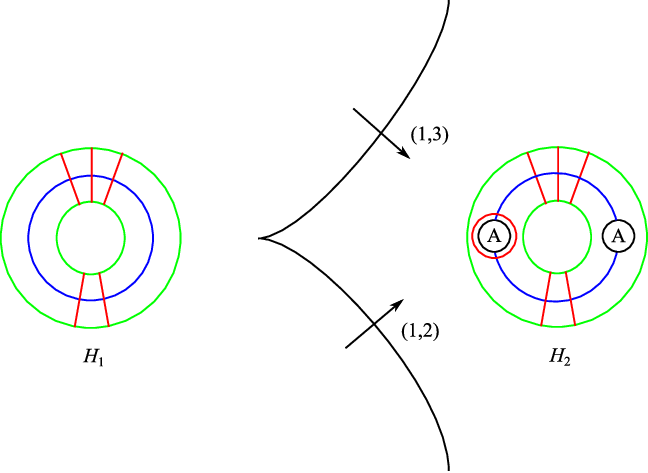}
    \caption{The link of a birth-death-birth singularity,
      type~\ref{item:D}.  In this example, $k=3$ and $l=2$.}
    \label{fig:link-10a}
  \end{figure}
\end{enumerate}

\begin{enumerate}[label = \ref{item:E1}]
\item \label{item:link-E1} A flow from an index~2 critical
  point $p_1$ to an index~1 critical point~$p_2$. Suppose there are
  $k$ flows from $p_2$ to index~2 critical points and $l$ flows from
  index~1 critical points to $p_1$. Then $r = k+l$, and $k$ of the
  strata $S_i$ are $\b$-handleslides while $l$ of them are
  $\a$-handleslides. Indeed, as we move the parameter value $\mu$ in a
  circle around 0, for each flow from another index~1 critical
  point~$q$ to $p_1$, we pass a stratum $S_i$ where we see an orbit of
  tangency in $W^u(q) \cap W^s(p_2)$, which translates to an
  $\a$-handleslide. Similarly, for each flow from $p_2$ to an index~2
  critical point~$r$, for some value $\mu \in S_i$, we see an orbit of
  tangency in $W^u(p_1) \cap W^s(r)$, which translates to a
  $\b$-handleslide. In each case, another curve slides over the
  $\a$-circle $W^u(p_2) \cap \S$ or the $\b$-circle $W^s(p_1) \cap \S$.

  We now explain how to choose $\eps$, the link $P$ -- which is a
  $2r$-gon -- and the corresponding diagrams $H_1$, $H_1', \dots, H_r$,
  $H_r'$. Since $(f_0,v_0)$ is not separable, we describe the
  construction in detail. For an illustration, see
  Figure~\ref{fig:E1-surface}.  Take~$N_0$ to be a thin regular
  neighborhood of
  \[
  \bigcup \left\{\, W^s(p) \,\colon\, p \in C_0(f_0) \cup C_1(f_0)
    \setminus \{p_2^0\} \,\right\} \cup R_-(\g),
  \]
  and let $\S = \ol{\partial N_0 \setminus \partial M}$.  Generically,
  $\S$ is transverse to $v_0$, and if we choose~$\eps$ small enough,
  $\S$ is transverse to $v_\mu$ for every $\mu \in D^2_\eps$.
  Consider the circle $\b = W^s(p_1) \cap \S$, and let $B$ be an
  annular neighborhood of $\b$ in $\S$ so small that it is disjoint
  from the stable flow line into~$p_2$ not starting at~$p_1$. Pick values $\nu_i \in S_i$,
  and let $A_i$ be a thin tube $\partial N(W^s(p_2^{\nu_i})) \setminus
  N_0$ disjoint from $W^u(p_1^{\nu_i}) \cup W^s(p_1^{\nu_i})$, and let
  $D_i \cup D_i'$ be the pair of disks $N(W^s(p_1^{\nu_i})) \cap \S$
  (the feet of the tube $A_i$).  If $\nu_i$ lies sufficiently close to
  $0$, then we can assume that $D_i \subset B$.
  Define $\S_i$ to be $(\S \setminus (D_i \cup D_i')) \cup A_i$; this
  is a separating surface for the Morse-Smale pair $(f_i,v_i)$.  For
  every $i \in \{\, 1,\dots,r \,\}$, pick an arc $a_i$ transverse to
  $S_i$ at $\nu_i$ so short that $\S_i \in \S(f_\mu,v_\mu)$ and $D_i
  \cap W^s(p_1^{\mu}) = \emptyset$ for every $\mu \in a_i$.
  According to our conventions, $\partial a_i \cap C_i = \mu_i'$ and
  $\partial a_i \cap C_{i+1} = \mu_{i+1}$.  Note that $A_i$ is an
  annular neighborhood of the circle $\a^i = W^u(p_2^{\nu_i}) \cap
  \S_i$. We also pick a small disk $D \subset \S$ around the point
  $W^s(p_2) \cap (\S \setminus B)$. Again, taking~$A_i$ sufficiently
  thin, all the~$D_i'$ will lie in~$D$.  The side $a_i'$ of the link
  $P$ is an arbitrary curve in $C_i$ connecting $\mu_i$ and
  $\mu_i'$. To obtain the surface enhanced link, we take~$\S_i' =
  \S_{i+1}$.

  The diagrams $H_i = H(f_i,v_i,\S_i)$ and $H_i' = H(f_i',v_i',\S_i')$
  all agree outside the subsurfaces $T_i = (D \setminus D_i') \cup A_i
  \cup (B \setminus D_i)$, up to a small isotopy of $\alphas \cup
  \betas$. What happens inside the twice punctured disks $T_i$ is
  depicted in Figure~\ref{fig:link-11}. There, $\partial T_i$ are the
  green circles and the two components of $\partial A_i$ are labeled
  by~$A$. (We have omitted the tubes $A_i$ for clarity.) The only
  difference between $\S_i$ and $\S_{i+1}$ is that the foot of the
  tube $A_i$ lying in $B$ is moved around by an isotopy. The diagrams
  $H_i'$ and $H_{i+1}$ are related by a handleslide, while~$a_i'$
  induces a diffeomorphism $\varphi_i \colon H_i \to H_i'$ isotopic to
  the identity in $M$ (and well-defined up to isotopy). The
  composition $\varphi_n \circ \dots \circ \varphi_1 \colon \S_1 \to
  \S_1$ is a diffeomorphism that is the product of Dehn twists about
  the components of $\partial T_1$; see
  Definition~\ref{def:handleswap}.
  \begin{figure}
    \centering
    \includegraphics{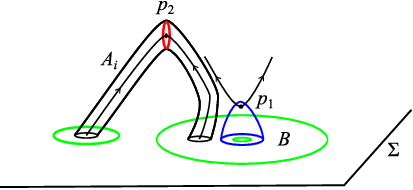}
    \caption{Construction of the Heegaard surface in
      case~\ref{item:E1}.}
    \label{fig:E1-surface}
  \end{figure}

  \begin{figure}
    \centering
    \includegraphics{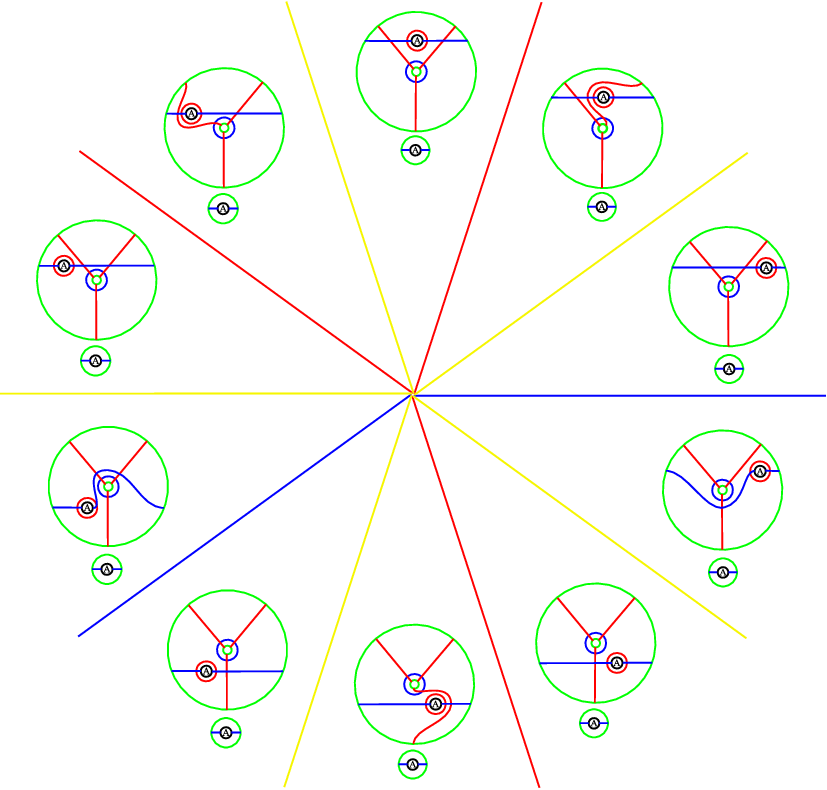}
    \caption{The bifurcation diagram for singularity~\ref{item:E1}, a
      flow from an index~1 critical point to an index~2 critical
      point.  In this example, $(k,l) = (2,3)$.  Outside the green
      circles, all five diagrams are small isotopic translates of each
      other. The Heegaard surface inside the green circles is not
      constant, there is a tube that moves around joining the two
      black boundary circles labeled by A.}
    \label{fig:link-11}
  \end{figure}
\end{enumerate}

In cases \ref{item:E2}--\ref{item:E4}, the same splitting surface $\S$
can be chosen for every $(f_i,v_i)$, and the curves $\alphas_i$
(resp.\ $\betas_i$) are all isotopic to each other for an appropriate
choice of spanning trees. Since we are going to pass to isotopy
diagrams, this description suffices for our purposes.
This concludes the proof of Theorem~\ref{thm:2-param}.
\end{proof}
\section{Simplifying moves on Heegaard diagrams}
\label{sec:simplify}

In this section, we break down $(k,l)$-stabilizations, generalized
handleslides, and the loops of diagrams of type
\ref{item:A}--\ref{item:E} appearing in Theorem~\ref{thm:2-param} into
the simpler moves and loops that come up in the definition of strong
Heegaard invariants, Definition~\ref{def:strong-Heegaard}.  During the
simplification procedure, we work with overcomplete diagrams, and will
only later choose spanning trees to pass to actual sutured diagrams.

We now overview the relevant combinatorial structures we are going to use.
A generic 2-parameter family of gradient-like vector fields
$\mathcal{F} \colon D^2 \to \FV(M,\g)$ gives rise to a bordered stratification
of~$D^2$; see Definition~\ref{def:stratification}. We subdivide this stratification
into a certain CW decomposition of~$D^2$, and consider a dual CW decomposition.
We label the vertices of this dual decomposition with overcomplete diagrams
such that neighboring diagrams are related by (generalized) Heegaard moves.
The simplification procedure consists of modifying the CW decompositions in a combinatorial
way, as opposed to modifying the 2-parameter family of gradients. Our goal is to obtain
a CW decomposition of~$D^2$ where the boundary of every 2-cell corresponds to one of the
elementary loops in the definition of strong Heegaard invariants (Definition~\ref{def:strong-Heegaard}).
We now give the relevant definitions.

\begin{definition} \label{def:polyhedral}
  A \emph{polyhedral decomposition} of $D^2$ is a regular CW decomposition of~$D^2$
  (i.e., the attaching map of every cell is an embedding) such that every closed
  1-cell is smoothly embedded in~$D^2$.

  A \emph{bordered polyhedral decomposition} of $D^2$ is a partition
  of $D^2$ that arises as follows:
  Choose a polyhedral decomposition of~$D^2$ such that every 0-cell in the boundary~$S^1$
  has valence~3, every closed 1-cell not contained in~$S^1$
  intersects~$S^1$ in at most one 0-cell, and every
  2-cell intersects~$S^1$ in at most one 1-cell.
  Then take the union of each open $i$-cell in $S^1$ with the
  neighboring open $(i+1)$-cell in $\text{Int}(D^2)$ for $i \in
  \{0,1\}$ (where an open 0-cell is just a 0-cell).
  We call these \emph{bordered $(i+1)$-cells}.

  A polyhedral decomposition $\mathcal{P}$ and a bordered polyhedral
  decomposition $\mathcal{R}$ are \emph{dual} if
  $\text{sk}_0(\mathcal{P}) \cap \text{sk}_1(\mathcal{R}) = \emptyset$
  and $\text{sk}_0(\mathcal{R}) \cap \text{sk}_1(\mathcal{P}) = \emptyset$,
  in each 2-cell of $\mathcal{P}$ there is a unique vertex of $\mathcal{R}$,
  and in each (bordered) 2-cell of $\mathcal{R}$ there is a unique vertex of $\mathcal{P}$.
  Furthermore, for each 1-cell~$e$ of~$\mathcal{P}$, we have $|e \cap \text{sk}_1(\mathcal{R})| = 1$,
  and for each (bordered) 1-cell~$e^*$ of~$\mathcal{R}$, we have $|e^* \cap \text{sk}_1(\mathcal{P})| = 1$.
  For an example, see Figure~\ref{fig:polyhedral-dual}.
\end{definition}

\begin{figure}
    \centering
    \input{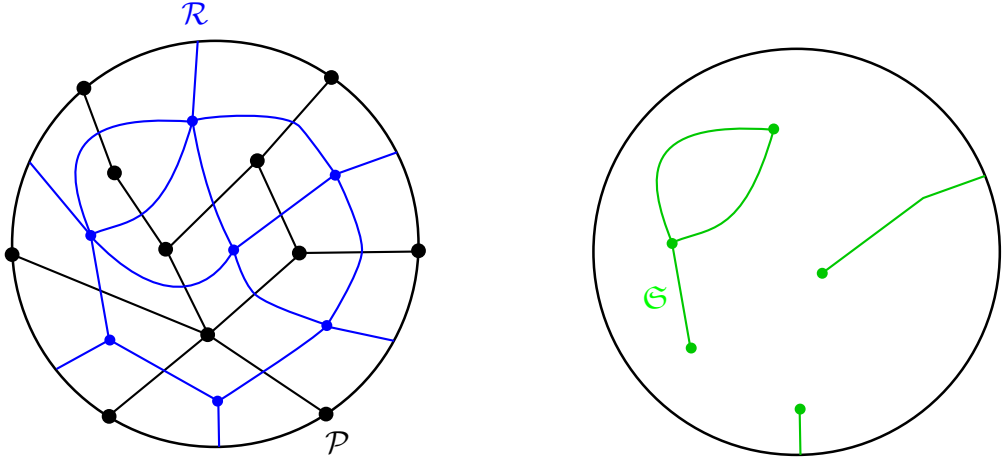}
    \caption{On the left, a polyhedral decomposition~$\mathcal{P}$ of $D^2$ is
      shown in black. It is dual to the blue bordered polyhedral decomposition~$\mathcal{R}$
      that refines the green bordered stratification~$\mathfrak{S}$
      of~$D^2$ on the right.}
    \label{fig:polyhedral-dual}
\end{figure}

\begin{definition} \label{def:stratification}
  We say that the partition $\mathfrak{S} = V_0 \sqcup V_1 \sqcup V_2$
  is a \emph{bordered stratification} of the disk $D^2$ if the
  following hold:
  \begin{enumerate}
  \item $V_0$ is a finite set of points in the interior of $D^2$,
  \item $V_1$ is a properly embedded 1-dimensional submanifold-with-boundary of $D^2
    \setminus V_0$, and
  \item each point $x \in V_0$ has a neighborhood $N_x$ such
    that the pair $(N_x, V \cap N_x)$ is homeomorphic to a cone
    $(D^2,I \cdot H)$ for some finite set $H \subset S^1$, where $V = V_0 \cup V_1$,
    and $I \cdot H$ is the union of the line segments connecting the origin with each point of~$H$.
  \end{enumerate}

  Note that we can view every bordered polyhedral decomposition
  as a bordered stratification if we set $V_0$ to be the union of the
  0-cells and $V_1$ to be the union of the (bordered) 1-cells.
  A bordered polyhedral decomposition
  $\mathcal{R}$ of $D^2$ is a \emph{refinement of} $\mathfrak{S}$ if
  $\text{sk}_i(\mathcal{R}) \supset V_i$ for $i \in \{0,1\}$
  (in particular, every open 1-cell of
  $\mathcal{R}$ is either contained in $V_1$ or is disjoint from it).
  We say that a polyhedral decomposition of $D^2$ is \emph{dual to
  $\mathfrak{S}$} if it is dual to some bordered polyhedral
  decomposition $\mathcal{R}$ refining $\mathfrak{S}$.
  For an example, see Figure~\ref{fig:polyhedral-dual}.
\end{definition}

A generic 2-parameter family of gradient-like vector fields
$\mathcal{F} \colon D^2 \to \FV(M,\g)$
gives rise to a bordered stratification $\mathfrak{S}(\mathcal{F})$ of
$D^2$ by taking
\[
V_i = \{\, \mu \in D^2 \,\colon\, \mathcal{F}(\mu) \in
\FV_{2-i}(M,\g)\,\}
\]
for $i \in \{\,0,1,2\,\}$.

\begin{definition}\label{def:adapted}
  A polyhedral decomposition $\mathcal{P}$ of $D^2$ is \emph{adapted
    to the family $\mathcal{F}$} if
  \begin{enumerate}
  \item $\mathcal{P}$ is dual to $\mathfrak{S}(\mathcal{F})$,
  \item \label{item:short} each edge intersecting $V_1$ is so short
    that Proposition~\ref{prop:1-param} applies to it,
  \item if $\om \in V_0$ and $\sigma$ is the 2-cell of $\mathcal{P}$
    containing $\om$, then $\partial \sigma$ is a link of $\om$ as in
    Theorem~\ref{thm:2-param},
  \item every 2-cell $\sigma$ of $\mathcal{P}$ that intersects $V_1$
    but is disjoint from $V_0$ is a quadrilateral, and $\sigma \cap
    V_1$ is an arc connecting opposite sides of $\sigma$,
  \item \label{item:disjoint-edges} any two closed 2-cells of
    $\mathcal{P}$ containing two different points of $V_0$ are
    disjoint, and any two closed 1-cells of $\mathcal{P}$ that
    intersect $V_1 \setminus S^1$ are either disjoint, or they both
    belong to a 2-cell containing a point of $V_0.$
  \end{enumerate}
\end{definition}

\begin{lemma} \label{lem:adapted} Let $\mathcal{F} \colon D^2 \to
  \FV(M,\g)$ be a generic 2-parameter family. Then there exists a
  polyhedral decomposition $\mathcal{P}$ of $D^2$ adapted to
  $\mathcal{F}$. Furthermore, given a triangulation of $S^1$ such that
  each 1-cell contains at most one bifurcation point of $\mathcal{F}$
  and satisfies Condition~(\ref{item:short}) of
  Definition~\ref{def:adapted}, then we can choose $\mathcal{P}$
  such that it extends this triangulation.
\end{lemma}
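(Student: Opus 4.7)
The plan is to build $\mathcal{P}$ in three stages (neighborhoods of $V_0$, neighborhoods of $V_1\setminus V_0$, and the complement), and then to exhibit a dual bordered decomposition $\mathcal{R}$ refining $\mathfrak{S}(\mathcal{F})$.

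\emph{Stage 1 (around $V_0$).} For each $\om\in V_0$, apply Theorem~\ref{thm:2-param} to pick a closed disk $D_\om\subset D^2$ with $\om\in\Int(D_\om)$ whose boundary is the link $P_\om$ provided by that theorem. After shrinking, one may assume that the $D_\om$ are pairwise disjoint, disjoint from $S^1$, and that $D_\om\cap V_1$ is the cone on $\partial D_\om\cap V_1$ centered at~$\om$. Declare each such $D_\om$ to be a 2-cell of $\mathcal{P}$, with vertices $\mu_i$ (and $\mu_i'$ when present) and edges $a_i$, $a_i'$ of $P_\om$ on its boundary; condition~(3) is then built in, as is the disjointness part of~(5) for 2-cells meeting $V_0$.

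\emph{Stage 2 (around $V_1$).} On the compact set $D^2\setminus\bigcup_\om\Int(D_\om)$, the locus $V_1$ is a properly embedded 1-manifold whose endpoints lie on $\bigl(\bigsqcup_\om\partial D_\om\bigr)\cup S^1$. Subdivide each of its components into short sub-arcs by adding finitely many interior vertices, using on $S^1$ only vertices of the prescribed triangulation, and making the subdivision fine enough that Proposition~\ref{prop:1-param} applies across each sub-arc. For each sub-arc pick a thin closed rectangular tubular neighborhood (a strip) whose core is that sub-arc and whose two short sides are transverse to $V_1$; by a standard general-position and tubular-neighborhood argument, arrange the strips so that they are pairwise disjoint, meet each $D_\om$ only along a single edge $a_i$ of $\partial D_\om$, and meet $S^1$ only along edges of the given triangulation. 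Declare each strip to be a quadrilateral 2-cell of $\mathcal{P}$; conditions~(2), (4), and the remaining disjointness required in~(5) follow immediately.

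\emph{Stage 3 and duality.} The complement of the $D_\om$'s and strips is a compact surface $X$ with polygonal boundary, lying entirely in $D^2\setminus(V_0\cup V_1)$; triangulate $X$ into polyhedral 2-cells compatibly with this boundary and with the prescribed triangulation of $S^1$, adding new vertices only in the interior of $X$. This completes the construction of $\mathcal{P}$. To verify condition~(1), build a dual bordered polyhedral decomposition $\mathcal{R}$ by placing one vertex of $\mathcal{R}$ in the interior of each 2-cell of $\mathcal{P}$ (taking this vertex to be $\om$ itself inside $D_\om$ and to lie on the core sub-arc of $V_1$ inside each strip), joining vertices in adjacent 2-cells by short transverse arcs dual to the 1-cells of $\mathcal{P}$ in $\Int(D^2)$, and absorbing $S^1$-cells into bordered cells in the standard way. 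By construction $V_0\subset\mathrm{sk}_0(\mathcal{R})$, and each 1-cell of $\mathcal{R}$ either lies along $V_1$ (namely the edges joining the strip-vertices to each other and to the $\om$'s) or is disjoint from $V_1$, so $\mathcal{R}$ refines $\mathfrak{S}(\mathcal{F})$. The most delicate step is coordinating the strips in Stage~2 so that they are simultaneously pairwise disjoint, short enough for Proposition~\ref{prop:1-param}, and properly abut the prescribed edges of the $D_\om$'s and of the $S^1$-triangulation; this is routine because compactness of $D^2$ gives a uniform bound on the required thickness and all of the intricate local geometry of $V$ has already been confined inside $\bigsqcup_\om D_\om$.
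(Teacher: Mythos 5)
Your proposal follows essentially the same strategy as the paper's proof: first choose small 2-cells around each point of $V_0$ using Theorem~\ref{thm:2-param}, then cover $V_1\setminus N(V_0)$ by thin quadrilaterals dual to short transverse arcs so that Proposition~\ref{prop:1-param} applies, and finally subdivide the remainder and verify duality with a bordered decomposition refining $\mathfrak{S}(\mathcal{F})$. Your writeup is more explicit than the paper's (in particular you spell out the construction of $\mathcal{R}$ rather than asserting it is "apparent"), but the underlying ideas coincide.
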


\begin{proof}
  First, choose the 2-cells of $\mathcal{P}$ containing the points of
  $V_0$ using Theorem~\ref{thm:2-param}, all taken to be sufficiently
  small, and denote by $N(V_0)$ their union.  Pick short arcs
  transverse to $V_1 \setminus N(V_0)$ such that
  Proposition~\ref{prop:1-param} applies to each, and such that there
  is an arc through each boundary point of $V_1$ lying inside $S^1$.
  These will all be 1-cells of $\mathcal{P}$.  Next, as in
  Proposition~\ref{prop:rectangle}, connect the endpoints of
  neighboring 1-cells intersecting $V_1$ so that we obtain a
  collection of rectangles that, together with~$N(V_0)$, completely
  cover $V_1$.  The rectangles are 2-cells of $\mathcal{P}$.  Finally,
  we subdivide the remaining regions until the attaching map of each
  2-cell becomes an embedding.  This is possible if we choose
  sufficiently many 1-cells intersecting $V_1$.  It is apparent from
  the construction that $\mathcal{P}$ is dual to a bordered polyhedral
  decomposition of $D^2$ refining the bordered stratification
  $\mathfrak{S}(\mathcal{F})$. If we are already given $\mathcal{P}
  \cap S^1$, then the extension to $D^2$ proceeds in an analogous
  manner.
\end{proof}

If $\mathcal{P}$ is adapted to the generic 2-parameter family
$\mathcal{F} \colon D^2 \to \FV(M,\g)$, then we label each edge of
$\mathcal{P}$ with the type of move that occurs as we move along it,
which is either a diffeomorphism isotopic to the identity if the edge
does not cross $V_1$, or a generalized handleslide, a
$(k,l)$-stabilization, or birth/death of a redundant $\a/\b$
curve if the edge does cross $V_1$.

\begin{definition} \label{def:coherent-surfaces} Let $\mathcal{F}
  \colon D^2 \to \FV(M,\g)$ be a generic 2-parameter family and
  $\mathcal{P}$ an adapted polyhedral decomposition. A choice of
  Heegaard surfaces
  \[
  \{\, \S_\mu \in \S(\mathcal{F}(\mu)) \,\colon\, \mu \in
  \text{sk}_0(\mathcal{P}) \,\}
  \]
  is \emph{coherent} with $\mathcal{P}$ if, for every edge $e$ of
  $\mathcal{P}$ with $\partial e = \mu - \mu'$, the isotopy diagrams
  $[H(\mathcal{F}(\mu),\S_\mu)]$ and $[H(\mathcal{F}(\mu'),\S_{\mu'})]$
  are related as indicated by the label of $e$. A \emph{surface
    enhanced polyhedral decomposition of $D^2$ adapted to
    $\mathcal{F}$} is a polyhedral decomposition of $D^2$ adapted to
  $\mathcal{F}$, together with a coherent choice of Heegaard surfaces.
\end{definition}

\begin{lemma} \label{lem:coherent} Let $\mathcal{F} \colon D^2 \to
  \FV(M,\g)$ be a generic 2-parameter family, and suppose that
  $\mathcal{P}$ is a polyhedral decomposition of $D^2$ adapted to
  $\mathcal{F}$.  If we are given Heegaard surfaces $\S_\mu \in
  \S(\mathcal{F}(\mu))$ for $\mu \in \text{sk}_0(\mathcal{P}) \cap
  S^1$ such that, for every edge $e$ of
  $\mathcal{P}$ in $S^1$ with $\partial e = \mu - \mu'$, the isotopy diagrams
  $[H(\mathcal{F}(\mu),\S_\mu)]$ and $[H(\mathcal{F}(\mu'),\S_{\mu'})]$
  are related as indicated by the label of $e$, then this can be
  extended to a choice of Heegaard surfaces coherent with
  $\mathcal{P}$.
\end{lemma}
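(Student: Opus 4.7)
The plan is to extend the given assignment one vertex at a time, using the local coherent families provided by Theorem~\ref{thm:2-param} at codimension-2 2-cells, Proposition~\ref{prop:rectangle} at rectangle cells crossing $V_1$, and the path-lifting property of Proposition~\ref{prop:fibration} along edges disjoint from $V_1$. The structural input that makes this work is condition~(\ref{item:disjoint-edges}) of adaptedness: two closed 2-cells containing distinct points of $V_0$ are disjoint, and two closed $V_1$-crossing 1-cells meet only inside a common codim-2 cell. Consequently each vertex $\mu \in \text{sk}_0(\mathcal{P})$ lies on at most one codim-2 2-cell and, outside of codim-2 cells, on at most one $V_1$-crossing rectangle edge. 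Moreover, by taking the codim-2 2-cells in the construction of $\mathcal{P}$ to be small (as in Lemma~\ref{lem:adapted}), no vertex of $S^1$ lies on a codim-2 link.

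First I would process the codim-2 2-cells. For each such cell $\sigma$ with centre $\om \in V_0$, pick a surface $\S \in \S(\mathcal{F}(\om))$ and invoke Theorem~\ref{thm:2-param} to produce coherent surfaces $\S_{\mu_i}$ (and $\S_{\mu_i'}$, if applicable) at every vertex of $\partial \sigma$. By the remark above, no such vertex lies in $S^1$, so this assignment is unconstrained by the boundary datum.

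Next I would process the rectangle 2-cells containing arcs of $V_1$. For such a rectangle $R$, consider the set of corners already assigned (by the boundary datum on $S^1$, or by Step~1 if a side of $R$ is shared with a codim-2 link). By condition~(\ref{item:disjoint-edges}), the two $V_1$-crossing edges of $R$ cannot share a vertex with any other $V_1$-crossing edge outside $R$; therefore any pre-assigned corners lie on a common side of $V_1$ in $R$. Proposition~\ref{prop:rectangle} then determines the opposite pair of corners up to the stabilization/handleslide relation, and we pick representatives realising that relation; if no corner is pre-assigned we simply choose one freely and let Proposition~\ref{prop:rectangle} supply the other three. Finally, for every remaining vertex $\mu$, pick a spanning forest of the subgraph of edges not meeting $V_1$ that rooted at the already-assigned vertices, and propagate a choice of surface along each edge using the path-lifting property of the principal bundle $\pi_k \co E_k^m(M,\gamma) \to B_k^m(M,\gamma)$ of Proposition~\ref{prop:fibration}; since the fibre $C^\infty(\S,\RR)$ is contractible, paths lift from any initial datum, and by Lemma~\ref{lem:isotopy} the resulting identification is a diffeomorphism isotopic to the identity in~$M$, which is precisely the coherence required.

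The main obstacle is the rectangle step: when one pair of opposite corners of a rectangle is pre-assigned (e.g.\ two corners from $S^1$ on opposite sides of $V_1$), we must verify that the two surfaces actually satisfy the stabilization/handleslide relation so that the other pair can be chosen compatibly. This is handled by Proposition~\ref{prop:rectangle}: since the input data on $S^1$ was assumed coherent with the labels on $S^1$-edges, the two opposite corners carry surfaces that differ by flowing across $V_1$, which by the proof of Proposition~\ref{prop:rectangle} is exactly the required relation after a small path-lifting adjustment using the affine structure on $\S(f,v)$ of Proposition~\ref{prop:grad-like}. With this adjustment the extension is coherent on every 1-cell of $\mathcal{P}$, completing the proof.
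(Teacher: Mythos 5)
Your proposal reaches the right conclusion using essentially the same idea as the paper, but takes a noticeably more elaborate route. The paper's proof is three sentences: assign surfaces at vertices of codimension\hyp 2 links via Theorem~\ref{thm:2-param}, assign surfaces to the remaining vertices of $V_1$-crossing edges via Proposition~\ref{prop:1-param} (this is consistent because, by condition~(\ref{item:disjoint-edges}), those edges are pairwise disjoint), and choose the remaining vertices \emph{arbitrarily}. The last point is the one you overcomplicate: your spanning-forest propagation is unnecessary because for an edge $e$ not crossing $V_1$, the label is ``diffeomorphism isotopic to the identity,'' and Lemma~\ref{lem:isotopy} guarantees that \emph{any} choice of separating surfaces at the two endpoints of a Morse--Smale family yields diagrams related by such a diffeomorphism; no propagation or lifting is needed. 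Your detour through Proposition~\ref{prop:rectangle} for the quadrilateral cells is likewise heavier than the paper's direct application of Proposition~\ref{prop:1-param} to each $V_1$-crossing 1-cell. Finally, the ``main obstacle'' you raise is slightly misdiagnosed: two corners pre-assigned from $S^1$ necessarily lie on a single non-crossing side $a_i$ of the quadrilateral and hence on the \emph{same} side of $V_1$, so that configuration cannot force opposite-side constraints; the case that actually needs care is when a quadrilateral shares its $V_1$-crossing edge with a codimension\hyp 2 link, in which case the two corners \emph{are} on opposite sides of $V_1$, but coherence is then guaranteed by the way Theorem~\ref{thm:2-param} chose the surfaces on the link. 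So your argument works, but the added machinery obscures rather than strengthens it.
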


\begin{proof}
  For the vertices of each 2-cell containing a point of $V_0$, we
  choose the surfaces~$\S_\mu$ using Theorem~\ref{thm:2-param}.  Then,
  for the remaining vertices of edges $e$ that intersect $V_1
  \setminus S^1$, we pick the $\S_\mu$ using
  Proposition~\ref{prop:1-param}.  This is possible because these
  edges have no vertices in common by (\ref{item:disjoint-edges}).
  For the rest of the vertices in $\text{sk}_0(\mathcal{P}) \setminus
  S^1$, we choose $\S_\mu$ arbitrarily.
\end{proof}

From now on, let
\[
\mathcal{F} \colon D^2 \to \FV(M,\g)
\]
be a generic 2-parameter family, $\mathfrak{S} = \mathfrak{S}(\mathcal{F})$ the
induced bordered stratification of $D^2$, and $\mathcal{P}$ an adapted surface
enhanced polyhedral decomposition of $D^2$ with dual bordered
polyhedral decomposition $\mathcal{R}$ refining~$\mathfrak{S}$.  In
the rest of Section~\ref{sec:simplify}, we give a method for resolving $\mathcal{R}$, giving
rise to a new bordered polyhedral decomposition $\mathcal{R}'$ of
$D^2$. This consists of first replacing the strata in $V_1 \setminus
N(V_0)$ corresponding to $(k,l)$-stabilizations by a collection of
parallel strata labeled by simple stabilizations and handleslides.
Then, at each point $\om$ of $V_0$, we connect these strata in a
particular manner depending on the type of $\om$.  We do not claim the
existence of a family $\mathcal{F}'$ giving rise to the new
decomposition $\mathcal{R}'$, though constructing such is probably
straightforward but tedious. (This would be the 2-parameter analogue of
Proposition~\ref{prop:lift-moves}.)

The role of the resolved stratification $\mathcal{R}'$ is that we can
refine the polyhedral decomposition $\mathcal{P}$ adapted to
$\mathcal{F}$ to obtain a decomposition $\mathcal{P}'$ dual to
$\mathcal{R}'$, and we can choose (overcomplete) isotopy diagrams for
the new vertices $\text{sk}_0(\mathcal{P}') \setminus
\text{sk}_0(\mathcal{P})$ in a natural manner such that neighboring
diagrams are now related by simple stabilizations, simple
handleslides, or diffeomorphisms. Furthermore, along the boundary of
each 2-cell of $\mathcal{P}'$, after an appropriate choice of spanning
trees, each strong Heegaard invariant will commute by definition.

As in the previous sections, we suppress the strata corresponding to
index 0-1 and 2-3 saddle-nodes, since these disappear for any choice
of spanning trees. For simplicity, we will often only draw the
bordered polyhedral decomposition $\mathcal{R}'$, possibly the dual
decomposition $\mathcal{P}'$, and the Heegaard diagrams for a few
vertices $\mu$ of $\mathcal{P}'$ if the other intermediate diagrams
are easy to recover.  Consistently with our previous color
conventions, edges of $\mathcal{R}$ and $\mathcal{R}'$ are red if the
diagrams on the two sides are related by an $\a$-equivalence, blue for
$\b$-equivalences, black for (de)stabilizations, and
yellow for diffeomorphisms.

\subsection{Codimension-1}
\label{sec:simplify-codim-1}

Suppose that the possibly overcomplete isotopy diagram $H' =
(\S',[\alphas'],[\betas'])$ is obtained from $H =
(\S,[\alphas],[\betas])$ by a $(k,l)$-stabilization, as in Definition~\ref{def:gen-stab}.
In particular, we
remove the disk $D \subset \S$ and replace it with the punctured torus
$T$ to obtain $\S'$. Inside $T$, we have two new attaching curves;
namely, $\a \in \alphas'$ and $\b \in \betas'$.

Such a $(k,l)$-stabilization can be replaced by a simple
stabilization, $k$ consecutive $\b$-handleslides, and $l$ consecutive
$\a$-handleslides.  For convenience, we describe this procedure in the
direction of the destabilization going from $H'$ to $H$.
Specifically, pick an orientation on both $\a$ and $\b$.  Let $\a_1,
\dots, \a_l$ be the $\a$-curves that intersect $\b$, labeled in order
given by the orientation of $\b$ and possibly listing the same
$\a$-curve several times. This gives rise to a sequence of diagrams
\[
H' = H_0 \text{, } H_1,\dots, H_l,
\]
where $H_i$ is obtained from $H_{i-1}$ by sliding $\a_i$ over $\a$ in
the direction opposite to the orientation of $\b$.  Similarly, let
$\b_1, \dots,\b_k$ be the $\b$-curves that intersect $\a$, labeled in order
given by the orientation of $\a$. Sliding these over $\b$ one by one
in the direction opposite to the orientation of $\a$, we obtain the
diagrams $H_{l+1},\dots,H_{l+k}$.  The result is a rectangular grid
between the arcs coming from $\a_1,\dots,\a_l$ and $\b_1, \dots,
\b_k$, plus the handle $T$ over which $\a$ and $\b$ run.  We can now
perform a simple destabilization on $(T,\a,\b)$ to obtain $H$.  See
Figure~\ref{fig:simpl-stab} for the resolution of a $(k,l)$-stabilization
stratum in a 2-parameter family.

\begin{figure}
  \centering
  \includegraphics{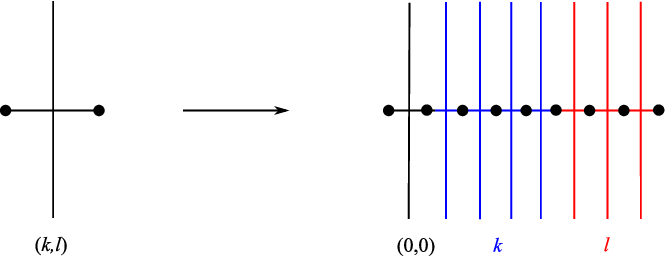}
  \caption{Resolving a stabilization. A stabilization of type
    $(k,l)$ can be replaced by a simple stabilization, followed by~$k$
    consecutive $\b$-handleslides and~$l$ consecutive
    $\a$-handleslides. When forming the resolution~$\mathcal{R}'$
    of the bordered polyhedral decomposition~$\mathcal{R}$, we replace the $(k,l)$-stabilization
    stratum of $\mathcal{R}$ on the left with a simple stabilization stratum,
    $k$ consecutive $\b$-handleslide strata (shown in blue),
    and $l$ consecutive $\a$-handleslide strata (shown in red) of $\mathcal{R}'$ on the right.}
  \label{fig:simpl-stab}
\end{figure}

Note that there were several choices involved in this construction,
namely the orientations on $\alpha$ and $\beta$, and also whether to
do the $\alpha$-handleslides or the $\beta$-handleslides first.  (In
the opposite direction, going from $H$ to $H'$ via a
$(k,l)$-stabilization, the choice of orientations corresponds to a
choice of which quadrant around the grid of intersections to stabilize
in.)  It will be helpful here to introduce the notion of a
stabilization slide.

\begin{definition} \label{def:stab-slide} A \emph{stabilization slide}
  is a subgraph of $\G$ of the form
  \[
  \xymatrix{H_1 \ar[r]^e \ar[rd]^f & H_2 \ar[d]^g \\ & H_3}
  \]
  such that
  \begin{enumerate}
  \item $H_i = (\S_i,[\alphas_i],[\betas_i])$ are (possibly
    overcomplete) isotopy diagrams for $i \in \{1,2,3\}$ such that
    $\S_2 = \S_3$,
  \item the edges $e$ and $f$ are stabilizations, while $g$ is an $\a$- or
    $\b$-equivalence,
  \item there are a disk $D \subset \S_1$ and a punctured torus $T
    \subset \S_2 = \S_3$ such that the restrictions $H_1|_D$, $H_2|_T$
    and $H_3|_T$ are conjugate to the pictures in
    Figure~\ref{fig:stab-slide} if the edge $g$ is an
    $\a$-equivalence, and to the same pictures with red and blue
    reversed if $g$ is a $\b$-equivalence, and
  \item we have $H_1|_{\S_1 \setminus D} = H_2|_{\S_2 \setminus T} =
    H_3|_{\S_3 \setminus T}$.
  \end{enumerate}
\end{definition}

\begin{figure}
  \centering
  \includegraphics{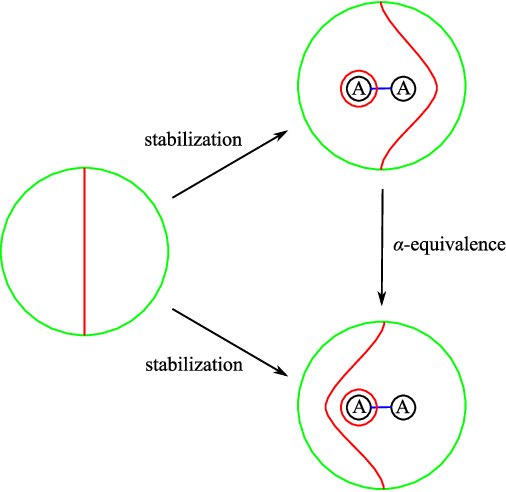}
  \caption{A stabilization slide. Such a loop of diagrams is a degenerate
    case of a distinguished rectangle of
    type~\eqref{item:rect-alpha-stab}.}
  \label{fig:stab-slide}
\end{figure}

Note that, if we apply a strong Heegaard invariant to a stabilization
slide, we obtain a commutative triangle.  Indeed, consider the
rectangle obtained from the stabilization slide triangle by taking two
copies of $H_1$ and connecting them by an edge labeled by the identity
of $H_1$. Then this is a distinguished rectangle of
type~\eqref{item:rect-alpha-stab}, with two opposite edges being
stabilizations and the other two being $\a$- or $\b$-equivalences.

\begin{figure}
  \centering
  \includegraphics{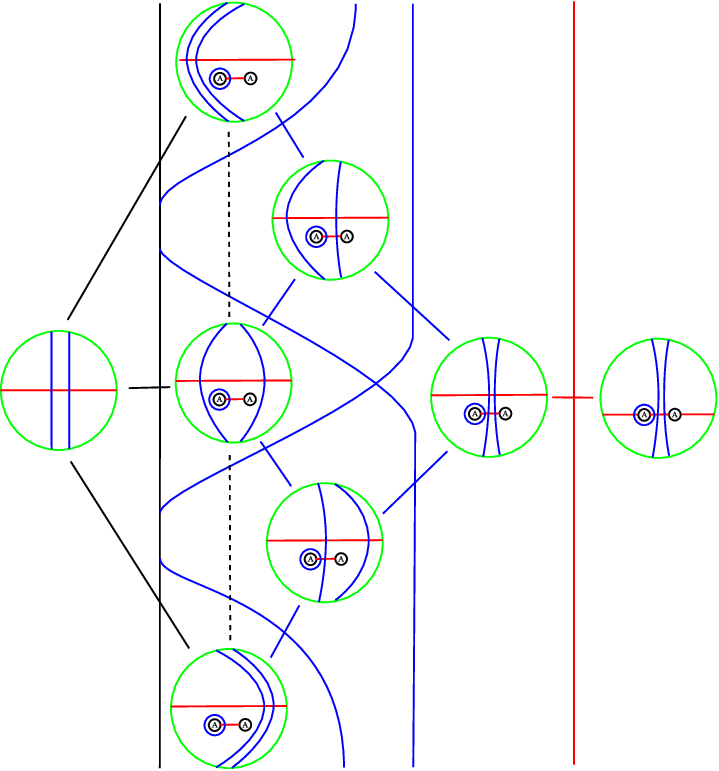}
  \caption{Switching the orientation involved in resolving a
    $(2,1)$-stabilization.  On the top and bottom are two different
    ways of resolving a $(2,1)$-stabilization, with different
    choices for the orientation of $\alpha$.  The two different
    choices can be related by loops of $\b$-equivalences and
    stabilization slides, as shown. Dashed edges are diagonals of
    rectangles whose other diagonal intersects the stabilization stratum.}
  \label{fig:stab-orient}
\end{figure}

It might happen that at the two ends of a $(k,l)$-stabilization
stratum, we need to use resolutions with different orientations
of the curves~$\a$ and~$\b$ (recall that the resolution process
depends on these orientations). Then we can interpolate between
the different resolutions as follows.

\begin{lemma} \label{lem:ori-change}
Suppose that the isotopy diagrams $H$ and $H'$ are related by a $(k,l)$-stabilization.
Using the notation introduced at the beginning of Section~\ref{sec:simplify-codim-1},
fix orientations of the curves~$\a$ and~$\b$, and let $H' = H_0, \dots, H_{k+l+1} = H$
be the resolution where~$H_i$ and~$H_{i+1}$ are related by an $\a$-handleslide for
$i \in \{0,\dots,l-1\}$, a $\b$-handleslide for $i \in \{l,\dots,l+k-1\}$, and by
a simple stabilization for $i = l+k$. If we reverse the orientation of~$\a$,
we obtain a different resolution
\[
H' = H_0, \dots, H_l, H_{l+1}', \dots, H_{k+l+1}' = H
\]
(the first $l+1$ diagrams are the same as we have kept the orientation of~$\b$).
Then there is a regular CW decomposition~$\mathcal{C}_k$ of~$D^2$ and a labeling of its vertices with diagrams such that
\begin{enumerate}
\item \label{it:boundary-decoration} the consecutive vertices along $S^1$ are labeled
\[
H, H_{k+l}, \dots, H_l, H_{l+1}', \dots, H_{k+l}',
\]
\item the boundary of every 2-cell is either labeled by a slide triangle,
or all of its vertices are labeled by $\b$-equivalent diagrams.
\end{enumerate}
\end{lemma}

Before proving Lemma~\ref{lem:ori-change},
we consider the example of switching the orientation of~$\a$ in a $(2,1)$-stabilization.
Figure~\ref{fig:stab-orient} illustrates how to modify the bordered polyhedral decomposition~$\mathcal{R}'$
and the dual polyhedral decomposition~$\mathcal{P}'$
along a $(2,1)$-stabilization stratum of~$\mathfrak{S}$ to obtain new
decompositions $\mathcal{R}''$ and $\mathcal{P}''_0$ that interpolate
between resolutions obtained using the two different orientations of~$\a$.
Essentially, we cut~$\mathcal{P}'$ and~$\mathcal{R}'$ along an arc transverse to the simple stabilization stratum and all
the $\b$-handleslide strata, and glue in the CW decomposition~$\mathcal{C}_2$ of the disk given by Lemma~\ref{lem:ori-change}.
We can also extend the stratification and its bordered polyhedral decomposition
to the disk we glued in.
Note that, in this figure, we have
introduced four codimension-2 bifurcations of type~\ref{item:B3}.
The modified decomposition~$\mathcal{R}''$ has some 1-cells corresponding
to $(1,0)$-stabilizations. We obtain $\mathcal{P}''_0$ by taking the
dual polyhedral decomposition $\mathcal{P}''$ of $\mathcal{R}''$, then
for each $(1,0)$-stabilization edge~$e$ of~$\mathcal{R}''$, we delete
the edge of~$\mathcal{P}''$ passing through~$e$, and replace it with
the other diagonal of the quadrilateral in~$\mathcal{P}''$ containing~$e$.
We indicated these new diagonals by dashed lines in the figure.
Each such diagonal divides the corresponding quadrilateral in~$\mathcal{P}''_0$
into a stabilization slide and a triangle all of whose
vertices are $\b$-equivalences. (A strong Heegaard invariant commutes along these
by the Functoriality Axiom.)

\begin{proof}[Proof of Lemma~\ref{lem:ori-change}]
Along~$S^1$, the CW decomposition~$\mathcal{C}_k$ and the labeling of the vertices
by diagrams are given by condition~\eqref{it:boundary-decoration}.
We denote each vertex on~$S^1$ with the corresponding diagram.
Assume that $H = (-1,0)$, $H_l = (1,0)$, while $H_i$ has negative $y$-coordinate
and $H_i'$ has positive $y$-coordinate for every $i \in \{l+1,\dots,l+k\}$.
We also suppose that the vertices are evenly spaced along~$S^1$.

Before extending the CW decomposition to the interior of~$D^2$,
it is helpful to define a bordered stratification~$\mathfrak{S}_k$ first:
Connect the midpoint of the 1-cell between~$H$ and~$H_{k+l}$
with the midpoint of the 1-cell between~$H$ and~$H_{k+l}'$ by a straight arc~$a$.
Let $v_1, \dots, v_k$ be distinct consecutive points in the interior of~$a$ such that~$v_1$
is closest to~$H_{k+l}$. The straight arc~$b_i$ connects $v_i$ with the midpoint of the
1-cell between $H_{k+l+1-i}$ and $H_{k+l-i}$ for $i \in \{1,\dots,k\}$.
Similarly, the straight arc~$b_i'$ connects $v_i$ with the midpoint of the
1-cell between $H_{l+i-1}'$ and $H_{l+i}'$. Then the set of vertices~$V$ of~$\mathfrak{S}_k$
are the points $b_i \cap b_j'$ for $1 \le j \le i \le k$, and the 1-cells are the components
of
\[
\left(a \cup (b_1 \cup b_1') \cup \dots \cup (b_k \cup b_k')\right) \setminus V;
\]
see Figure~\ref{fig:CW} for an illustration.
\begin{figure}
  \centering
  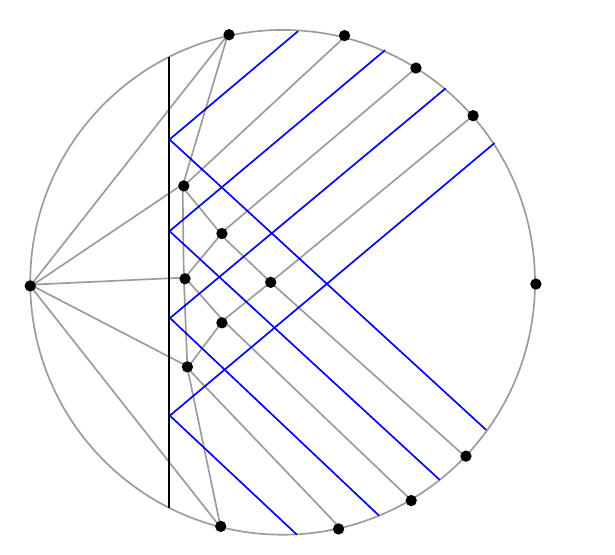
  \caption{The CW decomposition~$\mathcal{C}_k$ of $D^2$ from Lemma~\ref{lem:ori-change} is shown in grey,
  and the arcs $b_1,\dots,b_k$ and $b_1',\dots,b_k'$ in blue.}
  \label{fig:CW}
\end{figure}
The bordered stratification~$\mathfrak{S}_k$ can also be viewed as a bordered polyhedral decomposition.
The vertices of~$\mathcal{C}_k$ lying in the interior of~$D^2$ are the centers
of the 2-cells of~$\mathfrak{S}_k$ disjoint from~$S^1$.
The open 1-cells of $\mathcal{C}_k$ lying in the interior of~$D^2$
are obtained by connecting any two vertices that lie in 2-cells of~$\mathfrak{S}_k$
that share an edge by a straight line. Finally, for every 2-cell of~$\mathcal{C}_k$ that has~$H$ as a vertex
(these are all quadrilaterals), we add the diagonal disjoint from~$H$ to the 1-skeleton of~$\mathcal{C}_k$.

For $1 \le j \le i \le k$, we denote by~$v_{i,j}$ the vertex of~$\mathcal{C}_k$ that
lies between~$b_i$ and~$b_{i+1}$ and between~$b_j'$ and~$b_{j+1}'$.
Every interior vertex of~$\mathcal{C}_k$ is of this form.
We label~$v_{i,j}$ with the diagram~$H_{i,j}$ that is obtained from $H_l$ by
handlesliding $\b_k,\dots,\b_{i+1}$, in this order, over $\b$ according to the positive orientation of~$\a$,
and $\b_1, \dots, \b_j$ over~$\b$ according to the negative orientation of~$\a$.
The diagrams labeling the vertices of
an edge of~$\mathcal{C}_k$ that intersects~$(b_1 \cup b_1') \cup \dots \cup (b_k \cup b_k')$ in~$m \in \{1,2\}$ points
are related by~$m$ consecutive $\b$-handleslides.
Finally, the diagrams~$H$, $H_{i,i}$, and~$H_{i+1,i+1}$ form a slide triangle
for $i \in \{0,\dots,k-1\}$, where $H_{0,0} = H_{k+l}$
and $H_{k,k} = H_{k+l}'$.
\end{proof}

In order to prove Theorem~\ref{thm:iso},
it suffices to construct the modified resolution $\mathcal{P}''_0$ of $\mathcal{P}$ in a purely
combinatorial manner, without actually showing the existence of a
corresponding modification of the 2-parameter family of gradient-like
vector fields. Thus, by cutting the parameter space along an arc
transverse to the stabilization stratum and the $k$ $\b$-handleslide strata,
and gluing in the CW decomposition~$\mathcal{C}_k$ of~$D^2$
together with the labeling of its vertices given in Lemma~\ref{lem:ori-change}, we may assume that the
orientations are picked conveniently along both ends of each $(k,l)$-stabilization stratum.

\begin{figure}
  \centering
  \includegraphics{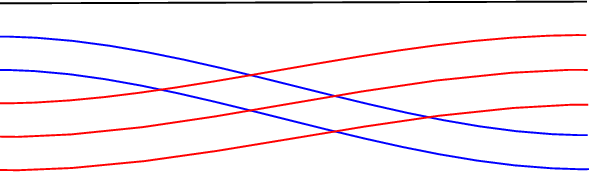}
  \caption{Interpolating in $\mathcal{R}'$ between the resolution of a
    $(2,3)$\hyp stabilization stratum starting with the
    $\b$-handleslides and the one starting with the
    $\a$-handleslides. For this, we introduce a grid of distinguished
    rectangles of type~\eqref{item:rect-alpha-beta}. As before, the handleslide
    stratum is black, the $\a$-handleslide strata are red, and the $\b$-handleslide
    strata are blue.}
  \label{fig:stab-ab-switch}
\end{figure}

Now suppose that at one end of a $(k,l)$-stabilization stratum,
we resolve by doing the $\b$-handleslides first, while at the other end,
we do the $\a$-handleslides first. We can interpolate between these
two choices by introducing a grid of distinguished rectangles of
type~\eqref{item:rect-alpha-beta} of Definition~\ref{def:distinguished-rect};
see Figure~\ref{fig:stab-ab-switch}.

\begin{figure}
  \centering
  \includegraphics{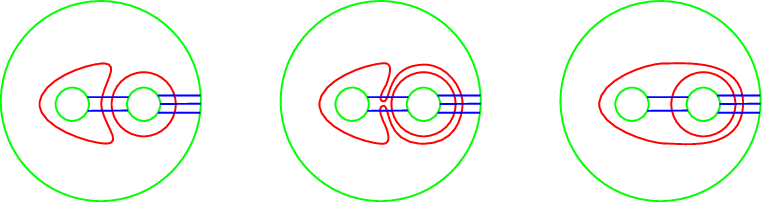}
  \caption{Writing a generalized handleslide of type $(2,3)$ as the
    composition of a simple handleslide and an isotopy of the resulting
    $\a$-curve.}
  \label{fig:simplify-gen-slide}
\end{figure}

If the diagram $\HD'$ is obtained from $\HD$ by a
generalized $\a$-handleslide of type $(m,n)$,
as in Definition~\ref{def:gen-handleslide},
then $\HD'$ can also be
obtained from $\HD$ by a simple (i.e., type $(0,m+n)$) handleslide,
followed by an isotopy of the resulting $\a$-curve.  For an
illustration, see Figure~\ref{fig:simplify-gen-slide}.  Since we are
passing to isotopy diagrams, we do not have to distinguish between
simple and generalized handleslides.

We now prove Proposition~\ref{prop:diag-connected-strong}, which
claims that for any balanced sutured manifold~$(M,\g)$,
in the graph~$\G_{(M,\g)}$, any two vertices can be
connected by an oriented path. Our argument refines the
proofs of \cite[Proposition~2.2]{OS06:HolDiskFour} and \cite[Proposition~2.15]{Juhasz06:Sutured},
in order to include diffeomorphisms isotopic to the identity among the moves
connecting two diagrams.

\begin{proof}[Proof of Proposition~\ref{prop:diag-connected-strong}]
Let~$H$ and~$H'$ be isotopy diagrams of~$(M,\g)$. Then pick
representatives~$\HD = (\S,\alphas,\betas)$ and~$\HD' = (\S',\alphas',\betas')$
such that~$\alphas \pitchfork \betas$ and~$\alphas' \pitchfork \betas'$.

By Proposition~\ref{prop:existence}, there are
simple Morse-Smale pairs~$(f,v)$, $(f',v') \in \FV_0(M,\g)$ such
that~$H(f,v) = \HD$ and~$H(f',v') = \HD'$. By Corollary~\ref{cor:FV-contractible},
there exists a generic 1-parameter family
\[
\{\, (f_t,v_t) \in \FV_{\le 1}(M,\g) \,\colon\, t \in I \,\}
\]
of sutured functions and gradient-like vector fields
such that $(f_0,v_0) = (f,v)$ and $(f_1,v_1) = (f',v')$.
Let~$0 < b_1 < \dots < b_n < 1$ be the set of parameter values such that~$(f_t,v_t) \in \FV_1(M,\g)$
if and only if~$t \in \{b_1,\dots,b_n\}$.

Using Proposition~\ref{prop:1-param} and the fact that for a given splitting surface
any two attaching sets are $\a$/$\b$-equivalent, for every~$i \in \{1,\dots,n\}$,
we can choose points~$b_i^- < b_i < b_i^+$ close to~$b_i$, separating
surfaces~$\S_i^\pm \in \S\left(f_{b_i^\pm},v_{b_i^\pm}\right)$, and spanning trees~$T_i^\pm$
such that the diagrams~$\HD_i^-$ and $\HD_i^+$ for
\[
\HD_i^\pm = H\left(f_{b_i^\pm},v_{b_i^\pm},\S_i^\pm,T_i^\pm\right)
\]
are related by an~$\a$- or $\b$-equivalence, or a~$(k,l)$-(de)stabilization.
As explained at the beginning of Section~\ref{sec:simplify-codim-1}, every $(k,l)$-stabilization
can be written as a simple stabilization, followed by an $\a$-equivalence and a $\b$-equivalence.

Finally, by Lemma~\ref{lem:isotopy},
$\HD$ and~$\HD_1^-$, $\HD_i^+$ and~$\HD_{i+1}^-$ for~$i \in \{\,1,\dots,n-1\,\}$,
and~$\HD_n^+$ and $\HD'$ are related by a diffeomorphism isotopic to the identity in~$M$,
followed by an~$\a$-equivalence and a $\b$-equivalence.
\end{proof}

\subsection{Codimension-2}
\label{sec:simplify-codim-2}

We consider the various types of singularities from
Theorem~\ref{thm:2-param} in Section~\ref{sec:transl-codim-2}, in an
order that is more convenient for this section.
For each type of singularity, we will construct a resolved bordered
decomposition $\mathcal{R}'$, as described at the beginning of
Section~\ref{sec:simplify}.

The links of singularities of type~\ref{item:A1a}--\ref{item:A1d}
and~\ref{item:link-A2} from Theorem~\ref{thm:2-param} (involving pairs
of handleslides) are easy, and we do not modify these during the
resolution process.  After choosing arbitrary spanning trees, we get a
loop in $\G_{(M,\g)}$ where each edge is an $\a$- or
$\b$-equivalence. Any strong Heegaard invariant $F$ applied to this
loop commutes. Indeed, such a loop can be subdivided into triangles
where each edge is of the same color, and some rectangles with two
opposite edges blue and two opposite edges red.  The commutativity of
$F$ along a triangle is guaranteed by the Functoriality Axiom of
Definition~\ref{def:strong-Heegaard}, whereas for the rectangles --
each of which is a distinguished rectangle of
type~\eqref{item:rect-alpha-beta} in the sense of
Definition~\ref{def:distinguished-rect} -- we can use the
Commutativity Axiom.

\begin{figure}
  \centering
  \includegraphics{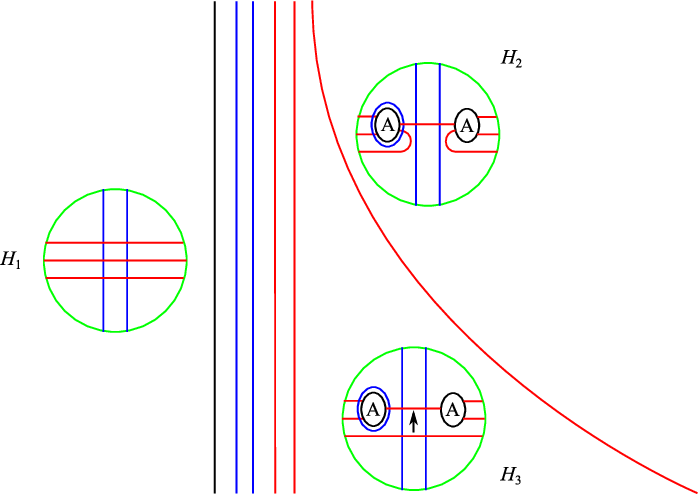}
  \caption{Resolving the singularity of type~\ref{item:link-B3} from
    Figure~\ref{fig:link-9a}.}
  \label{fig:simplify-9a}
\end{figure}

Next, we consider a singularity of type~\ref{item:link-B3}. This
essentially is just changing the type of destabilization, and can be
done with no singularities in the resolved bifurcation diagram
$\mathcal{R}'$, as long as the choice of orientation for resolving
the stabilization is consistent along the stabilization stratum.  An example is shown in
Figure~\ref{fig:simplify-9a}.

\begin{figure}
  \centering
  \includegraphics{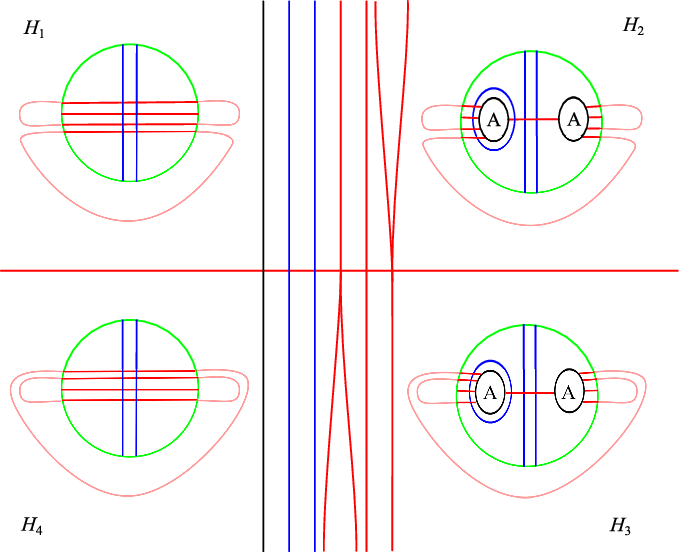}
  \caption{Resolving the singularity of type~\ref{item:link-B1} from
    Figure~\ref{fig:link-6a}.}
  \label{fig:simplify-6a}
\end{figure}

For singularities of type~\ref{item:link-B1} (a birth-death singularity
at~$p$ simultaneous with a handleslide of $p_1$ over $p_2$), recall
that a crucial feature was the number~$k = k_1 + k_2$ of flows from
$p_2$ to $p$. If $k=0$, the resolution can be done easily with several
handleslide commutations (links of type~\ref{item:A1a}) and one
stabilization-handleslide commutation.  For $k > 0$, we need to
introduce $k$ slide pentagons (links of type~\ref{item:link-A2}), as
$\alpha_1 = W^u(p_1) \cap \S$ is sliding over $\alpha_2 = W^u(p_2)
\cap \S$, which in turn slides over the circle $\alpha$ introduced at
the stabilization corresponding to $p$.  Note that $k_1$ of these
pentagons have exactly two vertices on the side
of the $\a$-handleslide stratum that existed before the resolution
containing $C_3 \cup C_4$
(below the horizontal red line in Figure~\ref{fig:simplify-6a}),
while $k_2$ of them have exactly two vertices on the other side.
See Figure~\ref{fig:simplify-6a} for an example with $k=2$.

Before proceeding, we introduce handleswaps; loops of overcomplete
diagrams that generalize the notion of simple handleswaps.  As we
shall see, these arise during the simplification procedure of links of
type~\ref{item:link-E1}, and are in fact quite close to them.

\begin{figure}
  \centering
  \includegraphics{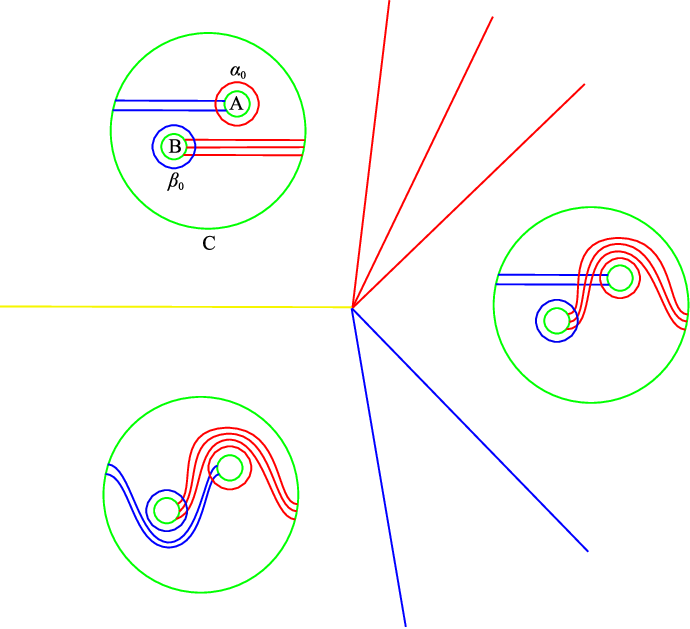}
  \caption{A $(2;3)$-handleswap.}
  \label{fig:handleswap-separated}
\end{figure}

\begin{definition} \label{def:handleswap} A \emph{$(k;l)$-handleswap}
  is a loop of overcomplete diagrams $\HD_0, \dots, \HD_{k+l}$ as follows.
  There is a surface $\S$ such that $\HD_i = (\S,\alphas_i,\betas_i)$
  for every $i \in \{\, 0,\dots,k+l \,\}$.  Furthermore, there is a
  pair of pants $P \subset \S$ such that
  \[
  \alphas_i \cap (\S \setminus P) = \alphas_j \cap (\S \setminus P)\text{ and }
  \betas_i \cap (\S \setminus P) = \betas_j \cap (\S \setminus P)
  \]
  for every
  $i$, $j$. Inside $P$, we have one full $\a$-curve that we denote by
  $\a_0$ and one full $\b$-curve that we call $\b_0$. The boundary
  $\partial P$ consists of three curves, $A$ being parallel to $\a_0$,
  a curve $B$ parallel to $\b_0$, and the third we denote by $C$. The
  set $(\alphas_0 \cap P) \setminus \a_0$ consists of $l$ parallel
  arcs connecting $B$ and $C$, while $(\betas_0 \cap P) \setminus
  \b_0$ consists of $k$ parallel arcs connecting $A$ and $C$. We also
  require that none of the $\a$-arcs intersect the $\b$-arcs in
  $P$. For $0 \le i < l$, the diagram $\HD_{i+1}$ is obtained from $\HD_i$
  by sliding one of the $\a$-arcs over $\a_0$, and for $l \le i <
  k+l$, the diagram $\HD_{i+1}$ is obtained from $\HD_i$ by sliding one of
  the $\b$-arcs over $\b_0$. The diagram $\HD_0$ is obtained from
  $\HD_{k+l}$ by a diffeomorphism that is the composition of a
  left-handed Dehn twist about $C$ and right-handed Dehn-twists about
  $A$ and $B$.  The case of a $(2;3)$-handleswap is depicted in
  Figure~\ref{fig:handleswap-separated}.

  Similarly, we say that
  a loop $H_0, \dots, H_{k+l}$ of isotopy diagrams is a $(k;l)$-handleswap
  if every $H_i$ has a representative $\HD_i$ such that
  $\HD_0, \dots, \HD_{k+l}$ is a $(k;l)$-handleswap.
\end{definition}

We will show in Section~\ref{sec:simpl-handleswap} that any
$(k;l)$-handleswap can be resolved into a number of simple
handleswaps, and thus that any strong Heegaard invariant applied to
the $(k;l)$-handleswap commutes.

Suppose we have a loop of diagrams as in
Definition~\ref{def:handleswap}, but where the $\a$- and
$\b$-handleslides are not necessarily separated from each other. Using
commutations, these can easily be rearranged in the standard form, so
we will also refer to these as handleswaps.
Figure~\ref{fig:commutations} shows how to write a handleswap loop
with mixed $\a$- and $\b$-handleslides as a product of a standard
handleswap and some distinguished rectangles corresponding to
commuting handleslides. The procedure is easier to understand on the
level of the bordered polyhedral decomposition $\mathcal{R}$, where
one spirals the blue strata and red strata in opposite directions to
separate them.

\begin{figure}
  \centering
  \includegraphics{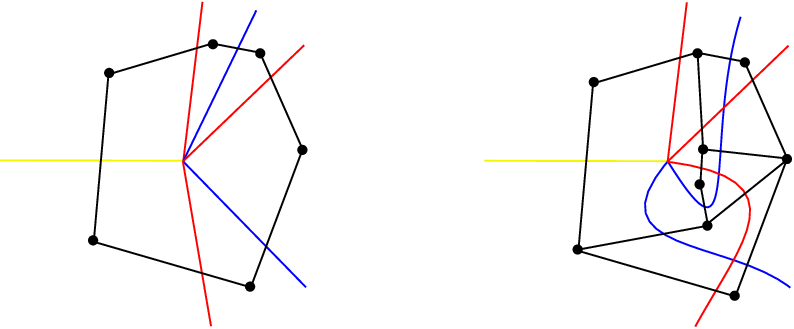}
  \caption{Rearranging the order of $\a$- and $\b$-handleslides in a
    handleswap using commutations.  On the left, we see $\mathcal{P}$
    and $\mathcal{R}$, on the right the modified decompositions
    $\mathcal{P}'$ and $\mathcal{R}'$.}
  \label{fig:commutations}
\end{figure}

As mentioned above, the link of a singularity of
type~\ref{item:link-E1} (a flow from an index~2 critical point to an
index~1 critical point) is quite close to a handleswap.  We can make
it exactly a handleswap by introducing some commutation moves between
diffeomorphisms and handleslides.  On the level of the bordered
polyhedral decomposition $\mathcal{R}$, in a small neighborhood of the
\ref{item:link-E1} singularity, we spiral the yellow diffeomorphism strata
corresponding to the diffeomorphisms $\varphi_1,\dots,\varphi_n$ to
all lie next to each other, then compose the diffeomorphisms.  For an
illustration, see Figure~\ref{fig:simplify-11}.
\begin{figure}
  \centering
  \includegraphics{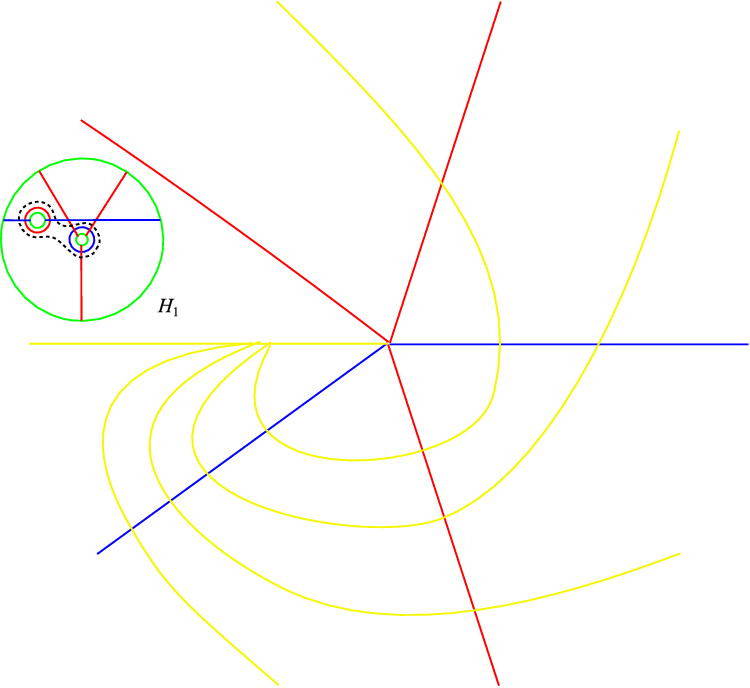}
  \caption{Simplifying a link of type~\ref{item:link-E1}.}
  \label{fig:simplify-11}
\end{figure}
Recall that the composition $d = \varphi_n \circ \dots \circ \varphi_1
\colon \S_1 \to \S_1$ is the product of Dehn twists about the boundary
components of the pair-of-pants $T_1$ (the three green circles in the
figure).  Finally, we rearrange the $\a$- and $\b$-handleswaps as
above to first have the $\a$-handleslides, followed by the
$\b$-handleslides. We denote this new surface enhanced polyhedral
decomposition by $\mathcal{P}'$, and the dual bordered polyhedral
decomposition by~$\mathcal{R}'$.

The link of the~\ref{item:link-E1} singularity in $\mathcal{P}'$
appears slightly different from a standard handleswap, since in the
diagram $H_1$ right above the diffeomorphism stratum, the $\a$- and
$\b$-arcs intersect each other.  However, this is not an issue as we
are dealing with isotopy diagrams. Indeed, consider the smaller pair
of pants $T_1'$ bounded by the dashed curve and the two small green
circles in Figure~\ref{fig:simplify-11}.  We choose $T_1'$ so small
that inside it all the $\a$- and $\b$-arcs are disjoint. If we now
perform all handleslides and the diffeomorphism within $T_1'$, we get
a standard handleswap loop, and each diagram is isotopic to the
corresponding diagram in $\mathcal{P}'$. If we even replace the
diffeomorphism $d$ by the diffeomorphism $d'$ that is a product of
Dehn twists about the boundary components of $T_1'$, then $d$ and $d'$
are isotopic, and by the Continuity Axiom of strong Heegaard
invariants,
\[
F(d) = F(d') \colon F(H_1) \to F(H_1).
\]

\begin{figure}
  \centering
  \includegraphics{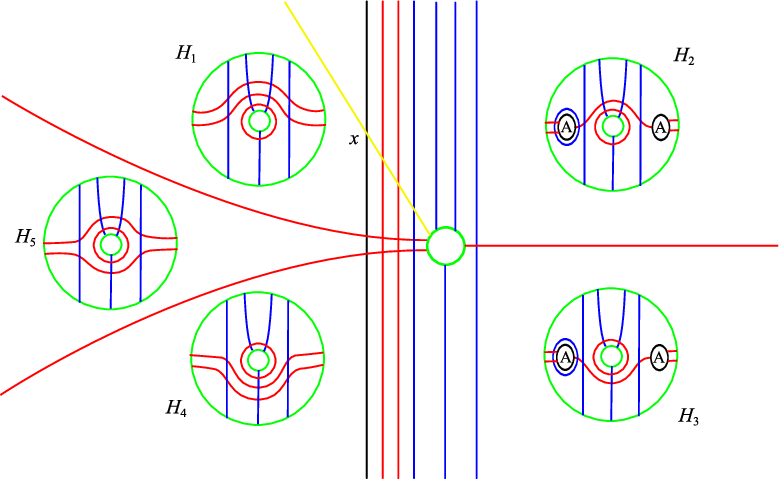}
  \caption{Resolving the singularity of type~\ref{item:link-B2} from
    Figure~\ref{fig:link-7a}, which has $k=2$ and $m=3$. Here, for
    clarity, the handleswap has not yet been put in the standard
    form.}
  \label{fig:simplify-7a}
\end{figure}

\begin{figure}
  \centering
  \includegraphics{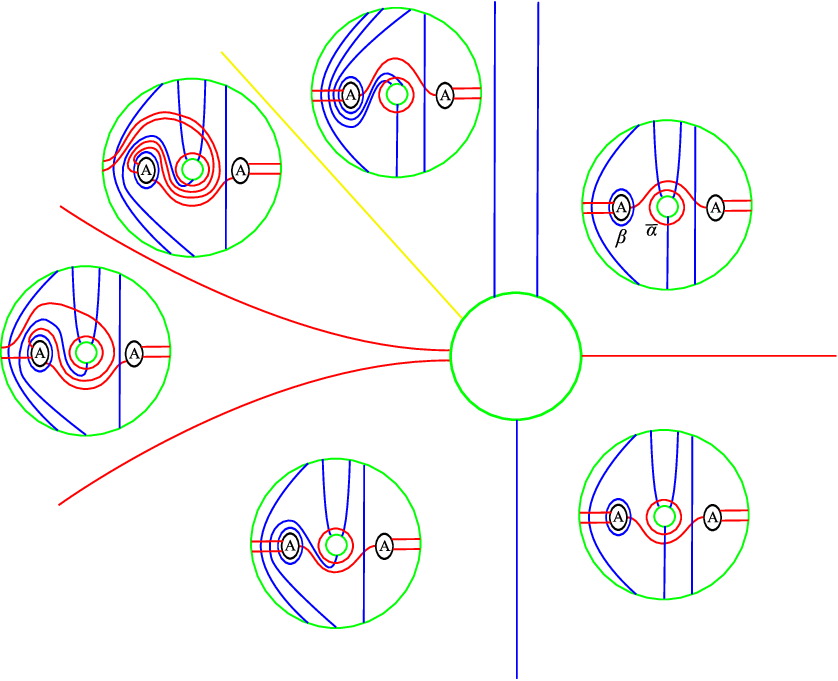}
  \caption{A closeup of the handleswap loop of diagrams in
    $\mathcal{P}'$ around the green circle from
    Figure~\ref{fig:simplify-7a}.}
  \label{fig:simplify-7a-closeup}
\end{figure}

Next, we consider singularities of type~\ref{item:link-B2}, a flow from a
birth-death singularity~$p$ to an index~1 critical point~$\ol{p}$.
As explained in the proof of Theorem~\ref{thm:2-param} (also see Figure~\ref{fig:link-7a}),
the crucial features are the number~$k$ of flows from index~1
critical points to $p$ and the number $m = m_1 + m_2$ of flows from
$\ol{p}$ to index~2 critical points. There are two codimension-1 stabilization
strata of types $(l+m_1,k)$ and a $(l+m_2,k)$, respectively. We resolve both of
these according to Section~\ref{sec:simplify-codim-1}. This results in a simple
stabilization stratum, followed by $k$ consecutive $\a$-handleslide strata
and $l+m_1$ consecutive $\b$-handleslide strata in one of the resolutions,
and $l+m_2$ consecutive $\b$-handleslide strata in the other one.
We connect the $k$ consecutive $\a$-handleslide strata, but the $\b$-handleslide
strata do not match up in number. In order to obtain the resolution~$\mathcal{R}'$,
we remove a small disk~$D$ around the central bifurcation value in the parameter
space~$D^2$. See Figure~\ref{fig:simplify-7a} for an
example, where we draw~$\partial D$ in green.
In this step, we only construct $\mathcal{R}'$ and
the dual polyhedral decomposition~$\mathcal{P}'$ outside~$D$.
The simple stabilization stratum and the $k$ consecutive $\a$-handleslide
strata are to one side of~$D$. Let $\ol{\a}$ be the $\a$-curve corresponding to~$\ol{p}$.
We connect the $\b$-handleslide strata
that correspond to $\b$-curves disjoint from $\ol{\a}$ (there are $l$ of these), and the
$m_1+m_2$ remaining $\b$-handleslide strata end on $\partial D$. In a neighborhood
of~$\partial D$, the $\b$-handleslide strata look the same as the corresponding $\b$-curves
in a neighborhood of $\ol{\a}$.

In the polyhedral decomposition~$\mathcal{P}'$ dual to~$\mathcal{R}'$,
we have a number of commutations as usual, as well as
an $(m;k+1)$-handleswap between the circles $\beta = W^s(p) \cap \S$
and $\ol{\alpha} = W^u(\ol{p}) \cap \S$.  To obtain this handleswap,
we add a single diffeomorphism stratum to~$\mathcal{R}'$, drawn in
yellow.  The handleswap
is the loop of diagrams in~$\mathcal{P}'$ around the green circle;
this loop is illustrated in Figure~\ref{fig:simplify-7a-closeup}.  We
will explain in Section~\ref{sec:simpl-handleswap} how to extend
$\mathcal{R}'$ to the interior of the green circle so that the
$(m;k+1)$-handleswap is reduced to a simple handleswap. The edge~$e$
of $\mathcal{P}'$ dual to the yellow stratum on the destabilized side
corresponds to a diffeomorphism that is isotopic to the identity of
the Heegaard surface, hence the two vertices of~$e$ correspond to the
same isotopy diagram. So we can terminate the yellow diffeomorphism
stratum at a point~$x$ of the black stabilization stratum, giving rise
to a triangle in the dual polyhedral decomposition~$\mathcal{P}'$
containing~$x$.

\begin{figure}
  \centering
  \includegraphics{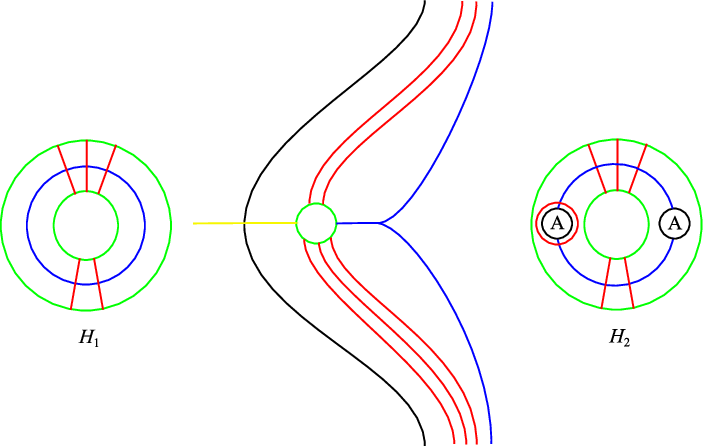}
  \caption{Resolving the singularity of type~\ref{item:link-D} from
    Figure~\ref{fig:link-10a}.}
  \label{fig:simplify-10a}
\end{figure}

\begin{figure}
  \centering
  \includegraphics{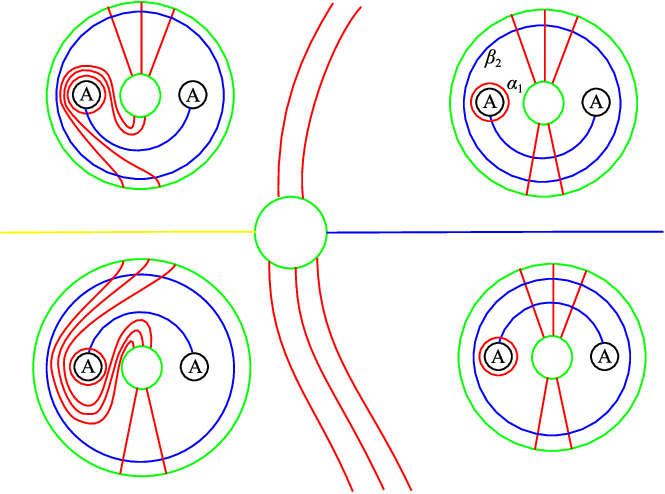}
  \caption{A closeup of the handleswap loop around the green circle
    from Figure~\ref{fig:simplify-10a}.  The handleswap is between
    $\alpha_1$ and $\beta_2$.}
  \label{fig:simplify-10a-closeup}
\end{figure}

For singularities of type~\ref{item:link-D}, a 2-1-2 birth-death-birth
singularity, we can, as usual, replace the stabilization by a simple
stabilization and a number of handleslides.  This time, we can replace
the cusp singularity by a slide triangle and a $(1;k+l)$-handleswap,
as shown in Figure~\ref{fig:simplify-10a}. As in
case~\ref{item:link-B2}, we add a diffeomorphism stratum passing
through the stabilization stratum. The corresponding diffeomorphism is
isotopic to the identity on the destabilized side. For a closeup of
the handleslide loop, see Figure~\ref{fig:simplify-10a-closeup}. The
handleswap is between $\a_1 = W^u(p_1) \cap \S$ and $\b_2 = W^s(p_2)
\cap \S$.

\begin{figure}
  \centering
  \includegraphics{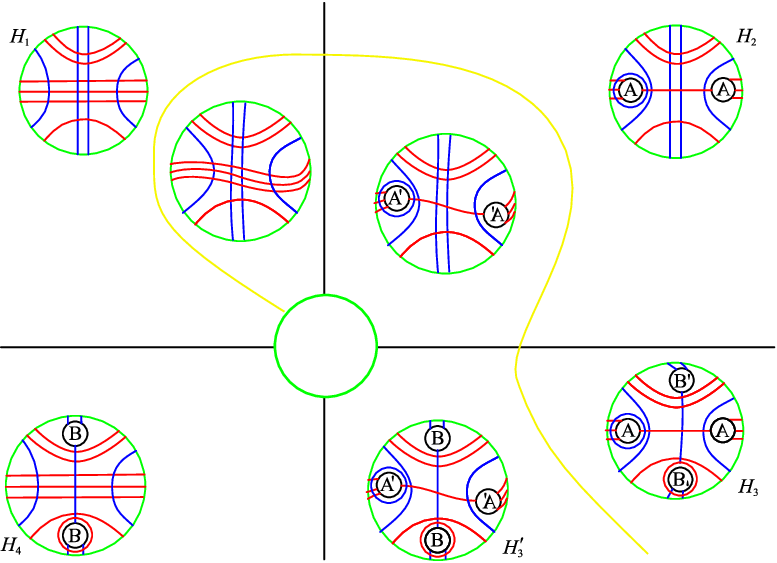}
  \caption{First step in resolving the singularity of
    type~\ref{item:link-C} from Figure~\ref{fig:link-8}. The two
    diagrams in the upper left quadrant are isotopic, so we manage to
    eliminate the diffeomorphism this way.}
  \label{fig:simplify-8-first}
\end{figure}

\begin{figure}
  \centering
  \includegraphics{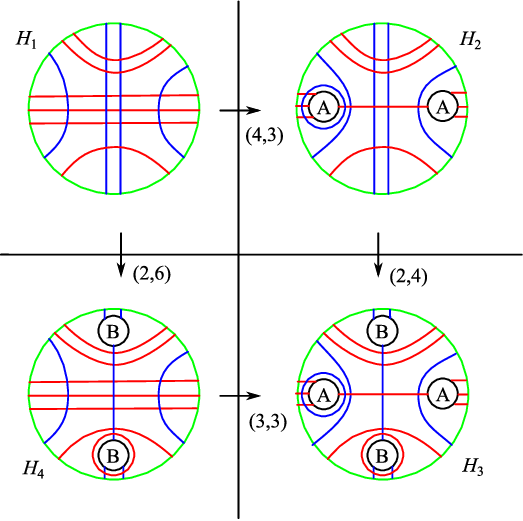}
  \caption{After the first reduction step in Figure~\ref{fig:simplify-8-first},
    the link in Figure~\ref{fig:link-8} can be replaced by this simpler link.}
  \label{fig:link-8-simpler}
\end{figure}

Finally, we consider the case of a double stabilization,
type~\ref{item:link-C}.  As shown in
Figure~\ref{fig:simplify-8-first}, we can eliminate the diffeomorphism
and assume that we are dealing with the link in
Figure~\ref{fig:link-8-simpler}.
More precisely, we first remove from the parameter space~$D^2$
a small disk~$D'$ around the codimension-2 bifurcation point.
Let~$D \subset D'$ be a smaller concentric disk whose boundary is shown in green
in Figure~\ref{fig:simplify-8-first}. We extend the stratification~$\mathcal{R}$ from
$D^2 \setminus D'$ to $D^2 \setminus D$ by extending the stabilization strata straight
to~$\partial D$, and spiralling the yellow diffeomorphism stratum across the region
(codimension-0 stratum) $C_2$ labelled by~$H_2$ and into the region~$C_1$ labeled by~$H_1$,
ending on~$\partial D$. This creates two new regions. The first
is split off~$C_2$, which we label by the destabilization~$H_2'$
of~$H_3'$ along the tube~$B$. The second one is split off~$C_1$,
and we label it by the diagram~$H_1'$
obtained from~$H_2'$ by destabilizing it along the tube~$A'$.
As explained in Remark~\ref{rem:diffeo-C}, the diffeomorphism
$d_3 \colon H_3 \to H_3'$ destabilizes to a diffeomorphism $d_3' \colon H_1 \to H_1'$ that
is isotopic to $\id_\S$, and so $H_1 = H_1'$ as isotopy diagrams.
Consequently, we can remove the diffeomorphism stratum dividing the regions labeled by~$H_1$
and~$H_1'$. We can then extend the stabilization strata straight across the disk~$D$,
and their intersection point will have the link $H_1$, $H_2'$, $H_3'$, $H_4$
shown in Figure~\ref{fig:link-8-simpler}.
The first step in the simplification of Figure~\ref{fig:link-8-alt} is shown in
Figure~\ref{fig:link-8-alt-simpler}.

\begin{figure}
  \centering
  \includegraphics{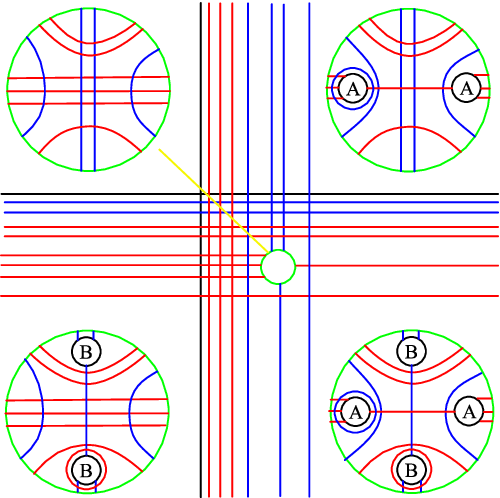}
  \caption{Resolving the singularity of type~\ref{item:link-C} with
    $t=1$ from Figure~\ref{fig:link-8-simpler}.}
  \label{fig:simplify-8}
\end{figure}

\begin{figure}
  \centering
  \includegraphics{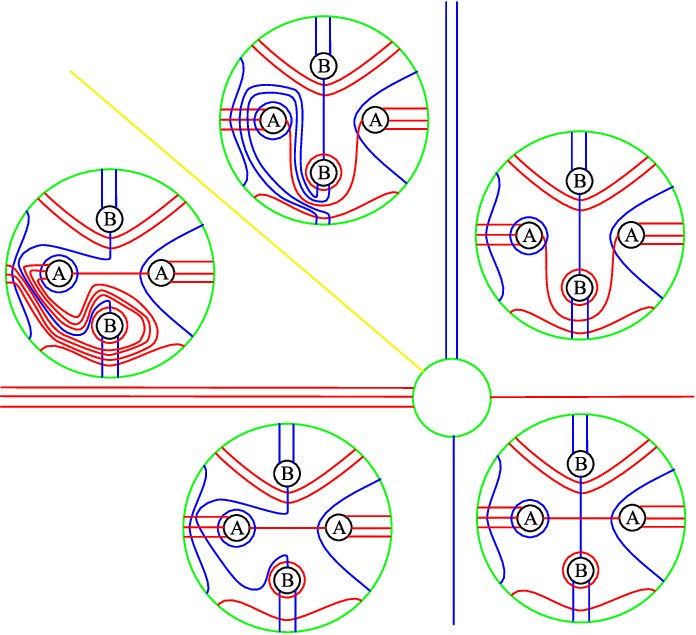}
  \caption{A closeup of the handleswap loop around the small green
    circle in Figure~\ref{fig:simplify-8}.}
  \label{fig:simplify-8-local}
\end{figure}

Recall that a key feature in case~\ref{item:link-C} was the number~$t$
of flows between the two stabilization points (from $p_1$ to $p_2$).
If $t=0$ and the resolutions of the generalized stabilization strata (the sequence of
a stabilization and a number of handleslides) are compatible with each
other, then we can fill in the link with a number of commuting squares.
Otherwise, we will get a total of $t$ different handleswaps, as shown
by example in Figure~\ref{fig:simplify-8} for $t=1$. For a closeup of
the handleswap loop, see Figure~\ref{fig:simplify-8-local}.  An
example for the $t=2$ case is shown in
Figure~\ref{fig:simplify-8-alt}.

\begin{figure}
  \centering
  \includegraphics{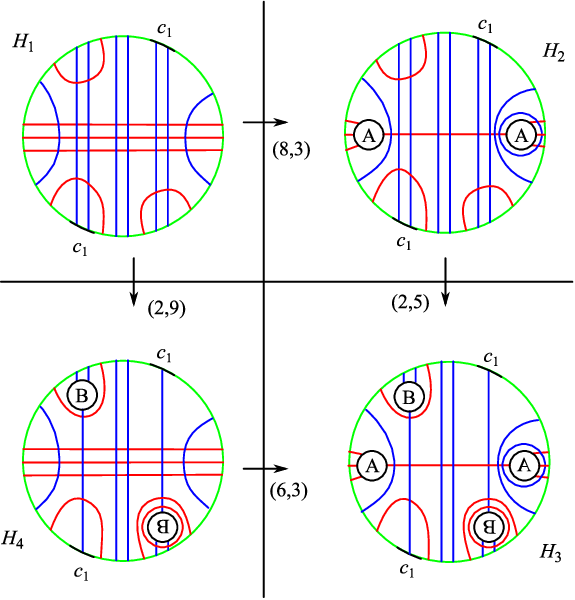}
  \caption{The first step in the simplification of the more
    complicated loop of Figure~\ref{fig:link-8-alt}.}
  \label{fig:link-8-alt-simpler}
\end{figure}

\begin{figure}
   \centering
  \includegraphics[width=4in]{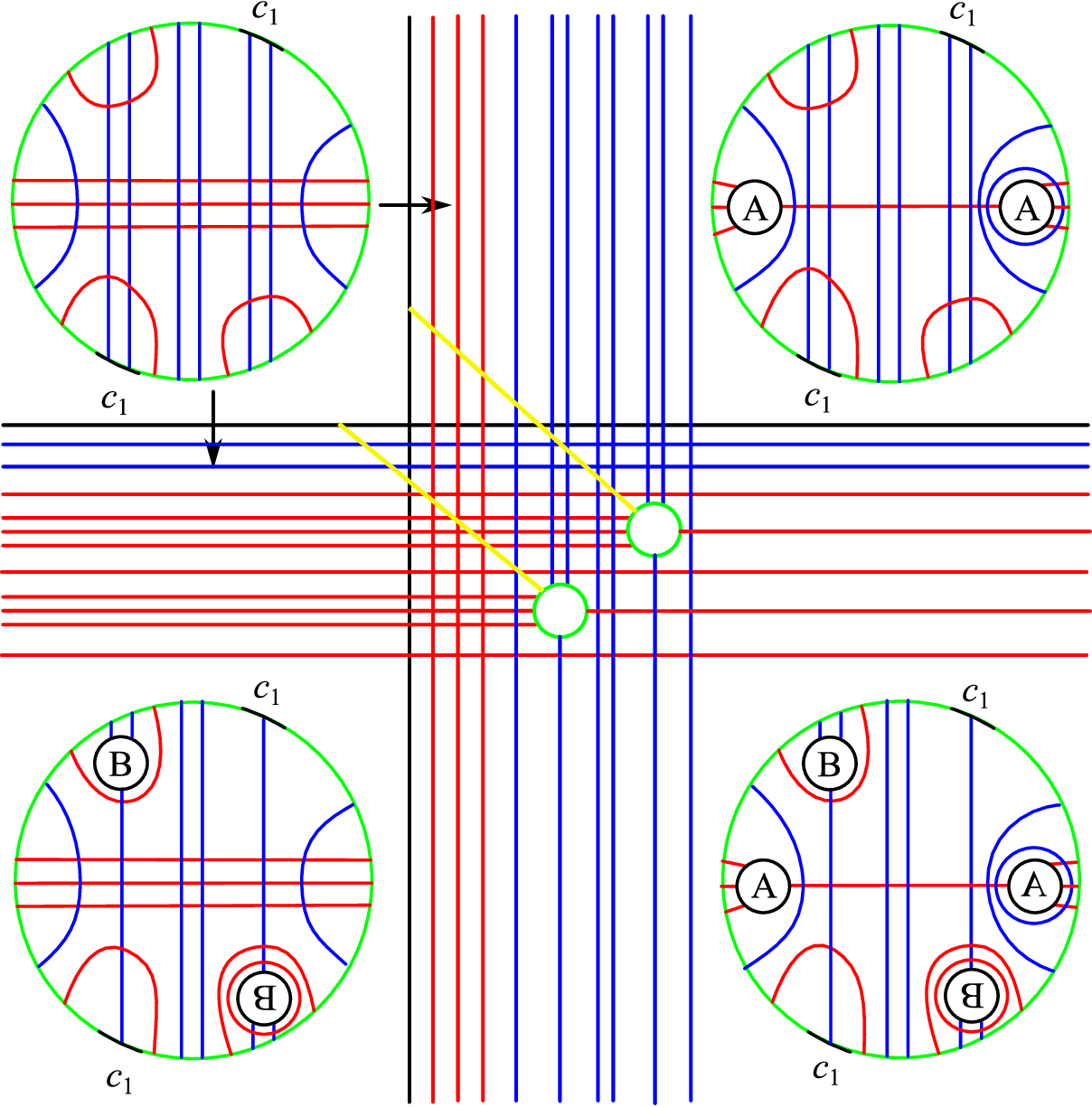}
  \caption{Resolving the singularity of type~\ref{item:link-C} with
    $t=2$ from Figure~\ref{fig:link-8-alt-simpler}.}
  \label{fig:simplify-8-alt}
\end{figure}
\subsection{Simplifying handleswaps}
\label{sec:simpl-handleswap}

Note that, in Definition~\ref{def:handleswap}, a $\b$-curve might
intersect $\a_0$ multiple times, hence several $\b$-arcs in the pair
of pants $P$ might belong to the same $\b$-curve.

\begin{definition}
  A $(k,1;l)$-handleswap is a $(k+1;l)$-handleswap between $\a_0$
  and $\b_0$ such that there is a $\b$-curve that intersects $\a_0$ in
  a single point.  Similarly, a $(k,1;l,1)$ handleswap is a
  $(k,1;l+1)$-handleswap such that there is an $\a$-curve that
  intersects $\b_0$ in a single point. A $(k;l,1)$-handleswap is
  defined in an analogous manner.
\end{definition}

The final ingredient in the proof of Theorem~\ref{thm:iso} is to
replace an arbitrary $(k;l)$-handleswap by simple handleswaps.  This
proceeds in several stages:
\begin{itemize}
\item We first stabilize the diagram, to guarantee that in each
  handleswap between $\a_0$ and $\b_0$, at least one of the
  $\b$-circles meeting $\a_0$ meets it exactly once, giving a
  $(k,1;l)$-handleswap.  Similarly, we do the same thing for the
  $\a$-circles meeting~$\b_0$, giving a $(k,1; l,1)$-handleswap.
\item Given a $(k,1;l,1)$-handleswap between $\a_0$ and $\b_0$ in which
  $\a_1$ intersects $\b_0$ once, we can perform handleslides of each
  of the $\a$-circles intersecting $\b_0$ over $\a_1$ to get rid of
  these intersections and reduce to the case of
  a $(k,1;1)$-handleswap. Similarly, we can
  perform handleslides on the $(k,1;1)$-handleswap
  to reduce it to the case of a $(1;1)$-handleswap.
\item Finally, in a $(1;1)$-handleswap between $\a_0$ and $\b_0$ with
  $\b_1$ intersecting $\a_0$ and $\a_1$ intersecting $\b_0$, we can
  perform handleslides of each $\a$-circle intersecting $\b_1$ over
  $\a_0$ to guarantee that $\b_1$ has no intersections besides the one
  with~$\a_0$.  We can similarly guarantee that $\a_1$ has no
  intersections besides the one with~$\b_0$. This is now, by
  definition, a simple handleswap.
\end{itemize}
In this overview, we have talked rather loosely about ``stabilizing''
and ``performing handleslides'' on a codimension two singularity (the
handleswap). In fact, we have to perform these operations consistently
in 2-parameter families, and see that we reduce our original loop of
Heegaard diagrams to the elementary loops of
Section~\ref{sec:strong-invar}.  We will carry this out in the
following sections. Recall that, in the previous section, each time we
encountered a handleswap in a figure we removed a disk -- indicated by
a green circle -- from the parameter space and only drew
$\mathcal{P}'$ and $\mathcal{R}'$ outside this disk.  In
$\mathcal{P}'$, the handleswap loop is parallel to this green circle.
In each step, we extend $\mathcal{P}'$ and $\mathcal{R}'$ to an
annulus in the interior of the disk removed, until we reduce to simple
handleswaps.

\subsubsection{Reducing to \texorpdfstring{$(k,1; l,1)$}{(k,1; l,1)}-handleswaps}
\label{sec:red-k+1-l+1}

It is easiest to understand this reduction by using non-simple
stabilizations.  Specifically, we will reduce a $(k;l)$-handleswap to
a $(k,1;l)$-handleswap and a $(1;l)$-handleswap (which is, of course, a
$(0,1;l)$-handleswap).  Start with a $(k;l)$-handleswap involving
$\a_0$ and $\b_0$.  Let the $\b$-strands crossing $\a_0$ be $\b_1,
\dots, \b_k$, and let the $\a$-strands crossing $\b_0$ be $\a_1,
\dots, \a_l$ (both lists with multiplicities).
In the diagrams involved in a handleswap without extra
crossings (on the top and bottom in
Figure~\ref{fig:handleswap-separated}), we can do a
$(k,1)$-stabilization on $\a_0$ and $\b_1,\dots,\b_k$.  Similarly, on
the diagram with $\b_1,\dots,\b_k$ crossing $\a_1,\dots,\a_l$ (on the
right in Figure~\ref{fig:handleswap-separated}), we can do a
$(k,l+1)$-stabilization on $\b_1,\dots,\b_k$ and $\a_0$,
$\a_1,\dots,\a_l$.  Let $\a'$ and $\b'$ be the new circles introduced
in the stabilization.  These two stabilizations in fact fit into a
2-parameter family (with the same boundary as the original $(k;l)$
handleswap): each $\a_i$ sliding over $\a_0$ for $i\in \{\,1,\dots,
l\,\}$ introduces a singularity of type~\ref{item:link-B1}, while
$\b'$ sliding over $\b_0$ introduces a singularity of
Type~\ref{item:link-B2}.  See Figure~\ref{fig:handleswap-red1} for an
example.  Note that the original handleswap is now a $(1;l)$-handleswap.

\begin{figure}
  \centering
  \includegraphics{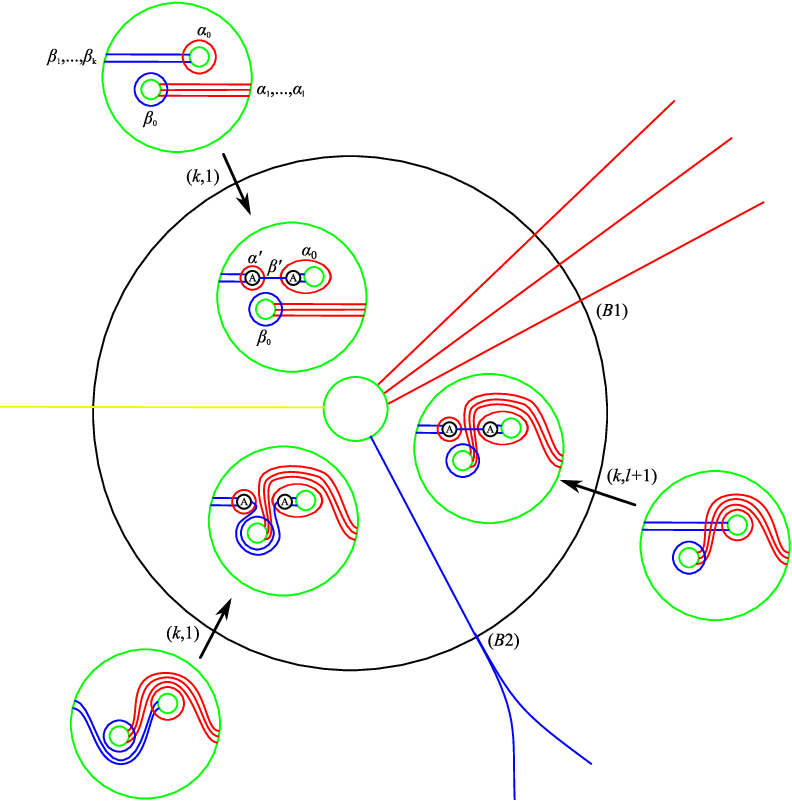}
  \caption{Reducing from a general $(k;l)$-handleswap to a $(k,1;
    l)$-handleswap, step~1.  Here we have
    introduced a circle of non-simple stabilizations to the
    $(2;3)$-handleswap from Figure~\ref{fig:handleswap-separated}.}
  \label{fig:handleswap-red1}
\end{figure}

We can resolve the non-simple stabilization introduced in this
procedure, following the algorithm of
Sections~\ref{sec:simplify-codim-1} and~\ref{sec:simplify-codim-2}, to
obtain a diagram involving only simple stabilizations and handleslides.
An example of the result is shown in
Figure~\ref{fig:handleswap-red1a}.  Resolving the singularities of
type~\ref{item:link-B1} introduces only slide pentagons, but
resolving the singularity of type~\ref{item:link-B2} introduces
another handleswap, of $\b_0$ and~$\a'$.  Since $\b'$ intersects $\a'$
in only one point, this is a $(k,1;l)$-handleswap, as desired.
\begin{figure}
  \centering
  \includegraphics{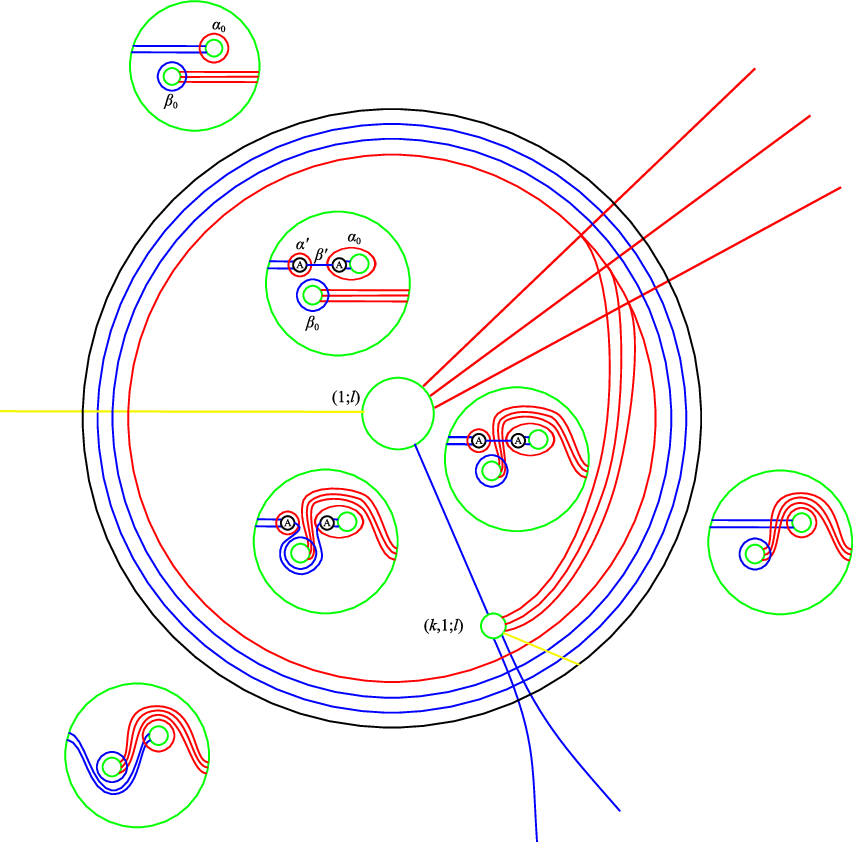}
  \caption{Reducing from a general $(k;l)$-handleswap to a $(1;l)$-
    and a $(k,1;l)$-handleswap, step~2.  This is
    the resolution (following Section~\ref{sec:simplify}) of
    Figure~\ref{fig:handleswap-red1}.}
  \label{fig:handleswap-red1a}
\end{figure}

Observe that the set of $\a$-strands involved in these two handleswaps
did not change.  Thus we can perform the same procedure again, but
with the roles of $\a$ and $\b$ switched, to reduce to handleswaps of
type $(k,1;l,1)$.

\subsubsection{Reducing to \texorpdfstring{$(1;1)$}{(1; 1)}-handleswaps}
\label{sec:red-1-1}

Next, we reduce a $(k;l,1)$-handleswap between $\alpha_0$ and
$\beta_0$ to a $(k;1)$-handleswap.  Again, let the $\beta$-strands
intersecting $\alpha_0$ be $\beta_1,\dots, \beta_k$ and let the
$\alpha$-strands intersecting $\beta_0$ be $\alpha_1, \dots,
\alpha_{l+1}$.  Assume that the circle containing $\alpha_1$
intersects $\beta_0$ only once.  Then, by sliding $\alpha_2, \dots,
\alpha_{l+1}$ over $\alpha_1$, we can reduce all three stages of the
handleswap to diagrams where only $\alpha_1$ intersects $\beta_0$,
which can in turn be related by a $(k;1)$-handleswap.  These
handleslides can be done consistently in a family with the
introduction of commuting squares and slide pentagons, that arise
when $\alpha_i$ for $i > 1$ slides over $\alpha_1$, which in turn
slides over $\alpha_0$.  See Figure~\ref{fig:handleswap-red2} for an
example.

\begin{figure}
  \centering
  \includegraphics{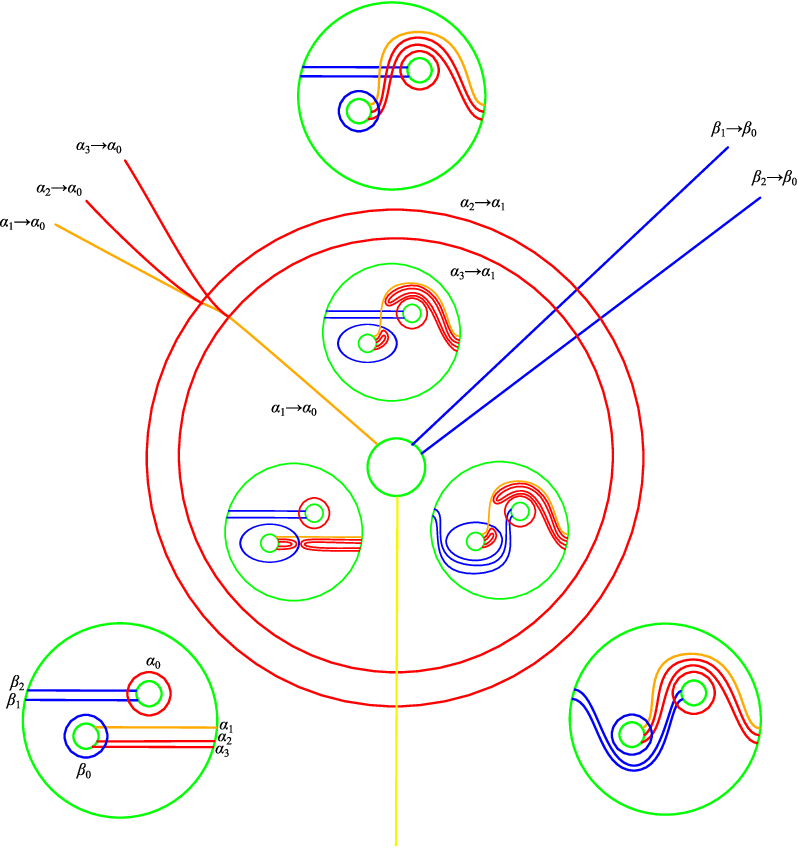}
  \caption{Here, we illustrate reduction from a $(k;l,1)$-handleswap to
    a $(k;1)$-handleswap.  In this example, $k=2$ and $l=2$.  The
    curve~$\alpha_1$, which, by hypothesis, intersects $\beta_0$ only
    once, is shown in orange.}
  \label{fig:handleswap-red2}
\end{figure}

This reduction did not affect the $\beta$-strands intersecting
$\alpha_0$.  Thus, if we start with a $(k,1;l,1)$-handleswap, we can
first reduce it to a $(k,1;1)$-handleswap as above, and then perform
the same operation on the $\beta$-strands to reduce to a
$(1;1)$-handleswap.

\subsubsection{Reducing to simple handleswaps}
\label{sec:red-simple}

Finally, we reduce a $(1;1)$-handleswap to a simple handleswap; this
is illustrated in Figure~\ref{fig:red-simple}.
\begin{figure}
  \centering
  \includegraphics{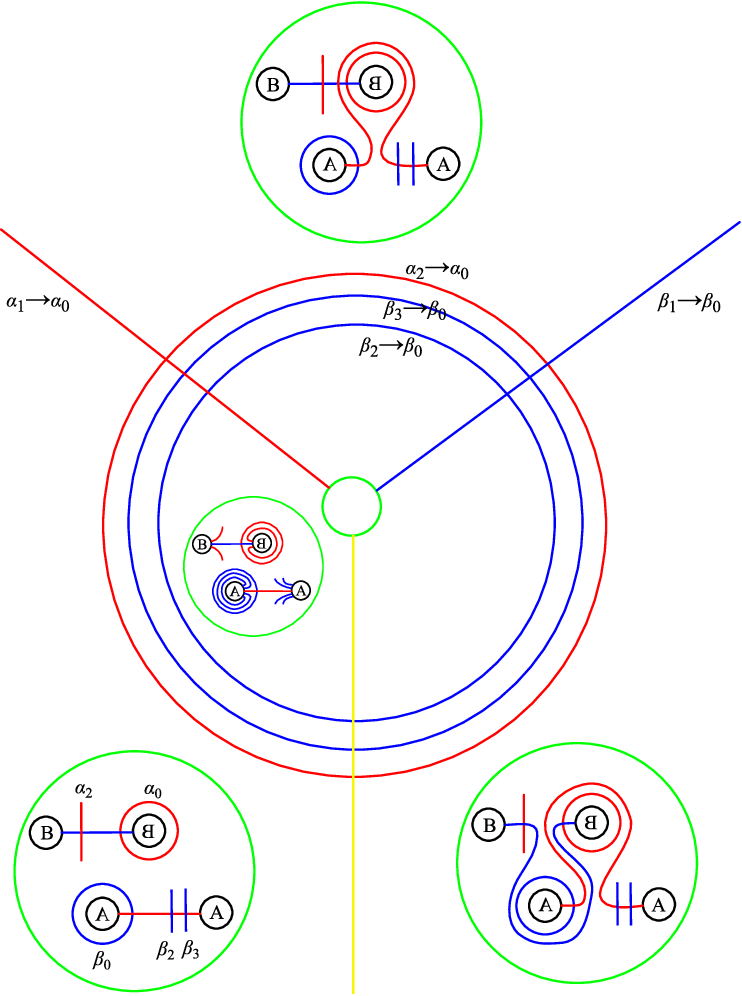}
  \caption{Reducing from a $(1;1)$-handleswap to a simple handleswap.}
  \label{fig:red-simple}
\end{figure}
Suppose the $(1;1)$-handleswap involves $\a_0$ and $\b_0$, a single
curve $\b_1$ intersecting $\a_0$, and a single curve $\a_1$
intersecting~$\b_0$.  Let the other strands intersecting $\b_1$ be
$\a_2, \dots, \a_{k+1}$, and let the other strands intersecting $\a_1$
be $\b_2, \dots, \b_{l+1}$, numbered such that, in the stage of the
handleswap where $\a_1$ and $\b_1$ cross, the intersections along
$\a_1$ are $\b_0$, $\b_1$, $\b_2, \dots$ in that order, and similarly,
the intersections along $\b_1$ are $\a_0$, $\a_1$, $\a_2, \dots$.
We can now slide (in order) $\b_{l+1}, \dots, \b_2$ over $\b_0$, from the
opposite side of the slide of $\b_1$ over $\b_0$ that appears in the
handleswap.  This commutes with
all three moves in the handleswap.  (It commutes with the slide of
$\b_1$ over $\b_0$ because we are sliding $\b_{l+1}, \dots, \b_2$ from
the opposite side of $\b_0$.)  Similarly, slide $\a_{k+1}, \dots,
\a_2$ over $\a_0$.  Again, if we slide from the opposite side from the
$\a_1$ slide, this commutes with all three moves in the handleswap.
But after these slides, $\a_1$ and $\b_1$ do not intersect any other
strands, and we have a simple handleswap, as in
Figure~\ref{fig:handleswap}.
\section{Strong Heegaard invariants have no monodromy}
\label{sec:proof}

We now have all the ingredients ready to prove
Theorem~\ref{thm:iso}. For the reader's convenience, we restate it
here.

\begin{thm}
  Let $\cS$ be a set of diffeomorphism types of sutured manifolds
  containing $[(M,\g)]$. Furthermore, let $F \colon \G(\cS) \to \C$ be
  a strong Heegaard invariant. Given isotopy diagrams $H$, $H' \in
  |\G_{(M,\g)}|$ and any two oriented paths $\eta$ and $\nu$ in
  $\G_{(M,\g)}$ connecting $H$ to $H'$, we have
  \[
  F(\eta) = F(\nu).
  \]
\end{thm}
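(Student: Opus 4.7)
The plan is to reduce to showing that $F$ vanishes on loops, then lift such a loop into the space of gradient-like vector fields, take a null-homotopy there, and verify that a strong Heegaard invariant is trivial on each 2-cell of the resulting bifurcation diagram. Concretely: by composing $\eta$ with the reverse of $\nu$, it suffices to prove that for any oriented loop $\ell$ in $\G_{(M,\g)}$ based at $H$, we have $F(\ell) = \id_{F(H)}$. Using Proposition~\ref{prop:lift-moves}, I would realize each edge of $\ell$ by a path in the space $\FV_0(M,\g)$ of Morse--Smale gradient-like pairs: diffeomorphisms isotopic to the identity in~$M$ come from isotopies of $(f,v)$ by part~(\ref{item:lift-diffeo}), $\a$- and $\b$-equivalences from paths through $\FV_1(M,\g)$ by part~(\ref{item:lift-equiv}), and (de)stabilizations from index $1$--$2$ birth-death paths by part~(\ref{item:lift-stab}). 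Concatenating these yields a loop $\Lambda \co S^1 \to \FV(M,\g)$ based at a simple Morse--Smale pair $(f_0,v_0)$ with $H(f_0,v_0)$ representing~$H$.

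Since $\FV(M,\g)$ is contractible by Corollary~\ref{cor:FV-contractible}, the loop $\Lambda$ extends to a map $D^2 \to \FV(M,\g)$, which, after a small perturbation made rel boundary, can be assumed to be a generic 2-parameter family $\mathcal{F}\co D^2 \to \FV(M,\g)$ in the sense of Sections~\ref{sec:gradients} and~\ref{sec:transl-codim-2}, with only codimension-0, 1, and 2 bifurcations, none of which is of the non-separable type~\ref{item:E1} (separability being an open, generic condition among type~\ref{item:E}). Crucially, throughout this 2-parameter family one can, via the construction in Section~\ref{sec:transl-hd}, track embedded Heegaard surfaces inside~$M$, and the resulting diffeomorphisms will be isotopic to the identity in~$M$ by Lemma~\ref{lem:isotopy}, so everything stays inside the subgraph $\G_{(M,\g)}$.

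Next, I would invoke Lemma~\ref{lem:adapted} and Lemma~\ref{lem:coherent} to choose a surface-enhanced polyhedral decomposition $\mathcal{P}$ of $D^2$ adapted to $\mathcal{F}$ that extends the given triangulation of $S^1 = \partial D^2$ corresponding to $\ell$. Then Propositions~\ref{prop:1-param}, \ref{prop:2-param-cod-1}, \ref{prop:rectangle}, and Theorem~\ref{thm:2-param} label each edge of $\mathcal{P}$ with a move in $\G_{(M,\g)}$ (handleslide, generalized stabilization, or identity-isotopic diffeomorphism) and fit the local link around each codimension-2 vertex into one of the standard pictures of Section~\ref{sec:transl-codim-2}. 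Applying the entire simplification procedure of Section~\ref{sec:simplify} --- replacing $(k,l)$-stabilizations by a simple stabilization with a grid of handleslides, replacing generalized handleslides by simple ones plus isotopies, and resolving each codimension-2 link as in Sections~\ref{sec:simplify-codim-2} and~\ref{sec:simpl-handleswap} until all handleswaps are simple --- refines $\mathcal{P}$ into a polyhedral decomposition $\mathcal{P}'$ of $D^2$ whose 2-cells are precisely: distinguished rectangles (Definition~\ref{def:distinguished-rect}), simple handleswap triangles (Definition~\ref{def:simple-handleswap}), stabilization slides and commuting triangles of $\a$- or $\b$-equivalences, and loops corresponding to diffeomorphisms isotopic to the identity of a Heegaard surface.

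On every such 2-cell, $F$ commutes by one of the four axioms of Definition~\ref{def:strong-Heegaard}: Functoriality for monochromatic triangles and for the identification $F(e') = F(e)^{-1}$ between a stabilization and its inverse destabilization, Commutativity for distinguished rectangles (including stabilization slides), Handleswap Invariance for simple handleswaps, and Continuity for diffeomorphism loops isotopic to the identity of~$\S$. Since $F(\ell)$ equals the product of $F$ applied to the boundaries of all 2-cells in $\mathcal{P}'$ (because every interior edge is traversed once in each direction), we conclude $F(\ell) = \id_{F(H)}$, proving $F(\eta) = F(\nu)$. The hard part, and the one that absorbs most of the work in Sections~\ref{sec:smooth}--\ref{sec:simplify}, is precisely the combinatorial reduction in the last step: ensuring that every generic codimension-2 bifurcation --- especially the intricate type~\ref{item:link-C} double birth-death and the chain of stabilization/handleslide moves around a type~\ref{item:link-E1} or~\ref{item:link-B2} singularity --- can be cut into the four elementary pieces listed above while remaining inside $\G_{(M,\g)}$.
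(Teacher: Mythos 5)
Your overall architecture matches the paper's: reduce to a loop, lift it into $\FV(M,\g)$ via Proposition~\ref{prop:lift-moves}, fill by contractibility, simplify via Section~\ref{sec:simplify}, and read off commutativity from the four axioms. However, there is one genuine mistake and two places where you gloss over steps that carry real weight.

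The mistake is your parenthetical ``none of which is of the non-separable type~\ref{item:E1} (separability being an open, generic condition among type~\ref{item:E}).'' This is false: a flow from an index-2 to an index-1 critical point is one of the \emph{unavoidable} codimension-2 phenomena in generic 2-parameter families of gradients (it is listed as case \ref{item:E1} in Section~\ref{sec:2-param} precisely because it persists under perturbation). The definition of separability (Definition~\ref{def:separable}) is crafted to \emph{exclude} type~\ref{item:E1} because at such a parameter value no embedded separating surface exists for the gradient; the paper's Theorem~\ref{thm:2-param} (case~\ref{item:link-E1}) therefore constructs the link of the bifurcation by a special argument that never uses a surface at the central value and instead introduces a yellow ``diffeomorphism stratum.'' Type~\ref{item:E1} is also the most transparent source of handleswaps, and the Handleswap Invariance Axiom exists chiefly to absorb it. You cannot perturb it away, so your proof as written breaks down precisely where the new axiom is required.

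Two lesser gaps. First, everything produced by the bifurcation analysis is naturally an \emph{overcomplete} diagram (it includes curves for index-0 and index-3 critical points and flows into a tree $T_\pm$), whereas the axioms of a strong Heegaard invariant only apply to genuine sutured diagrams. The paper handles this by a non-trivial blow-up $\mathcal{P}'\rightsquigarrow\mathcal{P}''$ of the polyhedral decomposition together with a careful scheme for choosing spanning trees of $\Gamma_\pm$ consistently across 2-cells (including the ``adapted to the handleslide'' condition when a slide occurs); you do not address this at all. Second, ``$F(\ell)$ equals the product of $F$ applied to the boundaries of all 2-cells'' is not literal, since $F$ takes values in a category $\C$ with possibly non-commutative automorphism groups and the 2-cell boundaries do not share a common basepoint; the paper's actual argument is a combinatorial $0$-homotopy (peel off one 2-cell at a time so that the boundary stays an embedded curve in $\mathrm{sk}_1(\mathcal{P}'')$), supported by the lemma that a polyhedral disk with more than one 2-cell always has a 2-cell meeting $S^1$ in a single arc. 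This is a short but indispensable piece of rigour that your sketch would need to supply.
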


\begin{proof}
  Since $F$ satisfies the Functoriality Axiom of
  Definition~\ref{def:strong-Heegaard}, it suffices to show that for
  any loop $\eta$ in $\G_{(M,\g)}$ of the form
  \[
  H_0 \stackrel{e_1}{\longrightarrow} H_1
  \stackrel{e_2}{\longrightarrow} \dots
  \stackrel{e_{n-1}}{\longrightarrow} H_{n-1}
  \stackrel{e_n}{\longrightarrow} H_0,
  \]
  we have $F(\eta) = \text{Id}_{F(H_0)}$. By
  Lemma~\ref{lem:handleslide}, every $\a$- and $\b$-equivalence
  between isotopy diagrams can be written as a product of
  handleslides. So, by the functoriality of $F$, we can assume that, for
  every $k \in \{\, 1,\dots,n \,\}$, if $e_k$ is an $\a$- or
  $\b$-equivalence, then it is actually a handleslide.

  We are going to construct a generic 2-parameter family $\mathcal{F}
  \colon D^2 \to \FV(M,\g)$ of sutured functions and gradient-like
  vector fields, together with a surface enhanced polyhedral
  decomposition $\mathcal{P}$ of $D^2$ such that along $S^1$ we have
  the loop $\eta$. First, for every $k \in \{\, 0,\dots,n-1 \,\}$,
  pick a representative $\HD_k = (\S_k,\alphas_k,\betas_k)$ of the
  isotopy diagram~$H_k$ such that $\alphas_k \pitchfork \betas_k$.
  We can then apply Proposition~\ref{prop:existence} to obtain a
  simple Morse-Smale pair $(f_k,v_k) \in \FV_0(M,\g)$ such that
  $H(f_k,v_k) = \HD_k$.  Let $p_k = e^{2\pi i k/n}$ be a vertex of
  $\mathcal{P}$ for every $k \in \{\, 0, \dots, n-1 \,\}$.
  We define $\mathcal{F}(p_k) = (f_k,v_k)$, and the surface enhancement
  assigns $\S_k \in \S(f_k,v_k)$ to $p_k$. In fact, the vertices of
  $\mathcal{P}$ along $S^1$ are precisely $p_0, \dots, p_{n-1}$ and
  the edges are the arcs in between them.  We extend $\mathcal{F}$ to
  the edge
  \[
  \ol{p_k p_{k+1}} = \{\, e^{2\pi i t/n} \,\colon\, t \in [k, k+1] \,\}
  \]
  between $p_k$ and $p_{k+1}$ using
  Proposition~\ref{prop:lift-moves}. By construction, each edge
  $\ol{p_k p_{k+1}}$ contains at most one bifurcation point of
  $\mathcal{F}$. Furthermore, if $\ol{p_k p_{k+1}}$ does contain a
  bifurcation point $p$, then at least one of $\S_k$ and $\S_{k+1}$ is
  in $\S(\mathcal{F}(p))$ and is transverse to $v_\mu$ for every $\mu
  \in \ol{p_kp_{k+1}}$, so Proposition~\ref{prop:1-param} applies to
  the whole edge for this separating surface. Hence~$\mathcal{P}$ and~$\mathcal{F}$
  satisfy the boundary conditions of Lemma~\ref{lem:adapted}.

  For $\mu \in S^1$, let $\mathcal{F}(\mu) = (f_\mu,v_\mu)$.  By
  Proposition~\ref{prop:grad-like-metric}, the space $G(f_\mu,v_\mu)$
  of Riemannian metrics $g$ on $M$ for which $v_\mu = \grad_g(f_\mu)$
  is non-empty and contractible.  So we can choose a generic family of
  metrics $\{\, g_\mu \in G(f_\mu,v_\mu) \,\colon\, \mu \in S^1 \,\}$.
  Choose a generic extension of $\{\, f_\mu \,\colon\, \mu \in S^1 \,\}$
  to a family of sutured functions $\{\, f_\mu \,\colon\, \mu \in D^2
  \,\}$, and similarly, extend $\{\, g_\mu \,\colon\, \mu \in S^1 \,\}$ to
  a generic family of metrics $\{\, g_\mu \,\colon\, \mu \in D^2
  \,\}$. For $\mu \in D^2$, let $v_\mu = \grad_{g_\mu}(f_\mu)$,
  modified near $\g$ such that it becomes a gradient-like vector
  field; see condition~\eqref{item:suturedfn-on-gamma} of
  Definition~\ref{def:grad-like}. Then, away from a neighborhood of
  $\g$, the family $\{\, v_\mu \,\colon\, \mu \in D^2 \,\}$ is a generic
  2-parameter family of gradients, as in
  Definition~\ref{def:family-of-gradients}.  The possible bifurcations
  of generic 2-parameter families of gradients were all listed in
  Section~\ref{sec:2-param}.  Even though the boundary behavior of
  $v_\mu$ on $\gamma$
  is not generic, this will not cause any problems since $\g$
  is an invariant subset of $v_\mu$ containing no singular points.
  Finally, let $\mathcal{F}(\mu) = (f_\mu,v_\mu) \in \FV(M,\g)$ for
  every $\mu \in D^2$. By Lemma~\ref{lem:adapted}, we can extend
  $\mathcal{P}$ to a polyhedral decomposition of $S^1$ adapted to
  $\mathcal{F}$. The surface enhancement assigning $\S_k$ to the
  boundary vertices $p_k \in \text{sk}_0(\mathcal{P}) \cap S^1$ can be
  extended to a choice of Heegaard surfaces
  \[
  \{\, \S_\mu \in \S(\mathcal{F}(\mu)) \,\colon\, \mu \in
  \text{sk}_0(\mathcal{P}) \,\}
  \]
  coherent with $\mathcal{P}$ according to Lemma~\ref{lem:coherent}.

  As in Section~\ref{sec:simplify}, let
  \[
  \mathfrak{S} = \mathfrak{S}(\mathcal{F}) = V_0 \sqcup V_1 \sqcup V_2
  \]
  be the bordered stratification given by the bifurcation strata of
  the family $\mathcal{F}$. Furthermore, pick a bordered polyhedral
  decomposition $\mathcal{R}$ of $D^2$ refining $\mathfrak{S}$ that is
  dual to $\mathcal{P}$. After applying the resolution process of
  Section~\ref{sec:simplify}, we obtain a new surface enhanced
  polyhedral decomposition $\mathcal{P}'$ of $D^2$, with dual bordered
  polyhedral decomposition $\mathcal{R}'$.  Since along $S^1$ we only
  have simple stabilizations and because we can assume that none of
  the 2-cells of $\mathcal{P}$ that intersect $S^1$ contain
  codimension-2 bifurcations of $\mathcal{F}$, after the resolution
  $\mathcal{P} \cap S^1 = \mathcal{P}' \cap S^1$, with the same
  surface enhancement. Note that we no longer claim that
  $\mathcal{P}'$ is adapted to some family of gradient-like vector
  fields, but along the boundary of each 2-cell of $\mathcal{P}'$, we
  have a loop of overcomplete diagrams that appears in
  Definition~\ref{def:strong-Heegaard} (or a stabilization slide,
  which is a degenerate distinguished rectangle; see Definition~\ref{def:stab-slide}).
  So it is either a loop of
  $\a$-equivalences, a loop of $\b$-equivalences, a loop of
  diffeomorphisms, a distinguished rectangle, a simple handleswap, or
  a stabilization slide. In addition, if we have a loop of
  diffeomorphisms, their composition is isotopic to the
  identity. Indeed, the composition $d$ of the diffeomorphisms around
  a 2-cell $\sigma$ is the same as the one induced by
  $\mathcal{F}|_{\partial \sigma} \colon \partial\sigma \to
  \FV_0(M,\g)$.  Since $\mathcal{F}$ has no bifurcations inside
  $\sigma$, the loop $\mathcal{F}|_{\partial \sigma}$ is
  null-homotopic in $\FV_0(M,\g)$, so $d$ is isotopic to the identity
  by Lemma~\ref{lem:loop}.

  The diagrams assigned to the vertices of $\mathcal{P}'$ might be
  overcomplete (except along the boundary). We now explain how to pass
  to a polyhedral decomposition $\mathcal{P}''$ that is decorated by
  actual (non-overcomplete) isotopy diagrams without altering anything
  along $S^1$. We obtain $\mathcal{P}''$ as follows. Let $v$ be a
  vertex of $\mathcal{P}'$ lying in the interior of $D^2$ that is the
  endpoint of $k_v$ one-cells. Then pick a $k_v$-gon $\sigma_v$
  centered at $v$ such that it has one vertex in each component of
  $D_\eps(v) \setminus \text{sk}_1(\mathcal{P}')$ for some $\eps$ very
  small. For every such $v$, the polygon $\sigma_v$ is a 2-cell of
  $\mathcal{P}''$.  Then, for each edge $e$ of $\mathcal{P}'$ in the
  interior of $D^2$ with $\partial e = v - w$, connect the sides of
  $\sigma_v$ and $\sigma_w$ that intersect $e$ by two arcs parallel to
  $e$; these will be edges of $\mathcal{P}''$.  If $e$ is an edge with
  one endpoint $w$ in $S^1$ and the other endpoint $v$ in the interior
  of $D^2$, then we connect the side of $\sigma_v$ intersecting~$e$
  with $w$, forming a 2-cell of $\mathcal{P}''$ that is a triangle.
  So each 2-cell of $\mathcal{P}'$ is replaced by a smaller 2-cell in
  $\mathcal{P}''$, each interior vertex of $\mathcal{P}'$ is ``blown
  up'' to a 2-cell, and each edge to a rectangle or triangle. For an
  illustration, see Figure~\ref{fig:polyhedral}.
  \begin{figure}
    \centering
    \includegraphics[width=3in]{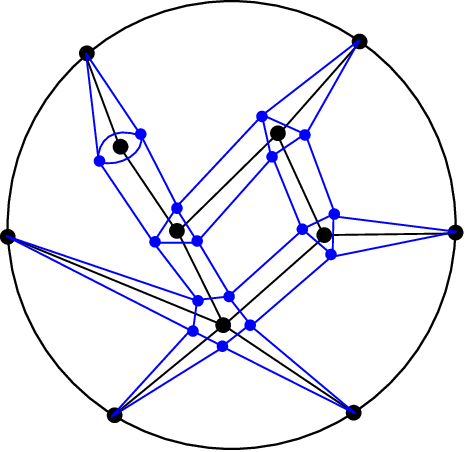}
    \caption{The polyhedral decomposition $\mathcal{P}'$ of $D^2$ is
      shown in black, and the ``blown-up'' decomposition
      $\mathcal{P}''$ in blue (along the boundary $S^1$ the two
      coincide).}
    \label{fig:polyhedral}
  \end{figure}

  We are going to decorate the vertices of $\mathcal{P}''$ with
  (non-overcomplete) isotopy diagrams by choosing spanning trees for
  the overcomplete diagram at the ``nearest'' vertex of
  $\mathcal{P}'$. If $\sigma$ is a 2-cell of
  $\mathcal{P}''$ with $r$ vertices, then we will write
  $K_1,\dots,K_r$ for the loop of overcomplete diagrams along
  $\partial \sigma$.

  Recall that, to a Morse-Smale gradient $(f,v) \in \FV_0(M,\g)$, we
  assigned the graphs $\Gamma_\pm(f,v)$, and any separating surface
  $\S \in \S(f,v)$ gives rise to an overcomplete diagram $H(f,v,\S) =
  (\S,\alphas,\betas)$.  However, we can obtain graphs
  $\Gamma_\pm(\S,\alphas,\betas)$ directly from the overcomplete
  diagram $(\S,\alphas,\betas)$ as follows.  First, consider the graph
  whose vertices correspond to the components of $\S \setminus
  \alphas$, and for each component $\a$ of $\alphas$, connect the
  vertices corresponding to the components on the two sides of $\a$ by
  an edge (possibly introducing a loop). Then
  $\Gamma_-(\S,\alphas,\betas)$ is obtained by identifying all the
  vertices that correspond to a component of $\S \setminus \alphas$
  that intersects $\partial \S$ non-trivially. We define
  $\Gamma_+(\S,\alphas,\betas)$ in an analogous manner. In case
  $(\S,\alphas,\betas) = H(f,v)$, we have
  \[
  \Gamma_\pm(\S,\alphas,\betas) = \Gamma_\pm(f,v).
  \]
  If $D$ is an overcomplete diagram and $T_\pm$ is a spanning tree of
  $\Gamma_\pm(D)$, then we denote by $H(D,T_\pm)$ the diagram obtained
  from $D$ by removing the $\a$- and $\b$-curves corresponding to
  edges in $T_\pm$.  A diffeomorphism of isotopy diagrams $d \colon
  D_1 \to D_2$ induces a map $d_* \colon \Gamma_\pm(D_1) \to
  \Gamma_\pm(D_2)$.

  Note that each vertex of $\mathcal{P}''$ in the interior of $D^2$
  lies in a unique 2-cell $\sigma$ that corresponds to a 2-cell of
  $\mathcal{P}'$. Hence, we can pick spanning trees for each such
  2-cell separately to make $F$ commute along their boundaries. Then
  we need to check that $F$ also commutes along 2-cells of
  $\mathcal{P}''$ corresponding to 0-cells and 1-cells of
  $\mathcal{P}'$.

\begin{definition}
  The isotopy diagrams $(\S_1,A_1,B_1)$ and $(\S_2,A_2,B_2)$ are
  $\a/\b$-e\-quiv\-a\-lent if $\S_1 = \S_2$, $A_1 \sim A_2$, and $B_1
  \sim B_2$.
\end{definition}

Clearly, an $\a$-equivalence or a $\b$-equivalence is a special case
of an $\a/\b$-equivalence.  From $\G(\cS)$, we obtain a graph
$\G'(\cS)$ by adding an edge for every $\a/\b$-equivalence that is not
an $\a$-equivalence or a $\b$-equivalence, and similarly, from
$\G_{(M,\g)}$ we obtain the graph $\G'_{(M,\g)}$.  The strong Heegaard
invariant $F \colon \G(\cS) \to \mathcal{C}$ extends to $\G'(\cS)$ as
follows.  Given an edge $e$ from $(\S,A_1,B_1)$ to $(\S,A_2,B_2)$,
there is an $\a$-equivalence $h$ from $(\S,A_1,B_1)$ to $(\S,A_2,B_1)$
and a $\b$-equivalence $g$ from $(\S,A_2,B_1)$ to $(\S,A_2,B_2)$.  We
let $F(e) = F(g) \circ F(h)$. Note that we could have taken the
intermediate diagram to be $(\S,A_1,B_2)$, but that gives the same map
by the Commutativity Axiom of strong Heegaard invariants applied to a
distinguished rectangle of type~\eqref{item:rect-alpha-beta}.

\begin{lemma} \label{lem:alpha-beta} Suppose that
  \[
  D_1 \stackrel{a_1}{\longrightarrow} D_2
  \stackrel{a_2}{\longrightarrow} \dots
  \stackrel{a_{r-1}}{\longrightarrow} D_r
  \stackrel{a_r}{\longrightarrow} D_1
  \]
  is a loop of isotopy diagrams in $\G'(\cS)$ such that each edge
  $a_i$ is an $\a/\b$-equivalence. Furthermore, let $F \colon \G(\cS)
  \to \mathcal{C}$ be a strong Heegaard invariant. Then
  \[
  F(a_r) \circ \dots \circ F(a_1) = \text{Id}_{F(D_1)}.
  \]
\end{lemma}

\begin{proof}
  As above, we can write every $\a/\b$-equivalence as a product of an
  $\a$-equivalence and a $\b$-equivalence.  By the Commutativity
  Axiom, it suffices to prove the lemma when $a_1, \dots, a_{i-1}$ are
  $\a$-equivalences and $a_i,\dots,a_r$ are $\b$-equivalences for
  some $i$. However, in this case $D_1 = D_i$, so we only have to
  prove the lemma when $a_1,\dots,a_r$ are all $\a$-equivalences, or
  when they are all $\b$-equivalences.  This is a simple consequence
  of the Functoriality Axiom of strong Heegaard invariants.
\end{proof}

If $\sigma$ is a 2-cell of $\mathcal{P}''$ corresponding to a vertex
$v$ of $\mathcal{P}'$ and $v$ is decorated by the overcomplete diagram
$K$, then choosing arbitrary spanning trees $T^1_\pm, \dots, T^r_\pm$
for $\Gamma_\pm(K)$ gives diagrams $D_i = H(K,T^i_\pm)$ for $i \in
\{\, 1,\dots,r \,\}$ such that any two of them are $\a/\b$-equivalent.
Hence $F$ applied to the loop of diagrams $D_1,\dots,D_r$ along
$\partial\sigma$ commutes by Lemma~\ref{lem:alpha-beta}.

Next, suppose that $\sigma$ is a 2-cell of $\mathcal{P}''$ that
corresponds to a 2-cell $\sigma_0$ of $\mathcal{P}'$.  We distinguish
several cases. In all the cases, we make sure that if the edge between
$K_i$ and $K_{i+1}$ is a diffeomorphism, then we choose spanning trees
$T^i_\pm$ and $T^{i+1}_\pm$ such that $T^{i+1}_\pm =
d_*(T^i_\pm)$. Furthermore, if this edge is an index~1-2
stabilization, then $T^{i+1}_\pm$ is the same as $T^i_\pm$ (in
particular, it does not contain the edges corresponding to the new
$\a$- and $\b$-curve).

If all the edges of $\partial\sigma_0$ are diffeomorphisms
$d_1,\dots,d_r$, then we showed above that their composition is
isotopic to the identity. Choose a spanning tree $T_\pm^1$ for
$\Gamma_\pm(K_1)$. Given $T^i_\pm$, we define $T^{i+1}_\pm =
d_{i*}(T^i_\pm)$ for $i \in \{\,1,\dots,r-1\,\}$. Note that $T^1_\pm =
d_{r*}(T^r_\pm)$, since $d_r \circ \dots \circ d_1$ is isotopic to the
identity and hence it cannot permute the $\a$-curves or the
$\b$-curves, which are both linearly independent in $H_1(\S_1)$.  By
taking $D_i = H(K_i,T^i_\pm)$ at the vertices of $\partial \sigma$, we
obtain the loop of diffeomorphisms
\[
D_1 \stackrel{d_1}{\longrightarrow} D_2
\stackrel{d_2}{\longrightarrow} \dots
\stackrel{d_{r-1}}{\longrightarrow} D_r
\stackrel{d_r}{\longrightarrow} D_1
\]
in $\G_{(M,\g)}$. The invariant $F$ commutes along this loop, since
\[
F(d_r) \circ \dots \circ F(d_1) = F(d_r \circ \dots \circ d_1) =
\text{Id}_{F(D_1)}
\]
by the Functoriality and Continuity Axioms.

If $\partial \sigma_0$ is a loop of $\a$- or $\b$-equivalences (e.g.,
a link of a singularity of type~(A)), or a commutative rectangle of
type~\eqref{item:rect-alpha-beta}, then, after choosing arbitrary
spanning trees, we get a loop of $\a/\b$-equivalences along $\partial
\sigma$. Then the strong Heegaard invariant $F$ commutes by
Lemma~\ref{lem:alpha-beta}.

Suppose that along $\sigma_0$, we have the distinguished rectangle
\[
\xymatrix{K_1 \ar[r]^e \ar[d]^f & K_2 \ar[d]^g \\ K_3 \ar[r]^h & K_4.}
\]
If this is of type~\eqref{item:rect-alpha-stab}, with $e$ and $h$
being $\a$-equivalences and $f$, $g$ being stabilizations, then we
choose a spanning tree $T^1_\pm$ of $\Gamma_\pm(K_1)$ and then a
spanning tree $T^2_\pm$ of $\Gamma_\pm(K_2)$ such that $T^2_+ =
T^1_+$. We can view $T^1_\pm$ as a spanning tree $T^3_\pm$ of
$\Gamma_\pm(K_3)$, and we can view $T^2_\pm$ as a spanning tree
$T^4_\pm$ of $\Gamma_\pm(K_4)$.  Then the vertices of $\sigma$ are
decorated by the diagrams $D_i = H(K_i,T^i_\pm)$ for $i \in \{\,
1,\dots, 4 \,\}$, which also form a distinguished rectangle of
type~\eqref{item:rect-alpha-stab}. A distinguished rectangle of
overcomplete diagrams of type~\eqref{item:rect-alpha-diff}, where
$f$ and $g$ are diffeomorphisms, can be reduced to a distinguished rectangle
of non-overcomplete diagrams of the same type in an analogous manner.
In case of a rectangle of
type~\eqref{item:rect-stab-stab} including only stabilizations, we
start with a spanning tree $T^1_\pm$ for $\Gamma_\pm(K_1)$, which then
gives rise to $T^2_\pm$ and $T^3_\pm$ in a natural manner. Both
$T^2_\pm$ and $T^3_\pm$ give the same spanning tree $T^4_\pm$ of
$\Gamma_\pm(K_4)$, as this is also the image of $T^1_\pm$ under the
embedding of $\Gamma_\pm(K_1)$ into $\Gamma_\pm(K_4)$. Finally, for a
rectangle of type~\eqref{item:rect-stab-diff}, where $e$ and $h$ are
stabilizations and $f$, $g$ are diffeomorphisms, we first choose
$T^1_\pm$, then let $T^3_\pm = f_*(T^1_\pm)$. We let $T^2_\pm$ be the
image of $T^1_\pm$ under the embedding of $\Gamma_\pm(K_1)$ into
$\Gamma_\pm(K_2)$, and $T^4_\pm$ is the image of $T^3_\pm$ under the
embedding of $\Gamma_\pm(K_3)$ into $\Gamma_\pm(K_4)$. By
construction, $T^4_\pm = g_*(T^2_\pm)$, hence reducing to a loop of
non-overcomplete diagrams of type~\eqref{item:rect-stab-diff}
along $\partial\sigma$.

The last possible type of loop along $\partial \sigma_0$ is a simple
handleswap, a triangle in $\G_{(M,\g)}$ with vertices decorated by
isotopy diagrams $K_1,K_2$, and $K_3$ on the common Heegaard surface
$\S$.  Let the $\a$- and $\b$-curves involved in the handleswap be
$\a_1$, $\a_2$, and $\b_1$, $\b_2$. Recall that the other $\a$- and
$\b$-curves coincide in $K_1$, $K_2$, and $K_3$, so the
graphs~$\Gamma_\pm(K_i)$ only differ in the 4 edges corresponding to $\a_1$,
$\a_2$, $\b_1$, $\b_2$.  Since $\S \setminus (\a_1 \cup \a_2)$ has the
same number of components as $\S$, there exists a common spanning tree
$T_-$ of $\Gamma_-(K_i)$ for $i \in \{1,2,3\}$ not containing the
edges corresponding to $\a_1$ and $\a_2$.  Similarly, $\b_1 \cup \b_2$
is non-separating, so there is a common spanning tree $T_+$ of
$\Gamma_+(K_i)$ for $i \in \{1,2,3\}$.  If we take the
non-overcomplete sutured diagrams $D_i = H(K_i, T_\pm)$ for $i \in
\{1,2,3\}$, then $D_1$, $D_2$, and $D_3$ also form a simple
handleswap. Indeed, they all contain $\a_1$, $\a_2,$ $\b_1$, and
$\b_2$, and all other curves coincide.

Finally, let~$\sigma$ be a 2-cell of~$\mathcal{P}''$ that corresponds
to an edge~$e$ of~$\mathcal{P}'$ not lying entirely in~$S^1$.
Then~$\sigma$ is a rectangle if~$e$ lies in the interior of~$D^2$, and is a
triangle if~$e \cap S^1 \neq \emptyset$. In the latter case,
we view~$\sigma$ as a rectangle in~$\G_{(M,\g)}$ with one edge being the
identity. Let~$\sigma_0$ and~$\sigma_1$ be the 2-cells of~$\mathcal{P}'$
lying on the two sides of~$e$, and the edges
corresponding to~$e$ in~$\mathcal{P}''$ are~$g_0 \subset \sigma_0$
and~$g_1 \subset \sigma_1$.  We denote the other two edges of~$\sigma$
by~$h_0$ and~$h_1$. The vertices of $e$ are decorated by the overcomplete
diagrams~$K_0$ and~$K_1$.  We distinguish three cases depending on the
type of~$e$. If~$e$ is an~$\a$- or $\b$-equivalence, then no matter
how we choose trees for~$K_0$ and~$K_1$ in~$\sigma_0$ and~$\sigma_1$,
along~$\sigma$ we get a loop of $\a/\b$-equivalences for which~$F$
commutes by Lemma~\ref{lem:alpha-beta}.

If $e$ is a stabilization, then in both $\sigma_0$ and $\sigma_1$, we
chose trees such that $g_0$ and $g_1$ are decorated by stabilizations.
Furthermore, the edges $h_0$ and $h_1$ are decorated by
$\a/\b$-equivalences, coming from the fact that we chose spanning
trees for the same overcomplete diagram to decorate the endpoints of
$h_i$. If $h_1$ is on the stabilized side, then this
$\a/\b$-equivalence leaves the $\a$- and $\b$-curve involved in the
stabilizations unchanged.  To see that applying $F$ to $\sigma$ we get
a commutative square, bisect both $h_0$ and $h_1$ and write them as a
product of an $\a$-equivalence and a $\b$-equivalence. Connect the
midpoints of $h_0$ and $h_1$ by a stabilization edge, hence
decomposing $\sigma$ into two distinguished rectangles of
type~\eqref{item:rect-alpha-stab}.  Then $F$ commutes when applied to
each of these distinguished rectangles.  If $e$ is a diffeomorphism,
then we proceed in a way analogous to the previous case; we can
decompose $\sigma$ into two distinguished rectangles.

So we now have a polyhedral decomposition $\mathcal{P}''$ of $D^2$,
together with a morphism of graphs
$H \colon \text{sk}_0(\mathcal{P}'') \to \G_{(M,\g)}$,
such that $F \circ H$ commutes along the boundary of each 2-cell of
$\mathcal{P}''$. What remains to show is that this implies that $F$
commutes along the boundary of $D^2$; i.e.,
\[
F(\eta) = F(e_n) \circ \dots \circ F(e_1) = \text{Id}_{F(H_0)}.
\]
The proof of this requires some care as the composition of morphisms
is not commutative.
For this, we show that there is a ``combinatorial 0-homotopy'' from
$S^1$ to the boundary of a 2-cell of $\mathcal{P}''$. By this, we mean
that there is a sequence of curves $\eta_0, \dots, \eta_k$ in $D^2$
such that
\begin{enumerate}
\item $\eta_0 = \eta$ and $\eta_k = \partial \sigma_0$ for some
  two-cell $\sigma_0$ of $\mathcal{P}''$,
\item every $\eta_i$ is a properly embedded curve in
  $\text{sk}_1(\mathcal{P}'')$, and
\item the 1-chain $\eta_i - \eta_{i+1}$ is the boundary of a single
  2-cell $\sigma_i$ of $\mathcal{P}''$.
\end{enumerate}
This clearly implies that $F(\eta)= \text{Id}_{F(H_0)}$, since
\[
F(\eta_i) \circ F(\eta_{i+1})^{-1} = F(\partial \sigma_i) = \text{Id}
\]
for every $i \in \{\,1,\dots,k-1\,\}$,  and
$F(\eta_k) = F(\partial \sigma_0) = \text{Id}$.

To construct the combinatorial 0-homotopy, we proceed
recursively. Suppose we have already obtained $\eta_i$. Then $\eta_i$
bounds a disk $D^2_i$ in $D^2$, and $\mathcal{P}''$ restricts to a
polyhedral decomposition of $D^2_i$. It suffices to show that if
$D^2_i$ has more than one 2-cells, then there exists a 2-cell
$\sigma_i$ in $D^2_i$ that intersects $\eta_i$ in a single
arc. Indeed, we then take $\eta_{i+1} = \eta_i - \partial \sigma_i$,
which is a simple closed curve.  The existence of such a $\sigma_i$
follows from the following lemma.

\begin{lemma}
  For any polyhedral decomposition of $D^2$ with more than one
  2-cells, there exists a 2-cell that intersects $S^1$ in a single
  arc.
\end{lemma}

\begin{proof}
  We proceed by induction on the number $t$ of 2-cells. If $t = 2$,
  then let the 2-cells be $\sigma_1$ and $\sigma_2$.  Since the
  attaching map of each 2-cell is an embedding, $\sigma_1 \cap
  \sigma_2$ consists of some disjoint arcs, and to obtain $D^2$, it
  has to be a single arc $a$. Hence $\sigma_i \cap S^1
  = \partial\sigma_i \setminus \text{Int}(a)$ is a single arc for
  $i \in \{1,2\}$.

  Now suppose that the statement holds for polyhedral decompositions
  for which the number of 2-cells is less than $t$ for some $t > 2$,
  and consider a decomposition where the number of 2-cells is $t$.
  There is a 2-cell $\sigma_1$ such that $\text{Int}(\sigma_1 \cap
  S^1) \neq \emptyset$.  If $\sigma_1 \cap S^1$ has a single
  component, it has to be an arc, and we are done. Otherwise, $D^2
  \setminus \sigma_1$ consists of at least two components, each of
  whose closure is homeomorphic to a disk; let $D_1$ be one of these.
  Observe that $s_1 = D_1 \cap \sigma_1$ is an arc.  If there are at
  least two 2-cells in $D_1$, then, by induction, there is a 2-cell
  $\sigma_2$ in $D_1$ for which $\sigma_2 \cap \partial D_1$ is a
  single arc $a_1$.  Since
  \[
  \sigma_2 \cap S^1 = a_1 \setminus \text{Int}(s_1),
  \]
  we are done if this is a single arc. Otherwise,
  either $a_1 \subset s_1$ or $a_1 \supset s_1$. In both cases, we
  merge $\sigma_1$ and $\sigma_2$ by removing all the vertices and
  edges in $\text{Int}(a_1 \cap s_1)$.  We obtain a polyhedral
  decomposition of~$D^2$ where the number of 2-cells is $t-1 \ge
  2$, so, by the induction hypothesis, there is a 2-cell $\sigma_3$
  that intersects $S^1$ in a single arc. Since $\sigma_1 \cup
  \sigma_2$ intersects $S^1$ in the same number of components as
  $\sigma_1$, which is more than one, $\sigma_3 \neq \sigma_1 \cup
  \sigma_2$, and so $\sigma_3$ is also a 2-cell of the original
  decomposition.  Finally, if $D_1$ consists of a single 2-cell
  $\sigma_2$, then $\sigma_2 \cap S^1 = \partial\sigma_2 \setminus
  \text{Int}(s_1)$, which is a single arc.
\end{proof}

Since the polyhedral decomposition $\mathcal{P}''|_{D^2_{i+1}}$
contains one less 2-cell than $\mathcal{P}''|_{D^2_i}$, the process
ends when $\mathcal{P}''|_{D^2_k}$ consists of a single 2-cell, and
we obtain the combinatorial 0-homotopy.
This concludes the proof of Theorem~\ref{thm:iso}.
\end{proof}
\section{Heegaard Floer homology}
\label{sec:HeegaardFloer}

In this section, we prove Theorem~\ref{thm:strong}. We first explain
how the different versions of Heegaard Floer homology fit into the
framework of weak Heegaard invariants in the sense of
Definition~\ref{def:weak-Heegaard}. We then show
they are strong Heegaard invariants in the sense of Definition~\ref{def:strong-Heegaard}.
Most of our constructions build on the work of Ozsv\'ath and Szab\'o~\cite[Section
2.5]{OS06:HolDiskFour}.  To show Theorem~\ref{thm:strong}, we perform the following additional checks:
\begin{enumerate}
\item We keep track of the embedding of the
Heegaard surface in the 3-manifold, and include diffeomorphisms isotopic
to the identity among the Heegaard moves, which allows us to define functorial
diffeomorphism maps on Heegaard Floer homology.
\item We verify the Continuity Axiom~\eqref{item:strong-cont} of Definition~\ref{def:strong-Heegaard}.
\item Most importantly, we verify simple handleswap invariance.
\end{enumerate}

For concreteness, we explain the case of sutured Floer
homology in detail as it includes $\HFa$ and $\HFLa$
as special cases, and only remark on the differences for the other
versions. In particular, we show that $\SFH$ is a strong Heegaard
invariant of the class~$\cS_\bal$. However, to emphasize that
all the arguments are essentially the same for the other
versions of Heegaard Floer homology, we will write
$\HF^\circ$ instead of $\SFH$. All Heegaard diagrams appearing in
this section are assumed to be balanced.

\subsection{Heegaard Floer homology as a weak Heegaard invariant}
\label{sec:HeegaarFloerInvariant}

We start by explaining how Heegaard Floer homology fits into the
framework of weak Heegaard invariants. This was proved, in a different form,
by Ozsváth and Szabó (with the exception of~$\SFH$);
we remind the reader of the proof in order to fill
in details and because we will later extend the arguments to prove that
Heegaard Floer homology is a strong Heegaard invariant.
One complication arises from the
fact that Heegaard Floer homology is, in fact, not an invariant
associated to arbitrary Heegaard diagrams; rather, these Heegaard
diagrams must satisfy the additional property of {\em admissibility}.
There are several forms of admissibility. We will focus presently on
the case of {\em weak admissibility} in the sense of Ozsv\'ath and
Szab\'o~\cite[Definition~4.10]{OS04:HolomorphicDisks} and
Juh\'asz~\cite[Definition~3.11]{Juhasz06:Sutured}, which is sufficient
for defining $\HFa$, $\SFH$, and $\HFp$.  The stronger variant, used
in the construction of $\HFm$ and $\HFinf$, is defined in reference to
an auxiliary $\SpinC$ structure.

We briefly discuss $\SpinC$ structures. Let $\HD =
(\S,\alphas,\betas)$ be an abstract (i.e., non-embedded) Heegaard
diagram. Our aim is to explain what we mean by a $\SpinC$-structure
for $\HD$. If $\HD$ is a diagram of the sutured manifolds $(M,\g)$ and
$(M',\g')$, then there is a diffeomorphism $\phi \colon (M,\g) \to
(M',\g')$ that is well-defined up to isotopy fixing $\S$.
So $\phi$ induces a bijection
\[
b_{(M,\g),(M',\g')} \colon \SpinC(M,\g) \to \SpinC(M',\g'),
\]
which intertwines the $H_1(M)$-action on the set $\SpinC(M,\g)$ and
the $H_1(M')$-action on the set $\SpinC(M',\g')$.  For $\spinc \in
\SpinC(M,\g)$ and $\spinc' \in \SpinC(M',\g')$, we write $\spinc \sim
\spinc'$ if and only if $b_{(M,\g),(M',\g')}(\spinc) = \spinc'$. Then
``$\mathord{\sim}$'' defines an equivalence relation on the class of
elements of $\SpinC(M,\g)$ for all $(M,\g)$ such that $\HD$ is a
diagram of $(M,\g)$.  We define $\SpinC(\HD)$ to be the collection of
these equivalence classes.
Given $\spinc \in \SpinC(M,\g)$, we denote its
equivalence class by $[\spinc]$; this is an element of
$\SpinC(\S,\alphas,\betas)$.

Let $\x \in \Torus_\a \cap \Torus_\b$ be a Heegaard Floer generator.
By Proposition~\ref{prop:existence}, there exists a simple pair $(f,v)
\in \FV_0(M,\g)$ such that $H(f,v) = \HD$.  We can associate to $\x$ a
nowhere vanishing vector field~$v_\x$ on $M$ as follows.
Let~$\g_\x$
be the union of the flow lines of $v$ passing through the points
of~$\x$. Then we delete~$v$ on a thin regular neighborhood~$N_\x$
of~$\g_\x$, and extend it to~$N_\x$ as a nowhere vanishing vector
field. This is possible since each component of $N_\x$ contains two
critical points of $v$ of opposite sign, and hence the degree of~$v$
is zero along each component of~$\partial N_\x$. If we take a
different simple pair $(\ol{f},\ol{v}) \in \FV_0(M,\g)$ with
$H(\ol{f},\ol{v}) = \HD$, then Proposition~\ref{prop:connected-MS}
implies that $(f,v)$ and $(\ol{f},\ol{v})$ can be connected by a path
within $\FV_0(M,\g)$.  This show that the vector fields $v_\x$ and
$\ol{v}_\x$ are homologous relative to $\partial M$. (Recall that two
vector fields are \emph{homologous} relative to $\partial M$ if they
are homotopic in the
complement of a ball, where the homotopy is the identity on $\partial
M$.) In particular, the
$\SpinC$ structures defined by $v_\x$ and $\ol{v}_\x$ coincide; we
denote it by $\spinc_{(M,\g)}(\x)$. As above, we can also assign to
$\x$ an element $\spinc_{(M',\g')}(\x) \in \SpinC(M',\g')$. By
construction, $\spinc_{(M,\g)}(\x) \sim \spinc_{(M',\g')}(\x)$, so we
can define $\spinc(\x) \in \SpinC(\HD)$ to be $[\spinc_{(M,\g)}(\x)]$
for any $(M,\g)$ such that $\HD$ is a diagram of $(M,\g)$.

As explained by Ozsv\'ath and Szab\'o~ \cite[Section~4 and
Theorem~6.1]{OS04:HolomorphicDisks}, the Floer homology groups
depend on a choice of complex structure $\mathfrak{j}$ on $\S$ and a
generic path $J_s \subset \mathcal{U}$ of perturbations of the induced
complex structure over $\text{Sym}^g(\S)$, where $\mathcal{U}$ is a
certain contractible set of almost complex structures. The following result
is due to Ozsv\'ath and Szab\'o~\cite[Lemma~2.11]{OS06:HolDiskFour}.

\begin{lemma} \label{lem:complex}
  Let $(\S,\alphas,\betas)$ be admissible.  Fix two different choices
  $(\mathfrak{j}, J_s)$ and $(\mathfrak{j}', J_s')$ of complex
  structures and perturbations.  Then there is an isomorphism
  \[
  \Phi_{J_s \to J_s'} \colon \HF^\circ_{J_s}(\S,\alphas,\betas,\spinc)
  \to \HF^\circ_{J_s'}(\S,\alphas,\betas,\spinc).
  \]
  These isomorphisms are natural in the sense that
  \[
  \Phi_{J_s' \to J_s''} \circ \Phi_{J_s \to J_s'} = \Phi_{J_s \to
    J_s''},
  \]
  and $\Phi_{J_s \to J_s}$ is the identity.
\end{lemma}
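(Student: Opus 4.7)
The plan is to apply the standard continuation-map construction in Lagrangian Floer theory. Since $\mathcal{U}$ is contractible, I would first choose a generic smooth path $\gamma\colon[0,1]\to\mathcal{U}$ from $(\mathfrak{j},J_s)$ to $(\mathfrak{j}',J_s')$, and use it to define a chain map
\[
\phi_\gamma\colon CF^\circ_{J_s}(\S,\alphas,\betas,\spinc)\to CF^\circ_{J_s'}(\S,\alphas,\betas,\spinc)
\]
by counting index-zero pseudo-holomorphic Whitney disks with respect to the time-dependent family specified by $\gamma$. Weak admissibility of $(\S,\alphas,\betas)$, together with positivity of area and Gromov compactness, guarantees that in each fixed $\spinc$-structure only finitely many homotopy classes of disks carry rigid solutions, so $\phi_\gamma$ is well-defined. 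The chain map identity is extracted, as usual, from the codimension-1 ends of the 1-dimensional moduli spaces, which break either as a $J_s$-trajectory followed by a $\gamma$-disk or as a $\gamma$-disk followed by a $J_s'$-trajectory.

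Next I would verify naturality at the chain-homotopy level. A generic 2-parameter family of paths interpolating between $\gamma_0$ and $\gamma_1$ produces a parametrized moduli space whose boundary analysis yields a chain homotopy $\phi_{\gamma_0}\simeq\phi_{\gamma_1}$, so the induced map on homology is path-independent. Considering the 2-parameter family associated to a concatenation gives $\phi_{\gamma'*\gamma}\simeq\phi_{\gamma'}\circ\phi_\gamma$, and the constant path at $J_s$ yields a continuation map chain-homotopic to the identity by the standard small-perturbation argument. Passing to homology, the chain homotopies collapse and we obtain a well-defined isomorphism $\Phi_{J_s\to J_s'}$ together with the identities
\[
\Phi_{J_s'\to J_s''}\circ\Phi_{J_s\to J_s'}=\Phi_{J_s\to J_s''}\qquad\text{and}\qquad\Phi_{J_s\to J_s}=\mathrm{id}.
\]
Invertibility of $\Phi_{J_s\to J_s'}$ then follows formally, with inverse $\Phi_{J_s'\to J_s}$.

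The main technical obstacle will be the interplay between admissibility and transversality in parametrized families: one must control the areas of disks along both 1- and 2-parameter families of almost complex structures while simultaneously achieving transversality. Along any path in $\mathcal{U}$, weak admissibility continues to bound the set of $\spinc$-structures carrying disks of any fixed index and area, so only finitely many homotopy classes enter the count, and the generic transversality arguments of Ozsv\'ath and Szab\'o for parametrized moduli spaces apply without change. Once this bookkeeping is in place, the rest of the argument is purely formal, reducing the lemma to the framework already established in their original papers.
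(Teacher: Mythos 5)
Your proposal correctly reconstructs the standard continuation-map argument: the paper itself gives no proof and simply cites Ozsv\'ath--Szab\'o \cite[Lemma~2.11]{OS06:HolDiskFour}, where precisely this approach (chain maps from counting disks along a path of almost complex structures, chain homotopies from two-parameter families to establish path-independence, composition, and the identity) is carried out. Your identification of the interplay between admissibility, area bounds, and parametrized transversality as the key technical point is exactly the right thing to worry about, and it is handled as you describe in the original reference.
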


Hence, we can define
\[
\HF^\circ(\S,\alphas,\betas,\spinc) = \coprod_{(\mathfrak{j},J_s)}
\HF^\circ_{J_s}(\S,\alphas,\betas,\spinc) \Bigr/\mathord{\sim},
\]
where $x \sim y$ if and only if $y = \Phi_{J_s \to J_s'}(x)$ for some
$(\mathfrak{j},J_s)$ and $(\mathfrak{j}',J_s')$.

Let $(\S,\alphas,\betas,\gammas)$ be an admissible triple diagram. Then,
by counting pseudo-holomorphic triangles,
Ozsv\'ath and Szab\'o~\cite[Theorem~8.12]{OS04:HolomorphicDisks} in the case of
ordinary Heegaard triple-diagrams, and Grigsby and
Wehrli~\cite[Section~3.3]{GW} for sutured triple-diagrams
defined maps
\[
F_{\a,\b,\g} \colon \HF^\circ(\S,\alphas,\betas) \otimes
\HF^\circ(\S,\betas,\gammas) \to \HF^\circ(\S,\alphas,\gammas).
\]

\begin{lemma} \label{lem:triangle}
  Let $(\S,\betas,\gammas)$ be an admissible sutured diagram such that
  $\betas \sim \gammas$. Then there is a unique $\SpinC$ structure
  $\spinc_0 \in \SpinC(\S,\betas,\gammas)$ such that $c_1(\spinc_0) = 0$.
  Furthermore, in the highest non-zero relative homological grading
  $\HF^\circ(\S,\betas,\gammas,\spinc_0)$ is isomorphic to~$\FF_2$;
  we denote its generator by $\Theta_{\b,\g}$.
\end{lemma}

\begin{proof}
Suppose we have a diagram $(\S,\betas,\gammas)$ such that $\betas
\sim \gammas$, and let $k = |\betas| = |\gammas|$. Then
$(\S,\betas,\gammas)$ defines a sutured manifold diffeomorphic to
\[
M(R_+,k) =
\left(R_+ \times I, \partial R_+ \times I\right) \conn \left( \#^{k} (S^1
  \times S^2)\right)
\]
for some compact oriented surface~$R_+$.  There is a unique
$\SpinC$ structure $\spinc_0$ on $M(R_+,k)$ such that $c_1(\spinc_0) = 0
\in H^2(M(R_+,k);\ZZ)$, and which can be represented by a vector field
that is vertical on the summand $(R_+ \times I, \partial R_+ \times
I)$.  By the connected sum formula for sutured manifolds of
Juh\'asz~\cite[Proposition~9.15]{Juhasz06:Sutured},
\[
\HF^\circ(M(R_+,k), \spinc_0) \cong \Lambda^* H_1(S^1 \times S^2; \FF_2)
\]
as relatively $\ZZ$-graded groups. Here, we do not use naturality,
only that Heegaard Floer homology is well-defined up to isomorphism
in each $\SpinC$ structure and relative homological grading, as shown
by Ozsv\'ath and Szab\'o~\cite{OS04:HolomorphicDisks}.
Hence, in the ``top'' non-zero homological grading, the group
\[
\HF^\circ(\S,\betas,\gammas,[\spinc_0]) \cong \HF^\circ(M(R_+,k),\spinc_0)
\]
is isomorphic to $\FF_2$; we denote its generator by $\Theta_{\b,\g}$.
Since $[\spinc_0]$ is independent of the concrete manifold representing
$M(R_+,k)$, we see that $\Theta_{\b,\g}$ is a well-defined element of
$\HF^\circ(\S,\betas,\gammas)$.
\end{proof}

\begin{definition} \label{def:psi}
  Let $(\S,\alphas,\betas,\gammas)$ be an admissible triple diagram.
  If $\betas \sim \gammas$, then we write
  $\Psi^\alphas_{\betas \to \gammas}$ for the map
  \[
  F_{\a,\b,\g}(- \otimes \Theta_{\b,\g}) \colon
  \HF^\circ(\S,\alphas,\betas) \to \HF^\circ(\S,\alphas,\gammas).
  \]
  Similarly, if $\alphas \sim \betas$, then let
  \[
  \Psi^{\alphas \to \betas}_\gammas(-) = F_{\b,\a,\g}(\Theta_{\b,\a}
  \otimes -) \colon \HF^\circ(\S,\alphas,\gammas) \to
  \HF^\circ(\S,\betas,\gammas).
  \]
\end{definition}

\begin{lemma} \label{lem:grading}
Let $(\S,\alphas,\betas,\gammas)$ be an admissible triple diagram such that $\betas \sim \gammas$.
Then there is an identification $\SpinC(\S,\alphas,\betas) \cong \SpinC(\S,\alphas,\gammas)$
such that, for every $\spinc \in \SpinC(\S,\alphas,\betas)$, the map
$\Psi^\alphas_{\betas \to \gammas}$ maps $\HF^\circ(\S,\alphas,\betas,\spinc)$ to
$\HF^\circ(\S,\alphas,\gammas,\spinc)$ and preserves the relative $\ZZ_\mathfrak{d}(\spinc)$-grading,
where $\mathfrak{d(\spinc)}$ is the divisibility of $c_1(\spinc) \in H^2(M)$.
\end{lemma}

\begin{proof}
If $(\S,\alphas,\betas)$ is a diagram of~$(M,\g)$, then $\betas \sim \gammas$
implies that $(\S,\alphas,\gammas)$ is also a diagram of $(M,\g)$. Hence,
for $\spinc \in \SpinC(M,\g)$, we identify its equivalence class $[\spinc] \in \SpinC(\S,\alphas,\betas)$
with $[\spinc] \in \SpinC(\S,\alphas,\gammas)$.

Suppose that the index zero homotopy class $\psi \in \pi_2(\x,\theta,\y)$ contributes
to~$\Psi^\alphas_{\betas \to \gammas}$, where $\x \in \Torus_\a \cap \Torus_\b$ is such that $\spinc(\x) = \spinc$,
while $\y \in \Torus_\a \cap \Torus_\g$, and $\theta \in \Torus_\b \cap \Torus_\g$
represents~$\Theta_{\b,\g}$.
By definition, $\spinc(\theta) = \spinc_0$.
There is a cobordism of sutured manifolds $\mathcal{W} = \mathcal{W}_{\a,\b,\g}$
from $M_{\a,\b} \sqcup M_{\b,\g}$ to $M_{\a,\g}$ associated to the triple diagram $(\S,\alphas,\betas,\gammas)$
(where $M_{i,j}$ is the sutured manifold defined by $(\S,i,j)$ for every $i$, $j \in \{\,\alphas, \betas, \gammas\,\}$),
together with a set of $\SpinC$ structures $\underline{\SpinC}(\mathcal{W})$; see~\cite[p.~961]{Juhasz16:Cobordism}.
By \cite[Proposition~5.5]{Juhasz16:Cobordism}, there is a map
\[
\underline{\spinc} \colon \pi_2(\x,\theta,\y) \to \underline{\SpinC}(\mathcal{W})
\]
such that $\underline{\spinc}(\psi)|_{M_{\a,\b}} = \spinc$,
while $\underline{\spinc}(\psi)|_{M_{\b,\g}} = \spinc_0$,
and $\underline{\spinc}(\psi)|_{M_{\a,\g}} = \spinc(\y)$.
Since $\betas \sim \gammas$, we can glue a cobordism from $\emptyset$ to $M_{\b,\g}$
to $\mathcal{W}$ and obtain a cobordism $\mathcal{W}'$ from $M_{\a,\b}$ to $M_{\a,\g}$
diffeomorphic to the product $M_{\a,\b} \times I$.
Since $\underline{\spinc}(\psi)|_{M_{\b,\g}} = \spinc_0$, it follows that
$\underline{\spinc}(\psi)|_{M_{\b,\g}}$ extends to a $\SpinC$ structure $\mathfrak{t}'$ on $\mathcal{W}'$.
As $\mathfrak{t}'|_{M_{\a,\b}} = \mathfrak{s}$, we have $\mathfrak{t}'|_{M_{\a,\g}} = \spinc$. Consequently,
$\spinc(\y) = \underline{\spinc}(\psi)|_{M_{\a,\g}} = \spinc$, hence
$\Psi^\alphas_{\betas \to \gammas}$ maps $\HF^\circ(\S,\alphas,\betas,\spinc)$ to
$\HF^\circ(\S,\alphas,\gammas,\spinc)$.

It also follows from the above argument that there is a unique $\SpinC$ structure
$\mathfrak{t}$ on~$\mathcal{W}$ that satisfies $\mathfrak{t}|_{M_{\a,\b}} = \spinc$
and $\mathfrak{t}|_{M_{\b,\g}} = \spinc_0$.
Let $\psi' \in \pi_2(\x',\theta',\y')$ be another index zero homotopy class that contributes
to~$\Psi^\alphas_{\betas \to \gammas}$, where $\x' \in \Torus_\a \cap \Torus_\b$ is such that $\spinc(\x') = \spinc$,
while $\y' \in \Torus_\a \cap \Torus_\g$, and $\theta' \in \Torus_\b \cap \Torus_\g$
represents~$\Theta_{\b,\g}$.
Choose a domain~$D_x$ from $\x$ to $\x'$, a domain~$D_y$ from $\y$ to $\y'$, and a domain~$D_\theta$
from $\theta$ to $\theta'$. We write $D$ for the domain of $\psi$ and $D'$ for the domain of $\psi'$.
Then
\[
D_1 := D + D_x + D_\theta - D_y \in \pi_2(\x',\theta',\y'),
\]
and $\underline{\spinc}(D_1) = \underline{\spinc}(D')$.
Hence, by \cite[Proposition~5.9]{Juhasz16:Cobordism}, we can write the domain $D_1 - D'$
as a sum $P_{\a,\b} + P_{\b,\g} + P_{\a,\g}$ for a periodic domain~$P_{\a,\b}$ in
$(\S,\alphas,\betas)$, a periodic domain~$P_{\b,\g}$ in
$(\S,\betas,\gammas)$, and a periodic domain~$P_{\a,\g}$ in
$(\S,\alphas,\gammas)$. Then, on one hand,
\[
\mu(D_1 - D') = \mu(D) + \mu(D_x) + \mu(D_\theta) - \mu(D_y) - \mu(D') = \mu(D_x) - \mu(D_y)
\]
since $\mu(D) = \mu(D') = 0$ as they contribute to the map~$\Psi^\alphas_{\betas \to \gammas}$,
and $\mu(D_\theta) = 0$ since $\D_\theta$ connects $\theta$ and $\theta'$ that have the same
homological grading as they are homologous.
On the other hand,
\[
\mu(D_1 - D') = \mu(P_{\a,\b}) + \mu(P_{\b,\g}) + \mu(P_{\a,\g}).
\]
As $\spinc(\theta') = \spinc_0$ and $c_1(\spinc_0) = 0$, we have
\[
\mu(P_{\b,\g}) = \langle\, c_1(\spinc(\theta')), H(P_{\b,\g}) \,\rangle = 0.
\]
Similarly, $\mu(P_{\a,\b}) = \langle\, c_1(\spinc), H(P_{\a,\b}) \,\rangle$
and $\mu(P_{\a,\g}) = \langle\, c_1(\spinc), H(P_{\a,\g}) \,\rangle$
are divisible by~$\mathfrak{d}(\spinc)$. It follows that
$\mathfrak{d}(\spinc)$ divides $\mu(D_x) - \mu(D_y) = \text{gr}(\x,\x') - \text{gr}(\y,\y')$,
hence $\Psi^\alphas_{\betas \to \gammas}$ preserves the relative $\ZZ_\mathfrak{d}(\spinc)$-grading.
\end{proof}

Before proceeding, we state two key lemmas that will be
used multiple times. Recall that
a \emph{Hamiltonian isotopy} of a symplectic manifold $(\S,\omega)$
is a 1-parameter family of diffeomorphisms of~$\S$ obtained by integrating a
time-dependent Hamiltonian vector field; see Ozsv\'ath and Szab\'o~\cite[Section~7.3]{OS04:HolomorphicDisks}.

\begin{lemma} \label{lem:adm-multi} For every $i \in \{\, 1,\dots,k \,\}$, let
  \[
  (\S,\etas_0^i,\dots,\etas_{n-1}^i,\etas_n)
  \]
  be a sutured multi-diagram such that $(\S,\etas_0^i,\dots,\etas_{n-1}^i)$ is admissible.
  Then there is a Hamiltonian isotopic translate $\etas_n'$ of
  $\etas_n$ such that $(\S,\etas_0^i,\dots,\etas_{n-1}^i,\etas_n')$ is
  admissible for every $i \in \{\, 1,\dots, k \,\}$.
\end{lemma}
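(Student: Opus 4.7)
The plan is to prove this by adapting the winding argument of Ozsv\'ath and Szab\'o \cite[Proposition~8.3]{OS04:HolomorphicDisks} to the setting of multiple sub-diagrams. The key point is that an exact Hamiltonian isotopy of $\etas_n$ can be constructed by ``winding'' each curve of $\etas_n$ along suitable auxiliary curves on $\S$, and that one choice of winding can be made to work for all $k$ sub-diagrams at once.

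First I would analyze the periodic domains of the full multi-diagram $(\S,\etas_0^i,\dots,\etas_{n-1}^i,\etas_n)$. Any such domain $P$ has boundary of the form $\bdy P = \sum_{\ell=0}^{n-1}\sum_j c_j^{(\ell,i)} \eta_j^{(\ell,i)} + \sum_j c_j^{(n)} \eta_j^{(n)}$. If all the $\etas_n$-coefficients $c_j^{(n)}$ vanish, then $P$ is a periodic domain of the sub-diagram $(\S,\etas_0^i,\dots,\etas_{n-1}^i)$, and by the admissibility hypothesis it has both positive and negative local multiplicities. Thus only periodic domains with a nontrivial $\etas_n$-component of the boundary can obstruct admissibility, and we only need to perturb $\etas_n$.

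The key construction I would use is simultaneous winding of each curve $\eta_j^{(n)} \in \etas_n$. For each $j$, pick a simple closed curve $\gamma_j \subset \S$ dual to $\eta_j^{(n)}$ in the sense that it meets $\eta_j^{(n)}$ transversely in exactly two points of opposite algebraic sign, is disjoint from the other components of $\etas_n$, and is disjoint from $\bdy \S$. An $m_j$-fold winding of $\eta_j^{(n)}$ along $\gamma_j$ is realized by an exact Hamiltonian isotopy with compactly supported generating Hamiltonian, and produces a new curve $\eta_j^{(n)\prime}$. The effect on a periodic domain $P$ with $c_j^{(n)}\neq 0$ is to introduce a pair of long thin regions running parallel to $\gamma_j$ whose local multiplicities are dominated by $\pm m_j c_j^{(n)}$, so that for $m_j$ large enough, both signs appear in $P$.

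The main obstacle is to handle all $k$ sub-diagrams simultaneously. The decisive observation is that the winding curves $\gamma_j$ and winding amounts $m_j$ depend only on $\etas_n$, not on the sub-diagram index $i$, and that the $\etas_n$-boundary of a periodic domain is likewise independent of $i$. For each fixed $i$, admissibility of the perturbed diagram amounts to a lower bound on the $m_j$ determined by finitely many generators of the periodic domain lattice modulo sub-diagram contributions. Since we have only finitely many indices $i$, one may choose $m_j$ large enough to simultaneously satisfy all these bounds. Composing the windings along the pairwise disjoint curves $\gamma_j$ yields a single exact Hamiltonian isotopy carrying $\etas_n$ to the desired $\etas_n'$, and the resulting multi-diagram is admissible for every $i \in \{\,1,\dots,k\,\}$.
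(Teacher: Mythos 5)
Your key insight --- that the perturbing isotopy of $\etas_n$ can be taken independent of the index $i$, and then one need only ensure it is ``large enough'' to treat all $k$ sub-diagrams at once --- is exactly what drives the paper's proof. The paper quotes Grigsby--Wehrli \cite[Lemma~3.13]{GW} for $k=1$, then simply observes that the same isotopy, being a fixed perturbation of $\etas_n$ alone, works for all $i$ simultaneously.

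Where your proposal diverges, and where it has a gap, is in the construction of the perturbing isotopy itself. The paper (following Grigsby--Wehrli) isotopes $\etas_n$ by \emph{finger moves along oriented arcs representing a basis of $H_1(\Sigma,\partial\Sigma)$ together with their parallel opposites}. This choice matters in the sutured setting: a periodic domain of a sutured diagram is required to vanish near $\partial\Sigma$, and the classes that detect its $\etas_n$-boundary live in $H_1(\Sigma,\partial\Sigma)$, not in $H_1(\Sigma)$. Moreover it is the pair ``arc plus parallel opposite'' that forces each affected periodic domain to acquire a positive multiplicity on one protrusion and a negative one on the other.

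Your construction instead winds $\eta_j^{(n)}$ around a simple closed curve $\gamma_j$ disjoint from $\partial\Sigma$, disjoint from the rest of $\etas_n$, and meeting $\eta_j^{(n)}$ in two points of opposite sign. Several things go wrong here. First, since you impose no condition on how $\gamma_j$ meets the remaining attaching sets $\etas_0^i,\dots,\etas_{n-1}^i$, it may be disjoint from all of them; in that case winding $\eta_j^{(n)}$ along $\gamma_j$ introduces no new intersections with those curves, so the multiplicity pattern of periodic domains is unchanged and no amount of winding helps. Second, even when $\gamma_j$ does cross those curve systems, a monotone $m_j$-fold spiral of $\eta_j^{(n)}$ in the annular neighborhood of $\gamma_j$ produces a monotone ramp of multiplicities in a periodic domain; this gives one large-magnitude sign, but it is not automatic that both signs appear --- the ``parallel opposite'' arc in the Grigsby--Wehrli construction is precisely the device that guarantees the second sign. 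Third, a curve meeting $\eta_j^{(n)}$ twice with opposite sign has zero algebraic intersection with it and with every other component of $\etas_n$, so it is not a dual curve in any useful sense; together with the constraint of being disjoint from $\partial\Sigma$, the collection $\{\gamma_j\}$ does not in general span $H_1(\Sigma,\partial\Sigma)$, which is what one needs in order to detect every periodic domain with nontrivial $\etas_n$-boundary. To close the gap, replace the closed winding curves with the finger-move arcs and their parallel opposites as in Grigsby--Wehrli; your simultaneity-over-$i$ observation then completes the proof as in the paper.
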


\begin{proof}
  The case $i=1$ was shown by Grigsby and Wehrli~ \cite[proof of Lemma
  3.13]{GW}. We proceed the same way, and isotope $\etas_n$ to $\etas_n'$ using
  finger moves along oriented arcs representing a basis of
  $H_1(\S,\partial \S)$ and their parallel opposites. Since this
  isotopy and hence $\etas_n'$ is independent of~$i$,
  the diagram $(\S,\etas_0^i,\dots,\etas_{n-1}^i,\etas_n')$ is
  admissible for every $i \in \{1,\dots,k\}$.
  Note that the finger moves of $\etas_n$ can be
  achieved by a Hamiltonian isotopy.
\end{proof}

\begin{lemma} \label{lem:triangles-compose} Suppose that the quadruple
  diagram $(\S,\alphas,\betas_1,\betas_2,\betas_3)$ is admissible,
  $\betas_1 \sim \betas_2 \sim \betas_3$, and
  $\Psi^{\betas_1}_{\betas_2 \to \betas_3}$ is an isomorphism.
  Then
  \[
  \Psi^\alphas_{\betas_1 \to \betas_3} = \Psi^\alphas_{\betas_2 \to
    \betas_3} \circ \Psi^\alphas_{\betas_1 \to \betas_2}.
  \]
\end{lemma}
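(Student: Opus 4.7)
The plan is to reduce the claim to the associativity of the triangle map on the admissible quadruple diagram $(\S,\alphas,\betas_1,\betas_2,\betas_3)$, plus one auxiliary identity in $HF^\circ(\S,\betas_1,\betas_3)$. Before invoking associativity, I would first ensure that all the relevant triples and quadruples are admissible: starting from the hypothesis, an application of Lemma~\ref{lem:adm-multi} (winding the $\betas_3$-curves by an exact Hamiltonian isotopy) produces a Hamiltonian translate that leaves the Floer homology groups and the elements $\Theta_{\b_i,\b_j}$ unchanged, while guaranteeing that every sub-triple $(\S,\etas_1,\etas_2,\etas_3)$ one needs below is admissible. I would then apply the standard quadrilateral-counting chain homotopy of Ozsv\'ath--Szab\'o~\cite[Section~8]{OS04:HolomorphicDisks}, as extended to the sutured setting by Grigsby--Wehrli~\cite[Section~3.3]{GW}, to the quadruple $(\S,\alphas,\betas_1,\betas_2,\betas_3)$. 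On homology, this yields the associativity identity
\[
F_{\a,\b_1,\b_3}\!\left(x \otimes F_{\b_1,\b_2,\b_3}(\Theta_{\b_1,\b_2}\otimes\Theta_{\b_2,\b_3})\right) = F_{\a,\b_2,\b_3}\!\left(F_{\a,\b_1,\b_2}(x\otimes\Theta_{\b_1,\b_2})\otimes\Theta_{\b_2,\b_3}\right)
\]
for every $x \in HF^\circ(\S,\alphas,\betas_1)$. By the definitions in Lemma~\ref{lem:triangle}, the right-hand side is precisely $\Psi^\alphas_{\betas_2\to\betas_3}\circ\Psi^\alphas_{\betas_1\to\betas_2}(x)$, while the left-hand side would equal $\Psi^\alphas_{\betas_1\to\betas_3}(x)$ provided one can identify
\begin{equation}\label{eqn:theta-compose}
F_{\b_1,\b_2,\b_3}(\Theta_{\b_1,\b_2}\otimes\Theta_{\b_2,\b_3}) = \Theta_{\b_1,\b_3}.
\end{equation}

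The second step is to establish~\eqref{eqn:theta-compose} using the assumption that $\Psi^{\betas_1}_{\betas_2\to\betas_3}$ is an isomorphism. By definition, the left-hand side of~\eqref{eqn:theta-compose} is exactly $\Psi^{\betas_1}_{\betas_2\to\betas_3}(\Theta_{\b_1,\b_2})$. Since each pair $(\S,\betas_i,\betas_j)$ defines a sutured manifold of the form $M(R_+,k)$, the proof of Lemma~\ref{lem:triangle} shows that in the torsion $\SpinC$-structure $[\spinc_0]$ the group $HF^\circ(\S,\betas_i,\betas_j,[\spinc_0])$ is an exterior algebra whose top homological grading is one-dimensional and generated by $\Theta_{\b_i,\b_j}$. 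The triangle map $\Psi^{\betas_1}_{\betas_2\to\betas_3}$ preserves $\SpinC$-structure summands (it sends torsion to torsion because $\Theta_{\b_2,\b_3}\in[\spinc_0]$) and respects the relative homological grading, so its restriction to $[\spinc_0]$ is a graded isomorphism. Consequently the top generator $\Theta_{\b_1,\b_2}$ is sent to a nonzero element of the one-dimensional top grading of $HF^\circ(\S,\betas_1,\betas_3,[\spinc_0])$, which over $\FF_2$ forces it to equal $\Theta_{\b_1,\b_3}$. This proves~\eqref{eqn:theta-compose} and completes the argument.

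The main obstacle in this approach is the bookkeeping around admissibility and $\SpinC$-structure decompositions: one has to verify that the Hamiltonian perturbation needed for Lemma~\ref{lem:adm-multi} does not disturb the identification of the top generators $\Theta_{\b_i,\b_j}$, and that the associativity chain homotopy of~\cite{OS04:HolomorphicDisks} extends verbatim to sutured quadruple diagrams containing sub-triples with $\betas_i \sim \betas_{i+1}$. Both points are essentially routine given the framework already in place, but they are where one must be most careful. Everything else is formal manipulation of the definitions of $F_{\a,\b,\g}$ and $\Psi$.
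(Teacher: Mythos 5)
Your argument is correct and follows the paper's own proof: apply associativity of the triangle maps over the admissible quadruple to reduce to the identity $\Psi^{\betas_1}_{\betas_2\to\betas_3}(\Theta_{\b_1,\b_2})=\Theta_{\b_1,\b_3}$, then deduce this from the isomorphism hypothesis together with the fact that each $\Theta$ is the unique generator of the one-dimensional top graded piece in the torsion $\SpinC$-structure. The preliminary Hamiltonian winding via Lemma~\ref{lem:adm-multi} is superfluous, since admissibility of the quadruple already gives admissibility of every sub-multi-diagram, so associativity applies directly to the given diagram.
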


\begin{proof}
  Pick an element $x \in \HF^\circ(\S,\alphas,\betas_1)$.  Since
  $(\S,\alphas,\betas_1,\betas_2,\betas_3)$ is admissible, we can use
  the associativity of the triangle maps, which was proved by
  Ozsv\'ath and Szab\'o~ \cite[Theorem 8.16]{OS04:HolomorphicDisks},
  to conclude that
  \begin{align*}
    \Psi^\alphas_{\betas_2 \to \betas_3} \circ \Psi^\alphas_{\betas_1
      \to \betas_2}(x)
    &= F_{\a,\b_2,\b_3}\left(F_{\a,\b_1,\b_2}(x \otimes \Theta_{\b_1,\b_2}) \otimes \Theta_{\b_2,\b_3}\right) \\
    &= F_{\a,\b_1,\b_3}\left(x \otimes F_{\b_1,\b_2,\b_3}(\Theta_{\b_1,\b_2} \otimes \Theta_{\b_2,\b_3})\right) \\
    &= F_{\a,\b_1,\b_3}\left(x \otimes \Psi^{\betas_1}_{\betas_2 \to
        \betas_3}(\Theta_{\b_1,\b_2})\right).
  \end{align*}
  So it suffices to show that $\Psi^{\betas_1}_{\betas_2 \to
    \betas_3}(\Theta_{\b_1,\b_2}) = \Theta_{\b_1,\b_3}$.  We assumed
  that $\Psi^{\betas_1}_{\betas_2 \to \betas_3}$ is an isomorphism.
  Hence, by Lemma~\ref{lem:grading}, it induces an isomorphism between the top groups
  $\HF^\circ_{\text{top}}(\S,\betas_1,\betas_2,\spinc_0) = \FF_2\langle
  \Theta_{\b_1,\b_2} \rangle$ and
  $\HF^\circ_{\text{top}}(\S,\betas_1,\betas_3, \spinc_0) = \FF_2
  \langle \Theta_{\b_1,\b_3} \rangle$, where $\spinc_0$ is the torsion
  $\SpinC$ struc\-ture, and has to map the generator
  $\Theta_{\b_1,\b_2}$ to the generator~$\Theta_{\b_1,\b_3}$.
\end{proof}

\begin{lemma} \label{lem:cont-triangle} Let $(\Sigma,\alphas,\betas)$
  and $(\Sigma,\alphas,\betas')$ be two admissible diagrams, let
  $\omega$ be a symplectic form on $\S$, and suppose we are given a
  Hamiltonian isotopy $I$ from~$\betas$ to~$\betas'$.  Then the
  isotopy $I$ induces an isomorphism
  \[
  \Gamma^\alphas_{\betas \to \betas'}\colon \HF^\circ(\Sigma,\alphas,
  \betas) \to \HF^\circ(\Sigma,\alphas,\betas').
  \]
  These isomorphisms compose under juxtaposition of isotopies, and we have
  $\Gamma^\alphas_{\betas \to \betas} = \text{Id}_{\HF^\circ(\S,\alphas,\betas)}$.  If,
  moreover, the triple $(\Sigma,\alphas,\betas,\betas')$ is
  admissible, then
  \begin{equation} \label{eqn:gamma-psi} \Gamma^\alphas_{\betas \to
      \betas'} = \Psi^\alphas_{\betas \to \betas'},
  \end{equation}
  and in particular it is independent of the isotopy $I$ (i.e., it
  depends only on the endpoints).
  Analogous results hold for admissible diagrams $(\S,\alphas,\betas)$ and $(\S,\alphas',\betas)$,
  and isomorphisms $\Gamma^{\alphas \to \alphas'}_{\betas} \colon \HF^\circ(\Sigma,\alphas,
  \betas) \to \HF^\circ(\Sigma,\alphas',\betas)$.
\end{lemma}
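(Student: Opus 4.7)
The plan is to construct $\Gamma^\alphas_{\betas\to\betas'}$ by the standard continuation-map machinery in Lagrangian Floer theory, and then compare it to $\Psi^\alphas_{\betas\to\betas'}$ via a neck-stretching degeneration.

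First I would define $\Gamma^\alphas_{\betas\to\betas'}$ on the chain level. The exact Hamiltonian isotopy $I = \{\phi_t\}_{t\in[0,1]}$ of $(\S,\omega)$ with $\phi_1(\betas) = \betas'$ induces an ambient Hamiltonian isotopy $\Phi_t$ on $\Sym^g(\S)$ carrying $\Tb$ to $\Tb'$. A chain map
\[
\gamma \colon CF^\circ(\S,\alphas,\betas) \to CF^\circ(\S,\alphas,\betas')
\]
is obtained by counting index-zero pseudo-holomorphic strips in $\Sym^g(\S)$ with boundary on $\Ta$ and on $\Phi_{\chi(s)}(\Tb)$, for a smooth monotone cut-off $\chi\colon\R\to[0,1]$ with $\chi(s)=0$ for $s\ll 0$ and $\chi(s)=1$ for $s\gg 0$. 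Weak admissibility of both diagrams, together with the exactness of the generating Hamiltonian, yields the energy bound necessary for Gromov compactness, and, after a generic perturbation within the space of admissible almost complex structures, standard transversality produces a chain map whose induced map $\Gamma^\alphas_{\betas\to\betas'}$ on homology does not depend on the choice of $\chi$ or the perturbation, via a $1$-parameter chain homotopy.

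Next, the composition property under juxtaposition of isotopies follows from the usual TQFT gluing argument: given $I_1$ from $\betas$ to $\betas'$ and $I_2$ from $\betas'$ to $\betas''$, stretching the cut-off so that $I_1$ and $I_2$ occur on widely separated $s$-intervals, then a Gromov/gluing analysis identifies the count for the concatenated isotopy with the composition $\gamma_{I_2}\circ\gamma_{I_1}$. Invertibility of $\Gamma^\alphas_{\betas\to\betas'}$ is then immediate, as juxtaposing $I$ with the reverse isotopy $\bar{I}$ is homotopic rel endpoints to the constant isotopy, for which the continuation map is readily identified (via a parametrized moduli space comparing with the Floer differential) with the identity on homology.

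For the identification $\Gamma^\alphas_{\betas\to\betas'} = \Psi^\alphas_{\betas\to\betas'}$ when $(\S,\alphas,\betas,\betas')$ is admissible, I would apply a neck-stretching degeneration at the $\Theta_{\b,\b'}$ corner of the pair-of-pants domain appearing in the definition of $F_{\a,\b,\b'}$. Stretching the neck there, the pair of pants breaks into (a) a disk with boundary on $\Tb$ and $\Tb'$ representing the generator $\Theta_{\b,\b'}$ in the top homological degree, and (b) a strip with boundary on $\Ta$ and on a moving $\betas$-to-$\betas'$ Hamiltonian family induced by the breaking profile. The usual gluing/compactness for this degeneration identifies the triangle count $F_{\a,\b,\b'}(\mathord{-}\otimes\Theta_{\b,\b'})$ with a continuation count for one specific exact Hamiltonian isotopy between $\betas$ and $\betas'$. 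Combining this with the homotopy-invariance of $\Gamma$ established by the parametrized moduli space, and the fact that any two exact Hamiltonian isotopies with the same endpoints (supported near $\betas$) are homotopic rel endpoints in that class, yields $\Gamma^\alphas_{\betas\to\betas'}=\Psi^\alphas_{\betas\to\betas'}$ and the asserted isotopy-independence.

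The main technical obstacle will be the neck-stretching step: one must justify that the Gromov-type limit of the triangle moduli space in that regime consists precisely of pairs (continuation strip, $\Theta$-disk), and that the inverse gluing produces every index-zero triangle exactly once. Making this rigorous in the symmetric-product setting, with the boundary behavior on $\Sym^g(\S)$ induced by the sutures on $\partial\S$, requires care, but the template is standard in closed-string Lagrangian Floer theory and has been used implicitly elsewhere in Heegaard Floer homology.
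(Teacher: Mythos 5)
Your construction of $\Gamma^\alphas_{\betas\to\betas'}$ via moving boundary conditions and its naturality under juxtaposition are standard and match what the paper cites from Ozsv\'ath--Szab\'o. For the identification $\Gamma=\Psi$, however, you take a genuinely different route from the paper, and that part has a gap.

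The paper's argument is algebraic: it invokes, as a black box, the commutativity of continuation and triangle maps from Ozsv\'ath--Szab\'o (Theorem~2.3 of \cite{OS06:HolDiskFour}, Theorem~8.14 of \cite{OS04:HolomorphicDisks}), chooses $\gammas$ to be a small Hamiltonian translate of $\betas$ so that both $\Psi^\alphas_{\betas\to\gammas}$ and $\Psi^\alphas_{\betas'\to\gammas}$ are isomorphisms (using \cite[Proposition~9.8]{OS04:HolomorphicDisks}), and then solves for $\Gamma$ in the resulting square, reducing to Lemma~\ref{lem:triangles-compose}. You instead propose a direct analytic degeneration of the triangle moduli space. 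This is close in spirit to the monogon argument of Lipshitz (\cite[Proposition~11.4]{Lipshitz06:CylindricalHF}) that the paper mentions as an ``alternate elegant argument,'' but your description of the degeneration does not quite make sense as stated. Stretching the neck at the $\Theta_{\b,\b'}$ corner of the triangle domain produces broken triangles (a triangle together with a Floer strip of $\CF^\circ(\Sigma,\betas,\betas')$), not ``a disk with boundary on $\Tb$ and $\Tb'$ representing $\Theta_{\b,\b'}$'' glued to a moving-boundary continuation strip. The object playing the role of the generator in a valid version of this argument is a monogon (a once-punctured disk), and passing from the triangle domain to a strip with a marked point is a domain degeneration, not a neck stretching at a strip-like end; making this precise requires redoing the compactness and gluing analysis rather than invoking them as routine.

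The more serious gap is the final step. You conclude isotopy-independence from ``the fact that any two exact Hamiltonian isotopies with the same endpoints (supported near $\betas$) are homotopic rel endpoints in that class.'' This is neither stated in the lemma (the isotopy $I$ is arbitrary, not supported near $\betas$) nor is it a justified fact: the paper's remark immediately after the lemma explicitly warns that in general symplectic manifolds continuation maps do depend on the homotopy class of the isotopy, so exactly this kind of claim fails without a Heegaard-Floer-specific input. The paper's route is designed to avoid this: it identifies $\Gamma$ with a triangle map $\Psi$ (which manifestly does not see the isotopy), and isotopy-independence falls out as a corollary. Your argument uses isotopy-independence as an ingredient, which risks circularity. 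To repair your proof you would need to establish the homotopy statement independently or, better, imitate Lipshitz's monogon computation directly, which sidesteps the issue.
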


\begin{proof}
  The continuation maps~$\Gamma^\alphas_{\betas \to \betas'}$ and~$\Gamma^{\alphas \to \alphas'}_{\betas}$
  were defined by Ozsv\'ath and Sza\-b\'o~\cite[Section~7.3]{OS04:HolomorphicDisks}, and they showed they are
  natural under juxtaposition in~\cite[Lemma~2.12]{OS06:HolDiskFour}.
  The result $\Gamma^\alphas_{\betas \to \betas} = \text{Id}_{\HF^\circ(\S,\alphas,\betas)}$
  follows immediately from the definition of~$\Gamma$ on \cite[p.~1087]{OS04:HolomorphicDisks},
  as we count Maslov index zero disks in $\text{Sym}^g(\S)$ with half of the boundary on~$\Torus_\a$ and
  the other half on the now constant torus $\Torus_\b$, which are exactly the constant disks whith image
  being some $\x \in \Torus_\a \cap \Torus_\b$.

  Equation~\eqref{eqn:gamma-psi} follows from commutativity of the continuation and triangle
  maps. Indeed, if $(\S,\alphas,\betas,\gammas)$ is an
  admissible triple, and $\betas'$ is a
  Hamiltonian translate of $\betas$ such that
  $(\S,\alphas,\betas',\gammas)$ is also admissible, then the last commutative diagram
  in~\cite[Theorem~2.3]{OS06:HolDiskFour} is
  \[
  \xymatrix{\HF^\circ(\S,\alphas,\betas) \otimes \HF^\circ(\S,\betas,\gammas)  \ar[r]^-{F_{\a,\b,\g}}
  \ar[d]_{\Gamma^\alphas_{\betas \to \betas'} \otimes \Gamma^{\betas \to \betas'}_{\gammas}} &
    \HF^\circ(\S,\alphas,\gammas) \ar[d]^{\text{Id}} \\
    \HF^\circ(\S,\alphas,\betas') \otimes \HF^\circ(\S,\betas',\gammas) \ar[r]^-{F_{\a,\b',\g}} &
    \HF^\circ(\S,\alphas,\gammas);}
  \]
  see also~\cite[Proposition~8.14]{OS04:HolomorphicDisks}.
  If $\betas \sim \gammas$, then
  \[
  \Gamma^{\betas \to \betas'}_{\gamma} \colon \HF^\circ(\S,\betas,\gammas,\spinc_0)
  \to \HF^\circ(\S,\betas',\gammas,\spinc_0)
  \]
  is an isomorphism preserving the relative $\ZZ$-gradings, hence
  $\Gamma^{\betas \to \betas'}_{\gamma}(\Theta_{\b,\g}) = \Theta_{\b',\g}$. Together with
  Definition~\ref{def:psi}, we obtain that the diagram
  \[
  \xymatrix{\HF^\circ(\S,\alphas,\betas) \ar[r]^{\Psi^\alphas_{\betas
        \to \gammas}} \ar[d]_{\Gamma^\alphas_{\betas \to \betas'}} &
    \HF^\circ(\S,\alphas,\gammas) \ar[d]^{\text{Id}} \\
    \HF^\circ(\S,\alphas,\betas') \ar[r]^{\Psi^\alphas_{\betas' \to
        \gammas}} & \HF^\circ(\S,\alphas,\gammas)}
  \]
  is commutative if $\betas \sim \gammas$.

  A priori, the maps $\Psi^\alphas_{\betas \to \gammas}$ and
  $\Psi^\alphas_{\betas' \to \gammas}$ might not be isomorphisms; we
  choose $\gammas$ such that they are, as follows: Let $\gammas$ be
  a sufficiently small Hamiltonian translate of~$\betas'$ such that
  each component of $\gammas$ intersects the corresponding component
  of $\betas'$ transversely in two points.
  Since the triple $(\S,\alphas,\betas,\betas')$ is admissible, we can
  choose $\gammas$ such that the quadruple
  $(\S,\alphas,\betas,\betas',\gammas)$ is also admissible.
  Indeed, if $\mathcal{P}$ is a quadruply-periodic domain in $(\S,\alphas,\betas,\betas',\gammas)$
  and $\mathcal{D}_i$ is the doubly-periodic domain traced by the isotopy from $\b_i' \in \betas'$ to
  $\g_i \in \gammas$ for $i \in \{1,\dots,d\}$ (in particular, $\partial \mathcal{D}_i = \b_i' - \g_i$),
  then
  \[
  \mathcal{P}' = \mathcal{P} + a_1\mathcal{D}_1 + \dots + a_d \mathcal{D}_d
  \]
  is a triply-periodic domain in $(\S,\alphas,\betas,\betas')$, where $a_i$
  is the multiplicity of $\g_i$ in $\partial \mathcal{P}$. Since $(\S,\alphas,\betas,\betas')$
  is admissible, $\mathcal{P}'$ has both positive and negative multiplicities,
  and hence so does $\mathcal{P}$ if $\gammas$ is so close to~$\betas'$ that none
  of the components of $\S \setminus (\alphas \cup \betas \cup \betas')$ is contained
  in a bigon between~$\betas'$ and~$\gammas$.
  In particular, both $(\Sigma,\alphas,\betas,\gammas)$ and
  $(\Sigma,\alphas,\betas',\gammas)$ are admissible, satisfying the
  conditions for the above diagram to be commutative.  Since
  $\gammas$ is close to $\betas'$, a result of Ozsv\'ath and
  Szab\'o~\cite[Proposition~9.8]{OS04:HolomorphicDisks} implies that
  the maps
  \[
  \begin{split}
  \Psi^\alphas_{\betas' \to \gammas} &\colon \HF^\circ(\S,\alphas,\betas)
  \to \HF^\circ(\S,\alphas,\gammas) \text{ and} \\
  \Psi^\betas_{\betas' \to \gammas} &\colon \HF^\circ(\S,\betas,\betas')
  \to \HF^\circ(\S,\betas,\gammas)
  \end{split}
  \]
  are isomorphisms. Then the commutativity of the above rectangle
  gives that
  \[
  \Gamma^\alphas_{\betas \to \betas'} = \left(\Psi^\alphas_{\betas'
      \to \gammas}\right)^{-1} \circ \Psi^\alphas_{\betas \to
    \gammas}.
  \]
  Since the quadruple $(\S,\alphas,\betas,\betas',\gammas)$ is
  admissible and $\Psi^\betas_{\betas' \to \gammas}$
  is an isomorphism, we can apply Lemma~\ref{lem:triangles-compose}
  to conclude that
  \[
  \left(\Psi^\alphas_{\betas' \to \gammas}\right)^{-1} \circ
  \Psi^\alphas_{\betas \to \gammas} = \Psi^\alphas_{\betas \to
    \betas'}.
  \]

  An alternate elegant argument can be given using monogons; see the
  work of Lipshitz~\cite[Proposition~11.4]{Lipshitz06:CylindricalHF}.
\end{proof}

\begin{remark}
  Continuation maps in general symplectic manifolds do
  depend on the homotopy class of the isotopy, and hence cannot be
  written in terms of triangle maps; see Seidel~\cite{Seidel97:pi1}.
  The above lemma is specific to Heegaard Floer homology for two reasons: First,
  in Lemma~\ref{lem:triangles-compose}, we used that if $\betas \sim \gammas$,
  then $\HF^\circ(\S,\betas,\gammas)$ is relatively $\ZZ$-graded as opposed to
  just being $\ZZ_2$-graded, and hence the top-graded generator $\Theta_{\b,\g}$ is well-defined up to
  grading-preserving automorphism (cf.~Lemma~\ref{lem:grading}). Secondly,
  the admissibility conditions allow us to work over a polynomial ring instead
  of Novikov ring coefficients (cf.~\cite[Section~10.0.1]{OS04:HolomorphicDisks}).
\end{remark}

Another way to view Lemma~\ref{lem:cont-triangle} is that the triangle
map $\Psi^\alphas_{\betas \to \betas'}$ is an isomorphism whenever the
triple $(\S,\alphas,\betas,\betas')$ is admissible and $\betas$ and
$\betas'$ are Hamiltonian isotopic. Our next goal is to relax
the second condition and show that $\Psi^\alphas_{\betas \to \betas'}$
is also an isomorphism whenever $\betas \sim
\betas'$.

\begin{proposition}\label{prop:Compatibility1}
\hspace{2em}
  \begin{enumerate}
  \item \label{item:psi-iso} Suppose that
    $(\S,\alphas,\betas,\betas')$ is an admissible triple and we have
    $\betas \sim \betas'$.  Then the map
    \[
    \Psi^{\alphas}_{\betas \to \betas'} \colon
    \HF^\circ(\S,\alphas,\betas)\to \HF^\circ(\S,\alphas,\betas')
    \]
    is an isomorphism.
  \item \label{item:psi-compose} These isomorphisms are compatible in
    the sense that if the triple diagrams
    $(\S,\alphas,\betas,\betas')$, $(\S,\alphas,\betas',\betas'')$,
    and $(\S,\alphas,\betas,\betas'')$ are admissible, then
    \begin{equation*}
      \Psi^{\alphas}_{\betas'\to\betas''}\circ \Psi^{\alphas}_{\betas\to\betas'} =
      \Psi^{\alphas}_{\betas\to\betas''}.
    \end{equation*}
  \item \label{item:psi-alpha-beta} Similarly, if
    $(\S,\alphas',\alphas,\betas)$ is admissible and $\alphas \sim
    \alphas'$, then the map
    \[
    \Psi^{\alphas \to \alphas'}_{\betas}\colon
    \HF^\circ(\S,\alphas,\betas)\to \HF^\circ(\S,\alphas',\betas)
    \]
    is an isomorphism, and satisfies the analogue of
    \eqref{item:psi-compose}. Finally, we have
    \begin{equation*}
      \Psi^{\alphas\to\alphas'}_{\betas'}\circ \Psi^{\alphas}_{\betas\to\betas'}=
      \Psi^{\alphas'}_{\betas\to\betas'}\circ \Psi^{\alphas\to\alphas'}_{\betas},
    \end{equation*}
    assuming all four triple diagrams involved are admissible.
  \end{enumerate}
\end{proposition}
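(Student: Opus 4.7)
\medskip

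\noindent\textbf{Proof proposal.} The plan is to reduce each statement to the elementary case already handled, namely small exact Hamiltonian isotopies covered by Lemma~\ref{lem:cont-triangle}, the associativity identity of Lemma~\ref{lem:triangles-compose}, and the admissibility perturbation of Lemma~\ref{lem:adm-multi}. Throughout, whenever a combination of attaching sets $(\S,\etas_0,\dots,\etas_n)$ needs to be admissible to apply these lemmas, I would invoke Lemma~\ref{lem:adm-multi} to Hamiltonian-isotope the last attaching set, and then use Lemma~\ref{lem:cont-triangle} to transport the resulting identities back to the original diagrams. I will therefore suppress these perturbations in the outline below.

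For part~\eqref{item:psi-iso}, since $\betas \sim \betas'$, Lemma~\ref{lem:handleslide} lets us connect $[\betas]$ and $[\betas']$ by a sequence of handleslides, interpolated by isotopies. I would first treat a single elementary step $\betas \to \widetilde{\betas}$: if $\widetilde{\betas}$ is an exact Hamiltonian isotopic translate of $\betas$, then $\Psi^{\alphas}_{\betas \to \widetilde{\betas}} = \Gamma^{\alphas}_{\betas \to \widetilde{\betas}}$ is an isomorphism by Lemma~\ref{lem:cont-triangle}; if $\widetilde{\betas}$ is obtained from $\betas$ by a single handleslide, then a standard model computation in a handleslide diagram of type $\#^g(S^1 \times S^2)$ (as in Ozsv\'ath--Szab\'o~\cite{OS04:HolomorphicDisks}, or Juh\'asz~\cite{Juhasz06:Sutured} in the sutured case) combined with Lemma~\ref{lem:triangles-compose} applied to a nearby Hamiltonian translate $\gammas$ of $\betas$ shows that $\Psi^{\alphas}_{\betas \to \widetilde{\betas}}$ is an isomorphism. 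Iterating Lemma~\ref{lem:triangles-compose} along the chain of elementary steps---after perturbing the intermediate attaching sets to make every required triple and quadruple admissible---identifies the composition with a single triangle map $\Psi^{\alphas}_{\betas \to \betas'}$, which is therefore an isomorphism.

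Part~\eqref{item:psi-compose} now follows directly from Lemma~\ref{lem:triangles-compose}: since~\eqref{item:psi-iso} guarantees that $\Psi^{\betas}_{\betas' \to \betas''}$ is an isomorphism, the composition identity applies to the quadruple $(\S,\alphas,\betas,\betas',\betas'')$, again after a preliminary perturbation to ensure quadruple admissibility and an application of Lemma~\ref{lem:cont-triangle} to transport the identity back. For the $\alphas$-analogue in~\eqref{item:psi-alpha-beta}, I would simply reverse the orientation of $\Sigma$ (or equivalently swap the roles of $\alphas$ and $\betas$ throughout), so that the argument for~\eqref{item:psi-iso} and~\eqref{item:psi-compose} applies verbatim. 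The mixed commutativity
\[
\Psi^{\alphas\to\alphas'}_{\betas'}\circ \Psi^{\alphas}_{\betas\to\betas'} = \Psi^{\alphas'}_{\betas\to\betas'}\circ \Psi^{\alphas\to\alphas'}_{\betas}
\]
I would deduce from the associativity of the rectangle/quadruple map~\cite[Theorem~8.16]{OS04:HolomorphicDisks} applied to the admissible quadruple $(\S,\alphas,\alphas',\betas,\betas')$: both sides are equal at the homology level to $F_{\a,\a',\b,\b'}(\Theta_{\a,\a'} \otimes x \otimes \Theta_{\b,\b'})$, the count of pseudo-holomorphic rectangles. The main obstacle in the whole argument is the handleslide case of~\eqref{item:psi-iso}: although this is classical for closed $3$-manifolds, in the sutured setting one has to invoke the sutured analogue of the model computation and verify that the top generator $\Theta_{\b,\b'}$ defined via the torsion $\SpinC$-structure on $M(R_+,k)$ is sent to the corresponding top generator under the composition of an elementary handleslide map with a nearby Hamiltonian continuation, which is what lets Lemma~\ref{lem:triangles-compose} bootstrap from the isomorphism of nearby continuation maps.
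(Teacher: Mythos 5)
Your proposal follows essentially the same route as the paper: part~\eqref{item:psi-iso} is proved by decomposing $\betas \to \betas'$ into model handleslides and small exact Hamiltonian isotopies, iterating Lemma~\ref{lem:triangles-compose} (with Lemma~\ref{lem:adm-multi} supplying the needed admissibility and Lemma~\ref{lem:cont-triangle} transporting back); part~\eqref{item:psi-compose} then follows because part~\eqref{item:psi-iso} discharges the isomorphism hypothesis in Lemma~\ref{lem:triangles-compose}; and the mixed commutativity in~\eqref{item:psi-alpha-beta} is deduced from triangle-map associativity on an admissible quadruple, with a preliminary Hamiltonian perturbation to reduce from the stated triple-admissibility hypotheses to quadruple admissibility. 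The only places where the paper is more careful than your sketch are the explicit induction on the handleslide count in~\eqref{item:psi-iso} (each inductive step uses a model handleslide and a separate Hamiltonian translate to make all the required multi-diagrams admissible simultaneously) and the explicit diagrams used to justify transporting the composition identities across those perturbations, but these are bookkeeping refinements of exactly the strategy you outline.
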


\begin{proof}
  We first show~\eqref{item:psi-iso}. By Lemma~\ref{lem:handleslide},
  we can get from $\betas$ to $\betas'$ by a sequence of isotopies and
  handleslides; let $h(\betas,\betas')$ be the minimal number of
  handleslides required in such a sequence. We prove the claim by
  induction on $h(\betas,\betas')$.

  Suppose that $h(\betas,\betas') = 0$. Since the triple
  $(\S,\alphas,\betas,\betas')$ is admissible, the pair
  $(\S,\betas,\betas')$ is also admissible. According to Ozsv\'ath and
  Szab\'o~\cite[Lemma 4.12]{OS04:HolomorphicDisks}, there exists a
  volume form $\omega$ on $\S$ for which every periodic domain has
  signed area equal to zero. Let $\betas = \{\,\b_1,\dots,\b_k\,\}$
  and $\betas' = \{\,\b_1',\dots,\b_k'\,\}$, labeled such that $\b_i$ and~$\b_i'$
  are isotopic for every $i \in \{1,\dots,k\}$.
  Then the cycle $\b_i - \b_i'$ is the boundary of
  a 2-chain $\mathcal{P}_i$, which can be viewed as a periodic
  domain. Since $\mathcal{P}_i$ has signed area zero with respect to $\omega$,
  it follows that $\b_i$ and $\b_i'$ are Hamiltonian isotopic for every $i \in \{1,\dots,k\}$.
  Let~$d_1$ be a diffeomorphism Hamiltonian isotopic to the identity
  such that $d_1(\b_1) = \b_1'$. Then $d_1(\b_2)$ and $\b_2'$ are
  also Hamiltonian isotopic, and since they are both disjoint and homologically
  linearly independent from $d_1(\b_1) = \b_1'$,
  they can be connected by a Hamiltonian isotopy that is the identity in a neighborhood
  of $\b_1'$. Let $d_2$ be a diffeomorphism Hamiltonian isotopic to the identity
  that fixes a neighborhood of $\b_1'$, and such that $d_2 \circ d_1(\b_2) = \b_2'$.
  Continuing this process, we recursively obtain diffeomorphisms $d_1, \dots, d_k$ such that
  $d_i$ is Hamiltonian isotopic to the identity, and fixes a neighborhood of
  $\b_1', \dots, \b_{i-1}'$ for every $i \in \{1, \dots, k\}$.
  Hence $d_k \circ \dots \circ d_1(\betas) =  \betas'$, and
  $d_k \circ \dots \circ d_1$ is Hamiltonian isotopic to the identity.
  By Lemma~\ref{lem:cont-triangle}, the triangle map
  $\Psi^\alphas_{\betas \to \betas'}$ is an isomorphism for any
  complex structure compatible with $\omega$. However, the triangle
  maps commute with the maps $\Phi_{J_s \to J_s'}$, hence it is an
  isomorphism for any complex structure and perturbation.

  Suppose we know the statement for $h(\betas,\betas') < n$ for some
  $n > 0$.  If $h(\betas,\betas') = n$, then we can choose an
  attaching set $\gammas$ such that $h(\betas,\gammas) = 1$ and
  $h(\gammas,\betas') = n-1$; furthermore, $\gammas$ is obtained from
  $\betas$ by a model handleslide as described by Ozsv\'ath and
  Szab\'o~\cite[Section~9]{OS04:HolomorphicDisks}. Then, according to
  Ozsv\'ath and Szab\'o~\cite[Theorem~9.5]{OS04:HolomorphicDisks}, the
  triple $(\S,\alphas,\betas,\gammas)$ is admissible and the map
  $\Psi^\alphas_{\betas \to \gammas}$ is an isomorphism.  The triple
  diagram $(\S,\alphas,\gammas,\betas')$ might not be admissible, but,
  by Lemma~\ref{lem:adm-multi}, there is a Hamiltonian
  translate $\gammas'$ of $\gammas$ for which both
  $(\S,\alphas,\betas,\betas',\gammas')$ and
  $(\S,\alphas,\betas,\gammas,\gammas')$ are admissible.  Then
  consider the following diagram:
  \[
  \xymatrix{ \HF^\circ(\S,\alphas,\betas) \ar[r]^{\Psi^\alphas_{\betas
        \to \betas'}} \ar[d]_{\Psi^\alphas_{\betas \to \gammas}}
    \ar[rd]^{\Psi^\alphas_{\betas \to \gammas'}}
    & \HF^\circ(\S,\alphas,\betas')  \\
    \HF^\circ(\S,\alphas,\gammas) \ar[r]_{\Psi^\alphas_{\gammas \to
        \gammas'}} & \HF^\circ(\S,\alphas,\gammas')
    \ar[u]_{\Psi^\alphas_{\gammas' \to \betas'}}.  }
  \]
  We will prove that it is commutative. Since
  $(\S,\alphas,\gammas,\gammas')$ is admissible and $\gammas'$ is a
  Hamiltonian translate of $\gammas$,
  Lemma~\ref{lem:cont-triangle} implies that the map
  $\Psi^\alphas_{\gammas \to \gammas'} = \Gamma^\alphas_{\gammas \to
    \gammas'}$ is an isomorphism.
  Similarly, $\Psi^{\betas}_{\gammas \to \gammas'}$ is also an isomorphism,
  and as the tuple
  $(\S,\alphas,\betas,\gammas,\gammas')$ is admissible, we can apply
  Lemma~\ref{lem:triangles-compose} to conclude that
  \[
  \Psi^\alphas_{\gammas \to \gammas'} \circ \Psi^\alphas_{\betas \to
    \gammas} = \Psi^\alphas_{\betas \to \gammas'}.
  \]
  We have seen that both $\Psi^{\alphas}_{\gammas \to \gammas'}$ and
  $\Psi^\alphas_{\betas \to \gammas}$ are isomorphisms, so
  $\Psi^\alphas_{\betas \to \gammas'}$ is an isomorphism.  Since
  $h(\gammas',\betas') = n-1$, the map $\Psi^\alphas_{\gammas' \to
    \betas'}$ is an isomorphism by the induction hypothesis. So we are
  done if we show that
  \[
  \Psi^\alphas_{\betas \to \betas'} = \Psi^\alphas_{\gammas' \to
    \betas'} \circ \Psi^\alphas_{\betas \to \gammas'}.
  \]
  This also follows from Lemma~\ref{lem:triangles-compose}. Indeed,
  the quadruple diagram $(\S,\alphas,\betas,\betas',\gammas')$ is
  admissible; furthermore, the map $\Psi^\betas_{\gammas' \to
    \betas'}$ is an isomorphism by the induction hypothesis (the
  diagram $(\S,\betas,\gammas',\betas')$ is admissible and
  $h(\gammas',\betas') = n-1$).  It follows that $\Psi^\alphas_{\betas
    \to \betas'}$ is an isomorphism, concluding the proof of
  \eqref{item:psi-iso}.

  A useful consequence of \eqref{item:psi-iso} is that, in
  Lemma~\ref{lem:triangles-compose}, the condition that
  $\Psi^{\betas_1}_{\betas_2 \to \betas_3}$ is an isomorphism
  automatically follows from the others (we only need that the triple
  $(\S,\betas_1,\betas_2,\betas_3)$ is admissible and $\betas_2 \sim
  \betas_3$). Armed with this fact, we proceed to the proof of
  \eqref{item:psi-compose}. By Lemma~\ref{lem:adm-multi}, there is a
  Hamiltonian translate $\betas_1'$ of $\betas'$ such that the
  quadruple diagrams $(\S,\alphas,\betas,\betas',\betas_1')$,
  $(\S,\alphas,\betas',\betas'',\betas_1')$, and
  $(\S,\alphas,\betas,\betas'',\betas_1')$ are all admissible. Then
  consider the following diagram:
  \[
  \xymatrix{
    & \HF^\circ(\S,\alphas,\betas') \ar[dd]^{\Psi^\alphas_{\betas' \to \betas_1'}} \ar[rddd]^{\Psi^\alphas_{\betas' \to \betas''}} \\
    \\
    & \HF^\circ(\S,\alphas,\betas_1') \ar[dr]^{\Psi^\alphas_{\betas_1' \to \betas''}} \\
    \HF^\circ(\S,\alphas,\betas) \ar[ru]^{\Psi^\alphas_{\betas \to
        \betas_1'}} \ar[ruuu]^{\Psi^\alphas_{\betas \to \betas'}}
    \ar[rr]^{\Psi^\alphas_{\betas \to \betas''}} & &
    \HF^\circ(\S,\alphas,\betas'').  }
  \]
  Commutativity of the three small triangles follows from the above
  improved version of Lemma~\ref{lem:triangles-compose}. Hence the
  large triangle is also commutative; i.e.,
  \[
  \Psi^\alphas_{\betas \to \betas''} = \Psi^\alphas_{\betas' \to
    \betas''} \circ \Psi^\alphas_{\betas \to \betas'},
  \]
  which concludes the proof of \eqref{item:psi-compose}.

  We finally prove \eqref{item:psi-alpha-beta}. First, we verify this
  when $(\S,\alphas,\alphas',\betas,\betas')$ is admissible. Pick an
  element $x \in \HF^\circ(\S,\alphas,\betas)$. Then, using the
  associativity of the triangle maps
  \cite[second part of Theorem~8.16]{OS04:HolomorphicDisks},
  \begin{align*}
    \Psi^{\alphas \to \alphas'}_{\betas'} \circ \Psi^\alphas_{\betas
      \to \betas'}(x) &= F_{\a',\a,\b'}(\Theta_{\a',\a} \otimes
    F_{\a,\b,\b'}(x \otimes \Theta_{\b,\b'}) ) \\ &=
    F_{\a',\b,\b'}(F_{\a',\a,\b}(\Theta_{\a',\a} \otimes x) \otimes
    \Theta_{\b,\b'}) \\ &= \Psi^{\alphas'}_{\betas \to \betas'} \circ
    \Psi^{\alphas \to \alphas'}_\betas(x).
  \end{align*}
  We now consider the general case. By Lemma~\ref{lem:adm-multi},
  there is an isotopic copy $\ol{\betas}$ of $\betas$ for which both
  $(\S,\alphas,\alphas',\betas,\ol{\betas})$ and
  $(\S,\alphas,\alphas',\betas',\ol{\betas})$ are admissible.  Then
  \begin{align*}
    \Psi^{\alphas \to \alphas'}_{\betas'} \circ \Psi^\alphas_{\betas
      \to \betas'} &=
    \Psi^{\alphas \to \alphas'}_{\betas'} \circ \Psi^\alphas_{\ol{\betas} \to \betas'} \circ \Psi^\alphas_{\betas \to \ol{\betas}} \\
    &= \Psi^{\alphas'}_{\ol{\betas} \to \betas'} \circ \Psi^{\alphas \to \alphas'}_{\ol{\betas}} \circ \Psi^\alphas_{\betas \to \ol{\betas}} \\
    &= \Psi^{\alphas'}_{\ol{\betas} \to \betas'}  \circ \Psi^{\alphas'}_{\betas \to \ol{\betas}} \circ \Psi^{\alphas \to \alphas'}_{\betas} \\
    &= \Psi^{\alphas'}_{\betas \to \betas'} \circ \Psi^{\alphas \to
      \alphas'}_{\betas}.
  \end{align*}
  Here, the first and fourth equalities follow from
  \eqref{item:psi-compose}, while the second and third follow from the
  previous special case, assuming the admissibility conditions.  This
  concludes the proof of \eqref{item:psi-alpha-beta}.
\end{proof}

\begin{definition}
Suppose that the quadruple diagram $(\S,\alphas,\alphas',\betas,\betas')$ is admissible,
$\alphas \sim \alphas'$, and $\betas \sim \betas'$. Then let
\[
\Psi^{\alphas \to \alphas'}_{\betas \to \betas'} =
\Psi^{\alphas \to \alphas'}_{\betas'} \circ \Psi^{\alphas}_{\betas \to \betas'} =
\Psi^{\alphas'}_{\betas \to \betas'} \circ \Psi^{\alphas \to \alphas'}_{\betas}.
\]
Note that the second equality holds by part~\eqref{item:psi-alpha-beta} of
Proposition~\ref{prop:Compatibility1}.
\end{definition}

\begin{lemma} \label{lem:psi-compose}
Suppose that the six-tuple $(\S,\alphas,\alphas',\alphas'',\betas,\betas',\betas'')$ is admissible.
Then
\[
\Psi^{\alphas' \to \alphas''}_{\betas' \to \betas''} \circ \Psi^{\alphas \to \alphas'}_{\betas \to \betas'}=
\Psi^{\alphas \to \alphas''}_{\betas \to \betas''}.
\]
\end{lemma}

\begin{proof}
By parts~\eqref{item:psi-compose} and~\eqref{item:psi-alpha-beta} or Proposition~\ref{prop:Compatibility1},
 \begin{align*}
    \Psi^{\alphas' \to \alphas''}_{\betas' \to \betas''} \circ
    \Psi^{\alphas \to \alphas'}_{\betas \to \betas'} &=
    \Psi^{\alphas''}_{\betas' \to \betas''} \circ \Psi^{\alphas' \to
      \alphas''}_{\betas'} \circ
    \Psi^{\alphas \to \alphas'}_{\betas'} \circ \Psi^{\alphas}_{\betas \to \betas'} \\
    &= \Psi^{\alphas''}_{\betas' \to \betas''} \circ \Psi^{\alphas \to \alphas''}_{\betas'} \circ \Psi^{\alphas}_{\betas \to \betas'} \\
    &= \Psi^{\alphas''}_{\betas' \to \betas''} \circ \Psi^{\alphas''}_{\betas \to \betas'} \circ \Psi^{\alphas \to \alphas''}_{\betas} \\
    &= \Psi^{\alphas''}_{\betas \to \betas''} \circ \Psi^{\alphas \to
      \alphas''}_{\betas} = \Psi^{\alphas \to \alphas''}_{\betas \to
      \betas''}.
  \end{align*}
\end{proof}

So triangle maps give canonical isomorphisms $\Psi^{\alphas \to \alphas'}_{\betas
  \to \betas'}$ between $\HF^\circ(\S,\alphas,\betas)$ and
$\HF^\circ(\S,\alphas',\betas')$ (in the sense of Definition~\ref{def:transitive-system}) whenever the quadruple
$(\S,\alphas,\alphas',\betas,\betas')$ is admissible,
$\alphas \sim \alphas'$, and $\betas \sim \betas'$.
But what do we do when the admissibility condition fails?  If the
triple $(\S,\alphas,\betas,\betas')$ is not admissible, then the
triangle count in $\Psi^{\alphas}_{\betas \to \betas'}$ might not be finite,
and even if it is, there are
simple examples where it does not give a natural isomorphism, even
though $\betas$ and $\betas'$ are isotopic. To overcome this obstacle,
we first apply a Hamiltonian isotopy to $\alphas$ and $\betas$ so that the
quadruple $(\alphas,\alphas',\betas,\betas')$ becomes admissible.
According to Lemma~\ref{lem:adm-multi}, this is always possible.

\begin{proposition} \label{prop:isotopy-map} Suppose that the diagrams
  $(\S,\alphas,\betas)$ and $(\S,\alphas',\betas')$ are both
  admissible, $\alphas \sim \alphas'$, and $\betas \sim \betas'$.  According to
  Lemma~\ref{lem:adm-multi}, there exist attaching sets $\ol{\alphas}$ and
  $\ol{\betas}$ isotopic to $\alphas$ and $\betas$, respectively, and such that the
  quadruples $(\S, \alphas, \ol{\alphas}, \betas, \ol{\betas})$ and
  $(\S, \ol{\alphas}, \alphas',\ol{\betas}, \betas')$ are both admissible.  Then the map
  \[
  \Psi^{\ol{\alphas} \to \alphas'}_{\ol{\betas} \to \betas'} \circ \Psi^{\alphas \to \ol{\alphas}}_{\betas
    \to \ol{\betas}} \colon \HF^\circ(\S,\alphas,\betas) \to
  \HF^\circ(\S,\alphas',\betas')
  \]
  is an isomorphism. Furthermore, it is independent of the choice of~$\ol{\alphas}$ an~$\ol{\betas}$;
  we denote it by $\Phi^{\alphas \to \alphas'}_{\betas \to \betas'}$.
  Finally, if $(\S,\alphas'',\betas'')$ is also admissible, $\alphas'' \sim \alphas$, and $\betas''
  \sim \betas$, then
  \begin{equation} \label{eqn:phi-compose} \Phi^{\alphas' \to \alphas''}_{\betas' \to
      \betas''} \circ \Phi^{\alphas \to \alphas'}_{\betas \to \betas'} =
    \Phi^{\alphas \to \alphas''}_{\betas \to \betas''}.
  \end{equation}
\end{proposition}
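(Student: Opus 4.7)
The plan is to deduce everything from Proposition~\ref{prop:Compatibility1} together with the ability, afforded by Lemma~\ref{lem:adm-multi}, to produce simultaneously admissible auxiliary multi-diagrams by exact Hamiltonian perturbation. The isomorphism claim is immediate: since $\betas \sim \ol{\betas}$ (as they are isotopic) and $\ol{\betas} \sim \betas'$, the hypothesis that both triples $(\S,\alphas,\betas,\ol{\betas})$ and $(\S,\alphas,\ol{\betas},\betas')$ are admissible lets us apply Proposition~\ref{prop:Compatibility1}\eqref{item:psi-iso} to conclude that each of $\Psi^\alphas_{\betas \to \ol{\betas}}$ and $\Psi^\alphas_{\ol{\betas} \to \betas'}$ is an isomorphism, hence so is their composition.

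For independence of the choice of $\ol{\betas}$, suppose $\ol{\betas}_1$ and $\ol{\betas}_2$ are two auxiliary translates, each making the required two triples admissible. By repeated application of Lemma~\ref{lem:adm-multi} to the various sub-multi-diagrams, I would produce a further exact Hamiltonian translate $\ol{\ol{\betas}}$ of $\betas$ such that every triple among $\{\betas, \ol{\betas}_1, \ol{\betas}_2, \betas', \ol{\ol{\betas}}\}$ (paired with $\alphas$) is admissible. Then Proposition~\ref{prop:Compatibility1}\eqref{item:psi-compose} applied twice yields
\[
\Psi^\alphas_{\ol{\betas}_i \to \betas'} \circ \Psi^\alphas_{\betas \to \ol{\betas}_i} = \Psi^\alphas_{\ol{\ol{\betas}} \to \betas'} \circ \Psi^\alphas_{\ol{\betas}_i \to \ol{\ol{\betas}}} \circ \Psi^\alphas_{\betas \to \ol{\betas}_i} = \Psi^\alphas_{\ol{\ol{\betas}} \to \betas'} \circ \Psi^\alphas_{\betas \to \ol{\ol{\betas}}}
\]
for $i \in \{1,2\}$, so both compositions coincide and $\Phi^\alphas_{\betas \to \betas'}$ is well-defined.

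To establish the composition law~\eqref{eqn:phi-compose}, I would again use Lemma~\ref{lem:adm-multi} to choose a Hamiltonian translate $\ol{\betas}$ of $\betas$ and then a Hamiltonian translate $\ol{\betas}'$ of $\betas'$ so that every triple involving $\alphas$ and two of the curves in $\{\betas,\betas',\betas'',\ol{\betas},\ol{\betas}'\}$ is admissible. By the already established independence, I am free to compute
\[
\Phi^\alphas_{\betas \to \betas'} = \Psi^\alphas_{\ol{\betas} \to \betas'} \circ \Psi^\alphas_{\betas \to \ol{\betas}}, \quad \Phi^\alphas_{\betas' \to \betas''} = \Psi^\alphas_{\ol{\betas}' \to \betas''} \circ \Psi^\alphas_{\betas' \to \ol{\betas}'},
\]
and $\Phi^\alphas_{\betas \to \betas''} = \Psi^\alphas_{\ol{\betas} \to \betas''} \circ \Psi^\alphas_{\betas \to \ol{\betas}}$. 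Two applications of Proposition~\ref{prop:Compatibility1}\eqref{item:psi-compose} then collapse the four-fold composition $\Psi^\alphas_{\ol{\betas}' \to \betas''} \circ \Psi^\alphas_{\betas' \to \ol{\betas}'} \circ \Psi^\alphas_{\ol{\betas} \to \betas'} \circ \Psi^\alphas_{\betas \to \ol{\betas}}$ to $\Psi^\alphas_{\ol{\betas} \to \betas''} \circ \Psi^\alphas_{\betas \to \ol{\betas}}$, which is $\Phi^\alphas_{\betas \to \betas''}$.

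The only real obstacle is bookkeeping: at each step one must be sure that the particular triples invoked in Proposition~\ref{prop:Compatibility1}\eqref{item:psi-compose} are admissible. This is precisely what Lemma~\ref{lem:adm-multi} is designed to guarantee, by perturbing the ``last'' curve of a finite collection of multi-diagrams simultaneously, so the argument reduces to applying that lemma carefully (and perhaps in two stages, to $\betas$ and then to $\betas'$) before invoking the triangle-map identities.
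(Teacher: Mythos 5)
Your proposal is correct and follows essentially the same route as the paper's proof: establish that each factor of the composition is an isomorphism via Proposition~\ref{prop:Compatibility1}\eqref{item:psi-iso}, then prove well-definedness and the composition law by introducing auxiliary exact Hamiltonian translates via Lemma~\ref{lem:adm-multi} and collapsing chains of triangle maps with Proposition~\ref{prop:Compatibility1}\eqref{item:psi-compose}. The paper organizes the independence step as a commutative diagram with a single central auxiliary $\ol{\betas}$ and four small triangles; your version writes it out as a chain of two substitutions for each $i\in\{1,2\}$, which is an equivalent bookkeeping.

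One small caution in the composition-law step: you write that one can arrange ``every triple involving $\alphas$ and two of the curves in $\{\betas,\betas',\betas'',\ol{\betas},\ol{\betas}'\}$'' to be admissible. This is more than Lemma~\ref{lem:adm-multi} delivers and more than is true --- the triples $(\S,\alphas,\betas,\betas')$, $(\S,\alphas,\betas',\betas'')$, $(\S,\alphas,\betas,\betas'')$ involve only the original curves and cannot be made admissible by perturbing the auxiliary ones (indeed, if they could, the entire $\Phi$ construction would be unnecessary). Fortunately your computation never invokes these triples, only the six that genuinely contain $\ol{\betas}$ or $\ol{\betas}'$, and those are exactly what Lemma~\ref{lem:adm-multi} guarantees (perturbing $\ol{\betas}$ first, then $\ol{\betas}'$), as in the paper. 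So the overclaim is cosmetic, but worth tightening.
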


\begin{proof}
  The map $\Psi^{\ol{\alphas} \to \alphas'}_{\ol{\betas} \to \betas'} \circ
  \Psi^{\alphas \to \ol{\alphas}}_{\betas \to \ol{\betas}}$ is an isomorphism by
  part~\eqref{item:psi-iso} of Proposition~\ref{prop:Compatibility1}.
  We now show that it is independent of the choice of~$\ol{\alphas}$ and~$\ol{\betas}$.
  Let~$\ol{\alphas}_1$, $\ol{\betas}_1$ and $\ol{\alphas}_2$, $\ol{\betas}_2$ be two
  different choices. Using Lemma~\ref{lem:adm-multi}, we isotope~$\alphas$ and~$\betas$
  until we get attaching sets $\ol{\alphas}$ and $\ol{\betas}$ such that the
  six-tuples obtained by adding them to the quadruples $(\S,\alphas, \ol{\alphas}_1,\betas, \ol{\betas}_1)$,
  $(\S,\alphas, \ol{\alphas}_2,\betas, \ol{\betas}_2)$,
  $(\S,\ol{\alphas}_1, \alphas', \ol{\betas}_1, \betas')$, and
  $(\S, \ol{\alphas}_2, \alphas', \ol{\betas}_2, \betas')$ are all admissible.  Then
  we can consider the following diagram:
  \begin{equation*} 
    \xymatrix{
      &\HF^\circ\left(\S,\ol{\alphas}_1,\ol{\betas}_1\right) \ar[d]^{\Psi^{\ol{\alphas}_1 \to \ol{\alphas}}_{\ol{\betas}_1 \to \ol{\betas}}}
      \ar[rd]^{\Psi^{\ol{\alphas}_1 \to \alphas'}_{\ol{\betas}_1 \to \betas'}} \\
      \HF^\circ(\S,\alphas,\betas) \ar[ru]^{\Psi^{\alphas \to \ol{\alphas}_1}_{\betas \to \ol{\betas}_1}}
      \ar[r]^{\Psi^{\alphas \to \ol{\alphas}}_{\betas \to \ol{\betas}}} \ar[rd]_{\Psi^{\alphas \to \ol{\alphas}_2}_{\betas \to \ol{\betas}_2}} &
      \HF^\circ\left(\S,\ol{\alphas},\ol{\betas}\right) \ar[d]^{\Psi^{\ol{\alphas} \to \ol{\alphas}_2}_{\ol{\betas} \to \ol{\betas}_2}}
      \ar[r]^{\Psi^{\ol{\alphas} \to \alphas'}_{\ol{\betas} \to \betas'}} &
      \HF^\circ(\S,\alphas',\betas'). \\
      &\HF^\circ\left(\S,\ol{\alphas}_2,\ol{\betas}_2\right) \ar[ru]_{\Psi^{\ol{\alphas}_2 \to \alphas'}_{\ol{\betas}_2 \to \betas'}}
    }
  \end{equation*}
  Each of the four small triangles is commutative by
  part~\eqref{item:psi-compose} of
  Proposition~\ref{prop:Compatibility1}.  Hence, the outer square also
  commutes; i.e.,
  \[
  \Psi^{\ol{\alphas}_1 \to \alphas'}_{\ol{\betas}_1 \to \betas'} \circ \Psi^{\alphas \to \ol{\alphas}_1}_{\betas
    \to \ol{\betas}_1} = \Psi^{\ol{\alphas_2} \to \alphas'}_{\ol{\betas}_2 \to \betas'}
  \circ \Psi^{\alphas \to \ol{\alphas}_2}_{\betas \to \ol{\betas}_2},
  \]
  so the map for $\ol{\alphas}_1$, $\ol{\betas}_1$ is the same as the map for
  $\ol{\alphas}_2$, $\ol{\betas}_2$.

  Finally we show equation~\eqref{eqn:phi-compose}.  Using
  Lemma~\ref{lem:adm-multi}, pick isotopic copies $\ol{\alphas}$, $\ol{\betas}$,
  $\ol{\alphas}'$, $\ol{\betas}'$ of $\alphas$, $\betas$, $\alphas'$ and $\betas'$, respectively, such that
  the six-tuples obtained by adding these four attaching sets to the diagrams $(\S,\alphas,\betas)$,
  $(\S,\alphas',\betas')$, and $(\S,\alphas'',\betas'')$
  are all admissible.  Applying part~\eqref{item:psi-compose} of
  Proposition~\ref{prop:Compatibility1} to the left-hand side of
  equation~\eqref{eqn:phi-compose},
  \begin{align*}
    \Psi^{\ol{\alphas}' \to \alphas''}_{\ol{\betas}' \to \betas''} \circ
    \Psi^{\alphas' \to \ol{\alphas}'}_{\betas' \to \ol{\betas}'} \circ
    \Psi^{\ol{\alphas} \to \alphas'}_{\ol{\betas} \to \betas'} \circ \Psi^{\alphas \to \ol{\alphas}}_{\betas
      \to \ol{\betas}} &=
    \Psi^{\ol{\betas}' \to \alphas''}_{\ol{\betas}' \to \betas''} \circ
    \Psi^{\ol{\alphas} \to \ol{\alphas}'}_{\ol{\betas} \to \ol{\betas}'} \circ
    \Psi^{\alphas \to \ol{\alphas}}_{\betas \to \ol{\betas}} \\
    &= \Psi^{\ol{\alphas} \to \alphas''}_{\ol{\betas} \to \betas''} \circ
    \Psi^{\alphas \to \ol{\alphas}}_{\betas \to \ol{\betas}} =
    \Phi^{\alphas \to \alphas''}_{\betas \to \betas''},
  \end{align*}
  as required.
\end{proof}

\begin{definition}
Suppose that the diagrams $(\S,\alphas,\betas)$ and $(\S,\alphas,\betas')$ are both
admissible and $\betas \sim \betas'$. Then let
\[
\Phi^\alphas_{\betas \to \betas'} = \Phi^{\alphas \to \alphas}_{\betas \to \betas'}.
\]
Similarly, when we have admissible diagrams $(\S,\alphas,\betas)$ and $(\S,\alphas',\betas)$
such that $\alphas \sim \alphas'$, we write
\[
\Phi^{\alphas \to \alphas'}_\betas = \Phi^{\alphas \to \alphas'}_{\betas \to \betas}.
\]
\end{definition}

\begin{lemma} \label{lem:identity}
Suppose that the diagrams $(\S,\alphas,\betas)$ and $(\S,\alphas,\betas')$ 
are both admissible and $\betas \sim \betas'$.
Let $\ol{\betas}$ be an isotopic copy of $\betas$ such that the triples $(\S,\alphas,\betas,\ol{\betas})$
and $(\S,\alphas,\betas',\ol{\betas})$ are admissible. Then
\[
\Phi^\alphas_{\betas \to \betas'} = \Psi^\alphas_{\ol{\betas} \to \betas'} \circ 
\Psi^\alphas_{\betas \to \ol{\betas}}.
\]
An analogous statement holds for $\Phi^{\alphas \to \alphas'}_\betas$.
Finally,
  \begin{equation} \label{eqn:id} \Phi^{\alphas \to \alphas}_{\betas
      \to \betas} = \Phi^\alphas_{\betas \to \betas} = \Phi^{\alphas
      \to \alphas}_\betas = \text{Id}_{\HF^\circ(\S,\alphas,\betas)}.
  \end{equation}
\end{lemma}

\begin{proof}
  Let $\ol{\alphas}$ be a Hamiltonian translate of $\alphas$ such that
  the quadruples $(\S,\alphas,\ol{\alphas},\betas,\ol{\betas})$ and
  $(\S,\alphas,\ol{\alphas},\betas',\ol{\betas})$ are admissible.
  By Lemma~\ref{lem:cont-triangle} and the naturality of the continuation maps under juxtaposition,
  \[
  \Psi^{\ol{\alphas} \to \alphas}_{\ol{\betas}}  \circ \Psi^{\alphas \to \ol{\alphas}}_{\ol{\betas}} =
  \Gamma^{\ol{\alphas} \to \alphas}_{\ol{\betas}}  \circ \Gamma^{\alphas \to \ol{\alphas}}_{\ol{\betas}} =
  \text{Id}_{\HF^\circ(\S,\alphas,\ol{\betas})}.
  \]
It follows that
\begin{align*}
\Phi^\alphas_{\betas \to \betas'} &= \Phi^{\alphas \to \alphas}_{\betas \to \betas'} =
\Psi^{\ol{\alphas} \to \alphas}_{\ol{\betas} \to \betas'} \circ \Psi^{\alphas \to \ol{\alphas}}_{\betas \to \ol{\betas}}
= \Psi^{\alphas}_{\ol{\betas} \to \betas'} \circ \Psi^{\ol{\alphas} \to \alphas}_{\ol{\betas}}  \circ
\Psi^{\alphas \to \ol{\alphas}}_{\ol{\betas}} \circ
\Psi^{\alphas}_{\betas \to \ol{\betas}} = \\
&= \Psi^{\alphas}_{\ol{\betas} \to \betas'} \circ \Psi^{\alphas}_{\betas \to \ol{\betas}},
\end{align*}
as claimed. The statement for $\Phi^{\alphas \to \alphas'}_\betas$ follows similarly.

  We now prove the last statement regarding $\Phi^{\alphas \to \alphas}_{\betas \to \betas}$.
  Let $\ol{\betas}$ be a Hamiltonian translate of $\betas$ such
  that $(\S,\alphas,\betas,\ol{\betas})$ is admissible.  If we apply the first part with $\betas = \betas'$,
  we get that
  \[
  \Phi^\alphas_{\betas \to \betas} = \Psi^\alphas_{\ol{\betas} \to
    \betas} \circ \Psi^\alphas_{\betas \to \ol{\betas}}.
  \]
  Using Lemma~\ref{lem:cont-triangle}, the right-hand side is
  $\Gamma^\alphas_{\ol{\betas} \to \betas} \circ
  \Gamma^\alphas_{\betas \to \ol{\betas}}$.  By Lemma~\ref{lem:cont-triangle}, the naturality of the
  continuation maps under juxtaposition, this is
  $\Gamma^\alphas_{\betas \to \betas} = \text{Id}_{\HF^\circ(\S,\alphas,\betas)}$.
\end{proof}

\begin{corollary} \label{cor:id} Let $(\S,\alphas,\betas,\betas')$ be
  an admissible triple such that $\betas \sim \betas'$. Then
  \[
  \left(\Psi^\alphas_{\betas \to \betas'}\right)^{-1} =
  \Psi^\alphas_{\betas' \to \betas}.
  \]
  An analogous result holds for the maps $\Psi^{\alphas \to \alphas'}_\betas$.
\end{corollary}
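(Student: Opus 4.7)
The plan is to deduce all three assertions from equations~\eqref{eqn:phi-compose} and~\eqref{eqn:id}, once we observe that the map $\Phi^\alphas_{\betas \to \betas'}$ specializes to the triangle map $\Psi^\alphas_{\betas \to \betas'}$ whenever the triple $(\S,\alphas,\betas,\betas')$ is admissible.

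First I would establish this specialization. Applying Lemma~\ref{lem:adm-multi} to the admissible triple $(\S,\alphas,\betas,\betas')$, pick an exact Hamiltonian translate $\ol{\betas}$ of $\betas$ such that the whole quadruple $(\S,\alphas,\betas,\betas',\ol{\betas})$ is admissible; in particular, each of the triples $(\S,\alphas,\betas,\ol{\betas})$, $(\S,\alphas,\betas',\ol{\betas})$, and $(\S,\alphas,\betas,\betas')$ is admissible. By part~\eqref{item:psi-compose} of Proposition~\ref{prop:Compatibility1}, we then have
\[
\Psi^\alphas_{\ol{\betas} \to \betas'} \circ \Psi^\alphas_{\betas \to \ol{\betas}} = \Psi^\alphas_{\betas \to \betas'}.
\]
By the construction (and well-definedness) of $\Phi^\alphas_{\betas \to \betas'}$ in Proposition~\ref{prop:isotopy-map}, the left-hand side equals $\Phi^\alphas_{\betas \to \betas'}$, so $\Phi^\alphas_{\betas \to \betas'} = \Psi^\alphas_{\betas \to \betas'}$. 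The same argument, applied to the admissible triple $(\S,\alphas,\betas',\betas)$, yields $\Phi^\alphas_{\betas' \to \betas} = \Psi^\alphas_{\betas' \to \betas}$.

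Next I would invoke the composition law for the $\Phi$ maps to get the inverse statement. By equation~\eqref{eqn:phi-compose} and equation~\eqref{eqn:id} of Lemma~\ref{lem:alpha-beta-commute},
\[
\Phi^\alphas_{\betas' \to \betas} \circ \Phi^\alphas_{\betas \to \betas'} = \Phi^\alphas_{\betas \to \betas} = \text{Id}_{HF^\circ(\S,\alphas,\betas)},
\]
and symmetrically in the other order. Combined with the identification of $\Phi$ with $\Psi$ from the previous paragraph, this gives $\left(\Psi^\alphas_{\betas \to \betas'}\right)^{-1} = \Psi^\alphas_{\betas' \to \betas}$. The analogous statement for the maps $\Psi^{\alphas \to \alphas'}_\betas$ follows in precisely the same way by swapping the roles of $\alphas$ and $\betas$, using parts~\eqref{item:psi-alpha-beta} of Proposition~\ref{prop:Compatibility1} and the $\alphas$-version of equation~\eqref{eqn:id}.

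Finally, the identity $\Phi^\alphas_{\betas \to \betas'} = \Phi^{\alphas \to \alphas}_{\betas \to \betas'}$ is a direct unwinding. By the definition in Lemma~\ref{lem:alpha-beta-commute}, $\Phi^{\alphas \to \alphas}_{\betas \to \betas'}$ is the common value of the two sides of equation~\eqref{eqn:phi-commute} with $\alphas' = \alphas$, which reads
\[
\Phi^{\alphas}_{\betas \to \betas'} \circ \Phi^{\alphas \to \alphas}_\betas = \Phi^{\alphas \to \alphas}_{\betas'} \circ \Phi^{\alphas}_{\betas \to \betas'}.
\]
Since equation~\eqref{eqn:id} gives $\Phi^{\alphas \to \alphas}_\betas = \text{Id}$ and $\Phi^{\alphas \to \alphas}_{\betas'} = \text{Id}$, both sides collapse to $\Phi^\alphas_{\betas \to \betas'}$, completing the proof. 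No step presents a real obstacle here; the whole argument is bookkeeping once one arranges admissibility of a single quadruple so that the triangle maps compose as in Proposition~\ref{prop:Compatibility1}.
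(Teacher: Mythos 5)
Your proposal is correct and fills in the details that the paper leaves implicit when it calls Corollary~\ref{cor:id} ``an immediate corollary of equation~\eqref{eqn:id}.'' The key step you supply—identifying $\Phi^\alphas_{\betas \to \betas'}$ with $\Psi^\alphas_{\betas \to \betas'}$ when the triple $(\S,\alphas,\betas,\betas')$ is admissible, by choosing $\ol{\betas}$ so that the full quadruple $(\S,\alphas,\betas,\betas',\ol{\betas})$ is admissible and then invoking part~\eqref{item:psi-compose} of Proposition~\ref{prop:Compatibility1}—is exactly the observation that makes the inverse statement for $\Psi$ follow from $\Phi^\alphas_{\betas' \to \betas} \circ \Phi^\alphas_{\betas \to \betas'} = \Phi^\alphas_{\betas \to \betas} = \text{Id}$, and one cannot argue directly with $\Psi^\alphas_{\betas \to \betas}$ since the degenerate pair $(\betas,\betas)$ is not admissible. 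The rest (the $\alphas$-version and the unwinding of $\Phi^{\alphas \to \alphas}_{\betas \to \betas'}$ via equations~\eqref{eqn:phi-commute} and~\eqref{eqn:id}) is routine bookkeeping as you say.
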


\begin{proof}
By Lemma~\ref{lem:identity},
\[
\Psi^\alphas_{\betas' \to \betas} \circ \Psi^\alphas_{\betas \to \betas'} = \Phi^\alphas_{\betas \to \betas} = \text{Id}_{\HF^\circ(\S,\alphas,\betas)}.\qedhere
\]
\end{proof}

Let $(\S,A,B)$ be an isotopy diagram. Then we denote by $M_{(\S,A,B)}$
the set of admissible diagrams $(\S,\alphas,\betas)$ such that
$[\alphas] = A$ and $[\betas] = B$. This is non-empty by
Lemma~\ref{lem:adm-multi}.  It follows from
equations~\eqref{eqn:phi-compose} and~\eqref{eqn:id} that the
groups $\HF^\circ(\S,\alphas,\betas)$ for $(\S,\alphas,\betas) \in
M_{(\S,A,B)}$, together with the isomorphisms $\Phi^{\alphas \to
  \alphas'}_{\betas \to \betas'}$ form a transitive system of groups,
as in Definition~\ref{def:transitive-system}.

\begin{definition} \label{def:HF-diagram}
  Given an isotopy diagram $H$, let $\HF^\circ(H)$ be the colimit
  of the transitive system of groups $\HF^\circ(\S,\alphas,\betas)$ for
  $(\S,\alphas,\betas) \in M_H$ and $\Phi^{\alphas \to
    \alphas'}_{\betas \to \betas'}$. In other words,
  \[
  \HF^\circ(H) = \coprod_{(\S,\alphas,\betas) \in M_H}
  \HF^\circ(\S,\alphas,\betas)\Bigr/\mathord{\sim},
  \]
  where $x \in \HF^\circ(\S,\alphas,\betas)$ and $x' \in
  \HF^\circ(\S,\alphas',\betas')$ are equivalent if and only if $x' =
  \Phi^{\alphas \to \alphas'}_{\betas \to \betas'}(x)$.
\end{definition}

We would like to show that $\HF^\circ$ is a weak Heegaard invariant. To
this end, we need to define isomorphisms induced by $\a$-equivalences,
$\b$-equivalences, diffeomorphisms, and (de)stabilizations between
isotopy diagrams. We start with $\a$- and $\b$-equivalences.

\begin{lemma} \label{lem:phi-ab-welldef} Suppose that we are given
  admissible diagrams $(\S,\alphas_1,\betas_1)$,
  $(\S,\alphas_1,\betas_1')$, $(\S,\alphas_2,\betas_2)$, and
  $(\S,\alphas_2,\betas_2')$ such that $\alphas_1 \sim \alphas_2$ and
  $\betas_1 \sim \betas_2 \sim \betas_1' \sim \betas_2'$. Then the following diagram is commutative:
  \[
  \xymatrix{ \HF^\circ(\S,\alphas_1,\betas_1)
    \ar[r]^{\Phi^{\alphas_1}_{\betas_1 \to \betas_1'}}
    \ar[d]_{\Phi^{\alphas_1 \to \alphas_2}_{\betas_1 \to \betas_2}} &
    \HF^\circ(\S,\alphas_1,\betas_1') \ar[d]^{\Phi^{\alphas_1 \to \alphas_2}_{\betas_1' \to \betas_2'}} \\
    \HF^\circ(\S,\alphas_2,\betas_2) \ar[r]^{\Phi^{\alphas_2}_{\betas_2
        \to \betas_2'}} & \HF^\circ(\S,\alphas_2,\betas_2').  }
  \]
\end{lemma}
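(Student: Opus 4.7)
The plan is to reduce the lemma to equation~\eqref{eqn:phi-ab-commute} of Lemma~\ref{lem:alpha-beta-commute}, after first enlarging the configuration to one where all required admissibility conditions hold. The identities already available — namely equations~\eqref{eqn:phi-compose}, \eqref{eqn:phi-commute}, \eqref{eqn:phi-ab-commute}, \eqref{eqn:id}, and Corollary~\ref{cor:id} — already say that the $\Phi$-maps behave formally like the canonical isomorphisms of a transitive system; the whole task is arranging admissibility so those identities apply.

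First, I would iteratively invoke Lemma~\ref{lem:adm-multi} to choose an exact Hamiltonian translate $\alphas_3$ of $\alphas_1$ (so $\alphas_3 \sim \alphas_1 \sim \alphas_2$), and an exact Hamiltonian translate $\betas_3$ of $\betas_1$ (so $\betas_3 \sim \betas_1 \sim \betas_1' \sim \betas_2 \sim \betas_2'$), in such a way that every pair diagram $(\Sigma,\alphas_k,\betas_l)$ with $k \in \{1,2,3\}$ and $l \in \{1,1',2,2',3\}$, as well as all quadruple diagrams needed below, is admissible. Then Proposition~\ref{prop:isotopy-map} (and its $\alpha$-analogue) defines every $\Phi^{\alphas_k}_{\betas_l \to \betas_{l'}}$ and $\Phi^{\alphas_k \to \alphas_{k'}}_{\betas_l}$ in the enlarged grid, and Lemma~\ref{lem:alpha-beta-commute} defines every mixed map $\Phi^{\alphas_k \to \alphas_{k'}}_{\betas_l \to \betas_{l'}}$ appearing in the grid.

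With this in hand, the proof is a direct computation. By Corollary~\ref{cor:id}, we have $\Phi^{\alphas_1}_{\betas_1 \to \betas_1'} = \Phi^{\alphas_1 \to \alphas_1}_{\betas_1 \to \betas_1'}$ and $\Phi^{\alphas_2}_{\betas_2 \to \betas_2'} = \Phi^{\alphas_2 \to \alphas_2}_{\betas_2 \to \betas_2'}$, so two applications of equation~\eqref{eqn:phi-ab-commute} show
\[
\Phi^{\alphas_1 \to \alphas_2}_{\betas_1' \to \betas_2'} \circ \Phi^{\alphas_1}_{\betas_1 \to \betas_1'} \;=\; \Phi^{\alphas_1 \to \alphas_2}_{\betas_1 \to \betas_2'} \;=\; \Phi^{\alphas_2}_{\betas_2 \to \betas_2'} \circ \Phi^{\alphas_1 \to \alphas_2}_{\betas_1 \to \betas_2},
\]
which is the desired commutativity. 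Since the four corner $HF^\circ$-groups in the square are the same as those appearing in the original diagram, this establishes the lemma.

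The main obstacle is the admissibility bookkeeping: the maps $\Phi^{\alphas_1 \to \alphas_2}_{\betas_1 \to \betas_2}$ and $\Phi^{\alphas_1 \to \alphas_2}_{\betas_1' \to \betas_2'}$ in the statement are defined via Lemma~\ref{lem:alpha-beta-commute}, which requires admissibility of the ``off-diagonal'' pairs $(\Sigma,\alphas_1,\betas_2)$, $(\Sigma,\alphas_2,\betas_1)$, $(\Sigma,\alphas_1,\betas_2')$, $(\Sigma,\alphas_2,\betas_1')$ that are not hypothesized. I would handle this by first observing that the maps factor through the $\alphas_3$-row (and $\betas_3$-column) of the admissible grid via compositions of $\Phi^\alphas_{\betas \to \betas'}$ and $\Phi^{\alphas \to \alphas'}_\betas$, and showing — via another application of equation~\eqref{eqn:phi-ab-commute} inside the grid — that the resulting map is independent of the choice of translates $\alphas_3$ and $\betas_3$. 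This extension step is purely formal once the grid is set up, mirroring the choice-independence proof of Proposition~\ref{prop:isotopy-map}.
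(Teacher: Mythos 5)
Your core calculation is exactly the paper's one-line proof: rewrite the two horizontal edges via Corollary~\ref{cor:id} as $\Phi^{\alphas_1 \to \alphas_1}_{\betas_1 \to \betas_1'}$ and $\Phi^{\alphas_2 \to \alphas_2}_{\betas_2 \to \betas_2'}$, and then apply equation~\eqref{eqn:phi-ab-commute} twice to identify both sides of the square with $\Phi^{\alphas_1 \to \alphas_2}_{\betas_1 \to \betas_2'}$. So the substance of your argument matches the paper's.

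Where you go beyond the paper is the admissibility preamble. You are right that the stated hypotheses do not guarantee admissibility of the off-diagonal pairs $(\S,\alphas_2,\betas_1)$, $(\S,\alphas_1,\betas_2)$, $(\S,\alphas_2,\betas_1')$, $(\S,\alphas_1,\betas_2')$, which is what Lemma~\ref{lem:alpha-beta-commute} uses to define the mixed maps $\Phi^{\alphas_1 \to \alphas_2}_{\betas_1 \to \betas_2}$ as composites, and the paper's proof is silent on this; it implicitly treats $\Phi^{\alphas \to \alphas'}_{\betas \to \betas'}$ as defined between any two admissible representatives of the same isotopy diagram. Your remedy — using Lemma~\ref{lem:adm-multi} to choose auxiliary translates $\alphas_3$ and $\betas_3$ so that the whole grid is admissible, factoring the mixed map through that grid, and checking independence of the intermediates by yet another application of equation~\eqref{eqn:phi-ab-commute} in the spirit of Proposition~\ref{prop:isotopy-map} — is exactly the kind of bookkeeping the paper is eliding, and it fills the gap correctly. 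In short: same route as the paper, with an extra and justified layer of admissibility care.
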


\begin{proof}
  By equation~\eqref{eqn:phi-compose},
  \[
  \Phi^{\alphas_1 \to \alphas_2}_{\betas_1' \to \betas_2'} \circ
  \Phi^{\alphas_1}_{\betas_1 \to \betas_1'} = \Phi^{\alphas_1 \to
    \alphas_2}_{\betas_1 \to \betas_2'} = \Phi^{\alphas_2}_{\betas_2
    \to \betas_2'} \circ \Phi^{\alphas_1 \to \alphas_2}_{\betas_1 \to
    \betas_2}.\qedhere
  \]
\end{proof}

\begin{definition} \label{def:phi-ab} Suppose that the isotopy
  diagrams $H = (\S,A,B)$ and $H' = (\S,A,B')$ are $\b$-equivalent.
  Pick admissible representatives $(\S,\alphas,\betas)$ and
  $(\S,\alphas,\betas')$ of $H$ and $H'$, respectively (this is
  possible by Lemma~\ref{lem:adm-multi}). By
  Lemma~\ref{lem:phi-ab-welldef}, the isomorphisms
  $\Phi^\alphas_{\betas \to \betas'}$ descend to the direct limit,
  giving an isomorphism
  \[
  \Phi^A_{B \to B'} \colon \HF^\circ(H) \to \HF^\circ(H').
  \]
  For $\a$-equivalent diagrams $(\S,A,B)$ and $(\S,A',B)$, we define
  the isomorphism $\Phi^{A \to A'}_B$ analogously.
\end{definition}

\begin{remark} \label{rem:equivalence}
Note that the isomorphisms Ozsv\'ath and Szab\'o~\cite[p. 344]{OS06:HolDiskFour} associate
to $\a$- and $\b$-equivalences are defined by composing continuation maps~$\Gamma$ and triangle maps.
The isomorphisms we define in Definition~\ref{def:phi-ab} only involve triangle maps, hence are better suited to computations,
but agree with the isomorphisms of Ozsv\'ath and Szab\'o by Lemma~\ref{lem:cont-triangle}.
\end{remark}

Next, we go on to define isomorphisms induced by diffeomorphisms.

\begin{definition} \label{def:diffeo}
  Let $(\S,\alphas,\betas)$ be an admissible diagram and $d \colon \S
  \to \S'$ a diffeomorphism.  We write $\alphas' = d(\alphas)$ and
  $\betas' = d(\betas)$.  Then $d$ induces an isomorphism
  \[
  d_* \colon \HF^\circ(\S, \alphas, \betas) \to \HF^\circ(\S', \alphas',
  \betas'),
  \]
  as follows. Let $k = |\alphas| = |\betas|$.  Choose a complex
  structure $\mathfrak{j}$ on $\S$ and a perturbation~$J_s$ of
  $\Sym^k(\mathfrak{j})$ on $\Sym^k(\S)$. Pushing $\mathfrak{j}$ and $J_s$
  forward along $d$, we get a complex structure $\mathfrak{j}'$ on
  $\S'$ and
  a perturbation $J_s'$ of $\Sym^k(\mathfrak{j}')$ on
  $\text{Sym}^k(\S')$. Clearly, $d$ induces an isomorphism
  \[
  d_{J_s,J_s'} \colon \HF^\circ_{J_s}(\S,\alphas,\betas) \to
  \HF^\circ_{J_s'}(\S',\alphas', \betas').
  \]
  Since the maps $d_{J_s,J_s'}$ commute with the isomorphisms
  $\Phi_{J_s \to \ol{J_s}}$ of Lemma~\ref{lem:complex}, these diffeomorphism maps descend to a map $d_*$
  on the direct limit $\HF^\circ(\S,\alphas,\betas)$.
\end{definition}

\begin{lemma} \label{lem:diffeo-triangle} The maps
  $\Psi^\alphas_{\betas \to \betas'}$ commute with the diffeomorphism
  maps $d_*$ defined above.  More precisely, suppose that
  $(\S,\alphas,\betas,\betas')$ is an admissible triple. Let $d \colon
  \S \to \ol{\S}$ be a diffeomorphism, and write $\ol{\alphas} =
  d(\alphas)$, $\ol{\betas} = d(\betas)$, and $\ol{\betas}' =
  d(\betas')$. Then we have a commutative rectangle
  \[
  \xymatrix{ \HF^\circ(\S,\alphas,\betas) \ar[r]^{\Psi^\alphas_{\betas
        \to \betas'}} \ar[d]^{d_*}
    & \HF^\circ(\S,\alphas,\betas') \ar[d]^{d_*} \\
    \HF^\circ\left(\ol{\S},\ol{\alphas},\ol{\betas}\right)
    \ar[r]^{\Psi^{\ol{\alphas}}_{\ol{\betas} \to \ol{\betas}'}} &
    \HF^\circ\left(\ol{\S},\ol{\alphas},\ol{\betas}'\right).  }
  \]
  An analogous result holds for the maps $\Psi^{\alphas \to
    \alphas'}_\betas$.
\end{lemma}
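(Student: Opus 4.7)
The plan is to reduce the statement to the naturality of the triangle-counting map $F_{\a,\b,\b'}$ under pushforward of all auxiliary data by the diffeomorphism $d$, together with the fact that the distinguished generator $\Theta_{\b,\b'}$ is preserved under $d_*$. First I would fix a complex structure $\mathfrak{j}$ on $\S$ and a generic perturbation $J_s$ of $\Sym^k(\mathfrak{j})$, where $k = |\alphas|$, and push these forward to $\ol{\mathfrak{j}} = d_*\mathfrak{j}$ and $\ol{J_s} = \Sym^k(d)_* J_s$. Since the map $d$ carries the Heegaard triple $(\S,\alphas,\betas,\betas')$ to $(\ol{\S},\ol{\alphas},\ol{\betas},\ol{\betas}')$ biholomorphically with respect to these choices, and $\Sym^k(d)$ identifies the corresponding totally real tori together with the relevant Whitney triangle homotopy classes and their moduli spaces, the induced chain-level map on Lagrangian Floer complexes commutes with the triangle product. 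Passing to homology, this yields the commutative square
\[
\xymatrix{ HF^\circ_{J_s}(\S,\alphas,\betas)\otimes HF^\circ_{J_s}(\S,\betas,\betas') \ar[r]^-{F_{\a,\b,\b'}} \ar[d]^{d_*\otimes d_*} & HF^\circ_{J_s}(\S,\alphas,\betas') \ar[d]^{d_*} \\
HF^\circ_{\ol{J_s}}(\ol{\S},\ol{\alphas},\ol{\betas})\otimes HF^\circ_{\ol{J_s}}(\ol{\S},\ol{\betas},\ol{\betas}') \ar[r]^-{F_{\ol{\a},\ol{\b},\ol{\b}'}} & HF^\circ_{\ol{J_s}}(\ol{\S},\ol{\alphas},\ol{\betas}'). }
\]
The passage to the direct limit over choices of $(\mathfrak{j},J_s)$ is routine, since $d_*$ was defined precisely to commute with the continuation isomorphisms $\Phi_{J_s \to J_s'}$.

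The second ingredient is that $d_*(\Theta_{\b,\b'}) = \Theta_{\ol{\b},\ol{\b}'}$. This holds because $\Theta_{\b,\b'}$ was defined as the generator of the top non-zero relative homological grading in $HF^\circ(\S,\betas,\betas',[\spinc_0])$, where $\spinc_0$ is the unique torsion $\SpinC$-structure on the sutured manifold of the form $M(R_+,k)$ defined by $(\S,\betas,\betas')$.  A diffeomorphism $d$ carries $(\S,\betas,\betas')$ to $(\ol{\S},\ol{\betas},\ol{\betas}')$ and hence identifies the corresponding sutured manifolds, the torsion $\SpinC$-structures, and the relative homological gradings; consequently $d_*$ preserves the top generator.

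Combining these two facts, for any $x \in HF^\circ(\S,\alphas,\betas)$,
\[
d_*\bigl(\Psi^\alphas_{\betas \to \betas'}(x)\bigr) = d_*\bigl(F_{\a,\b,\b'}(x \otimes \Theta_{\b,\b'})\bigr) = F_{\ol{\a},\ol{\b},\ol{\b}'}(d_*x \otimes \Theta_{\ol{\b},\ol{\b}'}) = \Psi^{\ol{\alphas}}_{\ol{\betas}\to\ol{\betas}'}(d_*x),
\]
which is the required commutativity. The analogous statement for $\Psi^{\alphas\to\alphas'}_\betas$ follows by the symmetric argument, exchanging the roles of $\alphas$ and $\betas$ throughout.

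The main obstacle is bookkeeping rather than genuine mathematical difficulty: one must verify that the pushforward of $(\mathfrak{j},J_s)$ by a diffeomorphism yields an equally generic pair of almost complex structures (so that the moduli-theoretic definition of $F$ applies without change) and that admissibility of the Heegaard triple is preserved under $d$, which is immediate since admissibility is a combinatorial property of the diagram. Once these observations are in place, the identification of moduli spaces of pseudo-holomorphic triangles via $\Sym^k(d)$ is tautological, and the proof reduces to the intrinsic characterization of $\Theta_{\b,\b'}$ noted above.
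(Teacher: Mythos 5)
Your proof is correct and follows essentially the same route as the paper's: push forward the complex structure and perturbation along $d$, observe that $\Sym^k(d)$ identifies the Lagrangian triples and the moduli spaces of pseudo-holomorphic triangles (so the triangle-counting maps are conjugate along $d_*$), and note that the distinguished generator $\Theta$ is preserved. Your explicit verification that $d_*(\Theta_{\b,\b'}) = \Theta_{\ol{\b},\ol{\b}'}$, via the intrinsic characterization as the top-graded generator in the torsion $\SpinC$-structure, is a helpful step that the paper's terse proof leaves implicit.
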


\begin{proof}
  If we choose corresponding complex structures and perturbations
  for~$\S$ and~$\ol{\S}$, the statement becomes a tautology.  Indeed,
  $\Sym^k(d)$ is a diffeomorphism between $\Sym^k(\S)$ and
  $\Sym^k(\ol{\S})$ that takes the triple
  $(\Torus_\a,\Torus_\b,\Torus_{\b'})$ to the triple
  $(\Torus_{\ol{\a}}, \Torus_{\ol{\b}}, \Torus_{\ol{\b}'})$, and
  matches up the complex structures and perturbations. Hence the
  triangle maps $\Psi^\alphas_{\betas \to \betas'}$ and
  $\Psi^{\ol{\alphas}}_{\ol{\betas} \to \ol{\betas}'}$ are conjugate
  along $d_*$.
\end{proof}

It follows from Lemma~\ref{lem:diffeo-triangle} that the
diffeomorphism maps and the canonical isomorphisms $\Phi^{\alphas \to
  \alphas'}_{\betas \to \betas'}$ for admissible diagrams
$(\S,\alphas,\betas)$ and $(\S,\alphas',\betas')$ such that $\alphas
\sim \alphas'$ and $\betas \sim \betas'$ also commute, as
$\Phi^{\alphas \to \alphas'}_{\betas \to \betas'}$ can be written as a
composition of triangle maps. Hence, if $H$ and $H'$ are isotopy
diagrams and $d \colon H \to H'$ is a diffeomorphism, then $d$
descents to a map of direct limits
\[
d_* \colon \HF^\circ(H) \to \HF^\circ(H').
\]

Finally, we define maps induced by stabilizations. We proceed as
Ozsv\'ath and Szab\'o~\cite[Section~10]{OS04:HolomorphicDisks},
\cite[p.~346]{OS06:HolDiskFour}.  Suppose that
$\HD'=(\Sigma',\alphas',\betas')$ is a stabilization of the admissible
diagram $\HD=(\Sigma,\alphas,\betas)$.  Then, for suitable
almost-complex structures, there is an isomorphism of chain complexes
\[
\sigma_{\HD\to\HD'}\colon \CF^\circ(\Sigma,\alphas,\betas)\to
\CF^\circ(\Sigma',\alphas',\betas'),
\]
as defined by Ozsv\'ath and
Szab\'o~\cite[Theorems~10.1 and~10.2]{OS04:HolomorphicDisks}. If $\alphas' =
\alphas \cup \{\a\}$, $\betas' = \betas \cup \{\b\}$, and $\a \cap \b =
\{c\}$, then $\sigma_{\HD \to \HD'}$ maps the generator $\x \in
\Torus_\a \cap \Torus_\b$ to $\x \times \{c\} \in \Torus_{\a'} \cap
\Torus_{\b'}$.  This induces an isomorphism on homology.

Before stating the next lemma, we introduce some notation. If $\HD_1 =
(\S,\alphas_1,\betas_1)$ and $\HD_2 = (\S,\alphas_2,\betas_2)$ are
admissible diagrams such that $\alphas_1 \sim \alphas_2$ and $\betas_1
\sim \betas_2$, then we denote $\Phi^{\alphas_1 \to
  \alphas_2}_{\betas_1 \to \betas_2}$ by $\Phi_{\HD_1 \to \HD_2}$.

\begin{lemma}
  \label{lem:StabilizePhi}
  The stabilization maps $\sigma_{\HD \to \HD'}$ commute with the maps
  $\Phi_{\HD_1 \to \HD_2}$, in the following sense: Let
  $\HD_1=(\Sigma,\alphas_1,\betas_1)$ and
  $\HD_2=(\Sigma,\alphas_2,\betas_2)$ be two admissible Heegaard
  diagrams such that $\alphas_1 \sim \alphas_2$ and $\betas_1 \sim
  \betas_2$.  If $\HD_1' = (\S',\alphas_1',\betas_1')$ and $\HD_2' =
  (\S',\alphas_2',\betas_2')$ are stabilizations of $\HD_1$ and
  $\HD_2$, respectively, then $\alphas_1' \sim \alphas_2'$, $\betas_1'
  \sim \betas_2'$, and
  \[
  \sigma_{\HD_2\to\HD_2'} \circ \Phi_{\HD_1\to\HD_2} = \Phi_{\HD_1'
    \to \HD_2'} \circ \sigma_{\HD_1\to\HD_1'}.
  \]
\end{lemma}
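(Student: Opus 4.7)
The plan is to reduce the lemma to a commutation statement between the stabilization map and the triangle maps $\Psi$, and then verify that statement by a local Gromov-compactness argument in the stabilization handle.

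First I would verify the easy topological part: if $\HD_i'$ is a stabilization of $\HD_i$ for $i \in \{1,2\}$, then $\alphas_1' \sim \alphas_2'$ and $\betas_1' \sim \betas_2'$. Writing $\alphas_i' = \alphas_i \cup \{\a_i\}$ and $\betas_i' = \betas_i \cup \{\b_i\}$ inside punctured tori $T_i \subset \S'$, one can isotope the tori and the distinguished curves so that $\a_1 = \a_2$ and $\b_1 = \b_2$, after which any sequence of handleslides/isotopies realizing $\alphas_1 \sim \alphas_2$ and $\betas_1 \sim \betas_2$ can be performed disjointly from the new handle. This uses Lemma~\ref{lem:handleslide} and the fact that handleslides of $\alphas$-curves over each other take place away from the stabilization region.

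Next I would reduce to the case where one of $\alphas_1 = \alphas_2$ or $\betas_1 = \betas_2$, using the factorization $\Phi_{\HD_1\to\HD_2}=\Phi^{\alphas_2}_{\betas_1\to\betas_2}\circ \Phi^{\alphas_1\to\alphas_2}_{\betas_1}$ and its stabilized analogue, together with Lemma~\ref{lem:phi-ab-welldef}. So by symmetry it suffices to verify, for any admissible triple $(\S,\alphas,\betas,\betas')$ with $\betas\sim\betas'$ and any stabilizations $\HD = (\S,\alphas,\betas) \to \HD'=(\S',\alphas',\betas')$ and $\HD_2=(\S,\alphas,\ol{\betas})\to \HD_2'=(\S',\alphas',\ol{\betas}')$ chosen compatibly (same new $\a$, small Hamiltonian translates of the new $\b$), that
\[
\sigma_{\HD_2\to\HD_2'}\circ \Psi^{\alphas}_{\betas\to\ol{\betas}} = \Psi^{\alphas'}_{\betas'\to\ol{\betas}'}\circ \sigma_{\HD\to\HD'}.
\]
By the definition of $\Phi$ in Proposition~\ref{prop:isotopy-map} and Lemma~\ref{lem:adm-multi}, one can choose the Hamiltonian translates $\ol{\betas}$ and $\ol{\betas}'$ such that all the triple and quadruple diagrams in sight are admissible; the main subtlety is to choose them compatibly so that $\ol{\betas}'$ stabilizes $\ol{\betas}$, which is possible since the stabilization handle is disjoint from the region where winding is performed.

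The heart of the proof is therefore commutativity of the stabilization map with the triangle map. For suitable almost-complex structures stretched along the connect-sum neck, a standard neck-stretching/Gromov-compactness argument (as in Ozsv\'ath--Szab\'o \cite[Section~10]{OS04:HolomorphicDisks}) shows that holomorphic triangles in the stabilized diagram $(\S',\alphas',\betas',\ol{\betas}')$ degenerate into pairs consisting of a triangle in $(\S,\alphas,\betas,\ol{\betas})$ together with a triangle in the model handle $(T, \a, \b, \b')$. In the model handle, there is a unique holomorphic triangle with vertex the intersection point $c \in \a \cap \b$ coming from the stabilization, the corresponding point $c' \in \a \cap \b'$, and the top generator $\theta \in \b \cap \b'$; this triangle contributes a factor of one, and moreover identifies the top generator $\Theta_{\betas',\ol{\betas}'}$ with $\Theta_{\betas,\ol{\betas}} \otimes \theta$. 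Pushing the count through gives exactly the desired identity $\sigma \circ \Psi = \Psi \circ \sigma$ on the chain level for these choices, hence on homology after combining with the complex-structure-invariance maps $\Phi_{J_s \to J_s'}$.

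The main obstacles I expect are bookkeeping rather than conceptual: (i) setting up the admissibility of the stabilized multi-diagrams in parallel with the unstabilized ones, which requires a refinement of Lemma~\ref{lem:adm-multi} applied in the complement of the stabilization region; and (ii) making the neck-stretching argument go through uniformly so that the identification $\Theta_{\betas',\ol{\betas}'} \leftrightarrow \Theta_{\betas,\ol{\betas}} \otimes \theta$ is genuinely canonical (independent of the auxiliary $J_s$) and not merely an equality up to the continuation isomorphisms $\Phi_{J_s\to J_s'}$. Both of these are essentially standard, and the analogous statement for $\a$-equivalences follows by swapping the roles of $\alphas$ and $\betas$.
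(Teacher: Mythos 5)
Your proposal is correct in substance, but it takes a more hands-on route than the paper does. The paper's proof of Lemma~\ref{lem:StabilizePhi} is a one-liner: it cites Ozsv\'ath--Szab\'o \cite[Lemma~2.15]{OS06:HolDiskFour} and then observes that the continuation maps used there agree with the triangle maps $\Psi$ by Lemma~\ref{lem:cont-triangle}. What you have written out is, in effect, a proof of that cited lemma: you reduce $\Phi_{\HD_1\to\HD_2}$ to a pair of triangle maps via an intermediate Hamiltonian translate, arrange the translate to live outside the stabilization region so that stabilization and winding are independent, and then invoke the neck-stretching/gluing analysis of \cite[Section~10]{OS04:HolomorphicDisks} to see that triangles in the stabilized diagram factor as a base triangle times the unique rigid model triangle in the punctured torus, matching the top generator $\Theta$ on both sides. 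That is precisely the content of \cite[Lemma~2.15]{OS06:HolDiskFour}, and your bookkeeping concerns (compatible admissibility of the stabilized multi-diagrams, independence of the almost-complex structure) are the genuine but standard technical points that also appear there.

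The upshot is a trade-off: the paper's argument is shorter and delegates the analysis to the literature, while yours is self-contained and makes explicit the reduction through Lemma~\ref{lem:phi-ab-welldef}, Lemma~\ref{lem:adm-multi}, and the definition of $\Phi$ in Proposition~\ref{prop:isotopy-map}. If you did not wish to cite \cite[Lemma~2.15]{OS06:HolDiskFour}, you would still need to carry out the degeneration argument carefully; but that is exactly the plan you sketch, and I see no step that would fail.
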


\begin{proof}
  This is verified in~\cite[Lemma~2.15]{OS06:HolDiskFour}. Note that
  the continuation maps in that proof agree with our triangle maps by
  Lemma~\ref{lem:cont-triangle}.
\end{proof}

\begin{definition} \label{def:stab-iso} Given isotopy diagrams $H$ and
  $H'$ such that $H'$ is a stabilization of $H$, we define an
  isomorphism
  \[
  \sigma_{H \to H'}\colon \HF^\circ(H) \to \HF^\circ(H')
  \]
  as follows. By definition, there are diagrams $\HD$ and $\HD'$
  representing $H$ and $H'$, respectively, such that $\HD'$ is a
  stabilization of $\HD$.  There are canonical isomorphisms $i_{\HD}
  \colon \HF^\circ(\HD) \to \HF^\circ(H)$ and $i_{\HD'} \colon
  \HF^\circ(\HD') \to \HF^\circ(H')$ coming from the colimit
  construction.  We define $d_{H \to H'}$ as $i_{\HD'} \circ
  \sigma_{\HD \to \HD'} \circ i_{\HD}^{-1}$.  This is independent of
  the choice of $\HD$ and $\HD'$ by Lemma~\ref{lem:StabilizePhi},
  together with the observation that for any two diagrams $\HD_1$ and
  $\HD_2$ representing the same isotopy diagram, $i_{\HD_2}^{-1} \circ
  i_{\HD_1} = \Phi_{\HD_1 \to \HD_2}$.  If $H'$ is obtained from $H$
  by a destabilization, then we set $\sigma_{H \to H'} = (\sigma_{H'
    \to H})^{-1}$.
\end{definition}

Having constructed $\HF^\circ(H)$ for any isotopy diagram $H$ (in the
class of diagrams for which $\HF^\circ(H)$ is defined), and
isomorphisms induced by $\a$\hyp equivalences, $\b$\hyp equivalences,
diffeomorphisms, stabilizations, and destabilizations, we have proved
that $\HF^\circ$ is a weak Heegaard invariant.  This reproves
Theorem~\ref{thm:HF-weak}, Theorem~\ref{thm:HFL-weak}, and
Theorem~\ref{thm:SFH-weak}. However, note that we have already used the
invariance of Heegaard Floer homology up to isomorphism for the manifolds
$M(R_+,k)$ in the proof of Lemma~\ref{lem:triangle}, where we
constructed the element $\Theta_{\b,\g}$ for $\b \sim \g$. We could
have avoided this by imitating the invariance proof of Ozsv\'ath and
Szab\'o~\cite{OS04:HolomorphicDisks}, at the price of making the
discussion longer.

Recall that, at the end of Section~\ref{sec:main-theorems}, we
indicated the necessary checks for obtaining the
$\SpinC$-refinement. If $\HD$ is an admissible diagram of the balanced
sutured manifold $(M,\g)$ and $\spinc \in \SpinC(M,\g)$, then
$\CF^\circ(\S,\alphas,\betas,\spinc)$ is generated by those $\x \in
\Torus_\a \cap \Torus_\b$ for which $\spinc_{(M,\g)}(\x) = \spinc$.
It follows from the work of Ozsv\'ath and
Sza\-b\'o~\cite{OS04:HolomorphicDisks} that the $\SpinC$-grading is
preserved by the isomorphisms
$\Phi_{J_s \to J_s'}$, the triangle maps $\Psi_{\alphas}^{\betas \to
  \betas'}$ for $\betas \sim \betas'$ and $\Psi^{\alphas \to
  \alphas'}_\betas$ for $\alphas \sim \alphas'$ (cf.~Lemma~\ref{lem:grading}),
and the stabilization maps $\sigma_{\HD \to \HD'}$.
Furthermore, given a diagram $\HD = (\S,\alphas,\betas)$ of $(M,\g)$,
a diagram $\HD'$ of $(M',\g')$, and a diffeomorphism $d \colon (M,\g)
\to (M',\g')$ mapping $\HD$ to $\HD'$, it is straightforward to see
that $d_* \left(\spinc_{(M,g)}(\x) \right) = \spinc_{(M',\g')}(d(\x))$
for every $\x \in \Torus_\a \cap \Torus_\b$. In particular, if $(M,\g)
= (M',\g')$ and $d$ is isotopic to the identity in $(M,\g)$, then $d_*
\colon \SpinC(M,\g) \to \SpinC(M,\g)$ is the identity.  The existence
of a $\SpinC$-grading on $\HF^\circ(M,\g)$ follows
once we show that $\HF^\circ$ is a strong Heegaard invariant.

\subsection{Heegaard Floer homology as a strong Heegaard invariant} \label{sec:HFstrong}

In this section, we show that the invariant $\HF^\circ$ of isotopy
diagrams, together with the maps induced by $\a$-equivalences,
$\b$-equivalences, diffeomorphisms, and (de)stabilizations, satisfy
the axioms of strong Heegaard invariants listed in
Definition~\ref{def:strong-Heegaard}. We postpone the verification of
axiom~\eqref{item:strong-handleswap}, handleswap invariance, to the
following section.

We first prove that $\HF^\circ$ satisfies
axiom~\eqref{item:strong-funct}, functoriality. The $\a$-equivalence
and $\b$-equivalence maps $\Phi^{A \to A'}_B$ and $\Phi^A_{B \to B'}$
are functorial by equations~\eqref{eqn:phi-compose}
and~\eqref{eqn:id}. Functoriality of the diffeomorphism maps $d_*$
follows immediately from the definition.  If $H'$
is obtained from $H$ by a stabilization, then the destabilization map
$\sigma_{H' \to H} = (\sigma_{H \to H'})^{-1}$, by definition.

Next, we consider axiom~\eqref{item:strong-commute}, commutativity. In
Definition~\ref{def:distinguished-rect}, we defined five different
types of distinguished rectangles of the form
\[
\xymatrix{
  H_1 \ar[r]^e \ar[d]^f & H_2 \ar[d]^g \\
  H_3 \ar[r]^h & H_4, }
\]
where $H_i = (\S_i,[\alphas_i],[\betas_i])$.  For a rectangle of
type~\eqref{item:rect-alpha-beta}, commutativity follows from
equation~\eqref{eqn:phi-compose}. Lemma~\ref{lem:StabilizePhi} implies
commutativity along a rectangle of type~\eqref{item:rect-alpha-stab}.
Commutativity along a rectangle of type~\eqref{item:rect-alpha-diff}
follows from Lemma~\ref{lem:diffeo-triangle}.

Now consider a rectangle of type~\eqref{item:rect-stab-stab}.  Then
there are disjoint disks $D_1$, $D_2 \subset \S_1$ and punctured tori
$T_1$, $T_2 \subset \S_4$ such that $\S_1 \setminus (D_1 \cup D_2) =
\S_4 \setminus (T_1 \cup T_2)$. Let $\alphas_4 \cap \betas_4 \cap T_i
= \{c_i\}$ for $i \in \{1,2\}$. Then there are representatives $\HD_i
= (\S_i,\alphas_i,\betas_i)$ of the isotopy diagrams $H_i$ for $i \in
\{\, 1,\dots,4 \,\}$ such that $\alphas_2 \cap \betas_2 \cap T_1 =
\{c_1\}$ and $\alphas_3 \cap \betas_3 \cap T_2 = \{c_2\}$, and the
four diagrams coincide outside $T_1$ and $T_2$.  Given a generator $\x
\in \Torus_{\a_1} \cap \Torus_{\b_1}$,
\[
\sigma_{\HD_2 \to \HD_4} \circ \sigma_{\HD_1 \to \HD_2}(\x) = \x
\times \{c_1\} \times \{c_2\} = \x \times \{c_2\} \times \{c_1\} =
\sigma_{\HD_3 \to \HD_4} \circ \sigma_{\HD_1 \to \HD_3}(\x).
\]
So the commutativity already holds on the chain level for an
appropriate choice of complex structures. Indeed, if $J_s$ and $J_s'$
are almost complex structures corresponding to different relative
neck lengths at $\partial T_1$ and $\partial T_2$,
then the change of complex structures map $\Phi_{J_s \to J_s'}$
only counts constant curves; see the proof of \cite[Proposition~4.20]{ZemCFLTQFT}.

Finally, for a rectangle of type~\eqref{item:rect-stab-diff}, we can
choose representatives $\HD_i = (\S_i,\alphas_i,\betas_i)$ of $H_i$
such that $\HD_2$ is a stabilization of $\HD_1$ and $\HD_4$ is a
stabilization of $\HD_3$; furthermore, $f(\HD_1) = \HD_3$ and
$g(\HD_2) = \HD_4$.  This is possible since for the stabilization
disks $D \subset \S_1$ and $D' \subset \S_3$ and punctured tori $T
\subset \S_2$ and $T' \subset \S_4$, the diffeomorphisms satisfy $f(D)
= D'$, $g(T) = T'$, and $f|_{\S_1 \setminus D} = g|_{\S_2 \setminus
  T}$. In particular, if $\alphas_2 \cap \betas_2 \cap T = \{c\}$ and
$\alphas_4 \cap \betas_4 \cap T' = \{c'\}$, then $g(c) = c'$. With
these choices, for $\x \in \Torus_{\a_1} \cap \Torus_{\b_1}$, we have
\[
g_* \circ \sigma_{\HD_1 \to \HD_2}(\x) = g(\x \times \{c\}) = g(\x)
\times \{g(c)\} = f(\x) \times \{c'\} = \sigma_{\HD_3 \to \HD_4} \circ
f_*.
\]
So we have commutativity on the chain level for an appropriate choice of
complex structures (for a sufficiently long neck,
as in the proof of~\cite[Theorem~10.2]{OS04:HolomorphicDisks}).

Finally, we verify axiom~\eqref{item:strong-cont}, continuity. This
follows from the following result.

\begin{proposition} \label{prop:continuity} Let $(\S,\alphas,\betas)$
  be an admissible diagram. Suppose that $d \colon \S \to \S$ is a
  diffeomorphism isotopic to $\text{Id}_\S$, and let $\alphas' =
  d(\alphas)$ and $\betas' = d(\betas)$. Then
  \begin{equation}\label{eqn:continuity}
  d_* = \Phi^{\alphas \to \alphas'}_{\betas \to \betas'} \colon
  \HF^\circ(\S,\alphas,\betas) \to \HF^\circ(\S,\alphas',\betas').
  \end{equation}
\end{proposition}

\begin{proof}
  Since $d \colon \S \to \S$ is isotopic to the identity, there are diagrams
  $\HD_i = (\S,\alphas_i,\betas_i)$ for $i \in \{\,0,\dots,n\,\}$ and
  diffeomorphisms $d_i \colon \HD_{i-1} \to \HD_i$ for $i \in
  \{\, 1,\dots,n \,\}$, such that
  \begin{itemize}
  \item $\HD_0 = (\S,\alphas,\betas)$ and $\HD_n =
    (\S,\alphas',\betas')$,
  \item every $d_i$ is Hamiltonian isotopic to $\text{Id}_\S$ for some symplectic form~$\omega_i$ on~$\S$,
  \item $d = d_n \circ \dots \circ d_1$,
  \item $|\a \cap d_i(\a)| = 2$ for every $\a \in \alphas_{i-1}$, and
  \item $|\b \cap d_i(\b)| = 2$ for every $\b \in \betas_{i-1}$.
  \end{itemize}
  \begin{figure}
    \centering
    \includegraphics{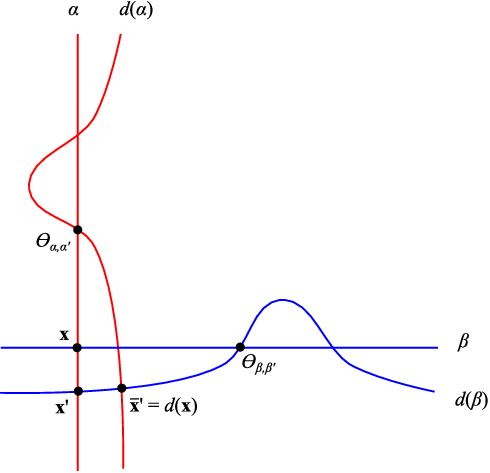}
    \caption{A schematic picture illustrating the diffeomorphism~$d_i$.}
    \label{fig:continuity}
  \end{figure}
  See Figure~\ref{fig:continuity} for a schematic picture of~$d_i$.

  By equation~\eqref{eqn:phi-compose} and the functoriality of the
  diffeomorphism maps, it suffices to prove the statement for each~$d_i$.
  So suppose that $d \colon (\S,\alphas,\betas) \to
  (\S,\alphas',\betas')$ is a diffeomorphism that is Hamiltonian isotopic to $\text{Id}_\S$
  for some symplectic form~$\omega$ on~$\S$, and such that
  $|\a \cap d(\a)| = 2$ and $|\b \cap d(\b)| = 2$ for every $\a \in \alphas$ and $\b \in \betas$.
  Let~$\{\, d_t \,\colon\, t \in \RR \,\}$ be a Hamiltonian isotopy supported
  in~$I = [0,1]$, where $d_0 = \text{Id}_\S$ and $d_1 = d$.

  Let $\mathfrak{j}$ be an almost complex structure on~$\S$, and $J_s$ a perturbation of $\text{Sym}^k(\mathfrak{j})$,
  where $k = |\alphas| = |\betas|$. For every $t \in I$, let $\mathfrak{j}_t = (d_t)_*(\mathfrak{j})$
  and $J_{s,t} = \text{Sym}^k(d_t)_*(J_s)$. We write $\mathfrak{j}' = \mathfrak{j}_1$ and $J_s' = J_{s,1}$.
  As in Lemma~\ref{lem:complex} and \cite[p.~1080]{OS04:HolomorphicDisks},
  the family~$J_{s,t}$ induces an isomorphism $\Phi_{J_s \to J_s'}$
  by counting Maslov index zero Whitney disks $u \colon I \times \RR \to \text{Sym}^k(\S)$ satisfying
  $du/ds + J_{s,t} (du/dt) = 0$, where~$s$ is the coordinate on $I$ and $t$ is the coordinate on~$\RR$.

  For every $t \in \RR$, let $\alphas_t = d_t(\alphas)$ and $\betas_t = d_t(\betas)$.
  As defined by Ozsv\'ath and Szab\'o~\cite[p.~344]{OS06:HolDiskFour} (also see \cite[p.~1086]{OS04:HolomorphicDisks}),
  the Hamiltonian isotopy induces an isomorphism
  \[
  \Gamma^{\alphas \to \alphas'}_{\betas \to \betas'} \colon \HF^\circ_{J_s}(\S,\alphas,\betas) \to 
  \HF^\circ_{J_s}(\S,\alphas',\betas')
  \]
  by counting index zero $J_s$-holomorphic disks with boundary condition $u(1+it) \in \Torus_{\a_t}$
  and $u(0+it) \in \Torus_{\b_t}$ for every $t \in \RR$
  and connecting some
  $\x \in \Torus_\a \cap \Torus_\b$ and $\y \in \Torus_{\a'} \cap \Torus_{\b'}$.
  As in the proof of~\cite[Lemma~9.6]{OS04:HolomorphicDisks}, the quadruple diagram
  $(\S,\alphas,\alphas',\betas,\betas')$ is admissible.
  Hence, by~\cite[Lemma~2.12]{OS06:HolDiskFour} and Lemma~\ref{lem:cont-triangle}, we have
  \[
  \Gamma^{\alphas \to \alphas'}_{\betas \to \betas'} =
  \Gamma^{\alphas'}_{\betas \to \betas'} \circ \Gamma^{\alphas \to \alphas'}_{\betas} =
  \Psi^{\alphas'}_{\betas \to \betas'} \circ \Psi^{\alphas \to \alphas'}_{\betas} =
  \Phi^{\alphas \to \alphas'}_{\betas \to \betas'}.
  \]
  So equation~\eqref{eqn:continuity} follows once we show that
  $d_* = \Gamma^{\alphas \to \alphas'}_{\betas \to \betas'}$.
  By Definition~\ref{def:diffeo}, we have $d_* = \Phi_{J_s' \to J_s} \circ d_{J_s, J_s'}$.
  As $\Phi_{J_s' \to J_s}^{-1} = \Phi_{J_s \to J_s'}$, it suffices to show that
  \[
  d_{J_s, J_s'} = \Phi_{J_s \to J_s'} \circ \Gamma^{\alphas \to \alphas'}_{\betas \to \betas'}.
  \]

  We now express $d_{J_s, J_s'}$ as a continuation map as well. We defined
  \[
  \Gamma_{d_t} \colon \HF^\circ_{J_s}(\S,\alphas,\betas) \to \HF^\circ_{J_s'}(\S,\alphas',\betas')
  \]
  by counting index zero maps $u \colon I \times \RR \to \text{Sym}^k(\S)$ connecting some
  $\x \in \Torus_\a \cap \Torus_\b$ and $\y \in \Torus_{\a'} \cap \Torus_{\b'}$
  that satisfy $du/ds + J_{s,t} (du/dt) = 0$ and the boundary conditions
  $u(1+it) \in \Torus_{\a_t}$ and $u(0+it) \in \Torus_{\b_t}$ for every $t \in \RR$.
  For every such $u$, let $v(s,t) = \text{Sym}^k(d_t)^{-1}(u(s,t))$.
  Then $v$ is an index zero $J_s$-holomorphic disk by the definition of~$J_{s,t}$,
  and satisfies the boundary conditions
  $v(1+it) \in \Torus_\a$ and $v(0+it) \in \Torus_\b$ for every $t \in \RR$,
  and is hence a constant disk with image some $\x \in \Torus_\a \cap \Torus_\b$.
  This implies that, on the chain level, $\Gamma_{d_t}$ maps every $\x \in \Torus_\a \cap \Torus_\b$
  to~$d(\x)$, and hence agrees with~$d_{J_s,J_s'}$.

  The last step is showing that
  \begin{equation}\label{eqn:homotopy}
  \Gamma_{d_t} = \Phi_{J_s \to J_s'} \circ \Gamma^{\alphas \to \alphas'}_{\betas \to \betas'}.
  \end{equation}
  For this, we use techniques analogous to those developed by Ozsv\'ath and Szab\'o~\cite[pp.~1081, 1089]{OS04:HolomorphicDisks}
  to prove that the maps $\Phi_{J_s \to J_s'}$ and $\Gamma^{\alphas \to \alphas'}_{\betas}$ are invertible.
  First, we give some motivation. Consider the product fibration
  \[
  \pi \colon (I\times I) \times \text{Sym}^k(\S) \to I \times I.
  \]
  For every $(t_1,t_2) \in I \times I$, we consider the totally real submanifolds $\Torus_{\a_{t_1}}$ and $\Torus_{\b_{t_1}}$
  of the fiber $\pi^{-1}(t_1,t_2)$ endowed with the almost complex structure~$\text{Sym}^k(\mathfrak{j}_{t_2})$
  and perturbation~$J_{s,t_2}$. Then $\Gamma_{d_t}$ is the continuation map along the path $h_0(t) = (t,t)$ in $I \times I$
  from~$(0,0)$ to~$(1,1)$, while $\Phi_{J_s \to J_s'} \circ \Gamma^{\alphas \to \alphas'}_{\betas \to \betas'}$
  is the continuation map along the path $h_1(t)$ that is $(2t,0)$ for $t \in [0,1/2]$ and is $h_1(t) = (1,2t)$
  for $t \in [1/2,1]$. Since $h_0$ and $h_1$ are homotopic in $I \times I$ fixing the endpoints, they induce
  chain homotopic chain maps, as we will now show.

  Let $h_\tau(t) = (h_{\tau,1}(t), h_{\tau,2}(t))$ for $\tau \in I$
  be a homotopy from $h_0$ to $h_1$ in $I \times I$ fixing the endpoints.
  Then $h_\tau(t)$ induces a chain homotopy between the two sides of equation~\eqref{eqn:homotopy}, as follows.
  For $(t,\tau) \in I \times I$, we write $\alphas_{t,\tau} = \alphas_{h_{\tau,1}(t)}$
  and $\betas_{t,\tau} = \betas_{h_{\tau,1}(t)}$.
  Fix $\tau \in I$, and let $\x \in \Torus_\a \cap \Torus_\b$ and $\y \in \Torus_{\a'} \cap \Torus_{\b'}$.
  Then $\pi_2^\tau(\x,\y)$ is the set of homotopy classes of maps $u \colon I \times \RR \to \text{Sym}^k(\S)$
  with boundary conditions $u(1+it) \in \Torus_{\a_{t,\tau}}$
  and $u(0+it) \in \Torus_{\b_{t,\tau}}$ for every $t \in \RR$,
  and such that $\lim_{t \to -\infty} u(s+it) = \x$ and $\lim_{t \to \infty} u(s+it) = \y$.

  For $\phi \in \pi_2^\tau(\x,\y)$, let $\mathcal{M}_\tau(\phi)$ be the moduli space of
  maps~$u$ representing~$\phi$ such that
  \[
  \frac{du}{ds} + J_{s,h_{\tau,2}(t)}\frac{du}{dt} = 0.
  \]
  For every $\tau \in I$, the homotopy $h$ induces a canonical identification between
  $\pi_2^0(\x,\y)$ and $\pi_2^\tau(\x,\y)$. Using these identifications,
  for every $\phi \in \pi_2^0(\x,\y)$, we define the moduli space
  \[
  \mathcal{M}^h(\phi) = \bigcup_{\tau \in I} \mathcal{M}_\tau(\phi) \times \{\tau\} =
  \left\{\, (u,\tau) \in C^\infty \left(I \times \RR, \text{Sym}^k(\S) \right) \times I \,\colon\, u \in \mathcal{M}_\tau(\phi)  \,\right\}.
  \]
  Then $\dim\left(\mathcal{M}^h(\phi)\right) = \mu(\phi)+1$. For $\x \in \Torus_\a \cap \Torus_\b$, let
  \[
  H^h(\x) = \sum_{\y \in \Torus_{\a'} \cap \Torus_{\b'}} \, \sum_{\substack{\phi \in \pi_2^0(\x,\y) \\ \mu(\phi) = -1}}
  \left(\left|\mathcal{M}^h(\phi)\right| \mod 2\right)\,\y
  \]
  in case of $\HFa$ or $\SFH$, and replacing $\x$ with $[\x,i]$ and $\y$ with $[\y,i-n_z(\phi)]$
  in case of~$\HF^+$, $\HF^-$, and~$\HF^\infty$.

  To see that $H^h$ gives a chain homotopy between $\Gamma_{d_t}$
  and $\Phi_{J_s \to J_s'} \circ \Gamma^{\alphas \to \alphas'}_{\betas \to \betas'}$, we consider
  the ends of the moduli spaces $\mathcal{M}^h(\psi)$ for $\psi \in \pi_2^0(\x,\y)$ such that $\mu(\psi) = 0$.
  There are three types of ends:
  The ends at $\tau = 0$ contribute to the map $\Gamma_{d_t}$. The ends at~$\tau = 1$ contribute to
  $\Phi_{J_s \to J_s'} \circ \Gamma^{\alphas \to \alphas'}_{\betas \to \betas'}$; to see that continuation
  maps are natural under juxtaposition of the paths involved, the proof of \cite[Lemma~2.12]{OS06:HolDiskFour}
  generalizes readily. Finally, broken strips contribute to $H^h \circ \partial + \partial' \circ H^h$,
  where $\partial$ is the boundary maps for $\Torus_\a$, $\Torus_\b$, and $(\mathfrak{j}, J_s)$,
  while $\partial'$ is the boundary map for $\Torus_\a'$, $\Torus_\b'$, and $(\mathfrak{j}', J_s')$.
  Hence, on the chain level,
  \[
  \Gamma_{d_t} + \Phi_{J_s \to J_s'} \circ \Gamma^{\alphas \to \alphas'}_{\betas \to \betas'} \equiv
  H^h \circ \partial + \partial' \circ H^h \mod 2,
  \]
  and equation~\eqref{eqn:homotopy} follows.
\end{proof}

\subsection{Simple handleswap invariance of Heegaard Floer homology}
\label{subsec:Handleswaps}

In this section, we prove simple handleswap invariance of Heegaard Floer homology.
The technical details of our proof are modeled on the proof of stabilization invariance
due to Lipshitz~\cite{Lipshitz06:CylindricalHF},
and invariance of $\HF^+$ under adding extra basepoints due to Ozsv\'ath and Szab\'o~\cite{OS08:HFL}.
The main analytical input is due to Lipshitz~\cite{Lipshitz06:CylindricalHF}
and Lipshitz, Ozsv\'ath, and Thurston~\cite{LOT1}.

Let
\[
\xymatrix{H_1 \ar[rd]^e & \\ H_3 \ar[u]^g & H_2 \ar[l]^f}
\]
be a simple handleswap, as in Definition~\ref{def:simple-handleswap},
where $H_i = (\S \# \S_0,\alphas_i,\betas_i)$ are admissible diagrams for $i \in \{1,2,3\}$,
the edge~$e$ is an $\alpha$-equivalence, $f$ is a $\beta$-equivalence, and $g$ is a diffeomorphism.
Note that $\betas_1 = \betas_2$ and $\alphas_2 = \alphas_3$.
We choose representatives of the isotopy diagrams $H_i$
that violate condition~\eqref{it:identical} of Definition~\ref{def:simple-handleswap},
in that we require $\alphas_1 \setminus \{\a_1\}$ and $\alphas_2 \setminus \{\a_1'\}$,
and similarly, $\betas_2 \setminus \{\b_2\}$ and $\betas_3 \setminus \{\b_2'\}$
to be small Hamiltonian isotopic translates of each other, and adjust the
diffeomorphism~$g$ accordingly. This will allow us to compute the
isomorphisms induced by~$e$ and~$f$ via a single triangle map~$\Psi$,
as opposed to the maps~$\Phi$ that are compositions of two triangle maps.
Changing $g$ by an isotopy does not change the map $g_* \colon \HF^\circ(H_3) \to \HF^\circ(H_1)$
by the Continuity Axiom, Proposition~\ref{prop:continuity}.

The $\alpha$- and $\beta$-equivalence maps corresponding to the arrows $e$ and $f$
can be computed with the triangle maps $\Phi_e := \Psi^{\alphas_1 \to \alphas_2}_{\betas_1}$
and~$\Phi_f := \Psi^{\alphas_2}_{\betas_2 \to \betas_3}$.
The map corresponding to the arrow $g$ is the map induced by a diffeomorphism,
which we also denote by~$g$. Simple handleswap invariance amounts to proving the following:

\begin{theorem}\label{prop:handleswapinvariance}
The induced maps $g_*$, $\Phi_e$, and $\Phi_f$ satisfy
\[
g_*\circ \Phi_f\circ \Phi_e = \Iden_{\HF^\circ(H_1)}.
\]
\end{theorem}

To prove this, we will explicitly count triangles by a neck stretching
argument. Let $\mathcal{T}_0 = (\Sigma_0,\alphas'_0,\alphas_0,\betas_0)$ be
the Heegaard triple shown in Figure~\ref{fig::2}, where $\S_0$ is a surface
of genus two, $p_0 \in \S_0 \setminus (\alphas'_0 \cup \alphas_0 \cup
\betas_0)$ is a distinguished point, $\alphas'_0 = \{\alpha'_1,\alpha'_2\}$,
$\alphas_0 = \{\a_1,\a_2\}$, and $\betas_0 = \{\b_1,\b_2\}$. Suppose that
$\mathcal{T}=(\Sigma, \alphas',\alphas,\betas)$ is a Heegaard triple with a
distinguished point $p\in \Sigma\setminus (\alphas'\cup\alphas\cup\betas)$.
We consider the Heegaard triple
\[
\mathcal{T} \# \mathcal{T}_0 = 
(\Sigma\#\Sigma_0, \alphas' \cup \alphas'_0, \alphas \cup \alphas_0, \betas \cup\betas_0),
\]
where the connected sum is taken at $p \in \S$ and $p_0 \in \S_0$.
Write $\mathbb{T}_{\a_0}\cap\mathbb{T}_{\b_0} =
\{\mathbf{a}\}$, $\mathbb{T}_{\a'_0} \cap \mathbb{T}_{\b_0} = \{\mathbf{b}\}$, and
\[
\mathbb{T}_{\a'_0} \cap \mathbb{T}_{\a_0} =
\{\,\theta_1^+\theta_2^+,\theta_1^+\theta_2^-,\theta_1^-\theta_2^+, \theta_1^-\theta_2^-\,\},
\]
where the intersection points $\theta_1^\pm \in \a_1'\cap \a_1 $
and $\theta_2^\pm \in \a_2'\cap \a_2$ are marked in Figure~\ref{fig::2}.
We will write $\Theta$ for $\theta_1^+\theta_2^+$.

\begin{figure}[ht!]
\centering
\input{fig2.pdf_tex}
\caption{The Heegaard triple $(\Sigma_0,\alphas'_0,\alphas_0,\betas_0,p_0)$
we will use for computing the $\alpha$-equivalence map $\Phi_e$.
Shaded in gray is a Maslov index zero triangle in $\pi_2(\Theta,\mathbf{a},\mathbf{b})$.
\label{fig::2}}
\end{figure}

The main technical result needed to prove Theorem \ref{prop:handleswapinvariance}
is the following count of holomorphic triangles:

\begin{proposition}\label{lem:handleswapinvariance}
Suppose that $\mathcal{T} = (\Sigma, \alphas',\alphas,\betas)$ is an
admissible Heegaard triple, and let $\mathcal{T} \# \mathcal{T}_0 = (\Sigma
\# \Sigma_0, \alphas' \cup \alphas'_0, \alphas \cup \alphas_0,\betas \cup
\betas_0)$ be as above. For a sufficiently stretched and appropriately
generic almost complex structure (specified precisely in
Subsection~\ref{sec:neck}), the triangle map $F^\circ_{\mathcal{T} \#
\mathcal{T}_0}$ associated to the triple~$\mathcal{T} \# \mathcal{T}_0$ is given by
\[
(\x\times \Theta) \otimes (\y\times \mathbf{a}) \mapsto 
F^\circ_{\mathcal{T}}(\x \otimes \y) \times \mathbf{b}
\]
for every $\x \in \mathbb{T}_{\alpha'} \cap \mathbb{T}_\a$ and 
$\y \in \mathbb{T}_\a \cap \mathbb{T}_\b$.
\end{proposition}

We will prove Proposition~\ref{lem:handleswapinvariance} in
Subsection~\ref{sec:neck}. The same proof can easily be adapted to show the
following. Let $\mathcal{T}'=(\Sigma,\alphas',\betas,\betas')$ denote a
Heegaard triple with a distinguished point $p \in \S \setminus (\alphas'\cup
\betas \cup \betas')$. Furthermore, let $\mathcal{T}'_0=(\Sigma_0,
\alphas_0',\betas_0,\betas_0')$ denote the Heegaard triple shown in
Figure~\ref{fig::6}, where $\S_0$ is a surface of genus two, $p_0 \in \S_0
\setminus (\alphas_0' \cup \betas_0 \cup \betas_0')$ is a distinguished
point, $\alphas_0' = \{\a_1',\a_2'\}$, $\betas_0 = \{\b_1,\b_2\}$, and
$\betas'_0 = \{\b'_1,\b'_2\}$. We write $\mathbb{T}_{\alpha_0'}\cap
\mathbb{T}_{\beta_0} = \{\mathbf{b}\}$, $\mathbb{T}_{\alpha_0'}\cap
\mathbb{T}_{\beta_0'} = \{\mathbf{c}\}$, and let $\Theta'$ denote the
top-graded intersection point of $\mathbb{T}_{\beta_0}\cap
\mathbb{T}_{\beta_0'}$. We consider the Heegaard triple
\[
\mathcal{T}' \# \mathcal{T}_0' = 
(\Sigma \# \Sigma_0, \alphas' \cup \alphas_0', \alphas \cup \alphas_0, \betas' \cup \betas_0'),
\]
where the connected sum is taken at $p \in \S$ and $p_0 \in \S_0$.

\begin{figure}[ht!]
\centering
\input{fig6.pdf_tex}
\caption{The Heegaard triple $(\Sigma_0,\alphas'_0,\betas_0,\betas_0',p_0)$
we will use for computing the $\beta$-equivalence map $\Phi_f$.
Shaded in gray is a Maslov index zero triangle in $\pi_2(\mathbf{b},\Theta',\mathbf{c})$.
\label{fig::6}}
\end{figure}

\begin{proposition}\label{lem:handleswapinvariance2}
Suppose that $\mathcal{T}'=(\Sigma, \alphas',\betas,\betas')$ is an
admissible Heegaard triple, and let $\mathcal{T}' \# \mathcal{T}_0' = (\Sigma
\# \Sigma_0, \alphas' \cup \alphas_0', \alphas \cup \alphas_0, \betas' \cup
\betas_0')$ be as above. For a sufficiently stretched and appropriately
generic almost complex structure (specified precisely in
Subsection~\ref{sec:neck}), the triangle map $F^\circ_{\mathcal{T}'}$
associated to the triple $\mathcal{T}' \# \mathcal{T}_0'$ is given by
\[
(\x\times \mathbf{b}) \otimes (\y\times \Theta') \mapsto
F^\circ_{\mathcal{T}'}(\x \otimes \y) \times \mathbf{c}
\]
for every $\x \in \mathbb{T}_{\alpha'} \cap \mathbb{T}_\beta$ and
$\y \in \mathbb{T}_\beta \cap \mathbb{T}_{\b'}$.
\end{proposition}

\begin{remark}
In Propositions~\ref{lem:handleswapinvariance}
and~\ref{lem:handleswapinvariance2}, our notation might suggest that $\alphas
\sim \alphas'$ and~$\betas \sim \betas'$. However, they hold for arbitrary
admissible triples~$\mathcal{T}$ and~$\mathcal{T}'$. We use this notation
since we will only need to apply Propositions~\ref{lem:handleswapinvariance}
and~\ref{lem:handleswapinvariance2} when the curves~$\alphas$ are isotopic
to~$\alphas'$ and the curves~$\betas$ are isotopic to~$\betas'$.
\end{remark}

We now use the above counts of holomorphic triangles to prove invariance
under simple handleswaps:

\begin{proof}[Proof of Theorem~\ref{prop:handleswapinvariance}]
Consider the diagram $H_1 = (\S \# \S_0, \alphas_1,\betas_1)$. Recall that
$\Sigma_0$ is a genus two surface. Let $\alphas_0= \alphas_1 \cap \S_0 =
\{\alpha_1,\alpha_2\}$ and $\betas_0 = \betas_1 \cap \S_0 =
\{\beta_1,\beta_2\}$. The diagram $H_0 = (\Sigma_0,\alphas_0,\betas_0)$ is
shown in Figures~\ref{fig:handleswap} and~\ref{fig::2}. Furthermore, let $H =
(\Sigma, \alphas,\betas)$, where $\alphas = \alphas_1 \cap \S$ and $\betas =
\betas_1 \cap \S$. By Definition~\ref{def:simple-handleswap}, we can write
\[
H_1 = H \# H_0 = (\S \# \S_0, \alphas \cup \alphas_0, \betas \cup \betas_0).
\]
Then $H$ is admissible since~$H_1$ is admissible.

The $\alpha$-equivalence $e$ corresponds to handlesliding $\alpha_1$ over
$\alpha_2$. Let $\alphas_0' = \{\alpha_1',\alpha_2'\}$ be as in
Figure~\ref{fig::2}. The curve $\alpha_2'$ is a small Hamiltonian translate
of $\alpha_2$, and $\alpha_1'$ is the result of handlesliding $\alpha_1$ over
$\alpha_2$. Furthermore, let $\alphas'$ be a small Hamiltonian translate
of~$\alphas$ on $\Sigma$. Then
\[
H_2 = (\S \# \S_0, \alphas' \cup \alphas_0', \betas \cup \betas_0),
\]
and $\Phi_e = \Psi_{\betas\cup \betas_0}^{\alphas\cup \alphas_0\to
\alphas'\cup \alphas_0'}$. As before, we write $\mathbb{T}_{\a_0}\cap
\mathbb{T}_{\b_0} = \{\mathbf{a}\}$ and $\mathbb{T}_{\a'_0}\cap
\mathbb{T}_{\b_0}=\{\mathbf{b}\}$. By Proposition~\ref{lem:handleswapinvariance},
for an appropriate choice of almost complex structure, we have
\begin{equation}
\begin{split}
\Psi_{\betas\cup \betas_0}^{\alphas\cup \alphas_0 \to \alphas'\cup \alphas_0'}(\y \times \mathbf{a}) &=
F_{\a'\cup \a_0', \a \cup \a_0,\b \cup \b_0}\left((\Theta_{\a',\a} \times \Theta) \otimes (\y \times \mathbf{a})\right) = \\
F_{\a',\a,\b}(\Theta_{\a',\a} \otimes \y) \times \mathbf{b} &=
\Psi_{\betas}^{\alphas\to\alphas'}(\y)\times \mathbf{b}.
\end{split}
\label{eq:Phiemap}
\end{equation}

The $\beta$-equivalence map $\Phi_f$ can be computed similarly. Let
$\betas_0'=\{\beta_1',\beta_2'\}$, where~$\beta_1'$ is a Hamiltonian
translate of $\beta_1$ and $\beta_2'$ is obtained by handlesliding $\beta_2$
over $\beta_1$; see Figure~\ref{fig::6}. We write $\mathbb{T}_{\a_0'} \cap
\mathbb{T}_{\b_0'}=\{\mathbf{c}\}$.
By Proposition~\ref{lem:handleswapinvariance2}, we have
\begin{equation}
\Psi_{\betas\cup \betas_0\to \betas'\cup \betas_0'}^{\alphas'\cup \alphas_0'}(\mathbf{x}\times \mathbf{b}) =
\Psi_{\betas\to \betas'}^{\alphas'}(\x)\times \mathbf{c}.\label{eq:Phifmap}
\end{equation}

Outside $\Sigma_0$, we can take $g$ to be a diffeomorphism isotopic to the identity
and mapping~$\alphas'$ to~$\alphas$ and~$\betas'$ to~$\betas$. By construction, we have that
\begin{equation}
g_*(\x\times \mathbf{c})=(g|_\S)_*(\x) \times \mathbf{a}.\label{eq:g*map}
\end{equation}

Combining Equations~\eqref{eq:Phiemap}, \eqref{eq:Phifmap}, and~\eqref{eq:g*map}, we see that
\[
\left(g_*\circ \Psi_{\betas \cup \betas_0\to \betas'\cup \betas_0'}^{\alphas' \cup \alphas_0'} \circ
\Psi_{\betas \cup \betas_0}^{\alphas \cup \alphas_0\to \alphas'\cup \alphas_0'}\right)(\x \times \mathbf{a}) =
\left((g|_\S)_*\circ \Psi_{\betas \to \betas'}^{\alphas'} \circ
\Psi_{\betas}^{\alphas \to \alphas'}\right)(\x) \times \mathbf{a}.
\]
By Proposition~\ref{prop:continuity}, we have
\[
(g|_\S)_* \circ \Psi_{\betas \to \betas'}^{\alphas'} \circ 
\Psi_{\betas}^{\alphas \to \alphas'} \simeq \Iden_{\CF(H)}.
\]
If $\partial_H$ denotes the differential on $\CF(H)$, then stabilization
invariance (see \cite[Section~10]{OS04:HolomorphicDisks} and
\cite[Section~12]{Lipshitz06:CylindricalHF}) implies that the differential on
$\CF(H_1)$ is $\partial_H \otimes \Iden_{\CF(H_0)}$ for a sufficiently
stretched almost complex structure. Hence
\[
g_*\circ \Phi_f\circ \Phi_e =
\left((g|_\S)_*\circ \Psi_{\betas\to \betas'}^{\alphas'} \circ 
\Psi_{\betas}^{\alphas\to \alphas'}\right)\otimes \Iden_{\CF(H_0)}
\simeq \Iden_{\CF(H)} \otimes \Iden_{\CF(H_0)},
\]
completing the proof.
\end{proof}

\subsubsection{Moduli spaces, compactness, transversality, and gluing results}
\label{sec:neck}

In order to prove Propositions~\ref{lem:handleswapinvariance}
and~\ref{lem:handleswapinvariance2}, we describe the limiting behavior of
pseudo-holomorphic curves as one stretches the connected sum tube between
$\Sigma$ and $\Sigma_0$. In principle, it is possible to describe the
behavior of holomorphic triangles appearing in
$\Sym^{g(\Sigma)+g(\Sigma_0)}(\Sigma \# \Sigma_0)$ as one stretches the
almost complex structure (see, for example, Ozsv\'ath and
Szab\'o~\cite[Section~10]{OS04:HolomorphicDisks}), but it is easier to use
the cylindrical reformulation of Heegaard Floer homology due to
Lipshitz~\cite{Lipshitz06:CylindricalHF}. Most of the material in this
section is an elaboration on Lipshitz's proof of stabilization invariance
in~\cite[Section~12]{Lipshitz06:CylindricalHF}.

Lipshitz showed that the Heegaard Floer chain complexes can also be defined
by counting holomorphic curves in $\Sigma \times [0,1] \times \R$.
We endow this manifold with the split symplectic form
$\omega = dA + ds \wedge dt$ for an area form~$dA$ on $\S$,
where $s$ is the coordinate on $[0,1]$ and $t$ is the coordinate on $\R$.
The holomorphic triangle maps described by Ozsv\'{a}th and Szab\'{o}
are replaced in his theory with counts of pseudo-holomorphic curves mapping
into $\Sigma \times \Delta$, where $\Delta$ is the subset of the complex plane shown
in Figure~\ref{fig::5}, viewed as having three cylindrical ends of the form
$[0,1] \times [0,\infty)$ or $[0,1] \times (-\infty, 0]$.

\begin{figure}[ht!]
\centering
\input{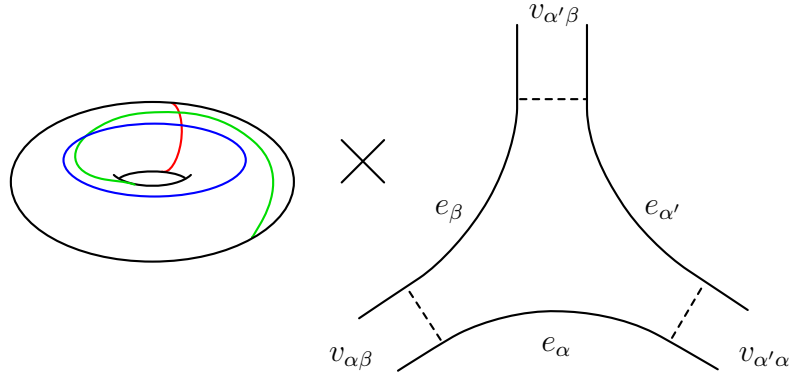}
\caption{An example of a 4-manifold $\Sigma \times \Delta$
used to define the triangle maps in the cylindrical formulation.
\label{fig::5}}
\end{figure}

To prove stabilization invariance,
Lipshitz~\cite[p.~959]{Lipshitz06:CylindricalHF} considers almost complex
structures $J$ on $\Sigma \times [0,1]\times \R$ that satisfy the following
five axioms:

\begin{enumerate}
\item[$(J1)$] $J$ is tamed by $\omega$.
\item[$(J2)$] $J$ is split in a cylindrical neighborhood of $P \times [0,1] \times \R$,
    where $P \subset \Sigma \setminus (\alphas\cup \betas)$ is a finite collection of points
    with at least one point in each component of $\Sigma\setminus (\alphas\cup \betas)$.
\item[$(J3)$] $J$ is translation invariant in the $\R$-factor.
\item[$(J4)$] $J(\partial/\partial s) = \partial/\partial t$.
\item[$(J5')$] There is a 2-plane distribution $\xi$ on $\Sigma \times [0,1] \times \{0\}$
    such that the restriction of $\omega$ to $\xi$ is non-degenerate, $J$ preserves $\xi$,
    and the restriction of $J$ to $\xi$ is compatible with $\omega$.
    We further assume that $\xi$ is tangent to $\Sigma$ near $(\alphas \cup \betas)\times [0,1] \times \{0\}$
    and near $\Sigma \times \{0,1\} \times \{0\}$.
\end{enumerate}

Lipshitz often considers an axiom~$(J5)$ that is more restrictive than
$(J5')$, which requires that the 2-planes $T(\Sigma\times \{(s,t)\})$ are
complex lines inside $\Sigma \times [0,1]\times \R$ for $(s,t) \in [0,1]
\times \R$. Axiom~$(J5')$ is used to ensure that certain curves achieve
transversality in the proof of stabilization invariance.

For the proof of Proposition~\ref{lem:handleswapinvariance}, we will consider
almost complex structures~$J$ on  $\Sigma \times \Delta$ that satisfy the
following:

\begin{enumerate}
\item[$(J'1')$] $J$ is tamed by the split symplectic form on $\Sigma \times \Delta$.
\item[$(J'2')$] There is a finite collection of points 
    $P \subset \Sigma \setminus (\alphas'\cup \alphas\cup \betas)$,
    with at least one point in each component of  $\Sigma \setminus (\alphas' \cup \alphas\cup \betas)$,
    such that the almost complex structure is split on a product neighborhood of $P \times \Delta$;
    i.e., $J=\mathfrak{j}_\Sigma \times \mathfrak{j}_\Delta$.
\item[$(J'3')$] Near  the cylindrical ends of $\Delta$,
    the almost complex structure~$J$ agrees with cylindrical almost complex structures
    on $\Sigma \times [0,1] \times \R$ satisfying condition $(J5')$ above.
\item[$(J'4')$] The 2-planes $T_d(\{p\}\times \Delta)$ are complex lines
    of $J$ for all $(p,d) \in \Sigma \times \Delta$.
\item[$(J'5')$] The 2-planes $T_p(\Sigma \times \{d\})$ are complex lines of $J$
    for $(p,d)$ near $(\alphas' \cup \alphas \cup \betas) \times \Delta$ and
    for $(p,d) \in \Sigma \times U$, where $U\subset \Delta$ is an open subset containing
    $\partial \Delta \setminus \{v_{\alpha'\alpha},v_{\alpha\beta},v_{\alpha'\beta}\}$.
\end{enumerate}

Lipshitz considers axioms $(J'1)$--$(J'4)$
in~\cite[Section~10.2]{Lipshitz06:CylindricalHF} for the triangle maps. Our
axioms $(J'1')$ and $(J'2')$ coincide with $(J'1)$ and $(J'2)$, respectively,
but $(J'3')$--$(J'5')$ together are more generic than the axioms $(J'3)$ and
$(J'4)$, and are analogous to the axiom $(J5')$ that Lipshitz uses for
holomorphic strips.

Note that the projections $\pi_\Delta \colon \S \times \Delta \to \Delta$ and
$\pi_\Sigma \colon \S \times \Delta \to \S$ are not necessarily holomorphic
maps for almost complex structures satisfying the above axioms. Nonetheless,
if $u \colon S \to \S \times \Delta$ is a $J$-holomorphic curve, then the map
$\pi_\Sigma \circ u$ is either constant, or an open map:

\begin{lemma}\label{lem:openmapping}
Let $J$ be an almost complex structure on $\Sigma \times \Delta$ that
satisfies axioms $(J'1')$--$(J'5')$. If $u \colon S \to \S \times \Delta$ is
$J$-holomorphic and $\pi_\Sigma \circ u$ is nonconstant on a component~$S_0$
of~$S$, then $\pi_\Sigma\circ u|_{S_0}$ is an open map. In fact, there are
coordinates near any critical point of $\pi_\Sigma\circ u|_{S_0}$ where
$\pi_\Sigma \circ u$ takes the form $z \mapsto z^k$ for some $k>0$.
\end{lemma}

\begin{proof}
By condition $(J'4')$, the fiber bundle $\pi_\Sigma \colon \Sigma \times
\Delta\to \Sigma$ has holomorphic fibers. The hypotheses of \cite[Lemma~3.1]
{Lipshitz06:CylindricalHF} are satisfied, immediately yielding the result.
\end{proof}

In the cylindrical setting, a (possibly nodal) holomorphic triangle in the triple diagram
$(\S,\alphas',\alphas,\betas)$ with $d = |\alphas'| = |\alphas| = |\betas|$ is a map
$u \colon S \to \Sigma \times \Delta$ that satisfies the following:
\begin{enumerate}
\item[$(M1)$] $(S,j)$ is a (possibly nodal) Riemann surface with boundary and $3d$ punctures on~$\partial S$.
\item[$(M2)$] $u$ is locally nonconstant and $(j,J)$-holomorphic.
\item[$(M3)$] $u(\partial S)\subset (\alphas'\times e_{\alpha'})\cup  (\alphas\times e_\alpha)\cup (\betas\times e_\beta) $.
\item[$(M4)$] $u$ has finite energy.
\item[$(M5)$] For each $i \in \{\,1, \dots, d\,\}$ and $\sigma \in \{\,\alpha',\alpha,\beta\,\}$, the preimage
$u^{-1}(\sigma_i \times e_\sigma)$ consists of exactly one component of the punctured boundary of~$S$.
\item[$(M6)$] As one approaches the punctures along $\partial S$, the map $u$ converges to a collection of Reeb chords
     (i.e., intersection points on the Heegaard triple) in the cylindrical ends of $\Sigma \times \Delta$.
\end{enumerate}
For the holomorphic triangle maps, we additionally impose the following axioms:
\begin{enumerate}
\item[$(M7)$] $\pi_\Delta\circ u$ is nonconstant on each component of $S$.
\item[$(M8)$] $S$ is smooth (i.e., not nodal), and the map
    $u \colon S \to \Sigma \times\Delta$ is an embedding.
\end{enumerate}
If $\psi$ is a homology class of triangles on a Heegaard triple
$\mathcal{T}$, we write $\mathcal{M}(\psi)$ for the moduli space of
holomorphic maps $u \colon S \to \Sigma \times \Delta$ that satisfy
$(M1)$--$(M6)$. If $S$ is a decorated surface (possibly with nodes), then we
write $\mathcal{M}(\psi,S)$ for the subset of~$\mathcal{M}(\psi)$ consisting
of holomorphic curves with underlying source~$S$. We will see that if $\psi$
is a Maslov index~0 homology class of triangles, then, for a generic choice
of almost complex structure $J$ satisfying $(J'1')$--$(J'5')$, any
$J$-holomorphic map $u \colon S \to \Sigma \times \Delta$ representing $\psi$,
and satisfying $(M1)$--$(M6)$ also satisfies $(M7)$ and $(M8)$; see
Equation~\eqref{eq:expecteddimension} below and the surrounding discussion.

Maps $u \colon S \to \Sigma \times [0,1] \times \R$ that
satisfy the obvious analogs of $(M1)$--$(M6)$ are called holomorphic strips.
We will need to consider compactifications of spaces of holomorphic triangles.
To this end, we make the following definition:

\begin{definition}\label{def:brokenholtriangle}
If $\mathcal{T} = (\Sigma,\alphas',\alphas,\betas)$ is a triple diagram, a
\emph{broken holomorphic triangle} on $\mathcal{T}$ representing a homology
class $\psi$ of triangles is a collection of (possibly nodal, non-embedded)
$(j,J)$-holomorphic curves $(u_1,v_1,\dots, v_n,w_1,\dots, w_m)$ such that
the following hold:
\begin{enumerate}
\item[$(BT1)$] $u_1$ maps into $\Sigma \times \Delta$, and satisfies $(M1)$ and $(M3)$--$(M6)$.
\item[$(BT2)$] $v_1, \dots, v_n$ map into $\Sigma \times [0,1] \times \R$,
    satisfy the analogs of $(M1)$ and $(M3)$--$(M6)$, and represent homology classes of disks
    in the diagrams $(\Sigma,\alphas',\alphas)$, $(\Sigma,\alphas',\betas)$, and $(\Sigma,\alphas,\betas)$.
\item[$(BT3)$] $w_1,\dots, w_m$ map into $\Sigma \times \Delta$ and $\Sigma \times [0,1] \times \R$.
    For every $i \in \{1, \dots, m\}$, the curve $w_i$ has $d$ boundary components,
    each with a single puncture, and they all map onto one set of attaching curves (boundary degenerations).
\item[$(BT4)$] The total homology class of the curves $u_1, v_1, \dots, v_n, w_1, \dots, w_m$ is equal to~$\psi$.
\end{enumerate}
\end{definition}

The appropriate notion of convergence of holomorphic triangles to a broken
holomorphic triangle is somewhat involved; see~\cite{BEHWZ03}
and~\cite{Abbas:CompactSympField}. Roughly speaking, the source curve can
degenerate along arcs connecting boundary components, or simple closed curves
along the interior, as in the Deligne-Mumford compactification of stable
curves. Additionally, different parts of the source curve can be sent off
to~$\infty$ in the cylindrical ends, resulting in a holomorphic building,
with one story in $\Sigma \times \Delta$, and all other stories in one of the
three cylindrical ends of $\Sigma \times \Delta$. Finally, constant
components might appear.

In our definition of a broken holomorphic triangle, for notational
simplicity, we have omitted the relative ordering of different stories of the
resulting holomorphic building. For the purposes of this section, this
additional structure will not be important, as we will only encounter broken
triangles with one or two stories.

\begin{proposition}\label{prop:gromovcompactness}
Let $J$ be an almost complex structure on $\S \times \Delta$ satisfying $(J'1')$--$(J'5')$.
If $u_i$ is a sequence of $J$-holomorphic triangles satisfying $(M1)$--$(M6)$ in
$\mathcal{M}(\psi)$ for a homology class $\psi$, then there is a subsequence
that converges to a broken holomorphic triangle.
\end{proposition}

The above result follows by adapting the arguments of
Lipshitz~\cite[Section~7]{Lipshitz06:CylindricalHF} to holomorphic triangles,
and follows from standard results about compactness in symplectic field
theory; see Bourgeois, Eliashberg, Hofer, Wysocki, and Zehnder~\cite{BEHWZ03}
and Abbas~\cite{Abbas:CompactSympField}. For analogous results in a closely
related context, see Lipshitz, Ozsv\'ath, and
Thurston~\cite[Section~5.4]{LOT1}.

We need an additional compactness result that concerns the neck stretching
procedure. Suppose we are given almost complex structures~$J$ and~$J_0$
on~$\Sigma \times \Delta$ and~$\Sigma_0 \times \Delta$ that are each split on
sets $D \times \Delta$ and $D_0 \times \Delta$, for disks $D \subset \Sigma$
and $D_0 \subset \Sigma_0$ containing the points $p$ and $p_0$, respectively.
We can form an almost complex structure~$J(T)$ on~$(\Sigma\# \Sigma_0) \times
\Delta$ by inserting a connected sum tube of length~$T$ between~$\Sigma$
and~$\Sigma_0$. We need to understand how a sequence of $J(T)$-holomorphic
triangles behaves as~$T \to \infty$.

\begin{proposition}\label{prop:weaklimitsoftriangles}
If $u_{T_i}$ is a sequence of holomorphic triangles on $(\Sigma \# \Sigma_0)
\times \Delta$ for a sequence of almost complex structures $J(T_i)$ with $T_i
\to \infty$, then a subsequence can be extracted that converges to a triple
$(U, V, U_0)$, where $U$ is a broken holomorphic triangle in $\Sigma\times
\Delta$ and $U_0$ is a broken holomorphic triangle in $\Sigma_0\times
\Delta$. Furthermore, $V$ is a collection of holomorphic curves mapping into
the neck regions $S^1 \times \R \times \Delta$ or $S^1 \times \R \times [0,1]
\times \R$ that are asymptotic to possibly multiply covered Reeb orbits of the form $S^1\times \{d\}$
in $S^1 \times \Delta$ or $S^1 \times [0,1] \times \R$ for $d \in \Delta$ or
$d \in [0,1] \times \R$, respectively.
\end{proposition}

The splitting process is described in detail by
Lipshitz~\cite[Appendix~A]{Lipshitz06:CylindricalHF} and follows from
adapting \cite[Proposition~12.4  or Sublemma~A.12]{Lipshitz06:CylindricalHF}
to the setting of holomorphic triangles. See also Ozsv\'ath and
Szab\'o~\cite[Section~5.1]{OS08:HFL} and Lipshitz, Ozsv\'ath, and
Thurston~\cite[Proposition~5.24]{LOT1}. The process of splitting a symplectic
manifold along a hypersurface is considered in more generality in
\cite[Section~9]{BEHWZ03}.

We note that one could enhance the previous proposition to keep track of the
\emph{level splittings} (see \cite[p.~993]{Lipshitz06:CylindricalHF}) of the
curves appearing in the limit, in analogy with the notion of holomorphic twin
tower appearing in \cite{BEHWZ03} and
\cite[Proposition~12.4]{Lipshitz06:CylindricalHF}. We could also keep track
of how curves in $U$ and $U_0$ that are in the same level splitting match up
across the connected sum point, somewhat analogous to the notion of a
holomorphic comb appearing in \cite{LOT1}, or the more complicated limits of
holomorphic polygonal combs appearing in the setting of bordered Heegaard
Floer homology in \cite[Section~5.6]{Bordered-branched}. There is a simple
reason why we omit this data from our definition: In our proof of handleswap
invariance, we will reduce to the case when the broken triangles on $\Sigma
\times \Delta$ and $\Sigma_0 \times\Delta$ each consist of a single
holomorphic triangle satisfying $(M1)$--$(M8)$.

Consider the triple diagram $(\S,\alphas',\alphas,\betas)$ and fix a point $p
\in \Sigma \setminus (\alphas' \cup \alphas \cup \betas)$. Using
Lemma~\ref{lem:openmapping}, if $u \colon S \to \Sigma \times \Delta$ is a
holomorphic curve satisfying $(M1)$--$(M6)$, then condition $(J'4')$ ensures
that $\{p\} \times \Delta$ and the image of $u$ intersect at only finitely
many points, and all intersections are positive. We write $(\pi_\Sigma \circ
u)^{-1}(p) = (x_1, \dots, x_{n_p(u)}) \in \Sym^{n_p(u)}(S)$, where $x_i$
appears with multiplicity~$m$ if it is a branch point of $\pi_\S \circ u$ of
order~$m$, and define
\begin{equation} \label{eqn:divisor}
\rho^{p}(u)= \left(\pi_\Delta \circ u(x_1), \dots, \pi_\Delta \circ u(x_{n_p(u)}) \right)
\in \Sym^{n_p(u)}(\Delta).
\end{equation}
Note that this definition makes sense even if $u$ is nodal: If a node~$x$ of
the source is mapped to $\{p\} \times \Delta$, then $\pi_\Delta \circ u(x)$
appears with multiplicity $m_1 + m_2$ in $\rho^p(u)$, where $m_1$ and $m_2$
are the multiplicities of branching of $\pi_\Sigma \circ u$ along the two
sheets.

\begin{definition} \label{def:mathcing}
Suppose $\psi$ is a homology class of triangles and $n_p(\psi) = k$.
Given a subset $X \subset \Sym^k(\Delta)$, we define
$\mathcal{M}(\psi,X)$ to be the moduli space of holomorphic curves in the
homology class $\psi$ that satisfy $(M1)$--$(M6)$ and match $X$ at the marked
point~$p$:
\[
\mathcal{M}(\psi,X) = \{\,u \in \mathcal{M}(\psi) \,\colon\, \rho^p(u) \in X \,\}.
\]
Similarly, if $S$ is a decorated source curve (smooth or nodal),
then we define the moduli space
$\mathcal{M}(\psi, S, X)$ to be the moduli space of holomorphic curves
in the homology class~$\psi$ with source curve~$S$ that match $X$ at the marked point~$p$.
\end{definition}

\begin{definition}
The \emph{fat diagonal} $\Diag^k(\Delta)$ in $\Sym^k(\Delta)$ is the subset consisting
of elements of $\Sym^k(\Delta)$ with at least one repeated entry.
\end{definition}

\begin{definition}
We say a holomorphic curve $u : S \to \Sigma \times \Delta$ with connected
source~$S$ is \emph{somewhere injective} if there is a point $z \in S$ such
that $du(z)\neq 0$ and $u^{-1}(u(z)) = \{z\}$. Such a point $z \in S$
satisfying the above condition is an \emph{injective point}.
\end{definition}

\begin{lemma} \label{lem:notmultiplycovered}
Let $(\S,\alphas',\alphas,\betas)$ be a triple diagram, and fix a point $p
\in \Sigma \setminus (\alphas' \cup \alphas \cup \betas)$. Suppose that
$\mathbf{d} \in \Sym^k(\Delta)$ is an element that is not in the fat
diagonal. Let $u \colon S \to \S \times \Delta$ be a holomorphic curve
satisfying $(M1)$--$(M6)$ for an almost complex structure satisfying
$(J'1')$--$(J'5')$ on $\Sigma$ such that $\rho^{p}(u) = \mathbf{d}$. Then
each component of $u$ is somewhere injective.
\end{lemma}

\begin{proof}
A modification of \cite[Lemma~3.3]{Lipshitz06:CylindricalHF} shows that any
component of~$u$ that is asymptotic at a puncture to a Reeb chord in an end
of $\Sigma \times \Delta$ is somewhere injective. Thus the only components
left to consider are ones that have no boundary components; i.e., closed
components mapping into the interior of $\Sigma \times \Delta$. By
assumption, $(\pi_\Sigma \circ u)^{-1}(p)$ contains exactly $|\mathbf{d}|$
points. Using condition $(J'4')$, the fact that $u$ has no constant
components, positivity of intersections of pseudo-holomorphic curves, and the
assumption that the entries of~$\mathbf{d}$ are distinct, we see that
$(\pi_\Sigma\circ u)^{-1}(q)$ contains $|\mathbf{d}|$ points for any $q$
close to $p$. This implies that $du$ does not vanish at any of the points in
$(\pi_\Sigma\circ u)^{-1}(p)$, and each point $x \in (\pi_\Sigma\circ
u)^{-1}(p)$ has the property that $u(z)\neq u(x)$ for any $z \neq x$ in the
source. Since each closed component must have nontrivial intersection with
$\{p\}\times \Delta$ by Lemma~\ref{lem:openmapping}, we conclude that each
component of~$u$ is somewhere injective.
\end{proof}

We now briefly discuss the index of holomorphic curves in the cylindrical
setting. If $\psi$ is a homology class of triangles, then one can define the
Maslov index of $\psi$, denoted~$\mu(\psi)$, in analogy to the original
setting of Heegaard Floer homology. Similarly, if $u \colon S \to
\Sigma\times \Delta$ is a holomorphic triangle representing $\psi$, one can consider the Fredholm
index of the linearized $\bar{\partial}$ operator at $u$, for which we write
$\ind(\psi, S)$. The Fredholm index gives the expected dimension of the moduli
space $\mathcal{M}(\psi, S)$. According to
Lipshitz~\cite[Section~4]{Lipshitz06:CylindricalHF}, the Maslov index
$\mu(\psi)$ agrees with the Fredholm index $\ind(\psi, S)$ at a holomorphic disk
or triangle $u$ that has smooth source and is embedded. For a curve $u$ that
is immersed, adapting \cite[Proposition~5.69]{LOT1}, we instead have
\begin{equation}
\ind(\psi, S) = \mu(\psi) - 2 \Sing(u),
\label{eq:expecteddimension}
\end{equation}
where $\Sing(u)$ denotes the order of singularity of~$u$. The
quantity~$\Sing(u)$ is nonnegative for $u$ holomorphic, and is zero if and
only if $u$ is embedded. A double point along the interior of $S$ contributes
$+1$ to $\Sing(u)$. A double point along the boundary of $S$ contributes
$+\tfrac{1}{2}$ to $\Sing(u)$. See also \cite[Proposition~4.2']{Lipshitz06:CylindricalHFErrata}.

We state the following transversality result, and sketch the proof:

\begin{proposition} \label{prop:transversality}
Let $(\S,\alphas',\alphas,\betas)$ be a triple diagram, and fix a point $p
\in \Sigma \setminus (\alphas' \cup \alphas \cup \betas)$. Suppose $X \subset
\Sym^k(\Delta)$ for some $k \in \NN$  is a non-empty submanifold that does
not intersect the fat diagonal. Furthermore, suppose that, for every $x \in
X$, the $k$-tuple $x$ has no coordinate in the open set $U \subset \Delta$
from $(J'5')$ that contains $\partial \Delta \setminus \{v_{\alpha'\alpha},
v_{\alpha\beta},v_{\alpha'\beta}\}$. Then, for a generic choice of almost
complex structure~$J$, the set $\mathcal{M}(\psi,S,X)$ is a smooth manifold
of dimension
\[
\ind(\psi,S)-\codim(X),
\]
where $\ind(\psi,S)$ denotes the Fredholm index.

When $X = \Sym^k(\Delta)$, the same statement holds near any curve $u$ that
has no component $T$ on which $\pi_\Delta \circ u|_T$ is constant and has
image in~$U$, and such that all components of $u$ are somewhere injective.
\end{proposition}

\begin{proof}[Sketch of proof]
We first note that if $X$ does not intersect the fat diagonal in
$\Sym^k(\Delta)$, then, by Lemma~\ref{lem:notmultiplycovered}, each curve $u$
that satisfies $\rho^p(u) \in X$ for some $p \in \S \setminus (\alphas' \cup
\alphas \cup \betas)$ is automatically somewhere injective with no assumption
on the almost complex structure being generic.

Proposition~\ref{prop:transversality} follows by modifying
\cite[Proposition~3.7]{Lipshitz06:CylindricalHF} to take into account the map
$\rho^p \colon \mathcal{M}(\psi,S)\to \Sym^k(\Delta)$. The required changes
to the proof of \cite[Proposition~3.7]{Lipshitz06:CylindricalHF} can be
adapted from \cite[Proposition~3.4.2]{MS:JHolandSympTop}, as we briefly
explain. One considers the universal moduli space $\mathcal{B}^\ell(\psi,S)$
of triples $(u,j,J)$, where $j$ is a complex structure  on $S$, while $J$ is
a $C^\ell$ almost complex structure on $\Sigma\times \Delta$ satisfying
$(J'1')$--$(J'5')$, and $u$ is a $(j,J)$-holomorphic map from $S$ to $\Sigma
\times \Delta$ with each component of $u$ somewhere injective. By
Lemma~\ref{lem:openmapping}, around a critical point, $\pi_\Sigma \circ u$ is
of the form $z \mapsto z^n$ for some $n > 0$, hence the set of critical
points of $\pi_\Sigma \circ u$ is discrete. As the set of injective points
of~$u$ is open and dense, we can find an injective point of~$u$ in each
component of~$S$ where $d(\pi_\Sigma \circ u)$ is non-zero. Adapting the
proof of \cite[Proposition~3.7]{Lipshitz06:CylindricalHF}, one can use this
to show that $D\bar{\partial}$ is surjective at $(u,j,J)$, and hence that
$\mathcal{B}^\ell(\psi,S)$ is a Banach manifold.

The next step is to show that the universal evaluation map
\[
\rho^p \colon \mathcal{B}^\ell(\psi,S) \to \Sym^k(\Delta)
\]
is a submersion at triples $(u,j,J)$ where $\rho^p(u)$ is not in the fat
diagonal. The fact that the map $\rho^p$ is a submersion when
$\rho^p(u)$ is not in the fat diagonal can be proven by modifying
\cite[Proposition~3.4.2]{MS:JHolandSympTop}.
For almost complex structures satisfying $(J'1')$--$(J'5')$,
this fact can also be proven by hand, as we now describe.
Note that, since we are using the split symplectic form on $\Sigma \times
\Delta$, if $\phi_\Delta$ is a symplectomorphism of $\Delta$, then the map
$\text{Id}_\S \times \phi_\Delta$ will be a symplectomorphism of $\Sigma
\times \Delta$. Furthermore, $(\text{Id}_\S \times \phi_\Delta)^*(J)$
satisfies $(J'1')$--$(J'5')$ for any $J$ satisfying $(J'1')$--$(J'5')$.
Suppose that $\rho^p(u) = \mathbf{d}$, where $\mathbf{d}$ is not in the fat
diagonal. At each $d_i \in \mathbf{d}$, pick an arbitrary vector $v_i \in
T_{d_i} \Delta$. Pick any Hamiltonian function $H$ on $\Delta$ such that the
Hamiltonian vector field $X_H$ satisfies
\[
(X_H)_{d_i}=v_i.
\]
Multiplying $H$ by bump functions, we can assume that $H$ is supported in a
neighborhood of the points $d_i$. Let $\phi_t \colon \Delta \times [-1,1] \to
\Delta$ denote the flow of~$X_H$. Then $\phi_t$ is a symplectomorphism for
each $t$. We define the action of $\text{Id}_\S \times \phi_t$ on
$\mathcal{B}^{\ell}(\psi,S)$ by
\[
(\text{Id}_\S \times \phi_t)^*(u,j,J) := 
\left((\text{Id}_\S \times \phi_t)^{-1} \circ u, j, (\text{Id}_\S \times \phi_t)^*(J) \right).
\]
Then
\[
\frac{d}{dt}\bigg|_{t=0} \rho^p\left((\text{Id}_\S \times \phi_t)^*(u,j,J)\right) = (-v_1,\dots, -v_k),
\]
which shows that $\rho^p \colon \mathcal{B}^{\ell}(\psi,S) \to \Sym^k(\Delta)$ is a submersion at $(u,j,J)$.

We define the universal matched moduli space
\[
\mathcal{B}^{\ell}(\psi,S,X) = \{\,(u,j,J)\in \mathcal{B}^\ell(\psi,S) \,\colon\, \rho^p(u)\in X\}.
\]
Near any $(u,j,J)$ where $\rho^p$ is a submersion, this is a Banach manifold
of codimension $\codim(X)$ in $\mathcal{B}^\ell(\psi,S)$. We write
$\mathcal{J}^\ell$ for the space of $C^\ell$ almost complex structures on
$\Sigma \times \Delta$ that satisfy $(J'1')$--$(J'5')$. If we apply the
Sard-Smale theorem to the Fredholm map $\pi \colon
\mathcal{B}^{\ell}(\psi,S,X) \to \mathcal{J}^\ell$, we obtain that a generic
choice of $J \in \mathcal{J}^\ell$ is a regular value of $\pi$. The Fredholm
index of $\pi$ is $\ind(\psi,S)-\codim(X)$. Hence $\mathcal{M}(\psi,S,X) =
\pi^{-1}(J)$ is a smooth manifold of dimension $\ind(\psi,S)-\codim(X)$ for a
generic choice of~$J$. An approximation argument yields the statement for
$C^\infty$ almost complex structures.

When $X = \Sym^k(\Delta)$, the above proof works given the additional assumptions
in the statement of the proposition, and we do not need to use the universal evaluation map.
\end{proof}

\begin{remark}
The only curves that we cannot achieve transversality for are curves where
$\pi_\Delta \circ u$ is constant and takes a value in the set $U$ containing
$\partial \Delta \setminus \{v_{\alpha'\alpha}, v_{\alpha\beta},
v_{\alpha'\beta}\}$, as well as curves with closed components that are
multiply covered.
\end{remark}

We finally need to discuss gluing results for holomorphic triangles. If
$\psi$ is a homology class of triangles on the triple $\mathcal{T}$ and
$\psi_0$ is a homology class of triangles on $\mathcal{T}_0$, then we can
take the connected sum $\psi \# \psi_0$ of the two homology classes, and
consider the moduli space $\mathcal{M}_{J(T)}(\psi \# \psi_0)$ for large $T$.

Let $p$ and $p_0$ be the connected sum points on $\Sigma$ and $\Sigma_0$,
respectively. If $u\in \mathcal{M}(\psi)$ and $u_0\in \mathcal{M}(\psi_0)$,
we consider the divisors $\rho^p(u)\in \Sym^{n_p(u)}(\Delta)$ and
$\rho^{p_0}(u_0) \in \Sym^{n_{p_0}(u_0)}(\Delta)$ defined in
equation~\eqref{eqn:divisor}. Then we have the following gluing result:

\begin{proposition}\label{prop:fiberedproduct}
Let $u$ and $u_0$ be holomorphic triangles
representing homology classes $\psi$ and $\psi_0$ in  
$\Sigma \times \Delta$ and $\Sigma_0 \times \Delta$
of Maslov indices~0 and~$2k$, respectively. Suppose that
\[
\rho^p(u) = \rho^{p_0}(u_0) \in \Sym^k(\Delta) \setminus \Diag^k(\Delta),
\]
where $k = n_{p}(\psi) = n_{p_0}(\psi_0)$,
and that the moduli spaces $\mathcal{M}(\psi)$ and $\mathcal{M}(\psi_0,\rho^p(u))$
are transversely cut out near~$u$ and~$u_0$, respectively. Then there
is a homeomorphism~$h$ between a neighborhood of $(u, u_0)$ in the
compactified 1-dimensional moduli space
\[
\overline{\bigcup_T \mathcal{M}_{J(T)}(\psi \# \psi_0) }
\]
(cf.~Proposition~\ref{prop:weaklimitsoftriangles}) and $[0,1)$ such that $h(u,u_0) = \{0\}$.
\end{proposition}

The above proposition follows immediately from the work of
Lipshitz~\cite[Proposition~A.2]{Lipshitz06:CylindricalHF}, together with a
computation of the relevant Fredholm indices associated to
$\mathcal{M}(\psi)$ and $\mathcal{M}(\psi_0, \rho^p(u))$ from
Proposition~\ref{prop:transversality}. See also Ozsv\'ath and
Szab\'o~\cite[Theorem~5.1]{OS08:HFL} for a similar result. The conditions
that $\rho^p(u) \not\in \Diag^k(\Delta)$, and that~$u$ has Maslov index zero
ensure that the Reeb orbits involved are
embedded, and that the techniques of Lipshitz~\cite{Lipshitz06:CylindricalHF} apply.
This is the only case that we need. To relax these conditions, one would
need to adapt~\cite[Proposition~A.2]{Lipshitz06:CylindricalHF} to a setting
that allowed for multiply-covered Reeb orbits.

\subsubsection{Proof of Proposition \ref{lem:handleswapinvariance}}

In this section, we prove the main holomorphic triangle count that features in the proof of handleswap invariance.
We first prove several simpler computations that will be used in the proof of the main count.

\begin{lemma}\label{lem:Maslovindex}
Consider the triple diagram $\mathcal{T}_0 = (\S_0,\alphas_0',\alphas_0,\betas_0)$ in Figure~\ref{fig::2}.
If $x \in \mathbb{T}_{\a'_0} \cap \mathbb{T}_{\a_0}$ and $\psi_0 \in \pi_2(x,\mathbf{a},\mathbf{b})$, then
\begin{equation} \label{eqn:firstformula}
\mu(\psi_0) = 2n_{p_0}(\psi_0) + \mu(x,\Theta),
\end{equation}
where $\mu(x,\Theta)$ denotes the relative Maslov grading of $x$ and $\Theta$.
\end{lemma}

\begin{proof}
We first prove equation~\eqref{eqn:firstformula} when $x = \Theta$.
This holds when $\psi_0$ is the Maslov index zero triangle domain shown in Figure~\ref{fig::2}
as $\mu(\psi_0) = 0$ and $n_{p_0}(\psi_0) = 0$.
The formula respects adding a multiple of~$[\Sigma_0]$ as well as triply periodic domains.
Indeed, every triply periodic domain in~$\mathcal{T}_0$ is a sum of doubly periodic domains.
There are no doubly periodic domains in $(\Sigma_0,\alphas_0,\betas_0,p_0)$
or $(\Sigma_0,\alphas'_0,\betas_0,p_0)$,
and every doubly periodic domain in $(\Sigma_0,\alphas'_0,\alphas_0,p_0)$ has Maslov index zero.

For a general $x \in \mathbb{T}_{\a'_0} \cap \mathbb{T}_{\a_0}$, pick a domain $\phi \in \pi_2(x,\Theta)$.
Then $\psi_0 - \phi$ represents an element of $\pi_2(\Theta,\mathbf{a},\mathbf{b})$,
and hence by the above, $\mu(\psi_0) = \mu(\phi) + 2n_{p_0}(\psi_0)$. Finally, note that
$\mu(\phi) = \mu(x,\Theta)$; this is independent of the choice of $\phi$ since
every periodic domain in $(\Sigma_0,\alphas'_0,\alphas_0,p_0)$ has Maslov index zero.
\end{proof}

Before we consider the curves counted by the map $F_{\mathcal{T} \# \mathcal{T}_0}^\circ$ in full generality,
we prove two simple lemmas that will be useful later when we perform more complicated counts of holomorphic triangles.

\begin{lemma}\label{lem:differentialvanishes}
The differential on $\CFa(\Sigma_0,\alphas'_0,\alphas_0,p_0)$ vanishes.
\end{lemma}

\begin{proof}
Observe that $(\Sigma_0,\alphas'_0,\alphas_0)$ represents $(S^1 \times S^2) \# (S^1 \times S^2)$,
and hence
\[
\dim \HFa(\Sigma_0,\alphas'_0,\alphas_0,p_0) = 4
\]
over $\FF_2$.
But $\CFa(\Sigma_0,\alphas'_0,\alphas_0,p_0)$ is also 4-dimensional,
and hence the differential must vanish.
\end{proof}

\begin{lemma}\label{lem:handleswapinhat}
The map
\[
\Psi^{\alphas_0\to \alphas'_0}_{\betas_0} \colon \CFa(\Sigma_0,\alphas_0,\betas_0,p_0) \to
\CFa(\Sigma_0,\alphas'_0,\betas_0,p_0)
\]
satisfies $\Psi^{\alphas_0 \to \alphas'_0}_{\betas_0}(\mathbf{a}) = \mathbf{b}$.
\end{lemma}

\begin{proof}
By Proposition~\ref{prop:Compatibility1}, the map $\Psi^{\alphas_0 \to \alphas'_0}_{\betas_0}$ is
a quasi-isomorphism of 1-dimensional chain complexes over~$\FF_2$ with vanishing differentials,
and such a map is uniquely determined.
\end{proof}

We remark that Lemma~\ref{lem:handleswapinhat} implies handleswap invariance in the hat version
or in the case of sutured Floer homology,
regardless of the almost complex structure,
but only for handleswaps where the region containing~$p$ also contains a basepoint
or a boundary component of the sutured Heegaard diagram.
(As pointed out to us by Sungkyung Kang, general handleswaps 
can be reduced to these using the Commutativity Axiom.)

For the $+$, $-$, and $\infty$ versions, and for general simple handleswaps in
the hat version, we must consider how moduli spaces of holomorphic curves degenerate
as we degenerate the almost complex structure.

\begin{proof}[Proof of Proposition~\ref{lem:handleswapinvariance}]
If $\psi \in \pi_2(\x,\y,\z)$ is a homology class of triangles on 
$\mathcal{T}=(\Sigma, \alphas',\alphas,\betas)$
and $\psi_0 \in \pi_2(x, \mathbf{a},\mathbf{b})$ is a homology class on 
$\mathcal{T}_0 = (\S_0,\alphas_0',\alphas_0,\betas_0)$
with $n_p(\psi) = n_{p_0}(\psi_0)$, then
we can form the homology class $\psi \# \psi_0$. By Lemma~\ref{lem:Maslovindex}
and the Maslov index formula of Sarkar~\cite[Theorem~4.1]{Sarkar:Maslov}, we have
\begin{equation} \label{eqn:Maslov}
\mu(\psi\#\psi_0) = \mu(\psi) + \mu(\psi_0) - 2n_{p_0}(\psi_0) = \mu(\psi) + \mu(x,\Theta),
\end{equation}
since when we take the connected sum of the two homology classes,
we must remove two balls with multiplicity $n_{p_0}(\psi_0) = n_{p}(\psi)$,
and each ball has Euler measure one.

Suppose that
\[
\psi\# \psi_0\in \pi_2(\x\times \Theta, \y\times \mathbf{a},\z\times \mathbf{b})
\]
is a homology class of triangles that has holomorphic representatives
$u_{T_i}$ for a sequence of neck lengths~$T_i$ approaching~$\infty$. By
Proposition~\ref{prop:weaklimitsoftriangles}, we can extract a subsequence
that converges to a broken triangle $U$ on $(\Sigma,
\alphas',\alphas,\betas)$ representing~$\psi$, a broken triangle $U_0$ on
$(\Sigma_0,\alphas'_0,\alphas_0,\betas_0)$ representing~$\psi_0$, and a
collection of holomorphic curves $V$ mapping into the connected sum regions
$S^1\times \R\times \Delta$ or $S^1\times \R\times [0,1]\times \R$. Supposing
that $\mu(\psi\#\psi_0) = 0$, we know from equation~\eqref{eqn:Maslov} that
\begin{equation}
\mu(\psi\# \psi_0) = \mu(\psi)+\mu(\Theta,\Theta) = \mu(\psi),
\label{eq:Maslovindexconnectedsum}
\end{equation}
and hence $\mu([U]) = 0$.

We list the steps of the rest of the proof:
\begin{enumerate}
\item \label{it:step1} We show that $U$ consists of a single Maslov index zero holomorphic triangle~$u$
    satisfying $(M1)$--$(M8)$, as well as potentially some constant holomorphic curves.
\item \label{it:step2} We show that $U_0$ consists of a single Maslov index $2k$ triangle~$u_0$
    satisfying $(M1)$--$(M8)$ and $\rho^{p_0}(u_0) = \rho^p(u)$, as well as potentially
    some constant holomorphic curves.
\item \label{it:step3} We show that $V$ consists of only a collection of trivial cylinders,
    and that there are no constant holomorphic components in $U$ or $U_0$.
\item \label{it:step4} We use the fibered product description of the moduli spaces
    in Proposition~\ref{prop:fiberedproduct} to yield the relevant counts of curves
    on~$\mathcal{T} \# \mathcal{T}_0$ in terms of the counts on~$\mathcal{T}$.
\end{enumerate}

First, let us restrict our attention to the components of~$U$ and~$U_0$ that
are nonconstant. Write $U'$ and $U_0'$ for the collections obtained by
removing components of $U$ and $U_0$, respectively, that map to a single
point. Note that this does not change the homology classes of $U$ and $U'$,
hence $\mu([U]) = \mu([U'])$ and $\mu([U_0]) = \mu([U_0'])$.

\emph{A priori}, the broken triangle $U'$ could consist of curves
mapping into both $\Sigma \times \Delta$ and $\Sigma \times [0,1] \times \R$.
However, holomorphic curves mapping into $\Sigma \times [0,1] \times \R$ that
satisfy the analogs of $(M1)$--$(M6)$ and have nonzero domain on $\Sigma$
have Fredholm index at least one, by transversality, and since there is an
$\R$-action on the moduli space. As equation~\eqref{eq:expecteddimension} also
holds for curves mapping into $\S \times [0,1] \times \R$, we obtain that
these curves must also have Maslov index at least one. Similarly, if we
choose the almost complex structure $J$ on $\S \times \Delta$ generically,
the curves mapping into $\Sigma \times \Delta$ must have nonnegative Fredholm
index, and hence also Maslov index by equation~\eqref{eq:expecteddimension}.
Since $\mu([U']) = \mu([U]) = 0$, the limiting curve must map only into $\Sigma \times \Delta$.
Clearly, the limiting curve in $U'$ must satisfy $(M1)$--$(M4)$ and $(M6)$. Let us
write $u$ for the single holomorphic triangle in $U'$.

We now show that $u$ satisfies $(M5)$. Degeneration of a source curve along a
simple closed curve in the source preserves $(M5)$, though such a
degeneration is impossible since $u$ has Maslov index zero, and nodal curves
would have negative expected dimension by
equation~\eqref{eq:expecteddimension}. Collapsing along an arc connecting two
boundary components of different type (e.g., an $\alpha$ boundary component
and a $\beta$ boundary component) would result in strip breaking, which we
have already ruled out.

The final possibility is that the source collapses along arcs with endpoints
along boundary components of the same type (e.g., two $\alpha$ boundary
components). Since $u|_{\partial S}$ is monotone along each arc of the
punctured boundary for a holomorphic curve with $u|_{\partial S}$
nonconstant, using the maximum modulus principle, it is not hard to see that
some components of the resulting curve must be constant under the
map~$\pi_\Delta$ and map into $e_{\alpha'} \cup e_\alpha \cup e_\beta$. These
correspond to boundary degenerations. The remaining components of the limit
satisfy $(M5)$. However, a boundary degeneration increases the Maslov index
by at least two. For example, an $\alpha$ boundary degeneration has domain
equal to a nonnegative sum of components of $\Sigma \setminus \alphas$, and
upon examining the formula of
Lipshitz~\cite[Proposition~4.10]{Lipshitz06:CylindricalHF}, splicing in such
a class raises the Maslov index by twice the sum of multiplicities. Since
$\mu([u]) = 0$, and the remaining curves are transversely cut out, boundary
degenerations are thus prohibited.  It follows that axiom $(M5)$ holds, as it
is satisfied by all the curves~$u_{T_i}$, and we have ruled out a sequence of
curves collapsing along any arcs or simple closed curves in the source.

Similarly, $(M8)$ is satisfied because we have ruled out nodal source curves
in~$U'$. Condition $(M7)$ holds since holomorphic curves with $\pi_\Delta
\circ u$ constant are either constant curves (which we are excluding, at the
moment), nonconstant closed surfaces, or boundary degenerations, and our
previous discussion rules out the latter two, using Maslov index
considerations. This concludes Step~\eqref{it:step1}.

We now proceed to Step~\eqref{it:step2}.
If $v$ is a holomorphic triangle on $(\S, \alphas', \alphas, \betas)$,
and $\rho^p(v)$ contains a point of multiplicity greater than one, then
$(\pi_\Sigma \circ v)^{-1}(p)$ either contains a double point of $v$, or a branch
point of $\pi_\S \circ v$. For a curve whose projection to $\S$ is locally
non-constant, the set of critical points of $\pi_\S \circ v$ is discrete by
Lemma~\ref{lem:openmapping} (in fact, since $J$ is split in a neighborhood of $p$,
we could just use elementary complex analysis). In particular, by perturbing the connected
sum point $p$ slightly, we can assume that $p$ is not the image of a branch
point of $\pi_\S \circ v$ for any Maslov index zero triangle $v$. Since $u$ is also
embedded, we can assume that $\rho^p(u)$ is not contained in the fat diagonal.

Since $\rho^p(u) \not\in \Diag^k(\Delta)$, the curves appearing in~$V$
(mapping into $S^1 \times \R \times \Delta$ or $S^1 \times \R \times [0,1] \times \R$)
are asymptotic to embedded Reeb orbits of the form $S^1 \times \{d\}$ in
$S^1\times \Delta$ or $S^1\times [0,1] \times \R$ for some $d$ in $\Delta$ or $[0,1] \times \R$.
However, if $v$ is a curve in $V$, then $\pi_\Delta \circ v$ must be constant by the maximum
modulus principle. In particular, the asymptotics of~$V$ agree with the
asymptotics of the curve $u \in U'$. Also, since $U'$ consists of a single
holomorphic triangle, the curves in $V$ can only map into $S^1 \times \R
\times \Delta$.

By Lemma~\ref{lem:Maslovindex}, we have $\mu(\psi_0) = 2n_{p_0}(\psi_0)$. The
broken holomorphic triangle $U_0'$ could \emph{a priori} consist of a
genuinely broken holomorphic triangle, with nodal curves, boundary
degenerations, and  holomorphic strips. The curves in $U_0'$ are obtained by
completing the limiting curves in $(\Sigma_0 \setminus \{p_0\}) \times
\Delta$. By \cite[Section~10.2.3]{BEHWZ03}, curves in adjacent level
splittings must have asymptotics that agree. Since the asymptotics of the
curves in~$V$ agree with those of~$u$, by the previous paragraph, we conclude
that there must be curves in~$U_0'$ that satisfy a matching condition
with~$u$. The matching condition reads
\[
\rho^p(u) = \rho^{p_0}(u_0).
\]
Since $u$ satisfies $(M1)$--$(M8)$, the divisor $\rho^p(u)$ is a collection of $n_p(u)$
points (possibly with repetition) in the interior of~$\Delta$. Let
\[
\mathcal{D}:=
\{\, \rho^p(u) \,\colon\, u \text{ holomorphic on $\mathcal{T}$, } \mu(u) = 0 \,\}.
\]
Since $(\Sigma,\alphas', \alphas,\betas)$ is admissible and $J$ is generic, the set
$\mathcal{D}$ is finite.
As described above, by perturbing the connected sum point $p$ slightly,
we can assume that $\mathcal{D} \cap \Diag^k(\Delta) = \emptyset$.
By Lemma~\ref{lem:notmultiplycovered}, this ensures that if $u_0$ is any (not
necessarily embedded) holomorphic curve in $\Sigma_0\times \Delta$ that
satisfies
\[
\rho^{p_0}(u_0)\in \mathcal{D},
\]
then $u_0$ is somewhere injective on each component.

Thus, for a generic almost complex structure on $\Sigma_0 \times \Delta$,
by Proposition~\ref{prop:transversality},  the moduli space
\[
\mathcal{M}(\psi_0,\mathbf{d}) =
\{\,u_0 \in \mathcal{M}(\psi_0) \,\colon\, \rho^{p_0}(u_0) = \mathbf{d} \,\}
\]
is a transversely cut out manifold
of dimension
\[
\mu(\psi_0) - \codim(\{\mathbf{d}\}) = 2n_{p_0}(\psi_0) - 2n_p(u) = 0
\]
for every $\mathbf{d} \in \mathcal{D}$. Repeating the  argument that we made
previously for the curves appearing in $\Sigma \times \Delta$ shows that, for
a generic almost complex structure on $\Sigma_0 \times \Delta$, the broken
triangle $U_0'$ consists of exactly one curve $u_0 \in
\mathcal{M}(\psi_0,\mathbf{d})$ mapping into $\Sigma_0 \times \Delta$, and
that $u_0$ satisfies $(M1)$--$(M8)$. This concludes Step~\eqref{it:step2}.

We now argue that $V$ consists of only trivial cylinders, and that there are
no non-trivial constant holomorphic curves in $U$ and $U_0$. The argument
follows by adapting standard arguments (for example \cite[Lemma~5.57]{LOT1}).
For a nontrivial constant holomorphic curve to appear in the limit, it must
be stable; i.e., it cannot consist of a sphere with exactly one or two
punctures. The original sequence of curves $u_{T_i}$ had Maslov index zero
and satisfied $(M1)$--$(M8)$. Writing $\widehat{S}$ for the source curve of
$u_{T_i}$ (which we can assume are topologically the same for all $i$, by
passing to a subsequence), by \cite[Section~10.2]{Lipshitz06:CylindricalHF},
we have
\begin{equation}\label{eq:firstindexformulatriangle}
\ind\left(\psi \# \psi_0, \widehat{S}\right) = 
\frac{1}{2} (d+2) - \chi(\widehat{S}) + 2e(\psi\# \psi_0),
\end{equation}
where $d = |\alphas'| = |\alphas| = |\betas|$ is the number of attaching
curves of each type in $\mathcal{T}$. Notice too that none of the constant
curves in~$U$ or~$U_0$ can map to $\{p\} \times \Delta$ or $\{p_0\} \times
\Delta$, since the collections~$U$ and~$U_0$ were obtained by taking curves
in $(\Sigma \setminus \{p\}) \times \Delta$ and $(\Sigma_0 \setminus \{p_0\})
\times \Delta$ that were asymptotic to Reeb orbits in the ends corresponding
to the points~$p$ and~$p_0$, and then completing over these punctures to get
curves in~$\Sigma \times \Delta$ and~$\Sigma_0 \times \Delta$.

Hence, we can be reconstruct the surface $\widehat{S}$ from the curves in
$U$, $V$, and $U_0$ by gluing the source curves along their nodes. First,
glue the source curves of the constant holomorphic curves together along any
shared nodes. Assuming that this has been done, leaving only components $T_1,
\dots, T_n$ of the constant holomorphic curves with no shared nodes, we note
that there can only be one node between any~$T_i$ and~$S$ or~$S_0$, as~$u$
and~$u_0$ are embedded and have no nodal singularities. Since none of the
constant curves appearing in the limit are once or twice punctured spheres,
by stability of the limiting curves, we conclude that each~$T_i$ has genus at
least one, and has at most one node shared with~$S$ or~$S_0$. Hence, gluing
the~$T_i$ to~$S$ and~$S_0$ only raises the genus of~$S$ and~$S_0$. Similarly,
if $V$ contains curves that are not trivial cylinders, it is easy to see that
they must have positive genus. As a consequence, we have
\[
\chi(\widehat{S}) \le \chi(S) + \chi(S_0) - 2k,
\]
with equality if and only if there were no constant curves in~$U$ and~$U_0$,
and~$V$ consisted of only trivial cylinders (the $-2k$ comes from gluing $S$
and~$S_0$ together at the $k = n_p(\psi) = n_{p_0}(\psi_0)$ points where~$u$
and~$u_0$ intersect $\{p\} \times \Delta$ or $\{p_0\} \times \Delta$).
Furthermore, if $V$ consists of anything but trivial cylinders, or if there
are constant curves appearing in the limit, then
\[
\chi(\widehat{S}) \le \chi(S) + \chi(S_0) - 2k - 2.
\]
Applying the formula for the Fredholm index from equation~\eqref{eq:firstindexformulatriangle}
to~$u$ and~$u_0$ implies that
\[
\ind(\psi, S) + \ind(\psi_0, S_0) \le 2k + \ind(\psi \# \psi_0, \widehat{S}) - 2 = 2k-2.
\]
By assuming that $\mathcal{M}(\psi)$ and $\mathcal{M}(\psi_0,\mathbf{d})$ are
smoothly cut out and of the expected dimension for any $\mathbf{d} \in
\mathcal{D}$, if constant components appear in~$U$ or~$U_0$, or, if~$V$
contains nontrivial curves, then one of the expected dimensions
of~$\mathcal{M}(S,\psi)$ and~$\mathcal{M}(S_0,\psi_0,\mathbf{d})$ is
negative.  Hence, constant components of~$U$ and~$U_0$, as well as nontrivial
curves in~$V$, do not arise generically. This concludes
step~\eqref{it:step3}.

The above limiting argument shows that the sequence $u_{T_i}$
converges to a pair $(u,u_0)$, where $u$ has Maslov index zero, $u_0$ has
Maslov index $2k$, and $\rho^p(u) = \rho^{p_0}(u_0)$. On the other
hand, Proposition~\ref{prop:fiberedproduct} describes a neighborhood of $(u,u_0)$ in the
compactified 1-dimensional moduli space
$\overline{\bigcup_T M_{J(T)}(\psi \# \psi_0)}$, from which we conclude that
\begin{equation} \label{eq:fiberedproductdescription}
\# \mathcal{M}_{J(T)}(\psi\# \psi_0) = \# \{\,(u,u_0)\in
\mathcal{M}(\psi) \times \mathcal{M}(\psi_0) \,\colon\, \rho^p(u)=\rho^{p_0}(u_0)\,\}
\end{equation}
for $T$ sufficiently large.
Thus, to prove Proposition~\ref{lem:handleswapinvariance},
it is sufficient to count triangles $u_0$ on $(\Sigma_0,\alphas'_0,\alphas_0,\betas_0)$
that satisfy a matching condition with a generic divisor $\mathbf{d} \in \Sym^k(\Delta)$
for some $k \in \NN$. For $x \in \mathbb{T}_{\a'_0} \cap \mathbb{T}_{\a_0}$, we define
\[
\mathcal{M}_{(x,\mathbf{a},\mathbf{b})}(\mathbf{d}) =
\coprod_{\substack{\psi_0\in \pi_2(x,\mathbf{a},\mathbf{b}) \\ 
n_{p_0}(\psi_0) = k}} \mathcal{M}(\psi_0,\mathbf{d}).
\]
Note that, by using the expected dimension from
equation~\eqref{eq:expecteddimension}, for a $J$ achieving transversality,
any curve in $\mathcal{M}(\psi_0,\mathbf{d})$ satisfying $(M1)$--$(M6)$ also
satisfies~$(M7)$ and~$(M8)$.

Summarizing, we have shown that, for sufficiently large $T$, the only Maslov
index zero homology classes that have representatives are of the form $\psi
\# \psi_0$ with $\mu(\psi) = 0$ and $\mu(\psi_0) = 2n_{p_0}(\psi_0) =
2n_{p}(\psi)$. Using the fibered product description of the moduli spaces
$\mathcal{M}(\psi \# \psi_0)$ in
equation~\eqref{eq:fiberedproductdescription}, to prove
Proposition~\ref{lem:handleswapinvariance}, it is sufficient to show that for
each $u \in \mathcal{M}(\psi)$, we have
\[
\#\mathcal{M}_{(\Theta,\mathbf{a},\mathbf{b})}(\rho^p(u))\equiv 1 \pmod{2}.
\]
This follows from Lemma~\ref{lem:divisorcount} below.
\end{proof}

\begin{lemma}\label{lem:divisorcount}
For $\mathbf{d} \in \Sym^k(\Delta)$ not contained in the fat diagonal, the
moduli space $\mathcal{M}_{(\Theta,\mathbf{a},\mathbf{b})}(\mathbf{d})$ is a
smoothly cut out 0-manifold for a generic choice of almost complex
structure~$J$. Furthermore, for $J$ achieving transversality, we have
\[
\# \mathcal{M}_{(\Theta,\mathbf{a},\mathbf{b})}(\mathbf{d})\equiv 1 \pmod 2.
\]
\end{lemma}

\begin{proof}
Note that, for a fixed $\mathbf{d}\in \Sym^k(\Delta)$ that is not in the fat
diagonal in $\Sym^k(\Delta)$ and also includes no points in the
neighborhood~$U$ of $\partial \Delta \setminus
\{v_{\alpha'\alpha},v_{\alpha\beta},v_{\alpha'\beta}\}$ appearing in
axiom~$(J'5')$, transversality of the matched moduli space
$\mathcal{M}_{(\Theta,\mathbf{a},\mathbf{b})}(\mathbf{d})$ can be achieved by
Proposition~\ref{prop:transversality}. In fact, since the set of such $J$ is
of the second category by the Sard-Smale theorem, one can pick a~$J$
achieving transversality simultaneously for any countable collection of
submanifolds of $\Sym^k(\Delta)$ that do not intersect the fat diagonal and
$U \times \Sym^{k-1}(\Delta)$.

We first show that the quantity $\#
\mathcal{M}_{(\Theta,\mathbf{a},\mathbf{b})}(\mathbf{d})$ is independent
of~$\mathbf{d}$ for generic~$\mathbf{d}$. Let $\mathbf{p} \colon [0,1] \to
\Sym^k(\Delta)$ be an embedded arc starting at $\mathbf{d}_0$ and ending at
$\mathbf{d}_1$ that does not intersect the fat diagonal and $U \times
\Sym^{k-1}(\Delta)$. Consider the 1-dimensional moduli space
\[
\mathcal{M}_{(\Theta,\mathbf{a},\mathbf{b})}(\mathbf{p})= \bigcup_{t\in [0,1]} \mathcal{M}_{(\Theta,\mathbf{a},\mathbf{b})}(\mathbf{p}(t)).
\]
We will count the ends of $\mathcal{M}_{(\Theta,\mathbf{a},\mathbf{b})}(\mathbf{p})$.
First, note that there are ends corresponding
to~$\mathcal{M}_{(\Theta,\mathbf{a},\mathbf{b})}(\mathbf{d}_i)$ for $i \in \{0,1\}$.

There are additional ends of
$\mathcal{M}_{(\Theta,\mathbf{a},\mathbf{b})}(\mathbf{p})$ that correspond to
degenerations of holomorphic triangles into broken holomorphic triangles, in
the sense of Definition \ref{def:brokenholtriangle}. We will show that, by
picking $J$ generically, the only degenerations that occur correspond to a
sequence of Maslov index $2k$ holomorphic triangles breaking into a Maslov
index $2k-1$ triangle that matches a divisor $\mathbf{p}(t)$ and a Maslov
index~1 holomorphic disk that has multiplicity zero at~$p_0$.

Let $u_i \colon S_0 \to \S_0 \times \Delta$ for $i \in \NN$ be a sequence of
triangles in $\mathcal{M}_{(\Theta,\mathbf{a},\mathbf{b})}(\mathbf{p})$ with
domain~$\psi_0$. We show first that the sequence~$u_i$ cannot degenerate
along a closed curve~$c$ in the interior of~$S_0$ as $i \to \infty$ for a
generic ~$J$. Note that a closed surface bubbles off if $c$ is separating,
and a source curves with an interior node appears in the limit if $c$ is
non-separating. Since the path $\mathbf{p}$ misses the fat diagonal and $U
\times \Sym^{k-1}(\Delta)$, the moduli space
$\mathcal{M}(\psi_0,S_0,\mathbf{p}) :=
\mathcal{M}(\psi_0,S_0,\text{Im}(\mathbf{p}))$ (see
Definition~\ref{def:mathcing}) is a manifold of the expected dimension by
Proposition~\ref{prop:transversality}. If $S_0$ collapses along a simple
closed curve in the interior, we note that the limiting curves must map the
node corresponding to the collapsed curve to a point in $\Sigma_0 \times
\Delta$ or $\Sigma_0 \times [0,1] \times \R$ (as opposed to being asymptotic
to a Reeb orbit in the ends) since there are no Reeb orbits in the ends, only
Reeb chords. Thus, if $S_0$ collapses along a closed curve in the interior,
the resulting source curve must have a nodal singularity in the interior.
Suppose that the resulting node occurs in a curve mapping into $\Sigma_0
\times \Delta$ (the case that the curve maps into $\Sigma_0 \times [0,1]
\times \R$ is handled analogously). Let $S_0'$ denote the collection of all
source curves in the broken triangle mapping into $\Sigma_0 \times \Delta$,
and write~$\psi_0'$ for the associated homology class. Since~$S_0'$ has at
least one nodal singularity,
\[
\ind(\psi_0',S_0') \le \mu(\psi_0') - 2\le \mu(\psi_0) - 2 = 2k-2
\]
by equation~\eqref{eq:expecteddimension}.
Hence, we have
\[
\dim \mathcal{M}(\psi_0',S_0',\mathbf{p}) = \ind(\psi_0',S_0')-(2k-1) \le 
(2k-2)-(2k-1) = -1,
\]
so $\mathcal{M}(\psi_0',S_0',\mathbf{p})$ is empty for a generic~$J$. We
conclude that, for a generically chosen almost complex structure on
$\Sigma_0\times \Delta$, 
$\mathcal{M}(\psi_0,\mathbf{p})$ contains no curves with nodes along the
interior, the limit of any sequence of curves in
$\mathcal{M}(\psi_0,\mathbf{p})$ has no nodes along the interior. Note that,
in the above argument, we assumed transversality at the limiting curves. By
Proposition~\ref{prop:transversality}, the only possibility for curves to
arise that do not achieve transversality are curves with a component~$u_0$
such that $\pi_\Delta \circ u_0$ is constant with value in~$U$. Such curves
cannot appear as they would need to match a point in $\mathbf{p}(t)$ for some
$i \in [0,1]$, but $\mathbf{p}$ avoids $U \times \Sym^{k-1}(\Delta)$.

The formation of curves with nodes along the boundary is ruled out by a
reasoning similar to the proof of
\cite[Proposition~7.1]{Lipshitz06:CylindricalHF}. By the index calculation of
equation~\eqref{eq:expecteddimension}, since boundary nodes contribute
$+\tfrac{1}{2}$ to $\Sing(u)$, transversality alone does not prohibit them
from forming. Furthermore (and more troublesome), we will not be able to
achieve transversality at such curves, as will be clear from the following
discussion. To show that such curves cannot form, we instead use the matching
condition. Following the proof of~\cite[Proposition
7.1]{Lipshitz06:CylindricalHF}, by applying the maximum modulus principle
near the boundary of the curve, one can see that nodes forming along the
boundary of the source of a holomorphic curve result in a limiting curve that
is mapped entirely into a fiber $\S_0 \times \{x\}$ over a point $x \in
e_{\alpha'} \cup e_\alpha \cup e_\beta \subset \partial \Delta$, or over an
$x$ in the corresponding edges of $\partial([0,1] \times \R)$. As each of
$\alphas'_0$, $\alphas_0$, and $\betas_0$ is individually non-separating on
$\Sigma_0$, any such holomorphic curve must have multiplicity at least 1 on
all of $\Sigma_0$, but such curves cannot match any non-empty subset
of~$\mathbf{p}(t)$, as~$\mathbf{p}$ does not intersect the boundary
of~$\Delta$. The remaining curves cannot match any~$\mathbf{p}(t)$ since they
have combined multiplicity at most $k-1$ over $p_0$.

Finally, we note that the formation of nontrivial constant holomorphic curves
is prohibited by transversality, as in the proof of
Proposition~\ref{lem:handleswapinvariance}. More precisely, if nontrivial
closed components form, then the holomorphic curves obtained by deleting
these constant curves would achieve transversality. But, in light of the
index formula from equation~\eqref{eq:firstindexformulatriangle}, if the
constant curves are nontrivial (i.e., there are no spheres with exactly one
or two punctures), then the index of the remaining curves drops by at
least~2, so the remaining curves do not appear in any moduli space
matching~$\mathbf{p}$, as $\text{Im}(\mathbf{p}) \subset \Sym^k(\Delta)$ has
dimension~1.

The remaining types of degenerations correspond to arcs connecting boundary
components of source curves that map to two different types of attaching
curves (e.g., collapsing an arc that connects an $\alpha$ boundary component
and a $\beta$ boundary component). These correspond to holomorphic strips
breaking off.

Given a sequence of holomorphic triangles $u_i$ in
$\mathcal{M}(\psi_0,\mathbf{p}(t_i))$ that converges to a broken triangle
with a nontrivial holomorphic strip, we know that the holomorphic triangle
$u$ appearing in the limit must also satisfy the matching condition with
respect to a divisor $\mathbf{p}(t)$ for some $t \in (0,1)$. The curve $u$
represents a class in $\pi_2(x,\mathbf{a},\mathbf{b})$ for some $x \in
\mathbb{T}_{\alpha'} \cap \mathbb{T}_\alpha$. We now consider separately the
contributions to the ends of
$\mathcal{M}_{(\Theta,\mathbf{a},\mathbf{b})}(\mathbf{p})$ corresponding to
different possibilities for $x$.

First, suppose that $x = \Theta$. As the class of the whole degenerate curve is $\psi_0$, we have
\[
\mu(u) = 2|\mathbf{d}| = \mu(\psi_0)
\]
by Lemma~\ref{lem:Maslovindex}. The remaining curves have total Maslov index
zero and have multiplicity zero at~$p_0$, and hence and hence represent the constant class.
By transversality, the curve $u$ is contained in the interior of
$\mathcal{M}_{(\Theta,\mathbf{a},\mathbf{b})}(\mathbf{p})$ and so does not
contribute to the boundary of the moduli space.

We now consider the case that $u$ represents a class in either
$\pi_2(\theta_1^+\theta_2^-,\mathbf{a},\mathbf{b})$ or
$\pi_2(\theta_1^-\theta_2^+,\mathbf{a},\mathbf{b})$. By
Lemma~\ref{lem:Maslovindex}, the remaining curves have total Maslov index~1,
and have multiplicity zero over $p_0$. The only possibility is that the
remaining curve consists of a single Maslov index~1 holomorphic strip. So
$\mathcal{M}_{(\Theta,\mathbf{a},\mathbf{b})}(\mathbf{p})$ has additional
ends corresponding to
\[
\bigcup_{\substack{t\in [0,1]\\ x \in \{\theta_1^+\theta_2^-,\theta_1^-\theta_2^+\}}} 
\bigcup_{\substack{\phi \in \pi_2(\Theta,x)\\ 
n_{p_0}(\phi)=0}}\widehat{\mathcal{M}}(\phi)\times \mathcal{M}_{(x,\mathbf{a},\mathbf{b})}(\mathbf{p}(t)).
\]
The above space has an even number of points, since the differential on
the chain complex $\CFa(\Sigma_0,\alphas'_0,\alphas_0,p_0)$
vanishes by Lemma~\ref{lem:differentialvanishes}.

We now claim that, for a generic choice of almost complex structure, there
are no additional ends of
$\mathcal{M}_{(\Theta,\mathbf{a},\mathbf{b})}(\mathbf{p})$. To this end, we
now claim that, for generic $J$, the space $\mathcal{M}(\psi, \mathbf{p})$ is
empty for any $\psi \in \pi_2(\theta_1^-\theta_2^-,\mathbf{a},\mathbf{b})$
with $n_{p_0}(\psi) = k$. By Lemma~\ref{lem:Maslovindex}, $\mu(\psi) = 2k-2$.
Proposition~\ref{prop:transversality} implies that $\mathcal{M}(\psi, S,
\mathbf{p})$ is a smooth manifold of dimension
\[
\ind(\psi,S) - (2k-1) \le \mu(\psi)-(2k-1) = -1,
\]
for a source~$S$. In particular, for generic $J$, there are no holomorphic
triangles $u$ (embedded or nodal) representing a class in
$\pi_2(\theta_1^-\theta_2^-, \mathbf{a}, \mathbf{b})$ with $\rho^{p_0}(u) \in
\mathbf{p}(t)$, for any~$t$.

Thus, modulo two, the count of the ends of
$\mathcal{M}_{(\Theta,\mathbf{a},\mathbf{b})}(\mathbf{p})$ is just
\[
0= \# \mathcal{M}_{(\Theta,\mathbf{a},\mathbf{b})}(\mathbf{d}_0) -
\#\mathcal{M}_{(\Theta,\mathbf{a},\mathbf{b})}(\mathbf{d}_1),
\]
showing that the quantity $\# \mathcal{M}_{(\Theta,\mathbf{a},\mathbf{b})}(\mathbf{d})$
is independent of~$\mathbf{d}$ modulo two for generic~$J$.

The final step is to show that
$\#\mathcal{M}_{(\Theta,\mathbf{a},\mathbf{b})}(\mathbf{d}) \equiv 1 \pmod 2$
for some choice of $\mathbf{d}$. To do this, we adapt an argument of
Ozsv\'ath and Szab\'o~\cite[p.~653]{OS08:HFL} from holomorphic strips to
triangles. Let $\mathbf{p} \colon [1,\infty) \to \Sym^k(\Delta)$ be a smooth
embedded path not intersecting the fat diagonal (a codimension 2 subset)
starting at a divisor $\mathbf{d} \in \Sym^k(\Delta)$, such that the points
in $\mathbf{p}(T)$ are spaced at least distance~$T$ apart that smoothly
approach the vertex~$v_{\alpha\beta}$ in~$\Delta$ as $T \to \infty$. Here, we
endow $\Delta$ with a metric in which the ends look like strips, as in
Figure~\ref{fig::5}.  We further assume that the points in $\mathbf{p}(T)$
avoid the three sides of~$\Delta$. We consider the space
\[
\mathcal{M}_{(\Theta,\mathbf{a},\mathbf{b})}(\mathbf{p})=\bigcup_{T \in [1,\infty)}  \mathcal{M}_{(\Theta,\mathbf{a},\mathbf{b})}(\mathbf{p}(T)),
\]
which has ends corresponding to
$\mathcal{M}_{(\Theta,\mathbf{a},\mathbf{b})}(\mathbf{d})$,  ends
corresponding to degenerations of holomorphic curves, as well as ends
corresponding to whatever curves appear in the limit as $T \to \infty$. The
same argument as in the previous case shows that the only degenerations that
can occur at finite $T$ correspond to a Maslov index~1 strip breaking off.
The same argument we used in the previous paragraph shows that the ends
corresponding to Maslov index~1 strips breaking off at finite $T$ have total
count equal to zero, modulo two.

We now claim that, as $T \to \infty$, the limit is to a Maslov index zero
triangle~$\tau$ on $(\Sigma_0,\alphas'_0,\alphas_0,\betas_0)$, and~$k$ Maslov
index~$2$ curves on $(\Sigma_0,\alphas_0,\betas_0)$, each satisfying a
matching condition to a point $d_i \in \Sym^1([0,1]\times\R)=[0,1]\times \R$.
Note that the total Maslov index is~$2k$. The limit contains curves mapping
into $\Sigma_0\times \Delta$ and $\Sigma_0 \times [0,1] \times \R$, all of
which could potentially be nodal. Because this configuration is the limit of
curves that satisfied the matching condition with the
divisors~$\mathbf{p}(T)$, and since the points of $\mathbf{p}(T)$ are
separating and heading off to the vertex $v_{\a\b}$, in the limit, we must
have~$k$ curves on $(\S_0,\alphas_0,\betas_0)$ that each satisfy the matching
condition with points $d_i \in [0,1]\times \R$. The homology classes of
curves on $(\S_0,\alphas_0,\betas_0)$ are all of the form
$e_{\mathbf{a}}+s[\S_0]$, where $e_{\mathbf{a}}$ denotes the constant disk at
$\mathbf{a} \in \Torus_{\a_0} \cap \Torus_{\b_0}$.  The Maslov index of such
a curve is~$2s$.  Since there are~$k$ curves satisfying matching conditions
with~$d_i$ (implying $s \ge 1$), each has Maslov index~$2$. If $\tau$ denotes
the remaining curves, which map into $\Sigma_0\times \Delta$, the only way
for the total Maslov index to be~$2k$ is for $\mu(\tau) = 0$, and for
those~$k$ curves satisfying matching conditions to be all of the remaining
curves in the limit. Furthermore, the holomorphic triangle $\tau$ must
satisfy $n_{p_0}(\tau) = 0$, since there are $k = |\mathbf{d}|$ holomorphic
strips with multiplicity~1 in the limit, and axiom~$(J'4')$ implies
positivity of intersections with  set $\{p_0\} \times \Delta$. As before, we
can use equation~\eqref{eq:expecteddimension}, as well as
Proposition~\ref{prop:transversality} to show that all of the $k+1$ curves
must satisfy $(M1)$--$(M8)$ (or the appropriate analogs for curves mapping
into $\Sigma_0 \times [0,1] \times \R$).

Hence, by a gluing result of Lipshitz~\cite[Proposition~A.1]{Lipshitz06:CylindricalHF},
we conclude that
\begin{equation}
\# \mathcal{M}_{(\Theta,\mathbf{a},\mathbf{b})}(\mathbf{d}) \equiv
\left(\# \mathcal{M}_{(\mathbf{a},\mathbf{a})}(d)\right)^k \cdot
\sum_{\substack{\psi\in \pi_2(\Theta,\mathbf{a},\mathbf{b})\\
n_{p_0}(\psi)=0}} \# \mathcal{M}(\psi) \pmod 2,
\label{eq:countbysplittingdivisor}
\end{equation}
where $d \in [0,1] \times \R$ is a point and $
\mathcal{M}_{(\mathbf{a},\mathbf{a})}(d)$ denotes the moduli space of Maslov
index~2 holomorphic curves~$u$ on $(\S_0,\alphas_0,\betas_0)$ satisfying
$\rho^{p_0}(u)=d$, for a generic almost complex structure satisfying
$(J1)$--$(J4)$ and $(J5')$. Note again that our use of almost complex
structures satisfying $(J5')$ ensures that any curve appearing
in~$\mathcal{M}_{(\mathbf{a},\mathbf{a})}(d)$ achieves transversality, as
long as each component is somewhere injective. The condition that they match
$d \in [0,1] \times \R$ at $p_0$ immediately implies that they are not
multiply covered. Hence, using Proposition~\ref{prop:transversality} as well
as the expected dimension from equation~\eqref{eq:expecteddimension}, we see
the curves in $\mathcal{M}_{(\mathbf{a},\mathbf{a})}(d)$ satisfy
$(M1)$--$(M8)$ (and in particular are not nodal).

Hence, it is sufficient to count $\mathcal{M}_{(\mathbf{a}, \mathbf{a})}(d)$
for a generic $d \in [0,1] \times \R$. The argument from the proof of
stabilization invariance due to
Lipshitz~\cite[Appendix~A]{Lipshitz06:CylindricalHF} adapts to our present
situation. We consider the Heegaard diagram for $S^1 \times S^2$ shown in
Figure~\ref{fig::4}.

\begin{figure}[ht!]
\centering
\input{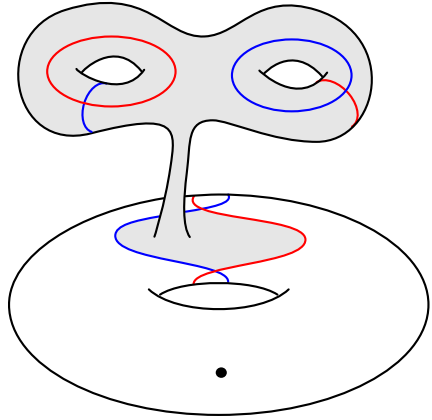}
\caption{A domain on a doubly stabilized diagram for $(S^1 \times S^2)$.
\label{fig::4}}
\end{figure}

There are exactly two homology classes of disks that contribute to the
differential of the hat complex. One is a bigon and has a unique holomorphic
representative. The other is a twice stabilized bigon. By invariance of
$\HFa$, we know the twice stabilized bigon must have one holomorphic
representative modulo~2 for any almost complex structure achieving
transversality. On the other hand, by stretching the neck, we also see that
the count for large neck length is equal to the count of strips in
$\mathcal{M}(e_{\mathbf{a}}+[\Sigma_0])$ that match the bigon at the
connected sum point. We conclude that
\[
\# \mathcal{M}_{(\mathbf{a},\mathbf{a})}(d) \equiv 1 \pmod 2.
\]
By Lemma~\ref{lem:handleswapinhat}, we know that
\[
\sum_{\substack{\psi\in \pi_2(\Theta,\mathbf{a},\mathbf{b})\\n_{p_0}(\psi)=0}} \# \mathcal{M}(\psi) \equiv 1 \pmod 2.
\]
Hence, from Equation~\eqref{eq:countbysplittingdivisor}, we see that
\[
\# \mathcal{M}_{(\Theta,\mathbf{a},\mathbf{b})}(\mathbf{d}) \equiv 1^k \cdot 1 = 1 \pmod 2,
\]
and the lemma follows.
\end{proof}

\subsection{Proof of Theorem~\ref{thm:strong}} \label{sec:strong}

We now combine the results that we have obtained and together imply
Theorem~\ref{thm:strong}. As explained at the beginning of
Section~\ref{sec:HeegaardFloer}, let $\HF^\circ$ denote one of the versions
of Heegaard Floer homology. In Subsection~\ref{sec:HeegaarFloerInvariant}, we
showed that $\HF^\circ$ is a weak Heegaard invariant in the sense of
Definition~\ref{def:weak-Heegaard}. In particular, in
Definition~\ref{def:HF-diagram}, we associated the $\FF_2$ vector space
$\HF^\circ(H)$ to an isotopy diagram~$H$. We associated the canonical
isomorphisms $\Phi^{A \to A'}_B$ and $\Phi^A_{B \to B'}$ to $\a$- and
$\b$-equivalences, respectively, in Definition~\ref{def:phi-ab}. For a
diffeomorphism~$d$, we constructed an induced isomorphism~$d_*$ in
Definition~\ref{def:diffeo}. Finally, to a stabilization $H \to H'$, we
assigned the isomorphism $\sigma_{H \to H'}$ in
Definition~\ref{def:stab-iso}, and to the destabilization $H' \to H$ its
inverse, $(\sigma_{H \to H'})^{-1}$.

The above isomorphisms satisfy the axioms for $\HF^\circ$ to be a strong
Heegaard invariant in the sense of Definition~\ref{def:strong-Heegaard}. At
the beginning of Subsection~\ref{sec:HFstrong}, we showed
axiom~\eqref{item:strong-funct}, functoriality, and
axiom~\eqref{item:strong-commute}, commutativity. We verified
axiom~\eqref{item:strong-cont}, continuity, in
Proposition~\ref{prop:continuity}. Finally,
axiom~\eqref{item:strong-handleswap}, simple handleswap invariance, was
proven in Subsection~\ref{subsec:Handleswaps}. \qed
\appendix
\section{The 2-complex of handleslides}
\label{app:handleslide}

In this appendix, we sketch a description of strong Heegaard invariants
for classical (i.e., not sutured) single pointed Heegaard diagrams
that is equivalent to Definition~\ref{def:strong-Heegaard}, and
instead of $\a$-equivalences and $\b$-equivalences, uses more
elementary moves: $\a$-isotopies, $\b$-isotopies, $\a$-handleslides,
and $\b$-handleslides. The tradeoff is that one has to check the
commutativity of the invariant~$F$ along a larger number of loops of
diagrams. But we do have to impose less on~$F$, and hence strengthen
Theorem~\ref{thm:iso}.  The main tool is a result of
Wajnryb~\cite{Waj98:MCG-Handlebody}, who constructed a
simply-connected 2-complex whose vertices consist of cut-systems, and
whose edges correspond to changing just one circle in a cut system.
We only sketch the proofs in this appendix.

We start off by looking at those moves that only involve $\a$-circles
or $\b$-circles.  For these, it is enough to consider only one of the
two handlebodies.  In particular, we show that any two cut-systems for
a handlebody can be connected by a sequence of handleslides.  This is
in fact a corollary of a result of
Wajnryb~\cite{Waj98:MCG-Handlebody}.  To state his result, let us
first recall some definitions.

\begin{definition}
  Let $B$ be a handlebody of genus $g$ and boundary $\S = \partial B$.
  A simple closed curve $\a \subset \S$ is a \emph{meridian curve} if
  it bounds a disk $D$ in $B$ such that $D \cap \S = \partial D = \a$.
  Then $D$ is called a \emph{meridian disk}.
  We also fix a finite
  number of disjoint distinguished disks on $\S$ and we shall assume
  that all isotopies of $\S$ are fixed on the distinguished disks.

  A \emph{cut-system} on $\S$ is an isotopy class of an unordered
  collection of $g$ disjoint meridian curves $\a_1, \dots, \a_g$ that
  are linearly independent in $H_1(\S)$
  and do not meet the distinguished disks.
  We denote the cut-system by $\langle \a_1,\dots,\a_g \rangle$.

  We say that two cut-systems are \emph{related by a simple move} if
  they have $g-1$ curves in common and the other two curves are
  disjoint.
\end{definition}

We construct a 2-dimensional complex $X_2(B)$. The vertices of $X$ are
the cut-systems on $\S$. Two cut-systems are connected by an edge if
they are related by a simple move; this gives the graph $X_1(B)$.  If
three vertices of $X$ have $g-1$ curves in common and the three
remaining curves, one from each cut-system, are pairwise disjoint,
then each pair of the vertices is connected by an edge in $X$ and the
vertices form a triangle. We glue a face to every triangle in $X_1(B)$
and get a 2-dimensional simplicial complex $X_2(B)$, called the
\emph{cut-system complex} of the handlebody $B$.

The following result is due to
Wajnryb~\cite[Theorem~1]{Waj98:MCG-Handlebody}.

\begin{citethm}\label{thm:handlebody-cmplx}
  The complex $X_2(B)$ is connected and simply-connected.
\end{citethm}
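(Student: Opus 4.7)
The plan is to give a disk-surgery argument in the spirit of Hatcher's proof of connectedness and simple connectedness of the curve complex of a surface, exploiting the fact that $B$ is irreducible so that any two systems of properly embedded meridian disks can be put into general position with no circles of intersection (using $\pi_2(B) = 0$ and an innermost-circle surgery argument, fixed on the distinguished disks on $\partial B$).

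For connectedness, given two cut-systems $\mathcal{C} = \langle \alpha_1, \ldots, \alpha_g \rangle$ and $\mathcal{C}' = \langle \alpha_1', \ldots, \alpha_g' \rangle$ represented by disjoint meridian disk systems $\{D_i\}$ and $\{D_j'\}$, I would first isotope them so that each $D_i \cap D_j'$ consists only of arcs. Let $n(\mathcal{C},\mathcal{C}')$ denote the total number of such intersection arcs, and induct on $n$. In the base case $n = 0$, one checks directly that two disjoint cut-systems are connected by a sequence of simple moves: successively replace $\alpha_i \in \mathcal{C}$ by $\alpha_i' \in \mathcal{C}'$, at each step using that the resulting $g$ curves are linearly independent in $H_1(\Sigma)$ and bound disjoint meridian disks. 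For the inductive step, pick an outermost arc on some $D_j'$ cutting off a subdisk $\Delta \subset D_j'$ whose interior is disjoint from every $D_i$. Attach $\Delta$ to the unique $D_i$ meeting $\partial \Delta$ and surger to produce two new disks, at least one of which (call it $D_i''$) is a meridian disk disjoint from $D_i$ and has strictly fewer intersections with $\bigcup_k D_k'$ than $D_i$ did. Then $\mathcal{C}'' = (\mathcal{C} \setminus \{\alpha_i\}) \cup \{\partial D_i''\}$ is related to $\mathcal{C}$ by a single simple move, and $n(\mathcal{C}'', \mathcal{C}') < n(\mathcal{C}, \mathcal{C}')$, closing the induction.

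For simple connectedness, the plan is to show that every loop in $X_1(B)$ bounds a disk in $X_2(B)$. I would realize a given loop as a generic 1-parameter family of cut-systems parametrized by $S^1$, extend it generically to a 2-parameter family over $D^2$, and analyze the codimension-1 and codimension-2 bifurcations of the surgery/isotopy data as the parameter varies. Each codimension-2 bifurcation produces a local elementary relation among simple moves, and the substantive content of the argument is to identify the finite catalogue of these relations: commutativity squares (two simple moves supported on disjoint meridian disks commute), the triangle relations that are built into the definition of $X_2(B)$, and a short list of \emph{pentagon}-type relations that arise when three successive surgeries interact. One then verifies that each commutativity square and each pentagon relation can be filled by a sequence of triangles of $X_2(B)$, which is a direct check using the explicit normal form for disjoint systems of meridian disks.

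The main obstacle is the simple-connectedness step, and in particular the verification that every pentagon-type relation arising from a 2-parameter bifurcation is null-homotopic in $X_2(B)$. This requires making surgery choices coherently across the parameter family so that only finitely many local models appear, and then carrying out an explicit combinatorial disk-filling for each such model. Once this reduction is in hand, the rest of the argument is a standard Cerf-theoretic disk-filling, and the connectedness statement is then a byproduct of the inductive surgery procedure.
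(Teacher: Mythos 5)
The paper does not prove this statement: it is a cited result (note the \texttt{citethm} environment), attributed to Wajnryb~\cite[Theorem~1]{Waj98:MCG-Handlebody}, and the authors invoke it as a black box when proving Proposition~\ref{prop:handlebody-cmplx-2}. So there is no ``paper's own proof'' to compare against, and the relevant question is whether your from-scratch sketch is actually a proof.

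For connectedness, your disk-surgery induction on the number of intersection arcs is sound in outline and is indeed close in spirit to Wajnryb's argument. But the base case as you state it has a gap: given two disjoint cut-systems $\langle\a_1,\dots,\a_g\rangle$ and $\langle\a_1',\dots,\a_g'\rangle$, ``successively replace $\a_i$ by $\a_i'$'' does not obviously produce a chain of cut-systems, because after replacing some but not all of the $\a_i$ the resulting $g$-tuple need not be linearly independent in $H_1(\Sigma)$, nor is it clear the consecutive replacements differ by a single simple move. This case requires its own inductive argument (on genus, say, compressing along a common curve), and is not a one-line check.

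For simple-connectedness, your proposal is a plan, not a proof, and it concedes as much: the substantive step is to show that every commutativity square, pentagon, and whatever else the Cerf-theoretic analysis produces can be filled by the \emph{triangles} of $X_2(B)$ (where, recall, the three cut-systems must share $g-1$ circles and have pairwise disjoint remaining circles). That filling is precisely the hard content of Wajnryb's theorem and is not a ``direct check''; for instance, a commutativity square supported on two disjoint pairs $(\a_1,\a_1')$ and $(\a_2,\a_2')$ has no diagonal edge in $X_1(B)$ (opposite vertices differ in two circles), so one must insert auxiliary vertices and argue carefully that they exist and bound the right meridian disks. Your proposal stops exactly where the real work begins. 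It is also worth noting that the Cerf-theoretic framework you suggest is a genuinely different route from Wajnryb's combinatorial proof, and in fact it mirrors the strategy the present paper uses at a larger scale for $\G_{(M,\g)}$; carrying it out for $X_2(B)$ would be an interesting alternative argument, but as written it is an unverified outline rather than a proof.
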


For compatibility with the other moves we consider, we work instead
with a 2-complex whose edges are elementary handleslides. To describe
the 2-cells, we need another definition.

\begin{definition} \label{def:handleslide-loop} A \emph{handleslide
    loop} is one of the following sequences of cut-systems connected
  by handleslides.
  \begin{enumerate}
  \item \label{item:slide-triang} A \emph{slide triangle}, formed by
    $\langle \a_1,\a_2, \vec{\alpha} \rangle$, $\langle \a_2,\a_3,
    \vec{\alpha} \rangle$, and $\langle \a_3, \a_1, \vec{\alpha}
    \rangle$, where $\a_1$, $\a_2$, and $\a_3$ bound a pair-of-pants.
  \item \label{item:simple-slide-square} A \emph{commuting slide
      square}, involving four distinct $\a$-curves, as in the link of
    a singularity of type~\ref{item:A1a}.
  \item \label{item:slide-square-over} A square formed by sliding
    $\a_1$ over $\a_2$ and/or $\a_4$, as in case~\ref{item:A1b}.
  \item A square formed by sliding $\a_1$ and/or $\a_3$ over $\a_2$,
    with $\a_1$ and $\a_3$ sliding over $\a_2$ from opposite sides, as
    in case~\ref{item:A1c}.
  \item A square formed by sliding $\a_1$ over $\a_2$ in two different
    ways, approaching $\a_2$ from opposite sides, as in case~\ref{item:A1d}.
  \item \label{item:slide-pentagon} A pentagon formed by sliding
    $\a_1$ over $\a_2$, which is itself sliding over $\a_3$, as in
    case~\ref{item:link-A2}; see Figure~\ref{fig:link-chain-flows}.
  \end{enumerate}
\end{definition}

Now suppose that there is exactly one distinguished disk on $\S
= \partial B$. Then let $Y_2(B)$ be the 2-complex whose vertices are
cut-systems on $B$, its edges correspond to handleslides avoiding the
distinguished disk, and its 2-cells correspond to the handleslide
loops of Definition~\ref{def:handleslide-loop}.

\begin{proposition}\label{prop:handlebody-cmplx-2-conn}
  The complex $Y_2(B)$ is connected.
\end{proposition}

\begin{proof}
  To prove connectivity, it suffices to show that the endpoints of
  each edge in $X_1(B)$ can be connected by a path lying in the
  1-skeleton $Y_1(B)$ of $Y_2(B)$.  Suppose we have an edge in
  $X_1(B)$ connecting $\langle \alpha_0, \vec{\alpha} \rangle$ and
  $\langle \alpha_1, \vec{\alpha} \rangle$.  Then $\alpha_0$ and
  $\alpha_1$ do not intersect. The combined set of circles $\langle
  \alpha_0, \alpha_1, \vec{\alpha} \rangle$ by hypothesis cuts $\bdy
  B$ into two components, exactly one of which does not contain the
  distinguished disk; call this component~$F$.  Both $\alpha_0$ and
  $\alpha_1$ necessarily appear in $\bdy F$.  We can get from $\langle
  \alpha_0, \vec{\alpha}\rangle$ to $\langle \alpha_1,
  \vec{\alpha}\rangle$ by sliding $\alpha_0$ over every component of
  $\bdy F \setminus (\alpha_0 \cup \alpha_1)$.
\end{proof}

\begin{proposition}\label{prop:handlebody-cmplx-2-sc}
  The complex $Y_2(B)$ is simply-connected.
\end{proposition}

\begin{proof}[Proof sketch]
  For simple connectivity, we first show that all the different ways
  of turning an edge of $X_1(B)$ into a path in $Y_1(B)$ are homotopic
  inside $Y_2(B)$. This can be done (with some work) using handleslide
  loops of
  type~\eqref{item:slide-square-over}. For a simple example, see
  Figure~\ref{fig:large-slide-edge}.%
\footnote{To do this properly, note that a minimal path in $Y_1(B)$
  corresponding to an edge in $X_1(B)$ gives a pants decomposition of
  a subsurface of $\partial B$. To show that two such paths are
  homotopic in $Y_2(B)$, it suffices to show connectivity of a
  suitable variant of the pants complex.}
  \begin{figure}
    \centering
    \includegraphics{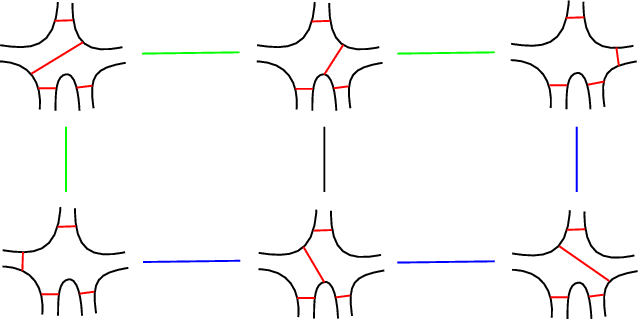}
    \caption{A simple example of a homotopy in $Y_2(B)$ connecting two different
      resolutions of an edge of $X_1(B)$.  The lower left and the
      upper right cut-systems are the vertices of the edge of $X_1(B)$
      we are resolving. One resolution is shown in green, the other
      one in blue.  We show half of the component~$F$ whose boundary
      contains $\alpha_0$, $\alpha_1$, and no basepoints.  In this
      case, there are $k=3$ other boundary components of~$F$.  The
      surfaces shown should be doubled along the black boundary to
      obtain~$F$; in this way the red arcs become red circles.}
    \label{fig:large-slide-edge}
  \end{figure}

  Next, we show that if we convert the edges $e_0$, $e_1$, and $e_2$
  of a triangle $\Delta$ in $X_2(B)$ into paths in $Y_1(B)$, we obtain
  a loop that is null-homotopic in $Y_2(B)$. Let $v_i$ be the vertex
  of $\Delta$ opposite the edge $e_i$.  We distinguish two cases:
  \begin{itemize}
  \item The same circle moves in all three edges of the triangle;
    i.e., the cut-system $v_i = \langle \a_i, \vec{\a} \rangle$ for $i
    \in \ZZ_3$.
  \item Two circles are involved; i.e., the cut-system $v_i = \langle
    \a_{i-1}, \a_{i+1},\vec{\a} \rangle$ for every $i \in \ZZ_3$,
    where $i-1$ and $i+1$ are to be considered modulo 3.
  \end{itemize}
  The first case is simple: we end up with a trivial loop even in
  $Y_1(B)$ for an appropriate choice of resolutions. Indeed, for $i
  \in \ZZ_3$, let $F_i$ be the component of the complement of $\langle
  \a_{i-1}, \a_{i+1}, \vec{\a} \rangle$ that does not contain the
  distinguished disk. Then $F_i = F_{i-1} \cup F_{i+1}$ for some $i
  \in \ZZ_3$.  We first convert $e_{i-1}$ and $e_{i+1}$ to paths
  $\g_{i-1}$ and $\g_{i+1}$ in $Y_1(B)$ using the procedure above,
  then we choose $\g_i$ to be $\g_{i+1}^{-1}\g_{i-1}^{-1}$. By the
  first step, any two choices for $\g_i$ are homotopic, so we can pick
  this particular one.

  In the second case, we get a component $F$ with boundary containing
  $\alpha_0$, $\alpha_1$, and~$\alpha_2$. A handleslide loop connects
  $\langle \alpha_0, \alpha_1, \vec{\alpha} \rangle$, $\langle
  \alpha_1, \alpha_2, \vec{\alpha} \rangle$, and $\langle \alpha_0,
  \alpha_2, \vec{\alpha} \rangle$.  If there are no other components
  of $\bdy F$, this is a slide triangle (a handleslide loop of
  type~\eqref{item:slide-triang}).
  Otherwise, if there are $k$ other
  boundary components of $\bdy F$, let $\alpha_0'$ be the curve
  obtained from $\alpha_0$ by sliding over one of the other $k$
  components. By induction, the triangle connecting
  $\langle \alpha_0',\alpha_1, \vec\alpha \rangle$,
  $\langle \alpha_1, \alpha_2, \vec\alpha \rangle$, and
  $\langle \alpha_0',\alpha_2, \vec\alpha \rangle$ can be decomposed
  into allowed two-cells. The remaining region (a quadrilateral with
  corners at
  $\langle \alpha_0',\alpha_1, \vec\alpha \rangle$,
  $\langle \alpha_0',\alpha_2, \vec\alpha \rangle$,
  $\langle \alpha_0,\alpha_1, \vec\alpha \rangle$, and
  $\langle \alpha_0,\alpha_2, \vec\alpha \rangle$) can be decomposed into
  $k-2$ commuting slide squares
  (type~\eqref{item:simple-slide-square}) and one slide pentagon
  (type~\eqref{item:slide-pentagon}). The entire large triangle is
  decomposed into one slide triangle, $k-1$ slide pentagons, and
  $\binom{k-1}{2}$ commuting slide squares.  An example of the end
  result is shown in Figure~\ref{fig:large-slide-triangle}.
\end{proof}

\begin{figure}
  \centering
  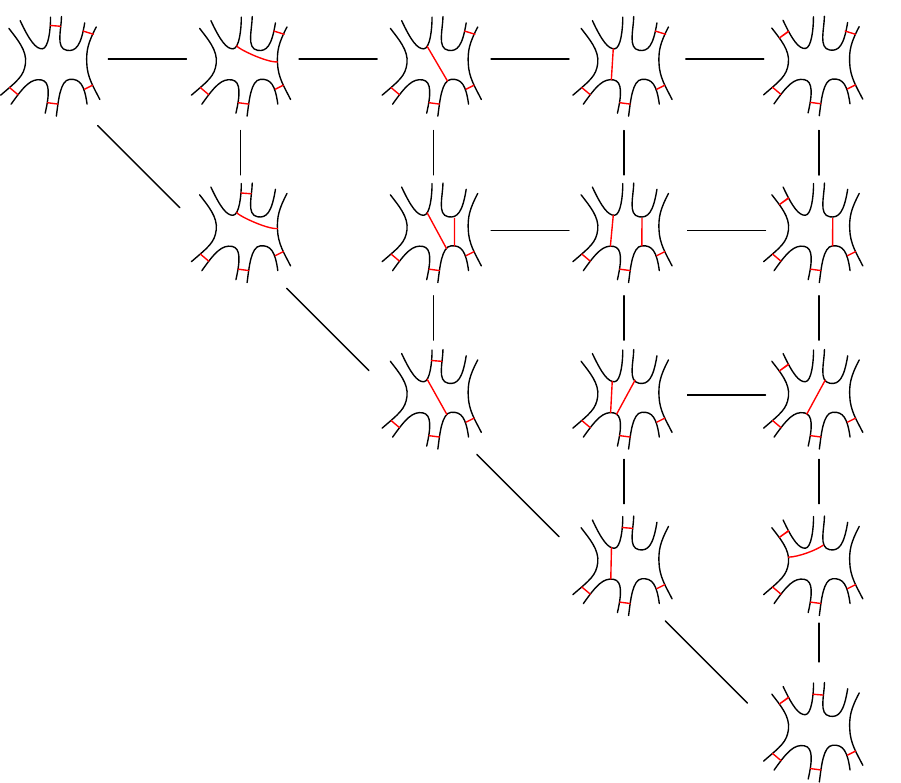
  \caption{Decomposing a large slide triangle.  The three vertices are
    the vertices of a triangle in the complex $X_2(B)$.  We show half
    of the component~$F$ whose boundary contains $\alpha_0$,
    $\alpha_1$, and $\alpha_2$, and no basepoints.  In this case,
    there are $k=3$ other boundary components of~$F$.}
  \label{fig:large-slide-triangle}
\end{figure}

Let $\G'$ be the graph defined just like in
Definition~$\ref{def:big-graph}$, but with the word
$\a$/$\b$-equivalence replaced by $\a$/$\b$-handleslide. So the
vertices of $\G'$ are isotopy diagrams, and its edges correspond to
handleslides, stabilizations, destabilizations, and
diffeomorphisms. Since every handleslide is an $\a$-equivalence or a
$\b$-equivalence, $\G'$ is a subgraph of $\G$.

Similarly, we can modify Definition~\ref{def:weak-Heegaard}.  If $\cS$
is a set of diffeomorphism types of sutured manifolds and $\C$ is a
category, then $\G'(\cS)$ is the full subgraph of $\G'$ spanned by
those isotopy diagrams $H$ for which $S(H) \in \cS$. The main result
of this appendix is the following.

\begin{theorem}
  Let $\cS = \cS_\man$ be the set of diffeomorphism types of sutured
  manifolds introduced in Definition~\ref{def:balanced}, and let $\C$
  be a category.  Then every morphism of graphs $F' \colon \G'(\cS)
  \to \C$ extends to a weak Heegaard invariant $F \colon \G(\cS) \to
  \C$.

  If, furthermore,
  \begin{itemize}
  \item $F'$ satisfies the second half of axiom~\eqref{item:strong-funct}
  and axioms~\eqref{item:strong-commute}--\eqref{item:strong-handleswap}
  of Definition~\ref{def:strong-Heegaard}, replacing
  ``$\a$/$\b$-equivalence'' with ``$\a$/$\b$-handleslide,''
  \item $F'$ commutes along each handleslide loop in Definition~\ref{def:handleslide-loop}, and
  \item $F'$ commutes along every stabilization slide (see Definition~\ref{def:stab-slide}),
  \end{itemize}
  then $F'$ uniquely extends to a strong Heegaard invariant $F \colon \G(\cS) \to \C$.
\end{theorem}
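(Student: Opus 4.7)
The strategy is to use Proposition~\ref{prop:handlebody-cmplx-2} as the combinatorial engine. For $\cS = \cS_\man$, a sutured diagram of $(M,\g) \in \cS_\man$ is essentially a genus~$g$ closed surface with a distinguished disk containing the suture, together with two cut-systems $\alphas$, $\betas$ for the two handlebodies. The relevant compression bodies are handlebodies with one distinguished disk, to which Proposition~\ref{prop:handlebody-cmplx-2} applies verbatim. My plan is to define $F$ on an $\alpha$-equivalence edge $e\co H_1\to H_2$ of $\G(\cS)$ by choosing, via Lemma~\ref{lem:handleslide}, a sequence $H_1=K_0\to K_1\to\cdots\to K_n=H_2$ of handleslides in $\G'(\cS)$ and setting $F(e) = F'(K_{n-1}\to K_n)\circ\cdots\circ F'(K_0\to K_1)$, with an analogous definition for $\b$-equivalences. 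On all other edges, $F$ is simply $F'$.

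Well-definedness is the heart of the construction. Two different handleslide sequences connecting the same endpoints in $\G'(\cS)$ project to two edge-paths between the same vertices in the 2-complex $Y_2(B)$ of the corresponding handlebody. By Proposition~\ref{prop:handlebody-cmplx-2}, $Y_2(B)$ is simply-connected, so these two paths differ by a finite sequence of handleslide loops of the types listed in Definition~\ref{def:handleslide-loop} (slide triangles, commuting squares of various shapes, and slide pentagons). These are precisely the loops along which $F'$ is assumed to commute, so the composition is independent of the chosen handleslide sequence. Concatenation of sequences then yields Functoriality of $F$ on $\G_\a(\cS)$ and $\G_\b(\cS)$, and each $F(e)$ is an isomorphism as a composition of isomorphisms, making $F$ a weak Heegaard invariant. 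Uniqueness of the extension is immediate: any strong Heegaard invariant agreeing with $F'$ on handleslides must, by the Functoriality Axiom, equal $F(e)$ on every $\a/\b$-equivalence edge.

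It remains to verify that $F$ satisfies the four axioms of Definition~\ref{def:strong-Heegaard}. Functoriality, Continuity, and Handleswap invariance only involve diffeomorphism, handleslide, and (de)stabilization edges, so they are inherited directly from the assumptions on $F'$. For the Commutativity Axiom, rectangles of types \eqref{item:rect-stab-stab}, \eqref{item:rect-stab-diff}, and (where it applies) \eqref{item:rect-alpha-diff} involve no $\a/\b$-equivalence edges (or only handleslide instances thereof) and are inherited from $F'$. For a rectangle of type~\eqref{item:rect-alpha-beta}, I expand the $\a$-equivalence into a sequence of $m$ handleslides and the $\b$-equivalence into a sequence of $n$ handleslides; this subdivides the original rectangle into an $m\times n$ grid of small rectangles, each of which has two opposite edges an $\a$-handleslide and two opposite edges a $\b$-handleslide. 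Each small rectangle is a distinguished rectangle of type~\eqref{item:rect-alpha-beta} in $\G'(\cS)$, along which $F'$ commutes by hypothesis, so the full rectangle commutes by tiling.

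The main obstacle is the type~\eqref{item:rect-alpha-stab} case, where two opposite edges are stabilizations and two are $\a/\b$-equivalences. Expanding the $\a/\b$-equivalences into handleslides gives a chain of squares whose horizontal edges are handleslides and whose vertical edges are the shared stabilization (the same stabilization across the whole chain, since the handleslides take place in the complement of the stabilization region). Each such square can be completed, by adding an identity edge on one side, to a pair of stabilization slides in the sense of Definition~\ref{def:stab-slide}, along which $F'$ commutes by the stabilization slide hypothesis; alternatively, one recognizes the small square directly as a rectangle of type~\eqref{item:rect-alpha-stab} in $\G'(\cS)$, handled by the assumption that $F'$ satisfies the commutativity axiom on such rectangles restricted to handleslide edges. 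Composing these commutations yields commutativity of $F$ along the original rectangle, completing the verification of the strong Heegaard invariant axioms.
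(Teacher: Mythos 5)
Your proposal follows the paper's strategy closely: define $F$ on $\a$/$\b$-equivalence edges by resolving into handleslides via Lemma~\ref{lem:handleslide}, establish well-definedness from the simple connectivity of Wajnryb's cut-system complex (Proposition~\ref{prop:handlebody-cmplx-2}) together with commutation of $F'$ along handleslide loops, inherit Functoriality, Continuity and Handleswap invariance, and verify Commutativity by subdivision.  The key tool and the construction of $F$ are essentially identical to the paper's, and the grid subdivision for type~\eqref{item:rect-alpha-beta} is exactly what the paper does.  Your remark on uniqueness (which the paper only states) is correct and worth making explicit.

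The place where your argument has a genuine gap is the type~\eqref{item:rect-alpha-stab} case.  You assert that ``the handleslides take place in the complement of the stabilization region'' and thus produce a chain of small squares sharing the same stabilization.  This is not automatic: the handleslide resolution of $e$ is produced by the cut-system complex $Y_2(B)$, whose only distinguished disk is the basepoint disk, so the intermediate cut-systems (and slide arcs) may well enter the stabilization disk $D$.  One must isotope them out of $D$ before lifting to the stabilized surface, and after lifting, the ladder of stabilized handleslides need not terminate exactly at the expected endpoint: it may differ by handleslides over the new curve in the punctured torus.  The paper repairs this mismatch precisely by appending a row of stabilization slides, and this is the place where the stabilization slide hypothesis on $F'$ is actually consumed.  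Your sentence invoking ``a pair of stabilization slides\dots by adding an identity edge'' does not match this correction, and the alternative you offer (``recognize the small square directly as a rectangle of type~\eqref{item:rect-alpha-stab}'') presupposes the ladder already closes up, which is the point at issue.  Separately, you dispose of type~\eqref{item:rect-alpha-diff} with ``(where it applies),'' but general type~\eqref{item:rect-alpha-diff} rectangles with non-trivial $\a$/$\b$-equivalences still need the same one-row subdivision you performed for type~\eqref{item:rect-alpha-beta}; that step should be spelled out rather than elided.
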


\begin{remark}
  Note that $\G'_\a(\cS)$ and $\G'_\b(\cS)$ are not sub-categories of
  $\G'(\cS)$, since the composition of two handleslides is in general not
  a handleslide. The functoriality of $F'$ restricted to the subgraphs
  $\G'_\a(\cS)$ and $\G'_\b(\cS)$ is replaced by the requirement that
  $F'$ commutes along handleslide loops.

  Also note that, in a stabilization slide, we subdivide the $\a$- or
  $\b$-equivalence into two handleslides, so we view this as a loop of
  length four.
\end{remark}

\begin{proof}[Proof sketch]
  To prove the first part, we only have to define $F(e)$ for the edges
  $e$ of $\G(\cS)$ that correspond to an $\a$-equivalence or a
  $\b$-equivalence. Without loss of generality, suppose that $e$ is an
  $\a$-equivalence between the isotopy diagrams $H =
  (\S,\alphas,\betas)$ and $H' = (\S, \alphas',\betas)$. Let
  $\ol{\S}$ be the surface obtained by attaching a disk $D$ to $\S$ along
  its boundary. This way we obtain two Heegaard diagrams $\ol{H}$ and
  $\ol{H}'$, containing a distinguished disk $D$.
  Let $Y$ be a 3-manifold containing both $\ol{H}$ and $\ol{H}'$
  as Heegaard diagrams, and let $B$ be the handlebody lying to the
  negative side of $\ol{\S}$.
  By Proposition~\ref{prop:handlebody-cmplx-2-conn}, the complex $Y_2(B)$
  is connected, so $\ol{H}$ and $\ol{H}'$ can be connected by a path
  of handleslides $\ol{h}_1, \dots, \ol{h}_k$ avoiding $D$.  This
  gives rise to a sequence of handleslides $h_1, \dots, h_k$
  connecting $H$ and $H'$.  Then the isomorphism $F(e)$ is defined to
  be the composite $F(h_k) \circ \dots \circ F(h_1)$.

  We now prove the second part. According to
  Proposition~\ref{prop:handlebody-cmplx-2-sc}, the complex
  $Y_2(B)$ is simply connected. Together with the fact that $F'$
  commutes along every handleslide loop (i.e., along the boundary of
  every face of $Y_2(B)$), we see that the extension of~$F'$ to an
  $\a$- or $\b$-equivalence edge $e$ is independent of the choice of
  path $h_1,\dots,h_k$. Functoriality of the restriction of $F$ to
  $\G_\a(\cS)$ and $\G_\b(\cS)$ is clear from the construction.

  What remains to show is that $F$ commutes along every distinguished
  rectangle of type~\eqref{item:rect-alpha-beta},
  \eqref{item:rect-alpha-stab}, and~\eqref{item:rect-alpha-diff}
  (see Definition~\ref{def:distinguished-rect}), with sides $e$, $f$,
  $g$, and $h$.  First, consider a rectangle of
  type~\eqref{item:rect-alpha-beta}.  Write the $\a$-equivalence $e$
  as a path of $\a$-handleslides $h_1, \dots, h_k$ and the
  $\b$-equivalence $f$ as a path of $\b$-handleslides
  $h_1',\dots,h_l'$. Then we can subdivide the big rectangle into a
  grid of smaller rectangles with sides $h_i$ and $h_j'$ for $i \in
  \{\, 1,\dots,k \,\}$ and $j \in \{\, 1,\dots,l \,\}$.

  Given a rectangle of type~\eqref{item:rect-alpha-diff}, let
  $h_1,\dots, h_k$ be the path of handleslides in the resolution of
  the $\a$- or $\b$-equivalence $e$, and let $d$ be the diffeomorphism
  corresponding to $f$ and $g$. Then we can subdivide the big
  rectangle into a row of smaller rectangles with sides $h_i$ and $d$
  for $i \in \{\, 1,\dots, k \,\}$.

  Finally, consider a rectangle of
  type~\eqref{item:rect-alpha-stab}. Then let $h_1,\dots,h_k$ be the
  resolution of the $\a$- or $\b$-equivalence $e$ on the destabilized
  side. Then, on the stabilized side $h$, we can choose the
  stabilizations $h_1',\dots,h_k'$ of the above handleslides. However,
  the endpoint of $h_k'$ might differ from $H_4$, the endpoint of $h$,
  by a sequence of handleslides over the new $\a$ or $\b$-curve
  appearing in the stabilization. We can correct this by attaching
  a row of stabilization slides to the row of rectangles with
  horizontal sides $h_i$ and $h_i'$.
\end{proof}

\bibliographystyle{amsplain}
\bibliography{heegaardfloer,topo}
\end{document}